\DeclareMathOperator{\dive}{div} 
\numberwithin{equation}{section}
\newcolumntype{C}{>{$\displaystyle} c <{$}}
\def\env@dmatrix{\hskip -\arraycolsep
	\let\@ifnextchar\new@ifnextchar
	\def\arraystretch{2}%
	\array{*{\c@MaxMatrixCols}{>{\displaystyle}c}}%
}
\DeclareFontShape{OMX}{cmex}{m}{n}{
	<-7.5> cmex7
	<7.5-8.5> cmex8
	<8.5-9.5> cmex9
	<9.5-> cmex10
}{}
\begin{document}

	\renewcommand{\thefootnote}{\fnsymbol{footnote}}
	
	\title{Weighted Eigenvalue Problems for Fourth-Order Operators in Degenerating Annuli}

	\author{Alexis Michelat\footnote{Institute of Mathematics, EPFL B, Station 8, CH-1015 Lausanne, Switzerland.\hspace{.5em} \href{alexis.michelat@epfl.ch}{alexis.michelat@epfl.ch}}\and \;Tristan \selectlanguage{french}Rivière\selectlanguage{english}\footnote{Department of Mathematics, ETH Zentrum, CH-8093 Zürich, Switzerland.\hspace{2.85em}\href{mailto:tristan.riviere@math.ethz.ch}{tristan.riviere@math.ethz.ch}}}
	\date{\today}
	
	\maketitle
	
	\vspace{-0.5em}
	
	\begin{abstract} 
    We obtain a nigh optimal estimate for the first eigenvalue of two natural weighted problems associated to the bilaplacian (and of a continuous family of fourth-order elliptic operators in dimension $2$) in degenerating annuli—that are central objects in bubble tree analysis—in all dimension. The estimate depends only on the conformal class of the annulus. We also show that in dimension $2$ and dimension $4$, the first eigenfunction (of the first problem) is never radial provided that the conformal class of the annulus is large enough. The other result is a weighted Poincaré-type inequality in annuli for those fourth-order operators. Applications to Morse theory are given.
	\end{abstract}

	\tableofcontents
	\vspace{0cm}
	\begin{center}
		{Mathematical subject classification : 
		 15A18, 15A42, 34L15, 35P15, 58J05, 58E05, 35A15, 35J20.}
	\end{center}

	\theoremstyle{plain}
	\newtheorem*{theorem*}{Theorem}
	\newtheorem{theorem}{Theorem}[section]
	\newenvironment{theorembis}[1]
	{\renewcommand{\thetheorem}{\ref{#1}$'$}%
		\addtocounter{theorem}{-1}%
		\begin{theorem}}
		{\end{theorem}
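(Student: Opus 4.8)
\noindent The plan is to turn the eigenvalue problem into a countable family of one-dimensional problems by a conformal change of variables and a spherical-harmonic decomposition, to treat the radial mode sharply through an explicit ODE analysis, and then to control every non-radial mode from below. The weighted problems attached to $\Delta^2$ on $A_\epsilon := \{\epsilon < |x| < 1\} \subset \mathbb{R}^n$ --- and, in dimension $2$, the whole family of fourth-order operators --- are invariant under the dilations $x \mapsto \mu x$, so the first eigenvalue depends only on the modulus $T := \log(1/\epsilon)$, that is, only on the conformal class of the annulus, which is the content of the statement. Passing to logarithmic coordinates $x = e^{-t}\omega$, $\omega \in S^{n-1}$, turns $A_\epsilon$ into the finite cylinder $\mathcal{C}_T = (0,T) \times S^{n-1}$; crucially the radial part of the operator becomes constant-coefficient in $t$ (the $e^{-4t}$ factor generated by $\Delta^2$ being exactly cancelled by the scale-invariant weight), while the operator keeps commuting with $\mathrm{O}(n)$. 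Expanding $u = \sum_k u_k(t)\,Y_k(\omega)$ in spherical harmonics of degree $k$ ($-\Delta_{S^{n-1}} Y_k = \mu_k Y_k$, $\mu_k = k(k+n-2)$) decouples the Rayleigh quotient into \[ \lambda_1(A_\epsilon) = \min_{k \ge 0} \lambda_k(T), \qquad \lambda_k(T) = \inf_{u_k} \frac{N_0[u_k] + \mu_k\, N_1[u_k] + \mu_k^2\, N_2[u_k]}{D[u_k]}, \] where the quadratic forms $N_0, N_1, N_2, D$ are explicit weighted integrals on $(0,T)$ and the competitors $u_k$ satisfy the four induced boundary conditions.

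\emph{The radial branch.} For $k = 0$ the homogeneous ODE $P(\partial_t) u_0 = \lambda u_0$ is constant-coefficient, $P$ a quartic, so $u_0$ is a combination of exponentials $e^{\beta_j(\lambda) t}$; imposing the four boundary conditions gives a transcendental equation $F(\lambda, T) = 0$ whose least positive root is $\lambda_0(T)$. For the upper bound I would test the Rayleigh quotient against a cut-off of the limiting (non-compact) eigenfunction, or a piecewise-explicit profile matching the boundary data; for the matching lower bound I would establish and use a weighted Hardy--Rellich inequality on the interval, the weight being the conformal factor. This yields the sharp leading behaviour of $\lambda_0(T)$ as $T \to \infty$, hence the ``nigh optimal'' two-sided bound for this mode, together with the profile of its eigenfunction, which is needed afterwards.

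\emph{Non-radial modes and conclusion.} The upper bound $\lambda_1(A_\epsilon) \le \lambda_0(T)$ is free. For the matching lower bound one needs $\lambda_k(T) \ge c\,\lambda_0(T)$ uniformly in $k$: after integration by parts $N_1$ contains a nonnegative term of the size $\mu_k \int |u_k'|^2$ plus a dimension-dependent, possibly negative, multiple of $\int u_k^2$ (and boundary terms), so by the interval Poincaré inequality the positive part dominates once $\mu_k$ is large and only finitely many $k$ require a separate --- and bounded --- verification. This proves the main estimate. The weighted Poincaré-type inequality of the statement follows from the same decomposition together with the one-dimensional weighted inequality of the previous step, now retaining the gradient term. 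Finally, the assertion that in dimensions $2$ and $4$ the first eigenfunction of the first problem is never radial once $T$ is large is the opposite effect: precisely for these $n$ the characteristic exponents $\beta_j(0)$ of $P$ collide, the indefinite cross terms in $N_1$ turn favourable for $k = 1$, and a sharp comparison of the asymptotics of $\lambda_1(T)$ and $\lambda_0(T)$ gives $\lambda_1(T) < \lambda_0(T)$ for $T$ large.

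The genuine difficulties I anticipate are, first, extracting the sharp constant and the exact $T$-dependence out of $F(\lambda, T) = 0$, uniformly in the dimension $n$ (and, for $n = 2$, uniformly in the operator parameter), rather than merely the order of magnitude; and, second, the strict cross-mode comparison behind the non-radiality statement, where one must pin down the crossover value of $T$ and control the error terms in the asymptotic expansions of two competing, explicit-but-unwieldy eigenvalue branches. The fourth-order nature of the problem --- four boundary conditions, a four-dimensional solution space, sign-indefinite quadratic forms after integration by parts --- makes each step heavier than its second-order analogue.
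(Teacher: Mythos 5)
Your overall frame (scale invariance, logarithmic coordinates turning the annulus into a cylinder of length $T=\log(b/a)$, spherical-harmonic/Fourier decoupling, a constant-coefficient quartic ODE per mode whose boundary conditions give a transcendental equation $F(\lambda,T)=0$) is the same skeleton as the paper's. But the architecture you build on it has a genuine gap: you propose to do the sharp analysis only on the radial branch, take the "free" upper bound $\lambda_1(A_\epsilon)\le\lambda_0(T)$, and close the argument with a crude uniform lower bound $\lambda_k(T)\ge c\,\lambda_0(T)$. For the problems at hand the minimizing frequency is \emph{not} radial: in dimension $2$, for $\mathscr{L}_m^{\ast}\mathscr{L}_m$ (already for $m=1$, the bilaplacian) the mode-$n$ eigenvalue is pinched between $\bigl((m-n)^2+\pi^2/T^2\bigr)\bigl((m+n)^2+\pi^2/T^2\bigr)$ and the same expression with $4\pi^2$, so the radial branch stays bounded below by roughly $m^4$ as $T\to\infty$, while the $n=\pm m$ branch decays like $4m^2\pi^2/T^2$ and is the true minimum. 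Hence your upper bound by the radial mode is off by a factor $T^2$, and no inequality of the form $\lambda_k\ge c\,\lambda_0$ can "match" it; the two-sided estimate with the explicit constants $\bigl(4m^2+\pi^2/T^2\bigr)\pi^2/T^2<\lambda_m<\bigl(4m^2+4\pi^2/T^2\bigr)4\pi^2/T^2$ requires carrying out the sharp zero-location analysis (first zero of the determinant function lies in $(\pi,2\pi)$) for \emph{every} mode $n$, and then performing a careful discrete minimization in $n$ with explicit thresholds on $\log(b/a)$ — this cross-mode comparison, which you relegate to a "bounded verification of finitely many $k$", is precisely where the non-radiality and the sharp constants come from. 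Your own final paragraph ($\lambda_1(T)<\lambda_0(T)$ for large $T$) contradicts the matching-bound structure you set up earlier.

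Two further points. First, invoking "a weighted Hardy--Rellich inequality on the interval" as the lower-bound tool for the radial branch is circular: sharp $T$-dependent inequalities of exactly this type are what the theorem is meant to produce, and the paper obtains them \emph{from} the ODE/determinant analysis (three cases according to whether the characteristic roots are real, repeated, or complex, the last leading to the universal equation in $(\lambda_1,\lambda_2)$), not the other way around; likewise the sign-indefinite cross terms you hope to absorb after integration by parts are exactly what this explicit-solution route avoids. Second, your mechanism for dimension $4$ is misplaced: for the first (zeroth-order-weight) problem in $\R^4$ the paper shows the minimizer \emph{is} radial once the conformal class is large, and the exceptional $d=4$ behaviour (exact value $\mu_0=4+4\pi^2/\log^2(b/a)$ beaten by the first spherical harmonic) occurs for the second, Hardy--Rellich-type problem; a heuristic about colliding indicial exponents at $\lambda=0$ does not detect this, since the selection of the minimizing mode is decided by the explicit algebraic form of the per-mode bounds, not by degeneracy of the characteristic roots.
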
}
	\renewcommand*{\thetheorem}{\Alph{theorem}}
	\newtheorem{lemme}[theorem]{Lemma}
	\newtheorem*{lemme*}{Lemma}
	\newtheorem{propdef}[theorem]{Definition-Proposition}
	\newtheorem*{propdef*}{Definition-Proposition}
	\newtheorem{prop}[theorem]{Proposition}
	\newtheorem{cor}[theorem]{Corollary}
	\theoremstyle{definition}
	\newtheorem*{definition}{Definition}
	\newtheorem{defi}[theorem]{Definition}
	\newtheorem{rem}[theorem]{Remark}
	\newtheorem*{rem*}{Remark}
	\newtheorem{rems}[theorem]{Remarks}
	\newtheorem{remimp}[theorem]{Important Remark}
	\newtheorem{exemple}[theorem]{Example}
	\newtheorem{defi2}{Definition}
	\newtheorem{propdef2}[defi2]{Proposition-Definition}
	\newtheorem{remintro}[defi2]{Remark}
	\newtheorem{remsintro}[defi2]{Remarks}
	\newtheorem{conj}{Conjecture}
	\newtheorem{question}{Open Question}
	\renewcommand\hat[1]{%
		\savestack{\tmpbox}{\stretchto{%
				\scaleto{%
					\scalerel*[\widthof{\ensuremath{#1}}]{\kern-.6pt\bigwedge\kern-.6pt}%
					{\rule[-\textheight/2]{1ex}{\textheight}}
				}{\textheight}%
			}{0.5ex}}%
		\stackon[1pt]{#1}{\tmpbox}
	}
	\parskip 1ex
	\newcommand{\totimes}{\ensuremath{\,\dot{\otimes}\,}}
	\newcommand{\vc}[3]{\overset{#2}{\underset{#3}{#1}}}
	\newcommand{\conv}[1]{\ensuremath{\underset{#1}{\longrightarrow}}}
	\newcommand{\A}{\ensuremath{\vec{A}}}
	\newcommand{\B}{\ensuremath{\vec{B}}}
	\newcommand{\C}{\ensuremath{\mathbb{C}}}
	\newcommand{\D}{\ensuremath{\nabla}}
	\newcommand{\Disk}{\ensuremath{\mathbb{D}}}
	\newcommand{\E}{\ensuremath{\vec{E}}}
	\newcommand{\I}{\ensuremath{\mathbb{I}}}
	\newcommand{\Q}{\ensuremath{\vec{Q}}}
	\newcommand{\loc}{\ensuremath{\mathrm{loc}}}
	\newcommand{\z}{\ensuremath{\bar{z}}}
	\newcommand{\hh}{\ensuremath{\mathscr{H}}}
	\newcommand{\h}{\ensuremath{\vec{h}}}
	\newcommand{\vol}{\ensuremath{\mathrm{vol}}}
	\newcommand{\hs}[3]{\ensuremath{\left\Vert #1\right\Vert_{\mathrm{H}^{#2}(#3)}}}
	\newcommand{\R}{\ensuremath{\mathbb{R}}}
	\renewcommand{\P}{\ensuremath{\mathbb{P}}}
	\newcommand{\N}{\ensuremath{\mathbb{N}}}
	\newcommand{\Z}{\ensuremath{\mathbb{Z}}}
	\newcommand{\p}[1]{\ensuremath{\partial_{#1}}}
	\newcommand{\Res}{\ensuremath{\mathrm{Res}}}
	\newcommand{\lp}[2]{\ensuremath{\mathrm{L}^{#1}(#2)}}
	\renewcommand{\wp}[3]{\ensuremath{\left\Vert #1\right\Vert_{\mathrm{W}^{#2}(#3)}}}
	\newcommand{\wpn}[3]{\ensuremath{\Vert #1\Vert_{\mathrm{W}^{#2}(#3)}}}
	\newcommand{\np}[3]{\ensuremath{\left\Vert #1\right\Vert_{\mathrm{L}^{#2}(#3)}}}
	\newcommand{\hp}[3]{\ensuremath{\left\Vert #1\right\Vert_{\mathrm{H}^{#2}(#3)}}}
	\newcommand{\ck}[3]{\ensuremath{\left\Vert #1\right\Vert_{\mathrm{C}^{#2}(#3)}}}
	\newcommand{\hardy}[2]{\ensuremath{\left\Vert #1\right\Vert_{\mathscr{H}^{1}(#2)}}}
	\newcommand{\lnp}[3]{\ensuremath{\left| #1\right|_{\mathrm{L}^{#2}(#3)}}}
	\newcommand{\npn}[3]{\ensuremath{\Vert #1\Vert_{\mathrm{L}^{#2}(#3)}}}
	\newcommand{\nc}[3]{\ensuremath{\left\Vert #1\right\Vert_{C^{#2}(#3)}}}
	\renewcommand{\Re}{\ensuremath{\mathrm{Re}\,}}
	\renewcommand{\Im}{\ensuremath{\mathrm{Im}\,}}
	\newcommand{\dist}{\ensuremath{\mathrm{dist}}}
	\newcommand{\diam}{\ensuremath{\mathrm{diam}\,}}
	\newcommand{\leb}{\ensuremath{\mathscr{L}}}
	\newcommand{\supp}{\ensuremath{\mathrm{supp}\,}}
	\renewcommand{\phi}{\ensuremath{\vec{\Phi}}}
	\renewcommand{\H}{\ensuremath{\vec{H}}}
	\renewcommand{\L}{\ensuremath{\vec{L}}}
	\renewcommand{\lg}{\ensuremath{\mathscr{L}_g}}
	\renewcommand{\ker}{\ensuremath{\mathrm{Ker}}}
	\renewcommand{\epsilon}{\ensuremath{\varepsilon}}
	\renewcommand{\bar}{\ensuremath{\overline}}
	\newcommand{\s}[2]{\ensuremath{\langle #1,#2\rangle}}
	\newcommand{\pwedge}[2]{\ensuremath{\,#1\wedge#2\,}}
	\newcommand{\bs}[2]{\ensuremath{\left\langle #1,#2\right\rangle}}
	\newcommand{\scal}[2]{\ensuremath{\langle #1,#2\rangle}}
	\newcommand{\sg}[2]{\ensuremath{\left\langle #1,#2\right\rangle_{\mkern-3mu g}}}
	\newcommand{\n}{\ensuremath{\vec{n}}}
	\newcommand{\ens}[1]{\ensuremath{\left\{ #1\right\}}}
	\newcommand{\lie}[2]{\ensuremath{\left[#1,#2\right]}}
	\newcommand{\g}{\ensuremath{g}}
	\newcommand{\dzeta}{\ensuremath{\det\hphantom{}_{\kern-0.5mm\zeta}}}
	\newcommand{\e}{\ensuremath{\vec{e}}}
	\newcommand{\f}{\ensuremath{\vec{f}}}
	\newcommand{\ig}{\ensuremath{|\vec{\mathbb{I}}_{\phi}|}}
	\newcommand{\ik}{\ensuremath{\left|\mathbb{I}_{\phi_k}\right|}}
	\newcommand{\w}{\ensuremath{\vec{w}}}
	\newcommand{\hooklongrightarrow}{\lhook\joinrel\longrightarrow}
	\renewcommand{\tilde}{\ensuremath{\widetilde}}
	\newcommand{\vg}{\ensuremath{\mathrm{vol}_g}}
	\newcommand{\im}{\ensuremath{\mathrm{W}^{2,2}_{\iota}(\Sigma,N^n)}}
	\newcommand{\imm}{\ensuremath{\mathrm{W}^{2,2}_{\iota}(\Sigma,\R^3)}}
	\newcommand{\timm}[1]{\ensuremath{\mathrm{W}^{2,2}_{#1}(\Sigma,T\R^3)}}
	\newcommand{\tim}[1]{\ensuremath{\mathrm{W}^{2,2}_{#1}(\Sigma,TN^n)}}
	\renewcommand{\d}[1]{\ensuremath{\partial_{x_{#1}}}}
	\newcommand{\dg}{\ensuremath{\mathrm{div}_{g}}}
	\renewcommand{\Res}{\ensuremath{\mathrm{Res}}}
	\newcommand{\un}[2]{\ensuremath{\bigcup\limits_{#1}^{#2}}}
	\newcommand{\res}{\mathbin{\vrule height 1.6ex depth 0pt width
			0.13ex\vrule height 0.13ex depth 0pt width 1.3ex}}
    \newcommand{\antires}{\mathbin{\vrule height 0.13ex depth 0pt width 1.3ex\vrule height 1.6ex depth 0pt width 0.13ex}}
	\newcommand{\ala}[5]{\ensuremath{e^{-6\lambda}\left(e^{2\lambda_{#1}}\alpha_{#2}^{#3}-\mu\alpha_{#2}^{#1}\right)\left\langle \nabla_{\e_{#4}}\vec{w},\vec{\mathbb{I}}_{#5}\right\rangle}}
	\setlength\boxtopsep{1pt}
	\setlength\boxbottomsep{1pt}
	\newcommand\norm[1]{%
		\setbox1\hbox{$#1$}%
		\setbox2\hbox{\addvbuffer{\usebox1}}%
		\stretchrel{\lvert}{\usebox2}\stretchrel*{\lvert}{\usebox2}%
	}
	\allowdisplaybreaks
	\newcommand*\mcup{\mathbin{\mathpalette\mcapinn\relax}}
	\newcommand*\mcapinn[2]{\vcenter{\hbox{$\mathsurround=0pt
				\ifx\displaystyle#1\textstyle\else#1\fi\bigcup$}}}
	\def\Xint#1{\mathchoice
		{\XXint\displaystyle\textstyle{#1}}%
		{\XXint\textstyle\scriptstyle{#1}}%
		{\XXint\scriptstyle\scriptscriptstyle{#1}}%
		{\XXint\scriptscriptstyle\scriptscriptstyle{#1}}%
		\!\int}
	\def\XXint#1#2#3{{\setbox0=\hbox{$#1{#2#3}{\int}$ }
			\vcenter{\hbox{$#2#3$ }}\kern-.58\wd0}} 
	\def\ddashint{\Xint=}
	\newcommand{\dashint}[1]{\ensuremath{{\Xint-}_{\mkern-10mu #1}}}
	\newcommand\ccancel[1]{\renewcommand\CancelColor{\color{red}}\cancel{#1}}
	\newcommand\colorcancel[2]{\renewcommand\CancelColor{\color{#2}}\cancel{#1}}
	\newcommand{\abs}[1]{\left\lvert #1 \right \rvert}
	
	\renewcommand{\thetheorem}{\thesection.\arabic{theorem}}

 \section{Introduction}

 \subsection{Motivation and Main Results}

 In \cite{riviere_morse_scs}, a new approach is developed to show the upper semi-continuity of the extended Morse index—the number of negative or null eigenvalues—of sequences of critical point of geometric functionals with uniformly bounded energy. One of the main steps in the argument is to derive suitable weighted estimates in degenerating annuli. Namely, if $0<a<b<\infty$ and $\Omega=B_b\setminus\bar{B}_a(0)$, then for all $u\in W^{1,2}_0(\Omega)$, Da Lio-Gianocca-Rivière showed  (\cite[Lemma IV.$1$]{riviere_morse_scs}) that for all $0<\beta<1$ the follows estimate holds
 \begin{align*}
     \int_{\Omega}|\D u|^2dx\geq C_{\beta}\int_{\Omega}\frac{u^2}{|x|^2}\left(\left(\frac{|x|}{b}\right)^{\beta}+\left(\frac{a}{|x|}\right)^{\beta}+\frac{1}{\log^2\left(\frac{b}{a}\right)}\right)dx,
 \end{align*}
 where $0<C_{\beta}<\infty$ is a universal constant. More precisely, the more precise inequality holds for all $u\in W^{1,2}_0(\Omega)$ (see \cite{riviere_morse_scs})
 \begin{align*}
     \int_{\Omega}|\D u|^2dx\geq \frac{\pi^2}{\log^2\left(\frac{b}{a}\right)}\int_{\Omega}\frac{u^2}{|x|^2}dx.
 \end{align*}
 This estimate is central to prove that the neck regions—associated to  bubble trees of fourth-order equations such as Willmore surfaces (in dimension $2$) and the (intrinsic or extrinsic) biharmonic maps in dimension $4$—contribute positively to the second derivative. For more details, refer to the introduction of \cite{morse_willmore_I}. It is equivalent to the estimate of the first positive eigenvalue for the problem
 \begin{align*}
     -\Delta u=\frac{\lambda_1}{|x|^2}u\qquad \text{in}\;\, \Omega,
 \end{align*}
 that is related to the Hardy's inequality in dimension $d\geq 3$ (\cite{hardy}). Due to the singularity of the weight $\dfrac{1}{|x|^2}$, this inequality does not hold in $\R^2$, but the analysis of \cite{riviere_morse_scs} furnishes the precise asymptotic behaviour of the degeneracy of this constant on annuli whose conformal classes diverges to $\infty$.  
 
 In this article, we generalise this estimate for a natural family of fourth-order operators in dimension $2$, and in all dimension in the case of the bilaplacian $\Delta^2$. Namely, we prove that for all $m\geq 1$, and for all $1/2<\beta<1$, 
 \begin{align*}
     \int_{\Omega}\left(\Delta u+2(m-1)\frac{x}{|x|^2}\cdot \D u+\frac{(m-1)^2}{|x|^2}u\right)^2dx\geq C_{\beta,m}\int_{\Omega}\frac{u^2}{|x|^4}\left(\left(\frac{|x|}{b}\right)^{4\beta}+\left(\frac{a}{|x|}\right)^{4\beta}+\frac{1}{\log^2\left(\frac{b}{a}\right)}\right)dx
 \end{align*}
 and for all $\sqrt{2}-1<\beta<1$
 \begin{align*}
     \int_{\Omega}\left(\Delta u+2(m-1)\frac{x}{|x|^2}\cdot \D u+\frac{(m-1)^2}{|x|^2}u\right)^2\geq C_{\beta,m}\int_{\Omega}\frac{|\D u|^2}{|x|^2}\left(\left(\frac{|x|}{b}\right)^{2\beta}+\left(\frac{a}{|x|}\right)^{2\beta}+\frac{1}{\log^2\left(\frac{b}{a}\right)}\right)dx.
 \end{align*}
 Let us point out that our inequality is more general provided that $m>1$.
 The operator
 \begin{align*}
     \leb_m=\Delta+2(m-1)\frac{x}{|x|^2}\cdot \D+\frac{(m-1)^2}{|x|^2}=|x|^{1-m}\Delta\left(|x|^{m-1}\,\cdot\,\right)
 \end{align*}
 naturally appears when on studies the Morse index stability of branched surfaces (and its adjoint plays an important role in the study of the index of branched Willmore spheres \cite{index4}). At a branch point in a neck region, thanks to the analysis of \cite{pointwise} the metric admits the following expansion in a conformal chart
 \begin{align*}
     g=e^{2u}|z|^{2m-2}|dz|^2,
 \end{align*}
     where $u$ is a bounded function and $m\geq 1$ is a positive integer. Since the Willmore energy is a non-linear biharmonic equation, one can easily show that the dominant term of the second derivative (we do not give a precise definition of this quadratic functional and refer to the introduction of \cite{morse_willmore_I} for more details) is given by
 \begin{align*}
     \frac{1}{2}\int_{\Omega}(\Delta_gu+|A|^2u)^2d\vg,
 \end{align*}
 where $A$ is the second fundamental form. Using the previous expansion, we get
 \begin{align*}
     \int_{\Omega}(\Delta_gu)^2d\vg=\int_{\Omega}e^{-2u}(|x|^{1-m}\Delta u)^2dx.
 \end{align*}
 Since we want to show that this positive term controls the lower-order terms of the second derivative, making the change of variable $u=|x|^{m-1}v$, we are led to the previous estimates. In the case of biharmonic maps, we show a general eigenvalue estimate in all dimension.

 We can now state the main theorems more precisely. 
\renewcommand*{\thetheorem}{\Alph{theorem}}
 \begin{theorem}\label{intro_main10}
     Let $m\geq 1$, let $0<a<b<\infty$, and $\Omega=B_b\setminus\bar{B}_a(0)\subset \R^2$. Consider the following minimisation problem
     \begin{align}\label{intro_eigenvalue0}
         \lambda_m=\inf\ens{\int_{\Omega}(\leb_mu)^2dx:u\in W^{2,2}_0(\Omega)\;\,\text{and}\;\,  \int_{\Omega}\frac{u^2}{|x|^4}dx=1}.
     \end{align}
     Then, there exists a unique solution $u$ to \eqref{intro_eigenvalue0}, that satisfies the equation
     \begin{align*}
         \Delta^2u+\frac{2(m^2-1)}{|x|^2}\Delta u-4(m^2-1)\left(\frac{x}{|x|^2}\right)^t\cdot\D^2u\cdot\left(\frac{x}{|x|^2}\right)+\frac{(m^2-1)^2}{|x|^4}u=\frac{\lambda_m}{|x|^4}u,
     \end{align*}
     and there exists a constant $R_{1,m}<\infty$ such that provided that 
     \begin{align}
         \log\left(\frac{b}{a}\right)\geq R_{1,m},
     \end{align}
     then
     \begin{align}
         \left(4m^2+\frac{\pi^2}{\log^2\left(\frac{b}{a}\right)}\right)\frac{\pi^2}{\log^2\left(\frac{b}{a}\right)}<\lambda_m<\left(4m^2+\frac{4\pi^2}{\log^2\left(\frac{b}{a}\right)}\right)\frac{4\pi^2}{\log^2\left(\frac{b}{a}\right)}.
     \end{align}
     Furthermore, if $m\geq 1$ is an integer, there exists $R_{2,m}<\infty$ such that the condition
     \begin{align*}
         \log\left(\frac{b}{a}\right)\geq R_{2,m}
     \end{align*}
     implies that the solution $u$ takes the form $u(r,\theta)=\Re\left(u_0(r)e^{im\theta}\right)$, where $u_0$ is a solution of the following ordinary differential equation
     \begin{align}
         f''''+\frac{2}{r}f'''-(4m^2-1)\frac{1}{r^2}f''+(4m^2-1)\frac{1}{r^3}f'-(4m^2-1)\frac{1}{r^4}f=\frac{\lambda_m}{r^4}f.
     \end{align}
 \end{theorem}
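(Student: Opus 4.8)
\emph{Proof idea.} I would separate variables in the angular direction, reducing the problem to a family of one–dimensional fourth–order eigenvalue problems — eventually with constant coefficients — indexed by the Fourier modes $k\in\Z$. \emph{Existence, regularity and the Euler--Lagrange equation.} On $\Omega$ the weight $|x|^{m-1}$ is smooth and bounded above and below, so $v\mapsto|x|^{m-1}v$ is an isomorphism of $W^{2,2}_0(\Omega)$; combined with $\leb_m u=|x|^{1-m}\Delta(|x|^{m-1}u)$ and the standard coercivity $\|\Delta v\|_{L^2(\Omega)}^2\geq c\|v\|^2_{W^{2,2}(\Omega)}$ on $W^{2,2}_0(\Omega)$ this gives $\int_\Omega(\leb_m u)^2\,dx\geq c'\|u\|^2_{W^{2,2}(\Omega)}$. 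Since $u\mapsto\int_\Omega(\leb_m u)^2\,dx$ is convex and continuous, hence weakly lower semicontinuous, and the constraint survives strong $L^2$ (Rellich) convergence, a minimiser $u$ exists. Elliptic regularity for $\leb_m^{*}\leb_m$ — uniformly elliptic, principal part $\Delta^2$, coefficients smooth on $\overline{\Omega}$ — gives $u\in C^\infty(\overline{\Omega})$. The first variation reads $\int_\Omega(\leb_m u)(\leb_m\varphi)\,dx=\lambda_m\int_\Omega|x|^{-4}u\varphi\,dx$ for all $\varphi\in W^{2,2}_0(\Omega)$, and two integrations by parts (all boundary terms vanishing) yield $|x|^{m-1}\Delta\!\big(|x|^{2-2m}\Delta(|x|^{m-1}u)\big)=\lambda_m|x|^{-4}u$; expanding the derivatives, using $\dive(x/|x|^2)=0$ in dimension $2$, gives precisely the stated equation.

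\emph{Reduction to one dimension.} Writing $u=\sum_{k\in\Z}c_k(r)e^{ik\theta}$, both $\int_\Omega(\leb_m u)^2\,dx$ and $\int_\Omega|x|^{-4}u^2\,dx$ split into orthogonal sums over $k$, since $\leb_m$ commutes with rotations and preserves each angular mode; hence $\lambda_m=\min_{k\geq0}\lambda_m^{(k)}$ with
\[
\lambda_m^{(k)}=\inf_{0\ne f\in W^{2,2}_0(a,b)}\frac{\displaystyle\int_a^b\Big(f''+\tfrac{2m-1}{r}f'+\tfrac{(m-1)^2-k^2}{r^2}f\Big)^2 r\,dr}{\displaystyle\int_a^b r^{-3}f^2\,dr}.
\]
The change of variables $r=e^t$ followed by $f(e^t)=e^t h(t)$, writing $L:=\log(b/a)$ and translating the interval to $(0,L)$, turns this into a constant–coefficient quotient,
\[
\lambda_m^{(k)}=\inf_{0\ne h\in W^{2,2}_0(0,L)}\frac{\displaystyle\int_0^L\big(h''+2mh'+(m^2-k^2)h\big)^2 dt}{\displaystyle\int_0^L h^2\,dt},
\]
and, because $h(0)=h(L)=h'(0)=h'(L)=0$, every cross term in the numerator integrates out, leaving the identity $\int_0^L(h''+2mh'+(m^2-k^2)h)^2 dt=\int_0^L(h'')^2 dt+2(m^2+k^2)\int_0^L(h')^2 dt+(m^2-k^2)^2\int_0^L h^2 dt$.

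\emph{The two bounds.} Put $\mu_1:=\inf_{W^{2,2}_0(0,L)}\|h''\|^2/\|h\|^2$, the first clamped–beam eigenvalue $(\beta_1/L)^4$ with $\beta_1>\pi$, and $\nu_1:=\inf_{W^{2,2}_0(0,L)}\|h'\|^2/\|h\|^2=(\pi/L)^2$ (as $W^{2,2}_0\subset W^{1,2}_0$ densely). Since $\inf(A+B+C)\geq\inf A+\inf B+\inf C$, the identity above gives $\lambda_m^{(k)}\geq\mu_1+2(m^2+k^2)\nu_1+(m^2-k^2)^2>\pi^4/L^4+2(m^2+k^2)\pi^2/L^2+(m^2-k^2)^2$, the last inequality strict because $\mu_1>\pi^4/L^4$. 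For $k^2\geq m^2$ this already exceeds $\pi^4/L^4+4m^2\pi^2/L^2$; for $k^2<m^2$ it does once $m^2-k^2\geq2\pi^2/L^2$, whose worst instance over integers $k$ is $k=m-1$, i.e. $2m-1\geq2\pi^2/L^2$, that is $L\geq R_{1,m}:=\pi\sqrt{2/(2m-1)}$ (for non–integer $m$, replace $2m-1$ by $m^2-\lfloor m\rfloor^2$). Minimising over $k$ yields $\lambda_m>(4m^2+\pi^2/L^2)\pi^2/L^2$. For the upper bound I would test the mode $k=m$ (taking $m$ a positive integer, as in the motivation) with $h(t)=\sin^2(\pi t/L)\in W^{2,2}_0(0,L)$: one computes $\int_0^L h^2=\tfrac{3L}{8}$, $\int_0^L(h')^2=\tfrac{\pi^2}{2L}$, $\int_0^L(h'')^2=\tfrac{2\pi^4}{L^3}$, and — since the coefficient $(m^2-k^2)^2$ vanishes — the quotient equals $\tfrac{16}{3}(\pi^4/L^4+m^2\pi^2/L^2)<16(\pi^4/L^4+m^2\pi^2/L^2)=(4m^2+4\pi^2/L^2)\,4\pi^2/L^2$, so $\lambda_m\leq\lambda_m^{(m)}<(4m^2+4\pi^2/L^2)\,4\pi^2/L^2$.

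\emph{Optimal mode, radial form and ODE.} For integer $m$ and integer $k\ne m$ we have $\lambda_m^{(k)}\geq(m^2-k^2)^2\geq(2m-1)^2\geq1$, whereas $\lambda_m^{(m)}\leq\tfrac{16}{3}(\pi^4/L^4+m^2\pi^2/L^2)\to0$, so there is $R_{2,m}<\infty$ with $L\geq R_{2,m}\Rightarrow\lambda_m^{(m)}<\lambda_m^{(k)}$ for every $k\ne m$; thus $\lambda_m=\lambda_m^{(m)}$ and each minimiser lies in the $k=m$ mode, i.e. $u(r,\theta)=\Re(u_0(r)e^{im\theta})$. Its one–dimensional Euler--Lagrange equation is $h''''-4m^2h''=\lambda_m h$ on $(0,L)$ with clamped boundary data — a constant–coefficient self–adjoint fourth–order problem whose ground state is simple (from the explicit solution basis $e^{\pm\alpha t},\cos\gamma t,\sin\gamma t$ and a determinant/nodal argument), which gives uniqueness of $u$ modulo the evident rotation and scaling; transcribing $h''''-4m^2h''=\lambda_m h$ back through $f(e^t)=e^t h(t)$, $r=e^t$, produces the stated ODE for $u_0=f$. \emph{Main obstacle.} The heart of the matter is the comparison across Fourier modes: the elementary lower bound coming from the two Poincaré inequalities must outweigh the upper estimate for $\lambda_m^{(m)}$, and this is exactly what forces the thresholds $R_{1,m}$ and $R_{2,m}$; the clean quadratic identity for $\|h''+2mh'+(m^2-k^2)h\|^2$ is what makes this comparison tractable, while the simplicity of the one–dimensional ground state — needed for uniqueness and the radial form — is a secondary ODE point.
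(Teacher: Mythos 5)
Your proposal is correct, and its skeleton (existence via coercivity, the Euler--Lagrange equation, Fourier decomposition in $\theta$, the logarithmic change of variable $r=e^{t}$) coincides with the paper's; but the heart of your argument — the per-mode eigenvalue bounds — follows a genuinely different and more elementary route. The paper solves the resulting constant-coefficient fourth-order ODE explicitly, encodes the clamped boundary conditions in a $4\times 4$ determinant, and carries out a three-case analysis on the roots of the (biquadratic, after the shift $X=Y+1$) characteristic polynomial, culminating in the localisation between $\pi$ and $2\pi$ of the first zero of an explicit transcendental function of the conformal class. You instead symmetrise first, writing $Y=e^{t}h$, so that the per-mode Rayleigh quotient becomes $\big(\|h''\|^2+2(m^2+k^2)\|h'\|^2+(m^2-k^2)^2\|h\|^2\big)/\|h\|^2$ on $W^{2,2}_0(0,L)$ — the cross terms vanish exactly because $h=h'=0$ at the endpoints — and then invoke only two classical one-dimensional facts: the clamped-beam eigenvalue $(\beta_1/L)^4>\pi^4/L^4$ and the Dirichlet eigenvalue $\pi^2/L^2$. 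This reproduces the paper's per-mode lower bound $\big((m-k)^2+\pi^2/\log^2(b/a)\big)\big((m+k)^2+\pi^2/\log^2(b/a)\big)$ with no condition on the conformal class at the level of a single mode (the threshold $\pi\sqrt{2}/\sqrt{2m-1}$ enters, as in the paper, only when comparing modes), while your explicit test function $\sin^2(\pi t/L)$ in the mode $k=m$ gives the upper bound with the sharper constant $16/3$ in place of $16$, again with no smallness assumption. What the paper's heavier determinant machinery buys in exchange is a description of the full spectrum of each projected operator (the zeros $\theta_k$ of $\psi$) and a toolkit that is reused verbatim for the second eigenvalue problem and the higher-dimensional theorems, together with explicit admissible values of $R_{2,m}$.

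Two loose ends you should be aware of, neither fatal. First, your upper bound and the $e^{im\theta}$ structure are only established for integer $m$; this matches what is actually proved (and what is true — for non-integer $m$ the infimum over integer modes does not tend to $0$), since the paper's own Theorem 2.7 imposes integrality for the two-sided bound, even though the introduction states it loosely. Second, "uniqueness" of the minimiser can only mean uniqueness up to sign and rotation, and your reduction of it to simplicity of the ground state of $h''''-4m^2h''=\lambda h$ with clamped data is asserted rather than proved ("determinant/nodal argument"); if you want this in full you should supply the short argument that the boundary determinant has a one-dimensional kernel at its first zero — the paper is no more explicit on this point.
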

 \setcounter{theorem}{0}
\renewcommand*{\thetheorem}{\thesection.\arabic{theorem}}
 \begin{rem}
 \begin{enumerate}
     \item[($1$)] In contrast to the previously quoted result from \cite{riviere_morse_scs}, the bound on the first eigenvalue of $\leb_m^{\ast}\leb_m$ for the weighted problem is only true asymptotically, and the optimal solution is \emph{not} radial, even for $m=1$ where $\leb_m^{\ast}\leb_m=\Delta^2$ is the bi-Laplacian! This result is reminiscent to those on Ginzburg-Landau vortices where the solution is radial up to an $e^{i\,\theta}$ phase (\cite{BBH,GLquanta}). 
        \item[($2$)] The bound on the conformal class is far from optimal, but we are mostly interested in the estimate when the conformal class diverges. 
        \item[($3$)] More precisely, we have the bound
        \begin{align*}
            \left(4m^2+\frac{\pi^2}{\log^2\left(\frac{b}{a}\right)}\right)\frac{\pi^2}{\log^2\left(\frac{b}{a}\right)}<\lambda_m<\left(4m^2+\frac{4\pi^2}{\log^2\left(\frac{b}{a}\right)}\right)\frac{4\pi^2}{\log^2\left(\frac{b}{a}\right)}.
        \end{align*}
 \end{enumerate}
 \end{rem}

The proof is based on a delicate ODE analysis of all frequencies in Fourier space (the same approach works in higher dimension if we replace the $e^{ik\theta}$ functions ($k\in\Z$) by spherical harmonics), that notably proves that the optimal solution is \emph{not} radial, but has a phase $e^{im\theta}$ provided that $m\geq 1$ is integer and the conformal class is large enough. It amounts at studying a family of ODEs and estimating the first positive zero of an explicit (and non-trivial!) function of the conformal class. Even the case $m=1$ of the bilaplacian was not known, and furnishes an interesting counter-example to the classical  Gidas-Ni-Nirenberg theorem (\cite{gidas}) that shows under general hypotheses that the first eigenfunction must be radial (the theorem does not apply here since our domain is not simply connected).

 \begin{cor}
     Let $m\geq 1$, let $0<a<b<\infty$ and $\Omega=B_b\setminus\bar{B}_a(0)$. Assume that 
     \begin{align}
         \log\left(\frac{b}{a}\right)\geq \frac{\pi\sqrt{2}}{\sqrt{2m-1}}
     \end{align}
     Then, for all $u\in W^{2,2}_0(\Omega)$, we have
     \begin{align}
         \int_{\Omega}\left(\Delta u+2(m-1)\frac{x}{|x|^2}\cdot \D u+\frac{(m-1)^2}{|x|^2}u\right)^2dx\geq \left(4m^2+\frac{\pi^2}{\log^2\left(\frac{b}{a}\right)}\right)\frac{\pi^2}{\log^2\left(\frac{b}{a}\right)}\int_{\Omega}\frac{u^2}{|x|^4}dx.
     \end{align}
 \end{cor}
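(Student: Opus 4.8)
The plan is to prove the displayed inequality \emph{directly} — so that one need not extract the precise value of the threshold $R_{1,m}$ from Theorem~\ref{intro_main10} — by reducing it, via a conformal change of variables followed by a Fourier decomposition, to a one–dimensional inequality that closes with two applications of the Poincaré inequality. Write $T:=\log(b/a)$ and $\mu:=\pi/T$. First I would pass to logarithmic coordinates $t=\log|x|\in(\log a,\log b)$, $\theta\in\R/2\pi\Z$, work on the cylinder $\mathcal C:=(\log a,\log b)\times S^1$, and set $\varphi:=u/|x|=e^{-t}u$. Using $\Delta=e^{-2t}(\partial_t^2+\partial_\theta^2)$, $|x|^{-2}x\cdot\D=e^{-2t}\partial_t$ and $dx=e^{2t}\,dt\,d\theta$, a short computation gives $\leb_m u=e^{-t}\bigl((\partial_t+m)^2\varphi+\partial_\theta^2\varphi\bigr)$, whence
\begin{align*}
\int_\Omega(\leb_m u)^2\,dx&=\int_{\mathcal C}\big|((\partial_t+m)^2+\partial_\theta^2)\varphi\big|^2\,dt\,d\theta,\\
\int_\Omega\frac{u^2}{|x|^4}\,dx&=\int_{\mathcal C}\varphi^2\,dt\,d\theta,
\end{align*}
and $u\mapsto u/|x|$ is an isomorphism of $W^{2,2}_0(\Omega)$ onto $W^{2,2}_0(\mathcal C)$ because $|x|^{\pm1}$ is smooth and bounded together with its derivatives on $\bar\Omega$ and $\mathcal C$ is conformally the annulus. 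Thus the claim becomes a weight-free inequality on the flat cylinder of length $T$.

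Next I would expand $\varphi(t,\theta)=\sum_{k\in\Z}\varphi_k(t)e^{ik\theta}$ with $\varphi_k\in W^{2,2}_0(0,T)$ (translating the interval to $(0,T)$); by Parseval both integrals decouple over $k$, so it suffices to prove that for every $k\in\Z$ and every $f\in W^{2,2}_0(0,T)$,
\begin{align*}
\int_0^T\big|f''+2mf'+(m^2-k^2)f\big|^2\,ds\ \ge\ (4m^2+\mu^2)\,\mu^2\int_0^T|f|^2\,ds .
\end{align*}
Since both $f$ and $f'$ vanish at the two endpoints, integration by parts annihilates $\int f''\overline{f'}$ and $\int f'\overline{f}$ and turns $\int f''\overline{f}$ into $-\int|f'|^2$, which yields the \emph{exact} identity
\begin{align*}
\big\|f''+2mf'+(m^2-k^2)f\big\|_{L^2}^2=\|f''\|_{L^2}^2+2(m^2+k^2)\|f'\|_{L^2}^2+(m^2-k^2)^2\|f\|_{L^2}^2 .
\end{align*}
Applying the Poincaré inequality on $(0,T)$ twice — $\|f'\|_{L^2}^2\ge\mu^2\|f\|_{L^2}^2$ and $\|f''\|_{L^2}^2\ge\mu^2\|f'\|_{L^2}^2\ge\mu^4\|f\|_{L^2}^2$, legitimate because $f,f'\in W^{1,2}_0(0,T)$ and $\mu^2=(\pi/T)^2$ is the first Dirichlet eigenvalue of $-\partial_s^2$ — the matter is reduced to the elementary inequality
\begin{align*}
\mu^4+2(m^2+k^2)\mu^2+(m^2-k^2)^2\ \ge\ 4m^2\mu^2+\mu^4,\qquad\text{i.e.}\qquad (m^2-k^2)^2\ \ge\ 2(m^2-k^2)\mu^2 .
\end{align*}

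It remains to check this last inequality for all $k\in\Z$. For $|k|\ge m$ the right-hand side is $\le0$ and the inequality is automatic; for $0\le k<m$ one divides by $m^2-k^2>0$ and must verify $m^2-k^2\ge2\mu^2$. With $m$ the positive integer of the geometric setting, the worst mode is $k=m-1$, where $m^2-k^2=2m-1$, so the whole chain closes exactly when $2m-1\ge2\mu^2=2\pi^2/T^2$, i.e. when $\log(b/a)=T\ge\pi\sqrt2/\sqrt{2m-1}$ — precisely the hypothesis. Summing over $k$ and reversing the change of variables then gives the corollary. I expect the only point requiring genuine care to be the first step — carrying the conformal factors through correctly and checking that the Sobolev classes transform as claimed under $u\mapsto u/|x|$ and under the passage to the cylinder; everything after that is the integration-by-parts identity, two applications of Poincaré, and a one-line comparison. (The crude estimate used here is not sharp — its extremal profile $\sin(\pi s/T)$ is not admissible in $W^{2,2}_0$, so the true constant is strictly larger and the threshold is not optimal — but it is clean, and for the neck–analysis applications that is all one needs; for non-integer $m$ the same scheme works with $k=\lfloor m\rfloor$ in place of $k=m-1$, at the cost of a slightly different threshold.)
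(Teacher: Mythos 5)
Your argument is correct, and it is a genuinely different route from the paper's. The paper deduces this corollary from its ODE spectral analysis (Theorems \ref{theoreme_ode_m_n} and \ref{main_neck_lm}): per Fourier mode it solves the Euler--Lagrange ODE explicitly after the logarithmic substitution, computes the roots of the biquadratic characteristic polynomial, studies $4\times4$ boundary determinants, and estimates the first zero of a transcendental function of the conformal class, before minimising $((m-n)^2+\pi^2/\log^2(b/a))((m+n)^2+\pi^2/\log^2(b/a))$ over integer $n$. You bypass all of that: writing $\leb_m u=e^{-t}\bigl((\partial_t+m)^2+\partial_\theta^2\bigr)(e^{-t}u)$ on the cylinder (which is exactly the identity $\leb_m=|x|^{1-m}\Delta(|x|^{m-1}\cdot)$ in log coordinates), decomposing in Fourier modes, and expanding the quadratic form, the cross terms vanish because $f=f'=0$ at the endpoints, and two applications of the sharp one-dimensional Poincaré inequality give per mode the bound $\mu^4+2(m^2+k^2)\mu^2+(m^2-k^2)^2$ with $\mu=\pi/\log(b/a)$ --- which is \emph{exactly} the paper's per-mode lower bound $((m-k)^2+\mu^2)((m+k)^2+\mu^2)$, obtained here unconditionally (non-strict) and without any threshold; the hypothesis $\log(b/a)\geq\pi\sqrt2/\sqrt{2m-1}$ enters only in the final comparison $(m^2-k^2)^2\geq 2(m^2-k^2)\mu^2$, whose worst admissible mode is $k=m-1$, just as in the paper's minimisation over $n$. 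What each approach buys: yours is short, elementary, and proves the stated inequality (with $\geq$, which is all the corollary claims) for integer $m$ --- and you correctly flag that non-integer $m$ would change the threshold via $k=\lfloor m\rfloor$, consistent with the fact that the paper's condition $2m-1\geq 2\mu^2$ also comes from the integer-mode analysis; the paper's heavier machinery additionally yields strict inequality, the nearly matching upper bound, uniqueness and non-radiality of the minimiser with the $e^{im\theta}$ phase, and the description of the full spectrum, none of which the quadratic-form argument can recover.
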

 \begin{rem}
    \begin{enumerate}
        \item[$1$.] In the case of the bilaplacian $\leb_1^{\ast}\leb_1=\Delta^2$, the condition on $a<b$ becomes
    \begin{align*}
        \frac{b}{a}\geq e^{\pi\sqrt{2}}=85.019695\cdots
    \end{align*}
    \item[$2$.] This kind of Rellich inequality for general elliptic operators with regular singularities have already been considered in higher dimension $d\geq 5$ (\cite{rellich_general_operator}) and in dimension $d=3$ for the subclass of functions $W^{2,2}(\R^3)\cap\ens{u:u(0)=0}$ that vanish at the origin.
    \end{enumerate}
 \end{rem}

 A similar result holds for the second minimisation problem.
 \setcounter{theorem}{1}
\renewcommand*{\thetheorem}{\Alph{theorem}}
 \begin{theorem}\label{intro_main1}
     Let $m\geq 1$, let $0<a<b<\infty$, and $\Omega=B_b\setminus\bar{B}_a(0)\subset \R^2$. Consider the following minimisation problem
     \begin{align}\label{intro_eigenvalue}
         \mu_m=\inf\ens{\int_{\Omega}(\leb_mu)^2dx:u\in W^{2,2}_0(\Omega)\;\,\text{and}\;\,  \int_{\Omega}\frac{|\D u|^2}{|x|^2}dx=1}.
     \end{align}
     Then, there exists a unique solution $u$ to \eqref{intro_eigenvalue} that satisfies the equation
      \begin{align*}
         \Delta^2u+\frac{2(m^2-1)}{|x|^2}\Delta u-4(m^2-1)\left(\frac{x}{|x|^2}\right)^t\cdot\D^2u\cdot\left(\frac{x}{|x|^2}\right)+\frac{(m^2-1)^2}{|x|^4}u=-\frac{\mu_m}{|x|^2}\left(\Delta u-2\frac{x}{|x|^2}\cdot \D u\right),
     \end{align*}
     and there exists a constant $R_{1,m}^{\ast}<\infty$ such that provided that 
     \begin{align}
         \log\left(\frac{b}{a}\right)\geq R_{1,m}^{\ast},
     \end{align}
     then
     \begin{align}
         \dfrac{\left(4m^2+\dfrac{\pi^2}{\log^2\left(\frac{b}{a}\right)}\right)\dfrac{\pi^2}{\log^2\left(\frac{b}{a}\right)}}{4(m^2+1)+\dfrac{2\pi^2}{\log^2\left(\frac{b}{a}\right)}}<\mu_m<\dfrac{\left(4m^2+\dfrac{2\pi^2}{\log^2\left(\frac{b}{a}\right)}\right)\dfrac{2\pi^2}{\log^2\left(\frac{b}{a}\right)}}{m^2+1+\dfrac{2\pi^2}{\log^2\left(\frac{b}{a}\right)}}.
     \end{align}
     Furthermore, if $m\geq 1$ is an integer, there exists $R_{2,m}^{\ast}<\infty$ such that the condition
     \begin{align*}
         \log\left(\frac{b}{a}\right)\geq R_{2,m}^{\ast}
     \end{align*}
     the solution $u$ takes the form $u(r,\theta)=\Re\left(u_0(r)e^{im\theta}\right)$, where $u_0$ is a solution of the following ordinary differential equation
     \begin{align}
         f''''+\frac{2}{r}f'''-(4m^2-1)\frac{1}{r^2}f''+(4m^2-1)\frac{1}{r^3}f'-(4m^2-1)\frac{1}{r^4}f=-\frac{\mu_m}{r^4}\left(f''-\frac{1}{r}f'-\frac{m^2}{r^2}f\right).
     \end{align}
 \end{theorem}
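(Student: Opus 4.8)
My plan is to follow the scheme that proves Theorem~\ref{intro_main10}, the only change being the normalising quadratic form; I indicate the modifications and where the real work lies.

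\textbf{Existence, uniqueness, Euler--Lagrange equation.} On the fixed annulus $\Omega$ the coefficients $|x|^{-1},|x|^{-2}$ of $\leb_m$ are bounded, so $\leb_m$ is a genuine second-order elliptic operator there; I would first note that $u\mapsto\int_{\Omega}(\leb_m u)^2\,dx$ is then a quadratic form equivalent to $\|\cdot\|_{W^{2,2}}^2$ on $W^{2,2}_0(\Omega)$, since a non-zero $u\in W^{2,2}_0(\Omega)$ with $\leb_m u=0$ would make $|x|^{m-1}u$ harmonic with vanishing Cauchy data on $\partial\Omega$, hence zero. As $u\mapsto\int_{\Omega}|\D u|^2|x|^{-2}\,dx$ is equivalent to $\|\D u\|_{L^2}^2$ on $W^{1,2}_0(\Omega)$ and $W^{2,2}_0(\Omega)\hookrightarrow W^{1,2}(\Omega)$ compactly, the direct method produces a minimiser $u$ with $\mu_m>0$; differentiating both forms gives
\begin{align*}
  \leb_m^{\ast}\leb_m u=-\mu_m\,\dive\!\left(\frac{\D u}{|x|^2}\right)=-\frac{\mu_m}{|x|^2}\left(\Delta u-2\frac{x}{|x|^2}\cdot\D u\right),
\end{align*}
which is the stated equation after expanding $\leb_m^{\ast}\leb_m$. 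Uniqueness, up to the rotation action $\theta\mapsto\theta+c$ and normalisation, should follow from the simplicity of the first eigenvalue of the reduced one-dimensional problem.

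\textbf{Fourier reduction.} Next I would write $u(r,\theta)=\sum_{k\in\Z}u_k(r)e^{ik\theta}$ and use
\begin{align*}
  \leb_m\bigl(u_k(r)e^{ik\theta}\bigr)=\left(u_k''+\frac{2m-1}{r}u_k'+\frac{(m-1)^2-k^2}{r^2}u_k\right)e^{ik\theta}
\end{align*}
to diagonalise both quadratic forms over the Fourier modes, so that $\mu_m=\min_{k\in\Z}\mu_m^{(k)}$ with $\mu_m^{(k)}$ the corresponding one-dimensional infimum. The substitution $r=e^{t}$ (on an interval of length $T=\log(b/a)$) followed by $u_k(e^{t})=e^{t}G(t)$ removes every weight and turns the clamped conditions into $G\in H^2_0$, giving
\begin{align*}
  \mu_m^{(k)}=\inf\left\{\frac{\int_0^T\bigl|\ddot G+2m\dot G+(m^2-k^2)G\bigr|^2\,dt}{\int_0^T\bigl(|\dot G|^2+(k^2+1)|G|^2\bigr)\,dt}\ :\ G\in H^2_0\bigl((0,T)\bigr)\setminus\{0\}\right\},
\end{align*}
whose Euler--Lagrange equation is the constant-coefficient ODE $G''''-\bigl(2(m^2+k^2)-\mu\bigr)G''+\bigl((m^2-k^2)^2-\mu(k^2+1)\bigr)G=0$ with $G(0)=G(T)=\dot G(0)=\dot G(T)=0$; for $k=\pm m$ it becomes $G''''-(4m^2-\mu)G''-\mu(m^2+1)G=0$, which, after undoing the two substitutions, is the ODE for the profile $u_0$ in the statement.

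\textbf{Estimates and the structure of the minimiser.} For $k\neq\pm m$ I would extend $G$ by zero and use Plancherel, giving $\mu_m^{(k)}\geq\inf_{u\geq0}\frac{(u+(m-k)^2)(u+(m+k)^2)}{u+k^2+1}>0$ (the numerator is $(m^2-k^2-\xi^2)^2+4m^2\xi^2$ with $u=\xi^2$); since moreover $\mu_m^{(k)}\to\infty$ as $|k|\to\infty$, there is $c(m)>0$, independent of $T$, with $\mu_m^{(k)}\geq c(m)$ for all $k\neq\pm m$. For $k=\pm m$ the cross term integrates away, $\int_0^T|\ddot G+2m\dot G|^2=\int_0^T|\ddot G|^2+4m^2\int_0^T|\dot G|^2$, and the Dirichlet--Poincaré inequality on $(0,T)$ applied to $G$ and to $\dot G$ yields a lower bound of the displayed type, while the explicit trial function $G(t)=\tfrac{T}{2\pi}\bigl(1-\cos(2\pi t/T)\bigr)$ yields an upper bound of the displayed type; equivalently---and this is the route suggested by the paper---since the two $s$-roots of $s^2-(4m^2-\mu)s-\mu(m^2+1)=0$ have product $-\mu(m^2+1)<0$, the general solution of the $k=m$ ODE combines $\cos(\omega t),\sin(\omega t)$ (with $\omega^2=-s_-$) and $e^{\pm\sqrt{s_+}\,t}$, the four clamped conditions are the vanishing of a $4\times4$ determinant $\Phi_m(\mu,T)=0$, and $\mu_m^{(\pm m)}$ is its first positive root, to be bracketed between the two displayed functions of the conformal class. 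Finally, choosing $R_{1,m}^{\ast}$ large enough that in addition $\mu_m^{(\pm m)}<c(m)$ forces $\mu_m=\mu_m^{(\pm m)}$ and hence the eigenvalue estimate; choosing $R_{2,m}^{\ast}\geq R_{1,m}^{\ast}$ large enough that moreover the first eigenvalue of the $k=m$ problem is simple (read off the explicit solutions) shows the minimiser lies in the modes $k=\pm m$, i.e. $u(r,\theta)=\Re\bigl(u_0(r)e^{im\theta}\bigr)$, and gives uniqueness.

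\textbf{The main obstacle.} The reductions are routine; the hard part, as the paper emphasises, is the last step---writing $\Phi_m(\mu,T)$ explicitly and bracketing its first positive root by the precise functions of $\log(b/a)$ in the statement uniformly in $m$, while simultaneously keeping every other Fourier mode $k\neq\pm m$ bounded away from it uniformly in $T$ and proving the simplicity of the first eigenvalue of the $k=m$ problem.
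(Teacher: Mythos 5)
Your plan is correct in substance, but it takes a genuinely different route from the paper for the quantitative core, and it is worth recording the comparison. The paper works mode by mode: after the Fourier projection and the logarithmic change of variable it analyses, for \emph{every} $n$, the $4\times4$ clamped boundary determinant, reduces it to the universal transcendental equation, locates its first zero between $\pi$ and $2\pi$, and so obtains asymptotically sharp two-sided brackets for each $\mu_{m,n}$ under mode-dependent conformal-class conditions; it then minimises over $n$ by hand, with a delicate case analysis (including the exceptional case $n=0$, $m>1$, split into further sub-cases). You instead (i) dispose of all non-resonant modes $k\neq\pm m$ at once by zero-extension and Plancherel, which gives $\mu_m^{(k)}\geq\inf_{u\geq0}\tfrac{(u+(m-k)^2)(u+(m+k)^2)}{u+k^2+1}\geq c(m)>0$ uniformly in $T=\log(b/a)$, and (ii) treat $k=\pm m$ via the cross-term cancellation and the optimal one-dimensional Poincar\'e inequalities, giving $\mu_m^{(\pm m)}\geq\tfrac{(4m^2+\pi^2/T^2)\,\pi^2/T^2}{m^2+1+\pi^2/T^2}$, which strictly dominates the stated lower bound because $m^2+1+\pi^2/T^2\leq 4(m^2+1)+2\pi^2/T^2$, while your trial function $G(t)=\tfrac{T}{2\pi}(1-\cos(2\pi t/T))$ gives $\mu_m^{(\pm m)}\leq\tfrac{(4m^2+4\pi^2/T^2)\,4\pi^2/T^2}{3(m^2+1)+4\pi^2/T^2}$, which lies below the stated upper bound as soon as $2(m^2+1)(2m^2-\pi^2/T^2)\geq 8\pi^4/T^4$, i.e.\ for $T$ larger than a constant of order $1/m$. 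Since $\mu_m^{(\pm m)}\to0$ while $\mu_m^{(k)}\geq c(m)$ for $k\neq\pm m$, both the bracket and the selection of the modes $\pm m$ follow for $\log(b/a)\geq R^*_{1,m}$; so, contrary to your closing paragraph, the determinant/first-zero analysis is not needed for the theorem as stated—that machinery is what buys the paper sharp per-mode estimates and explicit thresholds $R^*$, which your argument does not deliver but the statement does not require; conversely your argument is purely asymptotic in the conformal class and says nothing for moderate $\log(b/a)$. The one soft spot in both your sketch and the statement is uniqueness: it can only hold modulo the rotation and sign symmetry, and simplicity of the first eigenvalue of the clamped $k=m$ problem still needs an argument (e.g.\ that the $4\times4$ boundary matrix has rank at least three at its first singular value of $\mu$); you flag this correctly, and the paper is equally brief on the point.
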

\renewcommand*{\thetheorem}{\thesection.\arabic{theorem}}
 \begin{cor}
     Let $m\geq 1$, let $0<a<b<\infty$, and $\Omega=B_b\setminus\bar{B}_a(0)\subset \R^2$. Assume that 
     \begin{align}
         \log\left(\frac{b}{a}\right)\geq R_{1,m}^{\ast}.
     \end{align}
     Then, for all $u\in W^{2,2}_0(\Omega)$, we have
     \begin{align}
         \int_{\Omega}\left(\Delta u+2(m-1)\frac{x}{|x|^2}\cdot \D u+\frac{(m-1)^2}{|x|^2}u\right)^2dx\geq \dfrac{\left(4m^2+\dfrac{\pi^2}{\log^2\left(\frac{b}{a}\right)}\right)\dfrac{\pi^2}{\log^2\left(\frac{b}{a}\right)}}{4(m^2+1)+\dfrac{2\pi^2}{\log^2\left(\frac{b}{a}\right)}}\int_{\Omega}\frac{|\D u|^2}{|x|^2}dx. 
     \end{align}
 \end{cor}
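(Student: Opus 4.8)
The plan is to obtain this Corollary as a direct consequence of Theorem~\ref{intro_main1}, in which all of the genuine analytic work is already done. First I would observe that the quadratic expression on the left-hand side is nothing but $\int_{\Omega}(\leb_m u)^2\,dx$: by definition $\leb_m u=\Delta u+2(m-1)\frac{x}{|x|^2}\cdot \D u+\frac{(m-1)^2}{|x|^2}u$, so the integrands agree pointwise. I would also record that since $\Omega=B_b\setminus\bar B_a(0)$ stays at distance $a>0$ from the origin, the weight $|x|^{-2}$ is bounded on $\Omega$, hence $\int_{\Omega}\frac{|\D u|^2}{|x|^2}\,dx<\infty$ for every $u\in W^{2,2}_0(\Omega)$ and the right-hand side is well defined and finite.

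Next I would invoke the variational definition \eqref{intro_eigenvalue} of $\mu_m$, namely that $\mu_m$ is the infimum of $\int_{\Omega}(\leb_m v)^2\,dx$ over all $v\in W^{2,2}_0(\Omega)$ normalised by $\int_{\Omega}\frac{|\D v|^2}{|x|^2}\,dx=1$. Both $v\mapsto\int_{\Omega}(\leb_m v)^2\,dx$ and $v\mapsto\int_{\Omega}\frac{|\D v|^2}{|x|^2}\,dx$ are quadratic (homogeneous of degree $2$) in $v$ because $\leb_m$ and $\D$ are linear, so for any $u\in W^{2,2}_0(\Omega)$ with $\int_{\Omega}\frac{|\D u|^2}{|x|^2}\,dx>0$ a normalisation argument (apply the infimum to $v=u\big/\big(\int_{\Omega}\frac{|\D u|^2}{|x|^2}\,dx\big)^{1/2}$ and multiply back) yields $\int_{\Omega}(\leb_m u)^2\,dx\geq \mu_m\int_{\Omega}\frac{|\D u|^2}{|x|^2}\,dx$. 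In the remaining degenerate case $\int_{\Omega}\frac{|\D u|^2}{|x|^2}\,dx=0$ one has $\D u\equiv 0$ on the connected open set $\Omega$, so $u$ is constant and, by the vanishing trace, $u\equiv 0$, and the inequality holds trivially.

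Finally, under the standing hypothesis $\log(b/a)\geq R_{1,m}^{\ast}$, Theorem~\ref{intro_main1} furnishes the explicit lower bound $\mu_m>\dfrac{\left(4m^2+\frac{\pi^2}{\log^2(b/a)}\right)\frac{\pi^2}{\log^2(b/a)}}{4(m^2+1)+\frac{2\pi^2}{\log^2(b/a)}}$; combining this with the inequality from the previous step gives exactly the claimed estimate. I do not anticipate any real obstacle in this argument: the only points that deserve an explicit word are the homogeneity/normalisation reduction to the constrained minimisation problem and the trivial degenerate case $\D u\equiv 0$, while all the substantive difficulty — the sharp two-sided control of $\mu_m$ by the conformal class $\log(b/a)$ and the identification of $R_{1,m}^{\ast}$ — is already encapsulated in Theorem~\ref{intro_main1}.
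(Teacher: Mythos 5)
Your argument is correct and is essentially the paper's own route: the corollary is just the eigenvalue bound of Theorem~\ref{intro_main1} restated via the homogeneity of the Rayleigh quotient defining $\mu_m$ in \eqref{intro_eigenvalue}, with the strict lower bound on $\mu_m$ giving the stated $\geq$ a fortiori. Your treatment of the degenerate case $\D u\equiv 0$ (forcing $u\equiv 0$ by connectedness of the annulus and the zero trace) and the finiteness of the weighted gradient integral are the only points needing mention, and you handle both.
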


 Finally, the general result for the bilaplacian in all dimension reads as follows. For simplicity, we only state the inequality that comes out of corollary from the eigenvalue estimate.

  \setcounter{theorem}{2}
\renewcommand*{\thetheorem}{\Alph{theorem}}
 \begin{theorem}\label{poincare_gen00}
        Let $d\geq 3$, let $0<a<b<\infty$, and let $\Omega=B_b\setminus\bar{B}_a(0)\subset \R^d$. Then, there exists $R_d<\infty$ such that the hypothesis
        \begin{align*}
            \log\left(\frac{b}{a}\right)\geq R_d
        \end{align*}
        implies that for all  $u\in W^{2,2}_0(\Omega)$, we have
        \begin{align}\label{poincare_dim_gen00}
            \int_{\Omega}(\Delta u)^2dx\geq \left(\frac{d^2}{4}+\frac{\pi^2}{\log^2\left(\frac{b}{a}\right)}\right)\left(\frac{(d-4)^2}{4}+\frac{\pi^2}{\log^2\left(\frac{b}{a}\right)}\right)\int_{\Omega}\frac{u^2}{|x|^4}dx.
        \end{align}
        Furthermore, for all $u\in W^{2,2}_0(\Omega)$ and $d\geq 5$, we have
        \begin{align}\label{poincare_dim_gen10}
            \int_{\Omega}(\Delta u)^2dx\geq \frac{d^2(d-4)^2+((d-2)^2+4)\dfrac{8\pi^2}{\log^2\left(\frac{b}{a}\right)}+\dfrac{16\pi^4}{\log^4\left(\frac{b}{a}\right)}}{4(d-4)^2+\dfrac{16\pi^2}{\log^2\left(\frac{b}{a}\right)}}\int_{\Omega}\frac{|\D u|^2}{|x|^2}dx.
        \end{align}
        For $d=3$, and for all $u\in W^{2,2}_0(\Omega)\subset W^{2,2}(\Omega)$, we have
        \begin{align*}
            \int_{\Omega}(\Delta u)^2dx\geq \frac{25+\dfrac{108\pi^2}{\log^2\left(\frac{b}{a}\right)}+\dfrac{16\pi^4}{\log^2\left(\frac{b}{a}\right)}}{36+\dfrac{16\pi^2}{\log^2\left(\frac{b}{a}\right)}}\int_{\Omega}\frac{|\D u|^2}{|x|^2}dx.
        \end{align*}
        Finally, for $d=4$, and for all $u\in W^{2,2}_0(\Omega)\subset W^{2,2}(\R^4)$, we have
        \begin{align*}
            \int_{\Omega}(\Delta u)^2dx\geq \frac{9+\dfrac{10\pi^2}{\log^2\left(\frac{b}{a}\right)}+\dfrac{\pi^4}{\log^2\left(\frac{b}{a}\right)}}{3+\dfrac{\pi^2}{\log^2\left(\frac{b}{a}\right)}}\int_{\Omega}\frac{|\D u|^2}{|x|^2}dx.
        \end{align*}
    \end{theorem}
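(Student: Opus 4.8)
The plan is to reduce \eqref{poincare_dim_gen00} and the companion inequalities to a family of one‑dimensional fourth‑order eigenvalue problems on $(0,L)$, $L:=\log(b/a)$, indexed by the degree of spherical harmonics, and then to run the same ODE analysis underlying Theorems~\ref{intro_main10}--\ref{intro_main1}, with the Fourier characters $e^{ik\theta}$ replaced by an $L^2(S^{d-1})$‑orthonormal basis $(Y_k)_{k\ge 0}$ of spherical harmonics, $\Delta_{S^{d-1}}Y_k=-\mu_kY_k$, $\mu_k=k(k+d-2)$.

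\textbf{Reduction to ODEs.} By scaling we may assume $a=1$, $b=e^L$. Writing $u=\sum_k u_k(r)Y_k(\omega)$, orthogonality gives $\int_\Omega(\Delta u)^2dx=\sum_k\int_1^b(P_ku_k)^2r^{d-1}dr$ with $P_kf=f''+\frac{d-1}{r}f'-\frac{\mu_k}{r^2}f$, and likewise $\int_\Omega|x|^{-4}u^2dx$ and $\int_\Omega|x|^{-2}|\D u|^2dx$ split over $k$. The key change of variables is $t=\log r\in(0,L)$ together with the conjugation $u_k(e^t)=e^{\frac{4-d}{2}t}\psi_k(t)$ — in the spirit of $\leb_m=|x|^{1-m}\Delta(|x|^{m-1}\,\cdot\,)$ — which removes the singular weight and leaves a constant‑coefficient operator. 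One computes
\begin{align*}
\int_1^b(P_ku_k)^2r^{d-1}dr=\int_0^L\bigl(\ddot\psi_k+2\dot\psi_k-c_k\psi_k\bigr)^2dt,\qquad c_k=\tfrac{d(d-4)}{4}+\mu_k=\bigl(k+\tfrac d2\bigr)\bigl(k+\tfrac{d-4}{2}\bigr),
\end{align*}
while $\int_\Omega|x|^{-4}u^2=\sum_k\int_0^L\psi_k^2\,dt$ and $\int_\Omega|x|^{-2}|\D u|^2=\sum_k\int_0^L\bigl(\dot\psi_k^2+b_k\psi_k^2\bigr)dt$, $b_k=\tfrac{(d-4)^2}{4}+\mu_k=c_k-(d-4)$; the condition $u\in W^{2,2}_0(\Omega)$ translates into the clamped conditions $\psi_k,\dot\psi_k\in W^{1,2}_0((0,L))$. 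Integrating by parts (no boundary contribution) gives the exact identity
\begin{align*}
\int_0^L\bigl(\ddot\psi+2\dot\psi-c\psi\bigr)^2dt=\int_0^L\ddot\psi^2dt+(2c+4)\int_0^L\dot\psi^2dt+c^2\int_0^L\psi^2dt .
\end{align*}

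\textbf{One‑dimensional estimates.} For the $|x|^{-4}$ inequality one applies the sharp one‑dimensional Poincaré inequality twice — $\int\ddot\psi^2\ge\frac{\pi^2}{L^2}\int\dot\psi^2\ge\frac{\pi^4}{L^4}\int\psi^2$ and $\int\dot\psi^2\ge\frac{\pi^2}{L^2}\int\psi^2$ — which turns the identity above into $\int_0^L(\ddot\psi_k+2\dot\psi_k-c_k\psi_k)^2\ge\bigl((k+\tfrac d2)^2+\tfrac{\pi^2}{L^2}\bigr)\bigl((k+\tfrac{d-4}{2})^2+\tfrac{\pi^2}{L^2}\bigr)\int_0^L\psi_k^2$; since this coefficient is increasing in $k\ge 0$, summation yields \eqref{poincare_dim_gen00} with the $k=0$ value. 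For the $|x|^{-2}|\D u|^2$ inequalities one uses only $\int\ddot\psi^2\ge\frac{\pi^2}{L^2}\int\dot\psi^2$ and then absorbs the remaining deficit via $\int\dot\psi^2\ge\frac{\pi^2}{L^2}\int\psi^2$: for the relevant $(c_k,b_k)$ the remainder factors as a nonnegative multiple of $\int\dot\psi^2-\frac{\pi^2}{L^2}\int\psi^2$, producing
\begin{align*}
\int_0^L\bigl(\ddot\psi_k+2\dot\psi_k-c_k\psi_k\bigr)^2dt\ \ge\ \frac{c_k^2+(2c_k+4)\tfrac{\pi^2}{L^2}+\tfrac{\pi^4}{L^4}}{b_k+\tfrac{\pi^2}{L^2}}\int_0^L\bigl(\dot\psi_k^2+b_k\psi_k^2\bigr)dt .
\end{align*}

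\textbf{The extremal mode, and the main obstacle.} The constant in the gradient inequalities is $\min_{k\ge0}\frac{c_k^2+(2c_k+4)\pi^2/L^2+\pi^4/L^4}{b_k+\pi^2/L^2}$, whose $L\to\infty$ limit is $\min_{k\ge0}\frac{c_k^2}{b_k}=\min_{k\ge0}\bigl(b_k+2(d-4)+\tfrac{(d-4)^2}{b_k}\bigr)$; this is attained at $k=0$ when $d\ge5$ (value $d^2/4$) but at $k=1$ when $d=3,4$, because there $b_0=\tfrac{(d-4)^2}{4}$ is too small and the $k=0$ symbol degenerates — substituting the extremal index into the displayed bound gives exactly the four constants of the statement. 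The genuinely delicate point, exactly as for Theorems~\ref{intro_main10}--\ref{intro_main1}, is to show that the extremal index is unchanged for finite (large) $L$: this requires analysing, for every $k$, the transcendental equation for the first clamped eigenvalue of $\psi''''-(2c_k+4)\psi''+\dots$ against the corresponding weight, and it is here that the threshold $R_d$ originates. I expect this mode‑by‑mode comparison to be the main obstacle; the steps above are comparatively routine once it is in place.
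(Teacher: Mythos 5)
Your reduction and the two displayed inequalities are correct, and the route is genuinely different from the paper's. The paper (Theorems \ref{ode_gen_dimension}, \ref{poincare_gen} and \ref{poincare_gen0}) treats each spherical mode by computing the roots of the characteristic polynomial after the logarithmic change of variables, running a three-case analysis on the eigenvalue, reducing the clamped boundary conditions to a determinant identity, and locating the first zero of a transcendental function; this yields the actual modewise eigenvalues (two-sided bounds, the exact value $4+4\pi^2/\log^2(b/a)$ for $d=4$, $k=0$), at the price of largeness hypotheses. You instead conjugate by $|x|^{(4-d)/2}$ so each mode becomes the constant-coefficient operator $\ddot\psi+2\dot\psi-c_k\psi$ on $(0,L)$ with clamped data, use the exact expansion $\int(\ddot\psi+2\dot\psi-c_k\psi)^2=\int\ddot\psi^2+(2c_k+4)\int\dot\psi^2+c_k^2\int\psi^2$ (with $2c_k+4=(k+\tfrac d2)^2+(k+\tfrac{d-4}2)^2\ge0$), and then only the one-dimensional Poincaré inequality. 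I verified the reduction, the identity, and the absorption step: the condition behind your second display is $b_k\bigl(\tfrac{\pi^2}{L^2}+2c_k+4\bigr)\ge c_k^2$, i.e. $c_k^2+(12-2d)c_k-4(d-4)+b_k\tfrac{\pi^2}{L^2}\ge0$, which holds for every $d\ge3$, every $k\ge0$ and every $L$ (at $k=0$ one gets $b_0(2c_0+4)-c_0^2=(d-4)^4/16$, and $c_k$ only increases). In particular your argument proves \eqref{poincare_dim_gen00} with no hypothesis on the conformal class, which is stronger than the statement, and the extremal modes give exactly the stated constants (for $d=3$ your $K_1$ gives $104\pi^2$ in the numerator, matching Theorem \ref{poincare_gen0}; the $108$ and the stray $\log^2$ exponents in the statement are typos). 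What your variational bounds do not give is the eigenvalue information of the paper (upper bounds, identification of the minimiser, the exact $d=4$, $k=0$ value), which is the trade-off for the much shorter argument.

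Where your assessment goes wrong is the last paragraph: inside your framework nothing transcendental remains. The quantity to control is $\min_{k\ge0}K_k$ with $K_k=\dfrac{c_k^2+(2c_k+4)\pi^2/L^2+\pi^4/L^4}{b_k+\pi^2/L^2}$, an explicit rational function of $\mu_k=k(k+d-2)$ and $\pi^2/L^2$; showing that the minimum sits at $k=0$ ($d\ge5$) resp. $k=1$ ($d=3,4$) for $L\ge R_d$ is elementary calculus — monotonicity in $\mu_k$ beyond a computable point plus a direct comparison of the finitely many small modes — and is precisely what the paper does in Steps 2--4 of the proof of Theorem \ref{poincare_gen0} \emph{after} its hard eigenvalue analysis, which your absorption inequality replaces. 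This comparison is where $R_d$ originates (e.g.\ for $5\le d\le 7$ the inequality $K_1\ge K_0$ genuinely requires $L$ large, as in the paper's Step 4), but it is not a transcendental-equation problem, so you have in fact bypassed the step you label as the main obstacle while leaving the easy one undone. Two items you must still write out: the degenerate mode $d=4$, $k=0$, where $b_0=c_0=0$ and your identity gives $\int(\ddot\psi+2\dot\psi)^2\ge\bigl(4+\tfrac{\pi^2}{L^2}\bigr)\int\dot\psi^2$, to be compared with $K_1=\dfrac{9+10\pi^2/L^2+\pi^4/L^4}{3+\pi^2/L^2}$ (true as soon as $\pi^2/L^2\le1$); and, for \eqref{poincare_dim_gen00} with $d=3$, the coefficient $c\mapsto c^2+(2c+4)\tfrac{\pi^2}{L^2}+\tfrac{\pi^4}{L^4}$ is not monotone near $c_0=-\tfrac34$, so "increasing in $k$" needs the one-line substitute $f(c_1)-f(c_0)=(c_1-c_0)\bigl(c_1+c_0+\tfrac{2\pi^2}{L^2}\bigr)>0$.
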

\renewcommand*{\thetheorem}{\thesection.\arabic{theorem}}
The first inequality was already known in the full space $\Omega=\R^d$ in the case $d\geq 5$ and bears the name of Rellich inequality (\cite{rellich}). Furthermore, our analysis allows us to recover (thanks to an explicit upper bound of the first eigenvalue) the optimal constant
in the inequality
\begin{align*}
    \int_{\R^d}(\Delta u)^2dx\geq \left(\frac{d(d-4)}{4}\right)^2\int_{\R^d}\frac{u^2}{|x|^4}dx
\end{align*}
valid for $d\geq 5$ and $u\in W^{2,2}(\R^d)$ (refer to \cite{hardy-rellich1}, \cite{hardy-rellich2}, and the survey paper \cite{survey_cazacu} for some history of those inequalities. For $d=3$, we deduce that for all $u\in C^{\infty}_c(\R^3\setminus\ens{0})$
\begin{align}\label{dim3_ineq}
    \int_{\R^3}(\Delta u)^2dx\geq \frac{9}{16}\int_{\R^3}\frac{u^2}{|x|^4}dx.
\end{align}
By the Sobolev embedding $W^{2,2}(\R^3)\hookrightarrow W^{1,6}(\R^3)\hookrightarrow C^{0,\frac{1}{2}}(\R^3)$, we deduce that
\begin{align*}
    \bar{C^{\infty}_c(\R^3\setminus\ens{0})}^{W^{2,2}}=W^{2,2}(\R^3)\cap\ens{u:u(0)=0},
\end{align*}
and we deduce that for all $u\in W^{2,2}(\R^3)$ such that $u(0)=0$, the optimal inequality \eqref{dim3_ineq} holds (which recovers a result of Rellich \cite{rellich}; see also \cite{rellich_general_operator}). As expected, for $d=2$ or $d=4$, we do not get any new inequality (in the first case, the integral would not be finite in general, and in the second case, by density of $C^{\infty}_c(\R^4\setminus\ens{0})$ in $W^{2,2}(\R^4)$, it would imply an inequality for all functions $u\in W^{2,2}(\R^4)$, which is impossible since $|x|^{-4}\notin {L}^1_{\mathrm{loc}}(\R^4)$).

The second inequality bears the name Hardy-Rellich inequality, and we deduce that for all $d\geq 3$ and for all $u\in W^{2,2}(\R^d)$
\begin{align}
    \int_{\R^d}(\Delta u)^2dx\geq c_d\int_{\R^d}\frac{|\D u|^2}{|x|^2}dx
\end{align}
where 
\begin{align*}
    c_d=\left\{\begin{alignedat}{2}
        &\frac{25}{36}\qquad&& \text{for}\;\, d=3\\
        &3\qquad&& \text{for}\;\, d=4\\
        &\frac{d^2}{4}\qquad&& \text{for}\;\, d\geq 5,
    \end{alignedat}\right.
\end{align*}
which is the optimal constant in the Hardy-Rellich inequality (\cite{survey_cazacu}). Furthermore, we recover the fact that if $d=3,4$, the minimiser of the inequality (in the annulus) is of the form $u(r,\theta)=u_0(r)Y_1(\theta)$, where $Y_1$ is the first spherical harmonic on $S^d$, while the minimiser is radial for $d\geq 5$. We indeed show that for $d\neq 4$, the first eigenvalue can be bounded from below by
\begin{align*}
    \lambda_1>\inf_{n\geq 0}\ens{\frac{\left(d(d-4)+4n(n+d-2)\right)^2+((d-2)^2+4+4n(n+d-2))\dfrac{8\pi^2}{\log^2\left(\frac{b}{a}\right)}+\dfrac{16\pi^4}{\log^2\left(\frac{b}{a}\right)}}{4(d-4)^2+16n(n+d-2)+\dfrac{16\pi^2}{\log^2\left(\frac{b}{a}\right)}}}
\end{align*}
which is minimal for $n=0$ if $d\geq 5$, and for $n=1$ if $d=3$ (provided that the conformal class is large enough). For $d=4$, the formula above is valid for $n\geq 1$, but due to an exceptional algebraic structure, we prove that for $n=0$, the first eigenvalue of the associated ODE is equal (this is the only case where one gets an exact expression of the first eigenvalue) to
\begin{align*}
    \mu_0=4+\frac{4\pi^2}{\log^2\left(\frac{b}{a}\right)}
\end{align*}
which is strictly larger than 
\small
\begin{align*}
    \inf_{n\geq 1}\ens{\frac{16n^2(n+2)^2+(8+4n(n+2))\dfrac{8\pi^2}{\log^2\left(\frac{b}{a}\right)}+\dfrac{16\pi^4}{\log^2\left(\frac{b}{a}\right)}}{16n(n+2)+\dfrac{16\pi^2}{\log^2\left(\frac{b}{a}\right)}}}=\frac{9+\dfrac{10\pi^2}{\log^2\left(\frac{b}{a}\right)}+\dfrac{\pi^4}{\log^2\left(\frac{b}{a}\right)}}{3+\dfrac{\pi^2}{\log^2\left(\frac{b}{a}\right)}}=3+O\left(\frac{1}{\log^2\left(\frac{b}{a}\right)}\right)
\end{align*}
\normalsize
if the conformal class is large enough.

    \subsection{Applications}

    As we mentioned it in the beginning of the introduction, the main motivation behind this work is applications to Morse theory. More precisely, the estimate proven for the bilaplacian $\Delta^2$ in dimension $2$ allow one to prove the upper semi-continuity of the extended Morse index in the case of immersions (\cite{morse_willmore_I}), whiles the estimate on $\leb_m^{\ast}\leb_m$ (for $m\geq 2$) should allow us to treat the case of branched immersions (\cite{morse_willmore_II})—there are many technical difficulties in this case, starting by the definition of the Morse index—and later on to adapt our methods to the viscosity method (\cite{morse_viscosity}). Furthermore, the results in dimension $4$ for the bilaplacian should easily give us the two-sided Morse index stability estimate for (intrinsic or extrinsic) biharmonic maps into manifolds (\cite{morse_biharmonic}). Finally, it should be possible to use the eigenvalues estimates to prove Morse index stability results for the Yang-Mills functional. Indeed, in an appropriate gauge (\cite{uhlenbeck_yang_mills,yang_mills_riviere_tian_birthday}), the highest order term in the Yang-Mills functional becomes the $L^2$ norm of the Laplacian of a suitable function, which should allow one to prove that variations located in the neck have a positive contributions in the second derivative (that remains to be computed).

\section{First Eigenvalue Problem in Dimension 2}

 \subsection{Basic Properties of the Family of Differential Operators}

   As we mentioned it in the introduction, we will study some fine properties of the following second-order elliptic differential operator with \emph{regular} singularities (\cite{PR})
   \begin{align}
       \mathscr{L}_m=\Delta +2(m-1)\frac{x}{|x|^2}\cdot \D +\frac{(m-1)^2}{|x|^2}.
   \end{align}

As we mentioned in the introduction, its adjoint we already studied in \cite{index4}. Let us recall some basic properties of $\leb_m^{\ast}$. 
    
    \begin{lemme}[\cite{index4}]\label{indicielles}
    	$0<a<b<\infty$, and let $\Omega=B_b\setminus\bar{B}_a(0)$. Define for all for all $m\geq 1$ the second order elliptic differential operator
    	\begin{align*}
    	\widetilde{\mathscr{L}}_m=\Delta-2(m+1)\frac{x}{|x|^2}\cdot \D +\frac{(m+1)^2}{|x|^2}.
    	\end{align*} 
    	Let $u\in W^{2,2}(\Omega)$ be such that $u=\partial_{\nu}u=0$ Then we have the identity
    	\begin{align}\label{ipp0}
    		&\int_{\Omega}\left(\widetilde{\mathscr{L}}_mu\right)^2dx=\int_{\Omega}\left(\Delta u-2(m+1)\frac{x}{|x|^2}\cdot \D u+(m+1)^2\frac{u}{|x|^2}\right)^2dx\\
    		&=\int_{\Omega}\left(\Delta u+(m+1)(m-1)\frac{u}{|x|^2}\right)^2dx+4(m+1)(m-1)\int_{\Omega}\left(\frac{x}{|x|^2}\cdot \D u-\frac{u}{|x|^2}\right)^2dx.\nonumber
    	\end{align}
    \end{lemme}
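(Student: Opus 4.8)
The plan is to reduce the identity, by elementary algebra, to a single scalar cross-term identity, which in turn follows from two integrations by parts that are special to dimension two. First, since $u=\partial_\nu u=0$ on $\partial\Omega$ means $u\in W^{2,2}_0(\Omega)$, and since $a\le|x|\le b$ on $\Omega$ makes every quadratic expression below a continuous quadratic form on $W^{2,2}_0(\Omega)$, it suffices to prove the identity for $u\in C^\infty_c(\Omega)$ and pass to the limit. Next I would use the factorisation
\begin{align*}
\widetilde{\mathscr{L}}_m u=P-2(m+1)\,Q,\qquad P:=\Delta u+(m^2-1)\frac{u}{|x|^2},\qquad Q:=\frac{x\cdot\D u-u}{|x|^2},
\end{align*}
which one checks by expanding and using $(m^2-1)+2(m+1)=(m+1)^2$. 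Squaring and integrating gives $\int_\Omega(\widetilde{\mathscr{L}}_m u)^2\,dx=\int_\Omega P^2\,dx-4(m+1)\int_\Omega PQ\,dx+4(m+1)^2\int_\Omega Q^2\,dx$, and since the right-hand side of the claimed identity is exactly $\int_\Omega P^2\,dx+4(m^2-1)\int_\Omega Q^2\,dx$, the whole statement is equivalent, after dividing by $4(m+1)$, to
\begin{align*}
\int_\Omega PQ\,dx=2\int_\Omega Q^2\,dx.
\end{align*}

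To prove this I would split $\int_\Omega PQ\,dx=\int_\Omega\Delta u\,Q\,dx+(m^2-1)\int_\Omega\dfrac{u}{|x|^2}\,Q\,dx$ and first show the last integral vanishes. Writing $\dfrac{u}{|x|^2}\,Q=\dfrac12\dfrac{x\cdot\D(u^2)}{|x|^4}-\dfrac{u^2}{|x|^4}$ and integrating the first term by parts (no boundary term, since $u=0$ on $\partial\Omega$) yields $-\dfrac12\int_\Omega u^2\,\dive\!\big(x/|x|^4\big)\,dx=\dfrac{4-d}{2}\int_\Omega\dfrac{u^2}{|x|^4}\,dx$, which equals $\int_\Omega\dfrac{u^2}{|x|^4}\,dx$ precisely because $d=2$. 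Hence $\int_\Omega\dfrac{u}{|x|^2}\,Q\,dx=0$, so in particular $\int_\Omega PQ\,dx=\int_\Omega\Delta u\,Q\,dx$; I will reuse this vanishing below.

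Finally I would compute $\int_\Omega\Delta u\,Q\,dx=-\int_\Omega\D u\cdot\D Q\,dx$ (the boundary term $\int_{\partial\Omega}\partial_\nu u\,Q$ vanishes since $\partial_\nu u=0$). Expanding $\D Q$ and contracting with $\D u$ produces a term $\tfrac12\,|x|^{-2}\,x\cdot\D|\D u|^2$, whose integral equals, after one more integration by parts, $\tfrac{d-2}{2}\int_\Omega|\D u|^2/|x|^2\,dx=0$ when $d=2$, while the remaining terms collapse to $\int_\Omega\Delta u\,Q\,dx=2\int_\Omega\dfrac{(x\cdot\D u)(x\cdot\D u-u)}{|x|^4}\,dx=2\int_\Omega\dfrac{x\cdot\D u}{|x|^2}\,Q\,dx$. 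Substituting $\dfrac{x\cdot\D u}{|x|^2}=Q+\dfrac{u}{|x|^2}$ and using $\int_\Omega\dfrac{u}{|x|^2}\,Q\,dx=0$ gives $\int_\Omega\Delta u\,Q\,dx=2\int_\Omega Q^2\,dx$, which together with $\int_\Omega PQ\,dx=\int_\Omega\Delta u\,Q\,dx$ is the required cross-term identity, and the lemma follows.

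The individual computations are routine, so the only place I expect to need genuine care is the second integration by parts, i.e. the expansion of $\D u\cdot\D Q$ and the sign bookkeeping there. The conceptual point is that the argument is intrinsically two-dimensional: the divergences $\dive(x/|x|^2)=(d-2)/|x|^2$ and $\dive(x/|x|^4)=(d-4)/|x|^4$ take the special values $0$ and $-2/|x|^4$ for $d=2$, and it is exactly these that kill the would-be $\int_\Omega|\D u|^2/|x|^2$ terms and leave the clean right-hand side $\int_\Omega P^2\,dx+4(m^2-1)\int_\Omega Q^2\,dx$.
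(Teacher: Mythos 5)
Your proof is correct. Note that the paper does not actually prove this lemma — it is quoted from \cite{index4} without proof — so there is no in-text argument to compare against; your write-up supplies a complete, self-contained derivation. The two pillars check out: the algebraic splitting $\widetilde{\mathscr{L}}_m u = P - 2(m+1)Q$ with $(m^2-1)+2(m+1)=(m+1)^2$ reduces everything to $\int_\Omega PQ\,dx = 2\int_\Omega Q^2\,dx$; the vanishing $\int_\Omega \frac{u}{|x|^2}Q\,dx=0$ follows from $\dive(x/|x|^4)=(d-4)/|x|^4=-2/|x|^4$; and the computation $\int_\Omega \Delta u\,Q\,dx = 2\int_\Omega \frac{x\cdot\D u}{|x|^2}Q\,dx$ uses $\dive(x/|x|^2)=(d-2)/|x|^2=0$ to kill the $\frac12|x|^{-2}x\cdot\D|\D u|^2$ term (I verified the expansion of $\D Q$: the two $\D u/|x|^2$ contributions cancel, leaving exactly the three terms you use). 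The density reduction to $C^\infty_c(\Omega)$ is legitimate since all quadratic forms involved are continuous on $W^{2,2}(\Omega)$ thanks to $|x|\geq a>0$ on the annulus. Your closing remark correctly identifies why the identity is special to $d=2$, which is consistent with the role the operator $\leb_m$ plays elsewhere in the paper.
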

    \begin{rem}
        With the previous formula, we have $\mathscr{L}_m=\widetilde{\mathscr{L}}_{m-2}^{\ast}$, since
        \begin{align*}
            \frac{x}{|x|^2}\cdot \D=\D\log|x|\cdot \D,
        \end{align*}
        and $\log|\,\cdot\,|$ is a harmonic function on $\R^2\setminus\ens{0}$.
    \end{rem}

    We will also need the following computation from \cite{index4}.
	\begin{align}\label{operator0}
	\widetilde{\mathscr{L}}_m^{\ast}\widetilde{\mathscr{L}}_m&=\Delta^2+\frac{2(m+1)(m-1)}{|x|^2}\Delta -4(m+1)(m-1)\left(\frac{x}{|x|^2}\right)^t\cdot \D^2(\,\cdot\,)\cdot\left(\frac{x}{|x|^2}\right)+\frac{(m+1)^2(m-1)^2}{|x|^4},
	\end{align}
	and we indeed recover $\widetilde{\mathscr{L}}_1^{\ast}\widetilde{\mathscr{L}}_1=\Delta^2$ in the case $m=1$.

   \subsection{Computation of the Fourth-Order Differential Operators}

   The goal of this section is to prove the following result:
   \begin{align*}
       \leb_m^{\ast}\leb_m=\widetilde{\leb}_{m}^{\ast}\widetilde{\leb}_{m}.
   \end{align*}
   By the formulae  (from \cite{index4})
   \begin{align*}
       \Delta \left(\frac{1}{|x|^2}(\,\cdot\,)\right)&=\frac{1}{|x|^2}\left(\Delta-4\frac{x}{|x|^2}\cdot \D+\frac{4}{|x|^2}\right)u\\
 	\Delta\left(\frac{x}{|x|^2}\cdot \D (\,\cdot\,)\right)&=\frac{x}{|x|^2}\cdot \D\Delta +\frac{2}{|x|^2}\Delta -4\left(\frac{x}{|x|^2}\right)^t\cdot\D^2 (\,\cdot\,)\cdot\left(\frac{x}{|x|^2}\right),
   \end{align*}
   we get
   \begin{align*}
       \Delta(\mathscr{L}_m)&=\Delta^2+2(m-1)\frac{x}{|x|^2}\cdot \D \Delta+4(m-1)\frac{1}{|x|^2}\Delta-8(m-1)\left(\frac{x}{|x|^2}\right)^t\cdot \D^2(\,\cdot\,)\cdot\left(\frac{x}{|x|^2}\right)\\
       &+\frac{(m-1)^2}{|x|^2}\left(\Delta-4\frac{x}{|x|^2}\cdot \D+\frac{4}{|x|^2}\right)\\
       &=\Delta^2+2(m-1)\frac{x}{|x|^2}\cdot \D\Delta+(m-1)(m+3)\frac{1}{|x|^2}\Delta -8(m-1)\left(\frac{x}{|x|^2}\right)^t\cdot\D^2(\,\cdot\,)\cdot \left(\frac{x}{|x|^2}\right)\\
       &-4(m-1)^2\frac{x}{|x|^4}\cdot \D+\frac{4(m-1)^2}{|x|^4}.
   \end{align*}
   Likewise, by the identities
   \begin{align*}
       &\frac{x}{|x|^2}\cdot \D\left(\frac{x}{|x|^2}\cdot \D \right)=\left(\frac{x}{|x|^2}\right)^t\cdot \D^2 (\,\cdot\,)\cdot \left(\frac{x}{|x|^2}\right)-\frac{x}{|x|^4}\cdot \D\\
       &\frac{x}{|x|^2}\cdot \D\left(\frac{1}{|x|^2}(\,\cdot\,)\right)=\frac{x}{|x|^4}\cdot \D-\frac{2}{|x|^4},
   \end{align*}
   we get
   \begin{align*}
       \frac{x}{|x|^2}\cdot \D(\mathscr{L}_m)&=\frac{x}{|x|^2}\cdot \D\Delta +2(m-1)\left(\frac{x}{|x|^2}\right)^t\cdot \D^2(\,\cdot\,)\cdot \left(\frac{x}{|x|^2}\right)-2(m-1)\frac{x}{|x|^4}\cdot \D\\
       &+(m-1)^2\frac{x}{|x|^4}\cdot \D-\frac{2(m-1)^2}{|x|^4}
   \end{align*}
   Trivially, we have
   \begin{align*}
       \frac{1}{|x|^2}\mathscr{L}_m=\frac{1}{|x|^2}\Delta+2(m-1)\frac{x}{|x|^4}\cdot \D+\frac{(m-1)^2}{|x|^4}.
   \end{align*}
   Using the identity $\dfrac{x}{|x|^2}=\D\log|x|$, we obyain
   \begin{align*}
       \mathscr{L}_m^{\ast}=\Delta-2(m-1)\frac{x}{|x|^2}\cdot \D+\frac{(m-1)^2}{|x|^2}.
   \end{align*}
   Finally, we have
   \small
   \begin{align}\label{operator1}
       &\mathscr{L}_m^{\ast}\mathscr{L}_m=\Delta^2+\colorcancel{2(m-1)\frac{x}{|x|^2}\cdot \D\Delta}{red}+(m-1)(m+3)\frac{1}{|x|^2}\Delta -8(m-1)\left(\frac{x}{|x|^2}\right)^t\cdot\D^2(\,\cdot\,)\cdot \left(\frac{x}{|x|^2}\right)\nonumber\\
       &-\colorcancel{4(m-1)^2\frac{x}{|x|^4}\cdot \D}{blue}+\frac{4(m-1)^2}{|x|^4}
       -\colorcancel{2(m-1)\frac{x}{|x|^2}\cdot \D\Delta}{red}-4(m-1)^2\left(\frac{x}{|x|^2}\right)^t\cdot \D^2(\,\cdot\,)\cdot\left(\frac{x}{|x|^2}\right)\nonumber\\
       &+\colorcancel{4(m-1)^2\frac{x}{|x|^4}\cdot \D
       }{blue}-\colorcancel{2(m-1)^3\frac{x}{|x|^4}\cdot \D}{blue}+\frac{4(m-1)^3}{|x|^2}+\frac{(m-1)^2}{|x|^4}\Delta +\colorcancel{2(m-1)^3\frac{x}{|x|^4}\cdot \D}{blue}+\frac{(m-1)^4}{|x|^2}\nonumber\\
       &=\Delta^2+2(m+1)(m-1)\frac{1}{|x|^2}\Delta -4(m+1)(m-1)\left(\frac{x}{|x|^2}\right)^t\cdot\D^2(\,\cdot\,)\cdot\left(\frac{x}{|x|^2}\right)+\frac{(m+1)^2(m-1)^2}{|x|^4}.
   \end{align}
   \normalsize
   Comparing \eqref{operator0} and \eqref{operator1}, we deduce that 
   \begin{align}\label{operator2}
       \mathscr{L}_m^{\ast}\mathscr{L}_m=\widetilde{\mathscr{L}}_{m}^{\ast}\widetilde{\mathscr{L}}_m=\widetilde{\leb}_{m-2}\widetilde{\leb}_{m-2}^{\ast}.
   \end{align}

   \subsection{Estimates on the Second-Order Differential Operators}
   
   Thanks to \eqref{ipp0}, if $0<a<b<\infty$, $\Omega=B_b\setminus\bar{B}_a(0)$, and $u\in W^{2,2}_0(\Omega)$, then we get by \eqref{operator1}
   \begin{align}\label{ipp_Lm}
      \int_{\Omega}(\mathscr{L}_mu)^2dx&=\int_{\Omega}\left(\Delta u+(m^2-1)\frac{u}{|x|^2}\right)^2dx    +4(m^2-1)\int_{\Omega}\left(\frac{x}{|x|^2}\cdot \D u-\frac{u}{|x|^2}\right)^2dx.
   \end{align}
   
   \subsection{Expression of the Second-Order Differential Operators in Polar Coordinates}
    
   Using polar coordinates
   \begin{align*}
       \begin{pmatrix}
       \p{x_1}f\\
       \p{x_2}f
       \end{pmatrix}=\begin{pmatrix}
       \cos(\theta)\p{r}f-\sin(\theta)\dfrac{1}{r}\p{\theta}f\vspace{0.5em}\\
       \sin(\theta)\p{r}f+\cos(\theta)\dfrac{1}{r}\p{\theta}f
       \end{pmatrix},
   \end{align*}
   we get 
   \begin{align*}
       \p{x_1}^2f
       &=\cos^2(\theta)\p{r}^2f+\sin^2(\theta)\frac{1}{r^2}\p{\theta}^2f-2\cos(\theta)\sin(\theta)\frac{1}{r}\p{r,\theta}^2f
       +\sin^2(\theta)\frac{1}{r}\p{r}f+2\cos(\theta)\sin(\theta)\frac{1}{r^2}\p{\theta}f\\
       \p{x_1,x_2}^2f
       &=\cos(\theta)\sin(\theta)\left(\p{r}^2f-\frac{1}{r^2}\p{\theta}^2f\right)+\left(\cos^2(\theta)-\sin^2(\theta)\right)\frac{1}{r}\p{r,\theta}^2f-\cos(\theta)\sin(\theta)\frac{1}{r}\p{r}f\nonumber\\
       &+\left(\sin^2(\theta)-\cos^2(\theta)\right)\frac{1}{r^2}\p{\theta}f\\
       \p{x_2}^2f
       &=\sin^2(\theta)\p{r}^2f+\cos^2(\theta)\frac{1}{r^2}\p{\theta}^2f+2\cos(\theta)\sin(\theta)\frac{1}{r}\p{r,\theta}^2f+\cos^2(\theta)\frac{1}{r}\p{r}f-2\cos(\theta)\sin(\theta)\frac{1}{r^2}\p{\theta}f.
   \end{align*}
   Therefore, we have
   \begin{align*}
       &|x|^2\left(\frac{x}{|x|^2}\right)^t\cdot\D^2f\cdot\left(\frac{x}{|x|^2}\right)=\frac{1}{|x|^2}\sum_{i,j=1}^2x_i\,x_j\,\p{x_i,x_j}^2u\\
       &=\cos^2(\theta)\left(\cos^2(\theta)\p{r}^2f+\colorcancel{\sin^2(\theta)\frac{1}{r^2}\p{\theta}^2f}{red}-\colorcancel{2\cos(\theta)\sin(\theta)\frac{1}{r}\p{r,\theta}^2f}{blue}
       +\colorcancel{\sin^2(\theta)\frac{1}{r}\p{r}f}{red}+\colorcancel{2\cos(\theta)\sin(\theta)\frac{1}{r^2}\p{\theta}f}{blue}\right)\\
       &+2\cos(\theta)\sin(\theta)\left(\cos(\theta)\sin(\theta)\left(\p{r}^2f-\colorcancel{\frac{1}{r^2}\p{\theta}^2f}{red}\right)+\colorcancel{\left(\cos^2(\theta)-\sin^2(\theta)\right)\frac{1}{r}\p{r,\theta}^2f}{blue}-\colorcancel{\cos(\theta)\sin(\theta)\frac{1}{r}\p{r}f}{red}\right.\\
       &\left.+\colorcancel{\left(\sin^2(\theta)-\cos^2(\theta)\right)\frac{1}{r^2}\p{\theta}f}{blue}\right)
       +\sin^2(\theta)\left(\sin^2(\theta)\p{r}^2f+\colorcancel{\cos^2(\theta)\frac{1}{r^2}\p{\theta}^2f}{red}+\colorcancel{2\cos(\theta)\sin(\theta)\frac{1}{r}\p{r,\theta}^2f}{blue}\right.\\
       &\left.+\colorcancel{\cos^2(\theta)\frac{1}{r}\p{r}f}{red}-\colorcancel{2\cos(\theta)\sin(\theta)\frac{1}{r^2}\p{\theta}f}{blue}\right)
       =\p{r}^2f,
   \end{align*}
   so that 
   \begin{align}\label{operator3}
       \left(\frac{x}{|x|^2}\right)^t\cdot\D^2f\cdot\left(\frac{x}{|x|^2}\right)=\frac{1}{r^2}\p{r}^2f
   \end{align}
   where we used the identity
  $
       \cos^4(\theta)+2\cos^2(\theta)\sin^2(\theta)+\sin^4(\theta)=(\cos^2(\theta)+\sin^2(\theta))^2=1. 
 $
   By the identity
   $
       \Delta=\p{r}^2+\dfrac{1}{r}\p{r}+\dfrac{1}{r^2}\p{\theta}^2,
   $
   we get
   \begin{align*}
       &\p{r}\Delta=\p{r}^3+\frac{1}{r}\p{r}^2-\frac{1}{r^2}\p{r}+\frac{1}{r^2}\p{r}\p{\theta}^2-\frac{2}{r^3}\p{\theta}^2\\
       &\p{r}^2\Delta=\p{r}^4+\frac{1}{r}\p{r}^3-\frac{2}{r^2}\p{r}^2+\frac{2}{r^3}\p{r}+\frac{1}{r^2}\p{r}^2\p{\theta}^2-\frac{4}{r^3}\p{r}\p{\theta}^2+\frac{6}{r^4}\p{\theta}^2\\
       &\frac{1}{r^2}\p{\theta}^2\Delta=\frac{1}{r^2}\p{r}^2\p{\theta}^2+\frac{1}{r^3}\p{r}\p{\theta}^2+\frac{1}{r^4}\p{\theta}^4.
    \end{align*}
    Therefore, we finally deduce that
    \begin{align*}
        \Delta^2&=\p{r}^4+\frac{1}{r}\p{r}^3-\frac{2}{r^2}\p{r}^2+\frac{2}{r^3}\p{r}+\frac{1}{r^2}\p{r}^2\p{\theta}^2-\frac{4}{r^3}\p{r}\p{\theta}^2+\frac{6}{r^4}\p{\theta}^2\\
        &+\frac{1}{r}\p{r}^3+\frac{1}{r^2}\p{r}^2-\frac{1}{r^3}\p{r}+\frac{1}{r^3}\p{r}\p{\theta}^2-\frac{2}{r^4}\p{r}\p{\theta}^2
        +\frac{1}{r^2}\p{r}^2\p{\theta}^2+\frac{1}{r^3}\p{r}\p{\theta}^2+\frac{1}{r^4}\p{\theta}^4\\
        &=\p{r}^4+\frac{2}{r}\p{r}^3-\frac{1}{r^2}\p{r}^2+\frac{1}{r^3}\p{r}+\frac{1}{r^4}\p{\theta}^4+\frac{2}{r^2}\p{r}^2\p{\theta}^2-\frac{2}{r^3}\p{r}\p{\theta}^2+\frac{4}{r^4}\p{\theta}^2
    \end{align*}
    Projecting on the eigenspace corresponding to the eigenvalue $\lambda=n^2$, we get the operator
    \begin{align*}
        \Pi_{n^2}\left(\Delta^2\right)=\p{r}^4+\frac{2}{r}\p{r}^3-\frac{2\,n^2+1}{r^2}\p{r}^2+\frac{2\,n^2+1}{r^3}\p{r}+\frac{n^4-4n^2}{r^4}.
    \end{align*}
    Since
    \begin{align*}
        \Pi_{n^2}(\Delta)=\p{r}^2+\frac{1}{r}\p{r}-\frac{n^2}{r^2},
    \end{align*}
    we finally get by \eqref{operator3}
    \begin{align}\label{projection_m}
        &\Pi_{n^2}(\mathscr{L}_m^{\ast}\leb_m)=\Pi_{n^2}\left(\Delta^2+2(m+1)(m-1)\frac{1}{r^2}\Delta -4(m+1)(m-1)\frac{1}{r^2}\p{r}^2+\frac{(m+1)^2(m-1)^2}{r^4}\right)\nonumber\\
        &=\p{r}^4+\frac{2}{r}\p{r}^3-\frac{2\,n^2+1}{r^2}\p{r}^2+\frac{2\,n^2+1}{r^3}\p{r}+\frac{n^4-4n^2}{r^4}\nonumber\\
        &+2(m+1)(m-1)\frac{1}{r^2}\p{r}^2+2(m+1)(m-1)\frac{1}{r^3}\p{r}-2n^2(m+1)(m-1)\frac{1}{r^4}-4(m+1)(m-1)\frac{1}{r^2}\p{r}^2\nonumber\\
        &+\frac{(m+1)^2(m-1)^2}{r^4}\nonumber\\
        &=\p{r}^4+\frac{2}{r}\p{r}^3-(2(m+1)(m-1)+2\,n^2+1)\frac{1}{r^2}\p{r}^2+(2(m+1)(m-1)+2\,n^2+1)\frac{1}{r^3}\p{r}\nonumber\\
        &+\left((m+1)^2(m-1)^2-2n^2(m+1)(m-1)+n^2(n^2-4)\right)\frac{1}{r^4}.
    \end{align}

     \subsection{Minimisation Problem and Main Theorem}

    \subsubsection{Existence and Non-Triviality of Minimisers}

    Let $m\geq 1$ be a fixed real number, and consider for all $0<a<b<\infty$ the following minimisation problem:
    \begin{align}\label{min}
        \lambda_m=\inf\left\{\int_{B_{b}\setminus\bar{B}_a(0)}\left(\mathscr{L}_m u\right)^2dx: u\in W^{2,2}_0(B_b\setminus\bar{B}_a(0))\quad \text{and}\quad\int_{B_b\setminus\bar{B}_a(0)}\frac{u^2}{|x|^4}dx=1\right\}. 
    \end{align}
    From now on, we write $\Omega=B_b\setminus\bar{B}_a(0)$. 
    For $m=1$, as $\mathscr{L}_1=\Delta$,  there exists a constant $C=C(\Omega)$ such that for all $u\in W^{2,2}_0(\Omega)$,
    \begin{align*}
        \int_{\Omega}\left(|\D u|^2+u^2\right)dx\leq C\int_{\Omega}(\Delta u)^2dx.
    \end{align*}
    Indeed, by the standard Poincaré inequality, there exists a universal constant $C(\Omega)<\infty$ such that for all $u\in W^{2,2}_0(\Omega)\subset W^{1,2}_0(\Omega)$, we have
    \begin{align}\label{poincaré}
        \int_{\Omega}u^2dx\leq C(\Omega)\int_{\Omega}|\D u|^2dx.
    \end{align}
    Integrating by parts, and using Cauchy-Schwarz inequality and \eqref{poincaré}, we get
    \begin{align*}
        \int_{\Omega}|\D u|^2dx=-\int_{\Omega}u\,\Delta u\,dx\leq\left(\int_{\Omega}u^2dx\right)^{\frac{1}{2}}\left(\int_{\Omega}(\Delta u)^2dx\right)^{\frac{1}{2}}\leq \left(C(\Omega)\int_{\Omega}|\D u|^2dx\right)^{\frac{1}{2}} \left(\int_{\Omega}(\Delta u)^2dx\right)^{\frac{1}{2}}. 
    \end{align*}
    Therefore, provided that $u$ is not constant (\emph{i.e.} $u\neq 0$), we get
    \begin{align*}
        \int_{\Omega}|\D u|^2\leq C(\Omega)\int_{\Omega}(\Delta u)^2dx,
    \end{align*}
    which implies that
    \begin{align}\label{poincaré_laplacien}
        \int_{\Omega}\left(|\D u|^2+u^2\right)dx\leq C(\Omega)(1+C(\Omega))\int_{\Omega}(\Delta u)^2dx.
    \end{align}
    Finally, by integrating by parts, we have for all $u\in W^{2,2}_0(\Omega)$
    \begin{align*}
        \int_{\Omega}|\p{z}^2u|^2|dz|^2&=\int_{\Omega}\p{z}^2u\,\p{\z}^2u\,|dz|^2=-\int_{\Omega}\p{\z}(\p{z}^2u)\p{\z}u\,|dz|^2=-\int_{\Omega}\p{z}(\p{z\z}^2u)\p{\z}u\,|dz|^2=\int_{\Omega}|\p{z\z}^2u|^2dx\\
        &=\frac{1}{4}\int_{\Omega}(\Delta u)^2dx,
    \end{align*}
    using the well-known formula $\Delta=4\,\p{z\z}^2$. Therefore, we have 
    \begin{align}\label{ipp_laplacien}
        \int_{\Omega}|\D^2u|^2dx=\int_{\Omega}\left(2|\p{z}^2u|^2+2|\p{z\z}^2u|^2\right)|dz|^2=\int_{\Omega}(\Delta u)^2dx.
    \end{align}
    Gathering \eqref{poincaré_laplacien} and \eqref{ipp_laplacien}, we deduce that for all $u\in W^{2,2}_0(\Omega)$
    \begin{align}\label{complete_poincaré}
        \int_{\Omega}\left(|\D^2u|^2+|\D u|^2+u^2\right)dx\leq \left(1+C(\Omega)\right)^2\int_{\Omega}(\Delta u)^2dx.
    \end{align}
    where $C(\Omega)<\infty$ is the Poincaré constant from \eqref{poincaré}.
    Therefore, $E(u)=\np{\Delta u}{2}{\Omega}^2$ is a coercive quadratic form, which shows by the Lax-Milgram theorem (\cite[Corollaire V.$8$]{brezis}) that there exists a unique minimiser to this problem. Let us denote it by $u$. It satisfies for some $\lambda_1\geq 0$  the equation
    \begin{align}
        \left\{\begin{alignedat}{2}
            \Delta^2u&=\frac{\lambda_1}{|x|^4} u\quad&& \text{in}\;\, \Omega\\
            u&=0\quad&& \text{on}\;\,\partial\Omega\\
            \partial_{\nu}u&=0\quad &&\text{on}\;\, \partial\Omega.
        \end{alignedat}\right.
    \end{align}
    Integrating by parts, we deduce that
    \begin{align}\label{forth}
        \int_{\Omega}(\Delta u)^2dx=\int_{\Omega}u\,\Delta^2u\,dx=\lambda_1\int_{\Omega}\frac{u^2}{|x|^4}dx=\lambda_1,
    \end{align}
    since
    \begin{align*}
        \int_{\Omega}\frac{u^2}{|x|^4}dx=1
    \end{align*}
    by construction (more precisely, thanks to the compact Sobolev embedding $W^{2,2}_0(\Omega)\hookrightarrow L^2(\Omega)$; see the computations below for more details in general).

    Now let us generalise \eqref{complete_poincaré} to the general case $m\geq 1$.
    \begin{prop}
    Let $0<a<b<\infty$, $m\geq 1$, and $\Omega=B_b\setminus\bar{B}_a(0)$. Then, there exists a universal constant $C=C(m,\Omega)$ such that for all $u\in W^{2,2}_0(\Omega)$, we have
    \begin{align}\label{generalised_poincaré}
        \int_{\Omega}\left(|\D^2u|^2+|\D u|^2+u^2\right)dx\leq C\int_{\Omega}\left(\Delta u+2(m-1)\frac{x}{|x|^2}\cdot \D u+(m-1)^2\frac{u}{|x|^2}\right)^2dx.
    \end{align}
    \end{prop}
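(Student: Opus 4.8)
The plan is to reduce the general case $m\geq 1$ to the case $m=1$, which is precisely inequality \eqref{complete_poincaré}, by means of the conjugation identity
\begin{align*}
    \leb_mu=|x|^{1-m}\,\Delta\!\left(|x|^{m-1}u\right)
\end{align*}
already recorded in the introduction (and checked directly from $\Delta(|x|^s)=s^2|x|^{s-2}$ in dimension two together with the Leibniz rule). The crucial structural feature is that on $\Omega=B_b\setminus\bar{B}_a(0)$ the origin is excluded, so $0<a\leq|x|\leq b$ on $\Omega$, and therefore the weight $w=|x|^{m-1}$, its reciprocal $w^{-1}=|x|^{1-m}$, and all their derivatives up to order two lie in $C^{\infty}(\bar{\Omega})$ and are bounded above and below by positive constants depending only on $m$ and $\Omega$.

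First I would observe that multiplication by $w$ is a bounded isomorphism of $W^{2,2}_0(\Omega)$: it sends $C^{\infty}_c(\Omega)$ into itself, is continuous for the $W^{2,2}$-norm by the Leibniz rule and the uniform bounds on $w,\D w,\D^2w$, hence extends to all of $W^{2,2}_0(\Omega)$, and its inverse—multiplication by $w^{-1}$—is bounded for the same reason. Consequently, setting $v=w\,u=|x|^{m-1}u\in W^{2,2}_0(\Omega)$, there is a constant $C_1=C_1(m,\Omega)$ with
\begin{align*}
    C_1^{-1}\int_{\Omega}\left(|\D^2v|^2+|\D v|^2+v^2\right)dx\leq\int_{\Omega}\left(|\D^2u|^2+|\D u|^2+u^2\right)dx\leq C_1\int_{\Omega}\left(|\D^2v|^2+|\D v|^2+v^2\right)dx.
\end{align*}

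Next, the conjugation identity gives $\leb_mu=w^{-1}\Delta v$, hence $(\leb_mu)^2=|x|^{2-2m}(\Delta v)^2\geq\kappa\,(\Delta v)^2$ with $\kappa=\min_{\bar{\Omega}}|x|^{2-2m}>0$; integrating and applying the $m=1$ estimate \eqref{complete_poincaré} to $v$ then yields
\begin{align*}
    \int_{\Omega}\left(|\D^2v|^2+|\D v|^2+v^2\right)dx\leq(1+C(\Omega))^2\int_{\Omega}(\Delta v)^2dx\leq\frac{(1+C(\Omega))^2}{\kappa}\int_{\Omega}(\leb_mu)^2dx,
\end{align*}
where $C(\Omega)$ is the Poincaré constant from \eqref{poincaré}. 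Combining this with the two-sided comparison of the previous step, and recalling that $\leb_mu=\Delta u+2(m-1)\frac{x}{|x|^2}\cdot\D u+(m-1)^2\frac{u}{|x|^2}$ is exactly the expression squared on the right-hand side of \eqref{generalised_poincaré}, we obtain the proposition with $C=C_1(1+C(\Omega))^2/\kappa$.

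There is no genuinely hard step: the only point needing (routine) care is that the weights and their first two derivatives stay bounded above and below, which holds precisely because $a>0$, and the substance of the statement is entirely carried by the already-proven case $m=1$. An alternative, slightly longer, route would be a contradiction-and-compactness argument: if \eqref{generalised_poincaré} failed there would be a normalised sequence $u_k$ with $\leb_mu_k\to 0$ in $L^2$, which by the compact embedding $W^{2,2}_0(\Omega)\hookrightarrow W^{1,2}(\Omega)$ together with the elliptic a priori estimate for the uniformly elliptic operator $\leb_m$ (whose coefficients are smooth on $\bar{\Omega}$ since $0\notin\bar{\Omega}$) would converge strongly in $W^{2,2}$ to some $u\not\equiv0$ with $\leb_mu=0$; but then $v=|x|^{m-1}u$ would be harmonic on $\Omega$ and vanish on $\partial\Omega$, forcing $v\equiv0$ by the maximum principle, a contradiction.
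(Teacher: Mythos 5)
Your proof is correct, but it follows a genuinely different route from the paper's. The paper proves this proposition by combining the integration-by-parts identity \eqref{ipp_Lm} (i.e. \eqref{formula_ipp_lmbis}, imported from Lemma \ref{indicielles}) with the Poincaré inequality \eqref{poincaré}: it bounds $\np{u/|x|^2}{2}{\Omega}$ by $a^{-2}\sqrt{C(\Omega)}\np{\D u}{2}{\Omega}$, applies Minkowski's inequality to estimate $\np{\Delta u}{2}{\Omega}$ by $(m^2-1)a^{-2}\sqrt{C(\Omega)}\np{\D u}{2}{\Omega}+\np{\leb_m u}{2}{\Omega}$, and then invokes the $m=1$ estimates \eqref{poincaré_laplacien} and \eqref{ipp_laplacien} to conclude. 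You instead exploit the conjugation $\leb_m=|x|^{1-m}\Delta\left(|x|^{m-1}\,\cdot\,\right)$ (stated in the introduction and valid in dimension two), note that since $0\notin\bar{\Omega}$ the weight $|x|^{m-1}$ and its inverse are smooth bounded multipliers on $W^{2,2}_0(\Omega)$, and reduce everything to the already-proved case \eqref{complete_poincaré} applied to $v=|x|^{m-1}u$. Both arguments ultimately rest on \eqref{complete_poincaré}, and both yield only a qualitative constant $C(m,\Omega)$, so the losses of order $a^{2-2m}$, $b^{2m-2}$ in your weight comparison are harmless. What your route buys is a cleaner logical structure: in particular it avoids the absorption implicit in the paper's final step, where the term $(m^2-1)a^{-2}\sqrt{C(\Omega)}\np{\D u}{2}{\Omega}$ still has to be controlled against $\np{\Delta u}{2}{\Omega}$; what the paper's route buys is that it stays with the identity \eqref{ipp_Lm}, which is the structural tool reused throughout the rest of the article. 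Two cosmetic remarks: the derivatives of $|x|^{m-1}$ are of course only bounded above (not below, e.g. they vanish when $m=1$), which is all you actually use; and in your compactness alternative the nontriviality of the limit requires the elliptic estimate $\wp{u}{2,2}{\Omega}\leq C\left(\np{\leb_m u}{2}{\Omega}+\np{u}{2}{\Omega}\right)$ for $u\in W^{2,2}_0(\Omega)$, which you correctly flag but should state explicitly if you intend that version as the main proof.
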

    \begin{proof}
        We have already treated the case $m=1$, so we can assume that $m>1$. Thanks to \eqref{ipp_Lm}, we have for all $u\in W^{2,2}_0(\Omega)$
        \begin{align}\label{formula_ipp_lmbis}
            \int_{\Omega}\left(\leb_mu\right)^2dx=\int_{\Omega}\left(\Delta u+(m^2-1)\frac{u}{|x|^2}\right)^2dx+4(m^2-1)\int_{\Omega}\left(\frac{x}{|x|^2}\cdot \D u-\frac{u}{|x|^2}\right)^2dx.
        \end{align}
        Thanks to the Poincaré inequality, there exists $C(\Omega)<\infty$ such that for all $u\in W^{2,2}_0(\Omega)\subset W^{1,2}_0(\Omega)$, we have
        \begin{align*}
            \int_{\Omega}u^2dx\leq C(\Omega)\int_{\Omega}|\D u|^2dx.
        \end{align*}
        Therefore, the following elementary inequality holds 
        \begin{align}\label{bound_poincaré}
            \int_{\Omega}\frac{u^2}{|x|^4}dx\leq \frac{1}{a^4}\int_{\Omega}u^2dx\leq \frac{C(\Omega)}{a^4}\int_{\Omega}|\D u|^2dx.
        \end{align}
        Therefore, we deduce by \eqref{bound_poincaré}, \eqref{formula_ipp_lmbis}, and Minkowski's inequality that
        \begin{align*}
            \np{\Delta u}{2}{\Omega}&\leq (m^2-1)\np{\frac{u}{|x|^2}}{2}{\Omega}+\left(\int_{\Omega}\left(\Delta u +(m^2-1)\frac{u}{|x|^2}\right)^2dx\right)^{\frac{1}{2}}\\
            &\leq (m^2-1)\frac{\sqrt{C(\Omega)}}{a^2}\np{\D u}{2}{\Omega}+\np{\leb_m u}{2}{\Omega}.
        \end{align*}
        Finally, using \eqref{poincaré_laplacien} and \eqref{ipp_laplacien}, we deduce that there exists $C(\Omega)<\infty$ such that for all $u\in W^{2,2}_0(\Omega)$,
        \begin{align*}
            \wp{u}{2,2}{\Omega}\leq \sqrt{C(\Omega)}\np{\leb_mu}{2}{\Omega},
        \end{align*}
        which concludes the proof of the proposition.
    \end{proof}

    As previously, we deduce that there exists a minimiser $u$ to \eqref{min}. Indeed, take a minimising sequence $\ens{u_k}_{k\in\N}\subset W^{2,2}_0(\Omega)$, where 
    \begin{align*}
        &\int_{\Omega}\frac{u_k^2}{|x|^4}dx=1\\
        &\int_{\Omega}\left(\leb_mu_k\right)^2dx\conv{k\rightarrow \infty}\lambda_m.
    \end{align*}
    Then, by inequality \eqref{generalised_poincaré}, $\ens{u_k}_{k\in\N}$ is a bounded sequence in $W^{2,2}(\Omega)$. In particular, we deduce that there exists $u\in W^{2,2}_0(\Omega)$ such that
    \begin{align*}
        &u_k\underset{k\rightarrow \infty}{\rightharpoonup}u\;\,\text{weakly in}\;\, W^{2,2}_0(\Omega). 
    \end{align*}
    Thanks to the compact Sobolev embedding of $W^{1,2}(\Omega)\hooklongrightarrow L^q(\Omega)$ for all $q<\infty$, we deduce that 
    \begin{align*}
        \lim_{k\rightarrow \infty}\wp{u_k-u}{1,2}{\Omega}=0.
    \end{align*}
    In particular, the constraint
    \begin{align}\label{constraint}
        \int_{\Omega}\frac{u^2}{|x|^4}dx=\lim_{k\rightarrow\infty}\int_{\Omega}\frac{u_k^2}{|x|^4}=1
    \end{align}
    is satisfied, and Fatou's lemma shows that
    \begin{align}\label{bound}
        \int_{\Omega}\left(\leb_mu\right)^2dx\leq \liminf_{k\rightarrow  \infty}\int_{\Omega}\left(\leb_mu\right)^2dx=\lambda_m.
    \end{align}
    By the constraint \eqref{constraint} and the bound \eqref{bound}, we deduce that $u$ is a minimiser and that
    \begin{align*}
        \int_{\Omega}\left(\leb_mu\right)^2dx=\lambda_m.
    \end{align*}
    The solution $u$ is a solution to the following system of equations
    \begin{align}\label{sys_u_lm}
        \left\{\begin{alignedat}{2}
            \mathscr{L}_m^{\ast}\mathscr{L}_mu&=\frac{\lambda}{|x|^4} u\quad&& \text{in}\;\, \Omega\\
            u&=0\quad&& \text{on}\;\,\partial\Omega\\
            \partial_{\nu}u&=0\quad &&\text{on}\;\, \partial\Omega.
        \end{alignedat}\right.
    \end{align}
    Integrating by parts, we recover
    \begin{align}
        \int_{B_b\setminus \bar{B}_a(0)}(\mathscr{L}_mu)^2dx=\lambda_m\int_{B_b\setminus \bar{B}_a(0)}\frac{u^2}{|x|^4}dx=\lambda_m.
    \end{align}
    \begin{lemme}\label{lambda_positive}
        Under the previous hypothesis, for all $m\geq 1$, we have $\lambda_m>0$.
    \end{lemme}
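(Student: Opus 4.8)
The plan is to combine the two Poincaré-type inequalities already at our disposal, so that no new analysis is needed. Since $\lambda_m$ is an infimum of integrals of squares it is automatically $\geq 0$, and only strict positivity requires an argument. Fix any $u\in W^{2,2}_0(\Omega)$ admissible for the problem \eqref{min}, i.e.\ with $\int_{\Omega}u^2/|x|^4\,dx=1$. Chaining the elementary bound \eqref{bound_poincaré} with the generalised Poincaré inequality \eqref{generalised_poincaré} gives
\begin{align*}
    1=\int_{\Omega}\frac{u^2}{|x|^4}\,dx\leq \frac{C(\Omega)}{a^4}\int_{\Omega}|\D u|^2\,dx\leq \frac{C(\Omega)\,C(m,\Omega)}{a^4}\int_{\Omega}\left(\leb_mu\right)^2dx,
\end{align*}
where $C(\Omega)$ is the Poincaré constant of \eqref{poincaré} and $C(m,\Omega)$ is the constant of \eqref{generalised_poincaré}. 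Taking the infimum over all such $u$ yields $\lambda_m\geq a^4\big(C(\Omega)\,C(m,\Omega)\big)^{-1}>0$, which proves the lemma, with an explicit (though far from optimal) lower bound.

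Equivalently, one may argue by contradiction with the minimiser $u$ constructed above: if $\lambda_m=0$ then $\int_{\Omega}(\leb_mu)^2dx=0$, hence $\leb_mu=0$ almost everywhere in $\Omega$, and \eqref{generalised_poincaré} forces $u=0$ in $W^{2,2}_0(\Omega)$, contradicting the normalisation $\int_{\Omega}u^2/|x|^4\,dx=1$ from \eqref{constraint}. There is no genuine obstacle here: the entire content is already packaged in the coercivity estimate \eqref{generalised_poincaré}; the only point worth recording is that the weight $|x|^{-4}$ is bounded on $\Omega$ (as $a>0$), so that control of the $L^2$ norm of $u$—or of $\D u$—automatically controls the weighted $L^2$ norm appearing in the constraint.
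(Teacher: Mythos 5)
Your proof is correct and follows essentially the same route as the paper: the second (contradiction) argument is exactly the paper's, resting on the coercivity inequality \eqref{generalised_poincaré} and the normalisation \eqref{constraint}. Your first, direct version merely quantifies the same idea by chaining \eqref{bound_poincaré} with \eqref{generalised_poincaré} to extract an explicit (crude) lower bound $\lambda_m\geq a^4/(C(\Omega)C(m,\Omega))$, which is a harmless refinement rather than a different approach.
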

    \begin{proof}
    Indeed, if $\lambda_m=0$, then $\leb_m=0$, and inequality \eqref{generalised_poincaré} shows that $u=0$, which contradicts the identity 
    \begin{align*}
        \int_{\Omega}\frac{u^2}{|x|^4}dx=1.
    \end{align*}
    \end{proof}

    \subsubsection{Strategy of the Proof}

    The proof goes as follows. 
    We study the solutions of the equation
    \begin{align}\label{main_equation}
        \leb_m^{\ast}\leb_mf=\frac{\lambda}{|x|^4}f
    \end{align}
    projected on the eigenspace of the eigenvector $e^{i\,n\,\theta}$ for $n\in \Z$. That leads us to study thanks to \eqref{projection_m} the solutions of the following ordinary differential equation
    \begin{align}\label{ode_m_n_0}
        f''''+\frac{2}{r}f'''-\left(2m^2+2n^2-1\right)\frac{1}{r^2}f''+\left(2m^2+2n^2-1\right)\frac{1}{r^3}f'+\left((m^2-n^2-1)^2-4n^2)\right)\frac{1}{r^4}f=\frac{\lambda}{r^4}f.
    \end{align}
    with the boundary conditions
    \begin{align}\label{boundary_m_n_0}
    \left\{\begin{alignedat}{1}
        &f(a)=f(b)=0\\
        &f'(a)=f'(b)=0,
        \end{alignedat}\right.
    \end{align}
    that are equivalent to $u=\partial_{\nu}u=0$ on $\partial \Omega$ for $u=f(r)e^{i\,n\,\theta}$. In order to get the previous equation, we used the elementary identities:
    \begin{align*}
        2(m+1)(m-1)+2n^2+1&=2(m^2-1)+2n^2+1=2m^2+2n^2-1\\
        (m+1)^2(m-1)^2-2n^2(m+1)(m-1)+n^2(n^2-4)&=(m^2-1)^2-2n^2(m^2-1)+n^2(n^2-4)\\
        &=(m^2-n^2-1)^2-4n^2
    \end{align*}
    Then, a careful analysis gives us a bound on the minimal coefficient $\lambda=\lambda_{m,n}$ of this equation, once we make a change of variable that makes it linear (this is where the regular singularities of our operator play a crucial role).

    \subsection{Linearisation of the Ordinary Differential Equation}

    Let $m>0$, $n\in \Z$, and consider non-trivial solutions of the following ordinary differential equation:
    \begin{align}\label{eq_f_m_n}
        &\p{r}^4f+\frac{2}{r}\p{r}^3f-(2(m+1)(m-1)+2\,n^2+1)\frac{1}{r^2}\p{r}^2f+(2(m+1)(m-1)+2\,n^2+1)\frac{1}{r^3}\p{r}f\nonumber\\
        &+\left((m+1)^2(m-1)^2-2n^2(m+1)(m-1)+n^2(n^2-4)\right)\frac{1}{r^4}f=\frac{\lambda}{r^4}f,
    \end{align}
    where $\lambda\geq 0$.
    Notice that
    \begin{align*}
        &(m+1)^2(m-1)^2-2n^2(m+1)(m-1)+n^2(n^2-4)\\
        &=(m^2-1)^2-2n^2(m^2-1)+n^2(n^2-4)=(m^2-n^2-1)^2-4n^2.
    \end{align*}
    Make the change of variable $f(r)=Y(\log(r))$. Then, we have
    \begin{align}\label{change_var_log}
    \left\{\begin{alignedat}{1}
        &\p{r}f(r)=\frac{1}{r}Y'(\log(r))\\
        &\p{r}^2f(r)=\frac{1}{r^2}Y''(\log(r))-\frac{1}{r^2}Y'(\log(r))\\
        &\p{r}^3f(r)=\frac{1}{r^3}Y'''(\log(r))-\frac{3}{r^3}Y''(\log(r))+\frac{2}{r^3}Y'(\log(r))\\
        &\p{r}^4f(r)=\frac{1}{r^4}Y''''(\log(r))-\frac{6}{r^4}Y'''(\log(r))+\frac{11}{r^4}Y''(\log(r))-\frac{6}{r^4}Y'(\log(r)).
        \end{alignedat}\right.
    \end{align}
    Therefore, \eqref{eq_f_m_n} and \eqref{change_var_log} show that
    \begin{align}\label{change_var_log2}
        &f''''+\frac{2}{r}f'''-(2m^2+2n^2-1)\frac{1}{r^2}f''+(2m^2+2n^2-1)\frac{1}{r^3}f'\nonumber\\
        &=\frac{1}{r^4}\bigg(Y''''(\log(r))-6\,Y'''(\log(r))+11\,Y''(\log(r))-6\,Y'(\log(r))\nonumber\\
        &+2\,Y'''(\log(r))-6\,Y''(\log(r))+4\,Y'(\log(r))
        -(2m^2+2n^2-1)Y''(\log(r))+(2m^2+2n^2-1)Y'(\log(r))\nonumber\\
        &+(2m^2+2n^2-1)Y'(\log(r))\bigg)
        =\frac{1}{r^4}\bigg(Y''''(\log(r))-4\,Y'''(\log(r))-(2m^2+2n^2-6)Y''(\log(r))\nonumber\\
        &+4(m^2+n^2-1)Y'(\log(r))\bigg)
        =\frac{\lambda-\left((m^2-n^2-1)^2-4n^2\right)}{r^4}Y(\log(r)).
    \end{align}
    If $r=e^t$, then the equation is equivalent to
    \begin{align}\label{equa_diff_m_n0}
        &Y''''(t)-4\,Y'''(t)-2(m^2+n^2-3)Y''(t)+4(m^2+n^2-1)Y'(t)\nonumber\\
        &=\left(\lambda-\left((m^2-n^2-1)^2-4n^2\right)\right)Y(t).
    \end{align}
    Its characteristic polynomial is given by
    \begin{align*}
        P(X)=X^4-4X^3-2(m^2+n^2-3)X^2+4(m^2+n^2-1)X+(m^2-n^2-1)^2-4n^2-\lambda,
    \end{align*}
    and the roots of this polynomial are given by 
    \begin{align}\label{racines_m_n}
    \left\{\begin{alignedat}{1}
        r_1&=1+\sqrt{m^2+n^2+\sqrt{\lambda+4m^2n^2}}\\
        r_2&=1-\sqrt{m^2+n^2-\sqrt{\lambda+4m^2n^2}}\\
        r_3&=1+\sqrt{m^2+n^2+\sqrt{\lambda+4m^2n^2}}\\
        r_4&=1-\sqrt{m^2+n^2-\sqrt{\lambda+4m^2n^2}}.
        \end{alignedat}\right.
    \end{align}
    Indeed, if we make the substitution $X=Y+1$, we obtain
    \begin{align}\label{change_var_polynomial}
        &Q(Y)=P(Y+1)=Y^4+4Y^3+6Y^2+4Y+1
        -4\left(Y^3+3Y^2+3Y+1\right)\nonumber\\
        &-2(m^2+n^2-3)\left(Y^2+2Y+1\right)
        +4(m^2+n^2-1)(Y+1)+(m^2-n^2-1)^2-4n^2-\lambda\nonumber\\
        &=Y^4-2(m^2+n^2)Y^2+1+2(m^2+n^2-1)+(m^2-n^2-1)^2-4n^2-\lambda\nonumber\\
        &=Y^4-2(m^2+n^2)Y^2+(m^2-n^2)^2-\lambda
    \end{align}
    which is a biquadratic equation!
    Since $(m^2-n^2)^2=(m^2+n^2)^2-4m^2n^2$, we deduce that the discriminant of the polynomial $R(Z)=Q(\sqrt{Z})$ is given by
    \begin{align*}
        D_{m,n}=4(m^2+n^2)^2-4\left((m^2+n^2)^2-\lambda-4m^2n^2\right)=4(\lambda+4m^2n^2),
    \end{align*}
    which shows that the roots of $R$ are given by
    \begin{align*}
        s_{\pm}=m^2+n^2\pm\sqrt{\lambda+4m^2n^2},
    \end{align*}
    and we obtain the formula \eqref{racines_m_n} for the roots of $P$ (using that $Z=Y^2$ and $X=Y+1$).

    \subsection{Proof of the Main Results}
    
    \begin{theorem}\label{theoreme_ode_m_n}
        Let $m>0$, and $n\in\Z$. Let $0<a<b<\infty$ be such that
        \begin{align}\label{log_bound1}
            \log\left(\frac{b}{a}\right)>\frac{5}{\sqrt{2m^2+2n^2}}.
        \end{align}        
        For all $\lambda>0$, consider the following linear differential equation
        \begin{align}\label{equa_diff_m_n}
        &Y''''(t)-4\,Y'''(t)-2(m^2+n^2-3)Y''(t)+4(m^2+n^2-1)Y'(t)\nonumber\\
        &=\left(\lambda-\left((m^2-n^2-1)^2-4n^2\right)\right)Y(t)=0.
        \end{align}
        with boundary conditions
        \begin{align}\label{boundary_conditions_m_n}
            \left\{\begin{alignedat}{2}
                Y(\log(a))&=Y(\log(b))&&=0\\
                Y'(\log(a))&=Y'(\log(b))&&=0
        \end{alignedat}\right.
        \end{align}
        There exists $\lambda>0$ such that the system \eqref{equa_diff_m_n}-\eqref{boundary_conditions_m_n} admits a non-trivial solution $Y$. Furthermore, the minimal value $\lambda_{m,n}>0$ satisfies the following estimate 
        \small
        \begin{align}\label{estimate_log_lambda_m_n}
            \left((m-n)^2+\frac{\pi^2}{\log^2\left(\frac{b}{a}\right)}\right)\left((m+n)^2+\frac{\pi^2}{\log^2\left(\frac{b}{a}\right)}\right)<\lambda_{m,n}<\left((m-n)^2+\frac{4\pi^2}{\log^2\left(\frac{b}{a}\right)}\right)\left((m+n)^2+\frac{4\pi^2}{\log^2\left(\frac{b}{a}\right)}\right).
        \end{align}
        \normalsize
        Assuming that $m\in \N\setminus\ens{0}$ and
        \begin{align}
            \log\left(\frac{b}{a}\right)\geq \frac{\pi\sqrt{2}}{\sqrt{2m-1}},
        \end{align}
        we have
        \begin{align}\label{bound_all_eigenvalues}
            \inf_{n\in\Z}\lambda_{m,n}>\left(4m^2+\frac{\pi^2}{\log^2\left(\frac{b}{a}\right)}\right)\frac{\pi^2}{\log^2\left(\frac{b}{a}\right)}.
        \end{align}
    \end{theorem}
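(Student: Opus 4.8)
The plan is to reveal the self-adjoint structure hidden in \eqref{equa_diff_m_n} and reduce everything to one-dimensional Poincaré inequalities; spotting the right change of variable is the only genuinely non-routine step. On top of $r=e^{t}$, which already produced the constant-coefficient (but not yet self-adjoint) equation \eqref{equa_diff_m_n0}, I would substitute $Y(t)=e^{t}w(t)$ — the finite version of the shift $X=Y+1$ used in \eqref{change_var_polynomial}. A direct computation shows this removes the $e^{-2t}$-weight (equivalently the odd-order derivative terms) and turns \eqref{equa_diff_m_n0} into the manifestly self-adjoint, weight-free equation
\[
w''''-2(m^{2}+n^{2})\,w''+(m^{2}-n^{2})^{2}\,w=\lambda\,w\qquad\text{on }I_{L}:=(0,L),\quad L:=\log\!\big(\tfrac{b}{a}\big),
\]
(up to a harmless translation of $t$), with \eqref{boundary_conditions_m_n} becoming $w=w'=0$ at both endpoints, i.e. $w\in H^{2}_{0}(I_{L})$. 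Since the operator equals $\big(-\partial_{t}^{2}+(m-n)^{2}\big)\big(-\partial_{t}^{2}+(m+n)^{2}\big)$, an integration by parts against $w\in H^{2}_{0}(I_{L})$ identifies
\[
\lambda_{m,n}=\inf\Big\{\tfrac{\int_{I_{L}}\big[(w'')^{2}+2(m^{2}+n^{2})(w')^{2}+(m^{2}-n^{2})^{2}w^{2}\big]\,dt}{\int_{I_{L}}w^{2}\,dt}\ :\ w\in H^{2}_{0}(I_{L})\setminus\{0\}\Big\}.
\]
The numerator controls $\int_{I_{L}}(w'')^{2}$, hence (by Poincaré) the full $H^{2}$-norm on $H^{2}_{0}(I_{L})$, so the direct method together with the compact embedding $H^{2}_{0}(I_{L})\hookrightarrow L^{2}(I_{L})$ produces a minimiser; since the nontrivial solutions of \eqref{equa_diff_m_n}–\eqref{boundary_conditions_m_n} are exactly the eigenfunctions of the self-adjoint realisation of this operator, the minimiser is the sought nontrivial solution and $\lambda_{m,n}$ — the smallest $\lambda>0$ admitting one — is the displayed infimum, strictly positive because the numerator vanishes only at $w\equiv0$.

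For \eqref{estimate_log_lambda_m_n} I would use elementary Poincaré inequalities on $I_{L}$. Lower bound: $\int(w')^{2}\ge(\pi/L)^{2}\int w^{2}$ since $w\in H^{1}_{0}(I_{L})$, and then $\int(w'')^{2}\ge(\pi/L)^{2}\int(w')^{2}$ since $w'\in H^{1}_{0}(I_{L})$ as well (this is where both endpoint conditions on $w'$ are used); feeding these into the Rayleigh quotient gives $\lambda_{m,n}\ge\big((m-n)^{2}+\pi^{2}/L^{2}\big)\big((m+n)^{2}+\pi^{2}/L^{2}\big)$, and the inequality is strict because equality in the first Poincaré step would force $w\propto\sin(\pi t/L)\notin H^{2}_{0}(I_{L})$ (and $m^{2}+n^{2}>0$). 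Upper bound: test the quotient with $w_{\ast}(t)=1-\cos(2\pi t/L)\in H^{2}_{0}(I_{L})$, for which $\int w_{\ast}^{2}=3L/2$, $\int(w_{\ast}')^{2}=2\pi^{2}/L$, $\int(w_{\ast}'')^{2}=8\pi^{4}/L^{3}$, so $\lambda_{m,n}\le(m^{2}-n^{2})^{2}+\tfrac{8\pi^{2}(m^{2}+n^{2})}{3L^{2}}+\tfrac{16\pi^{4}}{3L^{4}}$, strictly below $\big((m-n)^{2}+4\pi^{2}/L^{2}\big)\big((m+n)^{2}+4\pi^{2}/L^{2}\big)=(m^{2}-n^{2})^{2}+\tfrac{8\pi^{2}(m^{2}+n^{2})}{L^{2}}+\tfrac{16\pi^{4}}{L^{4}}$ since $1/3<1$ and $m^{2}+n^{2}>0$. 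Neither bound uses the hypothesis $L>5/\sqrt{2m^{2}+2n^{2}}$.

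For \eqref{bound_all_eigenvalues}, with $m\in\N\setminus\{0\}$, write $c=\pi^{2}/L^{2}$ and $\phi(n):=\big((m-n)^{2}+c\big)\big((m+n)^{2}+c\big)=(n^{2}-m^{2})(n^{2}-m^{2}+2c)+c(4m^{2}+c)$, noting that $\phi(m)=c(4m^{2}+c)$ is exactly the claimed right-hand side. The hypothesis $L\ge\pi\sqrt{2}/\sqrt{2m-1}$ reads $c\le m-\tfrac12$; then a sign analysis of the two factors gives $\phi(n)\ge\phi(m)$ for every $n\in\Z$ — if $n^{2}\ge m^{2}$ both factors are $\ge0$; if $n^{2}<m^{2}$ then $n^{2}\le(m-1)^{2}=m^{2}-2m+1$ (since $m$ is a positive integer), so the second factor is $\le-(2m-1)+2c\le0$ while the first is $<0$. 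Combined with the strict lower bound just established, $\lambda_{m,n}>\phi(n)\ge\phi(m)$ for all $n$; and since $\lambda_{m,n}\ge\phi(n)\to\infty$ as $|n|\to\infty$, the infimum over $\Z$ is attained, hence still strictly larger than $\phi(m)=\big(4m^{2}+\pi^{2}/L^{2}\big)\pi^{2}/L^{2}$. The main obstacle is thus entirely concentrated in the reduction of the first paragraph: recognising the conjugation $Y=e^{t}w$ that renders the problem self-adjoint and factors the operator into two shifted one-dimensional Laplacians; after that, the three estimates are short.
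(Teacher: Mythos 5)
Your proposal is correct, and it takes a genuinely different route from the paper. I checked the pivotal computation: with $Y(t)=e^{t}w(t)$ the odd-order terms of \eqref{equa_diff_m_n0} do cancel and the zeroth-order coefficient combines with $(m^2-n^2-1)^2-4n^2$ to give exactly $(m^2-n^2)^2$, so the problem becomes $\bigl(-\partial_t^2+(m-n)^2\bigr)\bigl(-\partial_t^2+(m+n)^2\bigr)w=\lambda\,w$ with clamped conditions $w\in H^2_0(0,L)$; your Rayleigh-quotient identification, the iterated Poincaré inequalities (using that $w'\in H^1_0$ as well), the test function $1-\cos(2\pi t/L)$ with the stated integrals, and the integer minimisation of $\phi(n)=(n^2-m^2)(n^2-m^2+2c)+c(4m^2+c)$ under $c\le m-\tfrac12$ are all accurate, including the strictness arguments and the attainment of $\inf_{n}\lambda_{m,n}$ via $\phi(n)\to\infty$. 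The paper instead keeps the variable $Y$, writes the general solution from the roots of the (shifted) biquadratic characteristic polynomial, splits into three cases according to the sign of $\lambda-(m^2-n^2)^2$, computes the $4\times4$ boundary determinants explicitly, and in the oscillatory case reduces $\det A=0$ to the transcendental equation \eqref{fund_m}, locating the first zero of the associated function $\psi$ in $(\pi,2\pi)$; this is precisely where the hypothesis \eqref{log_bound1} enters, whereas your variational bounds in \eqref{estimate_log_lambda_m_n} hold unconditionally, so you actually prove the theorem under weaker hypotheses (only $m\in\N\setminus\{0\}$ and $\log(b/a)\ge\pi\sqrt2/\sqrt{2m-1}$ are needed for \eqref{bound_all_eigenvalues}), with a sharper upper bound to boot. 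What the paper's heavier route buys, and yours does not, is the explicit description of the whole spectrum as the zero set of $\psi$, which is reused afterwards (the remark following the theorem, the identification of the minimiser's angular frequency in Theorem \ref{main_neck_lm}, and the second eigenvalue problem, where the same equation reappears with shifted parameters); if you wanted your argument to replace the paper's entirely you would have to supply that spectral information separately. The only points left implicit in your sketch are routine: that a weak minimiser of the quotient bootstraps to an $H^4$ (hence classical) solution of the constant-coefficient ODE, and that every nontrivial solution of the boundary-value problem is an eigenfunction of the self-adjoint realisation, so that the minimal admissible $\lambda$ coincides with the bottom of the spectrum; both are standard and do not affect correctness.
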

    \begin{proof}
        Notice that
        \begin{align*}
            m^2+n^2-\sqrt{\lambda+4m^2n^2}\leq 0
        \end{align*}
        if and only if
        \begin{align*}
            m^4+2m^2n^2+n^4\leq \lambda+4m^2n^2,
        \end{align*}
        or
        \begin{align*}
            \lambda \geq m^4-2m^2n^2+n^4=(m^2-n^2)^2.
        \end{align*}
        Therefore, we need to distinguish three cases.
        
        \textbf{Case 1:} $\lambda<(m^2-n^2)^2$.

        Then, $P$ has four distinct real roots, and the solution $Y$ is given by
    \begin{align*}
        Y(t)=\mu_1e^{r_1t}+\mu_2e^{r_2t}+\mu_3e^{r_3t}+\mu_4e^{r_4t}.
    \end{align*}
    Since
    \begin{align*}
        Y'(t)=r_1\,\mu_1e^{r_1t}+r_2\,\mu_2e^{r_2t}+r_3\,\mu_3e^{r_3t}+r_4\,\mu_4e^{r_2t},
    \end{align*}
    the boundary conditions \eqref{boundary_conditions_m_n} show that the following linear system holds true:
    \begin{align*}
        A\,\mu=\begin{pmatrix}
            a^{r_1} & a^{r_2} & a^{r_3} & a^{r_4}\\
            b^{r_1} & b^{r_2} & b^{r_3} & b^{r_4}\\
            r_1\,a^{r_1} & r_2\,a^{r_2} & r_3\,a^{r_3} & r_4\,a^{r_4}\\
            r_1\,b^{r_2} & r_2\,b^{r_2} & r_3\,b^{r_3} & r_4\,b^{r_4}
        \end{pmatrix}\begin{pmatrix}
            \mu_1\\
            \mu_2\\
            \mu_3\\
            \mu_4
        \end{pmatrix}=0. 
    \end{align*}
    Now write for simplicity
    \begin{align*}
    \left\{\begin{alignedat}{1}
        &r_1=1+\lambda_1\\
        &r_2=1-\lambda_1\\
        &r_3=1+\lambda_2\\
        &r_4=1-\lambda_2,
        \end{alignedat}\right.
    \end{align*}
    where $\lambda_1=\sqrt{m^2+n^2+\sqrt{\lambda+4m^2n^2}}$, and $\lambda_2=\sqrt{m^2+n^2-\sqrt{\lambda+4m^2n^2}}$.

    In order to make computations more readable, define for $X,Y\in \C\setminus\ens{0}$ and $\alpha,\beta,\gamma,\delta\in \C$
    \begin{align*}
        B=\begin{pmatrix}
            X^{\alpha} & X^{\beta} & X^{\gamma} & X^{\delta}\\
            Y^{\alpha} & Y^{\beta} & Y^{\gamma} & Y^{\delta}\\
            \alpha\,X^{\alpha} & \beta\,X^{\beta} & \gamma\,X^{\gamma} & \delta\,X^{\delta}\\
            \alpha\,Y^{\alpha} & \beta\,Y^{\beta} & \gamma\,Y^{\gamma} & \delta\,Y^{\delta}
        \end{pmatrix}.
    \end{align*}
    We have
    \begin{align*}
        \det(B)&=X^{\alpha}\begin{vmatrix}
            Y^{\beta} & Y^{\gamma} & Y^{\delta}\\
            \beta\,X^{\beta} & \gamma\,X^{\gamma} & \delta\,X^{\delta}\\ 
            \beta\,Y^{\beta} & \gamma\,Y^{\gamma} & \delta\,Y^{\delta}
            \end{vmatrix}
            -X^{\beta}\begin{vmatrix}
            Y^{\alpha} & Y^{\gamma} & Y^{\delta}\\
            \alpha\,X^{\alpha} & \gamma\,X^{\gamma} & \delta\,X^{\delta}\\ 
            \alpha\,Y^{\alpha} & \gamma\,Y^{\gamma} & \delta\,Y^{\delta}
            \end{vmatrix}\\
            &+X^{\gamma}\begin{vmatrix}
            Y^{\alpha} & Y^{\beta} & Y^{\delta}\\
            \alpha\,X^{\alpha} & \beta\,X^{\beta} & \delta\,X^{\delta}\\ 
            \alpha\,Y^{\alpha} & \beta\,Y^{\beta} & \delta\,Y^{\delta}
            \end{vmatrix}
            -X^{\delta}\begin{vmatrix}
            Y^{\alpha} & Y^{\beta} & Y^{\gamma}\\
            \alpha\,X^{\alpha} & \beta\,X^{\beta} & \gamma\,X^{\gamma}\\ 
            \alpha\,Y^{\alpha} & \beta\,Y^{\beta} & \gamma\,Y^{\gamma}
            \end{vmatrix}\\
            &=X^{\alpha}\Big(\beta(\gamma-\delta)X^{\beta}Y^{\gamma+\delta}+\gamma(\delta-\beta)X^{\gamma}Y^{\beta+\delta}+\delta(\beta-\gamma)X^{\delta}Y^{\beta+\gamma}\Big)\\
            &-X^{\beta}\Big(\alpha(\gamma-\delta)X^{\alpha}Y^{\gamma+\delta}+\gamma(\delta-\alpha)X^{\gamma}Y^{\alpha+\delta}+\delta(\alpha-\gamma)X^{\delta}Y^{\alpha+\gamma}\Big)\\
            &+X^{\gamma}\Big(\alpha(\beta-\delta)X^{\alpha}Y^{\beta+\delta}+\beta(\delta-\alpha)X^{\beta}Y^{\alpha+\delta}+\delta(\alpha-\beta)X^{\delta}Y^{\alpha+\gamma}\Big)\\
            &-X^{\delta}\Big(\alpha(\beta-\gamma)X^{\alpha}Y^{\beta+\gamma}+\beta(\gamma-\alpha)X^{\beta}Y^{\alpha+\gamma}+\gamma(\alpha-\beta)X^{\gamma}Y^{\alpha+\beta}\Big)\\
            &=(\beta-\alpha)(\gamma-\delta)X^{\alpha+\beta}Y^{\gamma+\delta}+(\gamma-\alpha)(\delta-\beta)X^{\alpha+\gamma}Y^{\beta+\delta}+(\delta-\alpha)(\beta-\gamma)X^{\alpha+\delta}Y^{\beta+\gamma}\\
            &+(\beta-\gamma)(\delta-\alpha)X^{\beta+\gamma}Y^{\alpha+\delta}+(\beta-\delta)(\alpha-\gamma)X^{\beta+\delta}Y^{\alpha+\gamma}+(\delta-\gamma)(\alpha-\beta)X^{\gamma+\delta}Y^{\alpha+\beta}.
    \end{align*}
    Now, using
    \begin{align*}
        \left\{\begin{alignedat}{1}
            \alpha=1+\lambda_1\\
            \beta=1-\lambda_1\\
            \gamma=1+\lambda_2\\
            \delta=1-\lambda_2
        \end{alignedat}\right.
    \end{align*}
    the expression becomes
            \small
    \begin{align*}
        &\det(B)=(-2\lambda_1)(2\lambda_2)X^2Y^2+(\lambda_2-\lambda_1)(-\lambda_2+\lambda_1)X^{2+\lambda_1+\lambda_2}Y^{2-\lambda_1-\lambda_2}\\
        &+(-\lambda_2-\lambda_1)(-\lambda_1-\lambda_2)X^{2+\lambda_1-\lambda_2}Y^{2-\lambda_1+\lambda_2}
        +(-\lambda_1-\lambda_2)(-\lambda_2-\lambda_1)X^{2-\lambda_1+\lambda_2}Y^{2+\lambda_1-\lambda_2}\\
        &+(-\lambda_1+\lambda_2)(\lambda_1-\lambda_2)X^{2-\lambda_1-\lambda_2}Y^{2+\lambda_1+\lambda_2}+(-2\lambda_2)(2\lambda_1)X^2Y^2\\
        &=-(\lambda_1-\lambda_2)^2\left(X^{2+\lambda_1+\lambda_2}Y^{2-\lambda_1-\lambda_2}+X^{2-\lambda_1-\lambda_2}Y^{2+\lambda_1+\lambda_2}\right)\\
        &+(\lambda_1+\lambda_2)^2\left(X^{2+\lambda_1-\lambda_2}Y^{2-\lambda_1+\lambda_2}+X^{2-\lambda_1+\lambda_2}\right)-8\lambda_1\lambda_2X^2Y^2\\
        &=X^{2}Y^2\left((\lambda_1+\lambda_2)^2\left(\left(\frac{X}{Y}\right)^{\lambda_1-\lambda_2}+\left(\frac{Y}{X}\right)^{\lambda_1-\lambda_2}\right)-\left(\lambda_1-\lambda_2\right)^2\left(\left(\frac{X}{Y}\right)^{\lambda_1+\lambda_2}+\left(\frac{Y}{X}\right)^{\lambda_1+\lambda_2}\right)-8\lambda_1\lambda_2\right).
    \end{align*}
    \normalsize
    Therefore, we have
    \small
    \begin{align}\label{eq_det}
        \det(A)=a^2b^2\left((\lambda_1+\lambda_2)^2\left(\left(\frac{a}{b}\right)^{\lambda_1-\lambda_2}+\left(\frac{b}{a}\right)^{\lambda_1-\lambda_2}\right)-\left(\lambda_1-\lambda_2\right)^2\left(\left(\frac{a}{b}\right)^{\lambda_1+\lambda_2}+\left(\frac{b}{a}\right)^{\lambda_1+\lambda_2}\right)-8\lambda_1\lambda_2\right).
    \end{align}
    \normalsize
    This leads us to study (for all $\lambda_1>\lambda_2>0$) the function
    \begin{align*}
        g(x)=\left(\lambda_1+\lambda_2\right)^2\left(x^{\lambda_1-\lambda_2}+\frac{1}{x^{\lambda_1-\lambda_2}}\right)-\left(\lambda_1-\lambda_2\right)^2\left(x^{\lambda_1+\lambda_2}+\frac{1}{x^{\lambda_1+\lambda_2}}\right)-8\lambda_1\lambda_2.
    \end{align*}
    Notice that 
    \begin{align*}
        g(1)=2(\lambda_1+\lambda_2)^2-2(\lambda_1-\lambda_2)^2-8\lambda_1\lambda_2=0.
    \end{align*}
    We will show that $g(x)<0$ for all $x>1$ (or equivalently $0<x<1$ by symmetry). In fact, we find it make convenient to work with
    \begin{align*}
        f(x)=x^{\lambda_1+\lambda_2}g(x)=\left(\lambda_1+\lambda_2\right)^2\left(x^{2\lambda_1}+x^{2\lambda_2}\right)-(\lambda_1-\lambda_2)^2\left(x^{2\lambda_1+2\lambda_2}+1\right)-8\lambda_1\lambda_2x^{\lambda_1+\lambda_2}
    \end{align*}
    For notational convenience, write $\alpha=\lambda_1$ and $\beta=\lambda_2$, so that
    \begin{align*}
        f(x)=(\alpha+\beta)^2(x^{2\alpha}+x^{2\beta})-(\alpha-\beta)^2(x^{2\alpha+2\beta}+1)-8\alpha\beta\, x^{\alpha+\beta}.
    \end{align*}
    We have
    \begin{align*}
        xf'(x)&=(\alpha+\beta)^2(2\alpha\, x^{2\alpha}+2\beta\,x^{2\beta})-2(\alpha+\beta)(\alpha-\beta)^2x^{2\alpha+2\beta}-8\alpha\beta(\alpha+\beta)x^{\alpha+\beta}\\
        x\frac{d}{dx}\left(xf'(x)\right)&=(\alpha+\beta)^2(4\alpha^2x^{2\alpha}+4\beta^2x^{2\beta})-4(\alpha+\beta)^2(\alpha-\beta)^2x^{2\alpha+2\beta}-8\alpha\beta(\alpha+\beta)^2x^{\alpha+\beta}\\
        &=4(\alpha+\beta)^2\left(\alpha^2x^{2\alpha}+\beta^2x^{2\beta}-2\alpha\beta\, x^{\alpha+\beta}-(\alpha-\beta)^2x^{2\alpha+2\beta}\right)\\
        &=4(\alpha+\beta)^2\left((\alpha\, x^{\alpha}-\beta\, x^{\beta})^2-(\alpha-\beta)^2x^{2\beta+2\beta}\right)\\
        &=4(\alpha+\beta)^2\left(\alpha\, x^{\alpha}-\beta\, x^{\beta}+(\alpha-\beta)x^{\alpha+\beta}\right)\left(\alpha\, x^{\alpha}-\beta\, x^{\beta}-(\alpha-\beta)x^{\alpha+\beta}\right)\\
        &=-4(\alpha+\beta)^2x^{\beta}(\alpha\, x^{\alpha}-\beta\, x^{\beta}+(\alpha-\beta)x^{\alpha+\beta})((\alpha-\beta)x^{\alpha}-\alpha\, x^{\alpha-\beta}+\beta).
    \end{align*}
    We are led to study the function
    \begin{align*}
        h(x)=(\alpha-\beta)x^{\alpha}-\alpha\, x^{\alpha-\beta}+\beta.
    \end{align*}
    We have
    \begin{align}\label{var_lemma1}
        x\, h'(x)=\alpha(\alpha-\beta)x^{\alpha}-\alpha(\alpha-\beta)x^{\alpha-\beta}>0
    \end{align}
    for all $x>1$. Therefore, $h$ is a strictly increasing function on $(1,\infty)$, and since $h(1)=0$, we deduce that $h(x)>0$ for all $x>1$, which implies that 
    \begin{align*}
        x\dfrac{d}{dx}(xf'(x))<0\qquad\text{for all}\;\,x>1.
    \end{align*}
    Therefore, $x\mapsto xf'(x)$ is strictly decreasing on $(1,\infty)$, and since 
    \begin{align*}
        f'(1)=2(\alpha+\beta)^3-2(\alpha+\beta)(\alpha-\beta)^2-8\alpha\beta(\alpha+\beta)=2(\alpha+\beta)\left((\alpha+\beta)^2-(\alpha-\beta)^2-4\alpha\beta\right)=0,
    \end{align*}
    we deduce that $f'(x)<0$ for all $x>1$, which finally implies that $f$ is a strictly decreasing function on $(1,\infty)$. Therefore, we have
    \begin{align*}
        f(x)<0\qquad\text{for all}\;\,x>1.
    \end{align*}
    Coming back to \eqref{eq_det}, we deduce that $\det(A)<0$, which implies that $Y=0$.

        \textbf{Case 2:} $\lambda=(m^2-n^2)^2$.

        Then, the roots are given by $r_3=r_4=1$, and $r_1=1+\sqrt{2m^2+2n^2}=1+\lambda_1$, $r_2=1-\sqrt{2m^2+2n^2}=1-\lambda_1$. Therefore, the solution $Y$ is given by
        \begin{align*}
            Y(t)=\mu_1e^{(1+\lambda_1)t}+\mu_2e^{(1-\lambda_1)t}+\mu_3e^t+\mu_4\,t\,e^t.
        \end{align*}
        The boundary conditions \eqref{boundary_conditions_m_n} are equivalent to
        \begin{align}
            A\begin{pmatrix}
                \mu_1\\
                \mu_2\\
                \mu_3\\
                \mu_4
            \end{pmatrix}=\begin{pmatrix}
                a^{1+\lambda_1} & a^{1-\lambda_1} & a & a\log(a)\\
                b^{1+\lambda_1} & b^{1-\lambda_1} & b & b\log(b)\\
                (1+\lambda_1)a^{1+\lambda_1} & (1-\lambda_1)a^{1-\lambda_1} & a & (a+1)\log(a)\\
                (1+\lambda_1)b^{1+\lambda_1} & (1-\lambda_1)b^{1-\lambda_1} & b & (b+1)\log(b)
            \end{pmatrix}\begin{pmatrix}
                \mu_1\\
                \mu_2\\
                \mu_3\\
                \mu_4
            \end{pmatrix}=0.
        \end{align}
        
        Let $X,Y\in \C\setminus\ens{0}$, and $\alpha,\beta,\gamma\in \C$, and define the following matrix
        \begin{align*}
            B=\begin{pmatrix}
                X^{\alpha} & X^{\beta} & X^{\gamma} & \log(X)X^{\gamma}\\
                Y^{\alpha} & Y^{\beta} & Y^{\gamma} & \log(Y)Y^{\gamma}\\
                \alpha X^{\alpha} & \beta X^{\beta} & \gamma X^{\gamma} & (1+\gamma\log(X))X^{\gamma}\\
                \alpha Y^{\alpha} & \beta Y^{\beta} & \gamma Y^{\gamma} & (1+\gamma\log(Y))Y^{\gamma}
            \end{pmatrix}.
        \end{align*}
        Expanding on the fourth column, we get
        \begin{align}\label{determinant_equality_case1}
            &\det(B)=-\log(X)X^{\gamma}\begin{vmatrix}
                Y^{\alpha} & Y^{\beta} & Y^{\gamma}\\
                \alpha X^{\alpha} & \beta X^{\beta} & \gamma X^{\gamma}\nonumber\\
                \alpha Y^{\alpha} & \beta Y^{\beta} & \gamma Y^{\gamma}
            \end{vmatrix}+\log(Y)Y^{\gamma}\begin{vmatrix}
                X^{\alpha} & X^{\beta} & X^{\gamma}\\
                \alpha X^{\alpha} & \beta X^{\beta} & \gamma X^{\gamma}\nonumber\\
                \alpha Y^{\alpha} & \beta Y^{\beta} & \gamma Y^{\gamma}
            \end{vmatrix}\\
            &-\left(1+\gamma\log(X)\right)X^{\gamma}\begin{vmatrix}
                X^{\alpha} & X^{\beta} & X^{\gamma}\\
                Y^{\alpha} & Y^{\beta} & Y^{\gamma}\\
                \alpha Y^{\alpha} & \beta Y^{\beta} & \gamma Y^{\gamma}
            \end{vmatrix}+\left(1+\gamma\log(Y)\right)Y^{\gamma}
            \begin{vmatrix}
                X^{\alpha} & X^{\beta} & X^{\gamma}\\
                Y^{\alpha} & Y^{\beta} & Y^{\gamma}\\
                \alpha X^{\alpha} & \beta X^{\beta} & \gamma X^{\gamma}
            \end{vmatrix}\nonumber\\
            &=-\log(X)\left(\beta(\gamma-\alpha)X^{\beta+\gamma}Y^{\alpha+\gamma}+\colorcancel{\gamma(\alpha-\beta)X^{2\gamma}Y^{\alpha+\beta}}{red}+\alpha(\beta-\gamma)X^{\alpha+\gamma}Y^{\beta+\gamma}\right)\nonumber\\
            &-\log(X)\left(\gamma(\gamma-\beta)X^{\alpha+\gamma}Y^{\beta+\gamma}+\gamma(\alpha-\gamma)X^{\beta+\gamma}Y^{\alpha+\gamma}+\colorcancel{\gamma(\beta-\alpha)X^{2\gamma}Y^{\alpha+\beta}}{red}\right)\nonumber\\
            &+\log(Y)\left(\colorcancel{\gamma(\beta-\alpha)X^{\alpha+\beta}Y^{2\gamma}}{blue}+\alpha(\gamma-\beta)X^{\beta+\gamma}Y^{\alpha+\gamma}+\beta(\alpha-\gamma)X^{\alpha+\gamma}Y^{\beta+\gamma}\right)\nonumber\\
            &+\log(Y)\left(\gamma(\gamma-\alpha)X^{\alpha+\gamma}Y^{\beta+\gamma}+\colorcancel{\gamma(\alpha-\beta)X^{\alpha+\beta}Y^{2\gamma}}{blue}+\gamma(\beta-\gamma)X^{\beta+\gamma}Y^{\alpha+\gamma}\right)\nonumber\\
            &-\left((\gamma-\beta)X^{\alpha+\gamma}Y^{\beta+\gamma}+(\alpha-\gamma)X^{\beta+\gamma}Y^{\alpha+\gamma}+(\beta-\alpha)X^{2\gamma}Y^{\alpha+\beta}\right)\nonumber\\
            &+(\gamma-\alpha)X^{\alpha+\gamma}Y^{\beta+\gamma}+(\alpha-\beta)X^{\alpha+\beta}Y^{2\gamma}+(\beta-\gamma)X^{\beta+\gamma}Y^{\alpha+\gamma}\nonumber\\
            &=-\log(X)\Big((\beta-\gamma)(\gamma-\alpha)X^{\beta+\gamma}Y^{\beta+\gamma}+(\alpha-\gamma)(\beta-\gamma)X^{\alpha+\gamma}Y^{\beta+\gamma}\Big)\nonumber\\
            &+\log(Y)\Big((\alpha-\gamma)(\gamma-\beta)X^{\beta+\gamma}Y^{\alpha+\gamma}+(\beta-\gamma)(\alpha-\gamma)X^{\alpha+\gamma}Y^{\beta+\gamma}\Big)\nonumber\\
            &+(\beta-\alpha)\Big(X^{\alpha+\gamma}Y^{\beta+\gamma}+X^{\beta+\gamma}Y^{\alpha+\gamma}-\left(X^{2\gamma}Y^{\alpha+\beta}+X^{\alpha+\beta}Y^{2\gamma}\right)\Big)\nonumber\\
            &=(\alpha-\gamma)(\beta-\gamma)\left(X^{\alpha+\gamma}Y^{\beta+\gamma}-X^{\beta+\gamma}Y^{\alpha+\gamma}\right)\log\left(\frac{Y}{X}\right)\nonumber\\
            &+(\alpha-\beta)\Big(X^{2\gamma}Y^{\alpha+\beta}+X^{\alpha+\beta}Y^{2\gamma}-\big(X^{\alpha+\gamma}Y^{\beta+\gamma}+X^{\beta+\gamma}Y^{\alpha+\gamma}\big)\Big).
        \end{align}
        Taking now
        \begin{align}\label{determinant_equality_case2}
        \left\{\begin{alignedat}{1}
            &\alpha=1+\sqrt{2m^2+2n^2}=1+\lambda_1\\
            &\beta=1-\sqrt{2m^2+2n^2}=1-\lambda_1\\
            &\gamma=1
            \end{alignedat}\right.
        \end{align}
        we get (if $X=a$ and $Y=b$)
        \begin{align}\label{determinant_equality_case4}
            \det(A)&=-\lambda_1^2\left(a^{2+\lambda_1}b^{2-\lambda_1}-a^{2-\lambda_1}b^{2+\lambda_1}\right)\log\left(\frac{b}{a}\right)+2\lambda_1\left(a^2b^2+a^2b^2-\left(a^{2+\lambda_1}b^{2-\lambda_1}+a^{2-\lambda_1}b^{2+\lambda_1}\right)\right)\nonumber\\
            &=\lambda_1(ab)^2\left(4-2\left(\left(\frac{b}{a}\right)^{\lambda_1}+\bigg(\frac{a}{b}\bigg)^{\lambda_1}\right)+\lambda_1\left(\left(\frac{b}{a}\right)^{\lambda_1}-\bigg(\frac{a}{b}\bigg)^{\lambda_1}\right)\log\left(\frac{b}{a}\right)\right).
        \end{align}
        We are therefore led to consider on $\R_+$ the function 
        \begin{align*}
            g(x)=2\left((1+x)^{\beta}+\frac{1}{(1+x)^{\beta}}\right)-\beta\left((1+x)^{\beta}-\frac{1}{(1+x)^{\beta}}\right)\log(1+x)-4
        \end{align*}
        where $\beta=\sqrt{2m^2+2n^2}\geq \sqrt{2}$. Notice that $f(0)=0$. Then, we compute
        \begin{align*}
            (1+x)g'(x)
            &=\beta\left((1+x)^{\beta}-\frac{1}{(1+x)^{\beta}}\right)-\beta^2\left((1+x)^{\beta}+\frac{1}{(1+x)^{\beta}}\right)\log(1+x)\\
            &=\frac{\beta}{2(1+x)^{\beta}}\left(2((1+x)^{2\beta}-1)-2\beta\left((1+x)^{2\beta}+1\right)\log(1+x)\right).
        \end{align*}
        Letting $\alpha=2\beta$, we are now led to study the function
        Let $\alpha\geq \sqrt{2}$ and consider
        \begin{align*}
            f(x)&=2\left((1+x)^{\alpha}-1\right)-\alpha\left((1+x)^{\alpha}+1\right)\log(1+x).
        \end{align*}
        We have
        \begin{align*}
            f'(x)&=2\alpha(1+x)^{\alpha-1}-\alpha^2(1+x)^{\alpha-1}\log(1+x)-\alpha(1+x)^{\alpha-1}\\
            &=\alpha(1+x)^{\alpha-1}(1-\alpha\log(1+x))-\frac{\alpha}{1+x}=-\alpha(1+x)^{\alpha-1}\left(\frac{1}{(1+x)^{\alpha}}+\alpha\log(1+x)-1\right).
        \end{align*}
        Now, let
        \begin{align*}
            h(x)=\frac{1}{(1+x)^{\alpha}}+\alpha\log(1+x)-1.
        \end{align*}
        We have
        \begin{align*}
            h'(x)=-\frac{\alpha}{(1+x)^{\alpha+1}}+\frac{\alpha}{1+x}=\frac{\alpha}{(1+x)^{\alpha+1}}\left((1+x)^{\alpha}-1\right)\geq 0.
        \end{align*}
        Therefore, $h$ is an increasing function, which shows that
        \begin{align*}
            h(x)>h(0)=0
        \end{align*}
        for all $x> 0$. In particular, $f'(x)\leq 0$, which shows that $f$ is decreasing and $f(x)\leq f(0)=0$ (and even $f(x)<0$ for all $x>0$). Therefore, we deduce that $g'(x)<0$ for all $x>0$, which finally implies that $\det(A)<0$. As a consequence, our solution $Y$ vanishes identically.
        \textbf{Case 3:} $\lambda>(m^2-n^2)^2$.

         Then we have 
    \begin{align*}
    \left\{\begin{alignedat}{1}
        r_1&=1+\sqrt{\sqrt{\lambda+4m^2n^2}+m^2+n^2}\\
        r_2&=1-\sqrt{\sqrt{\lambda+4m^2n^2}+m^2+n^2}\\
        r_3&=1+i\sqrt{\sqrt{\lambda+4m^2n^2}-(m^2+n^2)}\\
        r_4&=1-i\sqrt{\sqrt{\lambda+4m^2n^2}-(m^2+n^2)},
        \end{alignedat}\right.
    \end{align*}
    and if $\lambda_1=\sqrt{\sqrt{\lambda+4m^2n^2}+m^2+n^2}$ and $\lambda_2=\sqrt{\sqrt{\lambda+4m^2n^2}-(m^2+n^2)}$, there exists  $\mu_1,\mu_2\in \R$ and $\mu_3,\mu_4\in \C$ such that
    \begin{align}\label{sol}
        Y(t)=\mu_1e^{(1+\lambda_1)t}+\mu_2e^{(1-\lambda_1)t}+\mu_3e^{(1+i\,\lambda_2)t}+\mu_4e^{(1-i\,\lambda_2)t}
    \end{align}
    we get thanks to \eqref{boundary_conditions_m_n} the system
    \begin{align}\label{complex_system}
        \begin{pmatrix}
            a^{1+\lambda_1} & a^{1-\lambda_1} & a^{1+i\,\lambda_2} & a^{1-i\,\lambda_2}\\
            b^{1+\lambda_1} & b^{1-\lambda_1} & b^{1+i\,\lambda_2} & b^{1-i\,\lambda_2}\\
            (1+\lambda_1)a^{1+\lambda_1} & (1-\lambda_1)a^{1-\lambda_1} & (1+i\,\lambda_2)a^{1+i\,\lambda_2} & (1-i\,\lambda_2)a^{1-i\,\lambda_2}\\
            (1+\lambda_1)b^{1+\lambda_1} & (1-\lambda_1)b^{1-\lambda_1} & (1+i\,\lambda_2)b^{1+i\,\lambda_2} & (1-i\,\lambda_2)b^{1-i\,\lambda_2}
        \end{pmatrix}\begin{pmatrix}
            \mu_1\\
            \mu_2\\
            \mu_3\\
            \mu_4
        \end{pmatrix}=0.
    \end{align}
    The underlying matrix $A$ falls is a specific case of the matrix studied in \textbf{Case 1}. Indeed, we have
    \begin{align*}
    \begin{vmatrix}
            X^{\alpha} & X^{\beta} & X^{\gamma} & X^{\delta}\\
            Y^{\alpha} & Y^{\beta} & Y^{\gamma} & Y^{\delta}\\
            \alpha\,X^{\alpha} & \beta\,X^{\beta} & \gamma\,X^{\gamma} & \delta\,X^{\delta}\\
            \alpha\,Y^{\alpha} & \beta\,Y^{\beta} & \gamma\,Y^{\gamma} & \delta\,Y^{\delta}
        \end{vmatrix}&=
        (\beta-\alpha)(\gamma-\delta)X^{\alpha+\beta}Y^{\gamma+\delta}+(\gamma-\alpha)(\delta-\beta)X^{\alpha+\gamma}Y^{\beta+\delta}\\
        &+(\delta-\alpha)(\beta-\gamma)X^{\alpha+\delta}Y^{\beta+\gamma}
        +(\beta-\gamma)(\delta-\alpha)X^{\beta+\gamma}Y^{\alpha+\delta}\\
        &+(\beta-\delta)(\alpha-\gamma)X^{\beta+\delta}Y^{\alpha+\gamma}+(\delta-\gamma)(\alpha-\beta)X^{\gamma+\delta}Y^{\alpha+\beta}.
    \end{align*}
    Letting
    \begin{align*}
        \left\{\begin{alignedat}{1}
            \alpha&=1+\lambda_1\\
            \beta&=1-\lambda_1\\
            \gamma&=1+i\,\lambda_2\\
            \delta&=1-i\,\lambda_2
        \end{alignedat}\right.
    \end{align*}
    and $(X,Y)=(a,b)$, we get
    \begin{align*}
        &\det(A)=-2\lambda_1(2\,i\,\lambda_2)X^{2}Y^2+(i\,\lambda_2-\lambda_1)(-i\,\lambda_2+\lambda_1)X^{2+\lambda_1+i\,\lambda_2}Y^{2-\lambda_1-i\,\lambda_2}\\
        &+(-i\,\lambda_2-\lambda_1)(-\lambda_1-i\,\lambda_2)X^{2+\lambda_1-i\,\lambda_2}Y^{2-\lambda_1+i\,\lambda_2}+(-\lambda_1-i\,\lambda_2)(-i\,\lambda_2-\lambda_1)X^{2-\lambda_1+i\,\lambda_2}Y^{2+\lambda_1-i\,\lambda_2}\\
        &+(-\lambda_1+i\,\lambda_2)(\lambda_1-i\,\lambda_2)X^{2-\lambda_1-i\,\lambda_2}Y^{2+\lambda_1+i\,\lambda_2}+(-2i\,\lambda_2)(2\lambda_1)X^2Y^2\\
        &=-8\,i\,\lambda_1\lambda_2X^2Y^2-(\lambda_1-i\,\lambda_2)^2\left(X^{2+\lambda_1+i\,\lambda_2}Y^{2-\lambda_1-i\,\lambda_2}+X^{2-\lambda_1-i\,\lambda_2}Y^{2+\lambda_1+i\,\lambda_2}\right)\\
        &+(\lambda_1+i\,\lambda_2)^2\left(X^{2+\lambda_1-i\,\lambda_2}Y^{2-\lambda_1+i\,\lambda_2}+X^{2-\lambda_1+i\,\lambda_2}Y^{2+\lambda_1-i\,\lambda_2}\right),
    \end{align*}
    and finally, if $\zeta=\lambda_1+i\,\lambda_2$
    \begin{align}\label{eq_det_complex}
        \det(A)=a^{2}b^{2}\left(\zeta^2\left(\bigg(\frac{a}{b}\bigg)^{\bar{\zeta}}+\left(\frac{b}{a}\right)^{\bar{\zeta}}\right)-\bar{\zeta}^2\left(\bigg(\frac{a}{b}\bigg)^{\zeta}+\left(\frac{b}{a}\right)^{\zeta}\right)-8\,i\,\Re(\zeta)\Im(\zeta)\right).
    \end{align}   
    We could have algebraically replaced $\lambda_2$ by $i\,\lambda_2$ in \eqref{eq_det}. Now, it is apparent that
    \begin{align*}
        \det(A)=2\,i\,a^{2}b^2\left(\Im\left(\zeta^2\left(\bigg(\frac{a}{b}\bigg)^{\bar{\zeta}}+\left(\frac{b}{a}\right)^{\bar{\zeta}}\right)\right)-4\,\Re(\zeta)\Im(\zeta)\right).
    \end{align*}
    Now, we have for all $t>0$
    \begin{align*}
        \Im\left(\zeta^2t^{\bar{\zeta}}\right)&=\Im\left((\lambda_1^2-\lambda_2^2+2\,i\,\lambda_1\lambda_2)t^{\lambda_1}\left(\cos(\lambda_2\log(t))-i\,\sin(\lambda_2\log(t))\right)\right)\\
        &=-(\lambda_1^2-\lambda_2^2)t^{\lambda_1}\sin(\lambda_2\log(t))+2\,\lambda_1\lambda_2t^{\lambda_1}\cos\left(\lambda_2\log(t)\right).
    \end{align*}
    Therefore, we deduce that $\det(A)=0$ if and only if
    \begin{align*}
        &-(\lambda_1^2-\lambda_2^2)\left(\frac{a}{b}\right)^{\lambda_1}\sin\left(\lambda_2\log\bigg(\frac{a}{b}\bigg)\right)+2\lambda_1\lambda_2\bigg(\frac{a}{b}\bigg)^{\lambda_1}\cos\left(\lambda_2\log\left(\frac{a
        }{b}\right)\right)\\
        &-(\lambda_1^2-\lambda_2^2)\left(\frac{b}{a}\right)^{\lambda_1}\sin\left(\lambda_2\log\left(\frac{b}{a}\right)\right)+2\lambda_1\lambda_2\left(\frac{b}{a}\right)^{\lambda_1}\cos\left(\log\left(\frac{b}{a}\right)\right)-4\lambda_1\lambda_2=0,
    \end{align*}
    that we finally rewrite as
    \small
    \begin{align}\label{fun_eq_gen}
        2\left(\left(\left(\frac{b}{a}\right)^{\lambda_1}+\bigg(\frac{a}{b}\bigg)^{\lambda_1}\right)\cos\left(\lambda_2\log\left(\frac{b}{a}\right)\right)-2\right)\lambda_1\lambda_2-\left(\lambda_1^2-\lambda_2^2\right)\left(\left(\frac{b}{a}\right)^{\lambda_1}+\bigg(\frac{a}{b}\bigg)^{\lambda_1}\right)\sin\left(\lambda_2\log\left(\frac{b}{a}\right)\right)=0.
    \end{align}
    \normalsize
    This is the universal equation that will keep appearing in all dimension and for all operators $\leb_m$ (where $m\geq 1$).  
    Finally,  we deduce that this equation is equivalent to
    \begin{align}\label{fund_m}
        \left(\left(1+\left(\frac{b}{a}\right)^{2\lambda_1}\right)\cos\left(\lambda_2\log\left(\frac{b}{a}\right)\right)-2\left(\frac{b}{a}\right)^{\lambda_1}\right)\lambda_1\lambda_2=(m^2+n^2)\left(\left(\frac{b}{a}\right)^{2\lambda_1}-1\right)\sin\left(\lambda_2\log\left(\frac{b}{a}\right)\right).
    \end{align}
    First, make the change of variable $R=\log\left(\dfrac{b}{a}\right)$. We have
    $
        \lambda_1=\sqrt{2m^2+2n^2+\lambda_2^2}
    $,
    which leads us to introduce the following function
    \begin{align}
        \varphi(t)&=\left(\left(1+e^{2R\sqrt{2m^2+2n^2+t^2}}\right)\cos(Rt)-2\,e^{R\sqrt{2m^2+2n^2+t^2}}\right)t\sqrt{2m^2+2n^2+t^2}\nonumber\\
        &-m^2\left(e^{2R\sqrt{2m^2+2n^2+t^2}}-1\right)\sin(Rt).
    \end{align}
    Make a second change of variable $Rt=\theta$, and define $\varphi(t)=\psi(\theta)$ so that
    \begin{align}
        \psi(\theta)&=\left(\left(1+e^{2\sqrt{(2m^2+2n^2)R^2+\theta^2}}\right)\cos(\theta)-2\,e^{\sqrt{(2m^2+2n^2)R^2+\theta^2}}\right)\frac{\theta}{R}\sqrt{(2m^2+2n^2)+\left(\frac{\theta^2}{R^2}\right)}\nonumber\\
        &-m^2\left(e^{2\sqrt{(2m^2+2n^2)R^2+\theta^2}}-1\right)\sin(\theta)\nonumber\\
        &=\frac{1}{R^2}\left(\left(1+e^{2\sqrt{(2m^2+2n^2)R^2+\theta^2}}\right)\cos(\theta)-2\,e^{\sqrt{(2m^2+2n^2)R^2+\theta^2}}\right)\theta\sqrt{(2m^2+2n^2)R^2+\theta^2}\nonumber\\
        &-(m^2+n^2)\left(e^{2\sqrt{(2m^2+2n^2)R^2+\theta^2}}-1\right)\sin(\theta).
    \end{align}
    In order to simplify notations, let
    \begin{align}\label{var_renamed}
        \left\{\begin{alignedat}{1}
        a^2&=(2m^2+2n^2)R^2\\
        b&=\frac{1}{R}\\
        c^2&=m^2+n^2
        \end{alignedat}\right.
    \end{align}
    so that
    \begin{align}\label{def_psi}
        \psi(\theta)=b^2\left(\left(1+e^{2\sqrt{a^2+\theta^2}}\right)\cos(\theta)-2\,e^{\sqrt{a^2+\theta^2}}\right)\theta\sqrt{a^2+\theta^2}-c^2\left(e^{2\sqrt{a^2+\theta^2}}-1\right)\sin(\theta).
    \end{align}
    As previously, we want to estimate the first strictly positive zero of $\psi$. Notice that 
    \begin{align}\label{second_bound}
    \left\{\begin{alignedat}{1}
        &\psi(2\pi)=b^2\left(e^{\sqrt{a^2+(2\pi)^2}}-1\right)^22\pi\sqrt{a^2+(2\pi)^2}>0\\
        &\psi(\pi)=-b^2\left(e^{\sqrt{a^2+\pi^2}}+1\right)^2\pi\sqrt{a^2+\pi^2}<0.
        \end{alignedat}\right.
    \end{align}
    Therefore, we will show that $\psi(\theta)<0$ for all $0<\theta\leq \pi$, which will imply that the first non-trivial zero $\alpha$ of $\psi$ satisfies the inequality $\pi<\alpha<2\pi$. First, notice that since $\cos(\theta)\leq 0$ for all $\dfrac{\pi}{2}\leq \theta\leq \pi$, we have
    \begin{align}
        \psi(\theta)\leq -2\,e^{\sqrt{a^2+\theta^2}}\theta\sqrt{a^2+\theta^2}<0\quad \text{for all}\;\, \frac{\pi}{2}\leq \theta\leq \pi.
    \end{align}
    Therefore, the interval $\left[\dfrac{\pi}{2},\pi\right]$ is a zero-free region of $\psi$, and we need only show that $\psi(\theta)<0$ on $\left(0,\dfrac{\pi}{2}\right)$.
    
    We have
    \begin{align}\label{der_psi}
        \psi'(\theta)&=b^2\left(\frac{2\theta}{\sqrt{a^2+\theta^2}}e^{2\sqrt{a^2+\theta^2}}\cos(\theta)-\left(1+e^{2\sqrt{a^2+\theta^2}}\right)\sin(\theta)-\frac{2\theta}{\sqrt{a^2+\theta^2}}e^{\sqrt{a^2+\theta^2}}\right)\theta\sqrt{a^2+\theta^2}\nonumber\\
        &+b^2\left(\left(1+e^{2\sqrt{a^2+\theta^2}}\right)\cos(\theta)-2e^{\sqrt{a^2+\theta^2}}\right)\left(\sqrt{a^2+\theta^2}+\frac{\theta^2}{\sqrt{a^2+\theta^2}}\right)\nonumber\\
        &-\frac{2c^2\theta}{\sqrt{a^2+\theta^2}}e^{2\sqrt{a^2+\theta^2}}\sin(\theta)-c^2\left(e^{2\sqrt{a^2+\theta^2}}-1\right)\cos(\theta)\nonumber\\
        &=\cos(\theta)\left(e^{2\sqrt{a^2+\theta^2}}\left(2b^2\theta^2+b^2\sqrt{a^2+\theta^2}+\frac{b^2}{\sqrt{a^2+\theta^2}}-c^2\right)+b^2\sqrt{a^2+\theta^2}+\frac{b^2\theta^2}{\sqrt{a^2+\theta^2}}+c^2\right)\nonumber\\
        &-\theta\sin(\theta)\left(e^{2\sqrt{a^2+\theta^2}}\left(\sqrt{a^2+\theta^2}+\frac{2c^2\theta}{\sqrt{a^2+\theta^2}}\right)+\sqrt{a^2+\theta^2}\right)\nonumber\\
        &-2b^2e^{\sqrt{a^2+\theta^2}}\left(\theta^2+\sqrt{a^2+\theta^2}+\frac{\theta^2}{\sqrt{a^2+\theta^2}}\right)\nonumber\\
        &=\cos(\theta)h_1(\theta)-\theta\sin(\theta)h_2(\theta)-h_3(\theta).
    \end{align}
    We trivially have for all $\theta\in \left[0,\dfrac{\pi}{2}\right]$ the inequality
    \begin{align*}
        \psi'(\theta)\leq \cos(\theta)h_1(\theta),
    \end{align*}
    where
    \begin{align}\label{def_h1}
        h_1(\theta)=e^{2\sqrt{a^2+\theta^2}}\left(2b^2\theta^2+b^2\sqrt{a^2+\theta^2}+\frac{b^2}{\sqrt{a^2+\theta^2}}-c^2\right)+b^2\sqrt{a^2+\theta^2}+\frac{b^2\theta^2}{\sqrt{a^2+\theta^2}}+c^2.
    \end{align}
    Now, consider the function
    \begin{align*}
        f(\theta)&=2b^2\theta^2+b^2\sqrt{a^2+\theta^2}+\frac{b^2}{\sqrt{a^2+\theta^2}}-c^2\\
        &=\frac{2\theta^2}{R^2}+\frac{1}{R^2}\sqrt{(2m^2+2n^2)R^2+\theta^2}+\frac{1}{R^2\sqrt{(2m^2+2n^2)R^2+\theta^2}}-(m^2+n^2).
    \end{align*}
    Using the elementary inequality $\sqrt{x+y}\leq \sqrt{x}+\sqrt{y}$ (valid for all $x,y\geq 0$), we deduce that for all $\theta\in \left[0,\dfrac{\pi}{2}\right]$
    \begin{align}\label{ineq_f}
        f(\theta)&\leq \frac{\pi^2}{2R^2}+\frac{1}{R^2}\left(\sqrt{2m^2+2n^2}\,R+\frac{\pi}{2}\right)+\frac{1}{R^3\sqrt{2m^2+2n^2}}-(m^2+n^2)\\
        &=\frac{1}{2R^3\sqrt{2m^2+2n^2}}\left(-(2m^2+2n^2)^{\frac{3}{2}}R^3+2(2m^2+2n^2)R^2+\pi(\pi+1)\sqrt{2m^2+2n^2}R+2\right).
    \end{align}
    Now, let $P_{m,n}$ be the following polynomial function
    \begin{align*}
        P_{m,n}(R)=-(2m^2+2n^2)^{\frac{3}{2}}R^3+2(2m^2+2n^2)R^2+\pi(\pi+1)\sqrt{2m^2+2n^2}R+2.
    \end{align*}
    Making the change of variable $s=\sqrt{2m^2+2n^2}R$, if $Q(\sqrt{2m^2+2n^2}R)=P(R)$, we get
    \begin{align*}
        Q(s)=-s^3+2s^2+\pi(\pi+1)s+2.
    \end{align*}    
    Let us estimate the roots of $Q$. 
    \begin{figure}[H]
         	 	\centering
         	 	\includegraphics[width=0.6\textwidth]{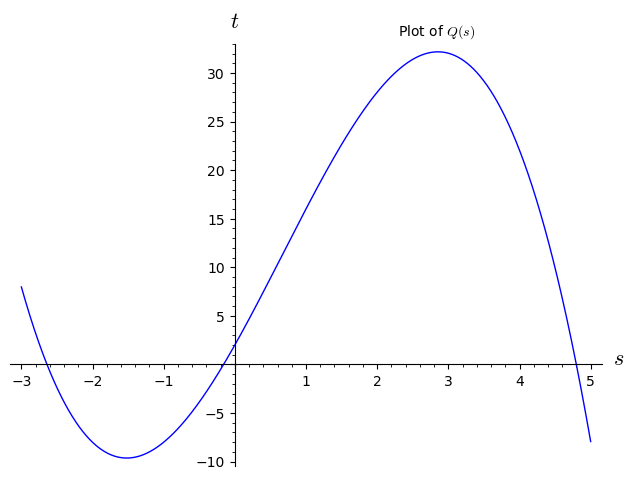}
         	 	\caption{Numerical Approximation of the Roots of $Q$.} 
         	 \end{figure}
    We have
    \begin{align*}
        \left\{\begin{alignedat}{1}
        Q(-3)&=27+18-3\pi(\pi+1)+2=47-3\pi(\pi+1)>0\\
        Q(-2)&=8+8-2\pi(\pi+1)+2=18-2\pi(\pi+1)<0\\
        Q(-1)&=5-\pi(\pi+1)<0\\
        Q(0)&=2>0\\
        Q(4)&=-64+32+4\pi(\pi+1)+2=2(-15+2\pi(\pi+1))>0\\
        Q(5)&=-125+50+5\pi(\pi+1)+2=-73+5\pi(\pi+1)<0,
        \end{alignedat}\right.
    \end{align*}
    where we only used the elementary inequality $13<\pi(\pi+1)<14$. Therefore, $Q$
    admits three real roots $r_1<r_2<r_3$ such that
    \begin{align}
       \left\{\begin{alignedat}{1}
          -3&<r_1<-2\\
          -1&<r_2<0\\
          4&<r_3<5.
       \end{alignedat}\right.
    \end{align}
    Furthermore, we have
    \begin{align*}
        Q'(s)=-3s^2+4s+\pi(\pi+1),
    \end{align*}
    and the roots of $Q'$ are given by
    \begin{align*}
        r_{\pm}=\frac{2}{3}\pm \frac{1}{3}\sqrt{4+3\pi(\pi+1)}<4,
    \end{align*}
    using again the elementary inequality $\pi(\pi+1)<14$ and the inequality $\sqrt{x+y}\leq \sqrt{x}+\sqrt{y}$. In particular, we deduce that $Q$ is strictly decreasing on $(4,\infty)$, which implies that
    \begin{align*}
        Q(s)\leq Q(5)=-73+5\pi(\pi+1)<0\quad \text{for all}\;\, s\geq 5.
    \end{align*}
    Therefore, coming back to $P_{m,n}(R)=Q(\sqrt{2m^2+2n^2}R)$, we deduce that for all $\sqrt{2m^2+2n^2}R\geq 5$, we have
    \begin{align}\label{ineq_bound_partial}
        P_{m,n}(R)\leq P_{m,n}\left(\frac{5}{\sqrt{2m^2+2n^2}}\right)=-(73-5\pi(\pi+1))\quad \text{for all}\;\, R\geq \frac{5}{\sqrt{2m^2+2n^2}}.
    \end{align}
    Therefore, provided that the following inequality holds
    \begin{align}\label{lower_bound2}
        R\geq \frac{5}{\sqrt{2m^2+2n^2}},
    \end{align}
    we deduce that for all $\theta\in \left[0,\dfrac{\pi}{2}\right]$,
    \begin{align}\label{ineq_f2}
        f(\theta)\leq -\frac{73-5\pi(\pi+1)}{2R^3\sqrt{2m^2+2n^2}}.
    \end{align}
    Therefore, by \eqref{def_h1} and \eqref{ineq_f2}, we deduce that for all $R$ satisfying the inequality \eqref{lower_bound2}, we have
    \begin{align*}
        h_1(\theta)&\leq -\frac{73-5\pi(\pi+1)}{2R^3\sqrt{2m^2+2n^2}}e^{2\sqrt{(2m^2+2n^2)R^2+\theta^2}}+\frac{1}{R^2}\sqrt{(2m^2+2n^2)R^2+\theta^2}+\frac{\theta^2}{R^2\sqrt{(2m^2+2n^2)R^2+\theta^2}}\\
        &+(m^2+n^2)
    \end{align*}
    which converges as $R\rightarrow \infty$ to $-\infty$ uniformly in $\theta\in\left[0,\dfrac{\pi}{2}\right]$. More precisely, we estimate
    \begin{align*}
        h_1(\theta)&\leq -\frac{73-5\pi(\pi+1)}{2R^3\sqrt{2m^2+2n^2}}e^{2\sqrt{2m^2+2n^2}\,R}+\frac{\sqrt{2m^2+2n^2}}{R}+\frac{\pi}{2R^2}+\frac{\pi^2}{4\sqrt{2m^2+2n^2}R^3}+(m^2+n^2)\\
        &=\frac{1}{4\sqrt{2m^2+2n^2}R^3}\left(4(m^2+n^2)^{\frac{3}{2}}R^3+4(2m^2+2n^2)R^2+2\pi\sqrt{2m^2+2n^2}R\right.\\
        &\left.-2(73-5\pi(\pi+1))e^{2\sqrt{2m^2+2n^2}R}+\pi^2\right).
    \end{align*}
    We are therefore let to study the function
    \begin{align*}
        f(R)=4(m^2+n^2)^{\frac{3}{2}}R^3+4(2m^2+2n^2)R^2+2\pi\sqrt{2m^2+2n^2}R+\pi^2-2(73-5\pi(\pi+1))e^{2\sqrt{2m^2+2n^2}R}
    \end{align*}
    Once more, making a change of variable $s=\sqrt{2m^2+2n^2}R$, we are reduced to studying the function
    \begin{align*}
        g(s)=4s^3+4s^2+2\pi s+\pi^2-2(73-5\pi(\pi+1))e^{2s}.
    \end{align*}
    Let us show that $g(s)<0$ for all $s\geq 0$. Indeed, we have for all $s\geq 0$
    \begin{align*}
        e^{2s}\geq 1+2s+\frac{(2s)^2}{2}+\frac{(2s)^3}{6}=1+2s+2s^2+\frac{4}{3}s^3,
    \end{align*}
    and we easily check that
    \begin{align*}
        \left\{\begin{alignedat}{1}
            2(73-5\pi(\pi+1))&>\pi^2\\
            4(73-5\pi(\pi+1))&>2\pi\\
            4(73-5\pi(\pi+1))&>4\\
            \frac{8}{3}(73-5\pi(\pi+1))&>4.
        \end{alignedat}\right.
    \end{align*}
    Therefore, we finally deduce that for all $R$ such that \eqref{lower_bound2} holds true we have
    \begin{align*}
        h_1(\theta)\leq 0\quad \text{for all}\;\, \theta\in \left[0,\frac{\pi}{2}\right],
    \end{align*}
    which implies by \eqref{der_psi} that $\psi$ is strictly decreasing on $[0,\frac{\pi}{2}]$,  which shows by \eqref{second_bound} that $\psi(\theta)<0$ for all $0<\theta\leq \pi$. Therefore, we finally deduce by \eqref{second_bound} that the first non-trivial zero $\theta_1>0$ of $\psi$ satisfies the estimate
    \begin{align*}
        \pi<\theta_1<2\pi,
    \end{align*}
    Coming back to the equation \eqref{fund_m}, we deduce since $R=\log\left(\frac{b}{a}\right)$, $R\lambda_2=\theta$ 
    that
    \begin{align*}
        \sqrt{\sqrt{\lambda+4m^2n^2}-(m^2+n^2)}=\lambda_2= \frac{\theta_1}{R}>\frac{\pi}{\log\left(\frac{b}{a}\right)}.
    \end{align*}
    Therefore, we get
    \begin{align*}
        \sqrt{\lambda+4m^2n^2}-(m^2+n^2)>\frac{\pi^2}{\log^2\left(\frac{b}{a}\right)},
    \end{align*}
    or
    \begin{align}\label{lower_estimate_lambda_n}
        \lambda>\left((m^2+n^2)+\frac{\pi^2}{\log^2\left(\frac{b}{a}\right)}\right)^2-4m^2n^2=\left((m-n)^2+\frac{\pi^2}{\log^2\left(\frac{b}{a}\right)}\right)\left((m+n)^2+\frac{\pi^2}{\log^2\left(\frac{b}{a}\right)}\right).
    \end{align}

    \textbf{Step 4: Lower estimate of all eigenvalue when $m$ is a strictly positive integer.} Assume from now on that $m\geq 1$ is a positive integer.

    Since
    \begin{align*}
        \left((m-n)^2+\frac{\pi^2}{\log^2\left(\frac{b}{a}\right)}\right)\left((m+n)^2+\frac{\pi^2}{\log^2\left(\frac{b}{a}\right)}\right)\conv{\frac{b}{a}\rightarrow \infty}(m^2-n^2)^2,
    \end{align*}
    we deducet that the  quantity on the right-hand side of \eqref{lower_estimate_lambda_n} is minimal when $n=m$ or $n=-m$ if the conformal class is large enough. First, notice that \eqref{lower_bound2} holds for all $n\in\Z$ if and only if
    \begin{align}\label{new_bound_conf1}
        \log\left(\frac{b}{a}\right)\geq \frac{5}{\sqrt{2m^2}}=\frac{5}{\sqrt{2}\,m}.
    \end{align}
    Therefore, assume that \eqref{new_bound_conf1} holds. 
    Indeed, let $\alpha>0$, and define
    \begin{align*}
        f(t)=\left((m-t)^2+\alpha^2\right)\left((m+t)^2+\alpha^2\right)=(m^2-t^2)^2+2(m^2+t^2)\alpha^2+\alpha^4.
    \end{align*}
    This is a strictly convex function, which therefore admits a unique minimum. We have
    \begin{align*}
        f'(t)=-4\,t\,(m^2-t^2)+4\,t\,\alpha^2=4t(t^2+\alpha^2-m^2).
    \end{align*}
    If $\alpha^2-m^2\geq 0$, or $\alpha\geq m$, then $f$ admits its minimum at $t=0$, while for $\alpha<m$, $f$ is minimal at $t=\sqrt{m^2-\alpha^2}$ and $t=-\sqrt{m^2-\alpha^2}$. Then, we have
    \begin{align*}
        &f(0)=(m^2+\alpha^2)^2=m^4+2\,\alpha^2m^2+\alpha ^4\\
        &f(m)=\alpha^2(4m^2+\alpha^2)=4\,\alpha^2m^2+\alpha^4,
    \end{align*}
    which shows that
    \begin{align}\label{step_min}
        f(0)-f(m)=m^4-2\alpha^2m^2=m^2(m^2-2\alpha^2)>0
    \end{align}
    provided that $m>\alpha\sqrt{2}$. We therefore assume from now on that
    \begin{align}\label{new_bound_conf2}
        \log\left(\frac{b}{a}\right)\geq \frac{\pi\sqrt{2}}{m}.
    \end{align}
    Furthermore, notice that $f$ is decreasing on $[0,\sqrt{m^2-\alpha^2})$ and increasing on $[\sqrt{m^2-\alpha^2},\infty[$. Therefore, we have 
    \begin{align}\label{step_min2}
        \inf_{n\in\Z}f(n)=\min\ens{f\left([\sqrt{m^2-\alpha^2}]\right),f\left([\sqrt{m^2-\alpha^2}]+1\right)}.
    \end{align}
    We have $[\sqrt{m^2-\alpha^2}]=m-1$ if and only if $\alpha^2\leq 2m-1$. Then, we get
    \begin{align*}
        &\min\ens{f\left([\sqrt{m^2-\alpha^2}]\right),f\left([\sqrt{m^2-\alpha^2}]+1\right)}\\
        &=\min\ens{f(m-1),f(m)}=\min\ens{(1+\alpha^2)((2m-1)^2+\alpha^2),(4m^2+\alpha^2)\alpha^2}.
    \end{align*}
    We have
    \begin{align*}
        g(\alpha)=(1+\alpha^2)((2m-1)^2+\alpha^2)-(4m^2+\alpha^2)\alpha^2=(2m-1)^2-2(2m-1)\alpha^2\geq 0
    \end{align*}
    if and only if
    $
        \alpha\leq \sqrt{\dfrac{2m-1}{2}},
    $
    or
    \begin{align}\label{new_bound_conf3}
        \log\left(\frac{b}{a}\right)\geq \frac{\pi\sqrt{2}}{\sqrt{2m-1}}.
    \end{align}
    Therefore, thanks to \eqref{new_bound_conf1}, \eqref{new_bound_conf2}, and \eqref{new_bound_conf3}, provided that
    \begin{align}\label{new_bound_conf4}
        \log\left(\frac{b}{a}\right)\geq \max\ens{\frac{5}{\sqrt{2}\,m},\frac{\pi\sqrt{2}}{m},\frac{\pi\sqrt{2}}{\sqrt{2m-1}}}=\frac{\pi\sqrt{2}}{\sqrt{2m-1}},
    \end{align}
    we have
    \begin{align*}
        \inf_{n\in\Z}\left((m-n)^2+\frac{\pi^2}{\log^2\left(\frac{b}{a}\right)}\right)\left((m+n)^2+\frac{\pi^2}{\log^2\left(\frac{b}{a}\right)}\right)=\left(4m^2+\frac{\pi^2}{\log^2\left(\frac{b}{a}\right)}\right)\frac{\pi^2}{\log^2\left(\frac{b}{a}\right)},
    \end{align*}
    which concludes the proof of the theorem.
    \end{proof}
    \begin{rem}
        The proof of the theorem also shows that the spectrum of $\displaystyle\Pi_{n^2}(\leb_m^{\ast}\leb_m)$ is given by 
        \begin{align*}
            \lambda=\left((m+n)^2+\frac{\theta_k^2}{\log^2\left(\frac{b}{a}\right)}\right)\left((m-n)^2+\frac{\theta_k^2}{\log^2\left(\frac{b}{a}\right)}\right)
        \end{align*}
        where $\ens{\theta_k}_{k\in\Z}\subset \Z$ is the discrete sets of zeroes of the function
        \begin{align*}
            \psi(\theta)&=\frac{1}{\log^2\left(\frac{b}{a}\right)}\left(\left(1+e^{2\sqrt{(2m^2+2n^2)\log^2\left(\frac{b}{a}\right)+\theta^2}}\right)\cos(\theta)-2\,e^{\sqrt{(2m^2+2n^2)\log^2\left(\frac{b}{a}\right)}+\theta^2}\right)\\
            &\times\theta\sqrt{(2m^2+2n^2)\log^2\left(\frac{b}{a}\right)+\theta^2}
            -(m^2+n^2)\left(e^{2\sqrt{(2m^2+2n^2)\log^2\left(\frac{b}{a}\right)+\theta^2}}-1\right)\sin(\theta).
        \end{align*}
        By parity, we have $\theta_{-k}=\theta_k$ for all $k\in\Z$.
        Indeed, looking at \eqref{der_psi}, we deduce that for $h_1(\theta)> 0$ for $\theta$ large enough. Since the dominant term of $h_1$ is $\theta^2e^{\sqrt{a^2+\theta^2}}$ which is also the dominant term of $h_2$ (while $h_3$ is negligible), we deduce that $\psi'$ changed of sign infinitely often. Since (using \eqref{var_renamed})
        \begin{align*}
            \psi(2k\pi)&=\frac{1}{\log^2\left(\frac{b}{a}\right)}\left(e^{\sqrt{(2m^2+2n^2){\log^2\left(\frac{b}{a}\right)}+(2k\pi)^2}}-1\right)^2 2k\pi\sqrt{(2m^2+2n^2){\log^2\left(\frac{b}{a}\right)}+(2k\pi)^2}>0\\
            \psi((2k+1)\pi)&=-\frac{1}{\log^2\left(\frac{b}{a}\right)}\left(e^{\sqrt{(2m^2+2n^2){\log^2\left(\frac{b}{a}\right)}+(2k+1)^2\pi^2}}+1\right)^2(2k+1)\pi\\
            &\times \sqrt{(2m^2+2n^2)
            {\log^2\left(\frac{b}{a}\right)}+(2k+1)^2\pi^2}<0,
        \end{align*}
        we deduce that $\psi$ admits a discrete set of zeroes $\ens{\theta_k}_{k\in\Z}$ such that $\theta_k\conv{k\rightarrow \pm\infty}\pm\infty$. Furthermore, the previous analysis suggests that for all $k\in\Z\setminus\ens{0}$, we have
        \begin{align}\label{conjectural_inequality}
            k\,\pi<\theta_k<(k+1)\pi.
        \end{align}
        Let us also mention that numerical experiments suggest that
        \begin{align}\label{conj_id}
            \theta_1(a,b)\conv{\frac{b}{a}\rightarrow \infty}\pi.
        \end{align}
        Indeed, taking $m=n=1$, and $R^2=10000$, a numerical estimation gives $0<\theta_1-\pi<2\cdot 10^{-88}$ and $-2\cdot 10^{-88}<\theta_2-2\pi< 0$, which also suggests more generally that for all $k\in \N$
        \begin{align}\label{conjectural_identity}
            \theta_k(a,b)\conv{\frac{b}{a}\rightarrow \infty}k\,\pi,
        \end{align}
        but that the inequality \eqref{conjectural_inequality} is no longer true (provided that $|k|\geq 2$) when the conformal class is large enough.
    \end{rem}
    \begin{theorem}\label{main_neck_lm}
        Let $m\geq 1$, and assume that 
        \begin{align}
            \log\left(\frac{b}{a}\right)\geq \frac{\pi\sqrt{2}}{\sqrt{2m-1}}.
        \end{align}
        Then, if $\lambda_m>0$ is defined by \eqref{min},  we have the estimate
        \begin{align}
            \lambda_m>\left(4m^2\frac{4\pi^2}{\log^2\left(\frac{b}{a}\right)}\right)\frac{4\pi^2}{\log^2\left(\frac{b}{a}\right)}.
        \end{align}
        Furthermore, provided that $m\geq 1$ is an integer and
        \begin{align}
            \log\left(\frac{b}{a}\right)\geq \sqrt{\frac{\pi}{2}}\frac{1}{2m-1}\sqrt{12m^2+4m-1+\sqrt{(12m^2+4m-1)^2+60(2m-1)^2}},
        \end{align}
        then
        \begin{align}
            \left(4m^2\frac{\pi^2}{\log^2\left(\frac{b}{a}\right)}\right)\frac{\pi^2}{\log^2\left(\frac{b}{a}\right)}<\lambda_m<\left(4m^2\frac{4\pi^2}{\log^2\left(\frac{b}{a}\right)}\right)\frac{4\pi^2}{\log^2\left(\frac{b}{a}\right)}.
        \end{align}
        and the minimiser is unique and given by
        \begin{align*}
            u(r,\theta)=\Re\left(u_m(r)e^{i\,m\,\theta}\right),
        \end{align*}
        where $Y(t)=u_m(\log(t))\in W^{2,2}_0([\log(a),\log(b)])$ is a non-trivial solution to the following ordinary differential equation
        \begin{align*}
            &Y''''(t)-4\,Y'''(t)-2(2m^2-3)Y''(t)+4(2m^2-1)Y'(t)-\left(\lambda_m-\left(2m^2(m^2-2)+1\right)\right)Y(t)=0,
        \end{align*}
        where $\lambda_m>0$ is as above.
    \end{theorem}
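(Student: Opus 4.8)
\emph{Step 1 — Fourier decoupling of the Rayleigh quotient.} The plan is to derive the whole statement from Theorem~\ref{theoreme_ode_m_n}, by decoupling the problem into Fourier modes in $\theta$. Writing $u\in W^{2,2}_0(\Omega;\R)$ in polar coordinates as $u(r,\theta)=\sum_{n\in\Z}u_n(r)e^{in\theta}$ with $u_{-n}=\overline{u_n}$, one uses that $\frac{x}{|x|^2}\cdot\D=\frac{1}{r}\p{r}$, so $\leb_m$ is rotation invariant and acts mode by mode as $\Pi_{n^2}(\leb_m)=\p{r}^2+(2m-1)\frac{1}{r}\p{r}-\dfrac{n^2-(m-1)^2}{r^2}$, while $\leb_m^{\ast}\leb_m=\widetilde{\leb}_m^{\ast}\widetilde{\leb}_m$ is self-adjoint and rotation invariant by \eqref{operator2}. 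Parseval's identity then gives
\begin{align*}
\int_{\Omega}(\leb_mu)^2dx=2\pi\sum_{n\in\Z}\int_a^b\left|\Pi_{n^2}(\leb_m)u_n\right|^2r\,dr,\qquad \int_{\Omega}\frac{u^2}{|x|^4}dx=2\pi\sum_{n\in\Z}\int_a^b\frac{|u_n|^2}{r^3}\,dr,
\end{align*}
and integrating by parts (valid since $u_n(a)=u_n(b)=u_n'(a)=u_n'(b)=0$) turns the first integrand into $\overline{u_n}\,\Pi_{n^2}(\leb_m^{\ast}\leb_m)u_n\,r$. Hence $\lambda_m=\inf_{n\in\Z}\lambda_{m,n}$, where $\lambda_{m,n}>0$ is the first eigenvalue of $\Pi_{n^2}(\leb_m^{\ast}\leb_m)$ on $(a,b)$ with weight $r^{-4}$ and boundary conditions \eqref{boundary_m_n_0} — that is, after the change $f(r)=Y(\log r)$ reducing it to \eqref{equa_diff_m_n}, exactly the quantity studied in Theorem~\ref{theoreme_ode_m_n}. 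To legitimize this I would check, via the one-dimensional form of the coercivity estimate \eqref{generalised_poincaré} together with self-adjointness and compactness of the resolvent, that the per-mode Rayleigh infimum is attained and equals the smallest $\lambda>0$ annihilating the boundary determinant of Theorem~\ref{theoreme_ode_m_n}; moreover the outer infimum over $n$ is attained, since the per-mode lower bounds of \eqref{estimate_log_lambda_m_n} grow like $n^4$.

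\emph{Step 2 — Lower bound.} For any real $m\geq1$ with $\log(b/a)\geq\pi\sqrt{2}/\sqrt{2m-1}$ (which forces $\log(b/a)\geq\pi\sqrt2/m$, and a fortiori the hypothesis \eqref{log_bound1} for every $n$), I would minimize over $n\in\Z$ the per-mode lower bound $((m-n)^2+\alpha^2)((m+n)^2+\alpha^2)$ of \eqref{estimate_log_lambda_m_n}, $\alpha=\pi/\log(b/a)$; since $\alpha\leq m/\sqrt2$, it is bounded below by its minimum $\alpha^2(4m^2+\alpha^2)$ over $n\in\R$ (the computation at the end of the proof of Theorem~\ref{theoreme_ode_m_n}), whence
\begin{align*}
\lambda_m=\inf_{n\in\Z}\lambda_{m,n}>\left(4m^2+\frac{\pi^2}{\log^2\left(\frac{b}{a}\right)}\right)\frac{\pi^2}{\log^2\left(\frac{b}{a}\right)}.
\end{align*}
When $m$ is an integer this is also immediate from \eqref{bound_all_eigenvalues}.

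\emph{Step 3 — Upper bound and identification of the minimizer ($m$ integer).} Taking $n=m$ in the upper half of \eqref{estimate_log_lambda_m_n} gives $\lambda_m\leq\lambda_{m,m}<\frac{4\pi^2}{\log^2(b/a)}\big(4m^2+\frac{4\pi^2}{\log^2(b/a)}\big)$, the asserted upper bound. To show the infimum over $n$ is attained only at $n=\pm m$, note that for $\log(b/a)\geq\pi\sqrt2/\sqrt{2m-1}$ the map $n\mapsto((m-n)^2+\alpha^2)((m+n)^2+\alpha^2)$ is non-increasing on $\{0,\dots,m-1\}$ and increasing past $m$, so its minimum over $\Z\setminus\{-m,m\}$ is at $n=\pm(m-1)$; it therefore suffices to compare the above upper bound for $\lambda_{m,m}$ against the lower bound $\lambda_{m,m-1}>(1+\alpha^2)\big((2m-1)^2+\alpha^2\big)$, which with $\alpha^2=\pi^2/\log^2(b/a)$ reduces to a quadratic inequality $15\alpha^4+(12m^2+4m-2)\alpha^2-(2m-1)^2<0$ in $\alpha^2$ — and its resolution is precisely the explicit threshold on $\log(b/a)$ in the statement. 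Granted this, for any minimizer $u=\sum_nu_ne^{in\theta}$ the decoupling of Step~1 gives
\begin{align*}
\lambda_{m,m}=\int_{\Omega}(\leb_mu)^2dx&=2\pi\sum_{n\in\Z}\int_a^b\left|\Pi_{n^2}(\leb_m)u_n\right|^2r\,dr\\
&\geq2\pi\sum_{n\in\Z}\lambda_{m,n}\int_a^b\frac{|u_n|^2}{r^3}\,dr\geq\lambda_{m,m},
\end{align*}
so every inequality is an equality; since $\lambda_{m,n}>\lambda_{m,m}$ for $n\neq\pm m$, this forces $u_n\equiv0$ for such $n$ while $u_{\pm m}$ realizes the mode-$m$ Rayleigh minimum. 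As $u$ is real, $u_{-m}=\overline{u_m}$, hence $u(r,\theta)=\Re\big(u_m(r)e^{im\theta}\big)$ with $Y(t)=u_m(e^t)$ solving the case $n=m$ of \eqref{equa_diff_m_n0} — the ODE displayed in the statement — under \eqref{boundary_conditions_m_n}. Uniqueness, up to the rotations $\theta\mapsto\theta+\theta_0$ and the sign fixed by $\int_{\Omega}u^2/|x|^4=1$, follows from simplicity of the first eigenvalue of the mode-$m$ operator, obtained by checking that the rank of the $4\times4$ boundary matrix $A$ at $\lambda=\lambda_{m,m}$ equals $3$.

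\emph{Main obstacle.} The analytic heart — the sign analysis of the auxiliary functions and the localization in $(\pi,2\pi)$ of the first positive zero of the function $\psi$ of \eqref{def_psi} — is already contained in Theorem~\ref{theoreme_ode_m_n}. The genuinely delicate point here is Step~3: certifying that the minimizing Fourier mode is $n=\pm m$ rather than $n=\pm(m-1)$ requires a quantitative comparison between an upper and a lower bound that are close for moderate conformal classes, which is exactly why the threshold on $\log(b/a)$ needed for the sharp statement (upper bound and uniqueness) is so much larger — and uglier — than the one needed for the crude lower bound of Step~2. The simplicity of the first mode-$m$ eigenvalue is a secondary, but for a fourth-order problem not entirely automatic, technical point.
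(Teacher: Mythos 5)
Your argument is correct and follows essentially the same route as the paper: reduction to the one-dimensional mode problems of Theorem \ref{theoreme_ode_m_n}, the two-sided bounds \eqref{estimate_log_lambda_m_n}, comparison of the upper bound at $n=m$ with the lower bound at $n=m\pm1$ leading to the same quadratic inequality in $\alpha^2=\pi^2/\log^2\left(\frac{b}{a}\right)$, and the conclusion that any minimiser is a pure $e^{\pm i m\theta}$ mode; the only organisational difference is that you decouple the quadratic form by Parseval and argue through per-mode Rayleigh quotients, whereas the paper passes through the Euler--Lagrange equation of the global minimiser and the characterisation of $\lambda_{m,n}$ as the smallest parameter admitting a non-trivial solution of the boundary-value ODE, which is an equivalent bookkeeping. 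One small slip in your Step 2: the minimum of $\left((m-n)^2+\alpha^2\right)\left((m+n)^2+\alpha^2\right)$ over \emph{real} $n$ is $4m^2\alpha^2$, attained at $n=\pm\sqrt{m^2-\alpha^2}$, not $\alpha^2(4m^2+\alpha^2)$; the latter is its minimum over \emph{integer} $n$ when $m$ is an integer and $\alpha^2\le\frac{2m-1}{2}$, which is precisely the computation in Step 4 of the proof of Theorem \ref{theoreme_ode_m_n} and what \eqref{bound_all_eigenvalues} records, so your fallback for integer $m$ is the correct (and the paper's) route, while the real-$m$ variant as literally written would only give the weaker bound $4m^2\alpha^2$.
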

    \begin{proof}
    The hypothesis implies in particular that for all $n\in\Z$, the following inequality holds
    \begin{align*}
        \log\left(\frac{b}{a}\right)\geq \frac{5}{\sqrt{2m^2+2n^2}}.
    \end{align*}
    Indeed, we have
    \begin{align*}
        \frac{5}{\sqrt{2m^2+2n^2}}\leq \frac{5}{m\sqrt{2}}<\frac{\pi\sqrt{2}}{m}
    \end{align*}
    as $\pi>5/2$.
    Therefore, for all $n\in\Z$, thanks to Theorem \ref{theoreme_ode_m_n}, there exists a solution $Y_n$ to the linear differential equation \eqref{equa_diff_m_n} with boundary conditions \eqref{boundary_conditions_m_n}, with minimal $\lambda_{m,n}>0$ that satisfies the bound of Theorem \ref{theoreme_ode_m_n}. Therefore, if $f_n(r)=Y_n(\log(r))$, we deduce that $f_n$ solves the equation \eqref{eq_f_m_n}, and 
    \begin{align*}
        v_n(r,\theta)=\Re\left(f_n(r)e^{i\,n\,\theta}\right)
    \end{align*}
    that we scale (without changing notations) so that
    \begin{align*}
        \int_{\Omega}\frac{v_n^2}{|x|^4}dx=1,
    \end{align*}
    is an admissible minimiser of \eqref{min} with
    \begin{align}\label{v_n_energy}
        \int_{\Omega}\left(\leb_mv_n\right)^2dx=\lambda_{m,n}.
    \end{align}
    Now, let $u$ be a minimiser of of \eqref{min}. Writing
    \begin{align*}
        u(r,\theta)=\sum_{n\in\Z}u_n(r)e^{i\,n\,\theta}
    \end{align*}
    Since $\lambda_m>0$, we have $u\neq 0$, and we deduce that there exists $n\in \Z$ such that $u_n\neq 0$. In particular, we deduce by Theorem \ref{theoreme_ode_m_n} since $Y(t)=u_n(e^t)$ is a non-trivial solution of \eqref{equa_diff_m_n} that
    \begin{align*}
        \lambda_m \geq \lambda_{m,n}>\left((m-n)^2+\frac{\pi^2}{\log^2\left(\frac{b}{a}\right)}\right)\left((m+n)^2+\frac{\pi^2}{\log^2\left(\frac{b}{a}\right)}\right).
    \end{align*}
    By the proof of Theorem \ref{theoreme_ode_m_n}, we have
    \begin{align*}
        \inf_{n\in\Z}\left((m-n)^2+\frac{\pi^2}{\log^2\left(\frac{b}{a}\right)}\right)\left((m+n)^2+\frac{\pi^2}{\log^2\left(\frac{b}{a}\right)}\right)=\left(4m^2+\frac{\pi^2}{\log^2\left(\frac{b}{a}\right)}\right)\frac{\pi^2}{\log^2\left(\frac{b}{a}\right)}
    \end{align*}
    thanks to our hypothesis on $R=\log\left(\dfrac{b}{a}\right)$, the bound of the theorem is proved. Now, we need only prove that $\lambda_m=\lambda_{m,m}$ provided that the conformal class is large enough. 
    
    By contradiction, if $\lambda_m>\lambda_{m,m}$, then $v_m$ is a solution that has a strictly smaller energy by \eqref{v_n_energy}. Now, assume that there exists $n\neq m$ such that $u_n\neq 0$. Then, we get
    \begin{align*}
        \lambda_m\geq \lambda_{m,n}>\lambda_{m,m},
    \end{align*}
    provided that $R$ is big enough, thanks to the estimate
    \begin{align*}
        \lambda_{m,n}<\left((m-n)^2+\frac{4\pi^2}{\log^2\left(\frac{b}{a}\right)}\right)\left((m+n)^2+\frac{4\pi^2}{\log^2\left(\frac{b}{a}\right)}\right).
    \end{align*}
    Indeed, we have $\lambda_{m,m}<\lambda_{m,n}$ provided that 
    \begin{align}\label{ineq_explicit_minimiser}
        \left((m-n)^2+\frac{\pi^2}{\log^2\left(\frac{b}{a}\right)}\right)\left((m+n)^2+\frac{\pi^2}{\log^2\left(\frac{b}{a}\right)}\right)\geq \left(4m^2+\frac{4\pi^2}{\log^2\left(\frac{b}{a}\right)}\right)\frac{4\pi^2}{\log^2\left(\frac{b}{a}\right)}
    \end{align}
    for some integer $n\neq m$. Since the left-hand side is symmetric in $n$, letting $\alpha=\dfrac{\pi}{\log\left(\frac{b}{a}\right)}$, making the change of variable $n=m+1+t$ and $n=m-1-s$ we have
    \begin{align*}
        &\inf_{n\in\Z}\left((m-n)^2+\frac{\pi^2}{\log^2\left(\frac{b}{a}\right)}\right)\left((m+n)^2+\frac{\pi^2}{\log^2\left(\frac{b}{a}\right)}\right)\\
        &=\min\ens{\inf_{t\geq 0}\left((t+1)^2+\alpha^2\right)\left((2m+1+t)^2+\alpha^2\right),\inf_{s\geq 0}\left((s+1)^2+\alpha^2\right)\left((2m-1-s)^2+\alpha^2\right)}.
    \end{align*}
    Therefore, we introduce the following function on $\R_+$
    \begin{align*}
        f(t)&=\left((t+1)^2+\alpha^2\right)\left((2m+1+t)^2+\alpha^2\right).
    \end{align*}
    As $f$ is trivially increasing on $\R_+$, we deduce that
    \begin{align*}
        \inf_{t\geq 0}f(t)=f(0)=\left(1+\alpha^2\right)\left((2m+1)^2+\alpha^2\right).
    \end{align*}
    Likewise, let $g:\R_+\rightarrow \R$ be such that
    \begin{align*}
        g(t)=\left(t^2+\alpha^2\right)\left((2m-t)^2+\alpha^2\right)
    \end{align*}
    that we minimise on $\R_+\cap\ens{t:t\geq 1}$. We have
    \begin{align*}
        g(t)=\left(t^2+\alpha^2\right)\left(t^2-4m\,t+4m^2+\alpha^2\right)=t^4-4m\,t^3+2(2m^2+\alpha^2)t^2-4m\alpha^2t+\alpha^2(4m^2+\alpha^2).
    \end{align*}
    Since $g$ is coercive, it admits a minimum.
    We have
    \begin{align*}
        g'(t)&=4t^3-12mt^2+4(2m^2+\alpha^2)t-4m\alpha^2=4(t^3-3mt^2+(2m^2+\alpha^2)t-m\alpha^2).
    \end{align*}
    We immediately see that $t=m$ is a root of $g'$. Therefore, we have
    \begin{align*}
        g'(t)=4(t-m)(t^2-2mt+\alpha^2)=4(t-m)((t-m)^2-(m^2-\alpha^2)).
    \end{align*}    
    Assuming from now on that $\alpha^2\leq m^2$, we deduce that $(t-m)^2-(m^2-\alpha^2)\geq 0$ if and only if
    \begin{align*}
        t\leq m-\sqrt{m^2-\alpha^2}\quad \text{or}\;\, t\geq m+\sqrt{m^2-\alpha^2}.
    \end{align*}
    Therefore, we deduce that $g$ is decreasing on $]-\infty,m-\sqrt{m^2-\alpha^2}]$, increasing on $[m-\sqrt{m^2-\alpha^2},m]$, decreasing on $[m,m+\sqrt{m^2-\alpha^2}]$, and increasing on $[m+\sqrt{m^2-\alpha^2},\infty[$. In particular, we have
    \begin{align*}
        \min_{t\in \R}g(t)&=\min\ens{g(m-\sqrt{m^2-\alpha^2}),g(m+\sqrt{m^2-\alpha^2})}\\
        &=\left(\left(m-\sqrt{m^2-\alpha^2}\right)^2+\alpha^2\right)\left(\left(m+\sqrt{m^2-\alpha^2}\right)^2+\alpha^2\right).
    \end{align*}
    Now, since we look for the minimum of $g$ on $\Z\setminus\ens{m}$, notice that $m-\sqrt{m^2-\alpha^2}\leq 1$ if and only if 
    \begin{align}\label{ineq_final_conf}
        m\geq \frac{1+\alpha^2}{2}
    \end{align}
    and we finally deduce that whenever the bound \eqref{ineq_final_conf} is satisfied (notice that it is satisfied by hypothesis), we have
    \begin{align*}
        \inf_{t\geq 1}g(t)=g(1)=\left(1+\alpha^2\right)\left((2m-1)^2+\alpha^2\right).
    \end{align*}
    Therefore thanks to \eqref{ineq_explicit_minimiser}, we have $\lambda_{m,m}<\lambda_{m,n}$ for all $n\neq m$ provided that
    \begin{align*}
        \left(1+\alpha^2\right)\left((2m-1)^2+\alpha^2\right)>4\alpha^2(4m^2+4\alpha^2),
    \end{align*}
    or
    \begin{align*}
        15\alpha^4+(12m^2+4m-1)\alpha^2-(2m-1)^2\leq 0;
    \end{align*}
    which yields the condition
    \begin{align*}
        \alpha^2&\leq \frac{1}{30}\left(-(12m^2+4m-1)+\sqrt{(12m^2+4m-1)^2+60(2m-1)^2}\right)\\
        &=\frac{2(2m-1)^2}{12m^2+4m-1+\sqrt{(12m^2+4m-1)^2+60(2m-1)^2}}.
    \end{align*}
    which finally furnishes the condition given in the statement of the theorem.        
    \end{proof}

\section{Second Eigenvalue Problem}

    The first ODE lemma will allow us (together with the forthcoming weighted Poincaré estimates) to control terms of the form
    \begin{align*}
        \int_{\Omega}|A|^4u^2d\vg
    \end{align*}
    in the second derivative of the Willmore energy. However, the terms of the form
    \begin{align*}
        \int_{\Omega}|A|^2|\D u|^2dx
    \end{align*}
    require a new estimate of the form
    \begin{align*}
        \int_{\Omega}(\leb_mu)^2dx\geq \frac{\lambda_0^2}{\log^2\left(\frac{b}{a}\right)}\int_{\Omega}\frac{|\D u|^2}{|x|^2}dx\quad \forall u\in W^{2,2}(\Omega),
    \end{align*}
    where $\lambda_0>0$ is independent of $\Omega=B_b\setminus\bar{B}_a(0)$. The method to prove such an identity is now clear. 

    \subsection{Minimisation Problem}

    Fix $0<a<b<\infty$, and consider the following minimisation problem
    \begin{align}\label{min2}
        \mu_m=\inf\ens{\int_{\Omega}(\leb_mu)^2dx: u\in W^{2,2}(B_b\setminus\bar{B}_a(0))\quad \text{and}\quad \int_{\Omega}\frac{|\D u|^2}{|x|^2}dx=1}.
    \end{align}
    A minimiser $u\in W^{2,2}_0(\Omega)$ exists thanks to a similar proof, and it verifies 
    \begin{align}\label{second_eigenvalue_fundamental}
        \leb_m^{\ast}\leb_mu=-\frac{\mu_m}{|x|^2}\left(\Delta-2\frac{x}{|x|^2}\cdot \D\right)u.
    \end{align}
    Indeed, integrating by parts, for all $u,v\in W^{2,2}_0(\Omega)$, we have
    \begin{align*}
        \int_{\Omega}\frac{\D u\cdot \D v}{|x|^2}dx=-\int_{\Omega}v\dive\left(\frac{1}{|x|^2}\D u\right)dx=-\int_{\Omega}v\frac{1}{|x|^2}\left(\Delta u-2\frac{x}{|x|^2}\cdot \D u\right)dx.
    \end{align*}
    Notice that we trivially have for all $n\in\Z$
    \begin{align*}
        \Pi_{n^2}\left(\Delta -2\frac{x}{|x|^2}\cdot \D\right)=\p{r}^2-\frac{1}{r}\p{r}-\frac{n^2}{r^2}.
    \end{align*}
    Recalling formula \eqref{projection_m}, if 
    \begin{align*}
        &\Pi_{n^2}(\leb_m^{\ast}\leb_m)=\p{r}^4+\frac{2}{r}\p{r}^3-(2(m+1)(m-1)+2\,n^2+1)\frac{1}{r^2}\p{r}^2+(2(m+1)(m-1)+2\,n^2+1)\frac{1}{r^3}\p{r}\nonumber\\
        &+\left((m+1)^2(m-1)^2-2n^2(m+1)(m-1)+n^2(n^2-4)\right)\frac{1}{r^4},
    \end{align*}
    we deduce that for all $\mu\geq 0$
    \begin{align*}
        &\Pi_{n^2}\left(\leb_m^{\ast}\leb_m+\frac{\mu}{|x|^2}\left(\Delta-2\frac{x}{|x|^2}\cdot \D\right)\right)=\p{r}^4+\frac{2}{r}\p{r}^3-(2(m^2-1)+2\,n^2+1)\frac{1}{r^2}\p{r}^2\\
        &+(2(m^2-1)+2\,n^2+1)\frac{1}{r^3}\p{r}
        +\left((m^2-1)^2-2n^2(m^2-1)+n^2(n^2-4)\right)\frac{1}{r^4}\\
        &+\frac{\mu}{r^2}\left(\Delta-\frac{1}{r^2}\p{r}-\frac{n^2}{r^2}\right).
    \end{align*}
    Now, consider a smooth solution to the equation
    \begin{align}\label{second_eigenvalue}
        &f''''+\frac{2}{r}f'''-\left(2m^2+2n^2-1\right)\frac{1}{r^2}f''+\left(2m^2+2n^2-1\right)\frac{1}{r^3}f'+\left((m^2-n^2-1)^2-4n^2)\right)\frac{1}{r^4}f\nonumber\\
        &=-\frac{\mu}{r^2}\left(f''-\frac{1}{r}f'-\frac{n^2}{r^2}f\right).
    \end{align}
    Making as previously the change of variable $f(r)=Y(\log(r))$, remember that by \eqref{change_var_log} and \eqref{change_var_log2}, we have
    \begin{align}\label{change_var_log_bis}
    \left\{\begin{alignedat}{1}
        &\p{r}f(r)=\frac{1}{r}Y'(\log(r))\\
        &\p{r}^2f(r)=\frac{1}{r^2}Y''(\log(r))-\frac{1}{r^2}Y'(\log(r))\\
        &\p{r}^3f(r)=\frac{1}{r^3}Y'''(\log(r))-\frac{3}{r^3}Y''(\log(r))+\frac{2}{r^3}Y'(\log(r))\\
        &\p{r}^4f(r)=\frac{1}{r^4}Y''''(\log(r))-\frac{6}{r^4}Y'''(\log(r))+\frac{11}{r^4}Y''(\log(r))-\frac{6}{r^4}Y'(\log(r)).
        \end{alignedat}\right.
    \end{align}
    Therefore, \eqref{eq_f_m_n} and \eqref{change_var_log} show that
    \begin{align}\label{change_var_log2_bis}
        &f''''+\frac{2}{r}f'''-(2m^2+2n^2-1)\frac{1}{r^2}f''+(2m^2+2n^2-1)\frac{1}{r^3}f'\nonumber\\
        &=\frac{1}{r^4}\bigg(Y''''(\log(r))-4\,Y'''(\log(r))-2(m^2+n^2-3)Y''(\log(r))+4(m^2+n^2-1)Y'(\log(r))\bigg).
    \end{align}
    On the other hand \eqref{change_var_log_bis} shows that
    \begin{align}\label{change_var_log3}
        &\frac{1}{r^2}\left(f''-\frac{1}{r}f'-\frac{n^2}{r^2}f\right)=\frac{1}{r^4}\bigg(Y''(\log(r))-2\,Y'(\log(r))-n^2Y(\log(r))\bigg),
    \end{align}
    and taking $r=e^t$, we deduce that \eqref{second_eigenvalue} is equivalent to the equation
    \begin{align}\label{second_eigenvalue2}
        &Y''''(t)-4\,Y'''(t)-\left(2(m^2+n^2-3)-\mu\right)Y''(t)+(4(m^2+n^2-1)-\mu)Y'(t)\nonumber\\
        &+\left((m^2-n^2-1)^2-(4+\mu)n^2\right)Y(t)=0.
    \end{align}
    The characteristic polynomial of this equation is given by
    \begin{align*}
        P(X)=X^4-4\,X^3-(2(m^2+n^2-3)-\mu)X^2+2(2(m^2+n^2-1)-\mu)X+(m^2-n^2-1)^2-(4+\mu)n^2.
    \end{align*}
    First consider the polynomial
    \begin{align*}
        R(X)=X^2-2X-n^2.
    \end{align*}
    We have
    \begin{align*}
        R(Y+1)=Y^2-n^2-1.
    \end{align*}
    Therefore, the computation in \eqref{change_var_polynomial} shows that
    \begin{align*}
        Q(Y)&=P(Y+1)=Y^4-2(m^2+n^2)Y^2+(m^2-n^2)^2+\mu\left(Y^2-n^2-1\right)\\
        &=Y^4-(2(m^2+n^2)-\mu)Y^2+(m^2-n^2)^2-\mu(n^2+1)
    \end{align*}
    which is once more a biquadratic polynomial! Therefore, the roots of $P$ are given by
    \begin{align*}
        \left\{\begin{alignedat}{1}
            r_1&=1+\sqrt{\frac{2(m^2+n^2)-\mu+\sqrt{16m^2n^2-4(m^2-1)\mu+\mu^2}}{2}}\\
            r_2&=1-\sqrt{\frac{2(m^2+n^2)-\mu+\sqrt{16m^2n^2-4(m^2-1)\mu+\mu^2}}{2}}\\
            r_3&=1+\sqrt{\frac{2(m^2+n^2)-\mu-\sqrt{16m^2n^2-4(m^2-1)\mu+\mu^2}}{2}}\\
            r_4&=1-\sqrt{\frac{2(m^2+n^2)-\mu-\sqrt{16m^2n^2-4(m^2-1)\mu+\mu^2}}{2}}.
        \end{alignedat}\right.
    \end{align*}
    We will distinguish three cases provided that $n\geq 1$ or $m=1$ and $n\geq 0$. Provided that either of those conditions hold, we deduce that $16m^2n^2-4(m^2-1)\mu+\mu^2\geq 0$, and we have
    \begin{align*}
        2(m^2+n^2)-\mu-\sqrt{16m^2n^2-4(m^2-1)\mu+\mu^2}\leq 0
    \end{align*}
    if and only if
    \begin{align*}
        4(m^2+n^2)^2-4(m^2+n^2)\mu+\mu^2\leq 16m^2n^2-4(m^2-1)\mu+\mu^2,
    \end{align*}
    or
    \begin{align*}
        \mu\geq \frac{1}{n^2+1}\left((m^2+n^2)^2-4m^2n^2\right)=\frac{(m^2-n^2)^2}{1+n^2}.
    \end{align*}

    \subsection{General ODE Lemma}

    \begin{theorem}\label{second_theoreme_ode_m_n}
        Let $m\geq 1$, and $n\in\Z$. Let $0<a<b<\infty$ and distinguish the following two cases.
        \emph{\textbf{Assumption I: $n^2>\dfrac{m^2(m^2-2)}{2m^2+1+\sqrt{5m^4+2m^2+1}}$}}. Then, we assume that 
        \begin{align*}
            \log\left(\frac{b}{a}\right)\geq \pi\sqrt{\frac{25(n^2+1)+\sqrt{525(n^2+1)^2+16\pi^2(50+\pi^2)(n^4+2(2m^2+1)n^2-m^2(m^2-2))}}{2(50+\pi^2)(n^4+2(2m^2+1)n^2-m^2(m^2-2))}}.
        \end{align*}
        \emph{\textbf{Assumption II: $m\geq \sqrt{2}$ and $n^2\leq \dfrac{m^2(m^2-2)}{2m^2+1+\sqrt{5m^4+2m^2+1}}$}}. Then, we make no assumptions on $0<a<b<\infty$.

        For all $\lambda>0$, consider the following linear differential equation
        \begin{align}\label{second_equa_diff_m_n}
            &Y''''(t)-4\,Y'''(t)-\left(2(m^2+n^2-3)-\mu\right)Y''(t)+(4(m^2+n^2-1)-\mu)Y'(t)\nonumber\\
        &+\left((m^2-n^2-1)^2-(4+\mu)n^2\right)Y(t)=0
        \end{align}
        with boundary conditions
        \begin{align}\label{second_boundary}
            \left\{\begin{alignedat}{2}
                Y(\log(a))&=Y(\log(b))&&=0\\
                Y'(\log(a))&=Y'(\log(b))&&=0.
            \end{alignedat}\right.
        \end{align}
        Then, there exists $\mu>0$ such that the system \eqref{second_equa_diff_m_n}-\eqref{second_boundary} admits a non-trivial solution $Y$. Furthermore, the minimal value $\mu_{m,n}>0$ satisfies the following estimate
        \small
        \begin{align}\label{second_estimate_log}
            \frac{\left(2(m+n)^2+\dfrac{\pi^2}{\log^2\left(\frac{b}{a}\right)}\right)\left(2(m-n)^2+\dfrac{\pi^2}{\log^2\left(\frac{b}{a}\right)}\right)}{4(n^2+1)+\dfrac{2\pi^2}{\log^2\left(\frac{b}{a}\right)}}<\mu_{m,n}<\frac{\left((m+n)^2+\dfrac{2\pi^2}{\log^2\left(\frac{b}{a}\right)}\right)\left((m-n)^2+\dfrac{2\pi^2}{\log^2\left(\frac{b}{a}\right)}\right)}{n^2+1+\dfrac{2\pi^2}{\log^2\left(\frac{b}{a}\right)}}.
        \end{align}
        \normalsize
        provided that \emph{\textbf{Assumption I}} or \emph{\textbf{Assumption II}} hold. Furthermore, we have
        \begin{align*}
            \mu_{m,n}>\frac{(m^2-n^2)^2}{1+n^2}>\frac{m^4}{4}\qquad \text{if}\quad n^2\leq \frac{m^2(m^2-2)}{2m^2+1+\sqrt{5m^4+2m^2+1}}
        \end{align*}
        if \emph{\textbf{Assumption III}} holds.
        Finally, assuming that \eqref{new_conf_condition}, \eqref{new_conf_condition2} \emph{(}if $m>1$\emph{)}, and \eqref{new_conf_condition3} \emph{(}if $m\geq r_0$, where $8<r_0<9$ is the largest root of the polynomial  $X^4-8X^3-6X^2+1$\emph{)}, we have
        we have
        \begin{align}
            \inf_{n\in\Z}\mu_{m,n}>\frac{\left(4m^2+\dfrac{\pi^2}{\log^2\left(\frac{b}{a}\right)}\right)\dfrac{\pi^2}{\log^2\left(\frac{b}{a}\right)}}{4(m^2+1)+\dfrac{2\pi^2}{\log^2\left(\frac{b}{a}\right)}},
        \end{align}
        while the conditions \eqref{new_conf_condition}, \eqref{new_conf_condition2} \emph{(}if $m>1$\emph{)}, \eqref{new_conf_condition3} \emph{(}if $m\geq r_0$, where $8<r_0<9$ is the largest root of the polynomial  $X^4-8X^3-6X^2+1$\emph{)}, and \eqref{new_conf_condition4} imply that
        \begin{align}
            \inf_{n\in\Z}\mu_{m,n}=\mu_{m,m}>\frac{\left(4m^2+\dfrac{\pi^2}{\log^2\left(\frac{b}{a}\right)}\right)\dfrac{\pi^2}{\log^2\left(\frac{b}{a}\right)}}{4(m^2+1)+\dfrac{2\pi^2}{\log^2\left(\frac{b}{a}\right)}}.
        \end{align}
    \end{theorem}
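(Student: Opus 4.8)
\medskip

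\noindent\textbf{Proof plan.}
The plan is to rerun the proof of Theorem~\ref{theoreme_ode_m_n}, carrying along the extra first-order term. Set $R=\log\left(\frac{b}{a}\right)$ and $\alpha=\frac{\pi}{R}$. After the change of variable $f(r)=Y(\log r)$ performed in \eqref{second_eigenvalue}--\eqref{second_eigenvalue2}, the problem reduces to the constant-coefficient ODE \eqref{second_equa_diff_m_n} with boundary conditions \eqref{second_boundary}; its characteristic polynomial $P$, after the shift $X\mapsto X+1$, is the biquadratic
\begin{align*}
Q(Y)=Y^4-\big(2(m^2+n^2)-\mu\big)Y^2+(m^2-n^2)^2-\mu(n^2+1),
\end{align*}
whose $Z=Y^2$-roots are $Z_\pm=\tfrac12\big(2(m^2+n^2)-\mu\pm\sqrt{16m^2n^2-4(m^2-1)\mu+\mu^2}\big)$. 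The threshold that here plays the role of $(m^2-n^2)^2$ is $\mu_c=\dfrac{(m^2-n^2)^2}{n^2+1}$, the value of $\mu$ at which $Z_-=0$, and I split the analysis at $\mu=\mu_c$.

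\medskip

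\noindent\textbf{The subcritical range $\mu\le\mu_c$.}
For $\mu<2(m^2+n^2)$ (which, under \textbf{Assumption I}, covers all of $(0,\mu_c)$) the polynomial $P$ has four real roots, with a double root at $1$ when $\mu=\mu_c$, and the $4\times4$ matrix encoding \eqref{second_boundary} is \emph{literally} the matrix $B$ studied in Case~1 of the proof of Theorem~\ref{theoreme_ode_m_n} (resp.\ its double-root variant from Case~2), so its explicit factorisation carries over verbatim and the monotonicity of the auxiliary functions $f,h$ (resp.\ $g,f,h$) gives $\det A\ne 0$. Under \textbf{Assumption III}, where $\mu_c\ge 2(m^2+n^2)$, the range $[2(m^2+n^2),\mu_c)$ needs a separate treatment because there the characteristic roots are no longer real, but the same kind of determinant computation (an imaginary specialisation of $B$, controlled by a $\sin$–$\sin$ inequality) again yields $\det A\ne 0$. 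In every case the system then has no non-trivial solution for $\mu\le\mu_c$, so $\mu_{m,n}>\mu_c$; and under \textbf{Assumption III} an elementary estimate gives $\mu_c=\dfrac{(m^2-n^2)^2}{n^2+1}>\dfrac{m^4}{4}$, which is the corresponding assertion. Note that $\mu_c\ge 2(m^2+n^2)$ is exactly the negation of the inequality $n^4+2(2m^2+1)n^2-m^2(m^2-2)>0$ appearing in \textbf{Assumption I}.

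\medskip

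\noindent\textbf{The supercritical range $\mu>\mu_c$.}
Now $P$ has real roots $1\pm\lambda_1$ and complex conjugate roots $1\pm i\lambda_2$, with $\lambda_1^2-\lambda_2^2=2(m^2+n^2)-\mu$ and $\lambda_1^2\lambda_2^2=\mu(n^2+1)-(m^2-n^2)^2$; eliminating $\mu$ between these two identities yields the bridge formula
\begin{align*}
\mu_{m,n}=\frac{\big(\lambda_2^2+(m+n)^2\big)\big(\lambda_2^2+(m-n)^2\big)}{\lambda_2^2+n^2+1}.
\end{align*}
The matrix $A$ is again the complex specialisation of $B$, so the vanishing of $\det A$ is governed by the universal transcendental equation \eqref{fun_eq_gen}, this time with $\lambda_1^2-\lambda_2^2=2(m^2+n^2)-\mu$; after introducing a variable $\theta$ proportional to $R\lambda_2$ this becomes $\psi(\theta)=0$ for a function $\psi$ of the same shape as \eqref{def_psi}, the only difference being that $\lambda_1R$ is now the more complicated function of $\theta$ dictated by the two identities above instead of the clean $\sqrt{\theta^2+2(m^2+n^2)R^2}$. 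I then reproduce the argument of Theorem~\ref{theoreme_ode_m_n}: $\psi$ has a sign change, which already gives the existence of an eigenvalue $\mu>\mu_c$, it is $\le 0$ wherever $\cos\theta\le 0$, and it is strictly decreasing on an initial interval once the analog of $h_1$ is shown to be $\le 0$ there; that last point reduces to locating the roots of an explicit polynomial, and the lower bounds on $\log\left(\frac{b}{a}\right)$ collected in \textbf{Assumption I}—together with the fact that no such bound is needed under \textbf{Assumption II}—are precisely what make that estimate go through. This traps the first positive zero of $\psi$ in an explicit window, calibrated so that the bridge formula produces the two-sided bound \eqref{second_estimate_log}.

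\medskip

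\noindent\textbf{Optimisation over $n$, and the main obstacle.}
For $m\in\N\setminus\ens{0}$ one then minimises $n\mapsto\mu_{m,n}$ over $\Z$, using the lower bound of \eqref{second_estimate_log} for the $n$ satisfying \textbf{Assumption I}, the bound $\mu_{m,n}>\mu_c$ from the subcritical range for the remaining $n$, and the upper bound of \eqref{second_estimate_log} at $n=\pm m$. The object to analyse is
\begin{align*}
n\longmapsto\frac{\big((m-n)^2+\alpha^2\big)\big((m+n)^2+\alpha^2\big)}{n^2+1+\alpha^2},
\end{align*}
whose minimum over $\Z$ is attained at $n=\pm m$ once $\alpha$ is small; establishing this uniformly in $m$ forces a case split on the size of $m$ (whence the threshold $r_0$, the largest root of $X^4-8X^3-6X^2+1$), which is the source of the conditions \eqref{new_conf_condition}, \eqref{new_conf_condition2}, \eqref{new_conf_condition3} and \eqref{new_conf_condition4}. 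Under the first three one obtains
\begin{align*}
\inf_{n\in\Z}\mu_{m,n}>\frac{\left(4m^2+\alpha^2\right)\alpha^2}{4(m^2+1)+2\alpha^2},
\end{align*}
and adding \eqref{new_conf_condition4} the infimum is attained at $n=m$. The main obstacle is twofold and both parts are substantially more delicate than in Theorem~\ref{theoreme_ode_m_n}: the monotonicity and zero-counting for $\psi$ in the supercritical range, where the loss of the clean form $\lambda_1=\sqrt{\lambda_2^2+\mathrm{const}}$ makes the polynomial estimate—hence the precise shape of \textbf{Assumption I}—genuinely harder; and the optimisation over $n\in\Z$, whose qualitative behaviour now changes with $m$.
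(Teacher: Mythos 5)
Your outline reproduces the paper's strategy in the supercritical range (the bridge formula $\mu=\frac{(\lambda_2^2+(m+n)^2)(\lambda_2^2+(m-n)^2)}{\lambda_2^2+n^2+1}$ is correct and is exactly how \eqref{second_estimate_log} is obtained, with the $\mu$-dependent smallness condition on $\lambda_2$ converted into the explicit bound of \textbf{Assumption I}), but there is a genuine gap in your treatment of the ``subcritical'' range, and it is not a peripheral one. You assert that for $\mu<2(m^2+n^2)$ the characteristic polynomial has four real roots, so that the determinant of Case~1/Case~2 of Theorem \ref{theoreme_ode_m_n} ``carries over verbatim'', and that non-real roots only occur in a window $[2(m^2+n^2),\mu_c)$ under your ``Assumption III''. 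This is false: the quantity under the inner square root is $16m^2n^2-4(m^2-1)\mu+\mu^2$, which is negative for a whole interval of $\mu$ as soon as $2m|n|<m^2-1$; in particular for $n=0$ and $m>1$ it is negative for \emph{all} $0<\mu<4(m^2-1)$, i.e.\ already for arbitrarily small $\mu$. There the roots are genuinely complex, $1\pm\alpha\pm i\,\beta$ with $\alpha,\beta\neq 0$, the boundary matrix is neither the real matrix $B$ nor its ``imaginary specialisation'', its determinant factors into the two conditions \eqref{end_structure1}--\eqref{end_structure2}, and one must locate the first zeros of two different transcendental equations. This is precisely why the paper devotes a separate, lengthy analysis to $n=0$, $m>1$ (three cases in $\mu$ versus $4(m^2-1)$, with sub-cases $m=\sqrt2$, $1<m<\sqrt2$, $m>\sqrt2$), culminating in the lower bounds \eqref{alternativeI}--\eqref{alternativeII}.

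The omission then breaks your optimisation over $n\in\Z$: you propose to use ``the bound $\mu_{m,n}>\mu_c$ from the subcritical range for the remaining $n$'', but for $n=0$, $m>1$ there is no such bound — eigenvalues of order $\pi^2/\log^2\left(\frac{b}{a}\right)$ exist far below $\mu_c=m^4$, as \eqref{alternativeI} shows, so $\mu_{m,0}$ competes with $\mu_{m,\pm m}$ at the same order and must be compared quantitatively with the generic bound \eqref{est_eigenvalue}; this comparison is the source of additional conformal-class conditions and is delicate enough that the paper can only conclude the lower bound on $\inf_{n\in\Z}\mu_{m,n}$, not always the identification of the minimiser. Without a correct analysis of the complex-root regime (in particular $n=0$), both the claim $\mu_{m,n}>\frac{(m^2-n^2)^2}{1+n^2}$ under \textbf{Assumption II} and the final statements about $\inf_{n\in\Z}\mu_{m,n}$ are unproved in your plan. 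Secondarily, your ``sin--sin inequality'' sketch for the intermediate range is too vague to check, and the conversion of the $\mu$-dependent condition $\log\left(\frac{b}{a}\right)\geq \frac{5}{\sqrt{2m^2+2n^2-\mu}}$ into the explicit \textbf{Assumption I} bound (the paper's Step 4, where the polynomial $n^4+2(2m^2+1)n^2-m^2(m^2-2)$ and the dichotomy on $m\gtrless\sqrt2$ arise) needs to be carried out, not just invoked.
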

    \begin{proof}
        \textbf{Assumption I: $n\geq 1$ or $m=1$.}

        Without loss of generality, since we are interested in the asymptotic behaviour of the first eigenvalue $\lambda_1(\Omega)$ as $\dfrac{b}{a}\rightarrow\infty$, which we know converges to $0$, we can assume that $\mu<2(m^2+n^2)$ (more precisely, we will show that the minimal solution is such that $\mu<2(m^2+n^2)$, which shows that this hypothesis is not restrictive)..

        Let $Y$ be a non-trivial solution of the equation \eqref{second_equa_diff_m_n} such that $\mu>0$. We distinguish three cases.

    \textbf{Case 1:} $\mu<\dfrac{(m^2-n^2)^2}{1+n^2}$.

    Then, the characteristic polynomial $P$ admits four distinct real roots, which shows that for some $(\mu_1,\mu_2,\mu_3,\mu_4)\in \R^4\setminus\ens{0}$, we have
    \begin{align*}
        Y(t)=\mu_1e^{r_1\,t}+\mu_2e^{r_2\,t}+\mu_3e^{r_3\,t}+\mu_4e^{r_4\,t},
    \end{align*}
    where
    \begin{align*}
        \left\{\begin{alignedat}{1}
        r_1=1+\lambda_1\\
        r_2=1-\lambda_1\\
        r_3=1+\lambda_2\\
        r_4=1-\lambda_2
        \end{alignedat}\right.
    \end{align*}
    and
    \begin{align*}
        \left\{\begin{alignedat}{1}
            \lambda_1&=\sqrt{\frac{2(m^2+n^2)-\mu+\sqrt{16m^2n^2-4(m^2-1)\mu+\mu^2}}{2}}\\
            \lambda_2&=\sqrt{\frac{2(m^2+n^2)-\mu-\sqrt{16m^2n^2-4(m^2-1)\mu+\mu^2}}{2}},
        \end{alignedat}\right.
    \end{align*}
    we are exactly in the situation of the first step in the proof of Theorem \ref{theoreme_ode_m_n}, and the boundary conditions imply that 
    \begin{align*}
        a^2b^2\left((\lambda_1+\lambda_2)^2\left(\bigg(\frac{a}{b}\bigg)^{\lambda_1-\lambda_2}+\left(\frac{b}{a}\right)^{\lambda_1-\lambda_2}\right)-\left(\lambda_1-\lambda_2\right)^2\left(\bigg(\frac{a}{b}\bigg)^{\lambda_1+\lambda_2}+\left(\frac{b}{a}\right)^{\lambda_1+\lambda_2}\right)-8\lambda_1\lambda_2\right)=0,
    \end{align*}
    which is impossible since $\lambda_1>\lambda_2>0$. Therefore, we get $Y=0$, a contradiction.

    \textbf{Case 2:} $\mu=\dfrac{(m^2-n^2)^2}{1+n^2}$. Then, provided that $\mu\leq 2(m^2+n^2)$ we have 
    \begin{align*}
    \left\{\begin{alignedat}{1}
        r_1&=1+\sqrt{2(m^2+n^2)-\mu}=\sqrt{2(m^2+n^2)-\frac{(m^2-n^2)^2}{1+n^2}}\\
        r_2&=1-\sqrt{2(m^2+n^2)-\mu}\\
        r_3&=r_4=1
        \end{alignedat}\right.
    \end{align*}
    Therefore, we deduce one more by the proof of Theorem \ref{theoreme_ode_m_n} (one need only replace $2m^2+2n^2$ by $2m^2+2n^2-\mu$ in formulae) that 
    \begin{align}\label{vanish_second}
        &\sqrt{2m^2+2n^2-\mu}\,(ab)^2\left(4-2\left(\frac{b}{a}\right)^{\sqrt{2m^2+2n^2-\mu}}-2\left(\frac{a}{b}\right)^{\sqrt{2m^2+2n^2-\mu}}\right.\nonumber\\
        &\left.+\sqrt{2m^2+2n^2-\mu}\left(\left(\frac{b}{a}\right)^{\sqrt{2m^2+2n^2-\mu}}-\left(\frac{a}{b}\right)^{\sqrt{2m^2+2n^2-\mu}}\right)\log\left(\frac{b}{a}\right)\right)=0.
    \end{align}
    Since $\mu<2(m^2+n^2)$ by hypothesis, we deduce that \eqref{vanish_second} is never verified, which implies that $Y=0$.

    \textbf{Case 3:} $\mu>\dfrac{(m^2-n^2)^2}{1+n^2}$. Then, the characteristic polynomial admits the following roots
    \begin{align*}
        \left\{\begin{alignedat}{1}
            r_1=1+\sqrt{\frac{\sqrt{16m^2n^2-4(m^2-1)\mu+\mu^2}+2(m^2+n^2)-\mu}{2}}\\
            r_2=1-\sqrt{\frac{\sqrt{16m^2n^2-4(m^2-1)\mu+\mu^2}+2(m^2+n^2)-\mu}{2}}\\
            r_3=1+i\sqrt{\frac{\sqrt{16m^2n^2-4(m^2-1)\mu+\mu^2}+\mu-2(m^2+n^2)}{2}}\\
            r_4=1-i\sqrt{\frac{\sqrt{16m^2n^2-4(m^2-1)\mu+\mu^2}+\mu-2(m^2+n^2)}{2}}
        \end{alignedat}\right.
    \end{align*}
    We are exactly in the situation of \textbf{Step 3} from Theorem \eqref{theoreme_ode_m_n}, and we deduce by \eqref{fun_eq_gen} also holds, namely
    \small
    \begin{align}\label{second_fun_eq_gen}
        2\left(\left(\left(\frac{b}{a}\right)^{\lambda_1}+\bigg(\frac{a}{b}\bigg)^{\lambda_1}\right)\cos\left(\lambda_2\log\left(\frac{b}{a}\right)\right)-2\right)\lambda_1\lambda_2-\left(\lambda_1^2-\lambda_2^2\right)\left(\left(\frac{b}{a}\right)^{\lambda_1}+\bigg(\frac{a}{b}\bigg)^{\lambda_1}\right)\sin\left(\lambda_2\log\left(\frac{b}{a}\right)\right)=0.
    \end{align}
    \normalsize
    where 
    \begin{align*}
        \left\{\begin{alignedat}{1}
            \lambda_1&=\sqrt{\frac{\sqrt{16m^2n^2-4(m^2-1)\mu+\mu^2}+2(m^2+n^2)-\mu}{2}}\\
            \lambda_2&=\sqrt{\frac{\sqrt{16m^2n^2-4(m^2-1)\mu+\mu^2}+\mu-2(m^2+n^2)}{2}}
        \end{alignedat}\right.
    \end{align*}
    Then, we have
    \begin{align*}
        \lambda_1^2-\lambda_2^2=2(m^2+n^2)-\mu, 
    \end{align*}
    and we get the equation
    \small
    \begin{align}\label{second_fun_eq_gen20}
        \left(\left(\left(\frac{b}{a}\right)^{\lambda_1}+\bigg(\frac{a}{b}\bigg)^{\lambda_1}\right)\cos\left(\lambda_2\log\left(\frac{b}{a}\right)\right)-2\right)\lambda_1\lambda_2=\left(m^2+n^2-\frac{\mu}{2}\right)\left(\left(\frac{b}{a}\right)^{\lambda_1}+\bigg(\frac{a}{b}\bigg)^{\lambda_1}\right)\sin\left(\lambda_2\log\left(\frac{b}{a}\right)\right),
    \end{align}
    \normalsize
    where $\lambda_1=\sqrt{2(m^2+n^2)-\mu+\lambda_2^2}$, and finally, we recover a variant of \eqref{fund_m}
    \small
    \begin{align}\label{second_fund_m}
        \left(\left(1+\left(\frac{b}{a}\right)^{2\lambda_1}\right)\cos\left(\lambda_2\log\left(\frac{b}{a}\right)\right)-2\left(\frac{b}{a}\right)^{\lambda_1}\right)\lambda_1\lambda_2=\left(m^2+n^2-\frac{\mu}{2}\right)\left(\left(\frac{b}{a}\right)^{2\lambda_1}-1\right)\sin\left(\lambda_2\log\left(\frac{b}{a}\right)\right),
    \end{align}
    \normalsize
    where $2m^2+2n^2$ is simply substituted to $2m^2+2n^2-\mu$. Therefore, using \eqref{lower_bound2}, provided that
    \begin{align}\label{condition_conformal_mu}
        \log\left(\frac{b}{a}\right)\geq \frac{5}{\sqrt{2m^2+2n^2-\mu}},
    \end{align}
    the first non-trivial solution $\lambda_2>0$ of this equation is such that
    \begin{align}\label{first_zero}
        \frac{\pi}{\log\left(\frac{b}{a}\right)}<\lambda_2<\frac{2\pi}{\log\left(\frac{b}{a}\right)}.
    \end{align}
    In particular, we have
    \begin{align*}
        \sqrt{\sqrt{16m^2n^2-4(m^2-1)\mu+\mu^2}+\mu-2(m^2+n^2)}>\frac{\pi}{\log\left(\frac{b}{a}\right)},
    \end{align*}
    which gives us
    \begin{align*}
        \sqrt{16m^2n^2-4(m^2-1)\mu+\mu^2}>2(m^2+n^2)-\mu+\frac{\pi^2}{\log^2\left(\frac{b}{a}\right)},
    \end{align*}
    and
    \begin{align*}
        16m^2n^2-4(m^2-1)\mu+\mu^2>\left(2(m^2+n^2)-\mu+\frac{\pi^2}{\log^2\left(\frac{b}{a}\right)}\right)^2,
    \end{align*}
    that we rewrite
    \begin{align}\label{second_est_last1}
        \mu^2-4(m^2-1)\mu>\left(2(m+n)^2-\mu+\frac{\pi^2}{\log^2\left(\frac{b}{a}\right)}\right)\left(2(m-n)^2-\mu+\frac{\pi^2}{\log^2\left(\frac{b}{a}\right)}\right).
    \end{align}
    In order to simplify notations, write
    \begin{align*}
        X=\frac{\pi}{\log\left(\frac{b}{a}\right)}.
    \end{align*}
    We have
    \begin{align*}
        &\left(2(m+n)^2-\mu+\frac{\pi^2}{\log^2\left(\frac{b}{a}\right)}\right)\left(2(m-n)^2-\mu+\frac{\pi^2}{\log^2\left(\frac{b}{a}\right)}\right)\\
        &=\left(2(m+n)^2+X^2\right)\left(2(m-n)^2+X^2\right)-2\left(2(m^2+n^2)+X^2\right)\mu+\mu^2
    \end{align*}
    where we used the parallelogram identity $(m^2+n^2)^2+(m^2-n^2)^2=2m^4+2n^4$. Therefore, \eqref{second_est_last1} is equivalent to 
    \begin{align*}
        2\left(2(n^2+1)+X^2\right)\mu&=\left(2(2(m^2+n^2)+X^2)-4(m^2-1)\right)\mu\\
        &>\left(2(m+n)^2+X^2\right)\left(2(m-n)^2+X^2\right),
    \end{align*}
    and we finally obtain
    \begin{align}\label{lower_estimate_mu}
        \mu>\frac{1}{2}\frac{\left(2(m+n)^2+\dfrac{\pi^2}{\log^2\left(\frac{b}{a}\right)}\right)\left(2(m-n)^2+\dfrac{\pi^2}{\log^2\left(\frac{b}{a}\right)}\right)}{2(n^2+1)+\dfrac{\pi^2}{\log^2\left(\frac{b}{a}\right)}}.
    \end{align}
    Furthermore, \eqref{first_zero} also shows that
    \begin{align}\label{upper_estimate_mu}
        \mu<\frac{\left((m+n)^2+\dfrac{2\pi^2}{\log^2\left(\frac{b}{a}\right)}\right)\left((m-n)^2+\dfrac{2\pi^2}{\log^2\left(\frac{b}{a}\right)}\right)}{n^2+1+\dfrac{2\pi^2}{\log^2\left(\frac{b}{a}\right)}}.
    \end{align}

    \textbf{Step 4: Conformal class estimate.}
    
    Now, recall that \eqref{condition_conformal_mu} depends on $\mu$, but it should only depend on $m$ and $n$. Therefore, we need to rewrite the condition so that it only depends on $m$ and $n$. If $Y^2=2X^2$, we have
    \begin{align}\label{condition_classe_conf}
        &2(m^2+n^2)-\frac{\left((m+n)^2+Y^2\right)((m-n)^2+Y^2)}{n^2+1+Y^2}\nonumber\\
        &=\frac{1}{n^2+1+Y^2}\left(2(m^2+n^2)(n^2+1+\colorcancel{Y^2}{red})-\left((m^2-n^2)^2+\colorcancel{2(m^2+n^2)Y^2}{red}+Y^4\right)\right)\nonumber\\
        &=\frac{1}{n^2+1+Y^2}\left({-Y^4+2(m^2+n^2)(n^2+1)-(m^2-n^2)^2}\right)\nonumber\\
        &=\frac{1}{n^2+1+Y^2}\left(-Y^4+n^4+2(2m^2+1)n^2-m^2(m^2-2)\right).
    \end{align}
    That gives us a non-trivial condition on $Y$ provided that $n^4+2(2m^2+1)n^2-m^2(m^2-2)>0$. The discriminant of this polynomial in $n^2$ is given by
    \begin{align*}
        4(2m^2+1)^2+4m^2(m^2-2)=4\left(5m^4+2m^2+1\right),
    \end{align*}
    which shows that its roots are given by
    \begin{align*}
        -(2m^2+1)\pm\sqrt{5m^4+2m^2+1}.
    \end{align*}
    
    \textbf{Sub-case 1: $1\leq m<  \sqrt{2}$.}
    We have
    \begin{align*}
        &2(m^2+n^2)-\frac{\left((m+n)^2+Y^2\right)\left((m-n)^2+Y^2\right)}{n^2+1+Y^2}\\
        &=\frac{1}{n^2+1+Y^2}\left(-Y^4+n^4+2(2m^2+1)n^2+(2-m^2)m^2\right)
    \end{align*}
    and we get the \emph{necessary} condition
    \begin{align*}
        Y<\sqrt[4]{n^4+2(2m^2+1)n^2+(2-m^2)m^2},
    \end{align*}
    or
    \begin{align*}
        \log\left(\frac{b}{a}\right)> \frac{\pi\sqrt{2}}{\sqrt[4]{n^4+2(2m^2+1)n^2+(2-m^2)m^2}}.
    \end{align*}
    Therefore, the condition \eqref{condition_conformal_mu} is implied by  
    \begin{align*}
        \frac{\pi\sqrt{2}}{Y}\geq 5\sqrt{\frac{n^2+1+Y^2}{-Y^4+n^4+2(2m^2+1)n^2+(2-m^2)m^2}}
    \end{align*}
    or
    \begin{align*}
        2\pi^2(-Y^4+n^4+2(2m^2+1)n^2+(2-m^2)m^2)>25Y^2\left(Y^2+1\right),
    \end{align*}
    that we finally rewrite as
    \begin{align*}
        n^4+2(2m^2+1)n^2+(2-m^2)m^2\geq \left(\frac{25}{2\pi^2}+1\right)Y^4+\frac{25}{2\pi^2}(n^2+1)Y^2.
    \end{align*}
    Therefore, we finally get the condition
    \begin{align*}
        Y^2&\leq -\frac{25}{4\pi^2}(n^2+1)+\sqrt{\frac{525}{16\pi^4}(n^2+1)^2+\left(\frac{50}{\pi^2}+4\right)(n^4+2(2m^2+1)n^2+(2-m^2)m^2)}\\
        &=-\frac{25}{4\pi^2}(n^2+1)+\frac{1}{4\pi^2}\sqrt{525(n^2+1)^2+16\pi^2(50+\pi^2)(n^4+2(2m^2+1)n^2+(2-m^2)m^2)}\\
        &=\frac{4(50+\pi^2)(n^4+2(2m^2+1)n^2+(2-m^2)m^2)}{25(n^2+1)+\sqrt{525(n^2+1)^2+16\pi^2(50+\pi^2)(n^4+2(2m^2+1)n^2+(2-m^2)m^2)}}
    \end{align*}
    which can can rewritten as
    \begin{align*}
        Y^2=2X^2=\frac{2\pi^2}{\log^2\left(\frac{b}{a}\right)}
    \end{align*}
    as
    \begin{align*}
        \log\left(\frac{b}{a}\right)&\geq \pi\sqrt{\frac{25(n^2+1)+\sqrt{525(n^2+1)^2+16\pi^2(50+\pi^2)(n^4+2(2m^2+1)n^2+(2-m^2)m^2)}}{2(50+\pi^2)(n^4+2(2m^2+1)n^2+(2-m^2)m^2)}},
    \end{align*}
    which concludes the proof of the theorem in the case $1\leq m< \sqrt{2}$. Notice that as $|n|\rightarrow \infty$, the right-hand side converges to $0$, which implies in particular that this quantity is bounded as $n\in\Z$. 

    \textbf{Sub-case 2: $m\geq \sqrt{2}$.}
    
    Now, assume that $m\geq \sqrt{2}$. Coming back to \eqref{condition_classe_conf}, we deduce that if \eqref{condition_classe_conf} is verified, then 
    \begin{align*}
        n^2>-(2m^2+1)+\sqrt{5m^4+2m^2+1}=\frac{m^2(m^2-2)}{2m^2+1+\sqrt{5m^4+2m^2+1}}.
    \end{align*}
    If 
    \begin{align*}
        n^2\leq -(2m^2+1)+\sqrt{5m^4+2m^2+1}=\frac{m^2(m^2-2)}{2m^2+1+\sqrt{5m^4+2m^2+1}}
        <\frac{m^2}{2},
    \end{align*}
    then we have
    \begin{align*}
        (m+n)^2(m-n)^2=(m^2-n^2)^2> \frac{m^4}{4}.
    \end{align*}
    In this case, using the elementary inequality
    \begin{align*}
        \mu>\frac{(m^2-n^2)^2}{1+n^2}\geq \frac{m^4}{4(1+n^2)}\geq \frac{m^4}{2(1+2m^2)},
    \end{align*}
    which suffices to our purpose since this estimate is constant as the conformal class degenerates. Therefore, we can assume that 
    \begin{align}\label{ineq_delicate}
        n^2>\frac{m^2(m^2-2)}{2m^2+1+\sqrt{5m^4+2m^2+1}},
    \end{align}
    and \eqref{condition_classe_conf} can be solved as
    \begin{align*}
        Y<\sqrt[4]{n^4+2(2m^2+1)n^2-m^2(m^2-2)},
    \end{align*}
    or
    \begin{align*}
        \log\left(\frac{b}{a}\right)>\frac{\pi\sqrt{2}}{\sqrt[4]{n^4+2(2m^2+1)n^2-m^2(m^2-2)}}.
    \end{align*}
    As $n\in\Z$, we deduce that the right-hand side is bounded as \eqref{ineq_delicate} holds. Then, coming back to \eqref{condition_conformal_mu}, we deduce that this condition holds if and only if 
    \begin{align*}
        \frac{\pi\sqrt{2}}{Y}\geq 5\sqrt{\frac{n^2+1+Y^2}{-Y^4+n^4+2(2m^2+1)n^2-m^2(m^2-2)}},
    \end{align*}
    or
    \begin{align*}
        2\pi^2\left(-Y^4+n^4+2(2m^2+1)n^2-m^2(m^2-2)\right)\geq 25Y^2(Y^2+n^2+1), 
    \end{align*}
    or
    \begin{align*}
        n^4+2(2m^2+1)n^2-m^2(m^2-2)\geq \left(\frac{25}{2\pi^2}+1\right)Y^4+\frac{25}{2\pi^2}(1+n^2)Y^2.
    \end{align*}
    Therefore, we have
    \begin{align*}
        Y^2&\leq -\frac{25}{4\pi^2}(n^2+1)+\sqrt{\frac{525}{16\pi^4}(n^2+1)^2+\left(\frac{50}{\pi^2}+4\right)\left(n^4+2(2m^2+1)n^2-m^2(m^2-2)\right)}\\
        &=\frac{4(50+\pi^2)(n^4+2(2m^2+1)n^2-m^2(m^2-2))}{25(n^2+1)+\sqrt{525(n^2+1)^2+16\pi^2(50+\pi^2)(n^4+2(2m^2+1)n^2-m^2(m^2-2))}},
    \end{align*}
    and finally
    \begin{align*}
        \log\left(\frac{b}{a}\right)\geq\pi\sqrt{\frac{25(n^2+1)+\sqrt{525(n^2+1)^2+16\pi^2(50+\pi^2)(n^4+2(2m^2+1)n^2-m^2(m^2-2))}}{2(50+\pi^2)(n^4+2(2m^2+1)n^2-m^2(m^2-2))}}.
    \end{align*}

    \textbf{Step 5: Lower Estimate for All Eigenvalues.}

    In this step, we show that if the conformal class is large enough, then
    \begin{align}\label{ineq_min}
        \inf_{n\in\Z}\mu_{m,n}>\frac{\left(2(m+n)^2+\dfrac{\pi^2}{\log^2\left(\frac{b}{a}\right)}\right)\left(2(m-n)^2+\dfrac{\pi^2}{\log^2\left(\frac{b}{a}\right)}\right)}{4(n^2+1)+\dfrac{2\pi^2}{\log^2\left(\frac{b}{a}\right)}}.
    \end{align}
    Therefore, let $\alpha=\dfrac{\pi}{\log\left(\frac{b}{a}\right)}$, and let 
    \begin{align*}
        f(t)&=\frac{(2(m+t)^2+\alpha^2)(2(m-t)^2+\alpha^2)}{2(t^2+1)+\alpha^2}=\frac{4(m^2-t^2)^2+4(m^2+t^2)\alpha^2+\alpha^4}{2(t^2+1)+\alpha^2}\\
        &=\frac{4t^4-4(2m^2-\alpha^2)t^2+4m^4+4m^2\alpha^2+\alpha^4}{2t^2+2+\alpha^2}
    \end{align*}
    As $f$ is coercive, it admits a global minimum on $\R$. Since
    \begin{align*}
        \frac{d}{dt}\left(4(m^2-t^2)^2+4(m^2+t^2)\alpha^2+\alpha^4\right)=-16t(m^2-t^2)+8\alpha^2t=8t\left(2t^2-2m^2+\alpha^2\right)
    \end{align*}
    we get
    \begin{align*}
        f'(t)&=\frac{4t}{(2t^2+2+\alpha^2)^2}\left(2(2t^2-2m^2+\alpha^2)(2t^2+2+\alpha^2)-\left(4t^4-4(2m^2-\alpha^2)t^2+4m^4+4m^2\alpha^2+\alpha^4\right)\right)\\
        &=\frac{4t}{(2t^2+2+\alpha^2)^2}\left(4t^4+4(2+\alpha^2)t^2-4m^2(m^2+2)-4(2m^2-1)\alpha^2+\alpha^4\right).
    \end{align*}
    Notice that for $\alpha=0$, we have
    \begin{align*}
        f'(t)=\frac{2t}{(t^2+1)^2}\left(t^2(t^2+2)-m^2(m^2+2)\right),
    \end{align*}
    which shows that $f'(t)\geq 0$ if and only if $t\geq m$ or $-m\leq t\leq 0$, which implies that $\displaystyle\inf_{t\in\R}f(t)=f(m)=0$. In the general case, we assume that
    \begin{align*}
        -4m^2(m^2+2)-4(2m^2-1)\alpha^2+\alpha^4\leq 0,
    \end{align*}
    which is equivalent to
    \begin{align*}
        \alpha^2\leq 2(2m^2-1)+2\sqrt{(2m^2-1)^2+m^2(m^2+2)},
    \end{align*}
    or
    \begin{align}\label{new_conf_condition}
        \log\left(\frac{b}{a}\right)\geq \frac{\pi}{\sqrt{2(2m^2-1)+2\sqrt{(2m^2-1)^2+m^2(m^2+2)}}}.
    \end{align}
    The roots of 
    \begin{align*}
        Q(X)=4X^2+4(2+\alpha^2)X-4m^2(m^2+2)-4(2m^2-1)\alpha^2+\alpha^4
    \end{align*}
    are given by
    \begin{align*}
        r_{\pm}=-\frac{2+\alpha^2}{2}\pm\frac{1}{2}\sqrt{(2+\alpha)^2+4m^2(m^2+2)+4(2m^2-1)\alpha^2-\alpha^4}.
    \end{align*}
    If
    \begin{align*}
        r&=\sqrt{-\frac{\alpha^2+2}{2}+\frac{1}{2}\sqrt{(\alpha^2+2)^2+4m^2(m^2+2)+4(2m^2-1)\alpha^2-\alpha^4}},
    \end{align*}
    we deduce that $f'(t)\geq 0$ if and only if $-r\leq t\leq 0$ or $t\geq r$. In particular, we have
    \begin{align*}
        \min_{t\in\R}f(t)=f(r)=f(-r).
    \end{align*}
    Then, notice that $r$ can be written more simply as
    \begin{align*}
        r=\sqrt{-\frac{\alpha^2+2}{2}+\sqrt{(m^2+1)^2+2m^2\alpha^2}}.
    \end{align*}
    We have
    \begin{align*}
        r\leq m
    \end{align*}
    if and only if
    \begin{align*}
        2\sqrt{(m^2+1)^2+2m^2\alpha^2}\leq 2(m^2+1)+\alpha^2,
    \end{align*}
    or
    \begin{align*}
        4(m^2+1)^2+8m^2\alpha^2\leq 4(m^2+1)^2+4(m^2+1)\alpha^2+\alpha^4,
    \end{align*}
    which is always true for $m=1$. Assuming now that $m>1$, we have $r\geq m$ if and only if
    \begin{align*}
        8m^2\alpha^2\geq 4(m^2+1)\alpha^2+\alpha^4,
    \end{align*}
    or
    \begin{align*}
        \alpha^2\leq 4(m^2-1),
    \end{align*}
    which gives the condition
    \begin{align}\label{new_conf_condition2}
        \log\left(\frac{b}{a}\right)\geq \frac{\pi}{2\sqrt{m^2-1}}.
    \end{align}
    If this condition holds, as $f$ is decreasing on $[m,r]$, we have
    \begin{align*}
        \inf_{n\in\Z}f(t)=f(m)
    \end{align*}
    provided that $r\leq m+1$ and $f(m)<f(m+1)$. The condition $r\leq m+1$ is equivalent to
    \begin{align*}
        4(m^2+1)^2+8m^2\alpha^2\leq 4((m+1)^2+1)^2+4((m+1)^2+1)\alpha^2+\alpha^4,
    \end{align*}
    which is equivalent to
    \begin{align*}
        \alpha^4-4(m^2-2m-2)\alpha^2+4(2m+1)(2m^2+2m+3)\geq 0.
    \end{align*}
    This condition is empty if $m\geq 1+\sqrt{3}$. Assuming that $m>1+\sqrt{3}$, we find the estimate (as we are interested in the limit when $\alpha\rightarrow 0$)
    \begin{align*}
        \alpha^2\leq 2(m^2-2m-2)-2\sqrt{(m^2-2m-2)^2-(2m+1)(2m^2+2m+3)},
    \end{align*}
    or
    \begin{align}\label{new_conf_condition3}
        \log\left(\frac{b}{a}\right)\geq \frac{\pi}{2(m^2-2m-2)-2\sqrt{(m^2-2m-2)^2-(2m+1)(2m^2+2m+3)}},
    \end{align}
    provided that
    \begin{align*}
        (m^2-2m-2)^2-(2m+1)(2m^2+2m+3)>0,
    \end{align*}
    or
    \begin{align*}
        m^4-8m^3-6m^2+1>0.
    \end{align*}
    Otherwise, the condition on $\alpha$ is empty. Let
    \begin{align*}
        g(t)=t^4-8t^3-6t^2+1.
    \end{align*}
    We have
    \begin{align*}
        g'(t)=4t^3-24t^2-12t=4t(t^2-6t-3)=4t((t-3)^2-12)=4t(t-3-2\sqrt{3})(t-3+2\sqrt{3}).
    \end{align*}
    Therefore, we deduce that $g'(t)<0$ for all $0<t<3+2\sqrt{3}$, and $g'(t)>0$ for all $t>3+2\sqrt{3}$. Therefore, $g$ is strictly decreasing on $[0,2\sqrt{3}]$, and increasing on $\R\cap\ens{t:t\geq 2\sqrt{3}}$. We have $2\sqrt{3}<4$, and $g(8)=-383<0$, while $g(9)=242>0$. Therefore, we only get a non-trivial condition provided that $m\geq 9$ (or $m$ larger than the largest root of $g$ if we do not consider integer values, that can be expressed with cubic roots). Finally we deduce that for $0<a<b<\infty$ such that all conditions \eqref{new_conf_condition}, \eqref{new_conf_condition2} (provided that $m>1$), and \eqref{new_conf_condition3}, we have
    \begin{align}\label{est_eigenvalue}
        \inf_{n\in\Z}f(t)=f(m)=\frac{\left(4m^2+\dfrac{\pi^2}{\log^2\left(\frac{b}{a}\right)}\right)\dfrac{\pi^2}{\log^2\left(\frac{b}{a}\right)}}{2(m^2+1)+\dfrac{\pi^2}{\log^2\left(\frac{b}{a}\right)}},
    \end{align}
    which finally shows that \eqref{ineq_min} holds true.

    \textbf{Step 6: Minimal Eigenvalue.}

    Assume that the bounds \eqref{new_conf_condition}, \eqref{new_conf_condition2} (if $m>1$), and \eqref{new_conf_condition3} from the previous step hold.
    In this step, we show that if the conformal class is large enough,
    \begin{align*}
        \inf_{n\in\Z}\mu_{m,n}=\mu_{m,m},
    \end{align*}
    which is a refinement of the previous step. Thanks to the previous analysis, it is implied by the inequality 
    \begin{align*}
        \frac{4\alpha^2(2m^2+\alpha^2)}{m^2+1+2\alpha^2}\leq \inf_{n\neq m}\frac{(2(m+n)^2+\alpha^2)(2(m-n)^2+\alpha)^2}{4(n^2+1)+2\alpha^2}.
    \end{align*}
    Since the right-hand side is symmetric in $n$, it suffices to study the previous function $f$ on $[m+1,\infty[$. Since $f$ is increasing on this interval, we have
    \begin{align*}
        \inf_{n\neq m}\frac{(2(m+n)^2+\alpha^2)(2(m-n)^2+\alpha)^2}{4(n^2+1)+2\alpha^2}=\frac{\left(2(2m+1)^2+\alpha^2\right)\left(2+\alpha^2\right)}{4(m^2+m+2)+2\alpha^2}.
    \end{align*}
    Finally, we have 
    \begin{align*}
        \frac{4\alpha^2(2m^2+\alpha^2)}{m^2+1+2\alpha^2}\leq \frac{\left(2(2m+1)^2+\alpha^2\right)\left(2+\alpha^2\right)}{4(m^2+m+2)+2\alpha^2}
    \end{align*}
    if and only if (after some elementary computations)
    \begin{align}\label{last_conf_class_cond}
        6\alpha^6+(15m^2+23)\alpha^4+4\left(6m^4+6m^3+5m^2-10m-3\right)\alpha^2\leq 4(2m+1)^2(m^2+1).
    \end{align}
    Assuming without loss of generality that $\alpha\leq 1$, we deduce that the inequality
    \eqref{last_conf_class_cond} holds provided that
    \begin{align*}
        \left(32m^4+24m^3+35m^2-40m+11\right)\alpha^2\leq 4(2m+1)^2(m^2+1),
    \end{align*}
    or
    \begin{align}\label{new_conf_condition4}
        \log\left(\frac{b}{a}\right)\geq \frac{\pi}{2(2m+1)}\sqrt{\frac{32m^4+24m^3+35m^2-40m+11}{m^2+1}},
    \end{align}
    which finally concludes the proof of the theorem if $m=1$ or $m>1$ and $n\neq 0$.

    \textbf{Assumption II: $n=0$ and $m>1$.} In this case, we have
    \begin{align*}
        Q(Y)=Y^4-(2m^2-\mu)Y^2+m^4-\mu.
    \end{align*}
    Its discriminant is given by 
    \begin{align*}
        (2m^2-\mu)^2-4(m^4-\mu)=\mu(\mu-4(m^2-1)).
    \end{align*}
    We need to distinguish three cases.

    \textbf{Case 1: $\mu>4(m^2-1)$.}
    
    If $\mu>4(m^2-1)$, then the previous proof is unchanged. 

    \textbf{Case 2: $\mu=4(m^2-1)$.} Then
    \begin{align*}
        Q(Y)&=Y^4-(2m^2-4(m^2-1))Y^2+m^4-4m^2+4=Y^4-2(m^2-2)Y^2+(m^2-2)^2\\
        &=(Y^2-(m^2-2))^2.
    \end{align*}

    \textbf{Sub-case 1: $m>\sqrt{2}$.} Then $Q$ admits two double roots $\pm \sqrt{m^2-2}$. Therefore, the solution $Y$ takes the form
    \begin{align*}
        Y(t)=\mu_1\,e^{(1+\sqrt{m^2-2})\,t}+\mu_2\,t\,e^{(1+\sqrt{m^2-2})\,t}+\mu_3\,e^{(1-\sqrt{m^2-2})\,t}+\mu_4\,t\,e^{(1-\sqrt{m^2-2})\,t}.
    \end{align*}
    Letting $\lambda_1=1+\sqrt{m^2-2}$ and $\lambda_2=1-\sqrt{m^2-2}$, the boundary conditions shows that the following matrix is not invertible:
    \begin{align*}
        A=\begin{pmatrix}
            a^{\lambda_1} & \log(a)a^{\lambda_1} & a^{\lambda_2} & \log(a)\,a^{\lambda_2}\\
            b^{\lambda_1} & \log(b)b^{\lambda_1} & b^{\lambda_2} & \log(b)\,b^{\lambda_2}\\
            \lambda_1a^{\lambda_1} & (1+\lambda_1\log(a))a^{\lambda_1} & \lambda_2a^{\lambda_1} &   (1+\lambda_2\log(a))a^{\lambda_2}\\
            \lambda_1b^{\lambda_1} & (1+\lambda_1\log(b))b^{\lambda_1} & \lambda_2b^{\lambda_2} & (1+\lambda_2\log(b))b^{\lambda_2}
        \end{pmatrix}.
    \end{align*}
    Let $X,Y\in\C\setminus\ens{0}$ and $\alpha,\beta\in\C$. We have
    \begin{align*}
        &\begin{vmatrix}
            X^{\alpha} & \log(X)X^{\alpha} & X^{\beta} & \log(X)X^{\beta}\\
            Y^{\alpha} & \log(Y)Y^{\alpha} & Y^{\beta} & \log(Y)Y^{\beta}\\
            \alpha X^{\alpha} & (1+\alpha\log(X))X^{\alpha} & \beta X^{\beta} & (1+\beta\log(X))X^{\beta}\\
            \alpha Y^{\alpha} & (1+\alpha\log(Y))Y^{\alpha} & \beta Y^{\beta} & (1+\beta\log(Y))Y^{\beta}
        \end{vmatrix}=-\log(X)X^{\beta}\begin{vmatrix}
            Y^{\alpha} & \log(Y)Y^{\alpha} & Y^{\beta}\\
            \alpha X^{\alpha} & (1+\alpha\log(X))X^{\alpha} & \beta X^{\beta}\\
            \alpha Y^{\alpha} & (1+\alpha\log(Y))Y^{\alpha} & \beta Y^{\beta}
        \end{vmatrix}\\
        &+\log(Y)Y^{\beta}\begin{vmatrix}
            X^{\alpha} & \log(X)X^{\alpha} & X^{\beta}\\
            \alpha X^{\alpha} & (1+\alpha\log(X))X^{\alpha} & \beta X^{\beta}\\
            \alpha Y^{\alpha} & (1+\alpha\log(Y))Y^{\alpha} & \beta Y^{\beta}
        \end{vmatrix}
        -(1+\beta\log(X))X^{\beta}\begin{vmatrix}
            X^{\alpha} & \log(X)X^{\alpha} & X^{\beta}\\
            Y^{\alpha} & \log(Y)Y^{\alpha} & Y^{\beta}\\
            \alpha Y^{\alpha} & (1+\alpha\log(Y))Y^{\alpha} & \beta Y^{\beta}
        \end{vmatrix}\\
        &+(1+\beta\log(Y))Y^{\beta}\begin{vmatrix}
            X^{\alpha} & \log(X)X^{\alpha} & X^{\beta}\\
            Y^{\alpha} & \log(Y)Y^{\alpha} & Y^{\beta}\\
            \alpha X^{\alpha} & (1+\alpha\log(X))X^{\alpha} & \beta X^{\beta}
        \end{vmatrix}\\
        &=-\log(X)X^{\beta}\Big(-\log(Y)Y^{\alpha}\alpha\beta(X^{\alpha}Y^{\beta}-X^{\beta}Y^{\alpha})+(1+\alpha\log(X))X^{\alpha}(\beta-\alpha)Y^{\alpha+\beta}\\
        &-(1+\alpha\log(Y))Y^{\alpha}\left(\beta X^{\beta}Y^{\alpha}-\alpha X^{\alpha}Y^{\beta}\right)\Big)+\log(Y)Y^{\beta}\Big(-\log(X)X^{\alpha}\alpha\beta\left(X^{\alpha}Y^{\beta}-X^{\beta}Y^{\alpha}\right)\\
        &+(1+\alpha\log(X))X^{\alpha}\left(\beta X^{\alpha}Y^{\beta}-\alpha X^{\beta}Y^{\alpha}\right)-(1+\alpha\log(Y))Y^{\alpha}(\beta-\alpha)X^{\alpha+\beta}\Big)\\
        &-\left(1+\beta\log(X)\right)X^{\beta}\Big(-\log(X)X^{\alpha}\left(\beta-\alpha\right)Y^{\alpha+\beta}+\log(Y)Y^{\alpha}\left(\beta X^{\alpha}Y^{\beta}-\alpha X^{\beta}Y^{\alpha}\right)\\
        &-(1+\alpha\log(Y))Y^{\alpha}\left(X^{\alpha}Y^{\beta}-X^{\beta}Y^{\alpha}\right)\Big)
        +(1+\beta\log(Y))Y^{\beta}\Big(-\log(X)X^{\alpha}\left(\beta X^{\beta}Y^{\alpha}-\alpha X^{\alpha}Y^{\beta}\right)\\
        &+\log(Y)Y^{\alpha}\left(\beta-\alpha\right)X^{\alpha+\beta}
        -(1+\alpha\log(X))X^{\alpha}\left(X^{\alpha}Y^{\beta}-X^{\beta}Y^{\alpha}\right)\Big)\\
        &=\log(X)\log(Y)\left(-2\alpha\beta\left(X^{\alpha}Y^{\beta}-X^{\beta}Y^{\alpha}\right)^2+2(\alpha X^{\alpha}Y^{\beta}-\beta X^{\beta}Y^{\alpha})(\beta X^{\alpha}Y^{\beta}-\alpha X^{\beta}Y^{\alpha})\right)\\
        &+(\beta-\alpha)^2\left(\log^2(X)+\log^2(Y)\right)X^{\alpha+\beta}Y^{\alpha+\beta}-\left(X^{\alpha}Y^{\beta}-X^{\beta}Y^{\alpha}\right)^2.
    \end{align*}
    We can rewrite this expression (naming the underlying matrix $B$) as 
    \begin{align*}
        \det(B)&=(\alpha-\beta)^2X^{\alpha+\beta}Y^{\alpha+\beta}\left(\log(X)^2-2\log(X)\log(Y)+\log^2(Y)\right)-(X^{\alpha}Y^{\beta}-X^{\beta}Y^{\alpha})^2\\
        &=(\alpha-\beta)^2X^{\alpha+\beta}Y^{\alpha+\beta}\log^2\left(\frac{Y}{X}\right)-\left(X^{\alpha}Y^{\beta}-X^{\beta}Y^{\alpha}\right)^2.
    \end{align*}
    Therefore, we get if
    \begin{align*}
    \left\{\begin{alignedat}{1}
        \alpha&=\lambda_1=1+\sqrt{m^2-2}=1+\mu\\
        \beta&=\lambda_2=1-\sqrt{m^2-2}=1-\mu,
    \end{alignedat}\right.
    \end{align*}
    we get 
    \begin{align*}
        \det(A)&=4\mu^2a^{2}b^2\log^2\left(\frac{b}{a}\right)-\left(a^{1+\mu}b^{1-\mu}-a^{1-\mu}b^{1+\mu}\right)^2\\
        &=a^4\left(4\mu^2\left(\frac{b}{a}\right)^2\log^2\left(\frac{b}{a}\right)-\left(\left(\frac{b}{a}\right)^{\mu}-\bigg(\frac{a}{b}\bigg)^{\mu}\right)^2\right)\\
        &=a^4\left(2\mu\left(\frac{b}{a}\right)\log\left(\frac{b}{a}\right)^{\mu}+\left(\left(\frac{b}{a}\right)-\bigg(\frac{a}{b}\bigg)^{\mu}\right)\right)\left(2\mu\left(\frac{b}{a}\right)\log\left(\frac{b}{a}\right)-\left(\left(\frac{b}{a}\right)^{\mu}-\bigg(\frac{a}{b}\bigg)^{\mu}\right)\right).
    \end{align*}
    Therefore, we have $\det(A)=0$ if and only if
    \begin{align*}
        2\mu\left(\frac{b}{a}\right)\log\left(\frac{b}{a}\right)-\left(\left(\frac{b}{a}\right)^{\mu}-\bigg(\frac{a}{b}\bigg)^{\mu}\right)=0.
    \end{align*}
    Therefore, we let $\alpha>0$ and we introduce on $\R_+$ the function
    \begin{align*}
        f(x)=2\alpha (1+x)^{\alpha+1}\log(1+x)-\left((1+x)^{2\alpha}-1\right).
    \end{align*}
    We have
    \begin{align*}
        f'(x)&=2\alpha(\alpha+1)(1+x)^{\alpha}\log(1+x)+2\alpha(1+x)^{\alpha}-2\alpha(1+x)^{2\alpha-1}\\
        &=2\alpha(1+x)^{\alpha}\left((\alpha+1)\log(1+x)+1-(1+x)^{\alpha-1}\right).
    \end{align*}
    If $\alpha\leq 1$, we deduce that $f$ is strictly increasing, and since $f(0)=0$, we deduce that $f(x)>0$ for all $x>0$. If $\alpha>1$ (or $m>\sqrt{3}$), let $\beta=\alpha-1>0$, and introduce the function
    \begin{align*}
        g(x)=(\beta+2)\log(1+x)+1-(1+x)^{\beta}.
    \end{align*}
    Make the change of variable $1+x=e^t$. Then, we have
    \begin{align*}
        g(t)=(\beta+2)t+1-e^{\beta t}.
    \end{align*}
    We have
    \begin{align*}
        g'(t)=\beta+2-\beta e^{\beta t}.
    \end{align*}
    Therefore, if 
    \begin{align*}
        \gamma=\frac{\log\left(\frac{\beta+2}{\beta}\right)}{\beta},
    \end{align*}
    we deduce that $g$ is strictly decreasing on $[0,\gamma]$ and strictly increasing on $[\gamma,\infty[$. We have
    \begin{align*}
        g(\gamma)=(\beta+2)\gamma+1-\frac{(\beta+2)}{\beta}=\frac{\beta+2}{\beta}\left(\beta+\log\left(1+\frac{2}{\beta}\right)-1\right)>0.
    \end{align*}
    Therefore, we deduce that there exists $\delta>\gamma$ such that $f$ is strictly increasing on $[0,\delta]$ and strictly decreasing on $[\delta,\infty[$. Since $f(0)=0$, $f$ admits a unique zero $r_0\in [e^{\gamma}-1,\infty[$. We have
    \begin{align*}
        f(t)&=2\alpha\,t\, e^{(\alpha+1)t}-(e^{2\alpha t}-1)=1+e^{(\alpha+1)t}\left(2\alpha t-e^{(\alpha-1)t}\right)\\
        &<1+e^{(\alpha+1)t}\left(2\alpha t-1-(\alpha-1)t-\frac{(\alpha-1)^2}{2}t^2\right)\\
        &=1-\frac{1}{2}e^{(\alpha+1)t}\left((\alpha-1)^2t^2-2(\alpha+1)t+2\right)<0
    \end{align*}
    provided that $(\alpha-1)^2t^2-2(\alpha+1)t+2\geq 2$, or $t\geq \frac{2(\alpha+1)}{(\alpha-1)^2}$. Therefore, we obtain the estimate 
    \begin{align*}
        \delta<e^{\frac{2(\alpha+1)}{(\alpha-1)^2}}
    \end{align*}
    Since 
    \begin{align*}
        1+x=\frac{b}{a},
    \end{align*}
    we conclude that there are no solutions provided that
    \begin{align*}
        \log\left(\frac{b}{a}\right)\geq \log\left(\frac{e^{2(\alpha+1)}}{(\alpha-1)^2}-1\right),
    \end{align*}
    which is implied by the stronger condition
    \begin{align}\label{new_conf_condition_small range}
        \log\left(\frac{b}{a}\right)\geq \frac{2\sqrt{m^2-2}}{(\sqrt{m^2-2}-1)^2}.
    \end{align}
    Notice that this condition is trivial provided that 
    \begin{align*}
        \frac{2\sqrt{m^2-2}}{(\sqrt{m^2-2}-1)^2}\leq 1,
    \end{align*}
    or
    \begin{align}
        m^4-18m^2+33\geq 0,
    \end{align}
    which is satisfied on $[\sqrt{9+4\sqrt{3}},\infty[$.

    To conclude, there are no solutions for $1<m\leq \sqrt{3}$ and $m\geq \sqrt{9+4\sqrt{3}}$, and no solution provided that \eqref{new_conf_condition_small range} if $\sqrt{3}<m<\sqrt{9+4\sqrt{3}}<4$.

    We can now move to the next sub-case.

    \textbf{Sub-case 2: $m=\sqrt{2}$}. Then $Q(Y)=Y^4$, and $P$ admits $r=1$ as quadruple root. Therefore, the solution $Y$ is given by
    \begin{align*}
        Y(t)=\mu_1e^t+\mu_2\,t\,e^t+\mu_3\,t^2\,e^t+\mu_4\,t^3\,e^t.
    \end{align*}
    The boundary conditions show that the following matrix is not invertible:
    \begin{align*}
        A=\begin{pmatrix}
            a & \log(a)a & \log^2(a)a & \log^3(a)a\\
            b & \log(b)b & \log^2(b)b & \log^3(b)b\\
            a & (1+\log(a))a & (2\log(a)+\log^2(a))a & (3\log^2(a)+\log^3(a))a\\
            b & (1+\log(b))b & (2\log(b)+\log^2(b))b & (3\log^2(b)+\log^3(b))b
        \end{pmatrix}.
    \end{align*}
    A computation shows that 
    \begin{align*}
        \det(A)&=-(ab)^2\log^4\left(\frac{b}{a}\right),
    \end{align*}
    which is clearly never zero for all $0<a<b<\infty$.

    \textbf{Sub-case 3: $1<m<\sqrt{2}$}. Then
    \begin{align*}
        Q(Y)=(Y^2+2-m^2)^2,
    \end{align*}
    which shows that $Q$ admits two double complex roots $\pm\,i\,\sqrt{2-m^2}$. If $\lambda_1=\sqrt{2-m^2}$, we deduce that the roots of $P(X)=Q(X+1)$ are given by
    \begin{align*}
        \left\{\begin{alignedat}{2}
            r_1&=r_2&&=1+i\,\lambda_1\\
            r_3&=r_4&&=1-i\,\lambda_1
        \end{alignedat}\right.
    \end{align*}
    Therefore, $Y$ is given by
    \begin{align*}
        Y(t)=\mu_1\,e^{(1+i\,\lambda_1)t}+\mu_2\,t\,e^{(1+i\,\lambda_1)}+\mu_4\,e^{(1-i\,\lambda_1)t}+\mu_4\,t\,e^{(1-i\,\lambda_1)t}.
    \end{align*}
    The boundary conditions show that the following matrix is degenerate
    \begin{align*}
        A=\begin{pmatrix}
            a^{1+i\,\lambda_1} & \log(a)a^{1+i\,\lambda_1} & a^{1-i\,\lambda_1} & \log(a)a^{1-i\,\lambda_1}\\
            b^{1+i\,\lambda_1} & \log(b)b^{1+i\,\lambda_1} & b^{1-i\,\lambda_1} & \log(b)b^{1-i\,\lambda_1}\\
            (1+i\,\lambda_1)a^{1+i\,\lambda_1} & (1+(1+i\,\lambda_1)\log(a))a^{1+i\,\lambda_1} & (1-i\,\lambda_1)a^{1-i\,\lambda_1} & \left(1+(1-i\,\lambda_1)\log(a)\right)a^{1-i\,\lambda_1}\\
            (1+i\,\lambda_1)b^{1+i\,\lambda_1} & (1+(1+i\,\lambda_1)\log(b))b^{1+i\,\lambda_1} & (1-i\,\lambda_1)b^{1-i\,\lambda_1} & \left(1+(1-i\,\lambda_1)\log(b)\right)b^{1-i\,\lambda_1}
        \end{pmatrix}.
    \end{align*}
    We have
    \begin{align*}
        \det(A)&=-\bigg(4 \, a^{2 i \, \lambda_{1}} b^{2 i \, \lambda_{1}} \lambda_{1}^{2} \log\left(a\right)^{2} - 8 \, a^{2 i \, \lambda_{1}} b^{2 i \, \lambda_{1}} \lambda_{1}^{2} \log\left(a\right) \log\left(b\right) + 4 \, a^{2 i \, \lambda_{1}} b^{2 i \, \lambda_{1}} \lambda_{1}^{2} \log\left(b\right)^{2}\\
        &- 2 \, a^{2 i \, \lambda_{1}} b^{2 i \, \lambda_{1}}
        + a^{4 i \, \lambda_{1}} + b^{4 i \, \lambda_{1}}\bigg) a^{-2 i \, \lambda_{1} + 2} b^{-2 i \, \lambda_{1} + 2}=0.
    \end{align*}
    Therefore, we have the following identity
    \begin{align*}
        &4 \, a^{2 i \, \lambda_{1}} b^{2 i \, \lambda_{1}} \lambda_{1}^{2} \log\left(a\right)^{2} - 8 \, a^{2 i \, \lambda_{1}} b^{2 i \, \lambda_{1}} \lambda_{1}^{2} \log\left(a\right) \log\left(b\right) + 4 \, a^{2 i \, \lambda_{1}} b^{2 i \, \lambda_{1}} \lambda_{1}^{2} \log\left(b\right)^{2}\\
        &- 2 \, a^{2 i \, \lambda_{1}} b^{2 i \, \lambda_{1}}
        + a^{4 i \, \lambda_{1}} + b^{4 i \, \lambda_{1}}=0,
    \end{align*}
    that is more simply rewritten as
    \begin{align*}
        4\lambda_1^2a^{2i\lambda_1}b^{2i\lambda_1}\log^2\left(\frac{b}{a}\right)+\left(b^{2i\lambda_1}-a^{2i\lambda_1}\right)^2=0,
    \end{align*}
    or
    \begin{align*}
        4\lambda_1^2\left(\frac{b}{a}\right)^{2i\lambda_1}\log^2\left(\frac{b}{a}\right)+\left(\left(\frac{b}{a}\right)^{2i\lambda_1}-1\right)^2=0.
    \end{align*}
    Therefore, we deduce that
    \begin{align}\label{impossible_system}
        \left\{\begin{alignedat}{2}
            &4\lambda_1^2\cos\left(2\lambda_1\log\left(\frac{b}{a}\right)\right)\log^2\left(\frac{b}{a}\right)+\left(\cos\left(2\lambda_1\log\left(\frac{b}{a}\right)\right)-1\right)^2-\sin^2\left(2\lambda_1\log\left(\frac{b}{a}\right)\right)=0\\
            &2\lambda_1^2\sin\left(2\lambda_1\log\left(\frac{b}{a}\right)\right)\log^2\left(\frac{b}{a}\right)+\cos\left(2\lambda_1\log\left(\frac{b}{a}\right)\right)\sin\left(2\lambda_1\log\left(\frac{b}{a}\right)\right)=0.
        \end{alignedat}\right.
    \end{align}
    If $2\lambda_1\log\left(\frac{b}{a}\right)\notin \pi\Z$, we get
    \begin{align*}
        2\lambda_1^2\log^2\left(\frac{b}{a}\right)+\cos\left(2\lambda_1\log\left(\frac{b}{a}\right)\right)=0,
    \end{align*}
    which is impossible provided that
    \begin{align*}
        2\lambda_1^2\log^2\left(\frac{b}{a}\right)>1,
    \end{align*}
    or
    \begin{align*}
        \log\left(\frac{b}{a}\right)>\frac{1}{\sqrt{2(2-m^2)}}.
    \end{align*}
    Plugging this identity in the first equality of \eqref{impossible_system} shows that 
    \begin{align*}
        &0=-8\lambda_1^4\log^4\left(\frac{b}{a}\right)+\left(2\lambda_1^2\log^2\left(\frac{b}{a}\right)-1\right)^2+4\lambda_1^4\log^4\left(\frac{b}{a}\right)-1\\
        &=-4\lambda_1^2\log^2\left(\frac{b}{a}\right),
    \end{align*}
    which is never $0$. Therefore, we deduce that either $2\lambda_1\log\left(\frac{b}{a}\right)=0$ (mod $2\pi$) or $2\lambda_1\log\left(\frac{b}{a}\right)=\pi$ (mod $2\pi$).
    In the first alternative, the first identity of \eqref{impossible_system} becomes
    \begin{align*}
        4\lambda_1^2\log^2\left(\frac{b}{a}\right)=0,
    \end{align*}
    which is never verified, while in the second alternative, it becomes
    \begin{align*}
        -4\lambda_1^2\log^2\left(\frac{b}{a}\right)+4=0,
    \end{align*}
    or
    \begin{align}\label{cond}
        \log\left(\frac{b}{a}\right)=\frac{1}{\sqrt{2-m^2}}.
    \end{align}
    However, this identity implies that $2\lambda_1\log\left(\frac{b}{a}\right)=2\neq \pi$, so we get a contradiction. 

    Finally, we can move to the third possibility.

    \textbf{Case 3: $\mu<4(m^2-1)$}.

    This is the expected case since $\mu$ must go to $0$ as the conformal class degenerates. Then, $Q(\sqrt{Y})$ admits the roots
    \begin{align*}
        \frac{1}{2}\left(2m^2-\mu\pm\,i\,\sqrt{\mu(4(m^2-1)-\mu)}\right).
     \end{align*}
    Let $a,b\in\R$ be arbitrary, and let $\alpha,\beta\in\R$ such that 
    \begin{align*}
        (\alpha+i\,\beta)^2=a+i\,b.
    \end{align*}
    We get
    \begin{align*}
        \left\{\begin{alignedat}{1}
            \alpha^2-\beta^2=a\\
            2\alpha\beta=b
        \end{alignedat}\right.
    \end{align*}
    which implies that
    \begin{align*}
        \alpha^4-a\,\alpha^2-\frac{b^2}{4}=0,
    \end{align*}
    and since $\alpha^2\geq 0$, we deduce that
    \begin{align*}
        \alpha^2=\frac{1}{2}\left(a+\sqrt{a^2+b^2}\right),
    \end{align*}
    and finally
    \begin{align*}
    \left\{\begin{alignedat}{1}
        \alpha&=\sqrt{\frac{1}{2}\left(a+\sqrt{a^2+b^2}\right)}\\
        \beta&=\mathrm{sgn}(b)\sqrt{\frac{1}{2}\left(-a+\sqrt{a^2+b^2}\right)}
        \end{alignedat}\right.
    \end{align*}
    Therefore, taking $a=\dfrac{1}{2}(2m^2-\mu)$ and $b=\dfrac{1}{2}\sqrt{\mu(4(m^2-1)-\mu)}$, we get
    \begin{align*}
        a^2+b^2=\frac{1}{4}\left(4m^4-4m^2\mu+\mu^2+4(m^2-1)\mu-\mu^2\right)=m^4-\mu.
    \end{align*}
    Therefore, we get
    \begin{align}\label{alpha_beta}
        \left\{\begin{alignedat}{1}
            \alpha&=\frac{1}{2}\sqrt{\left(2\sqrt{m^4-\mu}+2m^2-\mu\right)}\\
            \beta&=\frac{1}{2}\sqrt{\left(2\sqrt{m^4-\mu}-2m^2+\mu\right)},
        \end{alignedat}\right.
    \end{align}
    while obviously
    \begin{align*}
        \left(\alpha-i\,\beta\right)^2=2m^2-\mu-i\,\sqrt{\mu(4(m^2-1)-\mu)}.
    \end{align*}
    Finally, we deduce that the roots of $P$ are given by
    \begin{align*}
        \left\{\begin{alignedat}{1}
            r_1=1+\alpha+i\,\beta\\
            r_2=1-\alpha-i\,\beta\\
            r_3=1+\alpha-i\,\beta\\
            r_4=1-\alpha+i\,\beta
        \end{alignedat}\right.
    \end{align*}
    If $\lambda_1=\alpha+i\,\beta$ and $\lambda_2=\alpha-i\,\beta$, the boundary conditions shows that the following matrix is not invertible
    \begin{align*}
        A=\begin{pmatrix}
            a^{1+\lambda_1} & a^{1-\lambda_1} & a^{1+\lambda_2} & a^{1-\lambda_2}\\
            b^{1+\lambda_1} & b^{1-\lambda_1} & b^{1+\lambda_2} & b^{1-\lambda_2}\\
            (1+\lambda_1)a^{1+\lambda_1} & (1-\lambda_1)a^{1-\lambda_1} & (1+\lambda_2)a^{1+\lambda_2} & (1-\lambda_2)a^{1-\lambda_2}\\
            (1+\lambda_1)b^{1+\lambda_1} & (1-\lambda_1)b^{1-\lambda_1} & (1+\lambda_2)b^{1+\lambda_2} & (1-\lambda_2)b^{1-\lambda_2}
        \end{pmatrix}.
    \end{align*}
    We have
    \begin{align*}
        \det(A)&={\left(a^{\lambda_{2}} b^{\lambda_{1}} \lambda_{1} - a^{\lambda_{1}} b^{\lambda_{2}} \lambda_{1} + a^{\lambda_{2}} b^{\lambda_{1}} \lambda_{2} - a^{\lambda_{1}} b^{\lambda_{2}} \lambda_{2} + a^{\lambda_{1} + \lambda_{2}} \lambda_{1} - b^{\lambda_{1} + \lambda_{2}} \lambda_{1} - a^{\lambda_{1} + \lambda_{2}} \lambda_{2} + b^{\lambda_{1} + \lambda_{2}} \lambda_{2}\right)}\\
        &\times {\left(a^{\lambda_{2}} b^{\lambda_{1}} \lambda_{1} - a^{\lambda_{1}} b^{\lambda_{2}} \lambda_{1} + a^{\lambda_{2}} b^{\lambda_{1}} \lambda_{2} - a^{\lambda_{1}} b^{\lambda_{2}} \lambda_{2} - a^{\lambda_{1} + \lambda_{2}} \lambda_{1} + b^{\lambda_{1} + \lambda_{2}} \lambda_{1} + a^{\lambda_{1} + \lambda_{2}} \lambda_{2} - b^{\lambda_{1} + \lambda_{2}} \lambda_{2}\right)}\\
        &\times a^{-\lambda_{1} - \lambda_{2} + 2} b^{-\lambda_{1} - \lambda_{2} + 2}.
    \end{align*}
    Therefore, we either have
    \begin{align}\label{end_structure1}
        a^{\lambda_{2}} b^{\lambda_{1}} \lambda_{1} - a^{\lambda_{1}} b^{\lambda_{2}} \lambda_{1} + a^{\lambda_{2}} b^{\lambda_{1}} \lambda_{2} - a^{\lambda_{1}} b^{\lambda_{2}} \lambda_{2} + a^{\lambda_{1} + \lambda_{2}} \lambda_{1} - b^{\lambda_{1} + \lambda_{2}} \lambda_{1} - a^{\lambda_{1} + \lambda_{2}} \lambda_{2} + b^{\lambda_{1} + \lambda_{2}} \lambda_{2}=0
    \end{align}
    or
    \begin{align}\label{end_structure2}
        a^{\lambda_{2}} b^{\lambda_{1}} \lambda_{1} - a^{\lambda_{1}} b^{\lambda_{2}} \lambda_{1} + a^{\lambda_{2}} b^{\lambda_{1}} \lambda_{2} - a^{\lambda_{1}} b^{\lambda_{2}} \lambda_{2} - a^{\lambda_{1} + \lambda_{2}} \lambda_{1} + b^{\lambda_{1} + \lambda_{2}} \lambda_{1} + a^{\lambda_{1} + \lambda_{2}} \lambda_{2} - b^{\lambda_{1} + \lambda_{2}} \lambda_{2}=0.
    \end{align}
    Dividing these equation by $a^{\lambda_1+\lambda_2}$ shows that 
    \begin{align}\label{end_structure12}
        &\frac{1}{a^{\lambda_1+\lambda_2}}\left(a^{\lambda_{2}} b^{\lambda_{1}} \lambda_{1} - a^{\lambda_{1}} b^{\lambda_{2}} \lambda_{1} + a^{\lambda_{2}} b^{\lambda_{1}} \lambda_{2} - a^{\lambda_{1}} b^{\lambda_{2}} \lambda_{2} + a^{\lambda_{1} + \lambda_{2}} \lambda_{1} - b^{\lambda_{1} + \lambda_{2}} \lambda_{1} - a^{\lambda_{1} + \lambda_{2}} \lambda_{2} + b^{\lambda_{1} + \lambda_{2}} \lambda_{2}\right)\nonumber\\
        &=\lambda_1\left(\left(\frac{b}{a}\right)^{\lambda_1}-\left(\frac{b}{a}\right)^{\lambda_2}+1-\left(\frac{b}{a}\right)^{\lambda_1+\lambda_2}\right)+\lambda_2\left(\left(\frac{b}{a}\right)^{\lambda_1}-\left(\frac{b}{a}\right)^{\lambda_2}-1+\left(\frac{b}{a}\right)^{\lambda_1+\lambda_2}\right)\nonumber\\
        &=(\lambda_1+\lambda_2)\left(\left(\frac{b}{a}\right)^{\lambda_1}-\left(\frac{b}{a}\right)^{\lambda_2}\right)-\left(\lambda_1-\lambda_2\right)\left(\left(\frac{b}{a}\right)^{\lambda_1+\lambda_2}-1\right),
    \end{align}
    while the second identity becomes
    \begin{align}\label{end_structure22}
        &\frac{1}{a^{\lambda_1+\lambda_2}}\left(a^{\lambda_{2}} b^{\lambda_{1}} \lambda_{1} - a^{\lambda_{1}} b^{\lambda_{2}} \lambda_{1} + a^{\lambda_{2}} b^{\lambda_{1}} \lambda_{2} - a^{\lambda_{1}} b^{\lambda_{2}} \lambda_{2} - a^{\lambda_{1} + \lambda_{2}} \lambda_{1} + b^{\lambda_{1} + \lambda_{2}} \lambda_{1} + a^{\lambda_{1} + \lambda_{2}} \lambda_{2} - b^{\lambda_{1} + \lambda_{2}} \lambda_{2}\right)\nonumber\\
        &=\lambda_1\left(\left(\frac{b}{a}\right)^{\lambda_1}-\left(\frac{b}{a}\right)^{\lambda_2}-1+\left(\frac{b}{a}\right)^{\lambda_1+\lambda_2}\right)+\lambda_2\left(\left(\frac{b}{a}\right)^{\lambda_1}-\left(\frac{b}{a}\right)^{\lambda_2}+1-\left(\frac{b}{a}\right)^{\lambda_1+\lambda_2}\right)\nonumber\\
        &=(\lambda_1+\lambda_2)\left(\left(\frac{b}{a}\right)^{\lambda_1}-\left(\frac{b}{a}\right)^{\lambda_2}\right)+(\lambda_1-\lambda_2)\left(\left(\frac{b}{a}\right)^{\lambda_1+\lambda_2}-1\right).
    \end{align}
    Recalling that $\lambda_1=\alpha+i\,\beta$ and $\lambda_2=\alpha-i\,\beta$, we deduce that 
    \begin{align*}
        &(\lambda_1+\lambda_2)\left(\left(\frac{b}{a}\right)^{\lambda_1}-\left(\frac{b}{a}\right)^{\lambda_2}\right)-\left(\lambda_1-\lambda_2\right)\left(\left(\frac{b}{a}\right)^{\lambda_1+\lambda_2}-1\right)\\
        &=4\,i\,\beta\left(\frac{b}{a}\right)^{\alpha}\sin\left(\beta\log\left(\frac{b}{a}\right)\right)-2\,i\,\beta\left(\left(\frac{b}{a}\right)^{2\alpha}-1\right),
    \end{align*}
    and since $\beta\neq 0$, we get that \eqref{end_structure1} is equivalent to
    \begin{align}\label{end_structure13}
        2\left(\frac{b}{a}\right)^{\alpha}\sin\left(\beta\log\left(\frac{b}{a}\right)\right)=\left(\frac{b}{a}\right)^{2\alpha}-1,
    \end{align}
    while the second equation \eqref{end_structure2} is equivalent to
    \begin{align}\label{end_structure23}
        2\left(\frac{b}{a}\right)^{\alpha}\sin\left(\beta\log\left(\frac{b}{a}\right)\right)=-\left(\left(\frac{b}{a}\right)^{2\alpha}-1\right),
    \end{align}
    Recall that by \eqref{alpha_beta}, we have
    \begin{align}\label{new_alpha_beta}
        \left\{\begin{alignedat}{1}
            \alpha&=\frac{1}{2}\sqrt{\left(2\sqrt{m^4-\mu}+2m^2-\mu\right)}\\
            \beta&=\frac{1}{2}\sqrt{\left(2\sqrt{m^4-\mu}-2m^2+\mu\right)},
        \end{alignedat}\right.
    \end{align}
    Let also $R=\log\left(\dfrac{b}{a}\right)$.

    \textbf{Second Equality.} We look for the first positive zero of $f:[0,4(m^2-1)]\rightarrow\R$
    \begin{align*}
        f(t)=2\,e^{\frac{R}{2}\sqrt{\left(2\sqrt{m^4-t}+2m^2-t\right)}}\sin\left(\frac{R}{2}\sqrt{\left(2\sqrt{m^4-t}-2m^2+t\right)}\right)+\left(e^{R\sqrt{\left(2\sqrt{m^4-t}+2m^2-t\right)}}-1\right).
    \end{align*}
    We have $f(t)>0$ on $[0,\mu]$ provided that 
    \begin{align*}
        \sqrt{\left(2\sqrt{m^4-\mu}-2m^2+\mu\right)}\leq \frac{2\pi}{R},
    \end{align*}
    or
    \begin{align*}
        2\sqrt{m^4-\mu}\leq \frac{4\pi^2}{R^2}+2m^2-\mu,
    \end{align*}
    which is equivalent to
    \begin{align*}
        4(m^4-\mu)\leq 4m^4+\frac{16m^2\pi^2}{R^2}+\frac{16\pi^4}{R^4}-2\left(2m^2+\frac{4\pi^2}{R^2}\right)\mu+\mu^2,
    \end{align*}
    or
    \begin{align*}
        \mu^2-4\left((m^2-1)+\frac{2\pi^2}{R^2}\right)\mu+\frac{16m^2\pi^2}{R^2}+\frac{16\pi^4}{R^4}\leq 0.
    \end{align*}
    The discriminant of this quadratic polynomial in $\mu$ is given by 
    \begin{align*}
        16(m^2-1)^2+\frac{64(m^2-1)\pi^2}{R^2}+\frac{64\pi^4}{R^4}-\frac{64m^2\pi^2}{R^2}-\frac{64\pi^4}{R^4}=16\left((m^2-1)^2-\frac{4\pi^2}{R^2}\right).
    \end{align*}
    Assuming that 
    \begin{align*}
        \log\left(\frac{b}{a}\right)\geq \frac{2\pi}{m^2-1},
    \end{align*}
    we deduce that 
    \begin{align*}
        2\left((m^2-1)+\frac{2\pi^2}{R^2}\right)-2\sqrt{(m^2-1)^2-\frac{4\pi^2}{R^2}}< \mu<2\left((m^2-1)+\frac{2\pi^2}{R^2}\right)+2\sqrt{(m^2-1)^2-\frac{4\pi^2}{R^2}}.
    \end{align*}
    In particular, we get in this alternative
    \begin{align*}
        \mu_{m,0}&> 2\left((m^2-1)+\frac{2\pi^2}{R^2}\right)-2\sqrt{(m^2-1)^2-\frac{4\pi^2}{R^2}}\\
        &=\frac{2\left((m^2-1)+\dfrac{2\pi^2}{R^2}\right)^2-2\left((m^2-1)^2-\dfrac{4\pi^2}{R^2}\right)}{\left((m^2-1)+\dfrac{2\pi^2}{R^2}\right)+\sqrt{(m^2-1)^2-\dfrac{4\pi^2}{R^2}}}
        =\frac{\dfrac{8m^2\pi^2}{R^2}+\dfrac{8\pi^4}{R^4}}{\left((m^2-1)+\dfrac{2\pi^2}{R^2}\right)+\sqrt{(m^2-1)^2-\dfrac{4\pi^2}{R^2}}}
    \end{align*}
    or
    \begin{align}\label{alternativeI}
        \mu_{m,n}&>\frac{\left(8m^2+\dfrac{8\pi^2}{\log^2\left(\frac{b}{a}\right)}\right)\dfrac{\pi^2}{\log^2\left(\frac{b}{a}\right)}}{\left((m^2-1)+\dfrac{2\pi^2}{\log^2\left(\frac{b}{a}\right)}\right)+\sqrt{(m^2-1)^2-\dfrac{4\pi^2}{\log^2\left(\frac{b}{a}\right)}}}.
    \end{align}
 
    \textbf{First Equality.} Then, we look for the first positive zero of the function $f:[0,4(m^2-1)]\rightarrow\R$ such that for all $t\in [0,4(m^2-1)]$,
    \begin{align*}
        f(t)=2\,e^{\frac{R}{2}\sqrt{\left(2\sqrt{m^4-t}+2m^2-t\right)}}\sin\left(\frac{R}{2}\sqrt{\left(2\sqrt{m^4-t}-2m^2+t\right)}\right)-\left(e^{R\sqrt{\left(2\sqrt{m^4-t}+2m^2-t\right)}}-1\right).
    \end{align*}
    It it unclear how to directly estimate the first zero of this function, so we will have to differentiate it. Make the change of variable
    \begin{align*}
        \sqrt{m^4-t}=2s,
    \end{align*}
    or
    \begin{align*}
        t=m^4-4s^2.
    \end{align*}
    We get
    \begin{align*}
        &2\sqrt{m^4-t}+2m^2-t=4s+2m^2-(m^4-4s^2)=4s(s+1)-(m^2-1)^2+1\\
        &2\sqrt{m^4-t}-2m^2+t=4s-2m^2+m^4-4s^2=4s(1-s)+(m^2-1)^2-1,
    \end{align*}
    and finally, if $4p=(m^2-1)^2-1$, we get
    \begin{align*}
        g(s)=f(m^4-4s^2)=2e^{R\sqrt{s(s+1)-p}}\sin\left(R\sqrt{p-s(s-1)}\right)-\left(e^{2R\sqrt{s(s+1)-p}}-1\right).
    \end{align*}
    Notice that the domain of definition of $s$ is 
    \begin{align*}
        \left[\frac{1}{2}\sqrt{m^4-4(m^2-1)},\frac{1}{2}m^2\right]=\left[\frac{1}{2}|m^2-2|,\frac{1}{2}m^2\right].
    \end{align*}
    Now, we have
    \begin{align*}
        &g'(s)=\frac{R(2s+1)}{\sqrt{s(s+1)-p}}e^{R\sqrt{s(s+1)-p}}\sin\left(R\sqrt{p-s(s-1)}\right)\\
        &+\frac{R(-2s+1)}{\sqrt{p-s(s-1)}}e^{R\sqrt{s(s+1)-p}}\cos\left(R\sqrt{p-s(s-1)}\right)-\frac{2R(2s+1)}{\sqrt{s(s+1)-p}}e^{2R\sqrt{s(s+1)-p}}.
    \end{align*}
    Notice that 
    \begin{align*}
        f(0)=-\left(e^{R\sqrt{2\sqrt{m^4}+2m^2}}-1\right)=-\left(e^{2m^2R}-1\right)<0,
    \end{align*}
    and since $t=m^4-4s^2$, we need to give an \emph{upper} estimate of the first zero $|m^2-2|/2<s_1<m^2/2$ of $g$. Notice that provided that $0\leq R\sqrt{p-s(s-1)}\leq \pi/2$, and $1/2\leq s\leq m^2/2$, we have $g'(s)<0$. Recalling that $p=\dfrac{1}{4}((m^2-1)^2-1)$, we deduce that
    \begin{align*}
        0\leq R\sqrt{p-s(s-1)}\leq \frac{\pi}{2}
    \end{align*}
    if and only if
    \begin{align*}
        \frac{1}{4}((m^2-1)^2-1)-s(s-1)\leq \frac{\pi^2}{4R^2},
    \end{align*}
    or
    \begin{align*}
        s^2-s-\frac{1}{4}((m^2-1)^2-1)+\frac{\pi^2}{4R^2}\geq 0.
    \end{align*}
    The discriminant of this polynomial function is given by
    \begin{align*}
        (m^2-1)^2-\frac{\pi^2}{R^2}.
    \end{align*}
    Assuming that 
    \begin{align*}
        \log\left(\frac{b}{a}\right)\geq \frac{\pi}{m^2-1},
    \end{align*}
    we deduce that
    \begin{align*}
        s\geq \frac{1}{2}+\frac{1}{2}\sqrt{(m^2-1)^2-\frac{\pi^2}{R^2}}.
    \end{align*}
    In other words the first zero $s_1<m^2/2$ is such that
    \begin{align*}
        s_1<\frac{1}{2}+\frac{1}{2}\sqrt{(m^2-1)^2-\frac{\pi^2}{R^2}}.
    \end{align*}
    Therefore, the first positive zero $t_1>0$ of $f$ is such that
    \begin{align*}
        \sqrt{m^4-t_1}<1+\sqrt{(m^2-1)^2-\frac{\pi^2}{R^2}},
    \end{align*}
    or
    \begin{align*}
        m^4-t_1<1+(m^2-1)^2-\frac{\pi^2}{R^2}+2\sqrt{(m^2-1)^2-\frac{\pi^2}{R^2}}=m^4-2m^2+1-\frac{\pi^2}{R^2}+2\sqrt{(m^2-1)^2-\frac{\pi^2}{R^2}},
    \end{align*}
    and finally
    \begin{align*}
        t_1&>2m^2-1+\frac{\pi^2}{R^2}-2\sqrt{(m^2-1)^2-\frac{\pi^2}{R^2}}=\frac{4m^4-4m^2+1+2(2m^2-1)\dfrac{\pi^2}{R^2}+\dfrac{\pi^4}{R^4}-4(m^2-1)^2+\dfrac{4\pi^2}{R^2}}{2m^2-1+\dfrac{\pi^2}{R^2}+2\sqrt{(m^2-1)^2-\dfrac{\pi^2}{R^2}}}\\
        &=\dfrac{4m^2-3+2(2m^2+1)\dfrac{\pi^2}{R^2}+\dfrac{\pi^4}{R^4}}{2m^2-1+\dfrac{\pi^2}{R^2}+2\sqrt{(m^2-1)^2-\dfrac{\pi^2}{R^2}}}=\dfrac{4m^2-3+2(2m^2+1)\dfrac{\pi^2}{\log^2\left(\frac{b}{a}\right)}+\dfrac{\pi^4}{\log^4\left(\frac{b}{a}\right)}}{2m^2-1+\dfrac{\pi^2}{\log^2\left(\frac{b}{a}\right)}+2\sqrt{(m^2-1)^2-\dfrac{\pi^2}{\log^2\left(\frac{b}{a}\right)}}}.
    \end{align*}
    Therefore, the eigenvalue $\mu_{m,0}$ satisfies in this alternative the inequality
    \begin{align}\label{alternativeII}
        \mu_{m,0}>\dfrac{4m^2-3+2(2m^2+1)\dfrac{\pi^2}{\log^2\left(\frac{b}{a}\right)}+\dfrac{\pi^4}{\log^4\left(\frac{b}{a}\right)}}{2m^2-1+\dfrac{\pi^2}{\log^2\left(\frac{b}{a}\right)}+2\sqrt{(m^2-1)^2-\dfrac{\pi^2}{\log^2\left(\frac{b}{a}\right)}}},
    \end{align}
    provided that
    \begin{align*}
        \log\left(\frac{b}{a}\right)\geq \frac{\pi}{m^2-1}.
    \end{align*}
    Then, let us estimate a minimum on the conformal class such that 
    \begin{align*}
        \dfrac{4m^2-3+2(2m^2+1)\dfrac{\pi^2}{\log^2\left(\frac{b}{a}\right)}+\dfrac{\pi^4}{\log^4\left(\frac{b}{a}\right)}}{2m^2-1+\dfrac{\pi^2}{\log^2\left(\frac{b}{a}\right)}+2\sqrt{(m^2-1)^2-\dfrac{\pi^2}{\log^2\left(\frac{b}{a}\right)}}}\geq \frac{\left(8m^2+\dfrac{8\pi^2}{\log^2\left(\frac{b}{a}\right)}\right)\dfrac{\pi^2}{\log^2\left(\frac{b}{a}\right)}}{\left((m^2-1)+\dfrac{2\pi^2}{\log^2\left(\frac{b}{a}\right)}\right)+\sqrt{(m^2-1)^2-\dfrac{4\pi^2}{\log^2\left(\frac{b}{a}\right)}}}.
    \end{align*}
    Letting $X=\dfrac{\pi^2}{\log^2\left(\frac{b}{a}\right)}$, this inequality is equivalent to
    \begin{align*}
        2m^2-1+X^2-2\sqrt{(m^2-1)^2-X^2}\geq 2\left((m^2-1)+2X^2\right)-2\sqrt{(m^2-1)^2-4X^2},
    \end{align*}
    or
    \begin{align}\label{end_conf_cond1}
        4m-3-3X^2&\geq 2\left(\sqrt{(m^2-1)^2-X^2}-\sqrt{(m^2-1)^2-4X^2}\right)\nonumber\\
        &=\frac{6X^2}{\sqrt{(m^2-1)^2-X^2}+\sqrt{(m^2-1)^2-4X^2}}.
    \end{align}
    Assuming that 
    \begin{align}\label{end_conf_cond2}
        \log\left(\frac{b}{a}\right)\geq \frac{4\pi}{m^2-1},
    \end{align}
    we deduce that
    \begin{align*}
        \sqrt{(m^2-1)-X^2}+\sqrt{(m^2-1)^2-4X^2}\geq \dfrac{\sqrt{3}}{4}\left(\sqrt{5}+2\right)(m^2-1),
    \end{align*}
    and in particular \eqref{end_conf_cond1} is implied by the inequality
    \begin{align*}
        4m-3\geq \left(3+\frac{3\sqrt{3}}{2}\left(\sqrt{5}+2\right)\right)X^2,
    \end{align*}
    or
    \begin{align}\label{end_conf_cond3}
        \log\left(\frac{b}{a}\right)\geq \frac{2\pi}{3(2+\sqrt{3}(\sqrt{5}+2))(4m-3)}.
    \end{align}
    Then, we can move to the general lower bound on all eigenvalues. Using \eqref{est_eigenvalue} (and assuming that \eqref{new_conf_condition2}, \eqref{new_conf_condition3}, and \eqref{new_conf_condition4} hold), we look for a condition on the conformal class so that 
    \begin{align*}
        \dfrac{\left(8m^2+\dfrac{8\pi^2}{\log^2\left(\frac{b}{a}\right)}\right)\dfrac{\pi^2}{\log^2\left(\frac{b}{a}\right)}}{\left((m^2-1)+\dfrac{2\pi^2}{\log^2\left(\frac{b}{a}\right)}\right)+\sqrt{(m^2-1)^2-\dfrac{4\pi^2}{\log^2\left(\frac{b}{a}\right)}}}\geq \dfrac{\left(4m^2+\dfrac{\pi^2}{\log^2\left(\frac{b}{a}\right)}\right)\dfrac{\pi^2}{\log^2\left(\frac{b}{a}\right)}}{2(m^2+1)+\dfrac{\pi^2}{\log^2\left(\frac{b}{a}\right)}}.
    \end{align*}
    We have
    \begin{align*}
        \dfrac{\left(8m^2+\dfrac{8\pi^2}{\log^2\left(\frac{b}{a}\right)}\right)\dfrac{\pi^2}{\log^2\left(\frac{b}{a}\right)}}{\left((m^2-1)+\dfrac{2\pi^2}{\log^2\left(\frac{b}{a}\right)}\right)+\sqrt{(m^2-1)^2-\dfrac{4\pi^2}{\log^2\left(\frac{b}{a}\right)}}}\geq \dfrac{\left(8m^2+\dfrac{8\pi^2}{\log^2\left(\frac{b}{a}\right)}\right)\dfrac{\pi^2}{\log^2\left(\frac{b}{a}\right)}}{2(m^2-1)+\dfrac{2\pi^2}{\log^2\left(\dfrac{b}{a}\right)}},
    \end{align*}
    showing that the inequality is true provided that
    \begin{align*}
        2(m^2-1)+\dfrac{2\pi^2}{\log^2\left(\frac{b}{a}\right)}\leq 2(m^2+1)+\dfrac{\pi^2}{\log^2\left(\frac{b}{a}\right)},
    \end{align*}
    or
    \begin{align*}
        \log\left(\frac{b}{a}\right)\geq \frac{\pi^2}{4}.
    \end{align*}
    Finally, we have
    \begin{align*}
        \inf_{n\in\Z}\mu_{m,n}=\mu_{m,m}
    \end{align*}
    provided that 
    \begin{align*}
        \dfrac{\left(8m^2+\dfrac{8\pi^2}{\log^2\left(\frac{b}{a}\right)}\right)\dfrac{\pi^2}{\log^2\left(\frac{b}{a}\right)}}{\left((m^2-1)+\dfrac{2\pi^2}{\log^2\left(\frac{b}{a}\right)}\right)+\sqrt{(m^2-1)^2-\dfrac{4\pi^2}{\log^2\left(\frac{b}{a}\right)}}}\geq \frac{\left(8m^2+\dfrac{4\pi^2}{\log^2\left(\frac{b}{a}\right)}\right)\dfrac{\pi^2}{\log^2\left(\frac{b}{a}\right)}}{m^2+1+\dfrac{2\pi^2}{\log^2\left(\frac{b}{a}\right)}},
    \end{align*}
    which is only asymptotically true for $m\leq\sqrt{3}<2$. Since $m\geq 2$ in this last step, we cannot conclude which eigenvalue is the smallest between $\mu_{m,0}$ and $\mu_{m,m}$. This concludes the proof of the theorem.
    \end{proof}

    \subsection{Second Weighted Estimate}

    We deduce the following near-optimal weighted inequality.
    \begin{theorem}
        For all $m\geq 1$, there exists $R_m<\infty$ with the following property. Let $0<a<b<\infty$, let $\Omega=B_b\setminus\bar{B}_a(0)$. Assume that
        \begin{align*}
            \log\left(\frac{b}{a}\right)\geq R_m.
        \end{align*}
        Then, for all $u\in W^{2,2}_0(\Omega)$, we have
        \begin{align*}
            \int_{\Omega}\left(\Delta u+2(m-1)\frac{x}{|x|^2}\cdot \D u+\frac{(m-1)^2}{|x|^2}u\right)^2dx&\geq \dfrac{\left(4m^2+\dfrac{\pi^2}{\log\left(\frac{b}{a}\right)}\right)\dfrac{\pi^2}{\log^2\left(\frac{b}{a}\right)}}{4(m^2+1)+\dfrac{2\pi^2}{\log^2\left(\frac{b}{a}\right)}}\int_{\Omega}\frac{|\D u|^2}{|x|^2}dx
        \end{align*}
    \end{theorem}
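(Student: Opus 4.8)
The plan is to recognise the left-hand side as a multiple of the Rayleigh quotient of the minimisation problem \eqref{min2} and then invoke the ODE analysis of Theorem~\ref{second_theoreme_ode_m_n}. First, observe that for every $u\in W^{2,2}_0(\Omega)$ one has $\int_{\Omega}(\leb_mu)^2dx\geq \mu_m\int_{\Omega}\frac{|\D u|^2}{|x|^2}dx$: if $\int_{\Omega}\frac{|\D u|^2}{|x|^2}dx=0$ then $\D u\equiv 0$ on $\Omega$, hence $u\equiv 0$ by the Poincaré inequality \eqref{poincaré} and the inequality is trivial; otherwise, rescaling $u$ so that the constraint in \eqref{min2} is satisfied and using the definition of $\mu_m$ as an infimum gives the claim. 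Thus the theorem is equivalent to the lower bound $\mu_m\geq \left(4m^2+\frac{\pi^2}{\log^2\left(\frac{b}{a}\right)}\right)\frac{\pi^2}{\log^2\left(\frac{b}{a}\right)}\big/\left(4(m^2+1)+\frac{2\pi^2}{\log^2\left(\frac{b}{a}\right)}\right)$.

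Second, I would justify that \eqref{min2} is attained, by the same argument as for \eqref{min}. Since $a\leq |x|\leq b$ on $\Omega$, the functional $u\mapsto\int_{\Omega}\frac{|\D u|^2}{|x|^2}dx$ is equivalent to $\int_{\Omega}|\D u|^2dx$, so by the generalised Poincaré inequality \eqref{generalised_poincaré} the quadratic form $u\mapsto\int_{\Omega}(\leb_mu)^2dx$ dominates $\wp{u}{2,2}{\Omega}^2$; a minimising sequence is therefore bounded in $W^{2,2}_0(\Omega)$, and after extraction converges weakly in $W^{2,2}_0(\Omega)$ and strongly in $W^{1,2}(\Omega)$ thanks to the compact embedding $W^{2,2}_0(\Omega)\hooklongrightarrow W^{1,2}(\Omega)$. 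The constraint passes to the strong limit and the coercive quadratic form $u\mapsto\int_{\Omega}(\leb_mu)^2dx$ is weakly lower semicontinuous, so a minimiser $u\in W^{2,2}_0(\Omega)$ exists, satisfying \eqref{second_eigenvalue_fundamental}.

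Third, I would decompose $u(r,\theta)=\sum_{n\in\Z}u_n(r)e^{i\,n\,\theta}$. Both $\leb_m^{\ast}\leb_m$ and the first-order weighted operator $\frac{1}{|x|^2}\big(\Delta-2\frac{x}{|x|^2}\cdot\D\big)$ commute with rotations, hence act diagonally on the Fourier modes; by orthogonality of $\{e^{i\,n\,\theta}\}$ in $L^2(\partial B_1)$ the two integrals in the Rayleigh quotient split as sums over $n$, with the mode-$n$ contribution governed by the ODE \eqref{second_equa_diff_m_n}--\eqref{second_boundary} after the substitution $u_n(r)=Y(\log r)$. Consequently the Rayleigh quotient of $u$ is a weighted average of those of its modes, so $\mu_m\geq\inf_{n\in\Z}\mu_{m,n}$; conversely, for each $n$ the function $\Re\big(f_n(r)e^{i\,n\,\theta}\big)$ built from the minimal ODE solution (using the parity $\mu_{m,n}=\mu_{m,-n}$ to keep it real) is an admissible competitor, whence $\mu_m=\inf_{n\in\Z}\mu_{m,n}$, with $\mu_{m,n}$ as in Theorem~\ref{second_theoreme_ode_m_n}.

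Finally, I would take $R_m$ to be the maximum of all the conformal-class thresholds appearing in Theorem~\ref{second_theoreme_ode_m_n} — the bound of \textbf{Assumption I}, the conditions \eqref{new_conf_condition}, \eqref{new_conf_condition2} (if $m>1$), \eqref{new_conf_condition3} (if $m$ exceeds the largest root of $X^4-8X^3-6X^2+1$) and \eqref{new_conf_condition4}, together with the extra thresholds used in the $n=0$ analysis (namely $\log\left(\frac{b}{a}\right)\geq\pi/(m^2-1)$, $\log\left(\frac{b}{a}\right)\geq 4\pi/(m^2-1)$, \eqref{end_conf_cond3}, and $\log\left(\frac{b}{a}\right)\geq\pi^2/4$). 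For $\log\left(\frac{b}{a}\right)\geq R_m$, Theorem~\ref{second_theoreme_ode_m_n} yields $\inf_{n\in\Z}\mu_{m,n}>\left(4m^2+\frac{\pi^2}{\log^2\left(\frac{b}{a}\right)}\right)\frac{\pi^2}{\log^2\left(\frac{b}{a}\right)}\big/\left(4(m^2+1)+\frac{2\pi^2}{\log^2\left(\frac{b}{a}\right)}\right)$, and combining this with the first and third steps gives the asserted inequality. The only genuinely delicate point beyond Theorem~\ref{second_theoreme_ode_m_n} is the mode-decomposition step — ensuring that all cross terms vanish and that working with real-valued $u$ (so that the modes $n$ and $-n$ are coupled) does not spoil the identity $\mu_m=\inf_{n\in\Z}\mu_{m,n}$ — which is exactly what the $L^2$-orthogonality of the angular exponentials and the parity $\mu_{m,n}=\mu_{m,-n}$ provide.
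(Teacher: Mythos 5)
Your proposal is correct and follows essentially the route the paper intends: reduce the inequality to the lower bound on $\mu_m$ from \eqref{min2}, establish existence of a minimiser as for \eqref{min}, decompose into angular Fourier modes so that $\mu_m=\inf_{n\in\Z}\mu_{m,n}$ (exactly as in the proof of Theorem \ref{main_neck_lm}), and then invoke the bound on $\inf_{n\in\Z}\mu_{m,n}$ from Theorem \ref{second_theoreme_ode_m_n}. The only point worth making explicit is that the $n$-dependent conformal-class threshold of \textbf{Assumption I} is bounded uniformly in $n$ (it tends to $0$ as $|n|\to\infty$, as the paper remarks), so that your $R_m$, taken as the maximum of all thresholds, is indeed finite.
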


    \section{Second Class of Weighted Poincaré Estimates}

    \subsection{Inequalities Associated to the Bilaplacian}

We will now generalise the last two inequalities of \cite[Lemma IV.$1$]{riviere_morse_scs}. If $\Omega=B_b\setminus\bar{B}_a(0)$, recall that for all $u\in W^{2,2}_0(\Omega)$, if 
\begin{align*}
    \leb_m=\Delta+2(m-1)\frac{x}{|x|^2}\cdot\D+\frac{(m-1)^2}{|x|^2},
\end{align*}
then
\begin{align*}
    \int_{\Omega}\left(\leb_mu\right)^2dx=\int_{\Omega}\left(\Delta u+(m^2-1)\frac{u}{|x|^2}\right)^2dx+4(m^2-1)\int_{\Omega}\left(\frac{x}{|x|^2}\cdot\D u-\frac{u}{|x|^2}\right)^2dx.
\end{align*}
Before stating the theorem, let us prove an elementary lemma that will prove crucial in all cases $m\geq 1$.

\begin{lemme}\label{lemme_ipp}
    Let $\dfrac{1}{2}<\beta<\infty$ and $u\in W^{2,2}_0(B(0,1))$. Then, we have
    \begin{align}\label{ipp1}
        \int_{B(0,1)}\frac{u^2}{|x|^{4(1-\beta)}}dx=-\frac{1}{2\beta-1}\int_{B(0,1)}\frac{u}{|x|^{2(1-\beta)}}\frac{x}{|x|^{2(1-\beta)}}\cdot \D u\,dx.
    \end{align}
    Furthermore, the following inequality holds
    \begin{align}\label{ipp2}
        \int_{B(0,1)}\frac{u^2}{|x|^{4(1-\beta)}}dx\leq \frac{1}{(2\beta-1)^2}\int_{B(0,1)}\left(\frac{x}{|x|^{2}}\cdot \D u\right)^2|x|^{4\beta}dx.
    \end{align}
\end{lemme}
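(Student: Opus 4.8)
The plan is to prove the two assertions in order, starting from the integration-by-parts identity \eqref{ipp1} and then deducing \eqref{ipp2} by Cauchy--Schwarz. First I would establish \eqref{ipp1}. The key observation is that
\begin{align*}
    \frac{x}{|x|^{2}}\cdot \D\left(\frac{u^2}{|x|^{2(2\beta-1)}}\right)
    &= \frac{2u}{|x|^{2(2\beta-1)}}\,\frac{x}{|x|^{2}}\cdot \D u
    -2(2\beta-1)\,\frac{u^2}{|x|^{2(2\beta-1)}}\,\frac{1}{|x|^{2}},
\end{align*}
since $\dfrac{x}{|x|^{2}}\cdot \D\big(|x|^{-2(2\beta-1)}\big) = -2(2\beta-1)|x|^{-2(2\beta-1)-2}$. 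Recalling that $\dfrac{x}{|x|^{2}} = \D\log|x|$ is the gradient of a harmonic function on $B(0,1)\setminus\{0\}$, we have $\dive\left(\dfrac{x}{|x|^{2}}\right)=0$ away from the origin, so integrating the above identity against the constant function and using that $u\in W^{2,2}_0(B(0,1))$ vanishes (together with $\D u$) on $\partial B(0,1)$ kills the boundary term. One must be slightly careful about the singularity at the origin: since $\beta>\tfrac12$, one has $4(1-\beta)<2$ and the weight $|x|^{-4(1-\beta)}$ together with $u\in W^{2,2}_0\hookrightarrow C^0$ makes all integrals absolutely convergent, so a standard cutoff argument near $0$ (excising $B(0,\epsilon)$, checking the boundary term on $\partial B(0,\epsilon)$ tends to $0$ as $\epsilon\to 0$) justifies the integration by parts. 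Rearranging yields
\begin{align*}
    2(2\beta-1)\int_{B(0,1)}\frac{u^2}{|x|^{4(1-\beta)}}dx
    = 2\int_{B(0,1)}\frac{u}{|x|^{2(2\beta-1)}}\,\frac{x}{|x|^{2}}\cdot \D u\,dx,
\end{align*}
and since $\dfrac{1}{|x|^{2(2\beta-1)}} = \dfrac{1}{|x|^{2(1-\beta)}}\cdot\dfrac{1}{|x|^{2(1-\beta)}}\cdot\dfrac{1}{|x|^{4\beta-4+2-2\beta+\dots}}$—more transparently, writing $\dfrac{u}{|x|^{2(2\beta-1)}}\,\dfrac{x}{|x|^2} = \dfrac{u}{|x|^{2(1-\beta)}}\cdot\dfrac{x}{|x|^{2(1-\beta)}}\cdot|x|^{-(2(2\beta-1)+2-4(1-\beta))}$ with exponent $2(2\beta-1)+2-4(1-\beta)=0$—we get exactly \eqref{ipp1}.

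For \eqref{ipp2}, I would take absolute values in \eqref{ipp1} and apply the Cauchy--Schwarz inequality to the right-hand side, splitting the integrand as the product of $\dfrac{u}{|x|^{2(1-\beta)}}$ and $\dfrac{x}{|x|^{2(1-\beta)}}\cdot\D u = \dfrac{x}{|x|^2}\cdot\D u\cdot |x|^{2\beta}$:
\begin{align*}
    \int_{B(0,1)}\frac{u^2}{|x|^{4(1-\beta)}}dx
    \leq \frac{1}{2\beta-1}\left(\int_{B(0,1)}\frac{u^2}{|x|^{4(1-\beta)}}dx\right)^{1/2}
    \left(\int_{B(0,1)}\left(\frac{x}{|x|^2}\cdot\D u\right)^2|x|^{4\beta}dx\right)^{1/2}.
\end{align*}
Dividing through by $\left(\int_{B(0,1)}u^2|x|^{-4(1-\beta)}dx\right)^{1/2}$—which is legitimate provided it is finite and nonzero; if it is zero there is nothing to prove, and finiteness follows again from $\beta>\tfrac12$ and the $C^0$ bound on $u$—and then squaring gives \eqref{ipp2}.

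The main obstacle, such as it is, is not conceptual but a matter of care: justifying the integration by parts in \eqref{ipp1} near the origin, where the vector field $\dfrac{x}{|x|^2}$ is singular. The cleanest route is to work on the annulus $B(0,1)\setminus \bar B(0,\epsilon)$, where $\dfrac{x}{|x|^2}$ is smooth and divergence-free, pick up the boundary contribution $\displaystyle\int_{\partial B(0,\epsilon)}\frac{u^2}{|x|^{2(2\beta-1)}}\,\frac{x}{|x|^2}\cdot\nu\,d\sigma$ with inner normal $\nu = -x/|x|$, bound it by $C\,\epsilon^{-2(2\beta-1)-1}\cdot \|u\|_{C^0}^2\cdot\epsilon^{d-1}$ (here $d=2$, but the argument is dimension-agnostic), and check this tends to $0$ as $\epsilon\to 0$ precisely when the exponent $d-2-2(2\beta-1) = d-4\beta$ is positive after accounting that near $0$ we in fact have better control if $u(0)=0$; more robustly, one uses that $u\in W^{2,2}_0$ forces $u$ to be $C^{0,\alpha}$ and the radial integrals converge for $\beta>\tfrac12$ in $d=2$. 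Everything else—the product rule computation, Cauchy--Schwarz, the division—is entirely routine.
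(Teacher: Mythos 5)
Your reduction of \eqref{ipp2} to \eqref{ipp1} via Cauchy--Schwarz is fine and is exactly what the paper does. The gap is in your derivation of \eqref{ipp1}: you integrate by parts the wrong weighted quantity. You differentiate $u^2|x|^{-2(2\beta-1)}$ along $\frac{x}{|x|^2}$, which produces an identity relating $\int u^2|x|^{-4\beta}dx$ to $\int u\,(x\cdot\D u)|x|^{-4\beta}dx$ (and with a $+$ sign), whereas \eqref{ipp1} involves the weight $|x|^{-4(1-\beta)}=|x|^{4\beta-4}$ on both sides and a $-$ sign. Your exponent check is where this is hidden: $2(2\beta-1)+2-4(1-\beta)=8\beta-4$, not $0$, so the two weights agree only at the excluded value $\beta=\tfrac12$. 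Worse, with your choice of weight the argument cannot be repaired: the boundary term on $\partial B(0,\epsilon)$ is of size $\epsilon^{2-4\beta}\,u(0)^2$ (as your own estimate $d-4\beta$ shows), and since $u\in W^{2,2}_0(B(0,1))$ vanishes on $\partial B(0,1)$ but not at the origin, this blows up for $\beta>\tfrac12$; likewise $\int_{B(0,1)}u^2|x|^{-4\beta}dx$ is generically infinite ($\int_0^1 r^{1-4\beta}dr$ diverges), so the identity you are actually deriving is ill-posed. Neither the $C^{0,\alpha}$ embedding nor any appeal to ``better control near $0$'' fixes this, because nothing forces $u(0)=0$.

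The correct move is the reverse weight: work with $u^2|x|^{2(2\beta-1)}=u^2|x|^{4\beta-2}$, i.e. integrate $\D(u^2)$ against the vector field $|x|^{4\beta-2}\D\log|x|=x|x|^{4\beta-4}$, as the paper does. Then
\begin{align*}
\frac{x}{|x|^2}\cdot\D\bigl(u^2|x|^{4\beta-2}\bigr)=2u\,|x|^{4\beta-2}\,\frac{x}{|x|^2}\cdot\D u+(4\beta-2)\,u^2|x|^{4\beta-4},
\end{align*}
the inner boundary term is now of order $\epsilon^{4\beta-2}\to 0$, the distributional contribution vanishes because $|x|^{4\beta-2}\Delta\log|x|=2\pi|x|^{4\beta-2}\delta_0=0$ for $\beta>\tfrac12$, and after rearranging one obtains exactly \eqref{ipp1} with the correct factor $-\frac{1}{2\beta-1}$ and the correct weight $|x|^{4\beta-4}$. (One should also note, as the paper does, that $\int u^2|x|^{-4(1-\beta)}dx<\infty$ follows from $W^{2,2}_0(B(0,1))\hookrightarrow C^0$ together with $4(1-\beta)<2$, which makes all the manipulations legitimate.) With \eqref{ipp1} established this way, your Cauchy--Schwarz step goes through verbatim.
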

\begin{proof}
    First, notice that since $u\in W^{2,2}_0(B(0,1))$, we have by the Sobolev embedding $u\in C^0(B(0,1))$, which implies that there exists a universal constant $C<\infty$ such that
    \begin{align*}
        \np{u}{\infty}{B(0,1)}\leq C\wp{u}{2,2}{B(0,1)}\leq C\np{\Delta u}{2}{\Omega},
    \end{align*}
    where we used \eqref{complete_poincaré} (that holds in the limiting case $a=0$ and $b=1$). Therefore, we have
    \begin{align*}
        \int_{B(0,1)}\frac{u^2}{|x|^{4(1-\beta)}}dx\leq C\np{\Delta u}{2}{B(0,1)}^2\int_{B(0,1)}\frac{dx}{|x|^{4(1-\beta)}}=\frac{\pi C}{2\beta-1}\np{\Delta u}{2}{\Omega}<\infty.
    \end{align*}
    Since $\beta>\dfrac{1}{2}$, we have $|x|^{4\beta-2}\Delta\log|x|=2\pi|x|^{4\beta-2}\delta_0=0$, which shows by an integration by parts that
    \begin{align}
        &\int_{B(0,1)}\frac{u}{|x|^2}\frac{x}{|x|^2}\cdot \D u\,|x|^{4\beta}dx=\frac{1}{2}\int_{B(0,1)}|x|^{4\beta-2}\D\log|x|\cdot \D (u^2)\,dx=\nonumber\\
        &-\frac{1}{2}\int_{B(0,1)}\D(|x|^{4\beta-2})\cdot \D\log|x|\,u^2dx
        =-(2\beta-1)\int_{B(0,1)}\frac{u^2}{|x|^{4(1-\beta)}}dx.
    \end{align}
    The inequality follows from Cauchy-Schwarz inequality.
\end{proof}

\begin{theorem}\label{lemmeIV.1_complement}
    Let $0<a<b<\infty$ and $\Omega=B_b\setminus\bar{B}_a(0)$. Then, for all $1/2<\beta<\infty$, there exists a universal constant $C_{\beta}'<\infty$ independent of $0<a<b<\infty$ such that for all $u\in W^{2,2}_0(\Omega)$, the following inequality holds
    \begin{align}
        \int_{\Omega}\left(\left(\frac{|x|}{b}\right)^{4\beta}+\left(\frac{a}{|x|}\right)^{4\beta}\right)\frac{u^2}{|x|^4}dx\leq C_{\beta}\int_{\Omega}(\Delta u)^2dx.
    \end{align}
    For all $\sqrt{2}-1<\beta<\infty$, there exists a universal constant $C_{\beta}'<\infty$ independent of $0<a<b<\infty$ such that for all $u\in W^{2,2}_0(\Omega)$, the following inequality holds
    \begin{align}
        \int_{\Omega}\left(\left(\frac{|x|}{b}\right)^{2\beta}+\left(\frac{a}{|x|}\right)^{2\beta}\right)\frac{|\D u|^2}{|x|^2}dx\leq C_{\beta}'\int_{\Omega}(\Delta u)^2dx.
    \end{align}
\end{theorem}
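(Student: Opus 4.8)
The plan is to reduce the first inequality to the second, and to prove the second by reducing each of its two weighted terms — via zero‑extension, a dilation, and (for the inner weight) a Kelvin transform — to a \emph{scale‑free} weighted inequality on the unit ball, where only classical Hardy‑type estimates and the identity $\int_{B_1}|\D^2v|^2=\int_{B_1}(\Delta v)^2$ are needed.

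\textbf{Reduction of the first inequality to the second.} The identity \eqref{ipp1} of Lemma \ref{lemme_ipp} remains valid on the annulus $\Omega$ for \emph{every} real exponent $\neq\tfrac12$: its proof uses only that $\log|\,\cdot\,|$ is harmonic, and on $\Omega$ there is no contribution from the origin. Applying it with exponent $\beta$ (resp.\ $-\beta$), using Cauchy--Schwarz together with the pointwise bound $(x\cdot\D u)^2\le|x|^2|\D u|^2$, and multiplying by $b^{-4\beta}$ (resp.\ $a^{4\beta}$), one obtains
\begin{align*}
    \int_{\Omega}\left(\frac{|x|}{b}\right)^{4\beta}\frac{u^2}{|x|^4}\,dx\le\frac{1}{(2\beta-1)^2}\int_{\Omega}\left(\frac{|x|}{b}\right)^{4\beta}\frac{|\D u|^2}{|x|^2}\,dx,\qquad \int_{\Omega}\left(\frac{a}{|x|}\right)^{4\beta}\frac{u^2}{|x|^4}\,dx\le\frac{1}{(2\beta+1)^2}\int_{\Omega}\left(\frac{a}{|x|}\right)^{4\beta}\frac{|\D u|^2}{|x|^2}\,dx.
\end{align*}
Since $\beta>\tfrac12$ forces $2\beta>\sqrt2-1$, the second inequality of the theorem applied with parameter $2\beta$ (so that its weight exponent equals $4\beta$) bounds both right‑hand sides by a constant times $\int_{\Omega}(\Delta u)^2\,dx$, which proves the first inequality. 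It therefore suffices to establish the second inequality, which I treat term by term.

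\textbf{The outer weight.} Extending $u$ by zero across $\partial B_a$ gives $\tilde u\in W^{2,2}_0(B_b)$, and the dilation $x=by$ reduces $\int_{\Omega}(|x|/b)^{2\beta}\tfrac{|\D u|^2}{|x|^2}\,dx\le C\int_{\Omega}(\Delta u)^2\,dx$ to the scale‑free statement
\begin{align*}
    \int_{B_1}|y|^{2\beta-2}|\D v|^2\,dy\le C\int_{B_1}(\Delta v)^2\,dy,\qquad v\in W^{2,2}_0(B_1).
\end{align*}
Since $v\in W^{2,2}_0(B_1)$ has $\partial_i v\in W^{1,2}_0(B_1)$, and since the subcritical Hardy inequality $\int_{B_1}|y|^{-s}w^2\,dy\le C(s)\int_{B_1}|\D w|^2\,dy$ holds for every $0<s<2$ (by $W^{1,2}_0(B_1)\hookrightarrow L^q$ for all $q<\infty$ in dimension $2$, H\"older with $|y|^{-s}\in L^p(B_1)$ for $p<2/s$, and the Poincar\'e inequality), one applies this to $w=\partial_i v$ with $s=\max(0,2-2\beta)<2$, sums over $i$, and uses $\int_{B_1}|\D^2 v|^2\,dy=\int_{B_1}(\Delta v)^2\,dy$ from \eqref{ipp_laplacien}.

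\textbf{The inner weight.} Extend $u$ by zero across $\partial B_b$ to a function compactly supported in the open exterior domain $\R^2\setminus\bar B_a$, dilate by $x=az$, and apply the Kelvin transform $\zeta=\iota(z)=z/|z|^2$. Using the two‑dimensional identities $\Delta_z(w\circ\iota)(z)=|z|^{-4}(\Delta w)(\iota(z))$ and $|\D_z(w\circ\iota)(z)|^2=|z|^{-4}|(\D w)(\iota(z))|^2$ (so that the Dirichlet integral is conformally invariant while $\int(\Delta\,\cdot\,)^2$ picks up the factor $|z|^{-4}$), the term $\int_{\Omega}(a/|x|)^{2\beta}\tfrac{|\D u|^2}{|x|^2}\,dx$ equals $a^{-2}\int_{B_1}|\zeta|^{2\beta+2}|\D w|^2\,d\zeta$, while $\int_{\Omega}(\Delta u)^2\,dx$ equals $a^{-2}\int_{B_1}|\zeta|^4(\Delta w)^2\,d\zeta$, where $w\in W^{2,2}_0(B_1)$ is supported compactly in $B_1\setminus\{0\}$. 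So it remains to prove
\begin{align*}
    \int_{B_1}|\zeta|^{2\beta+2}|\D w|^2\,d\zeta\le C_\beta\int_{B_1}|\zeta|^4(\Delta w)^2\,d\zeta.
\end{align*}
Decomposing $w=\sum_{n\in\Z}f_n(r)e^{in\theta}$ with each $f_n$ compactly supported in $(0,1)$, expanding $\Delta(f_ne^{in\theta})$ in polar coordinates and integrating by parts in $r$, this is equivalent to the family
\begin{align*}
    \int_0^1 r^{2\beta+3}|f_n'|^2\,dr+n^2\int_0^1 r^{2\beta+1}|f_n|^2\,dr\le C_\beta\left(\int_0^1 r^5|f_n''|^2\,dr+(2n^2-3)\int_0^1 r^3|f_n'|^2\,dr+n^2(n^2-4)\int_0^1 r|f_n|^2\,dr\right).
\end{align*}
For $n\ge2$ all right‑hand coefficients are nonnegative and, after discarding the weights $r^{2\beta+3}\le r^3$ and $r^{2\beta+1}\le r$, the estimate follows from the weighted Hardy inequalities $\int_0^1 r^5|g'|^2\,dr\ge4\int_0^1 r^3 g^2\,dr$ (applied to $g=f_n'$) and $\int_0^1 r^3|f_n'|^2\,dr\ge\int_0^1 r|f_n|^2\,dr$; for $n=0$ the first of these gives $\int_0^1 r^5|f_0''|^2-3\int_0^1 r^3|f_0'|^2\ge\int_0^1 r^3|f_0'|^2$. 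The case $n=1$ is the delicate one: the right‑hand side is then $\int_0^1 r^5|f_1''|^2-\int_0^1 r^3|f_1'|^2-3\int_0^1 r|f_1|^2$, which is nonnegative but not coercive in $f_1$; one changes variables $r=e^t$, $f_1(e^t)=e^{-t}h(t)$ (turning the right‑hand side into $\int(|h''|^2+4|h'|^2)\,dt$ and the weighted left‑hand side into $\int e^{2\beta t}(|h'-h|^2+|h|^2)\,dt$), and then controls $\int_{-\infty}^0 e^{2\beta t}|h|^2\,dt$ by $\tfrac1{\beta^2}\int_{-\infty}^0|h'|^2\,dt$ via a weighted Hardy inequality on the half‑line using $h(0)=0$; this is the step that constrains the range of $\beta$ for which the argument is carried out.

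\textbf{Main obstacle.} The heart of the matter is the weighted Hardy--Rellich inequality on $B_1$ produced by the Kelvin transform, in its lowest Fourier mode ($n=1$, and to a lesser extent $n=0$). There the quadratic form on the $(\Delta w)^2$‑side is not manifestly positive definite and must be balanced against the weighted gradient term using the \emph{sharp} constants in the one‑dimensional Hardy inequalities together with their remainder terms; crucially the weight $r^{2\beta}$ cannot be discarded, since the extremal competitors concentrate near the inner boundary of the annulus, and tracking it is precisely what fixes the threshold for $\beta$. Everything else — the reduction of the first inequality to the second, the outer‑weight term, and the modes $n\neq1$ — is routine.
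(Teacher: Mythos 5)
Your proposal is correct, but it follows a genuinely different route from the paper. The paper proves both inequalities directly, splitting each into an outer-weight part (zero extension, dilation to $B_1$, H\"older with $|y|^{-\text{power}}$ integrable, Sobolev/Poincar\'e, and $\int|\D^2u|^2=\int(\Delta u)^2$) and an inner-weight part handled by the inversion followed by purely integral arguments: the radial integration-by-parts identity of Lemma \ref{lemme_ipp}, a further integration by parts, and Cauchy--Schwarz, which yield explicit constants such as $\frac{1}{((2\beta+1)^2-2)^2}$ and $\frac{(\beta+1)^2}{((\beta+1)^2-2)^2}$; the thresholds $\beta>\tfrac12$ and $\beta>\sqrt2-1$ are exactly the positivity conditions for those denominators. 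You instead (i) derive the first inequality from the second by running the Lemma \ref{lemme_ipp} computation directly on the annulus with exponents $\pm\beta$ (this is valid: the divergence identity and the vanishing traces of $u$ and $\D u$ are all that is used, and it produces the factors $(2\beta\mp1)^{-2}$, which is where the restriction $\beta>\tfrac12$ genuinely enters), and (ii) prove the inner-weight gradient estimate by Kelvin transform followed by a Fourier decomposition in $\theta$ and one-dimensional weighted Hardy inequalities mode by mode, with the $n=\pm1$ mode treated via $r=e^t$, $f_1(e^t)=e^{-t}h(t)$. I checked your mode computations: the $\int|\zeta|^4(\Delta w)^2$ form does reduce to $\int r^5f_n''^2+(2n^2-3)\int r^3f_n'^2+n^2(n^2-4)\int rf_n^2$, the boundary terms vanish because $f_n$ vanishes near $r=0$ and $f_n(1)=f_n'(1)=0$, and for $n=1$ the form indeed equals $\int(h''^2+4h'^2)\,dt$ while the weighted left-hand side is $\int e^{2\beta t}((h'-h)^2+h^2)\,dt$, controlled by $(2+3\beta^{-2})\int h'^2$ via the half-line Hardy inequality with $h(0)=0$. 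What your route buys is a structural explanation (the delicate directions are the angular modes annihilated by biharmonicity) and in fact a stronger range: your argument gives the gradient inequality for every $\beta>0$, whereas the paper's method needs $\beta>\sqrt2-1$.

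Two small corrections to your write-up, neither fatal. First, your closing claim that the $n=1$ step ``constrains the range of $\beta$'' is inaccurate: as your own estimate shows, that step works for all $\beta>0$; the only real constraint in your scheme is $\beta>\tfrac12$ for the first inequality, coming from the $(2\beta-1)^{-2}$ factor in the reduction. Second, the extended function and the Fourier coefficients are not compactly supported in the open exterior domain or in $(0,1)$; they merely have zero Cauchy data on the relevant boundary circle (and vanish identically near the other one), which is exactly what your integrations by parts use — state it that way, and also record the (standard) orthogonality of the Fourier modes in all three quadratic forms so that the mode-by-mode bounds with a uniform constant sum up.
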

\begin{proof}
\textbf{Step 1: First integral, first part.}

Since $u\in W^{2,2}_0(\Omega)$, we can extend it—without changing notations—by $0$ as a $W^{2,2}$ function for which all $L^2$ norms (of $u$ and its derivatives) are preserved. 
Making the change of variable $y=\dfrac{x}{b}$, we get if $\bar{u}(y)=u(b\,y)$ ($y\in B(0,1)$)
\begin{align}\label{lemma41_p1}
    \int_{\Omega}\left(\frac{|x|}{b}\right)^{4\beta}\frac{u^2}{|x|^4}dx=\frac{1}{b^2}\int_{B_1\setminus\bar{B}_{b^{-1}a}(0)}|y|^{4\beta}\frac{|\bar{u}|^2}{|y|^4}dy=\frac{1}{b^2}\int_{B(0,1)}|y|^{4\beta}\frac{|\bar{u}|^2}{|y|^4}dy.
\end{align}
Thanks \cite{talenti}, the Sobolev constant in $\R^2$ for $W^{1,p}$ functions is given by 
\begin{align*}
    C(2,p)&=\frac{1}{\sqrt{\pi}}\frac{1}{2^p}\left(\frac{p-1}{2-p}\right)^{1-\frac{1}{p}}\left(\frac{1}{\Gamma\left(\frac{2}{p}\right)\Gamma\left(3-\frac{2}{p}\right)}\right)^{\frac{1}{2}}\\
    &\leq \frac{1}{\sqrt{\pi}}\frac{1}{2^p}\left(\frac{p-1}{2-p}\right)^{1-\frac{1}{p}}\frac{2}{\sqrt{\pi}}\leq \frac{1}{\pi}\frac{1}{(2-p)^{1-\frac{1}{p}}}.
\end{align*}
Therefore, for all $q>1$ such that $4(1-\beta)p'<2$, or 
\begin{align*}
    2(1-\beta)<\frac{1}{p'}=1-\frac{1}{p}\Longleftrightarrow p>\frac{1}{2\beta-1},
\end{align*}
which implies in particular that we must choose $\beta>1/2$. Then, by Hölder's inequality, we have
\begin{align}\label{lemma41_p2}
    \int_{B(0,1)}|y|^{4\beta}\frac{|\bar{u}|^2}{|y|^4}dy&\leq \left(\int_{B(0,1)}\frac{dy}{|y|^{4(1-\beta)p'}}\right)^{\frac{1}{p'}}\left(\int_{B(0,1)}|\bar{u}|^{2p}dy\right)^{\frac{1}{p}}\nonumber\\
    &\leq C\left(2,\frac{2p}{p+1}\right)^2\left(\int_{B(0,1)}\frac{dy}{|y|^{4(1-\beta)p'}}\right)^{\frac{1}{p'}}\left(\int_{B(0,1)}|\D \bar{u}|^{\frac{2p}{p+1}}dy\right)^{\frac{p+1}{p}}\nonumber\\
    &\leq C\left(2,\frac{2p}{p+1}\right)^2\left(\int_{B(0,1)}\frac{dy}{|y|^{4(1-\beta)p'}}\right)^{\frac{1}{p'}}\pi^{\frac{1}{p+1}}\int_{B(0,1)}|\D \bar{u}|^2dy
\end{align}
Then, by the standard Poincaré inequality, the parallelogram identity, and \eqref{ipp_laplacien}, we have
\begin{align}\label{lemma41_p3}
    &\int_{B(0,1)}|\D \bar{u}|^2dy=4\int_{B(0,1)}|\p{z}\bar{u}|^2dy\leq \frac{16}{\pi^2}\int_{B(0,1)}|\D\p{z}u|^2dy\nonumber\\
    &=\frac{16}{\pi^2}\int_{B(0,1)}\left(\left|(\p{z}+\p{\z})\p{z}\bar{u}\right|^2+|i(\p{z}-\p{\z})\p{z}\bar{u}|^2\right)dy
    =\frac{16}{\pi^2}\int_{B(0,1)}\left(2|\p{z}^2\bar{u}|^2+2|\p{z\z}^2\bar{u}|^2\right)dy\nonumber\\
    &=\frac{16}{\pi^2}\int_{B(0,1)}|\D^2\bar{u}|^2dy
    =\frac{16}{\pi^2}\int_{B(0,1)}\left(\Delta \bar{u}\right)^2dy.
\end{align}
Therefore, we deduce by \eqref{lemma41_p2} and \eqref{lemma41_p3} that
\begin{align}\label{lemma41_p4}
    \int_{B(0,1)}|y|^{4\beta}\frac{|\bar{u}|^2}{|y|^4}dy\leq \frac{16}{\pi^{2-\frac{1}{p+1}}}C\left(2,\frac{2p}{p+1}\right)^2\left(\frac{\pi}{1-2(1-\beta)p'}\right)^{\frac{1}{p'}}\int_{B(0,1)}(\Delta \bar{u})^2dy.
\end{align}
Since
\begin{align}\label{lemma41_p5}
    \int_{B(0,1)}(\Delta \bar{u})^2dy=b^4\int_{B(0,1)}\left(\Delta u(b\,y)\right)dy=b^2\int_{B(0,b)}(\Delta u)^2dx=b^2\int_{\Omega}(\Delta u)^2dx,
\end{align}
we finally deduce by \eqref{lemma41_p1}, \eqref{lemma41_p4}, and \eqref{lemma41_p5} that for all $\frac{1}{2}<\beta<1$ and $p>\frac{1}{2\beta-1}$
\begin{align}\label{lemmeIV.1_part2}
    \int_{\Omega}\left(\frac{|x|}{b}\right)^{4\beta}\frac{u^2}{|x|^4}dx\leq \frac{16}{\pi^{2-\frac{1}{p+1}}}C\left(2,\frac{2p}{p+1}\right)^2\left(\frac{\pi}{1-2(1-\beta)p'}\right)^{\frac{1}{p'}}\int_{\Omega}(\Delta u)^2dx.
\end{align}

\textbf{Step 2: First integral, inversion part.}

Making an inversion as in \cite[Lemma IV.$1$]{riviere_morse_scs}, let us show that
\begin{align}\label{lemmeIV.1_part3}
    \int_{\Omega}\left(\frac{a}{|x|}\right)^{4\beta}\frac{u^2}{|x|^4}dx\leq \frac{1}{((2\beta+1)^2-2)^2}\int_{\Omega}(\Delta u)^2dx.
\end{align}
Now, making the change of variable
    \begin{align*}
        a\frac{x}{|x|^2}=y
    \end{align*}
    we get
    \begin{align*}
        \frac{dx}{|x|^4}=\frac{dy}{a^2}.
    \end{align*}
    Therefore, we get
    \begin{align*}
        \int_{\Omega}\left(\frac{a}{|x|}\right)^{4\beta}\frac{u^2}{|x|^4}dx=\frac{1}{a^2}\int_{B_1\setminus\bar{B}_{b^{-1}a}(0)}|y|^{4\beta}|u(a\,\iota(y))|^2dy=\frac{1}{a^2}\int_{B(0,1)}|y|^{4\beta}|\bar{u}(y)|^2dy,
    \end{align*}
    where $\bar{u}:B(0,1)\rightarrow \R$ is the extension by $0$ of $u(a\,\iota(\,\cdot\,))$, where $\iota(x)=\dfrac{x}{|x|^2}$ is the inversion. Now, applying \eqref{ipp2} to the exponent $\beta+1>1/2$, we get
    \begin{align}\label{laplace_second_est1}
        \int_{B(0,1)}|y|^{4\beta}|\bar{u}(y)|^2dy\leq \frac{1}{(2\beta+1)^2}\int_{B(0,1)}\left(y\cdot \D \bar{u}\right)^2|y|^{4\beta}dy\leq \frac{1}{(2\beta+1)^2}\int_{B(0,1)}|y|^{4\beta+2}|\D\bar{u}|^2dy.
    \end{align}
    Integrating by parts as above, we get by \eqref{ipp2}
    \begin{align}\label{laplace_second_est2}
        \int_{B(0,1)}|y|^{4\beta+2}|\D \bar{u}|^2dy&=-\int_{B(0,1)}|y|^{4\beta+2}\bar{u}\,\Delta\bar{u}\,dy-(4\beta+2)\int_{B(0,1)}\frac{\bar{u}}{|y|^2}\frac{y}{|y|^2}\cdot \D \bar{u}\,|y|^{4\beta+4}dy\nonumber\\
        &=-\int_{B(0,1)}|y|^{4\beta+2}\bar{u}\,\Delta\bar{u}\,dy+2\int_{B(0,1)}|y|^{4\beta}|\bar{u}|^2dy.
    \end{align}
    Therefore, \eqref{laplace_second_est1} and \eqref{laplace_second_est2} show that
    \begin{align}
        \int_{B(0,1)}|y|^{4\beta}|\bar{u}|^2dy\leq -\frac{1}{(2\beta+1)^2}\int_{B(0,1)}|y|^{4\beta+2}\bar{u}\,\Delta\bar{u}\,dy+\frac{2}{(2\beta+1)^2}\int_{B(0,1)}|y|^{4\beta}|\bar{u}|^2dy.
    \end{align}
    Since $\beta>\dfrac{}{}$, we have $\dfrac{2}{(2\beta+1)^2}\leq \dfrac{1}{2}<1$, and we deduce by Cauchy-Schwarz inequality that
    \begin{align}
        \int_{B(0,1)}|y|^{4\beta}|\bar{u}|^2dy&\leq -\frac{1}{(2\beta+1)^2-2}\int_{B(0,1)}|y|^{2\beta}\bar{u}|y|^{2\beta+2}\Delta\bar{u}dy\nonumber\\
        &\leq \frac{1}{(2\beta+1)^2-2}\left(\int_{B(0,1)}|y|^{4\beta}|\bar{u}|^2dy\right)^{\frac{1}{2}}\left(\int_{B(0,1)}|y|^{4\beta+4}(\Delta \bar{u})^2dy\right)^{\frac{1}{2}}.
    \end{align}
    Therefore, we have
    \begin{align}\label{weight_laplacian1}
        \int_{B(0,1)}|y|^{4\beta}|\bar{u}|^2\leq \frac{1}{((2\beta+1)^2-2)^2}\int_{B(0,1)}|y|^{4\beta+4}(\Delta \bar{u})^2dy.
    \end{align}
    Notice that this inequality is valid for all
    \begin{align*}
        \beta>\frac{\sqrt{2}-1}{2}=0.207\cdots.
    \end{align*}
    Making the change of variable $\alpha=4\beta$, we deduce that for all $u\in W^{2,2}_0(B(0,1))$, and for all $\alpha>2(\sqrt{2}-1)=0.8282\cdots$, we have
    \begin{align}\label{weight_general}
        \int_{B(0,1)}|x|^{\alpha}u^2dx\leq \frac{16}{\left(\alpha^2+4\alpha-4\right)^2}\int_{B(0,1)}|x|^{\alpha+4}\left(\Delta u\right)^2dx.
    \end{align}
    Now, if $\bar{u}(x)=u(\iota(x))$, then in complex coordinates, we have
    \begin{align*}
        \bar{u}(z)=u\left(\frac{z}{|z|^2}\right)=u\left(\frac{\Re(z)}{|z|^2},\frac{\Im(z)}{|z|^2}\right)=u\left(\Re\left(\frac{1}{\z}\right),\Im\left(\frac{1}{\z}\right)\right)=u\left(\frac{1}{2}\left(\frac{1}{z}+\frac{1}{\z}\right),\frac{i}{2}\left(\frac{1}{z}-\frac{1}{\z}\right)\right).
    \end{align*}
    Therefore, we have
    \begin{align}\label{inverse_laplacian}
        \p{z}\bar{u}&=-\frac{1}{2z^2}\p{x_1}u(\iota(z))-\frac{i}{2z^2}\p{x_2}u(\iota(z))\nonumber\\
        \p{z\z}^2\bar{u}&=-\frac{1}{2z^2}\left(-\frac{1}{2\z^2}\p{x_1}^2 u(\iota(z))+\frac{i}{2\z^2}\p{x_1,x_2}^2u(\iota(z))\right)-\frac{i}{2z^2}\left(\frac{i}{2\z^2}\p{x_2}^2u(\iota(z))-\frac{1}{2z^2}\p{x_1,x_2}^2u(\iota(z))\right)\nonumber\\
        &=\frac{1}{4|z|^4}\Delta u(\iota(z))\nonumber\\
        \Delta \bar{u}&=\frac{1}{|z|^4}\Delta u(\iota(z)).
    \end{align}
    Then, we get
    \begin{align}\label{weight_laplacian2}
        \int_{B(0,1)}|y|^{4\beta+4}\left(\Delta \bar{u}\right)^2dy&=a^4\int_{B(0,1)}|y|^{4\beta}\left(\Delta u(a\,\iota(y))\right)^2\frac{dy}{|y|^4}
        =a^2\int_{\Omega}\left(\Delta u\right)^2\left(\frac{a}{|x|}\right)^{4\beta}dx.
    \end{align}
    Thanks to \eqref{weight_laplacian1} and \eqref{weight_laplacian2}, we deduce that
    \begin{align*}
        \int_{\Omega}\left(\frac{a}{|x|}\right)^{4\beta}\frac{u^2}{|x|^4}dx&\leq \frac{1}{((2\beta+1)^2-2)^2}\int_{\Omega}\left(\Delta u\right)^2\left(\frac{a}{|x|}\right)^{4\beta}dx\\
        &\leq \frac{1}{((2\beta+1)^2-2)^2}\int_{\Omega}\left(\Delta u\right)^2dx.
    \end{align*}
    which concludes the proof of this step.

\textbf{Step 3: Second integral, first part.}
Now, consider for all $0<\beta<1$ and for all $u\in W^{2,2}_0(\Omega)$ the following integral
\begin{align*}
    \int_{\Omega}\left(\frac{|x|}{b}\right)^{2\beta}\frac{|\D u|^2}{|x|^2}dx.
\end{align*}
As above, making a change of variable $x=b\,y$ yields for all $p>\frac{1}{\beta}$
\begin{align}\label{lemmeIV.1_part4}
    \int_{\Omega}\left(\frac{|x|}{b}\right)^{2\beta}\frac{|\D u|^2}{|x|^2}dx&=\frac{1}{b^2}\int_{B_1\setminus\bar{B}_{b^{-1}a}(0)}|y|^{2\beta}\frac{|\D \bar{u}|^2}{|y|^2}dy=\frac{1}{b^2}\int_{B(0,1)}|y|^{2\beta}\frac{|\D \bar{u}|^2}{|y|^2}dy\nonumber\\
    &\leq \frac{1}{b^2}\left(\int_{B(0,1)}\frac{dy}{|y|^{2(1-\beta)p'}}\right)^{\frac{1}{p'}}\left(\int_{B(0,1)}|\D\bar{u}|^{2p}\right)^{\frac{1}{p}}\nonumber\\
    &\leq \frac{1}{b^2}\frac{16}{\pi^{2-\frac{1}{p+1}}}C\left(2,\frac{2p}{p+1}\right)^2\left(\frac{\pi}{1-(1-\beta)p'}\right)^{\frac{1}{p'}}\int_{B(0,1)}(\Delta \bar{u})^2dy\nonumber\\
    &=\frac{16}{\pi^{2-\frac{1}{p+1}}}C\left(2,\frac{2p}{p+1}\right)^2\left(\frac{\pi}{1-(1-\beta)p'}\right)^{\frac{1}{p'}}\int_{\Omega}(\Delta {u})^2dx.
\end{align}

    \textbf{Step 4: Second integral, inversion part.}
    Reasoning as above, we deduce that for all $\beta>0$
    \begin{align}\label{final_bilaplace1}
       \int_{\Omega}\left(\frac{a}{|x|}\right)^{2\beta}\frac{|\D u|^2}{|x|^2}dx=\frac{1}{a^2}\int_{B(0,1)}|y|^{2(\beta+1)}|\D \bar{u}|^2dy,
    \end{align}
    where $\bar{u}$ is the extension by $0$ of $u(b\,\iota(\,\cdot\,)):B_1\setminus\bar{B}_{b^{-1}a}(0)\rightarrow\R$. Integrating by parts as previously, we get
    \begin{align}\label{final_bilaplace2}
        \int_{B(0,1)}|y|^{2\beta+2}|\D \bar{u}|^2dy&=-\int_{B(0,1)}|y|^{2\beta+2}\bar{u}\,\Delta\bar{u}\,dy-(2\beta+2)\int_{B(0,1)}{\bar{u}}\,({y}\cdot \D \bar{u})\,|y|^{2\beta}dy\nonumber\\
        &=-\int_{B(0,1)}|y|^{2\beta+2}\bar{u}\,\Delta\bar{u}\,dy+2\int_{B(0,1)}|y|^{2\beta}|\bar{u}|^2dy.
    \end{align}
    Using \eqref{weight_general} and assuming that $\alpha=2\beta>2(\sqrt{2}-1)$, or $\beta>\sqrt{2}-1$, we deduce that
    \begin{align}\label{final_bilaplace3}
        \int_{B(0,1)}|y|^{2\beta}|\bar{u}|^2dy&\leq \frac{1}{((\beta+1)^2-2)^2}\int_{B(0,1)}|y|^{2\beta+4}\left(\Delta\bar{u}\right)^2dy\nonumber\\
        &=\frac{a^2}{((\beta+1)^2-2)^2}\int_{\Omega}\left(\Delta u\right)^2\left(\frac{a}{|x|}\right)^{2\beta}dx,
    \end{align}
    while Cauchy-Schwarz inequality and \eqref{final_bilaplace3} show that
    \begin{align}\label{final_bilaplace4}
        -\int_{B(0,1)}|y|^{2\beta+2}\bar{u}\,\Delta\bar{u}\,dy&\leq \left(\int_{B(0,1)}|y|^{2\beta}|\bar{u}|^2dy\right)^{\frac{1}{2}}\left(\int_{B(0,1)}|y|^{2\beta+4}\left(\Delta\bar{u}\right)^2dy\right)^{\frac{1}{2}}\nonumber\\
        &\leq \frac{1}{(\beta+1)^2-2}\int_{B(0,1)}|y|^{2\beta+4}\left(\Delta \bar{u}\right)^2dx.
    \end{align}
    Therefore, we get by \eqref{final_bilaplace2}, \eqref{final_bilaplace3}, and \eqref{final_bilaplace4} for all $\beta>\sqrt{2}-1$
    \begin{align}\label{final_bilaplace5}
        \int_{B(0,1)}|y|^{2\beta+2}|\D\bar{u}|^2dy&\leq \left(\frac{1}{(\beta+1)^2-2}+\frac{2}{((\beta+1)^2-2)^2}\right)\int_{B(0,1)}|y|^{2\beta+4}(\Delta \bar{u})^2dy\nonumber\\
        &=\frac{(\beta+1)^2}{((\beta+1)^2-2)^2}\int_{B(0,1)}|y|^{2\beta+4}(\Delta \bar{u})^2dy.
    \end{align}
    Thanks to \eqref{final_bilaplace1}, the identity \eqref{final_bilaplace3}, and \eqref{final_bilaplace5}, we deduce that for all $\beta>\sqrt{2}-1$
    \begin{align*}
        \int_{\Omega}\left(\frac{a}{|x|}\right)^{2\beta}\frac{|\D u|^2}{|x|^2}dx&\leq \frac{(\beta+1)^2}{((\beta+1)^2-2)^2} \int_{\Omega}(\Delta u)^2\left(\frac{a}{|x|}\right)^{2\beta}dx\\
        &\leq \frac{(\beta+1)^2}{((\beta+1)^2-2)^2}\int_{\Omega}(\Delta u)^2dx,
    \end{align*}
    which concludes the proof of the theorem.
\end{proof}

We will also need a version of this theorem for functions that do not necessarily vanish on the boundary. 
 Thanks to the inequality from Theorem \ref{lemmeIV.1_complement}, if $0<a<b<\infty$ and $\Omega=B_b\setminus\bar{B}_a(0)$, for all $\sqrt{2}-1<\beta<1$ and for all $u\in W^{2,2}_0(\Omega)$, we have
 \begin{align}\label{poincare_nul_bord}
     \int_{\Omega}\frac{|\D u|^2}{|x|^2}\left(\left(\frac{|x|}{b}\right)^{2\beta}+\left(\frac{a}{|x|}\right)^{2\beta}\right)dx\leq C_{\beta}\int_{\Omega}(\Delta u)^2dx.
 \end{align}
 However, in one of the key lemmas (Lemma $IV.1$) used in the proof of the main theorem of \cite{riviere_morse_scs} that we adapt to the case of Willmore immersions in \cite[Lemma $4.4$]{morse_willmore_I}, we need such an estimate for functions that do not necessarily vanish on the boundary of neck regions. This new technical difficulty is due to terms in the second derivative that can only be bounded from below by
 \begin{align*}
     -\int_{\Sigma}|du|_g^2|A|^2d\vg,
 \end{align*}
 and those contributions can \emph{a priori} prevent one from bounded from below the minimal eigenvalue of a sequence of immersions, an estimate that is crucial in the proof (see \cite[Lemma IV.$4$]{riviere_morse_scs}). Thanks to the inequality \eqref{poincare_nul_bord}, the proof reduces to a delicate estimate involving biharmonic functions.

        \begin{theorem}\label{biharmonic_interpolation}
            There exists a universal constant $\Gamma<\infty$ with the following property. Let $0<a<b<\infty$ and $\Omega=B_b\setminus\bar{B}_a(0)$. For all $\dfrac{1}{2}<\beta<1$ 
            and for all $0<\gamma<1$, for all biharmonic function $\psi\in W^{2,2}(\Omega)$ \emph{(}\emph{i.e.} that satisfies $\Delta^2\psi=0$ on $\Omega$\emph{)}, assuming that 
            \small
            \begin{align}\label{conformal_class_lemma}
            \log\left(\frac{b}{a}\right)\geq \max\ens{2,\frac{1}{2\beta-1}\log\left(4\beta\right),\frac{1}{4\beta}\log\left(\frac{2}{2-\sqrt{3}}\right), \frac{1}{4(1-\beta)}\log\left(1+\frac{8\beta(1-\beta)}{(2\beta-1)^2}\right), \frac{1}{4\beta}\log(8\beta(\beta+1))},
        \end{align}
        \normalsize
        we have
        \begin{align}\label{biharmonic_inter}
            \int_{\Omega}\frac{|\D\psi|^2}{|x|^2}\left(\left(\frac{|x|}{b}\right)^{2\gamma}+\left(\frac{a}{|x|}\right)^{2\gamma}\right)dx&\leq \frac{\Gamma}{\gamma(1-\gamma)}\frac{1}{1-2\left(\frac{a}{b}\right)^{4\beta}}\int_{\Omega}\frac{\psi^2}{|x|^4}\left(\left(\frac{|x|}{b}\right)^{4\beta}+\left(\frac{a}{|x|}\right)^{4\beta}\right)dx\nonumber\\
            &+\frac{\Gamma}{\gamma(1-\gamma)}\frac{1}{1-2\left(\frac{a}{b}\right)^{4\beta}}\int_{\Omega}|\D^2\psi|^2dx.
        \end{align}
        \end{theorem}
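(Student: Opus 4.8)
The plan is to exploit the explicit structure of biharmonic functions on an annulus via their Fourier decomposition in $\theta$. First I would write $\psi(r,\theta)=\sum_{n\in\Z}\psi_n(r)e^{in\theta}$, where each $\psi_n$ solves the ODE $\Pi_{n^2}(\Delta^2)\psi_n=0$, i.e.\ the Euler-type equation whose characteristic exponents are $\{0,2,n,n+2\}$ for $|n|\neq 0,1$ (with the usual logarithmic modifications when two exponents coincide, namely $n=0$ giving exponents $\{0,0,2,2\}$ and $n=\pm1$ giving $\{0,2\}$ each doubled after the shift — this is the case $\lambda=0$, $m=1$ of the root computation \eqref{racines_m_n}). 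Thus $\psi_n$ is an explicit linear combination of $r^{\alpha_j}$ (and $r^{\alpha_j}\log r$ in the degenerate cases). The point of passing to frequencies is that both sides of \eqref{biharmonic_inter} decouple: by orthogonality of $\{e^{in\theta}\}$ and the fact that $|\D\psi|^2 = |\p_r\psi|^2 + r^{-2}|\p_\theta\psi|^2$, the weighted $L^2$ norms split as $\sum_n (\text{weighted }L^2\text{ norms of }\psi_n,\psi_n',n\psi_n/r)$, and similarly for $\D^2\psi$ and the right-hand side. So it suffices to prove the inequality frequency by frequency, with constants uniform in $n$, and then resum.

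Next, for each fixed $n$ I would prove the one-dimensional weighted estimate
\begin{align*}
    \int_a^b\!\Big(|\psi_n'|^2+\frac{n^2}{r^2}|\psi_n|^2\Big)\Big(\big(\tfrac{r}{b}\big)^{2\gamma}+\big(\tfrac{a}{r}\big)^{2\gamma}\Big)\frac{dr}{r}
    \lesssim \frac{1}{\gamma(1-\gamma)}\,\frac{1}{1-2(a/b)^{4\beta}}\Big(\mathrm{RHS}_n\Big),
\end{align*}
where $\mathrm{RHS}_n$ collects the corresponding frequency-$n$ pieces of the two terms on the right of \eqref{biharmonic_inter}. The mechanism is: because $\psi_n$ is a finite combination of powers $r^{\alpha_j}$, one can compute each weighted integral of $\psi_n,\psi_n',\psi_n''$ exactly as a sum of terms of the shape $c_{jk}\,\frac{b^{s}-a^{s}}{s}$ with $s$ a linear combination of $\alpha_j+\alpha_k$ and $\gamma$ (or $2\beta$, or $0$). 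The two ``boundary-adapted'' weights $(r/b)^{2\gamma}$ and $(a/r)^{2\gamma}$ localize the estimate near $r=b$ and $r=a$ respectively, which is exactly what prevents the constant from degenerating with the conformal class; the factor $1/\gamma(1-\gamma)$ arises from integrals $\int_a^b (r/b)^{2\gamma}r^{-1}dr = \frac{1}{2\gamma}(1-(a/b)^{2\gamma})$ and its mirror, while the factor $(1-2(a/b)^{4\beta})^{-1}$ is the harmless denominator needed to absorb cross terms between the $r\sim a$ and $r\sim b$ regions once one invokes the already-established weighted Poincaré inequality of Theorem \ref{lemmeIV.1_complement} (in the form \eqref{poincare_nul_bord} / \eqref{weight_general}) to control the low-order part. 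The explicit lower bounds on $\log(b/a)$ in \eqref{conformal_class_lemma} are precisely the thresholds that make each of these elementary power-integral comparisons go in the favorable direction — e.g.\ $\frac{1}{2\beta-1}\log(4\beta)$ ensures $4\beta (a/b)^{2\beta-1}\le 1$ so a Cauchy–Schwarz absorption closes, and $\frac{1}{4(1-\beta)}\log(1+\tfrac{8\beta(1-\beta)}{(2\beta-1)^2})$ is what one gets from the constant $((2\beta+1)^2-2)$-type denominators appearing in \eqref{weight_general}.

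Concretely the steps are: (i) reduce to a single frequency by Fourier orthogonality; (ii) for $|n|\ge 2$ use that $\psi_n=\sum_{j}c_j r^{\alpha_j}$ with $\alpha_j\in\{0,2,|n|,|n|+2\}$, bound $|\psi_n''|^2$ and $|\psi_n'|^2$ above and $\psi_n^2$ below by matching homogeneous integrals, using the conformal-class conditions to dominate the ``wrong-sign'' exponents; (iii) handle the exceptional frequencies $n=0,\pm1$ separately, where logarithmic terms appear — here one uses $\int_a^b (\log(r/a))^2 r^{2\gamma-1}dr$-type identities and the bound $\log(b/a)\ge 2$ — these are the cases requiring the most care; (iv) for the low-frequency part $n=0$ additionally invoke \eqref{weight_general} from Theorem \ref{lemmeIV.1_complement} to bring in the $\D^2\psi$ term on the right; (v) resum over $n$ and combine, tracking that the constant $\Gamma$ absorbed all $n$-uniform numerical factors. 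The main obstacle I anticipate is Step (iii): the degenerate frequencies $n=0$ and $n=\pm 1$ (where the Euler equation has repeated roots, so $\psi_n$ carries $r^{\alpha}\log r$ terms) break the clean power-integral bookkeeping, and one must check by hand that the $\log$ factors are still dominated after multiplication by the boundary weights — this is where the somewhat unusual first threshold $\log(b/a)\ge 2$ and the precise form of the other four thresholds in \eqref{conformal_class_lemma} are actually used, and verifying the inequality uniformly across all five competing lower bounds is the delicate bookkeeping at the heart of the proof.
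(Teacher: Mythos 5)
Your overall strategy—expand the biharmonic $\psi$ in Fourier modes (equivalently $\psi=\psi_0+|x|^2\psi_1$ with explicit coefficients, exponents $\ens{n,-n,n+2,-n+2}$ plus logarithmic terms), compute the three weighted integrals as explicit sums of power terms, and compare them mode by mode using the thresholds in \eqref{conformal_class_lemma}—is exactly the route the paper takes. So the skeleton is right. But there is a genuine gap at the point where your plan has to produce the two structural features of the estimate, namely the factor $\bigl(1-2(a/b)^{4\beta}\bigr)^{-1}$ and the appearance of the weighted $L^2$ norm of $\psi$ on the right-hand side. In steps (ii)/(iv) and in your explanation of that factor you invoke Theorem \ref{lemmeIV.1_complement} and \eqref{weight_general}. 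Those inequalities are proved for $u\in W^{2,2}_0$, i.e.\ with vanishing Dirichlet and Neumann data, and they simply do not apply to $\psi$: the whole point of this theorem is that $\psi$ carries the boundary data (it is the biharmonic extension in the decomposition $u=\varphi+\psi$ used later in Theorem \ref{interpolation_weighted_poincare}), so no boundary-value Poincaré/Rellich inequality is available for it. This is not a removable shortcut: the coefficients that $\D^2\psi$ cannot see at all—$a_1$ and $b_{-1}$ (the affine parts $\Re(a_1z)$ and $\Re(b_{-1}\bar z)$), as well as $b_0$ and the logarithmic coefficients $\alpha,\beta$—must be recovered from the weighted $L^2$ norm of $\psi$ itself. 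In the paper this is done by observing that each frequency block of $\int\psi^2|x|^{-4}(|x|/b)^{4\beta}dx$ is a nonnegative quadratic form in $(a_n,b_n)$ (see \eqref{pointwise_coefficient2}) and proving, for the blocks $n=\pm1$ and $n=0$, coercivity with constant proportional to $1-2(a/b)^{4\beta}$ (estimates \eqref{b_minus_one_estimate}, \eqref{a1_estimate}, \eqref{b0_est}); it is precisely this coercivity computation that generates all five lower bounds in \eqref{conformal_class_lemma}, not a Cauchy--Schwarz absorption against $(a/b)^{2\beta-1}$ as you suggest.

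Two further points of imprecision worth fixing. First, the decoupling you invoke is only across Fourier modes; within a fixed mode the coefficients $a_n$ and $b_n$ interact through cross terms $\Re(a_n\bar b_n)$ in all three integrals (the paper's remark after the statement flags exactly this), and the Hessian term controls $|a_n|^2$ only after a discrete Cauchy--Schwarz against the $|b_n|^2$ contribution of $\int(\Delta\psi)^2$, with ratios of the form $\bigl(1-(a/b)^{2\gamma}\bigr)/\bigl(1-(a/b)^{2}\bigr)$; your "uniform in $n$" resummation has to go through this, and it is where the weights $n^2|n-1|/(n^2+1)$ and the series $\sum(n^2+1)(a/b)^{2(n-1)}$ enter. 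Second, the difficulty at the exceptional modes is not primarily the repeated indicial roots and $r\log r$ terms you describe: the delicate coefficients are the ones annihilated by $\D^2$, and separately the interaction of the two logarithmic components $\alpha\log|x|$ and $\beta|x|^2\log|x|$ with the weights, which the paper handles by hand (including the Lambert-function estimate for the $\beta^2 b^2\log^2(b)$ term) rather than by any citation of an earlier inequality. Without replacing your appeal to \eqref{weight_general} by such a direct argument, the proof does not close.
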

        \begin{rem}
            The difficulty of this estimate lies in the fact that if one write $\psi=\psi_0+|x|^2\psi_1$, where $\psi_0$ and $\psi_1$ are harmonic functions, the frequencies of $\psi_0$ and $\psi_1$ have a non-trivial interaction. Furthermore, due to the introduction of the weight, some frequencies that disappear in the integration of $\D^2\psi$ need to be controlled by the weighted $L^2$ norm of $\psi$ (that captures all frequencies thanks to the shift of the radial component). Last, but not least, controlling the two logarithmic components is delicate due to the involved algebraic expressions of them in the three integrals. 
        \end{rem}
        \begin{proof}
        \textbf{Step 1:} Estimation of            \begin{align*}
                 \int_{\Omega}\frac{|\D\psi|^2}{|x|^2}\left(\frac{|x|}{b}\right)^{2\gamma}dx
            \end{align*}
            As $\psi$ is a biharmonic function, there exists harmonic functions $\psi_0,\psi_1:\Omega\rightarrow \R$ such that $\psi=\psi_0+|x|^2\psi_1$. Therefore, there exists $\alpha,\beta\in\R$ and $\ens{a_n}_{n\in\Z},\ens{b_n}_{n\in\Z}\subset \C$ such that
            \begin{align}
                \psi(z)=\alpha\log|z|+\Re\left(\sum_{n\in\Z}a_nz^n\right)+|z|^2\left(\beta\log|z|+\Re\left(\sum_{n\in\Z}b_nz^n\right)\right).
            \end{align}
            Therefore, we have
            \begin{align*}
                \p{z}\psi&=\frac{1}{2z}\left(\alpha+\sum_{n\in\Z^{\ast}}n\,a_nz^n+2|z|^2\left(\beta\log|z|+\Re\left(\sum_{n\in\Z}b_nz^n\right)\right)+|z|^2\left(\beta+\sum_{n\in\Z^{\ast}}n\,b_nz^n\right)\right)\\
                &=\frac{1}{2z}\left(\alpha+2\beta|z|^2\log|z|+(\beta+2b_0)|z|^2+\sum_{n\in\Z^{\ast}}n\,a_nz^n+|z|^2\left(\sum_{n\in\Z^{\ast}}(n+1)b_nz^n+2\sum_{n\in\Z^{\ast}}\bar{b_n}\z^n\right)\right).
            \end{align*}
            The radial component of the gradient is therefore given by 
            \begin{align*}
                &\mathrm{rad}\left(|\D\psi|^2\right)=\mathrm{rad}\left(4|\p{z}\psi|^2\right)=\frac{1}{|z|^2}\bigg\{\alpha^2+4\alpha\beta|z|^2\log|z|+2\alpha(\beta+2b_0)|z|^2+4\beta^2|z|^4\log^2|z|\\
                &+4\beta(\beta+2b_0)|z|^4\log|z|+(\beta+2b_0)^2|z|^4+\sum_{n\in\Z^{\ast}}|n|^2|a_n|^2|z|^{2n}+\sum_{n\in\Z^{\ast}}((n+1)^2+4)|b_n|^2|z|^{2(n+2)}\\
                &+2\sum_{n\in\Z^{\ast}}n(n+1)\Re\left(a_n\bar{b_n}\right)|z|^{2(n+1)}\bigg\}.
            \end{align*}    
            Therefore, we get
            \begin{align*}
                &\int_{\Omega}\frac{|\D\psi|^2}{|x|^2}\left(\frac{|x|}{b}\right)^{2\gamma}dx=2\pi\int_{a}^{b}\frac{1}{r^3}\bigg\{\alpha^2+4\alpha\beta r^2\log(r)+2\alpha(\beta+2b_0)r^2+4\beta^2r^4\log^2(r)\\
                &+4\beta(\beta+2b_0)r^4\log(r)+(\beta+2b_0)^2r^4+\sum_{n\in\Z^{\ast}}|n|^2|a_n|^2r^{2n}+\sum_{n\in\Z^{\ast}}\left((n+1)^2+4\right)|b_n|^2r^{2(n+2)}\\
                &+ 2\sum_{n\in\Z^{\ast}}n(n+1)\Re\left(a_n\bar{b_n}\right)r^{2(n+1)}\bigg\}\left(\frac{r}{b}\right)^{2\gamma}dx\\
                &=\frac{\pi\alpha^2}{1-\gamma}\left(\frac{1}{a^2}\left(\frac{a}{b}\right)^{2\gamma}-\frac{1}{b^2}\right)+\frac{4\alpha\beta}{\gamma}\left(\log(b)-\log(a)\left(\frac{a}{b}\right)^{2\gamma}\right)+\frac{2\pi\alpha(\beta+2b_0)}{\gamma}\left(1-\left(\frac{a}{b}\right)^{2\gamma}\right)\\
                &+\frac{4\pi\beta^2}{\gamma+1}\left(b^2\left(\log^2(b)-\frac{\log(b)}{\gamma+1}+\frac{1}{2(\gamma+1)^2}\right)-a^2\left(\log^2(a)-\frac{\log(a)}{\gamma+1}+\frac{1}{2(\gamma+1)^2}\right)\left(\frac{a}{b}\right)^{2\gamma}\right)\\
                &+\frac{4\pi\beta(\beta+2b_0)}{\gamma+1}\left(b^2\left(\log(b)-\frac{1}{2(\gamma+1)}\right)-a^2\left(\log(a)-\frac{1}{2(\gamma+1)}\right)\left(\frac{a}{b}\right)^{2\gamma}\right)\\
                &+\frac{\pi(\beta+2b_0)^2}{\gamma+1}\left(b^2-a^2\left(\frac{a}{b}\right)^{2\gamma}\right)+\pi\sum_{n\in\Z^{\ast}}\frac{|n|^2}{n-1+\gamma}|a_n|^2\left(b^{2(n-1)}-a^{2(n-1)}\left(\frac{a}{b}\right)^{2\gamma}\right)\\
                &+\pi\sum_{n\in\Z^{\ast}}\frac{((n+1)^2+4)}{n+1+\gamma}|b_n|^2\left(b^{2(n+1)}-a^{2(n+1)}\left(\frac{a}{b}\right)^{2\gamma}\right)\\
                &
                +2\pi\sum_{n\in\Z^{\ast}}\frac{n(n+1)}{n+\gamma}\Re\left(a_n\bar{b_n}\right)\left(b^{2n}-a^{2n}\left(\frac{a}{b}\right)^{2\gamma}\right).
            \end{align*}
            Recall that
            \begin{align*}
                \psi(z)=\alpha\log|z|+\Re\left(\sum_{n\in\Z}a_nz^n\right)+|z|^2\left(\beta\log|z|+\Re\left(\sum_{n\in\Z}b_nz^n\right)\right).
            \end{align*}
            We compute
            \begin{align*}
                \mathrm{rad}(\psi^2)&=\alpha^2\log^2|z|+2\alpha\,a_0\log|z|+2\alpha\beta|z|^2\log^2|z|+2(\alpha\,b_0+\beta\,a_0)|z|^2\log|z|+\beta^2|z|^4\log^2|z|\\
                &+2\beta\,b_0|z|^4\log|z|
                +\frac{1}{2}\sum_{n\in\Z}|a_n|^2|z|^{2n}+\frac{1}{2}\sum_{n\in\Z}|b_n|^2|z|^{2(n+2)}+\sum_{n\in\Z}\Re\left(a_n\bar{b_n}\right)|z|^{2(n+1)}.
            \end{align*}
            Therefore, we have
            \begin{align*}
                &\int_{\Omega}\frac{\psi^2}{|x|^4}\left(\frac{|x|}{b}\right)^{4\gamma}dx=2\pi\int_{a}^{b}\frac{1}{r^3}\bigg\{\alpha^2\log^2(r)+2\alpha\,a_0\log(r)+2\alpha\beta r^2\log^2(r)+2(\alpha\,b_0+\beta\,a_0)r^2\log(r)\\
                &+\beta^2r^4\log^2(r)+\frac{1}{2}\sum_{n\in\Z}|a_n|^2r^{2n}+\frac{1}{2}\sum_{n\in\Z}|b_n|^2r^{2(n+2)}+\sum_{n\in\Z}\Re\left(a_n\bar{b_n}\right)r^{2(n+1)}\bigg\}\left(\frac{r}{b}\right)^{4\gamma}dr\\
                &=\frac{\pi\alpha^2}{2\gamma-1}\left(\frac{1}{b^2}\left(\log^2(b)-\frac{\log(b)}{2\gamma-1}+\frac{1}{2(2\gamma-1)^2}\right)-\frac{1}{a^2}\left(\log^2(a)-\frac{\log(a)}{2\gamma-1}+\frac{1}{2(2\gamma-1)^2}\right)\left(\frac{a}{b}\right)^{4\gamma}\right)\\
                &+\frac{\pi\alpha\,a_0}{2\gamma-1}\left(\frac{1}{b^2}\left(\log(b)-\frac{1}{2(2\gamma-1)}\right)-\frac{1}{a^2}\left(\log(a)-\frac{1}{2(2\gamma-1)}\right)\left(\frac{a}{b}\right)^{4\gamma}\right)\\
                &+\frac{\pi\alpha\beta}{\gamma}\left(\left(\log^2(b)-\frac{\log(b)}{2\gamma}+\frac{1}{4\gamma^2}\right)-\left(\log^2(a)-\frac{\log(a)}{2\gamma^2}+\frac{1}{4\gamma}\right)\left(\frac{a}{b}\right)^{4\gamma}\right)\\
                &+\frac{\pi(\alpha\,b_0+\beta\,a_0)}{\gamma}\left(\log(b)-\frac{1}{4\gamma}-\left(\log(a)-\frac{1}{4\gamma}\right)\left(\frac{a}{b}\right)^{4\gamma}\right)\\
                &+\frac{\pi\beta^2}{2\gamma+1}\left(b^2\left(\log^2(b)-\frac{\log(b)}{2\gamma+1}+\frac{1}{2(2\gamma+1)^2}\right)-a^2\left(\log^2(a)-\frac{\log(a)}{2\gamma+1}+\frac{1}{2(2\gamma+1)^2}\right)\left(\frac{a}{b}\right)^{4\gamma}\right)\\
                &+\frac{2\pi\beta\,b_0}{2\gamma+1}\left(b^2\left(\log(b)-\frac{1}{2\gamma+1}\right)-a^2\left(\log(a)-\frac{1}{2\gamma+1}\right)\left(\frac{a}{b}\right)^{4\gamma}\right)\\
                &+\pi\sum_{n\in\Z}\frac{1}{n-1+2\gamma}|a_n|^2\left(b^{2(n-1)}-a^{2(n-1)}\left(\frac{a}{b}\right)^{4\gamma}\right)\\
                &+\pi\sum_{n\in\Z}\frac{1}{n+1+2\gamma}|b_n|^2\left(b^{2(n+1)}-a^{2(n+1)}\left(\frac{a}{b}\right)^{4\gamma}\right)\\
                &+2\pi\sum_{n\in\Z}\frac{1}{n+2\gamma}\Re\left(a_n\bar{b_n}\right)\left(b^{2n}-a^{2n}\left(\frac{a}{b}\right)^{4\gamma}\right)
            \end{align*}
            Now, we have
        \begin{align*}
            \Delta\psi&=\left(\p{r}^2+\frac{1}{r}+\frac{1}{r^2}\p{\theta}^2\right)\left(\beta\, r^2\log(r)+\Re\left(\sum_{n\in\Z}b_nr^{n+2}e^{in\theta}\right)\right)\\
            &=4\beta(\log(r)+1)+\Re\left(\sum_{n\in\Z}\left((n+2)(n+1)+(n+2)-n^2\right)b_nr^ne^{in\theta}\right)\\
            &=4\beta(\log(r)+1)+4\,\Re\left(\sum_{n\in\Z}(n+1)b_nr^ne^{in\theta}\right).
        \end{align*}
        Therefore, we have
        \begin{align}\label{laplacian_l2}
            &\int_{\Omega}(\Delta \psi)^2dx=2\pi\int_{a}^{b}\left(16r(\beta\log(r)+\beta+b_0)^2+8\sum_{n\in\Z^{\ast}}(n+1)^2|b_n|^2r^{2n+1}\right)dr\nonumber\\
            &=16\pi\beta^2\left(b^2\left(\log^2(b)-\log(b)+\frac{1}{2}\right)-a^2\left(\log^2(a)-\log(a)+\frac{1}{2}\right)\right)\nonumber\\
            &+32\pi\beta(\beta+b_0)\left(b^2\left(\log(b)-\frac{1}{2}\right)-a^2\left(\log(a)-\frac{1}{2}\right)\right)+16\pi(\beta+b_0)^2\left(b^2-a^2\right)\nonumber\\
            &+8\pi\sum_{n\in\Z\setminus\ens{-1,0}}|n+1||b_n|^2\left|b^{2(n+1)}-a^{2(n+1)}\right|.
        \end{align}
        Then, we have
        \small
        \begin{align*}
            &\p{z}^2\psi=\p{z}\left(\frac{1}{2z}\left(\alpha+2\beta|z|^2\log|z|+(\beta+2b_0)|z|^2+\sum_{n\in\Z^{\ast}}n\,a_nz^n+|z|^2\left(\sum_{n\in\Z^{\ast}}(n+1)b_nz^n+2\sum_{n\in\Z^{\ast}}\bar{b_n}\z^n\right)\right)\right)\\
            &=\frac{1}{2z^2}\left(-\alpha+\beta|z|^2+\sum_{n\in\Z^{\ast}}n(n-1)a_nz^{n}+|z|^2\sum_{n\in\Z^{\ast}}n(n+1)b_nz^n\right).
        \end{align*}
        \normalsize
        Therefore, we have
        \begin{align*}
            \mathrm{rad}\left(|\p{z}^2\psi|^2\right)&=\frac{1}{4|z|^4}\left(\alpha^2-2\alpha\beta|z|^2+\beta^2|z|^4+\sum_{n\in\Z^{\ast}}n^2(n-1)^2|a_n|^2|z|^{2n}+\sum_{n\in\Z^{\ast}}n^2(n+1)^2|b_n|^2|z|^{2(n+2)}\right.\\
            &+\left.+2\sum_{n\in\Z^{\ast}}n^2(n^2-1)\Re\left(a_n\bar{b_n}\right)|z|^{2(n+1)}\right),
        \end{align*}
        and
        \begin{align*}
            &\int_{\Omega}4|\p{z}^2\psi|^2dx=2\pi\int_{a}^b\frac{1}{r^3}\left(\alpha^2-2\alpha\beta r^2+\beta^2r^4+\sum_{n\in\Z^{\ast}}n^2(n-1)^2|a_n|^2r^{2n}+\sum_{n\in\Z^{\ast}}n^2(n+1)^2|b_n|^2r^{2(n+2)}\right.\\
            &\left.+2\sum_{n\in\Z^{\ast}}n^2(n^2-1)\Re\left(a_n\bar{b_n}\right)r^{2(n+1)}\right)dr=\pi\alpha^2\left(\frac{1}{a^2}-\frac{1}{b^2}\right)-4\pi\alpha\beta\log\left(\frac{b}{a}\right)+\pi\beta^2\left(b^2-a^2\right)\\
            &+\pi\sum_{n\in\Z\setminus\ens{0,1}}n^2|n-1||a_n|^2\left|b^{2(n-1)}-a^{2(n-1)}\right|+\pi\sum_{n\in\Z\setminus\ens{-1,0}}n^2|n+1||b_n|^2\left|b^{2(n+1)}-a^{2(n+1)}\right|\\
            &+2\pi\sum_{n\in\Z\setminus\ens{-1,0,1}}n(n^2-1)\Re\left(a_n\bar{b_n}\right)\left(b^{2n}-a^{2n}\right).
        \end{align*}
        We first estimate since $\gamma\notin \N$
        \begin{align*}
            &\sum_{n\in\Z^{\ast}}\frac{((n+1)^2+4)}{n+1+\gamma}|b_n|^2\left(b^{2(n+1)}-a^{2(n+1)}\left(\frac{a}{b}\right)^{2\gamma}\right)\leq\frac{4}{\gamma}|b_{-1}|^2\left(1-\left(\frac{a}{b}\right)^{2\gamma}\right)\\
            &+\frac{4}{\gamma}\sum_{n\in\Z\setminus\ens{0,-1}}|n+1||b_n|^2\left|b^{2(n+1)}-a^{2(n+1)}\left(\frac{a}{b}\right)^{2\gamma}\right|.
        \end{align*}
        Then, we have
        \begin{align*}
            &\sum_{n\in\Z^{\ast}}|n+1||b_n|^2\left|b^{2(n+1)}-a^{2(n+1)}\right|=\sum_{n\geq 0}|n+1||b_n|^2b^{2(n+1)}\left(1-\left(\frac{a}{b}\right)^{2(n+1)}\right)\\
            &+\sum_{n\geq 2}|n-1||b_{-n}|^2\frac{1}{a^{2(n-1)}}\left(1-\left(\frac{a}{b}\right)^{2(n-1)}\right)\\
            &\geq \left(1-\left(\frac{a}{b}\right)^2\right)\left(\sum_{n\geq 0}|n+1||b_n|^2b^{2(n+1)}+\sum_{n\geq 2}|n-1||b_{-n}|\frac{1}{a^{2(n-1)}}\right).
        \end{align*}
        On the other hand, we have
        \begin{align*}
            &\sum_{n\in\Z^{\ast}}|n+1||b_n|^2\left|b^{2(n+1)}-a^{2(n+1)}\left(\frac{a}{b}\right)^{2\gamma}\right|=\sum_{n\geq 0}|n+1||b_n|^2b^{2(n+1)}\left(1-\left(\frac{a}{b}\right)^{2(n+1+\gamma)}\right)\\
            &+\sum_{n\geq 2}|n-1||b_{-n}|^2\frac{1}{a^{2(n-1)}}\left(\left(\frac{a}{b}\right)^{2\gamma}-\left(\frac{a}{b}\right)^{2(n-1)}\right)\\
            &\leq \frac{1-\left(\frac{a}{b}\right)^{2\gamma}}{1-\left(\frac{a}{b}\right)^2}\sum_{n\geq 0}|n+1||b_n|^2b^{2(n+1)}\left(1-\left(\frac{a}{b}\right)^{2(n+1)}\right)\\
            &+\sum_{n\geq 2}|n-1||b_{-n}|^2\frac{1}{a^{2(n-1)}}\left(1-\left(\frac{a}{b}\right)^{2(n-1)}\right)\\
            &\leq \frac{1-\left(\frac{a}{b}\right)^{2\gamma}}{1-\left(\frac{a}{b}\right)^2}\sum_{n\in\Z^{\ast}}|n+1||b_n|^2\left|b^{2(n+1)}-a^{2(n+1)}\right|
        \end{align*}
        where we used the following inequality (valid for all $n\geq 0$)
        \begin{align*}
            \left(1-\left(\frac{a}{b}\right)^{2(n+1+\gamma)}\right)\leq \frac{1-\left(\frac{a}{b}\right)^{2\gamma}}{1-\left(\frac{a}{b}\right)^2}\left(1-\left(\frac{a}{b}\right)^{2(n+1)}\right).
        \end{align*}
        Indeed, we have
        \begin{align*}
            \frac{1-\left(\frac{a}{b}\right)^{2(n+1+\gamma)}}{1-\left(\frac{a}{b}\right)^{2(n+1)}}=\left(\frac{a}{b}\right)^{2\gamma}+\frac{1-\left(\frac{a}{b}\right)^{2\gamma}}{1-\left(\frac{a}{b}\right)^{2(n+1)}},
        \end{align*}
        which is minimal for $n=0$. Finally, we deduce that 
        \begin{align}\label{frequency_two}
            &\pi\sum_{n\in\Z^{\ast}}\frac{((n+1)^2+4)}{n+1+\gamma}|b_n|^2\left(b^{2(n+1)}-a^{2(n+1)}\left(\frac{a}{b}\right)^{2\gamma}\right)\leq \frac{4\pi}{\gamma}|b_{-1}|^2\left(1-\left(\frac{a}{b}\right)^{2\gamma}\right)\nonumber\\
            &+\frac{4\pi}{\gamma} \frac{1-\left(\frac{a}{b}\right)^{2\gamma}}{1-\left(\frac{a}{b}\right)^2}\sum_{n\in\Z^{\ast}}|n+1||b_n|^2\left|b^{2(n+1)}-a^{2(n+1)}\right|\nonumber\\
            &
            \leq \frac{4\pi}{\gamma}|b_{-1}|^2\left(1-\left(\frac{a}{b}\right)^{2\gamma}\right)+\frac{1}{2\gamma}\frac{1-\left(\frac{a}{b}\right)^{2\gamma}}{1-\left(\frac{a}{b}\right)^2}\int_{\Omega}(\Delta\psi)^2dx.
        \end{align}
        Other estimates are similar. First, we have
        \begin{align*}
            &2\int_{\Omega}|\D^2\psi|^2dx=\int_{\Omega}4|\p{z}^2\psi|^2dx+\frac{1}{4}\int_{\Omega}(\Delta\psi)^2dx=\pi\alpha^2\left(\frac{1}{a^2}-\frac{1}{b^2}\right)-4\pi\alpha\beta\log\left(\frac{b}{a}\right)+\pi\beta^2\left(b^2-a^2\right)\\
            &+\pi\sum_{n\in\Z\setminus\ens{0,1}}n^2|n-1||a_n|^2\left|b^{2(n-1)}-a^{2(n-1)}\right|+\pi\sum_{n\in\Z\setminus\ens{-1,0}}n^2|n+1||b_n|^2\left|b^{2(n+1)}-a^{2(n+1)}\right|\\
            &+2\pi\sum_{n\in\Z\setminus\ens{-1,0,1}}n(n^2-1)\Re\left(a_n\bar{b_n}\right)\left(b^{2n}-a^{2n}\right)\\
            &+4\pi\beta^2\left(b^2\left(\log^2(b)-\log(b)+\frac{1}{2}\right)-a^2\left(\log^2(a)-\log(a)+\frac{1}{2}\right)\right)\\
            &+8\pi\beta(\beta+b_0)\left(b^2\left(\log(b)-\frac{1}{2}\right)-a^2\left(\log(a)-\frac{1}{2}\right)\right)+4\pi(\beta+b_0)^2\left(b^2-a^2\right)\\
            &+\pi\sum_{n\in\Z\setminus\ens{-1,0}}|n+1||b_n|^2\left|b^{2(n+1)}-a^{2(n+1)}\right|\\
            &=\pi\alpha^2\left(\frac{1}{a^2}-\frac{1}{b^2}\right)-4\pi\alpha\beta\log\left(\frac{b}{a}\right)+\pi\beta^2\left(b^2-a^2\right)\\
            &+4\pi\left(b^2\left(\beta^2\log^2(b)+\beta(\beta+b_0)\log(b)+(\beta+b_0)\left(\frac{1}{2}\beta+b_0\right)\right)\right.\\
            &\left.-a^2\left(\beta^2\log^2\left(a\right)+\beta(\beta+b_0)\log\left(a\right)+\left(\beta+b_0\right)\left(\frac{1}{2}\beta+b_0\right)\right)\right)\\
            &+\pi\sum_{n\in\Z\setminus\ens{0,1}}n^2|n-1||a_n|^2\left|b^{2(n-1)}-a^{2(n-1)}\right|+\pi\sum_{n\in\Z\setminus\ens{-1,0}}(n^2+1)|n+1||b_n|^2\left|b^{2(n+1)}-a^{2(n+1)}\right|\\
            &+2\pi\sum_{n\in\Z\setminus\ens{-1,0,1}}n(n^2-1)\Re\left(a_n\bar{b_n}\right)\left(b^{2n}-a^{2n}\right).
        \end{align*}
        Now, we have to be particularly careful to show that the $L^2$ norm of $\D^2\psi$ bounds the sum in $\ens{a_n}_{n\in\Z}$. Notice that for all $n\geq 1$, we have
        \begin{align*}
            &n^2|n-1||a_n|^2b^{2(n-1)}+2n(n^2-1)\Re\left(a_n\bar{b_n}\right)b^{2n}+(n^2+1)|n+1||b_n|^2b^{2(n+1)}\\
            &\geq \left(n^2(n-1)-\frac{n^2(n+1)^2(n-1)^2}{\left(\sqrt{(n^2+1)(n+1)}\right)^2}\right)|a_n|^2b^{2(n-1)}\\
            &=n^2(n-1)\left(1-\frac{(n+1)(n-1)}{n^2+1}\right)|a_n|^2b^{2(n-1)}=\frac{2n^2(n-1)}{n^2+1}|a_n|^2b^{2(n-1)}.
        \end{align*}
        Let $n\geq 2$. Then, we have
        \begin{align*}
            &2n(n^2-1)|a_n||b_n||b^{2n}-a^{2n}|=2n(n^2-1)|a_n||b_n|b^{2n}\left(1-\left(\frac{a}{b}\right)^{2n}\right)\\
            &\leq \frac{n^2(n+1)(n-1)^2}{n^2+1}|a_n|^2b^{2n}\left(1-\left(\frac{a}{b}\right)^{2(n-1)}\right)+(n^2+1)(n+1)|b_n|^2b^{2(n+1)}\frac{\left(1-\left(\frac{a}{b}\right)^{2n}\right)^2}{1-\left(\frac{a}{b}\right)^{2(n-1)}}.
        \end{align*}
        Therefore, we get for all $n\geq 2$ (for $n=1$ the middle coefficient vanishes and the inequality is trivial)
        \begin{align*}
            &n^2|n-1||a_n|^2\left|b^{2(n-1)}-a^{2(n-1)}\right|+2n(n^2-1)\Re\left(a_n\bar{b_n}\right)\left(b^{2n}-a^{2n}\right)+(n^2+1)|n+1||b_n|^2b^{2(n+1)}\\
            &\leq \frac{2n^2|n-1|}{n^2+1}|a_n|^2b^{2(n-1)}\left(1-\left(\frac{a}{b}\right)^{2(n-1)}\right)+\frac{(n^2+1)|n+1|}{1-\left(\frac{a}{b}\right)^{2(n-1)}}|b_n|^2\left(\left(1-\left(\frac{a}{b}\right)^{2(n+1)}\right)\left(1-\left(\frac{a}{b}\right)^{2(n-1)}\right)\right.\\
            &\left.-\left(1-\left(\frac{a}{b}\right)^{2n}\right)^2\right)\\
            &=\frac{2n^2|n-1|}{n^2+1}|a_n|^2b^{2(n-1)}\left(1-\left(\frac{a}{b}\right)^{2(n-1)}\right)-(n^2+1)|n+1||b_n|^2\frac{\left(\left(\frac{a}{b}\right)^{n-1}-\left(\frac{a}{b}\right)^{n+1}\right)^2}{1-\left(\frac{a}{b}\right)^{2(n-1)}}\\
            &=\frac{2n^2|n-1|}{n^2+1}|a_n|^2b^{2(n-1)}\left(1-\left(\frac{a}{b}\right)^{2(n-1)}\right)-(n^2+1)|n+1||b_n|^2\left(\frac{a}{b}\right)^{2(n-1)}\frac{\left(1-\left(\frac{a}{b}\right)^{2}\right)^2}{1-\left(\frac{a}{b}\right)^{2(n-1)}}\\
            &\geq \frac{2n^2|n-1|}{n^2+1}|a_n|^2b^{2(n-1)}\left(1-\left(\frac{a}{b}\right)^{2(n-1)}\right)-(n^2+1)|n+1||b_n|^2\left(\frac{a}{b}\right)^{2(n-1)}\left(1-\left(\frac{a}{b}\right)^{2}\right).
        \end{align*}
        Now, we compute
        \begin{align*}
            \sum_{n=2}^{\infty}(n^2+1)\left(\frac{a}{b}\right)^{2(n-1)}=\left(\frac{a}{b}\right)^2\frac{5-5\left(\frac{a}{b}\right)^2+2\left(\frac{a}{b}\right)^4}{\left(1-\left(\frac{a}{b}\right)^2\right)^3}.
        \end{align*}
        Since
        \begin{align*}
            \int_{\Omega}(\Delta\psi)^2dx\geq 8\pi \left(1-\left(\frac{a}{b}\right)^2\right)\sum_{n\geq 2}|n+1||b_n|^2b^{2(n+1)},
        \end{align*}
        we deduce that
        \begin{align*}
            &2\int_{\Omega}|\D^2\psi|^2dx+\frac{1}{8}\left(\frac{a}{b}\right)^2\frac{5-5\left(\frac{a}{b}\right)^2+2\left(\frac{a}{b}\right)^4}{\left(1-\left(\frac{a}{b}\right)^2\right)^3}\int_{\Omega}(\Delta\psi)^2dx\\
            &\geq 2\pi\sum_{n\geq 2}\frac{n^2|n-1|}{n^2+1}|a_n|^2b^{2(n-1)}\left(1-\left(\frac{a}{b}\right)^{2(n-1)}\right).
        \end{align*}
        Reasoning similarly for negative frequencies, we deduce in fact that 
        \begin{align}\label{frequency_one}
            &2\pi\sum_{n\in \Z\setminus\ens{0,1}}\frac{n^2|n-1|}{n^2+1}|a_n|^2|b^{2(n-1)}-a^{2(n-1)}|\leq 2\int_{\Omega}|\D^2\psi|^2dx\nonumber\\
            &+\frac{1}{8}\left(\frac{a}{b}\right)^2\frac{5-5\left(\frac{a}{b}\right)^2+2\left(\frac{a}{b}\right)^4}{\left(1-\left(\frac{a}{b}\right)^2\right)^3}\int_{\Omega}(\Delta\psi)^2dx,
        \end{align}
        that we rewrite more simply as
        \begin{align*}
            2\pi\sum_{n\in\Z\setminus\ens{0,1}}\frac{n^2|n-1|}{n^2+1}|a_n|^2\left|b^{2(n-1)}-a^{2(n-1)}\right|\leq 2\left(1+\left(\frac{a}{b}\right)^2\frac{5-5\left(\frac{a}{b}\right)^2+2\left(\frac{a}{b}\right)^4}{\left(1-\left(\frac{a}{b}\right)^2\right)^3}\right)\int_{\Omega}|\D^2\psi|^2dx.
        \end{align*}
        Therefore, we get
        \begin{align*}
            &\pi\sum_{n\in \Z^{\ast}}\frac{|n|^2}{|n-1+\gamma|}|a_n|^2\left|b^{2(n-1)}-a^{2(n-1)}\left(\frac{a}{b}\right)^{2\gamma}\right|\leq \frac{\pi}{\gamma}|a_1|^2\left(1-\left(\frac{a}{b}\right)^{2\gamma}\right)\\
            &+\frac{5\pi}{\gamma}\sum_{n\in\Z\setminus\ens{0,1}}\frac{n^2|n-1|}{n^2+1}|a_n|^2\left|b^{2(n-1)}-a^{2(n-1)}\left(\frac{a}{b}\right)^{2\gamma}\right|\\
            &\leq \frac{\pi}{\gamma}|a_1|^2\left(1-\left(\frac{a}{b}\right)^{2\gamma}\right)+\frac{5\pi}{1-\gamma}\frac{1-\left(\frac{a}{b}\right)^{2\gamma}}{1-\left(\frac{a}{b}\right)^2}\sum_{n\in\Z\setminus\ens{0,1}}\frac{n^2|n-1|}{n^2+1}|a_n|^2\left|b^{2(n-1)}-a^{2(n-1)}\right|\\
            &\leq \frac{\pi}{\gamma}|a_1|^2\left(1-\left(\frac{a}{b}\right)^{2\gamma}\right)+\frac{5}{1-\gamma}\frac{1-\left(\frac{a}{b}\right)^{2\gamma}}{1-\left(\frac{a}{b}\right)^2}\left(1+\left(\frac{a}{b}\right)^2\frac{5-5\left(\frac{a}{b}\right)^2+2\left(\frac{a}{b}\right)^4}{\left(1-\left(\frac{a}{b}\right)^2\right)^3}\right)\int_{\Omega}|\D^2\psi|^2dx.
        \end{align*}
        Now, we move to the estimation of the bound the logarithm contributions.  We trivially have
        \begin{align*}
            \frac{\pi\alpha^2}{1-\gamma}\left(\frac{1}{a^2}\left(\frac{a}{b}\right)^{2\gamma}-\frac{1}{b^2}\right)\leq \frac{\pi\alpha^2}{1-\gamma}\left(\frac{1}{a^2}-\frac{1}{b^2}\right).
        \end{align*}
        Then, 
        \begin{align*}
            \frac{4\pi\beta^2}{\gamma+1}\left(b^2\left(\log^2(b)-\frac{\log(b)}{\gamma+1}+\frac{1}{2(\gamma+1)^2}\right)-a^2\left(\log^2(a)-\frac{\log(a)}{\gamma+1}+\frac{1}{2(\gamma+1)^2}\right)\left(\frac{a}{b}\right)^{2\gamma}\right)
        \end{align*}
        needs to be compared to 
        \begin{align*}
            16\pi\beta^2\left(b^2\left(\log^2(b)-\log(b)+\frac{1}{2}\right)-a^2\left(\log^2(a)-\log(a)+\frac{1}{2}\right)\right).
        \end{align*}
        Since we have assumed that $0<a<b<1$, we trivially estimate
        \begin{align*}
            b^2\left(\log^2(b)-\frac{\log(b)}{\gamma+1}+\frac{1}{2(\gamma+1)^2}\right)=b^2\left(\log^2(b)+\frac{|\log(b)|}{\gamma+1}+\frac{1}{2(\gamma+1)^2}\right)\leq b^2\left(\log^2(b)-\log(b)+\frac{1}{2}\right).
        \end{align*}
        On the other hand,
        \begin{align*}
            a^2\left(\log^2(a)-\frac{\log(a)}{\gamma+1}+\frac{1}{2(\gamma+1)^2}\right)\geq \frac{1}{(\gamma+1)^2}a^2\left(\log^2(a)-\log(a)+\frac{1}{2}\right).
        \end{align*}
        Therefore, we have
        \begin{align*}
            &\frac{4\pi\beta^2}{\gamma+1}\left(b^2\left(\log^2(b)-\frac{\log(b)}{\gamma+1}+\frac{1}{2(\gamma+1)^2}\right)-a^2\left(\log^2(a)-\frac{\log(a)}{\gamma+1}+\frac{1}{2(\gamma+1)^2}\right)\left(\frac{a}{b}\right)^{2\gamma}\right)\\
            &\leq \frac{4\pi\beta^2}{(\gamma+1)^3}\left((\gamma+1)^2b^2\left(\log^2(b)-\log(b)+\frac{1}{2}\right)-a^2\left(\log^2(a)-\log(a)+\frac{1}{2}\right)\right).
        \end{align*}
        Recalling that 
        \begin{align*}
            \mathrm{rad}\left(z\,\p{z}\psi\right)=\frac{1}{2}\left(\alpha+2\beta|z|^2\log|z|+(\beta+b_0)|z|^2\right),
        \end{align*}
        we deduce by the triangle inequality and the elementary inequality $(a+b+c)^2\leq 4(a^2+b^2+c^2)$ that
        \begin{align}\label{simplified_gradient}
            &\int_{\Omega}\frac{|\D\psi|^2}{|x|^2}\left(\frac{|x|}{b}\right)^{2\gamma}dx\leq \frac{4\pi\alpha^2}{1-\gamma}\left(\frac{1}{a^2}\left(\frac{a}{b}\right)^{2\gamma}-\frac{1}{b^2}\right)
            +\frac{16\pi\beta^2}{\gamma+1}\left(b^2\left(\log^2\left(b\right)-\frac{\log(b)}{\gamma+1}+\frac{1}{2(\gamma+1)^2}\right)\right.\nonumber\\
            &\left.-a^2\left(\log^2(a)-\frac{\log(a)}{\gamma+1}+\frac{1}{2(\gamma+1)^2}\right)\left(\frac{a}{b}\right)^{2\gamma}\right)\nonumber
            +\frac{4\pi(\beta+2b_0)^2}{\gamma+1}\left(b^2-a^2\left(\frac{a}{b}\right)^{2\gamma}\right)^{2\gamma}\nonumber\\
            &+2\pi\sum_{n\in\Z^{\ast}}\frac{|n|^2}{n-1+\gamma}|a_n|^2\left(b^{2(n-1)}-a^{2(n-1)}\left(\frac{a}{b}\right)^{2\gamma}\right)\nonumber\\
            &+2\pi\sum_{n\in\Z^{\ast}}\frac{((n+1)^2+4)}{n+1+\gamma}|b_n|^2\left(b^{2(n+1)}-a^{2(n+1)}\left(\frac{a}{b}\right)^{2\gamma}\right).
        \end{align}
        
        Then, we need to bound $a_1$ and $b_{-1}$ by the weighted $L^2$ norm of $\psi$. Notice that we have
        \begin{align}\label{pointwise_coefficient}
            &\pi\sum_{n\in\Z}\frac{1}{n-1+2\gamma}|a_n|^2\left(b^{2(n-1)}-a^{2(n-1)}\left(\frac{a}{b}\right)^{4\gamma}\right)
            +\pi\sum_{n\in\Z}\frac{1}{n+1+2\gamma}|b_n|^2\left(b^{2(n+1)}-a^{2(n+1)}\left(\frac{a}{b}\right)^{4\gamma}\right)\nonumber\\
            &+2\pi\sum_{n\in\Z}\frac{1}{n+2\gamma}\Re\left(a_n\bar{b_n}\right)\left(b^{2n}-a^{2n}\left(\frac{a}{b}\right)^{4\gamma}\right)\leq \int_{\Omega}\frac{\psi^2}{|x|^4}\left(\frac{|x|}{b}\right)^{4\gamma}dx,
        \end{align}
        and since each term is positive (since it corresponds to the integration of a squared quantity), we deduce that for all $n\in\Z$, we also have 
        \begin{align}\label{pointwise_coefficient2}
            &\frac{1}{n-1+2\gamma}|a_n|^2\left(b^{2(n-1)}-a^{2(n-1)}\left(\frac{a}{b}\right)^{4\gamma}\right)
            +\frac{1}{n+1+2\gamma}|b_n|^2\left(b^{2(n+1)}-a^{2(n+1)}\left(\frac{a}{b}\right)^{4\gamma}\right)\nonumber\\
            &+\frac{2}{n+2\gamma}\Re\left(a_n\bar{b_n}\right)\left(b^{2n}-a^{2n}\left(\frac{a}{b}\right)^{4\gamma}\right)\leq \frac{1}{\pi}\int_{\Omega}\frac{\psi^2}{|x|^4}\left(\frac{|x|}{b}\right)^{4\gamma}dx.
        \end{align}
        For $n=-1$, the inequality becomes
        \begin{align}\label{ineq_b_minus_one}
            &\frac{1}{2(1-\gamma)}|a_{-1}|^2\frac{1}{a^4}\left(\left(\frac{a}{b}\right)^{4\gamma}-\left(\frac{a}{b}\right)^4\right)+\frac{1}{2\gamma}|b_{-1}|^2\left(1-\left(\frac{a}{b}\right)^{4\gamma}\right)\nonumber\\
            &-\frac{2}{2\gamma-1}\Re\left(a_{-1}\bar{b_{-1}}\right)\frac{1}{a^2}\left(\left(\frac{a}{b}\right)^{4\gamma}-\left(\frac{a}{b}\right)^2\right)\leq \frac{1}{\pi}\int_{\Omega}\frac{\psi^2}{|x|^4}\left(\frac{x}{b}\right)^{4\gamma}dx.
        \end{align}
        Thanks to Cauchy's inequality, we have
        \begin{align*}
            &-\frac{2}{2\gamma-1}\Re\left(a_{-1}\bar{b_{-1}}\right)\frac{1}{a^2}\left(\left(\frac{a}{b}\right)^{4\gamma}-\left(\frac{a}{b}\right)^2\right)\geq -\frac{1}{2(1-\gamma)}|a_{-1}|^2\frac{1}{a^4}\left(\left(\frac{a}{b}\right)^{4\gamma}-\left(\frac{a}{b}\right)^4\right)\\
            &-\frac{2(1-\gamma)}{(2\gamma-1)^2}|b_{-1}|^2\frac{\left(\left(\frac{a}{b}\right)^{4\gamma}-\left(\frac{a}{b}\right)^2\right)^2}{\left(\frac{a}{b}\right)^{4\gamma}-\left(\frac{a}{b}\right)^4}.
        \end{align*}
        Now, we have
        \begin{align*}
            &\frac{1}{2\gamma}\left(1-\left(\frac{a}{b}\right)^{4\gamma}\right)-\frac{2(1-\gamma)}{(2\gamma-1)^2}\frac{\left(\left(\frac{a}{b}\right)^{4\gamma}-\left(\frac{a}{b}\right)^2\right)^2}{\left(\frac{a}{b}\right)^{4\gamma}-\left(\frac{a}{b}\right)^4}
            =\frac{1}{2\gamma(2\gamma-1)^2}\frac{1}{\left(\frac{a}{b}\right)^{4\gamma}-\left(\frac{a}{b}\right)^4}\\
            &\times\left((2\gamma-1)^2\left(1-\left(\frac{a}{b}\right)^{4\gamma}\right)\left(\left(\frac{a}{b}\right)^{4\gamma}-\left(\frac{a}{b}\right)^4\right)-4\gamma(1-\gamma)\left(\left(\frac{a}{b}\right)^{4\gamma}-\left(\frac{a}{b}\right)^2\right)^2\right).
        \end{align*}
        Notice that the quantity in parentheses is strictly negative for $\gamma=1/2$ and vanishes for $\gamma=1$. More precisely, as $\gamma\rightarrow 1$, we have
        \begin{align*}
            \frac{1-\gamma}{\left(\frac{a}{b}\right)^{4\gamma}-\left(\frac{a}{b}\right)^4}=\frac{1}{\left(\frac{a}{b}\right)^{4\gamma}}\frac{1-\gamma}{1-e^{-4(1-\gamma)\log\left(\frac{b}{a}\right)}}\conv{\gamma\rightarrow 1}\left(\frac{b}{a}\right)^4\frac{1}{4\log\left(\frac{b}{a}\right)}.
        \end{align*}
        Therefore, we have 
        \begin{align*}
            \frac{1}{2\gamma}\left(1-\left(\frac{a}{b}\right)^{4\gamma}\right)-\frac{2(1-\gamma)}{(2\gamma-1)^2}\frac{\left(\left(\frac{a}{b}\right)^{4\gamma}-\left(\frac{a}{b}\right)^2\right)^2}{\left(\frac{a}{b}\right)^{4\gamma}-\left(\frac{a}{b}\right)^4}&\conv{\gamma\rightarrow 1}\frac{1}{2}\left(1-\left(\frac{a}{b}\right)^4\right)-\frac{1}{2\log\left(\frac{b}{a}\right)}\left(\frac{b}{a}\right)^4\left(\left(\frac{a}{b}\right)^4-\left(\frac{a}{b}\right)^2\right)^2\\
            &=\frac{1}{2}\left(1-\left(\frac{a}{b}\right)^4\right)-\frac{1}{2\log\left(\frac{b}{a}\right)}\left(1-\left(\frac{a}{b}\right)^2\right)^2\conv{\frac{b}{a}\rightarrow\infty}\frac{1}{2}>0.
        \end{align*}
        Therefore, for $\gamma<1$ and $\log\left(\frac{b}{a}\right)>1$ large enough, this constant will be positive, which will give us a control of $b_{-1}$! Let us now get explicit estimates for which our function is positive.

        We also notice if $x=\dfrac{a}{b}$ that
        \begin{align*}
            \frac{(x^{4\gamma}-x^2)^2}{x^{4\gamma}-x^4}=x^{4(1-\gamma)}\frac{(1-x^{2(2\gamma-1)})^2}{1-x^{4(1-\gamma)}}\conv{x\rightarrow 0}0.
        \end{align*}
        Therefore, for all fixed $\dfrac{1}{2}<\gamma<1$, we have
        \begin{align*}
            \frac{1}{2\gamma}\left(1-\left(\frac{a}{b}\right)^{4\gamma}\right)-\frac{2(1-\gamma)}{(2\gamma-1)^2}\frac{\left(\left(\frac{a}{b}\right)^{4\gamma}-\left(\frac{a}{b}\right)^2\right)^2}{\left(\frac{a}{b}\right)^{4\gamma}-\left(\frac{a}{b}\right)^4}\conv{\frac{b}{a}\rightarrow \infty}\frac{1}{2\gamma}>0.
        \end{align*}
        Therefore, for all $\dfrac{1}{2}<\gamma<1$, there exists $R_{\gamma}<\infty$ such that for all $0<a<b<\infty$ such that
        \begin{align*}
            \log\left(\frac{b}{a}\right)\geq R_{\gamma}',
        \end{align*}
        we have
        \begin{align*}
            \frac{1}{2\gamma}\left(1-\left(\frac{a}{b}\right)^{4\gamma}\right)-\frac{2(1-\gamma)}{(2\gamma-1)^2}\frac{\left(\left(\frac{a}{b}\right)^{4\gamma}-\left(\frac{a}{b}\right)^2\right)^2}{\left(\frac{a}{b}\right)^{4\gamma}-\left(\frac{a}{b}\right)^4}\geq \frac{1}{4\gamma}.
        \end{align*}
        Let us estimate some $R_{\gamma}$ (that gives a similar inequality). Since
        \begin{align*}
            \frac{(x^{4\gamma}-x^2)^2}{x^{4\gamma}-x^4}=x^{4(1-\gamma)}\frac{(1-x^{2(2\gamma-1)})^2}{1-x^{4(1-\gamma)}}\leq \frac{x^{4(1-\gamma)}}{1-x^{4(1-\gamma)}},
        \end{align*}
        we notice that
        \begin{align*}
            \frac{x^{4(1-\gamma)}}{1-x^{4(1-\gamma)}}\leq \frac{1}{4\gamma}\frac{(2\gamma-1)^2}{2\gamma(1-\gamma)}
        \end{align*}
        if and only if
        \begin{align*}
            x^{4(1-\gamma)}\leq \frac{\frac{(2\gamma-1)^2}{8\gamma(1-\gamma)}}{\frac{(2\gamma-1)^2}{8\gamma(1-\gamma)}+1}=\frac{(2\gamma-1)^2}{(2\gamma-1)^2+8\gamma(1-\gamma)}
        \end{align*}
        which gives the condition
        \begin{align*}
            \frac{b}{a}\geq \left(\frac{(2\gamma-1)^2+8\gamma(1-\gamma)}{(2\gamma-1)^2}\right)^{\frac{1}{4(1-\gamma)}}
        \end{align*}
        and finally
        \begin{align}\label{conformal_class}
            \log\left(\frac{b}{a}\right)\geq \frac{1}{4(1-\gamma)}\log\left(1+\frac{8\gamma(1-\gamma)}{(2\gamma-1)^2}\right).
        \end{align}
        Therefore, we get the estimate
        \begin{align*}
            R_{\gamma}\leq \frac{1}{4(1-\gamma)}\log\left(1+\frac{8\gamma(1-\gamma)}{(2\gamma-1)^2}\right)
        \end{align*}
        which blows us as $\gamma\rightarrow \dfrac{1}{2}$ and $\gamma\rightarrow 1$. Summarising, assuming that \eqref{conformal_class} holds, we deduce that
        \begin{align*}
            \frac{1}{2\gamma}\left(1-\left(\frac{a}{b}\right)^{4\gamma}\right)-\frac{2(1-\gamma)}{(2\gamma-1)^2}\frac{\left(\left(\frac{a}{b}\right)^{4\gamma}-\left(\frac{a}{b}\right)^2\right)^2}{\left(\frac{a}{b}\right)^{4\gamma}-\left(\frac{a}{b}\right)^4}\geq \frac{1}{4\gamma}\left(1-2\left(\frac{a}{b}\right)^{4\gamma}\right)>0
        \end{align*}
        provided that
        \begin{align*}
            \log\left(\frac{b}{a}\right)\geq \frac{1}{4\gamma}\log(2)
        \end{align*}
        that is maximal at $\gamma=\dfrac{1}{2}$. An easy estimate shows that 
        \begin{align*}
            \frac{1}{4(1-\gamma)}\log\left(1+\frac{8\gamma(1-\gamma)}{(2\gamma-1)^2}\right)\geq \frac{1}{2}\log(2)\quad \text{for all}\;\, \frac{1}{2}<\gamma<1,
        \end{align*}
        so the second condition is always satisfied provided that \eqref{conformal_class} holds. Therefore, we finally deduce that \eqref{conformal_class} implies that 
        \begin{align*}
            &\frac{1}{4\gamma}|b_{-1}|^2\left(1-2\left(\frac{a}{b}\right)^{4\gamma}\right)\leq 
            \frac{1}{2(1-\gamma)}|a_{-1}|^2\frac{1}{a^4}\left(\left(\frac{a}{b}\right)^{4\gamma}-\left(\frac{a}{b}\right)^4\right)+\frac{1}{2\gamma}|b_{-1}|^2\left(1-\left(\frac{a}{b}\right)^{4\gamma}\right)\\
            &-\frac{2}{2\gamma-1}\Re\left(a_{-1}\bar{b_{-1}}\right)\frac{1}{a^2}\left(\left(\frac{a}{b}\right)^{4\gamma}-\left(\frac{a}{b}\right)^2\right)\leq \frac{1}{\pi}\int_{\Omega}\frac{\psi^2}{|x|^4}\left(\frac{|x|}{b}\right)^{4\gamma}dx,
        \end{align*}
        which shows that 
        \begin{align*}
            \frac{4\pi}{\gamma}|b_{-1}|^2\left(1-\left(\frac{a}{b}\right)^{2\gamma}\right)\leq 16\frac{1-\left(\frac{a}{b}\right)^{2\gamma}}{1-2\left(\frac{a}{b}\right)^{4\gamma}}\int_{\Omega}\frac{\psi^2}{|x|^4}\left(\frac{|x|}{b}\right)^{4\gamma}dx.
        \end{align*}
        More generally, for all $\dfrac{1}{2}<\delta<1$, if 
        \begin{align}\label{conformal_class2}
            \log\left(\frac{b}{a}\right)\geq \frac{1}{4(1-\delta)}\log\left(1+\frac{8\delta(1-\delta)}{(2\delta-1)^2}\right),
        \end{align}
        we get
        \begin{align}\label{b_minus_one_estimate}
            \frac{4\pi}{\gamma}|b_{-1}|^2\left(1-\left(\frac{a}{b}\right)^{2\gamma}\right)\leq \frac{16\,\delta}{\gamma}\frac{1-\left(\frac{a}{b}\right)^{2\gamma}}{1-2\left(\frac{a}{b}\right)^{4\delta}}\int_{\Omega}\frac{\psi^2}{|x|^4}\left(\frac{|x|}{b}\right)^{4\delta}dx.
        \end{align}
        Now, we move to the estimate of $|a_1|^2$. Taking \eqref{pointwise_coefficient2} for $n=1$ yields
        \begin{align*}
            &\frac{1}{2\gamma}|a_1|^2\left(1-\left(\frac{a}{b}\right)^{4\gamma}\right)+\frac{1}{2(\gamma+1)}|b_1|^2b^4\left(1-\left(\frac{a}{b}\right)^{4(\gamma+1)}\right)\\
            &+\frac{2}{2\gamma+1}\Re\left(a_1\bar{b_1}\right)b^2\left(1-\left(\frac{a}{b}\right)^{2(2\gamma+1)}\right)\leq \frac{1}{\pi}\int_{\Omega}\frac{\psi^2}{|x|^4}\left(\frac{|x|}{b}\right)^{4\gamma}dx.
        \end{align*}
        As previously, we get
        \begin{align}\label{step1_a1}
            &\frac{1}{2\gamma}|a_1|^2\left(1-\left(\frac{a}{b}\right)^{4\gamma}\right)+\frac{1}{2(\gamma+1)}|b_1|^2b^4\left(1-\left(\frac{a}{b}\right)^{4(\gamma+1)}\right)\\
            &+\frac{2}{2\gamma+1}\Re\left(a_1\bar{b_1}\right)b^2\left(1-\left(\frac{a}{b}\right)^{2(2\gamma+1)}\right)\nonumber\\
            &\geq |a_1|^2\left(\frac{1}{2\gamma}\left(1-\left(\frac{a}{b}\right)^{4\gamma}\right)-\frac{2(\gamma+1)}{(2\gamma+1)^2}\frac{\left(1-\left(\frac{a}{b}\right)^{2(2\gamma+1)}\right)^2}{1-\left(\frac{a}{b}\right)^{4(\gamma+1)}}\right)\nonumber\\
            &=\frac{|a_1|^2}{2\gamma(2\gamma+1)^2}\left((2\gamma+1)^2\left(1-\left(\frac{a}{b}\right)^{4\gamma}\right)\left(1-\left(\frac{a}{b}\right)^{4(\gamma+1)}\right)-4\gamma(\gamma+1)\left(1-\left(\frac{a}{b}\right)^{2(2\gamma+1)}\right)^2\right)\nonumber\\
            &=\frac{|a_1|^2}{2\gamma(2\gamma+1)^2}\left(\left(1-\left(\frac{a}{b}\right)^{4\gamma}\right)\left(1-\left(\frac{a}{b}\right)^{4(\gamma+1)}\right)-4\gamma(\gamma+1)\left(\left(\frac{a}{b}\right)^{2\gamma}-\left(\frac{a}{b}\right)^{2(\gamma+1)}\right)^2\right).
        \end{align}
        Notice that the quantity within parentheses converges to $1$ as $\dfrac{b}{a}\rightarrow\infty$.
        We trivially estimate
        \begin{align*}
            \left(x^{2\gamma}-x^{2(\gamma+1)}\right)^{2}=x^{4\gamma}\left(1-x^{2}\right)^2\leq x^{4\gamma}\leq \frac{1}{8\gamma(\gamma+1)}
        \end{align*}
        if and only if
        \begin{align*}
            \frac{b}{a}\geq \left(8\gamma(\gamma+1)\right)^{\frac{1}{4\gamma}},
        \end{align*}
        or
        \begin{align}\label{conformal_class3}
            \log\left(\frac{b}{a}\right)\geq \frac{1}{4\gamma}\log\left(8\gamma(\gamma+1)\right).
        \end{align}
        Provided that this estimate holds, we get
        \begin{align*}
            &\frac{|a_1|^2}{2\gamma(2\gamma+1)^2}\left(\left(1-\left(\frac{a}{b}\right)^{4\gamma}\right)\left(1-\left(\frac{a}{b}\right)^{4(\gamma+1)}\right)-4\gamma(\gamma+1)\left(\left(\frac{a}{b}\right)^{2\gamma}-\left(\frac{a}{b}\right)^{2(\gamma+1)}\right)^2\right)\\
            &\geq \frac{|a_1|^2}{2\gamma(2\gamma+1)^2}\left(\frac{1}{2}-\left(\frac{a}{b}\right)^{4\gamma}-\left(\frac{a}{b}\right)^{4(\gamma+1)}+\left(\frac{a}{b}\right)^{4(2\gamma+1)}\right)\\
            &\geq \frac{|a_1|^2}{2\gamma(2\gamma+1)^2}\left(1-2\left(\frac{a}{b}\right)^{4\gamma}\right)>0
        \end{align*}
        provided that
        \begin{align}\label{conformal_class4}
            \log\left(\frac{b}{a}\right)\geq \frac{1}{4\gamma}\log(2),
        \end{align}
        which is implied by \eqref{conformal_class3}. Therefore, for all $0<a<b<\infty$ satisfying \eqref{conformal_class3}, we deduce that
        \begin{align*}
            \frac{|a_1|^2}{2\gamma(2\gamma+1)^2}\left(1-2\left(\frac{a}{b}\right)^{4\gamma}\right)\leq \frac{1}{\pi}\int_{\Omega}\frac{\psi^2}{|x|^4}\left(\frac{|x|}{b}\right)^{4\gamma}dx,
        \end{align*}
        and finally
        \begin{align*}
            \frac{\pi}{\gamma}|a_1|^2\left(1-\left(\frac{a}{b}\right)^{2\gamma}\right)\leq 2(2\gamma+1)^2\frac{1-\left(\frac{a}{b}\right)^{2\gamma}}{1-2\left(\frac{a}{b}\right)^{4\gamma}}\int_{\Omega}\frac{\psi^2}{|x|^4}\left(\frac{|x|}{b}\right)^{4\gamma}dx.
        \end{align*}
        More generally, for all $\dfrac{1}{2}<\delta<1$, provided that
        \begin{align}\label{conformal_class2bis}
            \log\left(\frac{b}{a}\right)\geq \frac{1}{4\delta}\log\left(8\delta(\delta+1)\right),
        \end{align}
        we have
        \begin{align}\label{a1_estimate}
            \frac{\pi}{\gamma}|a_1|^2\left(1-\left(\frac{a}{b}\right)^{2\gamma}\right)\leq \frac{2\delta(2\delta+1)^2}{\gamma}\frac{1-\left(\frac{a}{b}\right)^{2\gamma}}{1-2\left(\frac{a}{b}\right)^{4\delta}}\int_{\Omega}\frac{\psi^2}{|x|^4}\left(\frac{|x|}{b}\right)^{4\delta}dx.
        \end{align}
       Now, coming back to the $L^2$ norm of $\p{z}^2\psi$, recall that
       \begin{align*}
           \pi\alpha^2\left(\frac{1}{a^2}-\frac{1}{b^2}\right)-4\pi\alpha\beta\log\left(\frac{b}{a}\right)+\pi\beta^2\left(b^2-a^2\right)\leq 4\int_{\Omega}|\p{z}^2\psi|^2dx.
       \end{align*}
       We trivially estimate
       \begin{align*}
           &\alpha^2\left(\frac{1}{a^2}-\frac{1}{b^2}\right)-4\alpha\beta\log\left(\frac{b}{a}\right)+\beta^2\left(b^2-a^2\right)\geq \alpha^2\left(\frac{1}{a^2}-\frac{1}{b^2}-\frac{4\log^2\left(\frac{b}{a}\right)}{b^2-a^2}\right)\\
           &=\alpha^2\frac{1}{a^2}\left(1-\left(\frac{a}{b}\right)^2-\frac{4\left(\frac{a}{b}\right)^2}{1-\left(\frac{a}{b}\right)^2}\log^2\left(\frac{b}{a}\right)\right).
       \end{align*}
       Since
       \begin{align*}
           \lim_{x\rightarrow 0}\left(1-x^2-\frac{4x^2}{1-x^2}\log^2\left(\frac{1}{x}\right)\right)=1
       \end{align*}
       for $\dfrac{b}{a}$ large enough, we get
       \begin{align*}
           \frac{\alpha^2}{2}\frac{1}{a^2}\leq \frac{4}{\pi}\int_{\Omega}|\p{z}^2\psi|^2dx.
       \end{align*}
       Let us find an explicit estimate of the conformal class for which this inequality holds. We have
       \begin{align*}
           1-x^2-\frac{4x^2}{1-x^2}\log^2(x)\geq \frac{1}{2}
       \end{align*}
       if and only if
       \begin{align*}
           2(1-x^2)^2-8x^2\log^2(x)\geq 1-x^2,
       \end{align*}
       or
       \begin{align*}
           (1-x^2)(1-2x^2)-8x^2\log^2(x)\geq 0.
       \end{align*}
       Making a change of variable $\log(x)=-t$, we are led to study the function
       \begin{align*}
           f(t)=\left(1-e^{-2t}\right)\left(1-2e^{-2t}\right)-8t^2e^{-2t}=1+2e^{-4t}-\left(3+8t^2\right)e^{-2t}.
       \end{align*}
       We have
       \begin{align*}
           &f'(t)=-8e^{-4t}-16te^{-2t}+(6+16t^2)e^{-2t}=-8e^{-4t}+(6-16t+16t^2)e^{-2t}.
       \end{align*}
       We have $f'(t)> 0$ provided that $6-16t+16t^2\geq 8$, or $8t^2-8t-1\geq 0$, which holds true for
       \begin{align*}
           t\geq \frac{1}{2}+\frac{1}{2}\sqrt{\frac{3}{2}}=1.11237\cdots
       \end{align*}
       Furthermore, we have
       \begin{align*}
           f(2)=1+2e^{-8}-35e^{-4}=e^{-8}\left(e^8+2-35e^4\right)>e^{-4}\left(e^4-35\right)>0,
       \end{align*}
       where the last inequality holds (for example) by the elementary estimate $e>2.7$. Therefore, we have $f(t)>0$ for all $t\geq 2$. In other words, for all $x\leq e^{-2}$, we have
       \begin{align*}
           1-x^2-\frac{4x^2}{1-x^2}\log^2(x)\geq \frac{1}{2}.
       \end{align*}
       Summarising, provided that
       \begin{align}\label{conforma_class5}
           \log\left(\frac{b}{a}\right)\geq 2,
       \end{align}
       we have
       \begin{align}\label{alpha_estimate_step1}
           \frac{\alpha^2}{a^2}\leq \frac{8}{\pi}\int_{\Omega}|\p{z}^2\psi|^2dx.
       \end{align}
       Finally, provided that \eqref{conforma_class5} holds, we get
       \begin{align}\label{log_alpha}
           \frac{4\pi\alpha^2}{1-\gamma}\left(\frac{1}{a^2}\left(\frac{a}{b}\right)^{2\gamma}-\frac{1}{b^2}\right)\leq \frac{32}{1-\gamma}\left(\frac{a}{b}\right)^{2\gamma}\int_{\Omega}|\p{z}^2\psi|^2dx\leq \frac{16}{1-\gamma}\left(\frac{a}{b}\right)^{2\gamma}\int_{\Omega}|\D^2\psi|^2dx.
       \end{align}
       Then, we have
       \begin{align*}
           &\alpha^2\left(\frac{1}{a^2}-\frac{1}{b^2}\right)-4\alpha\beta\log\left(\frac{b}{a}\right)-4\alpha\beta\log\left(\frac{b}{a}\right)+\beta^2(b^2-a^2)\geq \alpha^2\frac{1}{a^2}\left(1-\left(\frac{a}{b}\right)^2-\frac{8\left(\frac{a}{b}\right)^2}{1-\left(\frac{a}{b}\right)^2}\log^2\left(\frac{b}{a}\right)\right)\\
           &+\frac{1}{2}\beta^2(b^2-a^2).
        \end{align*}
        If the conformal class satisfies the inequality \eqref{conforma_class5}, we deduce in particular that
        \begin{align*}
            \frac{4x^2}{1-x^2}\log^2(x)\leq \frac{1}{2}-x^2,
        \end{align*}
        which shows that 
        \begin{align*}
             &\alpha^2\frac{1}{a^2}\left(1-\left(\frac{a}{b}\right)^2-\frac{8\left(\frac{a}{b}\right)^2}{1-\left(\frac{a}{b}\right)^2}\log^2\left(\frac{b}{a}\right)\right)
           +\frac{1}{2}\beta^2(b^2-a^2)\geq \frac{1}{2}\beta^2(b^2-a^2)+\alpha^2\frac{1}{a^2}\left(\frac{a}{b}\right)^2\geq \frac{1}{2}\beta^2(b^2-a^2).
        \end{align*}
        Therefore, we have
        \begin{align}\label{beta2_bound}
            \beta^2(b^2-a^2)\leq \frac{8}{\pi}\int_{\Omega}|\p{z}^2\psi|^2.
        \end{align}
        Now, in order to estimate the logarithm contributions, we need to bound $b_0$. Applying \eqref{pointwise_coefficient2} to $n=0$, we get
        \begin{align*}
            &\frac{1}{2\gamma-1}a_0^2\left(\frac{1}{b^2}-\frac{1}{a^2}\left(\frac{a}{b}\right)^{4\gamma}\right)+\frac{1}{2\gamma+1}b_0^2\left(b^{2}-a^2\left(\frac{a}{b}\right)^{4\gamma}\right)\\
            &+\frac{1}{2\gamma}a_0b_0\left(1-\left(\frac{a}{b}\right)^{4\gamma}\right)\leq \frac{1}{\pi}\int_{\Omega}\frac{\psi^2}{|x|^4}\left(\frac{|x|}{b}\right)^{4\gamma}dx.
        \end{align*}
        We have
        \begin{align*}
            &\frac{1}{2\gamma-1}a_0^2\left(\frac{1}{b^2}-\frac{1}{a^2}\left(\frac{a}{b}\right)^{4\gamma}\right)+\frac{1}{2\gamma+1}b_0^2\left(b^{2}-a^2\left(\frac{a}{b}\right)^{4\gamma}\right)
            +\frac{1}{\gamma}a_0b_0\left(1-\left(\frac{a}{b}\right)^{4\gamma}\right)\\
            &=\frac{1}{2\gamma-1}a_0^2\frac{1}{b^2}\left(1-\left(\frac{a}{b}\right)^{2(2\gamma-1)}\right)+\frac{1}{2\gamma+1}b_0^2b^2\left(1-\left(\frac{a}{b}\right)^{2(2\gamma+1)}\right)+\frac{1}{\gamma}a_0b_0\left(1-\left(\frac{a}{b}\right)^{4\gamma}\right)\\
            &\geq b_0^2b^2\left(\frac{1}{2\gamma+1}\left(1-\left(\frac{a}{b}\right)^{2(2\gamma+1)}\right)-\frac{2\gamma-1}{4\gamma^2}\frac{\left(1-\left(\frac{a}{b}\right)^{4\gamma}\right)^2}{1-\left(\frac{a}{b}\right)^{2(2\gamma-1)}}\right).
        \end{align*}
        Notice that
        \begin{align*}
            \frac{1}{2\gamma+1}\left(1-\left(\frac{a}{b}\right)^{2(2\gamma+1)}\right)-\frac{2\gamma-1}{4\gamma^2}\frac{\left(1-\left(\frac{a}{b}\right)^{4\gamma}\right)^2}{1-\left(\frac{a}{b}\right)^{2(2\gamma-1)}}\conv{\frac{b}{a}\rightarrow \infty}\frac{1}{2\gamma+1}-\frac{2\gamma-1}{4\gamma^2}=\frac{1}{4\gamma^2(2\gamma+1)}>0.
        \end{align*}
        Let us now find an estimate on the conformal class under which this function is strictly positive. We have
        \begin{align}\label{temp_conf_class}
            \frac{1}{2\gamma+1}\left(1-x^{2(2\gamma+1)}\right)-\frac{2\gamma-1}{4\gamma^2}\frac{\left(1-x^{4\gamma}\right)^2}{1-x^{2(2\gamma-1)}}\geq \frac{1}{8\gamma^2(2\gamma+1)}
        \end{align}
        if and only if
        \begin{align*}
            8\gamma^2(1-x^{2(2\gamma+1)}-x^{2(2\gamma-1)}+x^{8\gamma})-2(4\gamma^2-1)\left(1-2x^{4\gamma}+x^{8\gamma}\right)\geq 1-x^{2(2\gamma-1)}.
        \end{align*}
        Since
        \begin{align*}
            &8\gamma^2(1-x^{2(2\gamma+1)}-x^{2(2\gamma-1)}+x^{8\gamma})-2(4\gamma^2-1)\left(1-2x^{4\gamma}+x^{8\gamma}\right)\\
            &=-8\gamma^2\left(x^{2\gamma-1}-x^{2\gamma+1}\right)^2+2\left(1-x^{4\gamma}\right)^2
            =2\left(\left(1-x^{4\gamma}\right)^2-4\gamma^2x^{2(2\gamma-1)}\left(1-x^2\right)^2\right)\\
            &\geq 2\left(\left(1-x^{4\gamma}\right)^2-4\gamma^2x^{2(2\gamma-1)}\right)
        \end{align*}
        we deduce that the inequality
        \begin{align*}
            \left(1-x^{4\gamma}\right)^2-4\gamma^2x^{2(2\gamma-1)}\geq \frac{1}{2}
        \end{align*}
        implies \eqref{temp_conf_class}. For all
        \begin{align*}
            x\leq \left(\frac{1}{16\gamma^2}\right)^{\frac{1}{2(2\gamma-1)}}=\left(\frac{1}{4\gamma}\right)^{\frac{1}{2\gamma-1}},
        \end{align*}
        we have
        \begin{align*}
            \left(1-x^{4\gamma}\right)^2-4\gamma^2x^{2(2\gamma-1)}\geq \left(1-x^{4\gamma}\right)^2-\frac{1}{4}\geq \frac{1}{2}\Longleftrightarrow x\leq \left(1-\frac{\sqrt{3}}{2}\right)^{\frac{1}{4\gamma}}.
        \end{align*}
        Therefore, provided that 
        \begin{align}\label{conformal_class6}
            \log\left(\frac{b}{a}\right)\geq \max\ens{\frac{1}{2\gamma-1}\log\left(4\gamma\right),\frac{1}{4\gamma}\log\left(\frac{2}{2-\sqrt{3}}\right)},
        \end{align}
        the inequality \eqref{temp_conf_class} holds and we deduce in particular that
        \begin{align}\label{b0_est}
            \frac{1}{8\gamma^2(2\gamma+1)}b_0^2\,b^2\leq \frac{1}{\pi}\int_{\Omega}\frac{\psi^2}{|x|^4}\left(\frac{|x|}{b}\right)^{4\gamma}dx.
        \end{align}
        \textbf{Finally}, 
        \begin{align*}
            &16\pi\beta^2\left(b^2\left(\log^2(b)-\log(b)+\frac{1}{2}\right)-a^2\left(\log^2(a)-\log(a)+\frac{1}{2}\right)\right)\\
            &+32\pi\beta(\beta+b_0)\left(b^2\left(\log(b)-\frac{1}{2}\right)-a^2\left(\log(a)-\frac{1}{2}\right)\right)+16\pi(\beta+b_0)^2\left(b^2-a^2\right)\\
            &= 16\pi\beta^2\left(b^2\left(\log^2(b)+\log(b)-\frac{1}{2}\right)-a^2\left(\log^2(a)+\log(a)-\frac{1}{2}\right)\right)\\
            &+32\pi\beta b_0\left(b^2\log(b)-a^2\log(a)\right)+16\pi((\beta+b_0)^2-\beta b_0)(b^2-a^2)\leq \int_{\Omega}(\Delta\psi)^2dx.
        \end{align*}
        Notice that we control $(\beta^2+b_0)^2b^2$, which allows us to estimate
        \begin{align*}
            &16\pi\beta^2\left(b^2\left(\log^2(b)+\log(b)-\frac{1}{2}\right)-a^2\left(\log^2(a)+\log(a)-\frac{1}{2}\right)\right)\\
            &+32\pi\beta b_0\left(b^2\log(b)-a^2\log(a)\right)+16\pi((\beta+b_0)^2-\beta b_0)(b^2-a^2)\\
            &\geq 16\pi\beta^2\left(b^2\left(\log^2(b)+\log(b)\right)-a^2\left(\log^2(a)+\log(a)\right)\right)-8\pi\beta^2\left(\frac{(b^2\log(b)-a^2\log(a))^2}{b^2-a^2}\right)\\
            &+16\pi\left((\beta+b_0)^2-\beta\, b_0-2b_0^2-\frac{1}{2}\beta^2\right)(b^2-a^2)
        \end{align*}
        Now, we have
        \begin{align*}
            &2\left(b^2\left(\log^2(b)+\log(b)\right)-a^2\left(\log^2(a)+\log(a)\right)\right)-\frac{(b^2\log(b)-a^2\log(a))^2}{b^2-a^2}\\
            &=b^2\left(\left(2-\frac{1}{1-\left(\frac{a}{b}\right)^2}\right)\log^2(b)+2\log(b)-\left(\frac{a}{b}\right)^2\left(2+\frac{1}{1-\left(\frac{a}{b}\right)^2}\right)\log^2(a)-2\left(\frac{a}{b}\right)^2\log(a)\right.\\
            &\left.+2\log(a)\log(b)\frac{\left(\frac{a}{b}\right)^2}{1-\left(\frac{a}{b}\right)^2}\right),
        \end{align*}
        which is positive is the conformal class is large enough. Indeed, if $b$ is fixed, we have
        \begin{align*}
            &\left(2-\frac{1}{1-\left(\frac{a}{b}\right)^2}\right)\log^2(b)+2\log(b)-\left(\frac{a}{b}\right)^2\left(2+\frac{1}{1-\left(\frac{a}{b}\right)^2}\right)\log^2(a)-2\left(\frac{a}{b}\right)^2\log(a)\\
            &+2\log(a)\log(b)\frac{\left(\frac{a}{b}\right)^2}{1-\left(\frac{a}{b}\right)^2}\conv{a\rightarrow 0}\log^2(b)+2\log(b)>0
        \end{align*}
        provided that $0<b<e^{-2}$ (or $b>1$). Furthermore, for all $0<b<e^{-4}$, we have $\log^2(b)+2\log(b)>\dfrac{1}{2}\log^2(b)$, which implies that as $0<b<e^{-4}$ is fixed and $0<a<b$ is small enough, we have
        \begin{align*}
            &16\pi\beta^2\left(b^2\left(\log^2(b)-\log(b)+\frac{1}{2}\right)-a^2\left(\log^2(a)-\log(a)+\frac{1}{2}\right)\right)\\
            &+32\pi\beta(\beta+b_0)\left(b^2\left(\log(b)-\frac{1}{2}\right)-a^2\left(\log(a)-\frac{1}{2}\right)\right)+16\pi(\beta+b_0)^2\left(b^2-a^2\right)\\
            &\geq 8\pi\beta^2\log^2(b)+16\pi\left((\beta+b_0)^2-\beta\,b_0-2b_0^2-\frac{1}{2}\beta^2\right)(b^2-a^2)\\
            &=4\pi\beta^2\log^2(b)+16\pi\left((\beta+b_0)^2-\beta\,b_0-2b_0^2-\frac{1}{2}\beta^2\right)(b^2-a^2)\\
            &=4\pi\beta^2b^2\log^2(b)+16\pi\left(\frac{1}{2}\beta^2+\beta\,b_0-b_0^2\right)(b^2-a^2)\geq 8\pi\beta^2\log^2(b)-8\pi\,b_0^2(b^2-a^2),
        \end{align*}
        which implies by \eqref{b0_est} that for all $\dfrac{1}{2}<\delta<1$ such that 
        \begin{align}\label{conformal_class_7}
            \log\left(\frac{b}{a}\right)\geq \max\ens{\frac{1}{2\delta-1}\log\left(4\delta\right),\frac{1}{4\delta}\log\left(\frac{2}{2-\sqrt{3}}\right)},    
        \end{align}
        we have (for $0<b<e^{-4}$ fixed and $a$ small enough)
        \begin{align}\label{beta2_b2}
            4\pi b^2\beta^2\log^2(b)\leq 8\pi\,b_0^2(b^2-a^2)+\int_{\Omega}(\Delta\psi)^2dx\leq 64\delta^2(2\delta+1)\int_{\Omega}\frac{\psi^2}{|x|^4}\left(\frac{|x|}{b}\right)^{4\delta}dx+\int_{\Omega}(\Delta\psi)^2dx.
        \end{align}
        Now, let us estimate $0<a<b<e^{-2}$ such that \eqref{beta2_b2} holds. First, assuming that
        \begin{align}\label{conformal_class8}
            \log\left(\frac{b}{a}\right)\geq \frac{1}{2}\log(5),
        \end{align}
        we have
        \begin{align*}
            \frac{1}{1-\left(\frac{a}{b}\right)^2}\leq \frac{5}{4}
        \end{align*}
        and since $0<a<b<1$, we have $\log(a)\log(b)>0$, so that
        \begin{align*}
            &\left(2-\frac{1}{1-\left(\frac{a}{b}\right)^2}\right)\log^2(b)+2\log(b)-\left(\frac{a}{b}\right)^2\left(2+\frac{1}{1-\left(\frac{a}{b}\right)^2}\right)\log^2(a)-2\left(\frac{a}{b}\right)^2\log(a)\\
            &+2\log(a)\log(b)\frac{\left(\frac{a}{b}\right)^2}{1-\left(\frac{a}{b}\right)^2}\\
            &\geq \frac{3}{4}\log^2(b)+2\log(b)-\left(\frac{a}{b}\right)^2\left(\frac{13}{4}\log^2(a)-2\left(1+|\log(b)|\right)|\log(a)|\right)\geq \frac{1}{2}\log^2(b)+2\log(b)
        \end{align*}
        if and only if
        \begin{align*}
            \left(\frac{a}{b}\right)^2\left(\frac{13}{4}\log^2(a)-2\left(1+|\log(b)|\right)\log|a|\right)\leq \frac{1}{4}\log^2(b),
        \end{align*}
        which is verified if
        \begin{align*}
            13\,a^2\log^2(a)\leq b^2\log^2(b),
        \end{align*}
        which is verified if and only if
        \begin{align*}
            0<a\leq e^{W_{-1}\left(\frac{b\log(b)}{\sqrt{13}}\right)},
        \end{align*}
        where $W_{-1}$ is the negative branch of the Lambert function (the inverse of the function $x\mapsto xe^x$ on $[e^{-1},0[$). Recalling that the Lambert function admits the following asymptotic expansion as $x\rightarrow 0^{+}$
        \begin{align*}
            W_{-1}(x)=\log|x|+\log|\log|x||+O\left(\frac{\log|\log|x||}{\log|x|}\right),
        \end{align*}
        we get
        \begin{align*}
            W_ {-1}\left(\frac{b\log(b)}{\sqrt{13}}\right)&=\log\left(b|\log(b)|\right)-\frac{1}{2}\log(13)+O(\log|\log(b|\log(b)|)|)\\
            &=\log(b)+\log|\log(b)|-\frac{1}{2}\log(13)+O(\log|\log(b|\log(b)|)|)\\
            &=-\frac{1}{2}\log(13)+\log(b)\left(1+O\left(\frac{\log|\log(b)|}{|\log(b)|}\right)\right),
        \end{align*}
        which gives us
        \begin{align*}
            e^{W_{-1}\left(\frac{b\log(b)}{\sqrt{13}}\right)}=\frac{1}{\sqrt{13}}b\,\exp\left(1+O\left(\frac{\log|\log(b)|}{|\log(b)|}\right)\right)\geq \lambda_{\ast}\,b,
        \end{align*}
        where $0<\lambda_{\ast}<\infty$ is a constant depending only on the Lambert function, by taking the supremum over all values $0<b<e^{-8}$. Let us estimate this constant explicitly. Let $-e^{-1}\leq x<0$. Then, we have by definition
        \begin{align}\label{lambert}
            W_{-1}(x)e^{W_{-1}(x)}=x
        \end{align}
        Notice that $W_{-1}(x)\leq -1<0$ for all $-e^{-1}\leq x<0$. Takng the absolute value and the logarithm of \eqref{lambert}, we get
        \begin{align*}
            \log|W_{-1}(x)|+W_{-1}(x)=\log|x|,
        \end{align*}
        or
        \begin{align}\label{elementary_lambert}
            W_{-1}(x)=\log|x|-\log|W_{-1}(x)|\leq \log|x|
        \end{align}
        since $|W_{-1}(x)|\geq 1$. Now, thanks to \eqref{lambert}, the Lambert function satisfies the following differential equation
        \begin{align*}
            W_{-1}'(x)=\frac{W_{-1}(x)}{x(1+W_{-1}(x))}=\frac{1}{x}-\frac{1}{x(1+W_{-1}(x))}.
        \end{align*}
        Now, let $\alpha>0$ be a real number (to be determined later), and introduce the function
        \begin{align*}
            f(x)=W_{-1}(-x)-\log(x)+\alpha\log|\log(x)|
        \end{align*}
        on $]0,e^{-1}[$. We have
        \begin{align*}
            f'(x)=-\left(\frac{1}{(-x)}-\frac{1}{(-x)(1+W_{-1}(-x))}\right)-\frac{1}{x}+\frac{\alpha}{x\log(x)}=\frac{\alpha(1+W_{-1}(x))-\log(x)}{x\log(x)(1+W_{-1}(-x))}.
        \end{align*}
        Taking $\alpha=2$ yields
        \begin{align*}
            f'(x)=\frac{|W_{-1}(-x)|-|\log(x)|+|W_{-1}(-x)|-2}{x|\log(x)|(1+W_{-1}(-x))}\geq 0
        \end{align*}
        provided that $0<x<e^{-2}$. Indeed, \eqref{elementary_lambert} shows that $|W_{-1}(x)|=-W_{-1}(x)\geq |\log|x||$ for all $-e^{-1}\leq x<0$, which shows that $W_{-1}(-x)-\log|x|\geq 0$ for all $0<x<e^{-1}$. In particular, $|W_{-1}(-x)|-2\geq |\log(x)|-2\geq 0$ for all $0<x<e^{-2}$. Therefore, we deduce that for all $0<x<e^{-2}$, we have
        \begin{align*}
            W_{-1}(-x)\geq \log(x)-2\log|\log(x)|,
        \end{align*}
        and we finally have
        \begin{align*}
            W_{-1}(x)\geq \log|x|-2\log|\log|x||\quad \text{for all}\;\, -e^{-2}\leq x<0.
        \end{align*}
        Therefore, we get
        \begin{align*}
            W_{-1}\left(\frac{b\log(b)}{\sqrt{13}}\right)=\log(b)+\log|\log(b)|-\frac{1}{2}\log(13)-2\log\left|\log(b)+\log|\log(b)|-\frac{1}{2}\log(13)\right|,
        \end{align*}
        and
        \begin{align*}
            e^{W_{-1}\left(\frac{b\log(b)}{\sqrt{13}}\right)}\geq \frac{1}{\sqrt{13}}b\,e^{1+\frac{\log|\log(b)|}{\log(b)}-\frac{2\log\left|\log(b)+\log|\log(b)|-\frac{1}{2}\log(13)\right|}{\log(b)}}\geq \frac{1}{\sqrt{13}}b\,e^{1-\frac{\log|\log(b)|}{|\log(b)|}}\geq \frac{e^{1-\frac{3\log(2)}{4}}}{\sqrt{13}}b
        \end{align*}
        for all $0<b<e^{-8}$.
        In particular, our condition is verified provided that
        \begin{align}\label{conformal_class9}
            \log\left(\frac{b}{a}\right)\geq \frac{1}{2}\log\left(13\right)-\left(1-\frac{3\log(2)}{4}\right)=0.8023\cdots,
        \end{align}
        in which case 
        \begin{align*}
            &\left(2-\frac{1}{1-\left(\frac{a}{b}\right)^2}\right)\log^2(b)+2\log(b)-\left(\frac{a}{b}\right)^2\left(2+\frac{1}{1-\left(\frac{a}{b}\right)^2}\right)\log^2(a)-2\left(\frac{a}{b}\right)^2\log(a)\\
            &+2\log(a)\log(b)\frac{\left(\frac{a}{b}\right)^2}{1-\left(\frac{a}{b}\right)^2}\geq \frac{1}{2}\log^2(b)+2\log(b)\geq \frac{1}{4}\log^2(b)
        \end{align*}
        provided that $0<b\leq e^{-8}$. Summarising, for all $0<b<e^{-8}$, provided that \eqref{conformal_class9} holds, we deduce that 
        \begin{align}\label{beta2_b2_bis}
            2\pi\beta^2\log^2(b)\leq 8\pi\,b_0^2(b^2-a^2)+\int_{\Omega}(\Delta\psi)^2dx\leq 64\delta^2(\delta+1)\int_{\Omega}\frac{\psi^2}{|x|^4}\left(\frac{|x|}{b}\right)^{4\delta}dx+\int_{\Omega}(\Delta\psi)^2dx.
        \end{align}
        Finally, we have by \eqref{beta2_bound} and \eqref{beta2_b2_bis}
        \begin{align}\label{log_beta}
            &\frac{16\pi\beta^2}{\gamma+1}\left(b^2\left(\log^2\left(b\right)-\frac{\log(b)}{\gamma+1}+\frac{1}{2(\gamma+1)^2}\right)-a^2\left(\log^2(a)-\frac{\log(a)}{\gamma+1}+\frac{1}{2(\gamma+1)^2}\right)\left(\frac{a}{b}\right)^{2\gamma}\right)\nonumber\\
            &\leq \frac{16\pi\beta^2}{\gamma+1}b^2\left(\log^2\left(b\right)-\frac{\log(b)}{\gamma+1}+\frac{1}{2(\gamma+1)^2}\right)\leq \frac{16\pi\beta^2}{\gamma+1}b^2\left(\frac{3}{2}\log^2(b)+\frac{1}{(\gamma+1)^2}\right)\nonumber\\
            &\leq \frac{12}{\gamma+1}\left(64\delta^2(2\delta+1)\int_{\Omega}\frac{\psi^2}{|x|^4}\left(\frac{|x|}{b}\right)^{4\delta}dx+\int_{\Omega}(\Delta\psi)^2dx\right)+\frac{16}{(\gamma+1)^3}\left(\frac{1}{1-\left(\frac{a}{b}\right)^2}\right)\left(8\int_{\Omega}|\p{z}\psi|^2dx\right)\nonumber\\
            &=\frac{768\,\delta^2(2\delta+1)}{\gamma+1}\int_{\Omega}\frac{\psi^2}{|x|^4}\left(\frac{|x|}{b}\right)^{4\delta}dx+\frac{12}{\gamma+1}\int_{\Omega}(\Delta\psi)^2dx+\frac{128}{(\gamma+1)^3}\left(\frac{1}{1-\left(\frac{a}{b}\right)^2}\right)\int_{\Omega}|\p{z}^2\psi|^2dx\nonumber\\
            &\leq \frac{768\,\delta(2\delta+1)}{\gamma+1}\int_{\Omega}\frac{\psi^2}{|x|^4}\left(\frac{|x|}{b}\right)^{4\delta}dx+\frac{32}{(\gamma+1)^3}\left(3(\gamma+1)^2+\frac{2}{1-\left(\frac{a}{b}\right)^2}\right)\int_{\Omega}|\D^2\psi|^2dx,
        \end{align}
        where we used the identity
        \begin{align*}
            \int_{\Omega}|\D^2\psi|^2dx=2\int_{\Omega}|\p{z}^2\psi|^2dx+2\int_{\Omega}|\p{z\z}^2\psi|^2dx=2\int_{\Omega}|\p{z}\psi|^2dx+\frac{1}{8}\int_{\Omega}(\Delta\psi)^2dx.
        \end{align*}
        In order to obtain the final inequality, we proceed into three steps to emphasise the control we need with respect to the conformal class. First, thanks to \eqref{frequency_two} and, provided that \eqref{conformal_class2} holds, we have
        \begin{align}\label{final_biharmonic1}
            &\pi\sum_{n\in\Z^{\ast}}\frac{((n+1)^2+4)}{n+1+\gamma}|b_n|^2\left(b^{2(n+1)}-a^{2(n+1)}\left(\frac{a}{b}\right)^{2\gamma}\right)\leq \frac{16\,\delta}{\gamma}\frac{1-\left(\frac{a}{b}\right)^{2\gamma}}{1-2\left(\frac{a}{b}\right)^{4\delta}}\int_{\Omega}\frac{\psi^2}{|x|^4}\left(\frac{|x|}{b}\right)^{4\delta}dx\nonumber\\
            &+\frac{1}{2\gamma}\frac{1-\left(\frac{a}{b}\right)^{2\gamma}}{1-\left(\frac{a}{b}\right)^2}\int_{\Omega}(\Delta\psi)^2dx.
        \end{align}
        Then, \eqref{frequency_one} and \eqref{a1_estimate}, provided that \eqref{conformal_class2bis} holds, we get
        \begin{align}\label{final_biharmonic2}
            &\pi\sum_{n\in\Z^{\ast}}\frac{|n|^2}{|n-1+\gamma|}|a_n|^2\left|b^{2(n-1)}-a^{2(n-1)}\left(\frac{a}{b}\right)^{2\gamma}\right|\leq \frac{\pi}{\gamma}|a_1|^2\nonumber\\
            &+\frac{5\pi}{\gamma}\frac{1-\left(\frac{a}{b}\right)^{2\gamma}}{1-\left(\frac{a}{b}\right)^2}\sum_{n\in\Z\setminus\ens{0,1}}\frac{n^2|n-1|}{n^2+1}|a_n|^2\left|b^{2(n-1)}-a^{2(n-1)}\right|\nonumber\\
            &\leq \frac{2\delta(2\delta+1)^2}{\gamma}\frac{1-\left(\frac{a}{b}\right)^{2\gamma}}{1-2\left(\frac{a}{b}\right)^{4\delta}}\int_{\Omega}\frac{\psi^2}{|x|^4}\left(\frac{|x|}{b}\right)^{4\delta}dx\nonumber\\
            &+\frac{5}{\gamma}\frac{1-\left(\frac{a}{b}\right)^{2\gamma}}{1-\left(\frac{a}{b}\right)^2}\left(\int_{\Omega}|\D^2\psi|^2dx+\frac{1}{16}\left(\frac{a}{b}\right)^2\frac{5-5\left(\frac{a}{b}\right)^2+2\left(\frac{a}{b}\right)^4}{\left(1-\left(\frac{a}{b}\right)^2\right)^3}\int_{\Omega}(\Delta\psi)^2dx\right)
        \end{align}
        Therefore, for all $\dfrac{1}{2}<\gamma,\delta<1$, thanks to the inequality \eqref{simplified_gradient}, using \eqref{log_alpha}, \eqref{log_beta}, \eqref{final_biharmonic1}, \eqref{final_biharmonic2}, and assuming that \eqref{conforma_class5}, \eqref{conformal_class6} \eqref{conformal_class9}, \eqref{conformal_class2}, and \eqref{conformal_class2bis} hold, namely 
        \small
        \begin{align}\label{conformal_class_part1}
            \log\left(\frac{b}{a}\right)\geq \max\ens{2,\frac{1}{2\delta-1}\log\left(4\delta\right),\frac{1}{4\delta}\log\left(\frac{2}{2-\sqrt{3}}\right), \frac{1}{4(1-\delta)}\log\left(1+\frac{8\delta(1-\delta)}{(2\delta-1)^2}\right), \frac{1}{4\delta}\log(8\delta(\delta+1))}
        \end{align}
        \normalsize
        and that $0<b\leq e^{-8}$, we get
        \begin{align}\label{final_biharmonic_part1}
            &\int_{\Omega}\frac{|\D\psi|^2}{|x|^2}\left(\frac{|x|}{b}\right)^{2\gamma}dx\leq \frac{32}{1-\gamma}\left(\frac{a}{b}\right)^{2\gamma}\int_{\Omega}|\p{z}^2\psi|^2dx+\frac{768\,\delta(2\delta+1)}{\gamma+1}\int_{\Omega}\frac{\psi^2}{|x|^4}\left(\frac{|x|}{b}\right)^{4\delta}dx\\
            &+\frac{32}{(\gamma+1)^3}\left(3(\gamma+1)^2+\frac{2}{1-\left(\frac{a}{b}\right)^2}\right)\int_{\Omega}|\D^2\psi|^2dx\nonumber\\
            &+\frac{32\,\delta}{\gamma}\frac{1-\left(\frac{a}{b}\right)^{2\gamma}}{1-2\left(\frac{a}{b}\right)^{4\delta}}\int_{\Omega}\frac{\psi^2}{|x|^4}\left(\frac{|x|}{\delta}\right)^{4\delta}dx+\frac{1}{\gamma}\frac{1-\left(\frac{a}{b}\right)^{2\gamma}}{1-\left(\frac{a}{b}\right)^2}\int_{\Omega}(\Delta\psi)^2dx\nonumber\\
            &+\frac{4\delta(2\delta+1)^2}{\gamma}\frac{1-\left(\frac{a}{b}\right)^{2\gamma}}{1-2\left(\frac{a}{b}\right)^{4\delta}}\int_{\Omega}\frac{\psi^2}{|x|^4}\left(\frac{|x|}{b}\right)^{4\delta}dx\nonumber\\
            &+\frac{10}{\gamma}\frac{1-\left(\frac{a}{b}\right)^{2\gamma}}{1-\left(\frac{a}{b}\right)^2}\left(\int_{\Omega}|\D^2\psi|^2dx+\frac{1}{16}\left(\frac{a}{b}\right)^2\frac{5-5\left(\frac{a}{b}\right)^2+2\left(\frac{a}{b}\right)^4}{\left(1-\left(\frac{a}{b}\right)^2\right)^3}\int_{\Omega}(\Delta\psi)^2dx\right)\nonumber\\
            &\leq \left(\frac{768\,\delta(2\delta+1)}{\gamma+1}+\frac{4\delta(8+(2\delta+1)^2)}{\gamma}\frac{1-\left(\frac{a}{b}\right)^{2\gamma}}{1-2\left(\frac{a}{b}\right)^{4\delta}}\right)\int_{\Omega}\frac{\psi^2}{|x|^4}\left(\frac{|x|}{b}\right)^{4\delta}dx \nonumber\\
            &+\left(32\left(\frac{1}{(\gamma+1)^3}\left(3(\gamma+1)^2+\frac{2}{1-\left(\frac{a}{b}\right)^2}\right)+\frac{1}{1-\gamma}\left(\frac{a}{b}\right)^{2\gamma}\right)\right.\nonumber\\
            &\left.+\frac{1}{\gamma}\frac{1-\left(\frac{a}{b}\right)^{2\gamma}}{1-\left(\frac{a}{b}\right)^2}\left(18+5\left(\frac{a}{b}\right)^2\frac{5-5\left(\frac{a}{b}\right)^2+2\left(\frac{a}{b}\right)^4}{\left(1-\left(\frac{a}{b}\right)^2\right)^3}\right)\right)\int_{\Omega}|\D^2\psi|^2dx.
        \end{align}
        Notice that the hypothesis $0<a<b\leq e^{-8}$ is not restrictive since all inequalities enjoy the same scaling, and yields half of our inequality; namely,
        \begin{align}\label{shorten_poincare1}
            \int_{\Omega}\frac{|\D\psi|^2}{|x|^2}\left(\frac{|x|}{b}\right)^{2\gamma}dx\leq C_{\gamma,\delta}\left(\int_{\Omega}\frac{\psi^2}{|x|^4}\left(\frac{|x|}{b}\right)^{4\delta}dx+\int_{\Omega}|\D^2\psi|^2dx\right).
        \end{align}
        
        \textbf{Step 3:} Estimation of
        \begin{align*}
            \int_{\Omega}\frac{|\D\psi|^2}{|x|^2}\left(\frac{a}{|x|}\right)^{2\gamma}dx.
        \end{align*}
        Recall that 
        \begin{align*}
            &\mathrm{rad}\left(|\D\psi|^2\right)=\frac{1}{|z|^2}\bigg\{\left(\alpha+2\beta|z|^2\log|z|+(\beta+2b_0)|z|^2\right)^2+\sum_{n\in\Z^{\ast}}|n|^2|a_n|^2|z|^{2n}\\
            &+\sum_{n\in\Z^{\ast}}((n+1)^2+4)|b_n|^2|z|^{2(n+2)}
            +2\sum_{n\in\Z^{\ast}}n(n+1)\Re\left(a_n\bar{b_n}|z|^{2(n+1)}\right)\bigg\}\\
            &\leq \frac{1}{|z|^2}\bigg\{4\left(\alpha^2+4\beta^2|z|^4\log^2|z|+(\beta+2b_0)^2|z|^4\right)+2\sum_{n\in\Z^{\ast}}|n|^2|a_n|^2|z|^{2n}\\
            &+2\sum_{n\in\Z^{\ast}}((n+1)^2+2)|b_n|^2|z|^{2(n+2)}\bigg\}.
        \end{align*}
        Therefore, we get
        \begin{align}\label{part2_est_psi}
            &\int_{\Omega}\frac{|\D\psi|^2}{|x|^2}\left(\frac{a}{|x|}\right)^{2\gamma}dx\leq 4\pi\int_{a}^b\frac{1}{r^3}\bigg\{2\alpha^2+8\beta^2r^4\log^2(r)+2(\beta+2b_0)^2r^4+\sum_{n\in\Z^{\ast}}|n|^2|a_n|^2r^{2n}\nonumber\\
            &+\sum_{n\in\Z^{\ast}}((n+1)^2+2)|b_n|^2r^{2(n+2)}\bigg\}\left(\frac{a}{r}\right)^{2\gamma}dr=\frac{4\pi\alpha^2}{\gamma+1}\left(\frac{1}{a^2}-\frac{1}{b^2}\left(\frac{a}{b}\right)^{2\gamma}\right)\nonumber\\
            &+\frac{16\pi\beta^2}{1-\gamma}\left(b^{2}\left(\log^2(b)-\frac{1}{1-\gamma}\log(b)+\frac{1}{2(1-\gamma)^2}\right)\left(\frac{a}{b}\right)^{2\gamma}\right.\nonumber\\
            &\left.-a^{2}\left(\log^2(a)-\frac{1}{1-\gamma}\log(a)+\frac{1}{2(1-\gamma)^2}\right)\right)
            +\frac{4\pi(\beta+2b_0)^2}{1-\gamma}\left(b^2\left(\frac{a}{b}\right)^{2\gamma}-a^2\right)\nonumber\\
            &+2\pi\sum_{n\in\Z^{\ast}}\frac{|n|^2}{n-1-\gamma}|a_n|^2\left(b^{2(n-1)}\left(\frac{a}{b}\right)^{2\gamma}-a^{2(n-1)}\right)\nonumber\\
            &+2\pi\sum_{n\in\Z^{\ast}}\frac{(n+1)^2+2}{n+1+\gamma}|b_n|^2\left(b^{2(n+1)}\left(\frac{a}{b}\right)^{2\gamma}-a^{2(n+1)}\right).
        \end{align}
        We start by estimating thanks to \eqref{laplacian_l2}
        \small
        \begin{align}\label{step2_frequency2}
            &\sum_{n\in\Z\setminus\ens{0,-1}}\frac{(n+1)^2+2}{n+1+\gamma}|b_n|^2\left(b^{2(n+1)}\left(\frac{a}{b}\right)^{2\gamma}-a^{2(n+1)}\right)\leq \frac{3}{\gamma}\sum_{n\in\Z\setminus\ens{0,-1}}|n+1||b_n|^2\left|b^{2(n+1)}-a^{2(n+1)}\left(\frac{a}{b}\right)^{2\gamma}\right|\nonumber\\
            &=\frac{3}{\gamma}\left(\sum_{n\geq 0}|n+1||b_n|^2b^{2(n+1)}\left(\left(\frac{a}{b}\right)^{2\gamma}-\left(\frac{a}{b}\right)^{2(n+1)}\right)
            +\sum_{n\geq 2}|n-1||b_{-n}|^2\frac{1}{a^{2(n-1)}}\left(1-\left(\frac{a}{b}\right)^{2(n+1+\gamma}\right)\right)\nonumber\\
            &\leq \frac{3}{\gamma}\left(\sum_{n\geq 0}|n+1||b_n|^2b^{2(n+1)}\left(1-\left(\frac{a}{b}\right)^{2(n+1)}\right)
            +\frac{1-\left(\frac{a}{b}\right)^{2\gamma}}{1-\left(\frac{a}{b}\right)^2}\sum_{n\geq 2}|n-1||b_{-n}|^2\frac{1}{a^{2(n-1)}}\left(1-\left(\frac{a}{b}\right)^{2(n+1)}\right)\right)\nonumber\\
            &\leq \frac{3}{8\pi\,\gamma}\frac{1-\left(\frac{a}{b}\right)^{2\gamma}}{1-\left(\frac{a}{b}\right)^2}\int_{\Omega}(\Delta\psi)^2dx.
        \end{align}
        \normalsize
        Now, recalling \eqref{frequency_one}
        \small
        \begin{align*}
            &2\pi\sum_{n\in \Z\setminus\ens{0,1}}\frac{n^2|n-1|}{n^2+1}|a_n|^2|b^{2(n-1)}-a^{2(n-1)}|\leq 2\int_{\Omega}|\D^2\psi|^2dx+\frac{1}{8}\left(\frac{a}{b}\right)^2\frac{5-5\left(\frac{a}{b}\right)^2+2\left(\frac{a}{b}\right)^4}{\left(1-\left(\frac{a}{b}\right)^2\right)^3}\int_{\Omega}(\Delta\psi)^2dx,
        \end{align*}
        \normalsize
        we get
        \begin{align}\label{step2_frequency1}
            &\sum_{n\in\Z\setminus\ens{0,1}}\frac{|n|^2}{n-1-\gamma}|a_n|^2\left(b^{2(n-1)}\left(\frac{a}{b}\right)^{2\gamma}-a^{2(n-1)}\right)\nonumber\\
            &\leq \frac{5}{1-\gamma}\sum_{n\in\Z\setminus\ens{0,1}}\frac{n^2|n-1|}{n^2+1}|a_n|^2\left|b^{2(n-1)}\left(\frac{a}{b}\right)^{2\gamma}-a^{2(n-1)}\right|\nonumber\\
            &\leq \frac{5}{\pi(1-\gamma)}\frac{1-\left(\frac{a}{b}\right)^{2\gamma}}{1-\left(\frac{a}{b}\right)^2}\left(2\int_{\Omega}|\D^2\psi|^2dx+\frac{1}{8}\left(\frac{a}{b}\right)^2\frac{5-5\left(\frac{a}{b}\right)^2+2\left(\frac{a}{b}\right)^4}{\left(1-\left(\frac{a}{b}\right)^2\right)^3}\int_{\Omega}(\Delta\psi)^2dx\right).
        \end{align}
        Then, recalling that
        \begin{align*}
            \mathrm{rad}(\psi^2)&=\alpha^2\log^2|z|+2\alpha\,a_0\log|z|+2\alpha\beta|z|^2\log^2|z|+2(\alpha\,b_0+\beta\,a_0)|z|^2\log|z|+\beta^2|z|^4\log^2|z|\\
            &+2\beta\,b_0|z|^4\log|z|
            +\frac{1}{2}\sum_{n\in\Z}|a_n|^2|z|^{2n}+\frac{1}{2}\sum_{n\in\Z}|b_n|^2|z|^{2(n+2)}+\sum_{n\in\Z}\Re\left(a_n\bar{b_n}\right)|z|^{2(n+1)},
        \end{align*}
        we get
        \begin{align*}
            &\int_{\Omega}\frac{\psi^2}{|x|^4}\left(\frac{a}{|x|}\right)^{4\gamma}dx=2\pi\int_{a}^b\frac{1}{r^3}\bigg\{\alpha^2\log^2(r)+2\alpha\,a_0\log(r)+2\alpha\beta r^2\log^2(r)+2\left(\alpha\,b_0+\beta\,a_0\right)r^2\log(r)\\
            &+\beta^2r^4\log^2(r)+2\beta\,b_0 r^4\log(r)+\frac{1}{2}\sum_{n\in\Z}|a_n|^2|z|^{2n}+\frac{1}{2}\sum_{n\in\Z}|b_n|^2r^{2(n+2)}+\sum_{n\in\Z}\Re\left(a_n\bar{b_n}\right)r^{2(n+1)}\bigg\}\left(\frac{a}{r}\right)^{4\gamma}dr\\
            &=\frac{\pi\alpha^2}{2\gamma+1}\left(\frac{1}{a^2}\left(\log^2(a)-\frac{\log(a)}{2\gamma+1}+\frac{1}{2(2\gamma+1)^2}\right)-\frac{1}{b^2}\left(\log^2(b)-\frac{\log(b)}{2\gamma+1}+\frac{1}{2(2\gamma+1)^2}\right)\left(\frac{a}{b}\right)^{4\gamma}\right)\\
            &+\frac{\pi\alpha\,a_0}{2\gamma+1}\left(\frac{1}{a^2}\left(\log(a)-\frac{1}{2(2\gamma+1)}\right)-\frac{1}{b^2}\left(\log(b)-\frac{1}{2(2\gamma+1)}\right)\left(\frac{a}{b}\right)^{4\gamma}\right)\\
            &+\frac{\pi\alpha\beta}{\gamma}\left(\left(\log^2(a)-\frac{\log(a)}{2\gamma^2}+\frac{1}{4\gamma^2}\right)-\left(\log^2(b)-\frac{\log(b)}{2\gamma^2}+\frac{1}{4\gamma^2}\right)\left(\frac{a}{b}\right)^{4\gamma}\right)\\
            &+\frac{\pi\left(\alpha\,b_0+\beta\,a_0\right)}{\gamma}\left(\log(a)-\frac{1}{4\gamma}-\left(\log(b)-\frac{1}{4\gamma}\right)\left(\frac{a}{b}\right)^{4\gamma}\right)\\
            &+\frac{\pi\beta^2}{2\gamma-1}\left(a^2\left(\log^2(a)-\frac{\log(a)}{2\gamma-1}+\frac{1}{2(2\gamma-1)^2}\right)-b^2\left(\log^2(b)-\frac{\log(b)}{2\gamma-1}+\frac{1}{2(2\gamma-1)^2}\right)\left(\frac{a}{b}\right)^{4\gamma}\right)\\
            &+\frac{2\pi\beta\,b_0}{2\gamma-1}\left(a^2\left(\log(a)-\frac{1}{2\gamma-1}\right)-b^2\left(\log(b)-\frac{1}{2\gamma-1}\right)\left(\frac{a}{b}\right)^{4\gamma}\right)\\
            &+\pi\sum_{n\in\Z}\frac{1}{n-1-2\gamma}|a_n|^2\left(b^{2(n-1)}\left(\frac{a}{b}\right)^{4\gamma}-a^{2(n-1)}\right)
            +\pi\sum_{n\in\Z}\frac{1}{n+1-2\gamma}|b_n|^2\left(b^{2(n+1)}\left(\frac{a}{b}\right)^{4\gamma}-a^{2(n+1)}\right)\\
            &+2\pi\sum_{n\in\Z}\frac{1}{n-2\gamma}\Re\left(a_n\bar{b_n}\right)\left(b^{2n}\left(\frac{a}{b}\right)^{4\gamma}-a^{2n}\right).
        \end{align*}
        As previously, we need to bound $b_{-1}$ and $a_1$, since those frequencies are not detected by the second derivative. For all $n\in\Z$, we have
        \begin{align}\label{pointwise_frequency}
            &\frac{1}{n-1-2\gamma}|a_n|^2\left(b^{2(n-1)}\left(\frac{a}{b}\right)^{4\gamma}-a^{2(n-1)}\right)+\frac{1}{n+1-2\gamma}|b_n|^2\left(b^{2(n+1)}\left(\frac{a}{b}\right)^{4\gamma}-a^{2(n+1)}\right)\nonumber\\
            &+\frac{2}{n-2\gamma}\Re\left(a_{n}\bar{b_n}\right)\left(b^{2n}\left(\frac{a}{b}\right)^{4\gamma}-a^{2n}\right)\leq \frac{1}{\pi}\int_{\Omega}\frac{\psi^2}{|x|^4}\left(\frac{a}{|x|}\right)^{4\gamma}dx.
        \end{align}
        Applying \eqref{pointwise_frequency} to $n=-1$, we deduce that 
        \begin{align*}
            &\frac{1}{2(\gamma+1)}|a_{-1}|^2\frac{1}{a^4}\left(1-\left(\frac{a}{b}\right)^{4(\gamma+1)}\right)+\frac{1}{2\gamma}|b_{-1}|^2\left(1-\left(\frac{a}{b}\right)^{4\gamma}\right)\\
            &+\frac{2}{2\gamma+1}\Re\left(a_{-1}\bar{b_{-1}}\right)\frac{1}{a^2}\left(1-\left(\frac{a}{b}\right)^{2(2\gamma+1)}\right)\leq \frac{1}{\pi}\int_{\Omega}\frac{\psi^2}{|x|^4}\left(\frac{a}{|x|}\right)^{4\gamma}dx.
        \end{align*}
        Now, we estimate
        \begin{align*}
            &\frac{1}{2(\gamma+1)}|a_{-1}|^2\frac{1}{a^4}\left(1-\left(\frac{a}{b}\right)^{4(\gamma+1)}\right)+\frac{1}{2\gamma}|b_{-1}|^2\left(1-\left(\frac{a}{b}\right)^{4\gamma}\right)\\
            &+\frac{2}{2\gamma+1}\Re\left(a_{-1}\bar{b_{-1}}\right)\frac{1}{a^2}\left(1-\left(\frac{a}{b}\right)^{2(2\gamma+1)}\right)\\
            &\geq |b_{-1}|^2\left(\frac{1}{2\gamma}\left(1-\left(\frac{a}{b}\right)^{4\gamma}\right)-\frac{2(\gamma+1)}{(2\gamma+1)^2}\frac{\left(1-\left(\frac{a}{b}\right)^{2(2\gamma+1)}\right)^2}{1-\left(\frac{a}{b}\right)^{4(\gamma+1)}}\right)
        \end{align*}
        This is exactly the quantity appearing in \eqref{step1_a1}, and we deduce by \eqref{a1_estimate} that for all $\dfrac{1}{2}<\delta<1$, provided that \eqref{conformal_class3} holds, we deduce that
        \begin{align*}
            &\frac{1}{2(\gamma+1)}|a_{-1}|^2\frac{1}{a^4}\left(1-\left(\frac{a}{b}\right)^{4(\gamma+1)}\right)+\frac{1}{2\gamma}|b_{-1}|^2\left(1-\left(\frac{a}{b}\right)^{4\gamma}\right)\\
            &+\frac{2}{2\gamma+1}\Re\left(a_{-1}\bar{b_{-1}}\right)\frac{1}{a^2}\left(1-\left(\frac{a}{b}\right)^{2(2\gamma+1)}\right)\geq \frac{|b_{-1}|^2}{2\gamma(2\gamma+1)^2}\left(1-2\left(\frac{a}{b}\right)^{4\gamma}\right),
        \end{align*}
        which implies that 
        \begin{align*}
            \pi|b_{-1}|^2\leq \frac{2\gamma(2\gamma+1)^2}{1-2\left(\frac{a}{b}\right)^{4\gamma}}\int_{\Omega}\frac{\psi^2}{|x|^4}\left(\frac{a}{|x|}\right)^{4\gamma}dx
        \end{align*}
        and more generally, for all $\dfrac{1}{2}<\delta<1$, if \eqref{conformal_class2bis} holds, we have
        \begin{align}\label{step2_b_minus_one}
            \frac{\pi}{\gamma}|b_{-1}|^2\left(1-\left(\frac{a}{b}\right)^{2\gamma}\right)\leq \frac{2\delta(2\delta+1)^2}{\gamma}\frac{1-\left(\frac{a}{b}\right)^{2\gamma}}{1-2\left(\frac{a}{b}\right)^{4\delta}}\int_{\Omega}\frac{\psi^2}{|x|^4}\left(\frac{a}{|x|}\right)^{4\delta}dx.
        \end{align}
        Likewise, taking $n=1$ yields
        \begin{align*}
            &\frac{1}{2\gamma}|a_1|^2\left(1-\left(\frac{a}{b}\right)^{4\gamma}\right)+\frac{1}{2(1-\gamma)}|b_1|^2b^4\left(\left(\frac{a}{b}\right)^{4\gamma}-\left(\frac{a}{b}\right)^4\right)-\frac{2}{2\gamma-1}\Re\left(a_1\bar{b_1}\right)b^2\left(\left(\frac{a}{b}\right)^{4\gamma}-\left(\frac{a}{b}\right)^{2}\right)\\
            &\leq \frac{1}{\pi}\int_{\Omega}\frac{\psi^2}{|x|^4}\left(\frac{a}{|x|}\right)^{4\gamma}dx.
        \end{align*}
        Once more, we recognise (up to changing $b_{-1}$ by $a_1$ and $a_{-1}$ by $b_1$ and the weight in the integral) \eqref{ineq_b_minus_one}. Therefore, for all $\dfrac{1}{2}<\delta<1$, if \eqref{conformal_class2} holds, we get by \eqref{b_minus_one_estimate}
        \begin{align}\label{step2_a_one}
            \frac{4\pi}{\gamma}|a_1|^2\left(1-\left(\frac{a}{b}\right)^{2\gamma}\right)\leq \frac{16\,\delta}{\gamma}\frac{1-\left(\frac{a}{b}\right)^{2\gamma}}{1-2\left(\frac{a}{b}\right)^{4\delta}}\int_{\Omega}\frac{\psi^2}{|x|^4}\left(\frac{a}{|x|}\right)^{4\delta}dx.
        \end{align}
        We now move to the estimate of logarithm components. For this, we will need to bound $b_0$. We have by \eqref{pointwise_frequency}
        \begin{align*}
            &\frac{1}{2\gamma+1}a_0^2\frac{1}{b^2}\left(1-\left(\frac{a}{b}\right)^{2}\right)+\frac{1}{2\gamma-1}b_0^2a^2\left(1-\left(\frac{a}{b}\right)^{4(\gamma+1)}\right)\\
            &\frac{1}{\gamma}a_0b_0\left(1-\left(\frac{a}{b}\right)^{4\gamma}\right)\leq \frac{1}{\pi}\int_{\Omega}\frac{\psi^2}{|x|^4}\left(\frac{a}{|x|}\right)^{4\gamma}dx.
        \end{align*}
        We have
        \begin{align*}
            &\frac{1}{2\gamma+1}a_0^2\frac{1}{a^2}\left(1-\left(\frac{a}{b}\right)^{2(2\gamma+1)}\right)+\frac{1}{2\gamma-1}b_0^2a^2\left(1-\left(\frac{a}{b}\right)^{2(2\gamma-1)}\right)\\
            &\frac{1}{\gamma}a_0b_0\left(1-\left(\frac{a}{b}\right)^{4\gamma}\right)
            \geq b_0^2a^2\left(\frac{1}{2\gamma-1}\left(1-\left(\frac{a}{b}\right)^{2(2\gamma-1)}\right)-\frac{2\gamma+1}{4\gamma^2}\frac{\left(1-\left(\frac{a}{b}\right)^{4\gamma}\right)^2}{1-\left(\frac{a}{b}\right)^{2(2\gamma+1)}}\right).
        \end{align*}
        As previously, observer that
        \begin{align*}
            \frac{1}{2\gamma-1}\left(1-\left(\frac{a}{b}\right)^{2(2\gamma-1)}\right)-\frac{2\gamma+1}{4\gamma^2}\frac{\left(1-\left(\frac{a}{b}\right)^{4\gamma}\right)^2}{1-\left(\frac{a}{b}\right)^{2(2\gamma+1)}}\conv{\frac{b}{a}\rightarrow\infty}\frac{1}{2\gamma-1}-\frac{2\gamma+1}{4\gamma^2}=\frac{1}{4\gamma^2(2\gamma-1)}>0.
        \end{align*}
        Now, we have
        \begin{align*}
            \frac{1}{2\gamma-1}\left(1-x^{2(2\gamma-1)}\right)-\frac{2\gamma+1}{4\gamma^2}\frac{\left(1-x^{4\gamma}\right)^2}{1-x^{2(2\gamma+1)}}\geq \frac{1}{8\gamma^2(2\gamma-1)}>0
        \end{align*}
        if and only if
        \begin{align*}
            8\gamma^2\left(1-x^{2(2\gamma-1)}-x^{2(2\gamma+1)}+x^{8\gamma}\right)-2(4\gamma^2-1)\left(1-2x^{4\gamma}+x^{8\gamma}\right)\geq 1-x^{2(2\gamma+1)}.
        \end{align*}
        Now, notice that the condition \eqref{conformal_class6} implies that 
        \begin{align*}
            8\gamma^2\left(1-x^{2(2\gamma-1)}-x^{2(2\gamma+1)}+x^{8\gamma}\right)-2(4\gamma^2-1)\left(1-2x^{4\gamma}+x^{8\gamma}\right)\geq 1.
        \end{align*}
        In particular, if \eqref{conformal_class6} holds, we deduce that 
        \begin{align}\label{step2_b0}
            \frac{1}{8\gamma^2(2\gamma-1)}b_0^2a^2\leq \frac{1}{\pi}\int_{\Omega}\frac{\psi^2}{|x|^4}\left(\frac{a}{|x|}\right)^{4\gamma}dx.
        \end{align}
        However, this estimate does not suffice since we need to bound $b_0^2b^2$. Therefore, we will use the estimate from \textbf{Step 1}.
        Now, recall that for all $0<a<e^2\,b<\infty$, we have by \eqref{alpha_estimate_step1}
        \begin{align*}
            \frac{\alpha^2}{a^2}\leq \frac{8}{\pi}\int_{\Omega}|\p{z}^2\psi|^2dx.
        \end{align*}
        Therefore, we get
        \begin{align}\label{step2_log_alpha}
            \frac{4\pi\alpha^2}{\gamma+1}\left(\frac{1}{a^2}-\frac{1}{b^2}\left(\frac{a}{b}\right)^{2\gamma}\right)\leq \frac{32}{\gamma+1}\left(1-\left(\frac{a}{b}\right)^{2(\gamma+1)}\right)\int_{\Omega}|\p{z}^2\psi|^2dx.
        \end{align}
        Using the previous estimates \eqref{beta2_b2_bis}, \eqref{beta2_bound}, and \eqref{b0_est}, if $0<b<e^{-8}$ and 
        \begin{align*}
            \log\left(\frac{b}{a}\right)\geq \frac{1}{2}\log(13)+\frac{3}{4}\log(2)-1,
        \end{align*}
        and \eqref{conformal_class_7} holds and $0<a<e^2\,b<e^{-8}$, then 
        \begin{align}\label{previous_estimates}
        \left\{\begin{alignedat}{1}
            &2\pi\beta^2\log^2(b)\leq 64\delta^2(2\delta+1)\int_{\Omega}\frac{\psi^2}{|x|^4}\left(\frac{|x|}{b}\right)^{4\delta}dx+\int_{\Omega}(\Delta\psi)^2dx\\
            &\pi\beta^2b^2\leq \frac{8}{1-\left(\frac{a}{b}\right)^2}\int_{\Omega}|\p{z}^2\psi|^2\\
            &\pi b_0^2b^2\leq 8\delta^2(2\delta+1)\int_{\Omega}\frac{\psi^2}{|x|^4}\left(\frac{|x|}{b}\right)^{4\gamma}dx
            \end{alignedat}\right.
        \end{align}
        Therefore, we estimate
        \begin{align}\label{step2_log_beta}
            &\frac{16\pi\beta^2}{1-\gamma}\left(b^2\left(\log^2(b)-\frac{1}{1-\gamma}\log(b)+\frac{1}{2(1-\gamma)^2}\right)\left(\frac{a}{b}\right)^{2\gamma}-a^2\left(\log^2(a)-\frac{1}{1-\gamma}\log(a)+\frac{1}{2(1-\gamma)^2}\right)\right)\nonumber\\
            &\leq \frac{24\pi\beta^2}{1-\gamma}b^2\log^2(b)\left(\frac{a}{b}\right)^{2\gamma}+\frac{16\pi\beta^2}{(1-\gamma)}b^2\left(\frac{a}{b}\right)^{2\gamma}\leq \frac{768\,\delta^2(2\delta+1)}{1-\gamma}\left(\frac{a}{b}\right)^{2\gamma}\int_{\Omega}\frac{\psi^2}{|x|^4}\left(\frac{|x|}{b}\right)^{4\delta}dx\nonumber\\
            &+\frac{12}{1-\gamma}\left(\frac{a}{b}\right)^{2\gamma}\int_{\Omega}(\Delta\psi)^2dx+\frac{128}{1-\gamma}\frac{\left(\frac{a}{b}\right)^{2\gamma}}{1-\left(\frac{a}{b}\right)^2}\int_{\Omega}|\p{z}^2\psi|^2dx\nonumber\\
            &\leq \frac{768\,\delta^2(2\delta+1)}{1-\gamma}\left(\frac{a}{b}\right)^{2\gamma}\int_{\Omega}\frac{\psi^2}{|x|^4}\left(\frac{|x|}{b}\right)^{4\delta}dx+\frac{32}{1-\gamma}\left(3+\frac{2}{1-\left(\frac{a}{b}\right)^{2}}\right)\left(\frac{a}{b}\right)^{2\gamma}\int_{\Omega}|\D^2\psi|^2dx.
        \end{align}
        Finally, we get by \eqref{previous_estimates}
        \begin{align}\label{step2_last_log}
            &\frac{4\pi(\beta+2b_0)^2}{1-\gamma}\left(b^2\left(\frac{a}{b}\right)^{2\gamma}-a^2\right)\leq \frac{8\pi\beta^2}{1-\gamma}b^2\left(\frac{a}{b}\right)^{2\gamma}\left(1-\left(\frac{a}{b}\right)^{2(1-\gamma)}\right)+\frac{32b_0^2}{1-\gamma}b^2\left(\frac{a}{b}\right)^{2\gamma}\left(1-\left(\frac{a}{b}\right)^{2(1-\gamma)}\right)\nonumber\\
            &\leq \frac{32}{1-\gamma}\left(\frac{a}{b}\right)^{2\gamma}\frac{1-\left(\frac{a}{b}\right)^{2(1-\gamma)}}{1-\left(\frac{a}{b}\right)^{2}}\int_{\Omega}|\p{z}^2\psi|^2dx+\frac{256\delta^2(2\delta+1)}{1-\gamma}\frac{1-\left(\frac{a}{b}\right)^{2(1-\gamma)}}{1-\left(\frac{a}{b}\right)^{2}}\int_{\Omega}\frac{\psi^2}{|x|^4}\left(\frac{|x|}{b}\right)^{4\gamma}dx.
        \end{align}
        Therefore, for all $\dfrac{1}{2}<\gamma,\delta<1$ if \eqref{conformal_class_part1} holds, we deduce by \eqref{part2_est_psi} and the estimates \eqref{step2_frequency2}, \eqref{step2_frequency1}, \eqref{step2_b_minus_one}, \eqref{step2_a_one},\eqref{step2_log_alpha}, \eqref{step2_log_beta}, and \eqref{step2_last_log} that
        \begin{align}\label{final_biharmonic_part2}
            &\int_{\Omega}\frac{|\D\psi|^2}{|x|^{2}}\left(\frac{a}{|x|}\right)^{2\gamma}dx\leq \frac{16}{\gamma+1}\left(1-\left(\frac{a}{b}\right)^{2(\gamma+1)}\right)\int_{\Omega}|\D^2\psi|^2dx\nonumber\\
            &+\frac{768\,\delta^2(2\delta+1)}{1-\gamma}\left(\frac{a}{b}\right)^{2\gamma}\int_{\Omega}\frac{\psi^2}{|x|^4}\left(\frac{|x|}{b}\right)^{4\delta}dx+\frac{32}{1-\gamma}\left(3+\frac{2}{1-\left(\frac{a}{b}\right)^{2}}\right)\left(\frac{a}{b}\right)^{2\gamma}\int_{\Omega}|\D^2\psi|^2dx\nonumber\\
            &+\frac{256\,\delta^2(2\delta+1)}{1-\gamma}\frac{1-\left(\frac{a}{b}\right)^{2(1-\gamma)}}{1-\left(\frac{a}{b}\right)^{2}}\int_{\Omega}\frac{\psi^2}{|x|^4}\left(\frac{|x|}{b}\right)^{4\gamma}dx+\frac{16}{1-\gamma}\left(\frac{a}{b}\right)^{2\gamma}\frac{1-\left(\frac{a}{b}\right)^{2(1-\gamma)}}{1-\left(\frac{a}{b}\right)^{2}}\int_{\Omega}|\D^2\psi|^2dx\nonumber\\
            &+\frac{6}{\gamma}\frac{1-\left(\frac{a}{b}\right)^{2\gamma}}{1-\left(\frac{a}{b}\right)^2}\int_{\Omega}|\D^2\psi|^2dx+\frac{8\delta(2\delta+1)^2}{\gamma}\left(\frac{a}{b}\right)^{2\gamma}\frac{1-\left(\frac{a}{b}\right)^{2(1-\gamma)}}{1-2\left(\frac{a}{b}\right)^{4\delta}}\int_{\Omega}\frac{\psi^2}{|x|^4}\left(\frac{a}{|x|}\right)^{4\delta}dx\nonumber\\
            &+\frac{10}{1-\gamma}\frac{1-\left(\frac{a}{b}\right)^{2\gamma}}{1-\left(\frac{a}{b}\right)^2}\left(2+\left(\frac{a}{b}\right)^2\frac{5-5\left(\frac{a}{b}\right)^2+2\left(\frac{a}{b}\right)^4}{\left(1-\left(\frac{a}{b}\right)^2\right)^3}\right)\int_{\Omega}|\D^2\psi|^2dx\nonumber\nonumber\\
            &+\frac{8\,\delta}{\gamma}\left(\frac{a}{b}\right)^{2\gamma}\frac{1-\left(\frac{a}{b}\right)^{2(1-\gamma)}}{1-2\left(\frac{a}{b}\right)^{4\delta}}\int_{\Omega}\frac{\psi^2}{|x|^4}\left(\frac{a}{|x|}\right)^{4\delta}dx\nonumber\\
            &=\frac{256\,\delta^2}{1-\gamma}\left(3\left(\frac{a}{b}\right)^{2\gamma}+\frac{1-\left(\frac{a}{b}\right)^{2(1-\gamma)}}{1-\left(\frac{a}{b}\right)^2}\right)\int_{\Omega}\frac{\psi^2}{|x|^4}\left(\frac{|x|}{b}\right)^{4\delta}dx\nonumber\\
            &+\frac{8\,\delta((2\delta+1)^2+1)}{\gamma}\left(\frac{a}{b}\right)^{2\gamma}\frac{1-\left(\frac{a}{b}\right)^{2(1-\gamma)}}{1-2\left(\frac{a}{b}\right)^{4\delta}}\int_{\Omega}\frac{\psi^2}{|x|^4}\left(\frac{a}{|x|}\right)^{4\delta}dx\nonumber\\
            &+\left(\frac{16}{\gamma+1}\left(1-\left(\frac{a}{b}\right)^{2(\gamma+1)}\right)+\frac{6}{\gamma}\frac{1-\left(\frac{a}{b}\right)^{2\gamma}}{1-\left(\frac{a}{b}\right)^2}+\frac{16}{1-\gamma}\left(\frac{a}{b}\right)^{2\gamma}\left(6+\frac{5-\left(\frac{a}{b}\right)^{2(1-\gamma)}}{1-\left(\frac{a}{b}\right)^2}\right)\right.\nonumber\\
            &\left.+\frac{10}{1-\gamma}\frac{1-\left(\frac{a}{b}\right)^{2\gamma}}{1-\left(\frac{a}{b}\right)^2}\left(2+\left(\frac{a}{b}\right)^2\frac{5-5\left(\frac{a}{b}\right)^2+2\left(\frac{a}{b}\right)^4}{\left(1-\left(\frac{a}{b}\right)^2\right)^3}\right)\right)\int_{\Omega}|\D^2\psi|^2dx,
        \end{align}
        where we used the identity
        \begin{align*}
            \int_{\Omega}|\D^2\psi|^2dx=2\int_{\Omega}|\p{z}^2\psi|^2dx+\frac{1}{8}\int_{\Omega}(\Delta\psi)^2dx.
        \end{align*}
        Finally, using \eqref{final_biharmonic_part1} and \eqref{final_biharmonic_part2}, we deduce that for all $\dfrac{1}{2}<\gamma,\delta<1$, if
        \small
        \begin{align}\label{conformal_class_part2}
            \log\left(\frac{b}{a}\right)\geq \max\ens{2,\frac{1}{2\delta-1}\log\left(4\delta\right),\frac{1}{4\delta}\log\left(\frac{2}{2-\sqrt{3}}\right), \frac{1}{4(1-\delta)}\log\left(1+\frac{8\delta(1-\delta)}{(2\delta-1)^2}\right), \frac{1}{4\delta}\log(8\delta(\delta+1))},
        \end{align}
        \normalsize
        then we have
        \begin{align*}
            &\int_{\Omega}\frac{|\D\psi|^2}{|x|^2}\left(\left(\frac{|x|}{b}\right)^{2\gamma}+\left(\frac{a}{|x|}\right)^{2\gamma}\right)dx\leq C_{\gamma,\delta}\left(\int_{\Omega}\frac{\psi^2}{|x|^4}\left(\left(\frac{|x|}{b}\right)^{4\delta}+\left(\frac{a}{|x|}\right)^{4\delta}\right)dx+\int_{\Omega}|\D^2\psi|^2dx\right)
        \end{align*}
        Notice that in fact, the proof of this step works for all $0<\gamma<1$ (provided that integrals involving the $L^2$ norm of $\psi$ are taken with respect to the $\delta$ weight). Furthermore, the hypothesis $0<a<b\leq e^{-8}$ is not restrictive by an immediate scaling argument. Therefore, the proof is complete.        
        \end{proof}

             \begin{theorem}\label{interpolation_weighted_poincare}
            Let $0<a<b<\infty$ and $\Omega=B_b\setminus\bar{B}_a(0)$. For all $\dfrac{1}{2}<\beta<1$ and $\sqrt{2}-1<\gamma<1$ there exists a universal constant $C_{\beta,\gamma}<\infty$ such that
            for all $u\in W^{2,2}(\Omega)$, provided that
            \small
            \begin{align}\label{conformal_class_cor}
            \log\left(\frac{b}{a}\right)\geq \max\ens{2,\frac{1}{2\beta-1}\log\left(4\beta\right),\frac{1}{4\beta}\log\left(\frac{2}{2-\sqrt{3}}\right), \frac{1}{4(1-\beta)}\log\left(1+\frac{8\beta(1-\beta)}{(2\beta-1)^2}\right), \frac{1}{4\beta}\log(8\beta(\beta+1))},
            \end{align}
            \normalsize
            we have
            \begin{align}\label{interpolation_weight}
                \int_{\Omega}\frac{|\D u|^2}{|x|^2}\left(\left(\frac{|x|}{b}\right)^{2\gamma}+\left(\frac{a}{|x|}\right)^{2\gamma}\right)dx\leq C_{\beta,\gamma}\left(\int_{\Omega}\frac{u^2}{|x|^4}\left(\left(\frac{|x|}{b}\right)^{4\beta}+\left(\frac{a}{|x|}\right)^{4\beta}\right)dx+\int_{\Omega}|\D^2u|^2dx\right).
            \end{align}
        \end{theorem}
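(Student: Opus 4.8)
The plan is to reduce the estimate for a general $u \in W^{2,2}(\Omega)$ to the special case of a biharmonic function already handled in Theorem~\ref{biharmonic_interpolation}, together with the zero-boundary estimate of Theorem~\ref{lemmeIV.1_complement}. The natural splitting is $u = \psi + v$, where $\psi \in W^{2,2}(\Omega)$ is the (unique) biharmonic function solving $\Delta^2 \psi = 0$ on $\Omega$ with $\psi = u$ and $\partial_\nu \psi = \partial_\nu u$ on $\partial \Omega$, and $v = u - \psi \in W^{2,2}_0(\Omega)$. This is the standard decomposition used in neck analyses: $\psi$ carries the boundary data and is biharmonic, while $v$ vanishes to first order on $\partial\Omega$, so it falls inside the scope of Theorem~\ref{lemmeIV.1_complement}. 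The existence and uniqueness of $\psi$ follows from Lax--Milgram applied to the bilinear form associated with $\int_\Omega (\Delta w)^2 dx$ on the affine space $u + W^{2,2}_0(\Omega)$, exactly as in the existence proof for \eqref{min}; by construction it is the minimiser of $\int_\Omega (\Delta w)^2 dx$ over that affine space, which immediately gives the orthogonality $\int_\Omega \Delta \psi \, \Delta v \, dx = 0$ and hence the Pythagorean identity $\int_\Omega (\Delta u)^2 dx = \int_\Omega (\Delta \psi)^2 dx + \int_\Omega (\Delta v)^2 dx$. Using \eqref{ipp_laplacien} this also controls $\int_\Omega |\D^2 \psi|^2 dx$ and $\int_\Omega |\D^2 v|^2 dx$ each by $\int_\Omega |\D^2 u|^2 dx$.

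Next I would estimate the two pieces of $\int_\Omega \frac{|\D u|^2}{|x|^2}\big((\frac{|x|}{b})^{2\gamma} + (\frac{a}{|x|})^{2\gamma}\big)dx$ separately. Using the elementary inequality $|\D u|^2 \le 2|\D \psi|^2 + 2|\D v|^2$, we split the weighted gradient integral into a $\psi$-contribution and a $v$-contribution. For the $v$-contribution, since $v \in W^{2,2}_0(\Omega)$ and $\sqrt 2 - 1 < \gamma < 1$, Theorem~\ref{lemmeIV.1_complement} gives directly
\begin{align*}
    \int_{\Omega}\frac{|\D v|^2}{|x|^2}\left(\left(\frac{|x|}{b}\right)^{2\gamma}+\left(\frac{a}{|x|}\right)^{2\gamma}\right)dx \le C_{\gamma}'\int_{\Omega}(\Delta v)^2 dx \le C_{\gamma}'\int_{\Omega}(\Delta u)^2 dx \le 4 C_\gamma' \int_\Omega |\D^2 u|^2 dx,
\end{align*}
where in the last step I use \eqref{ipp_laplacien}. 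For the $\psi$-contribution, Theorem~\ref{biharmonic_interpolation} (applicable precisely under hypothesis \eqref{conformal_class_cor}, which is \eqref{conformal_class_lemma} with $\beta$ in place of the dummy variable) bounds it by
\begin{align*}
    \frac{\Gamma}{\gamma(1-\gamma)}\frac{1}{1-2(a/b)^{4\beta}}\left(\int_{\Omega}\frac{\psi^2}{|x|^4}\left(\left(\frac{|x|}{b}\right)^{4\beta}+\left(\frac{a}{|x|}\right)^{4\beta}\right)dx + \int_{\Omega}|\D^2\psi|^2 dx\right).
\end{align*}
The factor $1/(1 - 2(a/b)^{4\beta})$ is harmless: under \eqref{conformal_class_cor} we have $\log(b/a) \ge 2$, so $(a/b)^{4\beta} \le e^{-8\beta}$ is bounded away from $1/2$ uniformly, and the factor is absorbed into a universal constant depending only on $\beta$.

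The remaining point is to replace $\psi$ by $u$ inside the weighted $L^2$ term $\int_\Omega \frac{\psi^2}{|x|^4}(\cdots)dx$. Here I write $\psi = u - v$, so $\psi^2 \le 2u^2 + 2v^2$, and I need $\int_\Omega \frac{v^2}{|x|^4}\big((\frac{|x|}{b})^{4\beta} + (\frac{a}{|x|})^{4\beta}\big)dx \le C_\beta \int_\Omega |\D^2 u|^2 dx$. This is exactly the first inequality of Theorem~\ref{lemmeIV.1_complement} applied to $v \in W^{2,2}_0(\Omega)$ (valid since $\tfrac12 < \beta$, indeed $\beta > \tfrac12 > \tfrac{\sqrt2-1}{2}$), giving the bound by $C_\beta \int_\Omega (\Delta v)^2 dx \le C_\beta \int_\Omega (\Delta u)^2 dx \le 4 C_\beta \int_\Omega |\D^2 u|^2 dx$. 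Similarly $\int_\Omega |\D^2\psi|^2 dx \le \int_\Omega |\D^2 u|^2 dx$ by orthogonality. Collecting all pieces yields \eqref{interpolation_weight} with $C_{\beta,\gamma}$ depending only on $\beta$ and $\gamma$ (through $\Gamma$, $C_\gamma'$, $C_\beta$, and the $\gamma(1-\gamma)$ denominator). I expect the main obstacle to be purely bookkeeping: one must verify that every invocation of Theorems~\ref{lemmeIV.1_complement} and~\ref{biharmonic_interpolation} is legitimate (the exponent ranges $\tfrac12<\beta<1$ and $\sqrt2-1<\gamma<1$ match the stated hypotheses, and \eqref{conformal_class_cor} is literally the hypothesis needed for Theorem~\ref{biharmonic_interpolation}), and that the constant $1/(1-2(a/b)^{4\beta})$ is uniformly controlled — neither of which is deep, but both require care. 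There is no genuine analytic difficulty beyond what has already been established.
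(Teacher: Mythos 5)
Your proof is correct and follows essentially the same route as the paper: the same splitting of $u$ into a biharmonic part $\psi$ carrying the boundary data and a $W^{2,2}_0(\Omega)$ remainder (your $v$, the paper's $\varphi$), Theorem \ref{lemmeIV.1_complement} applied to the remainder for both the weighted gradient and weighted $L^2$ terms, Theorem \ref{biharmonic_interpolation} applied to $\psi$ under hypothesis \eqref{conformal_class_cor}, and the substitution $\psi=u-v$ to trade $\psi^2$ for $u^2+v^2$; your orthogonality/Pythagoras argument is an equivalent replacement for the paper's Cauchy--Schwarz step. The only cosmetic caveat is that \eqref{ipp_laplacien} is stated for $W^{2,2}_0$ functions, so to control $\int_{\Omega}(\Delta u)^2dx$ by $\int_{\Omega}|\D^2u|^2dx$ for general $u\in W^{2,2}(\Omega)$ you should invoke the pointwise bound $(\Delta u)^2\leq 2|\D^2u|^2$ instead, which is what the paper implicitly does when absorbing its $\int_{\Omega}(\Delta u)^2dx$ term into the right-hand side of \eqref{interpolation_weight}.
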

        \begin{proof}
            On $\Omega$, make the decomposition $u=\varphi+\psi$, where
            \begin{align*}
            \left\{\begin{alignedat}{2}
                \Delta^2\varphi&=\Delta^2u\qquad&&\text{in}\;\, \Omega\\
                \varphi&=0\qquad&&\text{in}\;\, \partial \Omega\\
                \partial_{\nu}\varphi&=0\qquad &&\text{in}\;\, \partial \Omega,
                \end{alignedat}\right.
            \end{align*}
            and
            \begin{align*}
                \left\{\begin{alignedat}{2}
                    \Delta^2\psi&=0\qquad&& \text{in}\;\, \Omega\\
                    \psi&=u\qquad&& \text{on}\;\, \partial \Omega\\
                    \partial_{\nu}\psi&=\partial_{\nu}u\qquad&&\text{on}\;\, \partial \Omega.
                \end{alignedat}\right.
            \end{align*}

            \textbf{Step 2:} Estimate of
            \begin{align*}
                \int_{\Omega}\frac{|\D\varphi|^2}{|x|^2}\left(\left(\frac{a}{b}\right)^{2\gamma}+\left(\frac{a}{|x|}\right)^{2\gamma}\right)dx.
            \end{align*}
            
            Thanks to the estimate of Theorem \ref{lemmeIV.1_complement}, we deduce that for all $\gamma>\sqrt{2}-1$,
            \begin{align}\label{interpolation1}
                \int_{\Omega}\frac{|\D \varphi|^2}{|x|^2}\left(\left(\frac{|x|}{b}\right)^{2\gamma}+\left(\frac{a}{|x|}\right)^{2\gamma}\right)dx\leq C_{\gamma}\int_{\Omega}(\Delta \varphi)^2dx.
            \end{align}
            Thanks to Cauchy-Schwarz inequality, we have
            \begin{align}
                \int_{\Omega}(\Delta\varphi)^2dx=\int_{\Omega}\varphi\,\Delta^2\varphi\,dx=\int_{\Omega}\varphi\,\Delta^2 u\,dx=\int_{\Omega}\Delta\varphi\,\Delta u\,dx\leq \left(\int_{\Omega}(\Delta\varphi)^2dx\right)^{\frac{1}{2}}\left(\int_{\Omega}(\Delta u)^2dx\right)^{\frac{1}{2}}.
            \end{align}
            Therefore, we have
            \begin{align}\label{interpolation2}
                \int_{\Omega}(\Delta\varphi)^2dx\leq \int_{\Omega}(\Delta u)^2dx,
            \end{align}
            and \eqref{interpolation1} and \eqref{interpolation2} imply that
            \begin{align*}
                \int_{\Omega}\frac{|\D \varphi|^2}{|x|^2}\left(\left(\frac{|x|}{b}\right)^{2\gamma}+\left(\frac{a}{|x|}\right)^{2\gamma}\right)dx\leq C_{\gamma}'\int_{\Omega}(\Delta u)^2dx.
            \end{align*}
            Furthermore, the inequality from Theorem \ref{lemme} shows that
            \begin{align*}
                \int_{\Omega}\frac{\varphi^2}{|x|^4}\left(\left(\frac{|x|}{b}\right)^{4\beta}+\left(\frac{a}{|x|}\right)^{4\beta}\right)dx\leq C_{\beta}\int_{\Omega}(\Delta \varphi)^2dx\leq C_{\beta}\int_{\Omega}(\Delta u)^2dx.
            \end{align*}
            On the other hand, using Theorem \ref{biharmonic_interpolation} for the biharmonic function $\psi$, we deduce by \eqref{interpolation2} that 
            \begin{align*}
                &\int_{\Omega}\frac{|\D\psi|^2}{|x|^2}\left(\left(\frac{|x|}{b}\right)^{2\gamma}+\left(\frac{a}{|x|}\right)^{2\beta}\right)dx\leq C_{\beta,\gamma}\int_{\Omega}\frac{\psi^2}{|x|^4}\left(\left(\frac{|x|}{b}\right)^{4\beta}+\left(\frac{a}{|x|}\right)^{4\beta}\right)dx+C_{\beta,\gamma}\int_{\Omega}|\D^2\psi|^2dx\\
                &=2\,C_{\beta,\gamma}\int_{\Omega}\frac{\varphi^2}{|x|^4}\left(\left(\frac{|x|}{b}\right)^{4\beta}+\left(\frac{a}{|x|}\right)^{4\beta}\right)dx+2\,C_{\beta,\gamma}\int_{\Omega}\frac{u^2}{|x|^4}\left(\left(\frac{|x|}{b}\right)^{4\beta}+\left(\frac{a}{|x|}\right)^{4\beta}\right)dx\\
                &+2\,C_{\beta,\gamma}\int_{\Omega}|\D^2\varphi|^2dx+2\,C_{\beta,\gamma}\int_{\Omega}|\D^2u|^2dx\\
                &\leq 2\,C_{\beta,\gamma}\int_{\Omega}\frac{u^2}{|x|^4}\left(\left(\frac{|x|}{b}\right)^{4\beta}+\left(\frac{a}{|x|}\right)^{4\beta}\right)dx+2\,C_{\beta,\gamma}\left(1+C_{\beta}\right)\int_{\Omega}(\Delta u)^2dx+2\,C_{\beta,\gamma}\int_{\Omega}|\D^2u|^2dx
            \end{align*}
            where we used the fact that for all $\chi\in W^{2,2}_0(\Omega)$,
            \begin{align*}
                \int_{\Omega}|\D^2\chi|^2dx=\int_{\Omega}(\Delta\chi)^2dx.
            \end{align*}
            Therefore, we finally get
            \begin{align*}
                &\int_{\Omega}\frac{|\D u|^2}{|x|^2}\left(\left(\frac{|x|}{b}\right)^{2\gamma}+\left(\frac{a}{|x|}\right)^{2\gamma}\right)dx\leq 2\int_{\Omega}\frac{|\D\varphi|^2}{|x|^2}\left(\left(\frac{|x|}{b}\right)^{2\gamma}+\left(\frac{a}{|x|}\right)^{2\gamma}\right)dx\\
                &+2\int_{\Omega}\frac{|\D\psi|^2}{|x|^2}\left(\left(\frac{|x|}{b}\right)^{2\gamma}+\left(\frac{a}{|x|}\right)^{2\gamma}\right)dx\\
                &\leq 4\,C_{\beta,\gamma}\int_{\Omega}\frac{u^2}{|x|^4}\left(\left(\frac{|x|}{b}\right)^{4\beta}+\left(\frac{a}{|x|}\right)^{4\beta}\right)dx+2\left(C_{\gamma}'+2\,C_{\beta,\gamma}\left(1+C_{\beta}\right)\right)\int_{\Omega}(\Delta u)^2dx+4\,C_{\beta,\gamma}\int_{\Omega}|\D^2u|^2dx
            \end{align*}
            which concludes the proof of the corollary.
            \end{proof}

\subsection{General Case}

For $m>1$, the proof is a bit more involved, but the inequality is more precise. 

\begin{theorem}\label{poincare_weight_m}
    Let $m>1$, and let
    \begin{align*}
        \leb_m=\Delta+2(m-1)\cdot \frac{x}{|x|^2}\cdot \D+\frac{(m-1)^2}{|x|^2}.
    \end{align*}
    Let $0<a<b<\infty$ and $\Omega=B_b\setminus\bar{B}_a(0)$. Then, for all $0<\alpha<\min\ens{\dfrac{4\,m}{3},2}$, there exists a universal constant $C_{m,\alpha}<\infty$ such that for all $u\in W^{2,2}_0(\Omega)$, we have
    \begin{align}
    \left\{\begin{alignedat}{2}
        &\int_{\Omega}\frac{u^2}{|x|^4}\left(\left(\frac{|x|}{b}\right)^{2\alpha}+\left(\frac{a}{|x|}\right)^{2\alpha}\right)dx&&\leq C_{m,\alpha}\int_{\Omega}\left(\leb_mu\right)^2\left(\left(\frac{|x|}{b}\right)^{2\alpha}+\left(\frac{a}{|x|}\right)^{2\alpha}\right)dx\\
        &\int_{\Omega}\frac{|\D u|^2}{|x|^2}\left(\left(\frac{|x|}{b}\right)^{2\alpha}+\left(\frac{a}{|x|}\right)^{2\alpha}\right)dx&&\leq C_{m,\alpha}\int_{\Omega}\left(\leb_mu\right)^2\left(\left(\frac{|x|}{b}\right)^{2\alpha}+\left(\frac{a}{|x|}\right)^{2\alpha}\right)dx
        \end{alignedat}\right.
    \end{align}
\end{theorem}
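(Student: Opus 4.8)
The strategy is to reduce the weighted estimate for $\leb_m$ to the unweighted estimates already proved, by using the factorisation $\leb_m = |x|^{1-m}\Delta\big(|x|^{m-1}\,\cdot\,\big)$ together with the elementary integration by parts lemmas for the weights (Lemma~\ref{lemme_ipp} and its proof). First I would reduce, by a scaling argument and the usual inversion $x \mapsto a^2 x/|x|^2$ (which exchanges the two weights $(|x|/b)^{2\alpha}$ and $(a/|x|)^{2\alpha}$ up to harmless constants and maps $\leb_m$ to a conjugate operator of the same type, as in the inversion steps of Theorem~\ref{lemmeIV.1_complement}), to the case $\Omega = B_1 \setminus \bar{B}_\epsilon(0)$ and to bounding just the integral against $(|x|/b)^{2\alpha} = |x|^{2\alpha}$. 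Then, writing $u = |x|^{m-1}v$, we have $\leb_m u = |x|^{m-1}\cdot|x|^{1-m}\Delta(|x|^{m-1}v)\cdot|x|^{m-1}$—more precisely $\leb_m u = |x|^{1-m}\Delta(|x|^{m-1}u)$ evaluated with the substitution; the point is that $\int_\Omega (\leb_m u)^2 |x|^{2\alpha}\,dx = \int_\Omega (\Delta w)^2 |x|^{2\alpha + 2(1-m)}\,dx$ with $w = |x|^{m-1}u$, and similarly $\int_\Omega u^2 |x|^{-4}|x|^{2\alpha}\,dx = \int_\Omega w^2 |x|^{-4}|x|^{2\alpha - 2(m-1)}\,dx$, so everything is transported to a \emph{weighted Rellich-type inequality for the plain Laplacian with a shifted power weight}.

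Thus the core of the matter is an inequality of the form $\int_{B_1\setminus\bar B_\epsilon} w^2 |x|^{\sigma}\,dx \le C \int_{B_1\setminus\bar B_\epsilon} (\Delta w)^2 |x|^{\sigma+4}\,dx$ for $w \in W^{2,2}_0$, valid in the range $\sigma > 2(\sqrt2-1)$ (equivalently $\sigma^2 + 4\sigma - 4 > 0$), which is exactly \eqref{weight_general} in the proof of Theorem~\ref{lemmeIV.1_complement}. Here $\sigma = 2\alpha - 2(m-1)$, and one must check that the constraint $0 < \alpha < \min\{4m/3, 2\}$ guarantees the relevant exponents stay in the admissible range for \emph{both} the direct part (where one needs a Hölder/Sobolev estimate, valid for $\alpha < 2$ roughly) and the inversion part (where one needs the shifted power to satisfy $\sigma + 4 > \ldots$, which is where the $4m/3$ threshold enters—this mirrors the appearance of $2(\sqrt2-1)$ and the condition $\beta > \sqrt2 - 1$ in the bilaplacian case). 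Concretely I would: (i) prove the direct estimate for $\int w^2|x|^\sigma$ by the Sobolev+Hölder argument of Step~1 of Theorem~\ref{lemmeIV.1_complement} adapted to the extra radial power, controlling $\int |\D w|^2|x|^{\sigma'}$ and then $\int(\Delta w)^2|x|^{\sigma'}$ by \eqref{ipp_laplacien}-type identities with weights and integration by parts to move the weight across; (ii) prove the inversion-part estimate exactly as in Step~2, using the iterated integration by parts \eqref{ipp2} with the exponent shifted by $m-1$, and the coercivity gained from $\sigma^2+4\sigma > 4$; (iii) for the gradient inequality, repeat with the intermediate quantity $\int |\D w|^2 |x|^{\sigma+2}$, which is sandwiched by the $L^2$-of-$w$ and $L^2$-of-$\Delta w$ weighted norms via integration by parts as in Steps~3--4.

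The main obstacle I expect is \textbf{bookkeeping the admissible exponent ranges and the transport of the weight through the integrations by parts when $m \ne 1$}: each time one writes $\int |x|^{\mu}\, \partial(\cdot)\cdot\D(\cdot)$ and integrates by parts, one picks up a factor $\mu$ (or $\mu - 2$, etc.) in front, and one must ensure these constants do not vanish—this is precisely why the threshold is not simply $\alpha < 2$ but involves $4m/3$, and why the case $m=1$ needed $\beta > \sqrt2-1$ rather than $\beta > 0$. A secondary, more conceptual point is that for $m > 1$ the function $w = |x|^{m-1}u$ is only in $W^{2,2}_{\mathrm{loc}}$ away from the origin; since $u \in W^{2,2}_0(\Omega)$ and $0 \notin \bar\Omega$ this causes no trouble, but one should record that all the integration-by-parts identities are justified by the vanishing of $u$ and $\partial_\nu u$ on $\partial\Omega$, exactly as in Lemma~\ref{indicielles} and formula \eqref{ipp_Lm}. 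Once the weighted Rellich inequality for $\Delta$ with shifted power weights is in hand, reassembling the two pieces (the $b$-weight and, after inversion, the $a$-weight) and untransporting via $u = |x|^{m-1}v$ gives the stated inequality with $C_{m,\alpha}$ depending only on $m$ and $\alpha$.
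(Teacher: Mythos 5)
Your overall architecture (conjugate $\leb_m$ to the Laplacian, handle one weight directly and the other by inversion) is reasonable in spirit, but the central reduction does not deliver the theorem in the stated range, and the step you defer to ``bookkeeping'' is exactly where the proof lives. Writing $w=|x|^{m-1}u$, your target inequalities become
\begin{align*}
\int_{\Omega}w^2\,|x|^{2\alpha-2(m-1)-4}\,dx\leq C\int_{\Omega}(\Delta w)^2\,|x|^{2\alpha-2(m-1)}\,dx,
\end{align*}
i.e.\ a weighted Rellich inequality for $\Delta$ with exponent $\tau=2\alpha-2m-2$ in the normalisation of \eqref{weight_general}. But \eqref{weight_general} was proved only for $\tau>2(\sqrt{2}-1)>0$, which here would force $\alpha>m+\sqrt{2}$; for every $m\geq 2$ this is empty inside the theorem's range $0<\alpha<\min\{4m/3,2\}$ (and the inner weight $(a/|x|)^{2\alpha}$ makes the exponent even more negative). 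So the inequality you invoke is simply inapplicable: what is actually required is a Rellich-type inequality for the Laplacian with \emph{negative} power weights on the annulus, with constant uniform in the conformal class. Such inequalities are not free — for certain shifted exponents they degenerate or fail because the indicial roots of the weighted operator resonate with the angular frequencies — and identifying the admissible window is precisely what produces the threshold $\alpha<\min\{4m/3,2\}$. Your proposal neither states nor proves this inequality; it assumes the already-proved positive-exponent case covers it, which it does not.

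For comparison, the paper never performs the substitution $w=|x|^{m-1}u$. It computes $\int_{\Omega}|x|^{2\alpha}(\leb_mu)^2dx$ directly by repeated integration by parts, arriving at the exact identity \eqref{ipp_m_alpha14}, and then absorbs the negative term via the weighted Hardy inequality \eqref{ipp_m_alpha16}; the sign condition on the resulting cubic coefficient is where the constants $C_{m,\alpha}$ and part of the range come from. For the weight $(|x|/b)^{2\alpha}$ it uses the inversion identity $\leb_m^{\ast}\bar{u}=|x|^{-4}(\leb_mu)\circ\iota$ and must then redo the whole computation for $\leb_m^{\ast}$, exploiting $\leb_m\leb_m^{\ast}=\leb_{m-2}^{\ast}\leb_{m-2}$; this second computation behaves differently for $m>3$ and $1<m\leq 3$ and is the source of the $4m/3$ threshold. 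If you want to salvage your route, you would have to prove the negative-exponent weighted Rellich inequality for $\Delta$ on annuli from scratch (uniformly in $b/a$, tracking exactly which exponents are admissible), which amounts to the same integration-by-parts analysis the paper carries out on the conjugated operator — so as written your proposal has a genuine gap rather than an alternative proof.
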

\begin{rem}
    In particular, for all $\alpha>0$, we have
    \begin{align*}
        \int_{\Omega}\left(\frac{u^2}{|x|^4}+\frac{|\D u|^2}{|x|^2}\right)\left(\left(\frac{|x|}{b}\right)^{2\alpha}+\left(\frac{a}{|x|}\right)^{2\alpha}\right)dx\leq C_{\alpha}\int_{\Omega}\left(\Delta u+2(m-1)\frac{x}{|x|^2}\cdot \D u+\frac{(m-1)^2}{|x|^2}u\right)^2dx.
    \end{align*}
\end{rem}
\begin{proof}
Recall that
\begin{align}\label{fourth_order_m}
    \leb_m^{\ast}\leb_m=\Delta^2+2(m^2-1)\frac{1}{|x|^2}\Delta-4(m^2-1)\left(\frac{x}{|x|^2}\right)^t\cdot  \D^2(\,\cdot\,)\cdot\left(\frac{x}{|x|^2}\right)+\frac{(m^2-1)^2}{|x|^4}.
\end{align}
We have
\begin{align}\label{ipp_m_alpha1}
    &\int_{\Omega}|x|^{2\alpha}\left(\leb_mu\right)^2dx=\int_{\Omega}|x|^{2\alpha}\left(\Delta u+2(m-1)\frac{x}{|x|^2}\cdot \D u+\frac{(m-1)^2}{|x|^2}u\right)\leb_mu\,dx\nonumber\\
    &=\int_{\Omega}u\,\Delta\left(|x|^{2\alpha}\leb_mu\right)dx-2(m-1)\int_{\Omega}u\,\frac{x}{|x|^2}\cdot \D\left(|x|^{2\alpha}\leb_mu\right)dx+\int_{\Omega}|x|^{2\alpha}u\,\frac{(m-1)^2}{|x|^2}\leb_mu\,dx\nonumber\\
    &=\int_{\Omega}|x|^{2\alpha}u\,\leb_m^{\ast}\leb_mu\,dx+\int_{\Omega}\Delta\left(|x|^{2\alpha}\right)u\,\leb_mu\,dx+2\int_{\Omega}u\,\D\left(|x|^{2\alpha}\right)\cdot \D \left(\leb_mu\right)dx\nonumber\\
    &-2(m-1)\int_{\Omega}u\frac{x}{|x|^2}\cdot \left(|x|^{2\alpha}\right)\leb_mu\,dx\nonumber\\
    &=\int_{\Omega}|x|^{2\alpha}u\,\leb_m^{\ast}\leb_mu\,dx+4\alpha^2\int_{\Omega}|x|^{2\alpha}\frac{u}{|x|^2}\,\leb_mu\,dx+2\int_{\Omega}u\,\D\left(|x|^{2\alpha}\right)\cdot \D \left(\leb_mu\right)dx\nonumber\\
    &-4(m-1)\alpha\int_{\Omega}|x|^{2\alpha}\frac{u}{|x|^2}\leb_mu\,dx\nonumber\\
    &=\int_{\Omega}|x|^{2\alpha}\leb_m^{\ast}\leb_mu\,dx+4\alpha\int_{\Omega}|x|^{2\alpha}u\frac{x}{|x|^2}\cdot\D\left(\leb_mu\right)dx+\left(4\alpha^2-4(m-1)\alpha\right)\int_{\Omega}|x|^{2\alpha}\frac{u}{|x|^2}\leb_mu\,dx,
\end{align}
where we used the identities
\begin{align*}
    \left\{\begin{alignedat}{1}
        \D |x|^{2\alpha}=2\alpha\,x|x|^{2\alpha-2}\\
        \Delta |x|^{2\alpha}=4\alpha^2|x|^{2\alpha-2}.
    \end{alignedat}\right.
\end{align*}
By \eqref{fourth_order_m}
\begin{align}\label{ipp_m_alpha2}
    \int_{\Omega}|x|^{2\alpha}u\,\leb_m^{\ast}\leb_mu\,dx&=\int_{\Omega}|x|^{2\alpha}\left(u\,\Delta^2u\,+2(m^2-1)\frac{1}{|x|^2}u\,\Delta u-4(m^2-1)u\left(\frac{x}{|x|^2}\right)^t\cdot\D^2u\cdot\left(\frac{x}{|x|^2}\right)\right.\nonumber\\
    &\left.+\frac{(m^2-1)^2}{|x|^4}u^2\right)dx.
\end{align}
Then, 
\begin{align}\label{ipp_m_alpha3}
    &\int_{\Omega}|x|^{2\alpha}u\,\Delta^2u\,dx=\int_{\Omega}|x|^{2\alpha}\left(\Delta u\right)^2dx+\int_{\Omega}u\,\Delta u\Delta\left(|x|^{2\alpha}\right)dx+2\int_{\Omega}\D\left(|x|^{2\alpha}\right)\cdot \D u\,\Delta u\,dx\nonumber\\
    &=\int_{\Omega}|x|^{2\alpha}(\Delta u)^2dx+4\alpha^2\int_{\Omega}|x|^{2\alpha}\frac{u}{|x|^2}\Delta u\,dx+4\alpha\int_{\Omega}|x|^{2\alpha}\left(\frac{x}{|x|^2}\cdot \D u\right)\,\Delta u\,dx.
\end{align}
Therefore, we have by \eqref{ipp_m_alpha2} and \eqref{ipp_m_alpha3}
\begin{align}\label{ipp_m_alpha4}
    &\int_{\Omega}|x|^{2\alpha}u\,\leb_m^{\ast}\leb_mu\,dx=\int_{\Omega}|x|^{2\alpha}\left((\Delta u)^2+2(m^2-1)\frac{u}{|x|^2}\Delta u+\frac{(m^2-1)^2}{|x|^4}u^2\right)dx\nonumber\\
    &+4\alpha^2\int_{\Omega}|x|^{2\alpha}\frac{u}{|x|^2}\Delta u\,dx+4\alpha\int_{\Omega}|x|^{2\alpha}\left(\frac{x}{|x|^2}\cdot \D u\right)\Delta u\,dx\nonumber\\
    &-4(m^2-1)\int_{\Omega}|x|^{2\alpha}u\left(\frac{x}{|x|^2}\right)^t\cdot \D^2u\cdot\left(\frac{x}{|x|^2}\right)dx\nonumber\\
    &=\int_{\Omega}|x|^{2\alpha}\left(\Delta u+\frac{(m^2-1)}{|x|^2}u\right)^2dx
    +4\alpha^2\int_{\Omega}|x|^{2\alpha}\frac{u}{|x|^2}\Delta u\,dx+4\alpha\int_{\Omega}|x|^{2\alpha}\left(\frac{x}{|x|^2}\cdot \D u\right)\Delta u\,dx\nonumber\\
    &-4(m^2-1)\int_{\Omega}|x|^{2\alpha}u\left(\frac{x}{|x|^2}\right)^t\cdot \D^2u\cdot\left(\frac{x}{|x|^2}\right)dx.
\end{align}
Therefore, we have by \eqref{ipp_m_alpha1} and \eqref{ipp_m_alpha4}
\begin{align}\label{ipp_m_alpha5}
    &\int_{\Omega}|x|^{2\alpha}\left(\leb_mu\right)^2dx=\int_{\Omega}|x|^{2\alpha}\left(\Delta u+\frac{(m^2-1)}{|x|^2}u\right)^2dx+4\alpha\int_{\Omega}|x|^{2\alpha}u\frac{x}{|x|^2}\cdot\D\left(\leb_mu\right)dx\nonumber\\
    &+\left(4\alpha^2-4(m-1)\alpha\right)\int_{\Omega}|x|^{2\alpha}\frac{u}{|x|^2}\leb_mu\,dx
    +4\alpha^2\int_{\Omega}|x|^{2\alpha}\frac{u}{|x|^2}\Delta u\,dx+4\alpha\int_{\Omega}|x|^{2\alpha}\left(\frac{x}{|x|^2}\cdot \D u\right)\Delta u\,dx\nonumber\\
    &-4(m^2-1)\int_{\Omega}|x|^{2\alpha}u\left(\frac{x}{|x|^2}\right)^t\cdot \D^2u\cdot\left(\frac{x}{|x|^2}\right)dx
\end{align}
Then, integrating by parts, we get
\begin{align}\label{ipp_m_alpha6}
    &4\alpha\int_{\Omega}|x|^{2\alpha}u\frac{x}{|x|^2}\cdot \D\left(\leb_mu\right)dx=-8\alpha^2\int_{\Omega}|x|^{2\alpha}\frac{u}{|x|^2}\leb_mu\,dx-4\alpha\int_{\Omega}|x|^{2\alpha}\left(\frac{x}{|x|^2}\cdot \D u\right)\leb_mu\,dx\nonumber\\
    &=-8\alpha^2\int_{\Omega}|x|^{2\alpha}\frac{u}{|x|^2}\Delta u\,dx-16(m-1)\alpha^2\int_{\Omega}|x|^{2\alpha}\frac{u}{|x|^2}\left(\frac{x}{|x|^2}\cdot \D u\right)\,dx-8(m-1)^2\alpha^2\int_{\Omega}|x|^{2\alpha}\frac{u^2}{|x|^4}dx\nonumber\\
    &-4\alpha\int_{\Omega}|x|^{2\alpha}\left(\frac{x}{|x|^2}\cdot \D u\right)\Delta u\,dx-8(m-1)\alpha\int_{\Omega}|x|^{2\alpha}\left(\frac{x}{|x|^2}\cdot \D u\right)^2dx\nonumber\\
    &-4(m-1)^2\alpha\int_{\Omega}|x|^{2\alpha}\left(\frac{x}{|x|^2}\cdot \D u\right)\frac{u}{|x|^2}dx\nonumber\\
    &=-8\alpha^2\int_{\Omega}|x|^{2\alpha}\frac{u}{|x|^2}\Delta u\,dx-4\alpha\int_{\Omega}|x|^{2\alpha}\left(\frac{x}{|x|^2}\cdot \D u\right)\Delta u\,dx-8(m-1)\alpha\int_{\Omega}|x|^{2\alpha}\left(\frac{x}{|x|^2}\cdot \D u\right)^2dx\nonumber\\
    &-\left(16(m-1)\alpha^2+4(m-1)^2\alpha\right)\int_{\Omega}|x|^{2\alpha}\left(\frac{x}{|x|^2}\cdot \D u\right)\frac{u}{|x|^2}dx-8(m-1)^2\alpha^2\int_{\Omega}|x|^{2\alpha}\frac{u^2}{|x|^4}dx
\end{align}
while
\begin{align}\label{ipp_m_alpha7}
    &\left(4\alpha^2-4(m-1)\alpha\right)\int_{\Omega}|x|^{2\alpha}\frac{u}{|x|^2}\leb_mu\,dx
    +4\alpha^2\int_{\Omega}|x|^{2\alpha}\frac{u}{|x|^2}\Delta u\,dx+4\alpha\int_{\Omega}|x|^{2\alpha}\left(\frac{x}{|x|^2}\cdot \D u\right)\Delta u\,dx\nonumber\\
    &=\left(8\alpha^2-4(m-1)\alpha\right)\int_{\Omega}|x|^{2\alpha}\frac{u}{|x|^2}\Delta u\,dx+4\alpha\int_{\Omega}|x|^{2\alpha}\left(\frac{x}{|x|^2}\cdot \D u\right)\Delta u\,dx\nonumber\\
    &+2(m-1)(4\alpha^2-4(m-1)\alpha)\int_{\Omega}|x|^{2\alpha}\left(\frac{x}{|x|^2}\cdot \D u\right)\frac{u}{|x|^2}dx+(m-1)^2(4\alpha^2-4(m-1)\alpha)\int_{\Omega}|x|^{2\alpha}\frac{u^2}{|x|^4}dx.
\end{align}
Therefore, we have by \eqref{ipp_m_alpha6} and \eqref{ipp_m_alpha7}
\begin{align}\label{ipp_m_alpha8}
    &4\alpha\int_{\Omega}|x|^{2\alpha}u\frac{x}{|x|^2}\cdot \D\left(\leb_mu\right)dx+\left(4\alpha^2-4(m-1)\alpha\right)\int_{\Omega}|x|^{2\alpha}\frac{u}{|x|^2}\leb_mu\,dx
    +4\alpha^2\int_{\Omega}|x|^{2\alpha}\frac{u}{|x|^2}\Delta u\,dx\nonumber\\
    &+4\alpha\int_{\Omega}|x|^{2\alpha}\left(\frac{x}{|x|^2}\cdot \D u\right)\Delta u\,dx=-\colorcancel{8\alpha^2\int_{\Omega}|x|^{2\alpha}\frac{u}{|x|^2}\Delta u\,dx}{red}-\colorcancel{4\alpha\int_{\Omega}|x|^{2\alpha}\left(\frac{x}{|x|^2}\cdot \D u\right)\Delta u\,dx}{blue}\nonumber\\
    &-8(m-1)\alpha\int_{\Omega}|x|^{2\alpha}\left(\frac{x}{|x|^2}\cdot \D u\right)^2dx
    -\left(16(m-1)\alpha^2+4(m-1)^2\alpha\right)\int_{\Omega}|x|^{2\alpha}\left(\frac{x}{|x|^2}\cdot \D u\right)\frac{u}{|x|^2}dx\nonumber\\
    &-8(m-1)^2\alpha^2\int_{\Omega}|x|^{2\alpha}\frac{u^2}{|x|^4}dx+\left(\colorcancel{8\alpha^2}{red}-4(m-1)\alpha\right)\int_{\Omega}|x|^{2\alpha}\frac{u}{|x|^2}\Delta u\,dx\nonumber\\
    &+\colorcancel{4\alpha\int_{\Omega}|x|^{2\alpha}\left(\frac{x}{|x|^2}\cdot \D u\right)\Delta u\,dx}{blue}+(m-1)^2(4\alpha^2-4(m-1)\alpha)\int_{\Omega}|x|^{2\alpha}\frac{u^2}{|x|^4}dx
    \nonumber\\
    &+2(m-1)(4\alpha^2-4(m-1)\alpha)\int_{\Omega}|x|^{2\alpha}\left(\frac{x}{|x|^2}\cdot \D u\right)\frac{u}{|x|^2}dx\nonumber\\
    &=-8(m-1)\alpha\int_{\Omega}|x|^{2\alpha}\left(\frac{x}{|x|^2}\cdot \D u\right)^2dx-4(m-1)\alpha\int_{\Omega}|x|^{2\alpha}\frac{u}{|x|^2}\Delta u\,dx\nonumber\\
    &+\left(2(m-1)(4\alpha^2-4(m-1)\alpha)-\left(16(m-1)\alpha^2+4(m-1)^2\alpha\right)\right)\int_{\Omega}|x|^{2\alpha}\left(\frac{x}{|x|^2}\cdot\D u\right)\frac{u}{|x|^2}dx\nonumber\\
    &-(m-1)^2\left(4\alpha^2+4(m-1)\alpha\right)\int_{\Omega}|x|^{2\alpha}\frac{u^2}{|x|^4}dx.
\end{align}
Then, we have
\begin{align}\label{ipp_m_alpha9}
    \int_{\Omega}|x|^{2\alpha}\frac{u}{|x|^2}\Delta u\,dx&=-\int_{\Omega}\D\left(|x|^{2\alpha-2}u\right)\cdot \D u\,dx=-2(\alpha-1)\int_{\Omega}|x|^{2\alpha}\left(\frac{x}{|x|^2}\cdot \D u\right)\frac{u}{|x|^2}dx\nonumber\\
    &-\int_{\Omega}|x|^{2\alpha-2}|\D u|^2dx.
\end{align}
Therefore, we have by \eqref{ipp_m_alpha8} and \eqref{ipp_m_alpha9}
\begin{align}\label{ipp_m_alpha10}
    &4\alpha\int_{\Omega}|x|^{2\alpha}u\frac{x}{|x|^2}\cdot \D\left(\leb_mu\right)dx+\left(4\alpha^2-4(m-1)\alpha\right)\int_{\Omega}|x|^{2\alpha}\frac{u}{|x|^2}\leb_mu\,dx
    +4\alpha^2\int_{\Omega}|x|^{2\alpha}\frac{u}{|x|^2}\Delta u\,dx\nonumber\\
    &+4\alpha\int_{\Omega}|x|^{2\alpha}\left(\frac{x}{|x|^2}\cdot \D u\right)\Delta u\,dx=-8(m-1)\alpha\int_{\Omega}|x|^{2\alpha}\left(\frac{x}{|x|^2}\cdot \D u\right)^2dx+4(m-1)\alpha\int_{\Omega}|x|^{2\alpha}\frac{|\D u|^2}{|x|^2}dx\nonumber\\
    &+\left(2(m-1)(4\alpha^2-4(m-1)\alpha)-\left(16(m-1)\alpha^2+4(m-1)^2\alpha\right)-8(m-1)\alpha(1-\alpha)\right)\nonumber\\
    &\times\int_{\Omega}|x|^{2\alpha}\left(\frac{x}{|x|^2}\cdot\D u\right)\frac{u}{|x|^2}dx-(m-1)^2\left(4\alpha^2+4(m-1)\alpha\right)\int_{\Omega}|x|^{2\alpha}\frac{u^2}{|x|^4}dx.
\end{align}
Now, recall the following computation
\begin{align}\label{ipp_m_alpha11}
    \int_{\Omega}|x|^{2\alpha}\left(\frac{x}{|x|^2}\cdot \D u\right)\frac{u}{|x|^2}dx&=\frac{1}{2}\int_{\Omega}|x|^{2\alpha-2}\frac{x}{|x|^2}\cdot \D\left(u^2\right)\,dx=\frac{1}{2}\int_{\Omega}|x|^{2\alpha-2}\dive\left(\frac{x}{|x|^2}\,u^2\right)dx\nonumber\\
    &=(1-\alpha)\int_{\Omega}|x|^{2\alpha}\frac{u^2}{|x|^4}dx,
\end{align}
where we used that
\begin{align*}
    \dive\left(\frac{x}{|x|^2}\right)=\Delta\log|x|=0\qquad \text{in}\;\, \mathscr{D}'(\R^2\setminus\ens{0}).
\end{align*}
Therefore, we have 
\begin{align}\label{ipp_m_alpha12}
    &4\alpha\int_{\Omega}|x|^{2\alpha}u\frac{x}{|x|^2}\cdot \D\left(\leb_mu\right)dx+\left(4\alpha^2-4(m-1)\alpha\right)\int_{\Omega}|x|^{2\alpha}\frac{u}{|x|^2}\leb_mu\,dx
    +4\alpha^2\int_{\Omega}|x|^{2\alpha}\frac{u}{|x|^2}\Delta u\,dx\nonumber\\
    &+4\alpha\int_{\Omega}|x|^{2\alpha}\left(\frac{x}{|x|^2}\cdot \D u\right)\Delta u\,dx=-8(m-1)\alpha\int_{\Omega}|x|^{2\alpha}\left(\frac{x}{|x|^2}\cdot \D u\right)^2dx+4(m-1)\alpha\int_{\Omega}|x|^{2\alpha}\frac{|\D u|^2}{|x|^2}dx\nonumber\\
    &+\bigg\{\Big(2(m-1)(4\alpha^2-4(m-1)\alpha)-\left(16(m-1)\alpha^2+4(m-1)^2\alpha\right)-8(m-1)\alpha(1-\alpha)\Big)(1-\alpha)\nonumber\\
    &-(m-1)^2(4\alpha^2+4(m-1)\alpha)\bigg\}\int_{\Omega}|x|^{2\alpha}\frac{u^2}{|x|^4}dx.
\end{align}
Now, thanks to \eqref{operator3}, we have
    \begin{align*}
        \left(\frac{x}{|x|^2}\right)^t\cdot \D^2u\cdot\left(\frac{x}{|x|^2}\right)=\frac{1}{r^2}\p{r}^2u.
    \end{align*}
Therefore, using polar coordinates, we get
    \begin{align}\label{ipp_m_alpha13}
        &\int_{\Omega}|x|^{2\alpha}u\,\left(\frac{x}{|x|^2}\right)^t\cdot \D^2u\cdot\left(\frac{x}{|x|^2}\right)dx=\int_{S^1}\left(\int_{a}^br^{2\alpha-1}u(r,\theta)\p{r}^2u(r,\theta)dr\right)d\theta\nonumber\\
        &=-\int_{S^1}\left(\int_{a}^br^{2\alpha-1}(\p{r}u(r,\theta))^2dr\right)d\theta-(2\alpha-1)\int_{S^1}\left(\int_{a}^br^{2\alpha-2}u(r,\theta)\p{r}u(r,\theta)dr\right)d\theta\nonumber\\
        &=-\int_{S^1}\left(\int_{a}^br^{2\alpha-1}(\p{r}u(r,\theta))^2dr\right)d\theta-\frac{2\alpha-1}{2}\int_{S^1}\left(\int_{a}^br^{2\alpha-2}\p{r}(u(r,\theta))^2dr\right)d\theta\nonumber\\
        &=-\int_{S^1}\left(\int_{a}^br^{2\alpha-1}(\p{r}u(r,\theta))^2dr\right)d\theta+(\alpha-1)(2\alpha-1)\int_{S^1}\left(\int_{a}^br^{2\alpha-3}(u(r,\theta))^2dr\right)d\theta\nonumber\\
        &=-\int_{\Omega}|x|^{2\alpha-2}|\p{r}u|^2dx+(\alpha-1)(2\alpha-1)\int_{\Omega}|x|^{2}\frac{u^2}{|x|^4}dx\nonumber\\
        &=-\int_{\Omega}|x|^{2\alpha}\left(\frac{x}{|x|^2}\cdot \D u\right)^2dx+(\alpha-1)(2\alpha-1)\int_{\Omega}|x|^{2\alpha}\frac{u^2}{|x|^4}dx,
    \end{align}
Finally, by \eqref{ipp_m_alpha5}, \eqref{ipp_m_alpha12}, and \eqref{ipp_m_alpha13}, we deduce that
\begin{align}\label{ipp_m_alpha14}
    &\int_{\Omega}|x|^{2\alpha}\left(\leb_mu\right)^2dx=\int_{\Omega}|x|^{2\alpha}\left(\Delta u+\frac{(m^2-1)}{|x|^2}u\right)^2dx-8(m-1)\alpha\int_{\Omega}|x|^{2\alpha}\left(\frac{x}{|x|^2}\cdot \D u\right)^2dx\nonumber\\
    &+4(m-1)\alpha\int_{\Omega}|x|^{2\alpha}\frac{|\D u|^2}{|x|^2}dx\nonumber\\
    &+\bigg\{\Big(2(m-1)(4\alpha^2-4(m-1)\alpha)-\left(16(m-1)\alpha^2+4(m-1)^2\alpha\right)-8(m-1)\alpha(1-\alpha)\Big)(1-\alpha)\nonumber\\
    &-(m-1)^2(4\alpha^2+4(m-1)\alpha)\bigg\}\int_{\Omega}|x|^{2\alpha}\frac{u^2}{|x|^4}dx\nonumber\\
    &+4(m^2-1)\int_{\Omega}|x|^{2\alpha}\left(\frac{x}{|x|^2}\cdot \D u\right)^2dx-4(m^2-1)(\alpha-1)(2\alpha-1)\int_{\Omega}|x|^{2\alpha}\frac{u^2}{|x|^4}dx\nonumber\\
    &=\int_{\Omega}|x|^{2\alpha}\left(\Delta u+\frac{(m^2-1)}{|x|^2}u\right)^2dx+4(m-1)(m+1-2\alpha)\int_{\Omega}|x|^{2\alpha}\left(\frac{x}{|x|^2}\cdot \D u\right)^2dx\nonumber\\
    &+4(m-1)\alpha\int_{\Omega}|x|^{2\alpha}\frac{|\D u|^2}{|x|^2}dx
    -4(m-1)(2\alpha^2+(m^2-2m-3)\alpha+(m+1))\int_{\Omega}|x|^{2\alpha}\frac{u^2}{|x|^4}dx.
\end{align}
If $2\alpha^2+(m^2-2m-3)\alpha+(m+1)\leq 0$, we are done, so assume that $2\alpha^2+(m^2-2m-3)\alpha+(m+1)>0$. First, notice that 
\begin{align}\label{ipp_m_alpha15}
    \int_{\Omega}|x|^{2\alpha}\frac{|\D u|^2}{|x|^2}dx=\int_{\Omega}|x|^{2\alpha}\left(\frac{x}{|x|^2}\cdot \D u\right)^2dx+\int_{\Omega}|x|^{2\alpha}\frac{(\p{\theta}u)^2}{|x|^4}dx.
\end{align}
Then, the elementary identity \eqref{ipp_m_alpha11} and Cauchy-Schwarz inequality imply that for all $\alpha<1$
\begin{align}\label{ipp_m_alpha16}
    \int_{\Omega}|x|^{2\alpha}\frac{u^2}{|x|^4}dx\leq \frac{1}{(1-\alpha)^2}\int_{\Omega}|x|^{2\alpha}\left(\frac{x}{|x|^2}\cdot \D u\right)^2dx.
\end{align}
Therefore, we have by \eqref{ipp_m_alpha14}, \eqref{ipp_m_alpha15}, and \eqref{ipp_m_alpha16}
\begin{align*}
    &\int_{\Omega}|x|^{2\alpha}\left(\leb_mu\right)^2dx\geq \int_{\Omega}|x|^{2\alpha}\left(\frac{x}{|x|^2}\cdot \D u\right)^2dx+4(m-1)\alpha\int_{\Omega}|x|^{2\alpha}\frac{(\p{\theta}u)^2}{|x|^4}dx\\
    &+4(m-1)\left(m+1-2\alpha+\alpha-\frac{1}{(1-\alpha)^2}\left(2\alpha^2+(m^2-2m-3)\alpha+(m+1)\right)\right)\int_{\Omega}|x|^{2\alpha}\left(\frac{x}{|x|^2}\cdot \D u\right)^2dx\\
    &=\int_{\Omega}|x|^{2\alpha}\left(\frac{x}{|x|^2}\cdot \D u\right)^2dx+4(m-1)\alpha\int_{\Omega}|x|^{2\alpha}\frac{(\p{\theta}u)^2}{|x|^4}dx\\
    &-\frac{4(m-1)\alpha}{(1-\alpha)^2}\left(\alpha^2-(m+1)\alpha+m^2\right)\int_{\Omega}|x|^{2\alpha}\left(\frac{x}{|x|^2}\cdot \D u\right)^2dx.
\end{align*}
Since
\begin{align*}
    (m+1)^2-4m^2=-3m^2+2m+1<0\quad \text{for all}\;\, m>1, 
\end{align*}
we deduce that for all $m>1$ and $\alpha>0$, we have 
\begin{align*}
    &\int_{\Omega}\frac{|\D u|^2}{|x|^2}\left(\frac{a}{|x|}\right)^{2\alpha}dx\leq \frac{(1+\alpha)^2}{4(m-1)\alpha(\alpha^2+(m+1)\alpha+m^2)}\int_{\Omega}\left(\leb_mu\right)^2\left(\frac{a}{|x|}\right)^{2\alpha}dx\\
    &\leq \frac{(1+\alpha)^2}{4(m-1)\alpha(\alpha^2+(m+1)\alpha+m^2)}\int_{\Omega}\left(\Delta u+2(m-1)\frac{x}{|x|^2}\cdot \D u+\frac{(m-1)^2}{|x|^2}u\right)^2dx.
\end{align*}
Furthermore, the inequality \eqref{ipp_m_alpha16} shows that for all $m>1$ and for all $\alpha>0$, we have
\begin{align*}
    \int_{\Omega}\frac{u^2}{|x|^4}\left(\frac{a}{|x|}\right)^{2\alpha}dx&\leq \frac{1+\alpha}{4(m-1)\alpha(\alpha^2+(m+1)\alpha+m^2)}\int_{\Omega}\left(\leb_mu\right)^2\left(\frac{a}{|x|}\right)^{2\alpha}dx\\
    &\leq \frac{1+\alpha}{4(m-1)\alpha(\alpha^2+(m+1)\alpha+m^2)}\int_{\Omega}\left(\Delta u+2(m-1)\frac{x}{|x|^2}\cdot \D u+\frac{(m-1)^2}{|x|^2}u\right)^2dx.
\end{align*}
For the second inequality with the weight $\left(\dfrac{|x|}{b}\right)^{2\alpha}$, we will take advantage of the quasi conformal invariance of our problem and of the almost self-adjointness properties of $\leb_m^{\ast}\leb_m$. 

   Now, making the change of variable
    \begin{align*}
        \frac{x}{|x|^2}=y
    \end{align*}
    we get
    \begin{align*}
        \frac{dx}{|x|^4}=dy.
    \end{align*}
    Therefore, we get
    \begin{align}\label{2lm1}
        \int_{\Omega}|x|^{-2\alpha}\frac{u^2}{|x|^4}dx=\int_{\Omega'}|y|^{2\alpha}|u(i(y))|^2dy=\int_{\Omega')}|y|^{2\alpha}|\bar{u}(y)|^2dy,
    \end{align}
    where $\Omega'=B_{a^{-1}}\setminus\bar{B}_{b^{-1}}(0)$, and 
    where $\bar{u}=u\circ \iota:\Omega'\rightarrow \R$. Then, thanks to \eqref{inverse_laplacian}, if $\bar{u}=u(\iota(x))$, then 
    \begin{align*}
        \Delta \bar{u}=\frac{1}{|x|^4}\Delta u(\iota(x)).
    \end{align*}
    On the other hand, we have
    \begin{align*}
        x\cdot \D\bar{u}&=x_1\left(\frac{x_2^2-x_1^2}{(x_1^2+x_2^2)^2}\p{x_1}u-\frac{2x_1x_2}{(x_1^2+x_2^2)^2}\p{x_2}u\right)+x_2\left(-\frac{2x_1x_2}{(x_1^2+x_2^2)}\p{x_1}u\right)\\
        &=-\frac{x}{|x|^2}\cdot \D u(\iota(x)).
    \end{align*}
    Therefore, we have
    \begin{align*}
        \Delta \bar{u}-2(m-1)\frac{x}{|x|^2}\cdot \D \bar{u}+\frac{(m-1)^2}{|x|^2}\bar{u}=\frac{1}{|x|^4}\Delta u\circ \iota+2(m-1)\frac{x}{|x|^4}\cdot \D u\circ\iota+\frac{(m-1)^2}{|x|^2}u\circ \iota,
    \end{align*}
    or more compactly
    \begin{align}\label{inversion_lm1}
        \leb_m^{\ast}\bar{u}=\frac{1}{|x|^4}\left(\leb_mu\right)\circ\iota
    \end{align}
    which implies that for all $b>1$
    \begin{align}\label{inversion_lm2}
        &\int_{\Omega'}|y|^{2\alpha}\left(\Delta \bar{u}-2(m-1)\frac{y}{|y|^2}\cdot \D \bar{u}+\frac{(m-1)^2}{|y|^2}\bar{u}\right)^2dy\nonumber\\
        &=\int_{\Omega}\frac{1}{|x|^{2\alpha}}\left(|x|^4\Delta u+2(m-1)|x|^2\, x\cdot \D u+(m-1)^2|x|^2u\right)^2\frac{dx}{|x|^4}\nonumber\\
        &=\int_{\Omega}{|x|^{4-2\alpha}}\left(\Delta u+2(m-1)\frac{x}{|x|^2}\cdot \D u+\frac{(m-1)^2}{|x|^2}u\right)^2dx=\int_{\Omega}|x|^{4-2\alpha}\left(\leb_mu\right)^2dx.
    \end{align}
    Now, recall that by \eqref{operator1}, we have
    \begin{align*}
        \leb_m^{\ast}\leb_m=\Delta^2+2(m^2-1)\frac{1}{|x|^2}\Delta-4(m^2-1)\left(\frac{x}{|x|^2}\right)^{t}\cdot \D^2\left(\,\cdot\,\right)\cdot\left(\frac{x}{|x|^2}\right)+\frac{(m^2-1)^2}{|x|^4},
    \end{align*}
    while \eqref{operator2} implies that 
    \begin{align*}
        &\leb_m\leb_m^{\ast}=\widetilde{\leb_{m-2}}^{\ast}\widetilde{\leb_{m-2}}=\leb_{m-2}^{\ast}\leb_{m-2}\\
        &=\Delta^2+2((m-2)^2-1)\frac{1}{|x|^2}\Delta-4((m-2)^2-1)-1)\left(\frac{x}{|x|^2}\right)^t\cdot\D^2\left(\,\cdot\,\right)\cdot\left(\frac{x}{|x|^2}\right)+\frac{((m-2)^2-1)^2}{|x|^4}.
    \end{align*}
    Therefore, we compute as previously
    \begin{align}\label{new_m_alpha1}
        &\int_{\Omega}|x|^{2\alpha}\left(\leb_{m}^{\ast}u\right)^2dx=\int_{\Omega}|x|^{2\alpha}\left(\Delta u-2(m-1)\frac{x}{|x|^2}\cdot \D u+\frac{(m-1)^2}{|x|^2}u\right)\leb_m^{\ast}u\,dx\nonumber\\
        &=\int_{\Omega}u\,\Delta\left(|x|^{2\alpha}\leb_m^{\ast}u\right)dx+2(m-1)\int_{\Omega}u\frac{x}{|x|^2}\cdot \D\left(|x|^{2\alpha}\leb_m^{\ast}u\right)dx+\int_{\Omega}|x|^{2\alpha}u\frac{(m-1)^2}{|x|^2}\leb_m^{\ast}u\,dx\nonumber\\
        &=\int_{\Omega}|x|^{2\alpha}u\,\leb_m\leb_m^{\ast}u\,dx+4\alpha\int_{\Omega}|x|^{2\alpha}u\frac{x}{|x|^2}\cdot \D\left(\leb_m^{\ast}u\right)dx+\left(4\alpha^2+4(m-1)\alpha\right)\int_{\Omega}|x|^{2\alpha}\frac{u}{|x|^2}\leb_m^{\ast}u\,dx.
    \end{align}
    Since $\leb_m\leb_m^{\ast}=\leb_{m-2}^{\ast}\leb_{m-2}$, we have by \eqref{ipp_m_alpha4}
    \begin{align}\label{new_m_alpha2}
        &\int_{\Omega}|x|^{2\alpha}u\,\leb_m\leb_m^{\ast}u\,dx=\int_{\Omega}|x|^{2\alpha}\left(\Delta u+\frac{((m-2)^2-1)}{|x|^2}u\right)^2dx+4\alpha^2\int_{\Omega}|x|^{2\alpha}\frac{u}{|x|^2}\Delta u\,dx\nonumber\\
        &+4\alpha\int_{\Omega}|x|^{2\alpha}\left(\frac{x}{|x|^2}\cdot \D u\right)\Delta u\,dx
        -4((m-2)^2-1)\int_{\Omega}|x|^{2\alpha}u\left(\frac{x}{|x|^2}\right)^t\cdot \D^2u\cdot\left(\frac{x}{|x|^2}\right)dx.
    \end{align}
    Then, we have (compare with \eqref{ipp_m_alpha6})
    \begin{align}\label{new_m_alpha3}
        &4\alpha\int_{\Omega}|x|^{2\alpha}u\frac{x}{|x|^2}\cdot\D\left(\leb_m^{\ast}u\right)dx=-8\alpha^2\int_{\Omega}|x|^{2\alpha}\frac{u}{|x|^2}\leb_m^{\ast}u\,dx-4\alpha\int_{\Omega}|x|^{2\alpha}\left(\frac{x}{|x|^2}\cdot \D u\right)\leb_m^{\ast}u\,dx\nonumber\\
        &=-8\alpha^2\int_{\Omega}|x|^{2\alpha}\frac{u}{|x|^2}\Delta u\,dx -4\alpha\int_{\Omega}|x|^{2\alpha}\left(\frac{x}{|x|^2}\cdot \D u\right)\Delta u\,dx+8(m-1)\alpha\int_{\Omega}|x|^{2\alpha}\left(\frac{x}{|x|^2}\cdot \D u\right)^2dx\nonumber\\
        &+\left(16(m-1)\alpha^2-4(m-1)^2\alpha\right)\int_{\Omega}|x|^{2\alpha}\left(\frac{x}{|x|^2}\cdot \D u\right)\frac{u}{|x|^2}dx-8(m-1)^2\alpha^2\int_{\Omega}|x|^{2\alpha}\frac{u^2}{|x|^4}dx.
    \end{align}
    Then, we simply expand as in \eqref{ipp_m_alpha7}
    \begin{align}\label{new_m_alpha4}
        &\left(4\alpha^2+4(m-1)\alpha\right)\int_{\Omega}|x|^{2\alpha}\frac{u}{|x|^2}\leb_m^{\ast}u\,dx+4\alpha^2\int_{\Omega}|x|^{2\alpha}\frac{u}{|x|^2}\Delta u\,dx+4\alpha\int_{\Omega}|x|^{2\alpha}\left(\frac{x}{|x|^2}\cdot \D u\right)\Delta u\,dx\nonumber\\
        &=\left(8\alpha^2+4(m-1)\alpha\right)\int_{\Omega}|x|^{2\alpha}\frac{u}{|x|^2}\Delta u\,dx+4\alpha\int_{\Omega}|x|^{2\alpha}\left(\frac{x}{|x|^2}\cdot \D u\right)\Delta u\,dx\nonumber\\
        &-2(m-1)(4\alpha^2+4(m-1)\alpha)\int_{\Omega}|x|^{2\alpha}\left(\frac{x}{|x|^2}\cdot \D u\right)\frac{u}{|x|^2}dx+(m-1)^2\left(4\alpha^2+4(m-1)\alpha\right)\int_{\Omega}|x|^{2\alpha}\frac{u^2}{|x|^4}dx.
    \end{align}
    Therefore, we have by \eqref{new_m_alpha3} and \eqref{new_m_alpha4}
    \begin{align}\label{new_m_alpha5}
        &4\alpha\int_{\Omega}|x|^{2\alpha}u\frac{x}{|x|^2}\cdot\D\left(\leb_m^{\ast}u\right)dx+\left(4\alpha^2+4(m-1)\alpha\right)\int_{\Omega}|x|^{2\alpha}\frac{u}{|x|^2}\leb_m^{\ast}u\,dx+4\alpha^2\int_{\Omega}|x|^{2\alpha}\frac{u}{|x|^2}\Delta u\,dx\nonumber\\
        &+4\alpha\int_{\Omega}|x|^{2\alpha}\left(\frac{x}{|x|^2}\cdot \D u\right)\Delta u\,dx
        =-\colorcancel{8\alpha^2\int_{\Omega}|x|^{2\alpha}\frac{u}{|x|^2}\Delta u\,dx}{red} -\colorcancel{4\alpha\int_{\Omega}|x|^{2\alpha}\left(\frac{x}{|x|^2}\cdot \D u\right)\Delta u\,dx}{blue}\nonumber\\
        &+8(m-1)\alpha\int_{\Omega}|x|^{2\alpha}\left(\frac{x}{|x|^2}\cdot \D u\right)^2dx
        +\left(16(m-1)\alpha^2-4(m-1)^2\alpha\right)\int_{\Omega}|x|^{2\alpha}\left(\frac{x}{|x|^2}\cdot \D u\right)\frac{u}{|x|^2}dx\nonumber\\
        &-8(m-1)^2\alpha^2\int_{\Omega}|x|^{2\alpha}\frac{u^2}{|x|^4}dx
        +\left(\colorcancel{8\alpha^2}{red}+4(m-1)\alpha\right)\int_{\Omega}|x|^{2\alpha}\frac{u}{|x|^2}\Delta u\,dx+\colorcancel{4\alpha\int_{\Omega}|x|^{2\alpha}\left(\frac{x}{|x|^2}\cdot \D u\right)\Delta u\,dx}{blue}\nonumber\\
        &-2(m-1)(4\alpha^2+4(m-1)\alpha)\int_{\Omega}|x|^{2\alpha}\left(\frac{x}{|x|^2}\cdot \D u\right)\frac{u}{|x|^2}dx+(m-1)^2\left(4\alpha^2+4(m-1)\alpha\right)\int_{\Omega}|x|^{2\alpha}\frac{u^2}{|x|^4}dx\nonumber\\
        &=8(m-1)\alpha\int_{\Omega}|x|^{2\alpha}\left(\frac{x}{|x|^2}\cdot\D u\right)^2dx+4(m-1)\alpha\int_{\Omega}|x|^{2\alpha}\frac{u}{|x|^2}\Delta u\,dx\nonumber\\
        &+\left(16(m-1)\alpha^2-4(m-1)^2\alpha-2(m-1)\left(4\alpha^2+4(m-1)\alpha\right)\right)\int_{\Omega}|x|^{2\alpha}\left(\frac{x}{|x|^2}\cdot \D u\right)\frac{u}{|x|^2}dx\nonumber\\
        &+(m-1)^2\left(4(m-1)\alpha-4\alpha^2\right)\int_{\Omega}|x|^{2\alpha}\frac{u^2}{|x|^4}dx.
    \end{align}
    By \eqref{new_m_alpha1}, \eqref{new_m_alpha2}, and \eqref{new_m_alpha5}, we deduce that 
    \begin{align}\label{new_m_alpha6}
        &\int_{\Omega}|x|^{2\alpha}\left(\leb_m^{\ast}u\right)^2dx=\int_{\Omega}|x|^{2\alpha}\left(\Delta u+\frac{((m-2)^2-1)}{|x|^2}u\right)^2dx+8(m-1)\alpha\int_{\Omega}|x|^{2\alpha}\left(\frac{x}{|x|^2}\cdot\D u\right)^2dx\nonumber\\
        &-4\left((m-2)^2-1\right)\int_{\Omega}|x|^{2\alpha}u\left(\frac{x}{|x|^2}\right)\cdot \D^2u\cdot\left(\frac{x}{|x|^2}\right)dx+4(m-1)\alpha\int_{\Omega}|x|^{2\alpha}\frac{u}{|x|^2}\Delta u\,dx\nonumber\\
        &+\left(16(m-1)\alpha^2-4(m-1)^2\alpha-2(m-1)\left(4\alpha^2+4(m-1)\alpha\right)\right)\int_{\Omega}|x|^{2\alpha}\left(\frac{x}{|x|^2}\cdot \D u\right)\frac{u}{|x|^2}dx\nonumber\\
        &+(m-1)^2\left(4(m-1)\alpha-4\alpha^2\right)\int_{\Omega}|x|^{2\alpha}\frac{u^2}{|x|^4}dx.
    \end{align}
    Now, we have
    \begin{align}\label{new_m_alpha7}
        &\int_{\Omega}|x|^{2\alpha-2}u\Delta u\,dx=-\int_{\Omega}|x|^{2\alpha}\frac{|\D u|^2}{|x|^2}dx-2(\alpha-1)\int_{\Omega}|x|^{2\alpha}\left(\frac{x}{|x|^2}\cdot \D u\right)\frac{u}{|x|^2}dx\nonumber\\
        &=-\int_{\Omega}|x|^{2\alpha}\left(\frac{x}{|x|^2}\cdot \D u\right)^2dx-\int_{\Omega}|x|^{2\alpha}\frac{(\p{\theta}u)^2}{|x|^4}dx-2(\alpha-1)\int_{\Omega}|x|^{2\alpha}\left(\frac{x}{|x|^2}\cdot \D u\right)\frac{u}{|x|^2}dx
    \end{align}
    while \eqref{ipp_m_alpha13} implies that
    \begin{align}\label{new_m_alpha8}
        \int_{\Omega}|x|^{2\alpha}u\left(\frac{x}{|x|^2}\right)^t\cdot \D^2u\cdot\left(\frac{x}{|x|^2}\right)dx=-\int_{\Omega}|x|^{2\alpha}\left(\frac{x}{|x|^2}\cdot\D u\right)^2dx+(\alpha-1)(2\alpha-1)\int_{\Omega}|x|^{2\alpha}\frac{u^2}{|x|^4}dx.
    \end{align}
    Therefore, we have by \eqref{new_m_alpha6}, \eqref{new_m_alpha7}, and \eqref{new_m_alpha8}
    \begin{align}\label{new_m_alpha9}
        &\int_{\Omega}|x|^{2\alpha}\left(\leb_m^{\ast}u\right)^2dx=\int_{\Omega}|x|^{2\alpha}\left(\Delta u+\frac{((m-2)^2-1)}{|x|^2}u\right)^2dx\nonumber\\
        &+\left(4\left((m-2)^2-1\right)+4(m-1)\alpha\right)\int_{\Omega}|x|^{2\alpha}\left(\frac{x}{|x|^2}\cdot \D u\right)^2dx-4(m-1)\alpha\int_{\Omega}|x|^{2\alpha}\frac{(\p{\theta}u)^2}{|x|^4}dx\nonumber\\
        &+\bigg\{16(m-1)\alpha^2-4(m-1)^2\alpha-2(m-1)\left(4\alpha^2+4(m-1)\alpha\right)
        -8(m-1)\alpha(\alpha-1)\bigg\}\nonumber\\
        &\times\int_{\Omega}|x|^{2\alpha}\left(\frac{x}{|x|^2}\cdot \D u\right)\frac{u}{|x|^2}dx\nonumber\\
        &+\bigg\{(m-1)^2\left(4(m-1)\alpha-4\alpha^2\right)-4\left((m-2)^2-1\right)(\alpha-1)(2\alpha-1)\bigg\}\int_{\Omega}|x|^{2\alpha}\frac{u^2}{|x|^4}dx.
    \end{align}
    Finally, using \eqref{ipp_m_alpha11}, since
    \begin{align*}
        &(m-1)^2\left(4(m-1)\alpha-4\alpha^2\right)-4\left((m-2)^2-1\right)(\alpha-1)(2\alpha-1)\\
        &+(1-\alpha)\bigg\{16(m-1)\alpha^2-4(m-1)^2\alpha-2(m-1)\left(4\alpha^2+4(m-1)\alpha\right)
        -8(m-1)\alpha(\alpha-1)\bigg\}\\
        &=4(m-1)\left(2\alpha^2+(m^2-2m-3)\alpha-m+3\right),
    \end{align*}
    we get by \eqref{new_m_alpha9}
    \begin{align}\label{new_m_alpha10}
        &\int_{\Omega}|x|^{2\alpha}\left(\leb_m^{\ast}u\right)^2dx=\int_{\Omega}|x|^{2\alpha}\left(\Delta u+\frac{((m-2)^2-1)}{|x|^2}u\right)^2dx\nonumber\\
        &+\left(4\left((m-2)^2-1\right)+4(m-1)\alpha\right)\int_{\Omega}|x|^{2\alpha}\left(\frac{x}{|x|^2}\cdot \D u\right)^2dx-4(m-1)\alpha\int_{\Omega}|x|^{2\alpha}\frac{(\p{\theta}u)^2}{|x|^4}dx\nonumber\\
        &+4(m-1)\left(2\alpha^2+(m^2-2m-3)\alpha-m+3\right)\int_{\Omega}|x|^{2\alpha}\frac{u^2}{|x|^4}dx.
    \end{align}
    We would be finished it it were not for this tangential part of the gradient. Therefore, using identity \eqref{new_m_alpha7}, we get
    \begin{align}\label{new_m_alpha11}
        &-4(m-1)\alpha\int_{\Omega}|x|^{2\alpha}\frac{(\partial_{\theta}u)^2}{|x|^4}dx=4(m-1)\alpha\int_{\Omega}|x|^{2\alpha-2}u\,\Delta u\,dx+4(m-1)\alpha\int_{\Omega}|x|^{2\alpha}\left(\frac{x}{|x|^{2}}\cdot \D u\right)^2dx\nonumber\\
        &+8(m-1)\alpha(\alpha-1)\int_{\Omega}|x|^{2\alpha}\left(\frac{x}{|x|^2}\cdot \D u\right)\frac{u}{|x|^2}dx
        =4(m-1)\alpha\int_{\Omega}|x|^{2\alpha-2}u\,\Delta u\,dx\nonumber\\
        &+4(m-1)\alpha\int_{\Omega}|x|^{2\alpha}\left(\frac{x}{|x|^{2}}\cdot \D u\right)^2dx-8(m-1)\alpha(\alpha-1)^2\int_{\Omega}|x|^{2\alpha}\frac{u^2}{|x|^4}dx,
    \end{align}
    and \eqref{new_m_alpha10} becomes
    \begin{align}\label{new_m_alpha12}
        &\int_{\Omega}|x|^{2\alpha}\left(\leb_m^{\ast}u\right)^2dx=\int_{\Omega}|x|^{2\alpha}\left(\Delta u+\frac{((m-2)^2-1)}{|x|^2}u\right)^2dx\nonumber\\
        &+\left(4\left((m-2)^2-1\right)+8(m-1)\alpha\right)\int_{\Omega}|x|^{2\alpha}\left(\frac{x}{|x|^2}\cdot \D u\right)^2dx-4(m-1)\alpha\int_{\Omega}|x|^{2\alpha}u\,\Delta u\,dx\nonumber\\
        &+4(m-1)\left(-2\alpha^3+6\alpha^2+(m^2-2m-5)\alpha-m+3\right)\int_{\Omega}|x|^{2\alpha}\frac{u^2}{|x|^4}dx.
    \end{align}
    Then, we have
    \begin{align}\label{new_m_alpha13}
        &-4(m-1)\alpha\int_{\Omega}|x|^{2\alpha}u\,\Delta u\,dx=-4(m-1)\alpha\int_{\Omega}|x|^{2\alpha}u\left(\Delta u+\frac{((m-2)^2-1)}{|x|^2}u\right)dx\nonumber\\
        &+4(m-1)^2(m-3)\alpha\int_{\Omega}|x|^{2\alpha}\frac{u^2}{|x|^4}dx
        \geq -4(m-1)^2\alpha^2\int_{\Omega}|x|^{2\alpha}\frac{u^2}{|x|^4}dx\nonumber\\
        &-\int_{\Omega}|x|^{2\alpha}\left(\Delta +\frac{((m-2)^2-1)}{|x|^2}u\right)^2dx
        +4(m-1)^3(m-3)\alpha\int_{\Omega}|x|^{2\alpha}\frac{u^2}{|x|^4}dx.
    \end{align}
    On the other hand, we have (provided that $(m-2)^2-1+2(m-1)\alpha>0$ (which is always true for $m\geq 2$ and $\alpha>1$) by \eqref{ipp_m_alpha16}
    \small
    \begin{align}\label{new_m_alpha14}
        \left(4((m-2)^2-1)+8(m-1)\alpha\right)\int_{\Omega}|x|^{2\alpha}\left(\frac{x}{|x|^2}\cdot \D u\right)^2dx\geq 4(m-1)\left(m-3+2\alpha\right)(\alpha-1)^2\int_{\Omega}|x|^{2\alpha}\frac{u^2}{|x|^4}dx.
    \end{align}
    \normalsize
    Since 
    \begin{align}\label{new_m_alpha15}
        &4(m-1)\left(m-3+2\alpha\right)(\alpha-1)^2+4(m-1)\left(-2\alpha^3+6\alpha^2+(m^2-2m-5)\alpha-m+3\right)\nonumber\\
        &-4(m-1)^2\alpha^2+4(m-1)^3(m-3)\alpha=4\,m(m-1)^2(m-3)\alpha,
    \end{align}
    we deduce by \eqref{new_m_alpha12}, \eqref{new_m_alpha12}, \eqref{new_m_alpha12}, and \eqref{new_m_alpha12} that
    \begin{align}\label{m_large_ineq}
        4\,m(m-1)^2(m-3)\alpha\int_{\Omega}|x|^{2\alpha}\frac{u^2}{|x|^4}dx\leq \int_{\Omega}|x|^{2\alpha}\left(\leb_m^{\ast}u\right)^2dx.
    \end{align}
    Therefore, for all $m>3$ and for all $\alpha>0$ we deduce that
    \begin{align}\label{m_large_ineq2}
        \int_{\Omega}|x|^{2\alpha}\frac{u^2}{|x|^4}dx\leq \frac{1}{4\,m(m-1)^2(m-3)\alpha}\int_{\Omega}|x|^{2\alpha}\left(\Delta u-2(m-1)\frac{x}{|x|^2}\cdot \D u+\frac{(m-1)^2}{|x|^4}\right)^2dx.
    \end{align}
    We also deduce that for all $m>3$ and for all $\alpha>0$
    \begin{align}\label{m_large_ineq3}
        &4(m-1)(m-3+2\alpha)\int_{\Omega}|x|^{2\alpha}\left(\frac{x}{|x|^2}\cdot \D u\right)^2dx\leq \int_{\Omega}|x|^{2\alpha}\left(\leb_m^{\ast}u\right)^2dx\nonumber\\
        &+\left(4(m-1)(m-3+2\alpha)(\alpha-1)^2-4m(m-1)^2(m-3)\alpha\right)\int_{\Omega}|x|^{2\alpha}\frac{u^2}{|x|^4}dx\nonumber\\
        &\leq \left(1+\left(\frac{(m-3+2\alpha)(\alpha-1)^2}{m(m-1)(m-3)\alpha}-1\right)_+\right)\int_{\Omega}|x|^{2\alpha}\left(\leb_m^{\ast}u\right)^2dx,
    \end{align}
    which shows that we have in particular that for all $m>3$ and $\alpha>0$
    \begin{align}\label{m_large_ineq4}
        \int_{\Omega}|x|^{2\alpha}\left(\frac{x}{|x|^2}\cdot \D u\right)^2dx\leq \left(\frac{1}{4(m-1)(m-3+2\alpha)}+\frac{(\alpha-1)^2}{4m(m-1)^2(m-3)\alpha}\right)\int_{\Omega}|x|^{2\alpha}\left(\leb_m^{\ast}u\right)^2dx.
    \end{align}
    Furthermore, notice the two inequalities \eqref{m_large_ineq2} and \eqref{m_large_ineq4} show that for all $m>3$ and $\alpha>0$, there exists $C_{m,\alpha}<\infty$ such that
    \begin{align}
        \int_{\Omega}|x|^{2\alpha}\left(\Delta u+\frac{((m-2)^2-1)}{|x|^2}u\right)^2dx\leq C_{m,\alpha}\int_{\Omega}\left(\Delta u-2(m-1)\frac{x}{|x|^2}\cdot \D u+\frac{(m-1)^2}{|x|^2}u\right)^2dx.
    \end{align}
    Finally, an integration by parts and Cauchy-Schwarz inequality show that 
    \begin{align}\label{ipp_finale_m_alpha}
        &\int_{\Omega}|x|^{2\alpha}\frac{|\D u|^2}{|x|^2}dx=-\int_{\Omega}|x|^{2\alpha}\frac{u}{|x|^2}\Delta u\,dx+2(1-\alpha)\int_{\Omega}|x|^{2\alpha}\left(\frac{x}{|x|^2}\cdot \D u\right)\frac{u}{|x|^2}dx\nonumber\\
        &=-\int_{\Omega}|x|^{2\alpha}\frac{u}{|x|^2}\leb_m^{\ast}u\,dx-2(m-1)\int_{\Omega}|x|^{2\alpha}\left(\frac{x}{|x|^2}\cdot \D u\right)\frac{u}{|x|^2}dx+2(1-\alpha)\int_{\Omega}|x|^{2\alpha}\left(\frac{x}{|x|^2}\cdot \D u\right)\frac{u}{|x|^4}dx\nonumber\\
        &+(m-1)^2\int_{\Omega}|x|^{2\alpha}\frac{u^2}{|x|^4}dx\nonumber\\
        &=-\int_{\Omega}|x|^{2\alpha}\frac{u}{|x|^2}\leb_m^{\ast}u\,dx+\left((m-1)^2+(2m-(\alpha+4))(\alpha-1)\right)\int_{\Omega}|x|^{2\alpha}\frac{u^2}{|x|^4}dx\\
        &\leq \int_{\Omega}|x|^{2\alpha}\left(\leb_m^{\ast}u\right)^2dx+\left(\frac{1}{4}-(\alpha^2-(2m-3)\alpha-m^2+4m-5)\right)\int_{\Omega}|x|^{2\alpha}\frac{u^2}{|x|^4}dx\nonumber\\
        &\leq \left(1+\left(\frac{1}{4(m-1)(m-3+2\alpha)}+\frac{(\alpha-1)^2}{4m(m-1)^2(m-3)\alpha}\right)\left(\frac{1}{4}-(\alpha^2-(2m-3)\alpha-m^2+4m-5)\right)_+\right)\nonumber\\
        &\times \int_{\Omega}|x|^{2\alpha}\left(\leb_m^{\ast}u\right)^2dx.\label{grad_ineq}
    \end{align}
    Now, using \eqref{inversion_lm2}, we deduce by \eqref{m_large_ineq2} and \eqref{grad_ineq}
    that for all $m>3$ and for all $\alpha>0$, we have
    \begin{align*}
        \int_{\Omega}|x|^{4-2\alpha}\frac{|\D u|^2}{|x|^2}dx+\int_{\Omega}|x|^{4-2\alpha}\frac{u^2}{|x|^4}dx\leq C_{m,\alpha}\int_{\Omega}|x|^{4-2\alpha}\left(\Delta u+2(m-1)\frac{x}{|x|^2}\cdot \D u+\frac{(m-1)^2}{|x|^2}u\right)^2dx.
    \end{align*}
    Since the inequality holds true for all $\alpha>0$, we deduce in particular that for all $0<\alpha<2$, we have
    \begin{align}
        \int_{\Omega}\frac{|\D u|^2}{|x|^2}\left(\frac{|x|}{b}\right)^{2\alpha}dx+\int_{\Omega}\frac{u^2}{|x|^4}\left(\frac{|x|}{b}\right)^{2\alpha}dx\leq C_{m,2-\alpha}\int_{\Omega}\left(\leb_mu\right)^2\left(\frac{|x|}{b}\right)^{2\alpha}dx.
    \end{align}
    Now, the case $1<m\leq 3$ remains to be treated. In reality, we are mostly interested in the cases $m=2$ and $m=3$. Since $(m-2)^2-1\leq 0$ for $1\leq m\leq 3$, it is expected that the previous computation does not lead to the because additional terms mostly contribute negative to the coefficient in front of the weight $L^2$ norm of $u$. Starting from \eqref{new_m_alpha12}, we rewrite thanks to \eqref{ipp_m_alpha11}
    \begin{align}\label{extra_m_alpha1}
        &-4(m-1)\alpha\int_{\Omega}|x|^{2\alpha}u\,\Delta u\,dx=-4(m-1)\alpha\int_{\Omega}|x|^{2\alpha}u\,\leb_m^{\ast}u\,dx-8(m-1)^2\alpha\int_{\Omega}|x|^{2\alpha}\left(\frac{x}{|x|^2}\cdot u\right)dx\nonumber\\
        &+4(m-1)^3\alpha\int_{\Omega}|x|^{2\alpha}\frac{u^2}{|x|^4}dx\nonumber\\
        &=-4(m-1)\alpha\int_{\Omega}|x|^{2\alpha}u\,\leb_m^{\ast}u\,dx+\left(8(m-1)^2\alpha(\alpha-1)+4(m-1)^3\alpha\right)\int_{\Omega}|x|^{2\alpha}\frac{u^2}{|x|^4}dx\nonumber\\
        &\geq \left(8(m-1)^2\alpha(\alpha-1)+4(m-1)^3\alpha-\delta\right)\int_{\Omega}|x|^{2\alpha}\frac{u^2}{|x|^4}dx-\frac{4(m-1)\alpha^2}{\delta}\int_{\Omega}|x|^{2\alpha}\left(\leb_m^{\ast}u\right)^2dx
    \end{align}
    for all $\delta>0$. Therefore, we deduce by \eqref{new_m_alpha12}, \eqref{ipp_m_alpha16}, and \eqref{extra_m_alpha1} that for all $\delta>0$,
    \begin{align}\label{extra_m_alpha2}
        &\left(1+\frac{4(m-1)\alpha^2}{\delta}\right)\int_{\Omega}|x|^{2\alpha}\left(\leb_m^{\ast}u\right)^2dx
        \geq\int_{\Omega}\left(\Delta u+\frac{((m-2)^2-1)}{|x|^2}u\right)^2dx\nonumber\\
        &+\bigg\{\left(4((m-2)^2-1)+8(m-1)\alpha\right)(\alpha-1)^2+8(m-1)^2\alpha(\alpha-1)+4(m-1)^3\alpha-\delta\nonumber\\
        &+4(m-1)\left(-2\alpha^3+6\alpha^2+(m^2-2m-5)\alpha-m+3\right)\bigg\}\int_{\Omega}|x|^{2\alpha}\frac{u^2}{|x|^4}dx\nonumber\\
        &=\int_{\Omega}\left(\Delta u+\frac{((m-2)^2-1)}{|x|^2}u\right)^2dx+\left(4(m-1)^2\alpha(3\alpha+2(m-3))-\delta\right)\int_{\Omega}|x|^{2\alpha}\frac{u^2}{|x|^4}dx.
    \end{align}
    Therefore, for all $\alpha>2-\dfrac{2}{3}m$, taking
    \begin{align*}
        \delta=2(m-1)^2\alpha(3\alpha+2(m-3))
    \end{align*}
    we deduce that
    \begin{align}\label{extra_m_alpha3}
        \int_{\Omega}|x|^{2\alpha}\frac{u^2}{|x|^4}dx&\leq \left(\frac{1}{2(m-1)^2\alpha(3\alpha+2(m-3))}+\frac{4\alpha^2}{(m-1)\alpha(3\alpha+2(m-3))^2}\right)\int_{\Omega}|x|^{2\alpha}\left(\leb_m^{\ast}u\right)^2dx\nonumber\\
        &=C_{2,m,\alpha}\int_{\Omega}|x|^{2\alpha}\left(\Delta u-2(m-1)\frac{x}{|x|^2}\cdot \D u+\frac{(m-1)^2}{|x|^2}\right)^2dx
    \end{align}
    Then, we have
    \begin{align*}
        &4(m-1)^2\alpha(3\alpha+2(m-3))-\left(4((m-2)^2-1)+8(m-1)\alpha\right)\\
        &=-4(m-1)(2\alpha^3-2(m+2)\alpha^2+2(-m^2+3m+1)\alpha+m-3)
    \end{align*}
    an
    \begin{align*}
        4\left((m-2)^2-1\right)+8(m-1)\alpha=4(m-1)\left(2\alpha+m-3\right),
    \end{align*}
    which implies that
    \begin{align}\label{extra_m_alpha4}
        &4(m-1)\left(2\alpha+m-3\right)\int_{\Omega}|x|^{2\alpha}\left(\frac{x}{|x|^2}\cdot\D u\right)^2dx\leq \left(1+\frac{4(m-1)\alpha^2}{\delta}\right)\int_{\Omega}|x|^{2\alpha}\left(\leb_m^{\ast}u\right)^2dx\nonumber\\
        &+4(m-1)(2\alpha^3-2(m+2)\alpha^2+2(-m^2+3m+1)\alpha+m-3+\delta)\int_{\Omega}|x|^{2\alpha}\frac{u^2}{|x|^4}dx.
    \end{align}
    Taking $\delta=1$, we deduce that (taking $\delta=1$) that for all $\alpha>\dfrac{3-m}{2}$
    \begin{align}\label{extra_m_alpha5}
        &\int_{\Omega}|x|^{2\alpha}\left(\frac{x}{|x|^2}\cdot \D u\right)^2dx\leq \left(\frac{1+4(m-1)\alpha^2}{4(m-1)(2\alpha+m-3)}\right.\nonumber\\
        &\left.+\frac{(2\alpha^3-2(m+2)\alpha^2+2(-m^2+3m+1)\alpha+m-2)_+}{2\alpha+m-3}C_{2,m,\alpha}\right)\int_{\Omega}|x|^{2\alpha}\left(\leb_m^{\ast}u\right)^2dx.
    \end{align}
    Since $1<m\leq 3$, we have
    \begin{align*}
        2-\frac{2}{3}m>\frac{3-m}{2}.
    \end{align*}
    Therefore, for all $\alpha>2-\dfrac{2}{3}m$, \eqref{ipp_finale_m_alpha}, \eqref{extra_m_alpha3}, and \eqref{extra_m_alpha5} imply that
    \begin{align}\label{extra_m_alpha5bis}
        \int_{\Omega}|x|^{2\alpha}\frac{|\D u|^2}{|x|^2}dx\leq \left(1+\left(\frac{1}{4}-(\alpha^2-(2m-3)\alpha-m^2+4m-5)\right)_+C_{2,m,\alpha}\right)\int_{\Omega}|x|^{2\alpha}\left(\leb_m^{\ast}u\right)^2dx.
    \end{align}
    Therefore, using the inversion formula \eqref{inversion_lm2} and the inequalities  \eqref{extra_m_alpha3} and \eqref{extra_m_alpha5bis}, we deduce that for all $\alpha>2-\dfrac{2}{3}m$, we have
    \begin{align*}
        \int_{\Omega}|x|^{4-2\alpha}\frac{|\D u|^2}{|x|^2}dx+\int_{\Omega}|x|^{4-2\alpha}\frac{u^2}{|x|^4}dx\leq C_{m,\alpha}'\int_{\Omega}|x|^{4-2\alpha}\left(\Delta u+2(m-1)\frac{x}{|x|^2}\cdot\D u+\frac{(m-1)^2}{|x|^2}u\right)^2dx
    \end{align*}
    In particular, we deduce that for all $1<m\leq 3$ and for all $0<\alpha<\dfrac{4}{3}m$, we have
    \begin{align}
        \int_{\Omega}\frac{|\D u|^2}{|x|^2}\left(\frac{|x|}{b}\right)^{2\alpha}dx+\int_{\Omega}\frac{u^2}{|x|^4}\left(\frac{|x|}{b}\right)^{2\alpha}dx\leq C_{m,2-\alpha}\int_{\Omega}\left(\leb_mu\right)^2\left(\frac{|x|}{b}\right)^{2\alpha}dx,
    \end{align}
    which concludes the proof of the theorem.
\end{proof}

\section{First Minimisation Problem in Dimension 4}

    \subsection{Statement of the Main Theorem}

    Let $0<a<b<\infty$, let $\Omega=B_b\setminus\bar{B}_a(0)\subset \R^4$, and consider the following minimisation problem:
    \begin{align}\label{biharmonique_dim4}
        \lambda=\inf\ens{\int_{\Omega}(\Delta u)^2dx:\;\, u\in W^{2,2}_0(\Omega)\quad \text{and}\quad \int_{\Omega}\frac{u^2}{|x|^4}dx=1}.
    \end{align}
    Thanks to the Poincaré inequality, there exists a constant $C(\Omega)<\infty$ such that for all $u\in W^{2,2}_0(\Omega)$, we have
    \begin{align*}
        &\int_{\Omega}u^2dx\leq C(\Omega)\int_{\Omega}|\D u|^2dx\\
        &\int_{\Omega}|\D u|^2dx\leq C(\Omega)\int_{\Omega}|\D^2u|^2dx.
    \end{align*}
    Now, notice that
    \begin{align}\label{d2=Delta}
        \int_{\Omega}|\D^2u|^2dx&=\sum_{i,j=1}^4\int_{\Omega}(\p{x_i,x_j}^2u)^2dx=-\sum_{i,j=1}^4\int_{\Omega}\p{x_i}(\p{x_i,x_j}^2u)\p{x_j}u\,dx=-\sum_{i,j=1}^4\int_{\Omega}\p{x_j}(\p{x_i}^2u)\p{x_j}u\,dx\nonumber\\
        &=\sum_{i,j=1}^4\int_{\Omega}\p{x_i}^2u\,\p{x_j}^2u\,dx=\int_{\Omega}(\Delta u)^2dx.
    \end{align}
    Therefore, we finally deduce that for all $u\in W^{2,2}_0(\Omega)$, the holds
    \begin{align}\label{poincare_bilaplace}
        \int_{\Omega}\left(|\D^2u|^2+|\D u|^2+u^2\right)dx\leq C(\Omega)\int_{\Omega}(\Delta u)^2dx.
    \end{align}
    Therefore, taking a minimising sequence $\ens{u_k}_{k\in\N}\subset W^{2,2}_0(\Omega)$ such that
    \begin{align*}
        &\int_{\Omega}\frac{u_k^2}{|x|^4}dx=1\\
        &\int_{\Omega}\left(\Delta u\right)^2dx\conv{k\rightarrow \infty}\lambda,
    \end{align*}
    we deduce by \eqref{poincare_bilaplace} that $\ens{u_k}_{k\in\N}$ is bounded in $W^{2,2}_0(\Omega)$, which shows that there exists $u\in W^{2,2}_0(\Omega)$ such that up to a subsequence, $u_k\hookrightarrow u$ weakly as $k\rightarrow\infty$. Thanks to the continuous Sobolev embedding $W^{2,2}(\Omega)\hookrightarrow W^{1,4}(\Omega)$, and the compact embedding $W^{1,4}(\Omega)\hookrightarrow L^p(\Omega)$ for all $p<\infty$, we deduce in particular that
    \begin{align*}
        \lim_{k\rightarrow\infty}\wp{u_k-u}{1,2}{\Omega}=0.
    \end{align*}
    In particular, 
    \begin{align}\label{constraint1}
        1=\int_{\Omega}\frac{u_k^2}{|x|^4}dx\conv{k\rightarrow\infty}\int_{\Omega}\frac{u^2}{|x|^4}dx.
    \end{align}
    Finally, Fatou's Lemma shows that 
    \begin{align}\label{constraint2}
        \int_{\Omega}(\Delta u)^2dx\leq\liminf_{k\rightarrow\infty}\int_{\Omega}(\Delta u_k)^2dx=\lambda.
    \end{align}
    Therefore, $u\in W^{2,2}_0(\Omega)$ satisfies the constraint \eqref{constraint1}, and \eqref{constraint2} shows that $u$ is a minimiser for our problem \eqref{biharmonique_dim4}. Therefore, $u$ is a solution of the system of equations:
    \begin{align}\label{eigenvalue_bilaplacian}
        \left\{\begin{alignedat}{2}
            \Delta^2u&=\frac{\lambda}{|x|^4}u\qquad&&\text{in}\;\,\Omega\\
            u&=0\qquad&&\text{on}\;\,\partial\Omega\\
            \partial_{\nu}u&=0\qquad&&\text{on}\;\,\partial\Omega.
        \end{alignedat}\right.
    \end{align}
    Integrating by parts, by deduce that
    \begin{align*}
        \int_{\Omega}(\Delta u)^2dx=\int_{\Omega}u\,\Delta^2u\,dx=\int_{\Omega}\frac{u^2}{|x|^4}dx=\lambda,
    \end{align*}
    and \eqref{constraint} shows that $\lambda>0$.

    \begin{theorem}\label{neck_estimate_dim4}
        Let $u\in W^{2,2}_0(\Omega)$ a minimiser of \eqref{biharmonique_dim4}, and assume that
        \begin{align}\label{conformal_bound_dim4}
            \log\left(\frac{b}{a}\right)\geq \frac{15\sqrt{4+3\pi(\pi+1)}}{2}.
        \end{align}
        Then, we have
        \begin{align}\label{bound_eigenvalue_dim4}
            \lambda>\frac{4\pi^2}{\log^2\left(\frac{b}{a}\right)}.
        \end{align}
        Furthermore, the minimiser is a radial function $u(r)$, where $Y(r)=u(e^r)$ is a non-trivial solution of the ordinary differential equation
        \begin{align*}
            Y''''(t)-4\,Y''(t)=\lambda\,Y(t)
        \end{align*}
        such that $Y=Y'=0$ on $\partial[\log(a),\log(b)]$.
    \end{theorem}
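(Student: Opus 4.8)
The plan is to transcribe, in dimension four, the argument of Theorems~\ref{theoreme_ode_m_n} and~\ref{main_neck_lm}, replacing the Fourier expansion in $e^{in\theta}$ by the spherical-harmonic expansion on $S^3$. Existence of a minimizer $u\in W^{2,2}_0(\Omega)$ of \eqref{biharmonique_dim4} solving the associated eigenvalue equation with $\lambda>0$ has already been established, so only the bound \eqref{bound_eigenvalue_dim4} and the radial symmetry remain. First I would decompose $u=\sum_{n\geq0}u_n(r)\,Y_n(\theta)$ over spherical harmonics of degree $n$ on $S^3$ (with $-\Delta_{S^3}Y_n=n(n+2)Y_n$); each non-vanishing profile $u_n$ then solves $\Pi_n(\Delta^2)u_n=\lambda r^{-4}u_n$ on $(a,b)$ with $u_n=u_n'=0$ at both endpoints. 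Using the identity $\Delta\big(f(r)Y_n\big)=e^{-2t}\big(\ddot f+2\dot f-n(n+2)f\big)Y_n$ in the variable $t=\log r$ and applying $\Delta$ twice, the substitution $u_n(e^t)=Y(t)$ converts this into the constant-coefficient equation
\begin{align*}
    Y''''-\big(2n(n+2)+4\big)Y''+n^2(n+2)^2Y=\lambda Y,
\end{align*}
which for $n=0$ is exactly $Y''''-4Y''=\lambda Y$. A pleasant feature of dimension four is that the characteristic polynomial is already biquadratic, so no shift of roots is needed; its roots are $X^2=\big(n(n+2)+2\big)\pm\sqrt{4(n+1)^2+\lambda}$.

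Next I would analyze each mode. For $n=0$ one has $\sqrt{4+\lambda}>2$ for all $\lambda>0$, so the roots are $\pm\lambda_1$ (real) and $\pm i\lambda_2$ (purely imaginary) with $\lambda_1^2-\lambda_2^2=4$: this is exactly Case~3 of Theorem~\ref{theoreme_ode_m_n}. Imposing the four boundary conditions forces the determinant of the associated $4\times4$ matrix to vanish; expanding it reproduces the computation leading to \eqref{eq_det_complex} (with the prefactor $a^2b^2$ now absent, precisely because there is no shift), and hence the universal transcendental equation \eqref{fund_m}, with the constant $2$ in place of $m^2+n^2$. Applying verbatim the estimate on the first positive zero from the proof of Theorem~\ref{theoreme_ode_m_n} — the monotonicity of the function $\psi$ of \eqref{def_psi} on $[0,\pi/2]$ and its sign change on $[\pi/2,\pi]$, where the conformal bound \eqref{conformal_bound_dim4} enters — gives $\pi<\theta_1<2\pi$, so that $\pi^2/\log^2(b/a)<\lambda_2^2=\sqrt{4+\lambda}-2$ and therefore
\begin{align*}
    \lambda>\left(4+\frac{\pi^2}{\log^2\left(\frac{b}{a}\right)}\right)\frac{\pi^2}{\log^2\left(\frac{b}{a}\right)}>\frac{4\pi^2}{\log^2\left(\frac{b}{a}\right)}.
\end{align*}

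For $n\geq1$ I would split into the three cases $\lambda<n^2(n+2)^2$, $\lambda=n^2(n+2)^2$ and $\lambda>n^2(n+2)^2$ exactly as in Theorem~\ref{theoreme_ode_m_n}: the first two force $Y\equiv0$ by the same monotonicity lemmas (Cases~1 and~2 there), while the third again yields \eqref{fund_m} with constant $n(n+2)+2$, so the smallest eigenvalue $\lambda_{4,n}$ of $\Pi_n(\Delta^2)$ satisfies $\lambda_{4,n}>n^2(n+2)^2\geq9$. Since the problem decouples over spherical harmonics, $\lambda=\inf_n\lambda_{4,n}$, and the bound \eqref{conformal_bound_dim4} keeps $\lambda_{4,0}$ below $9$ (via the companion upper bound coming from $\theta_1<2\pi$), so $\lambda=\lambda_{4,0}<\lambda_{4,n}$ for every $n\geq1$. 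Were any $u_n$ with $n\geq1$ non-trivial we would get $\lambda\geq\lambda_{4,n}>\lambda_{4,0}=\lambda$, a contradiction; hence $u=u_0(r)$ is radial and $Y(t)=u_0(e^t)$ solves $Y''''-4Y''=\lambda Y$ with $Y=Y'=0$ at the endpoints of $[\log a,\log b]$.

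As in two dimensions, the only genuine obstacle is the delicate control of the first positive zero of the transcendental function $\psi$ produced by the degenerate determinant, i.e.\ verifying that the explicit threshold \eqref{conformal_bound_dim4} suffices to run the $[0,\pi/2]$ monotonicity step of Theorem~\ref{theoreme_ode_m_n}; everything else is a routine transcription once the dimension-four characteristic polynomial is recognized to be biquadratic and the spherical-harmonic decoupling is set up.
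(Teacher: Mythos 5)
Your proposal follows essentially the same route as the paper: the paper's proof of this theorem invokes Theorem \ref{theoreme_ode_dimension4}, which performs exactly your spherical-harmonic decomposition, the $t=\log r$ reduction to the biquadratic characteristic polynomial with roots $(n+1)^2+1\pm\sqrt{\lambda+4(n+1)^2}$, and the identification with the universal equation \eqref{fund_m} (via $m=1$, $k=n+1$) to get $\pi<\theta_1<2\pi$, and then concludes radiality as in Theorem \ref{main_neck_lm} by playing the upper bound for the $n=0$ mode against the lower bound for $n\geq1$. The only cosmetic difference is that the paper compares against the full $n=1$ lower bound $\left(1+\tfrac{\pi^2}{\log^2(b/a)}\right)\left(9+\tfrac{\pi^2}{\log^2(b/a)}\right)$ rather than the cruder constant $9$, which changes nothing under the stated hypothesis.
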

    \begin{proof}
        The bound \eqref{bound_eigenvalue_dim4} holds thanks to the forthcoming Theorem \ref{theoreme_ode_dimension4} from next section. Furthermore, notice that for all $n\in\N$ the bound
        \begin{align*}
            \left(n^2+\frac{\pi^2}{\log^2\left(\frac{b}{a}\right)}\right)\left((n+2)^2+\frac{\pi^2}{\log^2\left(\frac{b}{a}\right)}\right)<\lambda_n<\left(n^2+\frac{4\pi^2}{\log^2\left(\frac{b}{a}\right)}\right)\left((n+2)^2+\frac{4\pi^2}{\log^2\left(\frac{b}{a}\right)}\right)
        \end{align*}
        shows that $\lambda_0<\lambda_n$ for all $n\geq 1$ provided that
        \begin{align*}
            \left(1+\frac{\pi^2}{\log^2\left(\frac{b}{a}\right)}\right)\left(9+\frac{\pi^2}{\log^2\left(\frac{b}{a}\right)}\right)>\frac{4\pi^2}{\log^2\left(\frac{b}{a}\right)}\left(4+\frac{4\pi^2}{\log\left(\frac{b}{a}\right)}\right).
        \end{align*}
        Making the change of variable $X=\dfrac{\pi^2}{\log^2\left(\frac{b}{a}\right)}$, we deduce that this inequality is equivalent to 
        \begin{align*}
            (1+X)(9+X)>4X(4+4X)=16X(X+1),
        \end{align*}
        or $-1<X<\dfrac{3}{5}$, which finally gives the bound
        \begin{align*}
            \log\left(\frac{b}{a}\right)>\pi\sqrt{\frac{5}{3}},
        \end{align*}
        which is satisfied by assumption. Therefore, we conclude as in the proof of Theorem  \ref{main_neck_lm} that the solution is radial and given as previously.
    \end{proof}
    \begin{rem}
        The argument implies that the lower bound
        \begin{align*}
            \frac{b}{a}\geq e^{\pi\sqrt{\frac{5}{3}}}=57.73\cdots
        \end{align*}
        may be optimal to have a radial solution. It also suggests that \eqref{conformal_bound_dim4} is far from optimal. It would be interesting to investigate whether our minimisation problem admits a non-radial minimiser for small enough conformal class.
    \end{rem}
    We deduce the following corollary that we state as theorem.
    \begin{theorem}\label{bound_biharmonic}
        Let $0<a<b<\infty$, let $\Omega=B_b\setminus\bar{B}_a(0)\subset\R^4$, and assume that
        \begin{align}\label{large_conf_class}
            \log\left(\frac{b}{a}\right)\geq \frac{15\sqrt{4+3\pi(\pi+1)}}{2}.
        \end{align}
        Then, for all $u\in W^{2,2}_0(\Omega)$, we have
        \begin{align}\label{eigenvalue_bilaplacian2}
            \int_{\Omega}(\Delta u)^2dx\geq \frac{4\pi^2}{\log^2\left(\frac{b}{a}\right)}\int_{\Omega}\frac{u^2}{|x|^4}dx.
        \end{align}
    \end{theorem}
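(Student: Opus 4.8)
The plan is to read off the stated inequality directly from Theorem~\ref{neck_estimate_dim4} together with the variational definition of $\lambda$. Recall that $\lambda$, introduced in \eqref{biharmonique_dim4}, is by definition the infimum of $\int_{\Omega}(\Delta u)^2\,dx$ taken over all $u\in W^{2,2}_0(\Omega)$ normalised by $\int_{\Omega}|x|^{-4}u^2\,dx=1$; the Poincaré-type bound \eqref{poincare_bilaplace} shows that this infimum is finite and positive, and Theorem~\ref{neck_estimate_dim4} upgrades positivity to the quantitative estimate $\lambda>4\pi^2/\log^2(b/a)$ precisely under the hypothesis \eqref{large_conf_class}.

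It then remains to remove the normalisation by a scaling argument. Let $u\in W^{2,2}_0(\Omega)$ be arbitrary. Since $\Omega$ lies at positive distance from the origin, the weight $|x|^{-4}$ is bounded on $\Omega$, so if $\int_{\Omega}|x|^{-4}u^2\,dx=0$ then $u=0$ almost everywhere and \eqref{eigenvalue_bilaplacian2} holds trivially. Otherwise, set $t=\left(\int_{\Omega}|x|^{-4}u^2\,dx\right)^{1/2}>0$ and $v=t^{-1}u$; then $v\in W^{2,2}_0(\Omega)$ satisfies $\int_{\Omega}|x|^{-4}v^2\,dx=1$, so $v$ is admissible in \eqref{biharmonique_dim4} and hence $\int_{\Omega}(\Delta v)^2\,dx\geq\lambda$. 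Multiplying through by $t^2$ and using that both functionals appearing in \eqref{eigenvalue_bilaplacian2} are homogeneous of degree two gives $\int_{\Omega}(\Delta u)^2\,dx\geq\lambda\int_{\Omega}|x|^{-4}u^2\,dx$, and combining with the bound $\lambda>4\pi^2/\log^2(b/a)$ from Theorem~\ref{neck_estimate_dim4} yields \eqref{eigenvalue_bilaplacian2}.

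There is no genuine obstacle here: all the analytic work is already contained in Theorem~\ref{neck_estimate_dim4} (and ultimately in the ODE analysis of Theorem~\ref{theoreme_ode_dimension4}, which reduces the eigenvalue problem \eqref{eigenvalue_bilaplacian}, after Fourier decomposition in the angular variables of $S^3$, to the radial equation $Y''''-4Y''=\lambda Y$ on $[\log a,\log b]$ with clamped boundary conditions, and then estimates the first positive zero of the corresponding determinant function). The corollary itself is only a bookkeeping step: identifying the best constant in \eqref{eigenvalue_bilaplacian2} with the first eigenvalue $\lambda$ and invoking its lower bound. The sole minor point to be careful about is the trivial case $u=0$ and the elementary verification that rescaling preserves membership in $W^{2,2}_0(\Omega)$, both of which are immediate.
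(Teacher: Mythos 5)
Your proposal is correct and follows exactly the route the paper intends: the result is presented as a direct corollary of Theorem~\ref{neck_estimate_dim4}, and your normalisation/scaling argument (together with the trivial case $u=0$) is precisely the bookkeeping needed to pass from the eigenvalue bound $\lambda>4\pi^2/\log^2(b/a)$ to the inequality for arbitrary $u\in W^{2,2}_0(\Omega)$.
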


    \subsection{Eigenvalue Estimates for the Associated Ordinary Differential Equation}

    Recall the the Laplacian on $\R^4$ admits the following expansion in polar coordinates
    \begin{align*}
        \Delta=\p{r}^2+\frac{3}{r}\p{r}+\frac{1}{r^2}\Delta_{S^{3}}.
    \end{align*}
    Furthermore, all  function $u\in L^2(S^3)$ admits an expansion
    \begin{align*}
        u(\theta)=\sum_{n=0}^{\infty}a_nY_n(\theta),
    \end{align*}
    for some $\ens{a_n}_{n\in\N}\subset \R$, 
    where $Y_n$ is the restriction of a homogeneous harmonic polynomial on $\R^4$ of degree $n$ (and the $\ens{Y_n}_{n\in\N}$ are orthogonal in $L^2(S^3)$ equipped with a fixed weight). Furthermore, $Y_n$ has eigenvalue $n(n+2)$ for $-\Delta_{S^3}$. Therefore, if $u\in C^{\infty}(\R^4)$, using in polar coordinates $(r,\theta)\in (0,\infty)\times S^3$, we have an expansion
    \begin{align*}
        u(r,\theta)=\sum_{n=0}^{\infty}a_n(r)Y_n(\theta),
    \end{align*}
    and
    \begin{align*}
        \Delta u=\sum_{n=0}^{\infty}\left(a_n''(r)+\frac{3}{r}a_n'(r)-\frac{n(n+2)}{r^2}a_n(r)\right)Y_n(\theta).
    \end{align*}
    Then, we have
    \begin{align*}
        &\p{r}\left(a_n''(r)+\frac{3}{r}a_n'(r)-\frac{n(n+2)}{r^2}a_n(r)\right)=a_n'''(r)+\frac{3}{r}a_n''(r)-\frac{3+n(n+2)}{r^2}a_n'(r)+\frac{2n(n+2)}{r^3}a_n(r)\\
        &\p{r}^2\left(a_n''(r)+\frac{3}{r}a_n'(r)-\frac{n(n+2)}{r^2}a_n(r)\right)=a_n''''(r)+\frac{3}{r}a_n'''(r)-\frac{6+n(n+2)}{r^2}a_n''(r)\\
        &+\frac{2(3+n(n+2))+2n(n+2)}{r^3}a_n'(r)-\frac{6n(n+2)}{r^4}a_n(r)\\
        &=a_n''''(r)+\frac{3}{r}a_n'''(r)-\frac{6+n(n+2)}{r^2}a_n''(r)+\frac{6+4n(n+2)}{r^3}a_n'(r)-\frac{6n(n+2)}{r^4}a_n(r).
    \end{align*}
    Therefore, we get
    \begin{align*}
        \Delta^2u&=\sum_{n=0}^{\infty}\bigg(a_n''''(r)+\frac{3}{r}a_n'''(r)-\frac{6+n(n+2)}{r^2}a_n''(r)+\frac{6+4n(n+2)}{r^3}a_n'(r)-\frac{6n(n+2)}{r^4}a_n(r)\\
        &+\frac{3}{r}\left(a_n'''(r)+\frac{3}{r}a_n''(r)-\frac{3+n(n+2)}{r^2}a_n'(r)+\frac{2n(n+2)}{r^3}a_n(r)\right)\\
        &-\frac{n(n+2)}{r^2}\left(a_n''(r)+\frac{3}{r}a_n'(r)-\frac{n(n+2)}{r^2}a_n(r)\right)\bigg)Y_n(\theta)\\
        &\sum_{n=0}^{\infty}\left(a_n''''(r)+\frac{6}{r}a_n'''(r)+\frac{3-2n(n+2)}{r^2}a_n''(r)-\frac{3+2n(n+2)}{r^3}a_n'(r)+\frac{n^2(n+2)^2}{r^4}a_n(r)\right)Y_n(\theta).
    \end{align*}
    In other words, we have
    \begin{align*}
        \Pi_{n(n+2)}(\Delta^2)=\p{r}^4+\frac{6}{r}\p{r}^3+\frac{3-2n(n+2)}{r^2}\p{r}^2-\frac{3+2n(n+2)}{r^3}\p{r}+\frac{n^2(n+2)^2}{r^4}.
    \end{align*}
    Now, consider the following eigenvalue problem on some interval $[a,b]$, where $0<a<b<\infty$
    \begin{align}\label{bilap_eigenvalue}
        \left\{\begin{alignedat}{1}
        &f''''+\frac{6}{r}f'''+\frac{3-2n(n+2)}{r^2}f''-\frac{3+2n(n+2)}{r^3}f'+\frac{n^2(n+2)^2}{r^4}f=\frac{\lambda}{r^4}f\\
        &f(a)=f(b)=f'(a)=f'(b)=0,
        \end{alignedat}\right.
    \end{align}
    where $\lambda\geq 0$.
    Making the change of variable $f(r)=Y(\log(r))$, we get
    \begin{align*}
        &\p{r}f(r)=\frac{1}{r}Y'(\log(r))\\
        &\p{r}^2f(r)=\frac{1}{r^2}Y''(\log(r))-\frac{1}{r^2}Y'(\log(r))\\
        &\p{r}^3f(r)=\frac{1}{r^3}Y'''(\log(r))-\frac{3}{r^3}Y''(\log(r))+\frac{2}{r^3}Y'(\log(r))\\
        &\p{r}^4f(r)=\frac{1}{r^4}Y''''(\log(r))-\frac{6}{r^4}Y'''(\log(r))+\frac{11}{r^4}Y''(\log(r))-\frac{6}{r^4}Y'(\log(r)).
    \end{align*}
    Therefore, we deduce that
    \begin{align*}
        &r^4\left(f''''+\frac{6}{r}f'''+\frac{3-2n(n+2)}{r^2}f''-\frac{3+2n(n+2)}{r^3}f'+\frac{n^2(n+2)^2}{r^4}f\right)\\
        &=Y''''\colorcancel{-6\,Y'''}{red}+11\,Y''-6\,Y'+6\left(\colorcancel{Y'''}{red}-3\,Y''+2\,Y'\right)+(3-2n(n+2))\left(Y''-Y'\right)\\
        &-(3+2n(n+2))Y'+n^2(n+2)^2Y\\
        &=Y''''-(4+2n(n+2))Y''+n^2(n+2)^2Y,
    \end{align*}
    and \eqref{bilap_eigenvalue} is equivalent to 
    \begin{align}\label{bilap_eigenvalue2}
    \left\{\begin{alignedat}{2}
        &Y''''-(4+2n(n+2))Y''+n^2(n+2)^2Y=\lambda\, Y\qquad&& \text{in}\;\, (\log(a),\log(b))\\
        &Y(\log(a))=Y(\log(b))=Y'(\log(a))=Y'(\log(b)).
        \end{alignedat}\right.
    \end{align}
    The characteristic polynomial of the equation is 
    \begin{align*}
        P(X)=X^4-(4+2n(n+2))X^2+n^2(n+2)^2-\lambda,
    \end{align*}
    which is a biquadratic polynomial! The discriminant of $P(\sqrt{X})$ is given by
    \begin{align*}
        4(2+n(n+2))^2-4n^2(n+2)^2+4\lambda=16n(n+2)+16+4\lambda=4(4(n+1)^2+\lambda),
    \end{align*}
    which shows that the roots of $P$ are given by 
    \begin{align}\label{root4}
    \left\{\begin{alignedat}{2}
        r_1&=&&\sqrt{(n+1)^2+1+\sqrt{\lambda+4(n+1)^2}}\\
        r_2&=-&&\sqrt{(n+1)^2+1+\sqrt{\lambda+4(n+1)^2}}\\
        r_3&=&&\sqrt{(n+1)^2+1-\sqrt{\lambda+4(n+1)^2}}\\
        r_4&=-&&\sqrt{(n+1)^2+1-\sqrt{\lambda+4(n+1)^2}}.
        \end{alignedat}\right.
    \end{align}
    Since $\lambda\geq 0$ (by assumption), we have
    \begin{align*}
        (n+1)^2+1-\sqrt{\lambda+4(n+1)^2}\leq 0
    \end{align*}
    if and only if
    \begin{align*}
        (n+1)^4+2(n+1)^2+1\leq \lambda+4(n+1)^2,
    \end{align*}
    or
    \begin{align*}
        \lambda\geq ((n+1)^2-1)^2.
    \end{align*}
    \begin{theorem}\label{theoreme_ode_dimension4}
        Let $n\in\N$. Let $0<a<b<\infty$ be such that
        \begin{align}\label{log_bound1_0}
            \log\left(\frac{b}{a}\right)>\max\ens{\frac{15\sqrt{4+3\pi(\pi+1)}}{\sqrt{2(n+1)^2+2}},\frac{\log(1+\sqrt{2(n+1)^2+2})}{\sqrt{2(n+1)^2+2}}}
        \end{align}        
        For all $\lambda>0$, consider the following linear differential equation on $I=(\log(a),\log(b))$:
        \begin{align}\label{bilap_eigenvalue2bis}
    \left\{\begin{alignedat}{2}
        Y''''(t)-(4+2n(n+2))Y''(t)+n^2(n+2)^2Y(t)&=\lambda_nY(t)\qquad&& \text{in}\;\, I\\
        Y=Y'&=0\qquad&&\text{on}\;\,\partial I.
        \end{alignedat}\right.
    \end{align}
        There exists $\lambda>0$ such that the system \eqref{bilap_eigenvalue2} admits a non-trivial solution $Y$. Furthermore, the minimal value $\lambda_{n}>0$ satisfies the following estimate 
        \begin{align}\label{estimate_log_lambda_4}
            \lambda_n>\left(n^2+\frac{\pi^2}{\log^2\left(\frac{b}{a}\right)}\right)\left((n+2)^2+\frac{\pi^2}{\log^2\left(\frac{b}{a}\right)}\right).
        \end{align}
        In particular, we have
        \begin{align}
            \inf_{n\in \N}\lambda_{n}>\frac{4\pi^2}{\log^2\left(\frac{b}{a}\right)}.
        \end{align}
    \end{theorem}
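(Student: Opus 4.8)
The plan is to reproduce, almost verbatim, the three–case dissection used in the proof of Theorem~\ref{theoreme_ode_m_n}. The only structural difference is that here the characteristic polynomial $P(X)=X^4-(4+2n(n+2))X^2+n^2(n+2)^2-\lambda$ is already biquadratic \emph{without} the substitution $X\mapsto X-1$, so by \eqref{root4} its roots are $\pm\lambda_1,\pm\lambda_2$ rather than $1\pm\lambda_1,\,1\pm\lambda_2$, where $\lambda_1=\sqrt{(n+1)^2+1+\sqrt{\lambda+4(n+1)^2}}$ and $\lambda_2$ is either $\sqrt{(n+1)^2+1-\sqrt{\lambda+4(n+1)^2}}$ or $\sqrt{\sqrt{\lambda+4(n+1)^2}-(n+1)^2-1}$. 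Thus the role played by $m^2+n^2$ in Theorem~\ref{theoreme_ode_m_n} is now played by $(n+1)^2+1$, and every $4\times4$ boundary determinant computed there reappears here multiplied by $1$ instead of $(ab)^2$, which is immaterial for deciding whether it vanishes. Existence of a positive eigenvalue can be obtained either from the variational problem \eqref{biharmonique_dim4} projected onto the $n$-th spherical harmonic (as carried out just before Theorem~\ref{neck_estimate_dim4}), or, as below, by an intermediate value argument applied to the transcendental equation produced in Case~3.

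First I would dispose of the two easy ranges. If $\lambda<((n+1)^2-1)^2=n^2(n+2)^2$ the roots are real and distinct, the solution is $Y=\mu_1e^{\lambda_1t}+\mu_2e^{-\lambda_1t}+\mu_3e^{\lambda_2t}+\mu_4e^{-\lambda_2t}$, and evaluating the general determinant of Case~1 of Theorem~\ref{theoreme_ode_m_n} at $(\alpha,\beta,\gamma,\delta)=(\lambda_1,-\lambda_1,\lambda_2,-\lambda_2)$ reproduces, up to the missing factor $a^2b^2$, the function $g$ of \eqref{eq_det}, which is strictly negative for $b/a>1$ as soon as $\lambda_1>\lambda_2>0$; hence $Y\equiv0$. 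If $\lambda=n^2(n+2)^2$ then $\lambda_2=0$, so there is a double root at $0$ and $Y=\mu_1e^{\lambda_1t}+\mu_2e^{-\lambda_1t}+\mu_3+\mu_4t$ with $\lambda_1=\sqrt{2(n+1)^2+2}\ge2$; the corresponding determinant is, up to a positive factor, the expression of \eqref{determinant_equality_case4} with $\beta=\lambda_1\ge\sqrt2$, which the argument there shows cannot vanish for $b/a>1$, so again $Y\equiv0$. Consequently every non-trivial eigenfunction arises in the range $\lambda>n^2(n+2)^2$.

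In that range the roots are $\pm\lambda_1,\pm i\lambda_2$ with $\lambda_1,\lambda_2>0$ and $\lambda_1^2-\lambda_2^2=2((n+1)^2+1)$, so the vanishing of the boundary determinant is exactly the universal equation \eqref{fun_eq_gen}, equivalently \eqref{fund_m}, with $(n+1)^2+1$ substituted for $m^2+n^2$. The study of the resulting function $\psi(\theta)$ is then word-for-word that of Theorem~\ref{theoreme_ode_m_n}: $\psi(\pi)<0$, $\psi(2\pi)>0$, $\psi\le-2e^{\sqrt{a^2+\theta^2}}\theta\sqrt{a^2+\theta^2}<0$ on $[\pi/2,\pi]$ since $\cos\le0$ there, and $\psi$ is strictly decreasing on $[0,\pi/2]$. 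The two components of the conformal-class hypothesis \eqref{log_bound1_0} are precisely what is needed: the first feeds the chain of polynomial/exponential estimates bounding the relevant $h_1\le0$ on $[0,\pi/2]$ (with $15\sqrt{4+3\pi(\pi+1)}$ replacing the constant $5$, since $4+3\pi(\pi+1)$ governs the pertinent zero of the auxiliary cubic's derivative), while the second guarantees the sign $\psi(2\pi)>0$ and that the first positive zero lies beyond $\pi$. It follows that $\psi$ has a first positive zero $\theta_1$ with $\pi<\theta_1<2\pi$; this produces an eigenvalue $\lambda=\lambda_n>0$ and forces $\lambda_2=\theta_1/\log(b/a)>\pi/\log(b/a)$, hence $\sqrt{\lambda_n+4(n+1)^2}-((n+1)^2+1)>\pi^2/\log^2(b/a)$, and squaring together with the identities $((n+1)^2+1)^2-4(n+1)^2=n^2(n+2)^2$, $(n+1)^2+1-2(n+1)=n^2$, $(n+1)^2+1+2(n+1)=(n+2)^2$ gives
\[
\lambda_n>\Bigl(n^2+\tfrac{\pi^2}{\log^2(b/a)}\Bigr)\Bigl((n+2)^2+\tfrac{\pi^2}{\log^2(b/a)}\Bigr),
\]
which is \eqref{estimate_log_lambda_4}. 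Minimising the right-hand side over $n\in\N$ — the $n=0$ value $\tfrac{\pi^2}{\log^2(b/a)}\bigl(4+\tfrac{\pi^2}{\log^2(b/a)}\bigr)$ is the smallest, the products for $n\ge1$ being $\ge9$ — and noting that the $n$-dependent bound in \eqref{log_bound1_0} is decreasing in $n$, so its $n=0$ instance (the hypothesis of Theorem~\ref{neck_estimate_dim4}) covers all $n$ at once, yields $\inf_{n\in\N}\lambda_n>4\pi^2/\log^2(b/a)$.

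The main obstacle is not conceptual but clerical: one must verify that the substitution $m^2+n^2\rightsquigarrow(n+1)^2+1$ leaves intact the whole cascade of elementary inequalities controlling $\psi'$ on $[0,\pi/2]$, in particular that it still closes with the cruder constant appearing in \eqref{log_bound1_0}, and one must handle with care the degenerate Case~2, where the double root sits at $0$ (rather than at $1$ as in dimension $2$), so that a $\log$-column enters the boundary matrix. Everything else is a transcription of the dimension-$2$ argument.
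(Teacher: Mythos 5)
Your proposal is correct and follows essentially the same route as the paper: the paper also reduces each of the three cases ($\lambda$ below, equal to, or above $n^2(n+2)^2$) to the determinant computations of Theorem~\ref{theoreme_ode_m_n}, observing that the roots are $\pm\lambda_1,\pm\lambda_2$ (resp.\ $\pm\lambda_1,\pm i\lambda_2$) so that the universal equation \eqref{fund_m} applies with $m^2+n^2$ replaced by $(n+1)^2+1$ (the paper phrases this as choosing $m=1$, $k=n+1$), yielding $\pi<\theta_1<2\pi$ and hence \eqref{estimate_log_lambda_4}. Your explicit minimisation over $n$ for the final bound and your remark that the hypothesis \eqref{log_bound1_0} is decreasing in $n$ match the paper's (more implicit) conclusion.
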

    \begin{rem}
        To obtain a more symmetric result as in Theorem \ref{theoreme_ode_m_n}, we can rewrite the estimate as
        \begin{align*}
            \lambda_n>\left(((n+1)-1)^2+\frac{\pi^2}{\log^2\left(\frac{b}{a}\right)}\right)\left(((n+1)+1)^2+\frac{\pi^2}{\log^2\left(\frac{b}{a}\right)}\right).
        \end{align*}
        Furthermore, the proof shows that the bound
        \begin{align}\label{precise}
            \lambda_n<\left(n^2+\frac{4\pi^2}{\log^2\left(\frac{b}{a}\right)}\right)\left((n+2)^2+\frac{4\pi^2}{\log^2\left(\frac{b}{a}\right)}\right)
        \end{align}
        holds for all $n\in\N$.
    \end{rem}
    \begin{proof}
        Let $Y$ be a non-trivial solution of the system. We need to distinguish three cases. 

        \textbf{Case 1:} $\lambda<((n+1)^2-1)^2$.

        Notice that by assumption this case cannot happen for $n=0$, but our proof also works for $n=0$.
        
        Then, the characteristic polynomial $P(X)=X^4-(4+2n(n+2))X^2+n^2(n+2)^2-\lambda$ of the ordinary differential equation has four distinct real roots, and 
        \begin{align}
            Y(t)=\mu_1\,e^{r_1\,t}+\mu_2\,e^{r_2\,t}+\mu_3\,e^{r_3\,t}+\mu_4\,e^{r_4\,t},
        \end{align}
        where $r_1,r_2,r_3,r_4$ are given in \eqref{root4}. Also write
        \begin{align}
            \left\{\begin{alignedat}{2}
                r_1&=&&\lambda_1\\
                r_2&=-&&\lambda_1\\
                r_3&=&&\lambda_2\\
                r_4&=-&&\lambda_2
            \end{alignedat}\right.,
        \end{align}
        where $\lambda_1=\sqrt{((n+1)^2+1+\sqrt{\lambda+4(n+1)^2}}$ and $\lambda_2=\sqrt{(n+1)^2+1-\sqrt{\lambda+4(n+1)^2}}$. The boundary conditions are equivalent to
         \begin{align*}
        A\,\mu=\begin{pmatrix}
            a^{r_1} & a^{r_2} & a^{r_3} & a^{r_4}\\
            b^{r_1} & b^{r_2} & b^{r_3} & b^{r_4}\\
            r_1\,a^{r_1} & r_2\,a^{r_2} & r_3\,a^{r_3} & r_4\,a^{r_4}\\
            r_1\,b^{r_2} & r_2\,b^{r_2} & r_3\,b^{r_3} & r_4\,b^{r_4}
        \end{pmatrix}\begin{pmatrix}
            \mu_1\\
            \mu_2\\
            \mu_3\\
            \mu_4
        \end{pmatrix}=0. 
    \end{align*}
    We recover the same algebraic expression as in \textbf{Step 1} of Theorem \ref{theoreme_ode_m_n}. Therefore, we have 
    \begin{align*}
        \det(A)&=(\lambda_1+\lambda_2)^2\left(\left(\frac{a}{b}\right)^{\lambda_1-\lambda_2}+\left(\frac{b}{a}\right)^{\lambda_1-\lambda_2}\right)-\left(\lambda_1-\lambda_2\right)^2\left(\left(\frac{a}{b}\right)^{\lambda_1+\lambda_2}+\left(\frac{b}{a}\right)^{\lambda_1+\lambda_2}\right)-8\lambda_1\lambda_2,
    \end{align*}
    and since $\lambda_1\lambda_2\neq 0$, and $\lambda_1-\lambda_2\neq 0$, the rest of the proof carries on and shows that $Y=0$.

    \textbf{Case 2:} $\lambda=((n+1)^2-1)^2$.

    Then, the roots are $r_3=r_4=0$, and $r_1=\sqrt{2(n+1)^2+2}$, $r_2=-\sqrt{2(n+1)^2+2}$. Therefore, $Y$ is given by 
    \begin{align*}
        Y(t)=\mu_1\, e^{\sqrt{2(n+1)^2+2}}+\mu_2e^{-\sqrt{2(n+1)^2+2}}+\mu_3+\mu_4\,t,
    \end{align*}
    and the boundary conditions are equivalent to 
    \begin{align}\label{boundary4}
        \begin{pmatrix}
            a^{\sqrt{2(n+1)^2+2}} & a^{-\sqrt{2(n+1)^2+2}} & 1 & \log(a)\\
            b^{\sqrt{2(n+1)^2+2}} & b^{-\sqrt{2(n+1)^2+2}} & 1 & \log(b)\\
            \sqrt{2(n+1)^2+2}\,a^{\sqrt{2(n+1)^2+2}} & -\sqrt{2(n+1)^2+2}\,a^{-\sqrt{2(n+1)^2+2}} & 0 & 1\\
            \sqrt{2(n+1)^2+2}\,b^{\sqrt{2(n+1)^2+2}} & -\sqrt{2(n+1)^2+2}\,b^{-\sqrt{2(n+1)^2+2}} & 0 & 1
        \end{pmatrix}\begin{pmatrix}
            \mu_1\\
            \mu_2\\
            \mu_3\\
            \mu_4
        \end{pmatrix}=0.
    \end{align}
    We recognise the determinant of \textbf{Step 2} of Theorem \ref{theoreme_ode_m_n} given in \eqref{determinant_equality_case1}.   
    Therefore, the proof of Theorem \eqref{theoreme_ode_m_n} applies and shows that $\det(A)\neq 0$, which finally implies that $Y=0$.

    \textbf{Case 3:} $\lambda>((n+1)^2-1)^2$.

    Then, we have by \eqref{root4}
    \begin{align}\label{root4_bis}
    \left\{\begin{alignedat}{2}
        r_1&=&&\sqrt{\sqrt{\lambda+4(n+1)^2}+(n+1)^2+1}\\
        r_2&=-&&\sqrt{\sqrt{\lambda+4(n+1)^2}+(n+1)^2+1}\\
        r_3&=&&i\sqrt{\sqrt{\lambda+4(n+1)^2}-((n+1)^2+1)}\\
        r_4&=-&&i\sqrt{\sqrt{\lambda+4(n+1)^2}-((n+1)^2+1)}.
        \end{alignedat}\right.
    \end{align}
    Provided that $\lambda_1=\sqrt{\sqrt{\lambda+4(n+1)^2}+(n+1)^2+1}$ and $\lambda_2=\sqrt{\sqrt{\lambda+4(n+1)^2}-((n+1)^2+1)}$, we deduce that there exists $\mu_1,\mu_2\in \R$ and $\mu_3,\mu_4\in \C$ such that
    \begin{align}\label{sol4}
        Y(t)=\mu_1e^{\lambda_1\,t}+\mu_2e^{-\lambda_1\,t}+\mu_3e^{i\,\lambda_2\,t}+\mu_4e^{-i\,\lambda_2)t}
    \end{align}
    we get thanks to \eqref{boundary_conditions_m_n} the system
    \begin{align}\label{complex_system4}
        \begin{pmatrix}
            a^{\lambda_1} & a^{-\lambda_1} & a^{i\,\lambda_2} & a^{-i\,\lambda_2}\\
            b^{\lambda_1} & b^{-\lambda_1} & b^{i\,\lambda_2} & b^{-i\,\lambda_2}\\
            \lambda_1\,a^{\lambda_1} & -\lambda_1\,a^{-\lambda_1} & i\,\lambda_2\,a^{i\,\lambda_2} & -i\,\lambda_2\,a^{-i\,\lambda_2}\\
            \lambda_1\,b^{\lambda_1} & -\lambda_1\,b^{-\lambda_1} & i\,\lambda_2\,b^{i\,\lambda_2} & -i\,\lambda_2\,b^{-i\,\lambda_2}
        \end{pmatrix}\begin{pmatrix}
            \mu_1\\
            \mu_2\\
            \mu_3\\
            \mu_4
        \end{pmatrix}=0.
    \end{align}
    We recognise the determinant of \textbf{Step 3} of Theorem \ref{theoreme_ode_m_n}, and following the exact same steps, we deduce by \eqref{fund_m} that
    \small
    \begin{align*}
        \left(\left(1+\left(\frac{b}{a}\right)^{2\lambda_1}\right)\cos\left(\lambda_2\log\left(\frac{b}{a}\right)\right)-2\left(\frac{b}{a}\right)^{\lambda_1}\right)\lambda_1\lambda_2=((n+1)^2+1)\left(\left(\frac{b}{a}\right)^{2\lambda_1}-1\right)\sin\left(\lambda_2\log\left(\frac{b}{a}\right)\right).
    \end{align*}
    \normalsize
    Since $\lambda_1=\sqrt{2((n+1)^2+1)+\lambda_2^2}$, we deduce that our equation is equivalent to \eqref{fund_m} (with $n$ replaced by $k$ so that we do not use twice the same notations), provide that $m\geq 1$ and $k\geq 0$ are chosen such that
    \begin{align*}
        m^2+k^2=(n+1)^2+1
    \end{align*}
    We can therefore choose $m=1$, and $k=n+1$. Therefore, the proof of Theorem \ref{theoreme_ode_m_n} shows that provided that 
    \small
    \begin{align*}
        \log\left(\frac{b}{a}\right)\geq R_{1,n+1}=\max\ens{\frac{15\sqrt{4+3\pi(\pi+1)}}{\sqrt{2(n+1)^2+2}},\frac{\log(1+\sqrt{2(n+1)^2+2})}{\sqrt{2(n+1)^2+2}},\frac{\log(1+\log(1+\sqrt{2(n+1)^2+2}))}{\sqrt{2(n+1)^2+2}}},
    \end{align*}
    \normalsize
    (which is a decreasing function of $n$), we have $\lambda_2\log\left(\frac{b}{a}\right)>\pi$, or
    \begin{align*}
        \sqrt{\sqrt{\lambda+4(n+1)^2}-((n+1)^2+1)}>\frac{\pi}{\log\left(\frac{b}{a}\right)}.
    \end{align*}
    Therefore, we get
    \begin{align*}
        \sqrt{\lambda+4(n+1)^2}-((n+1)^2+1)>\frac{\pi^2}{\log^2\left(\frac{b}{a}\right)},
    \end{align*}
    and finally, 
    \begin{align*}
        \lambda >\left((n+1)^2+1+\frac{\pi^2}{\log^2\left(\frac{b}{a}\right)}\right)^2-4(n+1)^2=\left(n^2+\frac{\pi^2}{\log^2\left(\frac{b}{a}\right)}\right)\left((n+2)^2+\frac{\pi^2}{\log^2\left(\frac{b}{a}\right)}\right),
    \end{align*}
    which concludes the proof of the theorem.
    \end{proof}

    \subsection{Second Estimate in Neck Regions}

    Before stating the main result, let us generalise \ref{lemme_ipp} to all dimension.
    \begin{lemme}\label{lemme_ipp_general}
        Let $2\leq n<\infty$, let $\Omega\subset \R^n$ be an open set, and consider $u\in W^{1,n}_0(\Omega)$. Then, for all $\beta>0$, we have
        \begin{align}\label{identity_ipp_general1}
            \beta\int_{\Omega}|x|^{\beta}\frac{u^2}{|x|^{n}}dx=-2\int_{\Omega}|x|^{\beta}u\frac{x}{|x|^{n}}\cdot \D u\,dx,
        \end{align}
        and
        \begin{align}\label{identity_ipp_general2}
            \int_{\Omega}|x|^{\beta}\frac{u^2}{|x|^n}dx\leq \frac{4}{\beta^2}\int_{\Omega}|x|^{\beta}\left(\frac{x}{|x|^{\frac{n}{2}}}\cdot \D u\right)^2dx
        \end{align}
    \end{lemme}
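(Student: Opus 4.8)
The plan is to mimic the proof of Lemma \ref{lemme_ipp} but with the singular weight $|x|^{\beta-n}$ in dimension $n$, the key algebraic input being that the radial vector field $V(x):=|x|^{\beta-n}x=|x|^{\beta}\dfrac{x}{|x|^{n}}$ satisfies $\dive V=\beta\,|x|^{\beta-n}$ on $\R^n\setminus\{0\}$ (a one-line computation, since $\dive(|x|^{\gamma}x)=(\gamma+n)|x|^{\gamma}$). Granting this, both assertions follow from an integration by parts against $u^2$.

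First I would prove the identity \eqref{identity_ipp_general1} for $u\in C^\infty_c(\Omega)$. On $\Omega\setminus B_\varepsilon(0)$ the weight is smooth, so integration by parts gives
\begin{align*}
\beta\int_{\Omega\setminus B_\varepsilon}|x|^{\beta-n}u^2\,dx=-2\int_{\Omega\setminus B_\varepsilon}|x|^{\beta-n}u\,(x\cdot\D u)\,dx-\varepsilon^{\beta-n+1}\int_{\partial B_\varepsilon}u^2\,d\sigma,
\end{align*}
and the boundary term is $O(\varepsilon^{\beta})\to 0$ because $u$ is bounded and $|\partial B_\varepsilon|=O(\varepsilon^{n-1})$; since $\beta>0$ all remaining integrals converge as $\varepsilon\to0$ (the weights $|x|^{\beta-n}$ and $|x|^{\beta-n+1}$ being locally integrable), which yields \eqref{identity_ipp_general1} for smooth compactly supported $u$. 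Writing $|x|^{\beta-n}(x\cdot\D u)^2=|x|^{\beta}\bigl(\tfrac{x}{|x|^{n/2}}\cdot\D u\bigr)^2$ and applying Cauchy–Schwarz to \eqref{identity_ipp_general1} with the weight split symmetrically gives
\begin{align*}
\beta\int_{\Omega}|x|^{\beta-n}u^2\,dx\le 2\Bigl(\int_{\Omega}|x|^{\beta-n}u^2\,dx\Bigr)^{1/2}\Bigl(\int_{\Omega}|x|^{\beta}\bigl(\tfrac{x}{|x|^{n/2}}\cdot\D u\bigr)^2dx\Bigr)^{1/2},
\end{align*}
and dividing by the (finite, for smooth compactly supported $u$) first factor and squaring gives \eqref{identity_ipp_general2} for $u\in C^\infty_c(\Omega)$.

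Next I would pass from $C^\infty_c(\Omega)$ to $W^{1,n}_0(\Omega)$ by density, and this is where the \emph{critical} exponent $p=n$ enters. If $u_k\in C^\infty_c(\Omega)$ with $u_k\to u$ in $W^{1,n}(\Omega)$, the Sobolev embedding $W^{1,n}_0(\Omega)\hookrightarrow L^q(\Omega)$ for every $q<\infty$ gives, via Hölder's inequality, $\int_\Omega|x|^{\beta-n}u_k^2\,dx\to\int_\Omega|x|^{\beta-n}u^2\,dx$ (the weight $|x|^{\beta-n}$ lies in $L^s_{\mathrm{loc}}$ for some $s>1$ precisely because $\beta>0$, paired with $u_k^2\to u^2$ in $L^{s'}$); likewise $|\D u_k|^2\to|\D u|^2$ in $L^{n/2}$, while $|x|^{\beta-n+2}\in L^{n/(n-2)}_{\mathrm{loc}}$ again exactly because $\beta>0$, so the right–hand side of \eqref{identity_ipp_general2} also converges. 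Passing to the limit yields \eqref{identity_ipp_general2} for $u$, and then \eqref{identity_ipp_general1} follows by the same limiting argument once both sides are known to be finite. For $n=2$ these Hölder bounds are elementary since the weight $|x|^{\beta}$ is locally bounded; when $\Omega$ is unbounded one localizes, or simply reads the identities with possibly infinite values understood simultaneously on both sides (the case relevant to us, an annulus, is always bounded).

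The main obstacle is not the algebra — $\dive V=\beta|x|^{\beta-n}$ does all the work — but the bookkeeping at the origin: justifying that the singular–weight integrals are finite and depend continuously on $u$ along an approximating sequence. This is the exact analog of the use of $W^{2,2}\hookrightarrow C^0$ in Lemma \ref{lemme_ipp}, here replaced by the fact that a point has zero $W^{1,n}$–capacity in $\R^n$ (equivalently, by the Hölder estimates above built on $W^{1,n}\hookrightarrow L^q$ for all $q<\infty$), which is precisely what forces the hypothesis $u\in W^{1,n}_0(\Omega)$ rather than $W^{1,p}_0$ for a smaller exponent.
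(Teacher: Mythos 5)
Your proof is correct and takes essentially the same route as the paper: the whole argument rests on the identity $\dive\left(|x|^{\beta}\frac{x}{|x|^{n}}\right)=\beta\,|x|^{\beta-n}$ (which the paper obtains from the harmonicity of $|x|^{2-n}$ away from the origin, the factor $|x|^{\beta}$ with $\beta>0$ killing the Dirac mass), followed by an integration by parts against $u^{2}$ and Cauchy--Schwarz. The only difference is bookkeeping at the origin: you excise $B_{\varepsilon}(0)$ and then pass from $C^{\infty}_{c}(\Omega)$ to $W^{1,n}_{0}(\Omega)$ by density, whereas the paper works directly with $u\in W^{1,n}_{0}(\Omega)$, invoking the embedding $W^{1,n}_{0}(\Omega)\hookrightarrow L^{q}(\Omega)$ for all $q<\infty$ to guarantee finiteness of the weighted integrals and justify the integration by parts distributionally.
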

    \begin{proof}
        Since we have already proved this statement for $n=2$ (notice that the proof works for all domain), we can assume that $n\geq 3$. Thanks to the Sobolev embedding $W^{1,n}(\R^n)\hookrightarrow L^p(\R^4)$ for all $p<\infty$, we deduce that all integrals are finite. Since the Green's function for the Laplacian in $\R^n$ is given (up to a constant) by $x\mapsto |x|^{2-n}$, 
        we deduce that
        \begin{align*}
            \dive\left(\frac{x}{|x|^n}\right)=0\qquad\text{in}\;\, \mathscr{D}'(\R^n\setminus\ens{0}).
        \end{align*}
        Therefore, since $\beta>0$, we have $|x|^{\beta}\Delta|x|^{2-n}=0$ in $\mathscr{D}'(\R^n)$, and
        \begin{align*}
            \int_{\Omega}|x|^{\beta}\frac{u}{|x|^{\frac{n}{2}}}\frac{x}{|x|^{\frac{n}{2}}}\cdot \D u\,dx=\frac{1}{2}\int_{\Omega}|x|^{\beta}\frac{x}{|x|^n}\cdot \D (u^2)\,dx=\frac{1}{2}\int_{\Omega}|x|^{\beta}\dive\left(\frac{x}{|x|^n}\,u^2\right)dx=-\frac{\beta}{2}\int_{\Omega}|x|^{\beta}\frac{u^2}{|x|^n}dx,
        \end{align*}
        and the inequality follows from Cauchy-Schwarz inequality.
    \end{proof}

    \begin{theorem}
        Let $0<a<b<\infty$, let $\Omega=B_b\setminus\bar{B}_a(0)\subset\R^4$, and let $0<\beta<\infty$. There exists $C_{\beta}<\infty$ such that for all $u\in W^{2,2}_0(\Omega)$, the following estimate hold:
        \begin{align}\label{bilap_second_est1}
            \int_{\Omega}\frac{u^2}{|x|^4}\left(\left(\frac{|x|}{b}\right)^{\beta}+\left(\frac{a}{|x|}\right)^{\beta}\right)dx\leq C_{\beta}\int_{\Omega}(\Delta u)^2dx
        \end{align}
        and
        \begin{align}\label{bilap_second_est2}
            \int_{\Omega}\frac{|\D u|^2}{|x|^2}\left(\left(\frac{|x|}{b}\right)^{\beta}+\left(\frac{a}{|x|}\right)^{\beta}\right)dx\leq C_{\beta}\int_{\Omega}(\Delta u)^2dx.
        \end{align}
    \end{theorem}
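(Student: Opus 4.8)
The plan is to reduce all four estimates (the two boundary weights in each of \eqref{bilap_second_est1} and \eqref{bilap_second_est2}) to a single weighted Rellich-type inequality on the unit ball of $\R^4$: for every $0<\beta<\infty$ there is a universal constant $C_\beta<\infty$ such that
\[
\int_{B(0,1)}|y|^{\beta-4}v^2\,dy\le C_\beta\int_{B(0,1)}(\Delta v)^2\,dy\qquad\text{for all }v\in W^{2,2}_0(B(0,1)),
\]
which I will refer to as $(\star)$. To prove $(\star)$ I would argue by H\"older's inequality and the critical Sobolev embedding. When $\beta\ge 4$ the weight $|y|^{\beta-4}$ is bounded on $B(0,1)$ and $(\star)$ follows from the standard Poincar\'e inequality together with the identity $\int_{B(0,1)}|\D^2v|^2=\int_{B(0,1)}(\Delta v)^2$ valid on $W^{2,2}_0$ (see \eqref{d2=Delta}). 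When $0<\beta<4$, choose $p'\in\big(1,\tfrac{4}{4-\beta}\big)$, so that $|y|^{(\beta-4)p'}\in L^1(B(0,1))$, and let $p$ be its conjugate exponent; since $W^{2,2}_0(B(0,1))\hookrightarrow W^{1,4}(\R^4)\hookrightarrow L^{2p}(B(0,1))$, Poincar\'e and $\int|\D^2 v|^2=\int(\Delta v)^2$ give $\|v\|_{L^{2p}(B(0,1))}\le C_p\|\Delta v\|_{L^2(B(0,1))}$, and H\"older closes the argument. From $(\star)$ I would then deduce its gradient companion on the ball, $\int_{B(0,1)}|y|^{\beta-2}|\D v|^2\le C_\beta'\int_{B(0,1)}(\Delta v)^2$, by an integration by parts exactly as in Steps~3--4 of Theorem~\ref{lemmeIV.1_complement}: writing $\int|y|^{\beta-2}|\D v|^2=-\int|y|^{\beta-2}v\,\Delta v-(\beta-2)\int|y|^{\beta-4}v\,(y\cdot\D v)$ and using the identity $\int|y|^{\beta-4}v\,(y\cdot\D v)=-\tfrac{\beta}{2}\int|y|^{\beta-4}v^2$ from Lemma~\ref{lemme_ipp_general} (legitimate since $v$ is supported away from the origin) one obtains $\int|y|^{\beta-2}|\D v|^2=-\int|y|^{\beta-2}v\,\Delta v+c_\beta\int|y|^{\beta-4}v^2$ for an explicit $c_\beta$, and Cauchy--Schwarz (using $|y|\le 1$ on $B(0,1)$) together with $(\star)$ finishes it.

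With these two ball inequalities in hand, the outer-weight halves follow by rescaling. Extending $u\in W^{2,2}_0(\Omega)$ by zero and setting $\bar u(y)=u(by)\in W^{2,2}_0(B(0,1))$, a change of variables gives
\[
\int_{\Omega}\Big(\tfrac{|x|}{b}\Big)^{\beta}\tfrac{u^2}{|x|^4}\,dx=\int_{B(0,1)}|y|^{\beta-4}\bar u^2\,dy,\qquad
\int_{\Omega}\Big(\tfrac{|x|}{b}\Big)^{\beta}\tfrac{|\D u|^2}{|x|^2}\,dx=\int_{B(0,1)}|y|^{\beta-2}|\D\bar u|^2\,dy,
\]
while $\int_{B(0,1)}(\Delta\bar u)^2\,dy=\int_{\Omega}(\Delta u)^2\,dx$ because in dimension $4$ the functional $u\mapsto\int(\Delta u)^2$ is invariant under dilations. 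Applying $(\star)$ and its gradient companion to $\bar u$ yields the $\big(\tfrac{|x|}{b}\big)^{\beta}$-halves of \eqref{bilap_second_est1}--\eqref{bilap_second_est2}.

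For the inner-weight halves I would use the rescaled inversion $\iota_a(y)=a\,y/|y|^2$, which is conformal with factor $\lambda(y)=a/|y|^2$ and maps $\Omega$ diffeomorphically onto $B(0,1)\setminus\bar B_{a/b}(0)$, sending the inner sphere $\partial B_a$ to $\partial B(0,1)$; thus $\bar u:=u\circ\iota_a$, extended by zero across $B_{a/b}(0)$, lies in $W^{2,2}_0(B(0,1))$, and the same computation as above (now with Jacobian $a^4|y|^{-8}$) gives $\int_{\Omega}\big(\tfrac{a}{|x|}\big)^{\beta}\tfrac{u^2}{|x|^4}\,dx=\int_{B(0,1)}|y|^{\beta-4}\bar u^2\,dy$ and $\int_{\Omega}\big(\tfrac{a}{|x|}\big)^{\beta}\tfrac{|\D u|^2}{|x|^2}\,dx=\int_{B(0,1)}|y|^{\beta-2}|\D\bar u|^2\,dy$. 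The one genuinely nontrivial point is the identity $\int_{B(0,1)}(\Delta\bar u)^2\,dy=\int_{\Omega}(\Delta u)^2\,dx$. It comes from the conformal covariance of the bi-Laplacian in dimension $4$: since $\R^4$ is the critical dimension for the fourth-order GJMS (Paneitz) operator, $\Delta^2$ transforms with trivial weight, so $\Delta^2(u\circ\iota_a)(y)=\lambda(y)^{4}\,(\Delta^2 u)(\iota_a(y))=a^4|y|^{-8}(\Delta^2u)(\iota_a(y))$ as distributions (this can also be checked directly from the Kelvin transform identity). Testing against $\bar u\in W^{2,2}_0(B(0,1))$ (after approximating $u$ by $C^\infty_c(\Omega)$ functions, so that no boundary contributions survive) and changing variables back, every factor cancels and one obtains exactly $\int_{B(0,1)}(\Delta\bar u)^2\,dy=\int_{\Omega}(\Delta u)^2\,dx$. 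Then $(\star)$ and its gradient companion give the inner-weight halves, and adding the outer- and inner-weight contributions (taking the larger of the two resulting constants) proves the theorem.

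The main obstacle is exactly this last conformal identity: one must transport $\int(\Delta u)^2$ through the inversion, which is precisely the statement that $\Delta^2$ is conformally covariant with vanishing weight in dimension $4$, together with the verification that the boundary terms in the relevant integrations by parts vanish because $\bar u$ lies in $W^{2,2}_0$ (equivalently, $u$ has vanishing Cauchy data on $\partial\Omega$). Everything else --- the H\"older/Sobolev proof of $(\star)$, the passage to the gradient inequality via Lemma~\ref{lemme_ipp_general}, and the elementary changes of variables --- is routine.
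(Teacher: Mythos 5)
Your proposal is correct and follows essentially the same route as the paper: weighted estimates on the unit ball obtained from H\"older/Sobolev together with the integration-by-parts identity of Lemma \ref{lemme_ipp_general}, then a dilation for the outer weight and the inversion combined with the conformal invariance of $\int(\Delta u)^2\,dx$ in dimension $4$ for the inner weight. The only cosmetic difference is the order of the two ball estimates: the paper first bounds $\int_{B(0,1)}|x|^{\beta-2}|\D u|^2dx$ by H\"older against $\np{\D u}{4}{B(0,1)}^2$ and then deduces the $u^2$ bound from the lemma, whereas you establish the $u^2$ bound first and derive the gradient bound by integration by parts.
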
    
    \begin{proof}
        \textbf{Part 1: The Scalar Change of Variable.}
        Let $u\in W^{2,2}_0(B(0,1))$. Then, thanks to the Sobolev embeddings $W^{2,2}(\R^4)\hookrightarrow W^{1,4}_0(\R^4)$, and $W^{1,4}(\R^4)\hookrightarrow L^p(\R^4)$ for all $p<\infty$, we deduce that for all $\beta>0$, 
        \begin{align*}
            \int_{B(0,1)}|x|^{\beta}\frac{u^2}{|x|^4}dx+\int_{B(0,1)}|x|^{\beta}\frac{|\D u|^2}{|x|^2}<\infty.
        \end{align*}
        Assume that $0<\beta<2$, since the inequality is trivial for $\beta \geq 2$ thanks to \eqref{poincare_bilaplace}. Integrating by parts, we deduce by Lemma \ref{lemme_ipp_general} that
        \begin{align*}
            \int_{B(0,1)}|x|^{\beta-2}|\D u|^2dx&=-\int_{B(0,1)}|x|^{\beta-2}u\,\Delta u\,dx+(2-\beta)\int_{B(0,1)}|x|^{\beta}\frac{u}{|x|^2}\frac{x}{|x|^2}\cdot \D u\,dx\\
            &=-\int_{B(0,1)}|x|^{\beta-2}u\,\Delta u\,dx-\frac{2(2-\beta)}{\beta}\int_{B(0,1)}|x|^{\beta}\frac{u^2}{|x|^4}dx.
        \end{align*}
        Using Cauchy's inequality, we deduce that
        \begin{align*}
            -\int_{B(0,1)}|x|^{\beta-2}u\,\Delta u\,dx\leq \frac{2-\beta}{\beta}\int_{B(0,1)}|x|^{\beta}\frac{u^2}{|x|^4}dx+\frac{4\beta}{2-\beta}\int_{B(0,1)}|x|^{\beta}(\Delta u)^2dx,
        \end{align*}
        and finally
        \begin{align*}
            \frac{2-\beta}{\beta}\int_{B(0,1)}|x|^{\beta}\frac{u^2}{|x|^4}dx+\int_{B(0,1)}|x|^{\beta}\frac{|\D u|^2}{|x|^2}dx\leq \frac{4\beta}{2-\beta}\int_{B(0,1)}|x|^{\beta}(\Delta u)^2dx.
        \end{align*}
        Less precisely, for all $\beta>0$, since $\D u\in L^4(B(0,1))$ by Sobolev embedding and $u\in W^{2,2}_0(B(0,1))$ we have
        \begin{align}\label{gen_est_du}
            \int_{B(0,1)}|x|^{\beta-2}|\D u|^2dx&\leq\left(\int_{B(0,1)}\frac{dx}{|x|^{4-2\beta}}\right)^{\frac{1}{2}}\left(\int_{B(0,1)}|\D u|^4dx\right)^{\frac{1}{2}}\nonumber\\
            &=\frac{\pi}{\sqrt{\beta}}\left(\int_{B(0,1)}|\D u|^4dx\right)^{\frac{1}{2}}
            \leq C\left(\int_{B(0,1)}(\Delta u)^2dx\right)^{\frac{1}{2}}dx,
        \end{align}
        where we used the Poincaré inequality and \eqref{d2=Delta}
        \begin{align*}
            \int_{B(0,1)}|\D u|^4dx\leq C\int_{B(0,1)}|\D^2u|^2dx=C\int_{B(0,1)}(\Delta u)^2dx.
        \end{align*}
        Therefore, Lemma \ref{lemme_ipp_general} shows that for all $\beta>0$, there holds:
        \begin{align}\label{gen_est_u}
            \int_{B(0,1)}|x|^{\beta}\frac{u^2}{|x|^4}dx\leq C\int_{B(0,1)}(\Delta u)^2dx.
        \end{align}
        Now, if $u\in W^{2,2}_0(\Omega)$, making a change of variable $x=b\,y$, we get
        \begin{align*}
            \int_{\Omega}\frac{u^2}{|x|^4}\left(\frac{|x|}{b}\right)^{\beta}dx=\int_{B_1\setminus\bar{B}_{b^{-1}a}(0)}|y|^{\beta}\frac{|u(by)|^2}{|y|^4}dy=\int_{B(0,1)}|y|^{\beta}\frac{|\bar{u}|^2}{|y|^4}dy,
        \end{align*}
        where $\bar{u}\in W^{2,2}_0(B(0,1))$ is the extension by $0$ of $u(b\,\cdot\,):B_1\setminus\bar{B}_{b^{-1}a}(0)$. Using \eqref{gen_est_u}, we deduce that
        \begin{align*}
            \int_{B(0,1)}|y|^{\beta}\frac{|\bar{u}|^2}{|y|^4}dy\leq C\int_{B(0,1)}(\Delta \bar{u})^2dy=C\int_{B(0,1)}(\Delta u(by))^2b^4\,dy=C\int_{\Omega}(\Delta u)^2dx,
        \end{align*}
        and making the same change of variable for $\D u$, the result follows for the first component of the integral \eqref{bilap_second_est2}.

        \textbf{Part 2: The Inversion.} Making an inversion $x=a\frac{y}{|y|^2}$, we get
        \begin{align*}
            \int_{\Omega}\frac{u^2}{|x|^4}\left(\frac{a}{|x|}\right)^{\beta}dx=\int_{B(0,1)}|\bar{u}|^2\frac{|y|^4}{a^4}|y|^{\beta}\frac{a^2dy}{|y|^8}=\int_{B(0,1)}|y|^{\beta}\frac{|\bar{u}|^2}{|y|^4}dy,
        \end{align*}
        and by conformal invariance of the Bilaplacian in dimension $4$, the two estimates follow from the computations of \textbf{Part 2}.
    \end{proof}
    \begin{rem}
        For $0<\beta<2$, we have the more precise estimates
        \begin{align*}
            &\int_{\Omega}\frac{u^2}{|x|^4}\left(\frac{|x|}{b}\right)^{\beta}dx\leq \frac{4\beta^2}{(2-\beta)^2}\int_{\Omega}(\Delta u)^2\left(\frac{|x|}{b}\right)^{\beta}dx\\
            &\int_{\Omega}\frac{u^2}{|x|^4}\left(\frac{a}{|x|}\right)^{\beta}dx\leq \frac{4\beta^2}{(2-\beta)^2}\int_{\Omega}(\Delta u)^2\left(\frac{a}{|x|}\right)^{\beta}dx,
        \end{align*}
        and
        \begin{align*}
            &\int_{B(0,1)}\frac{|\D u|^2}{|x|^2}\left(\frac{|x|}{b}\right)^{\beta}dx\leq \frac{4\beta}{2-\beta}\int_{\Omega}(\Delta u)^2\left(\frac{|x|}{b}\right)^{\beta}dx\\
            &\int_{B(0,1)}\frac{|\D u|^2}{|x|^2}\left(\frac{a}{|x|}\right)^{\beta}dx\leq \frac{4\beta}{2-\beta}\int_{\Omega}(\Delta u)^2\left(\frac{a}{|x|}\right)^{\beta}dx.
        \end{align*}
    \end{rem}
    Finally, we deduce the following result.
    \begin{theorem}\label{weight_final_bilap}
        Let $0<a<b<\infty$, let $\Omega=B_b\setminus\bar{B}_a(0)\subset \R^4$, and assume that
        \begin{align*}
            \log\left(\frac{b}{a}\right)\geq \frac{15\sqrt{4+3\pi(\pi+1)}}{2}.
        \end{align*}
        Then, for all $\beta>0$, there exists a constant $C_{\beta}>0$ such that for all $u\in W^{2,2}_0(\Omega)$, we have
        \begin{align}
            \int_{\Omega}(\Delta u)^2dx\geq C_{\beta}\int_{\Omega}\left(\frac{u^2}{|x|^4}+\frac{|\D u|^2}{|x|^2}\right)\left(\left(\frac{|x|}{b}\right)^{\beta}+\left(\frac{a}{|x|}\right)^{\beta}+\frac{1}{\log^2\left(\frac{b}{a}\right)}\right)dx.
        \end{align}
    \end{theorem}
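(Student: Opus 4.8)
The plan is to reduce the claimed inequality to results already established, by splitting the weight $\left(\frac{|x|}{b}\right)^{\beta}+\left(\frac{a}{|x|}\right)^{\beta}+\frac{1}{\log^{2}\left(\frac{b}{a}\right)}$ into its three summands and estimating the corresponding three contributions of the right-hand side against $\int_{\Omega}(\Delta u)^{2}dx$ one at a time. The two pure-power summands will be absorbed by the weighted estimates \eqref{bilap_second_est1} and \eqref{bilap_second_est2} of the preceding theorem, and the constant summand $\frac{1}{\log^{2}(b/a)}$ will be controlled by the eigenvalue bound of Theorem~\ref{bound_biharmonic} together with the (non-degenerate) Hardy--Rellich inequality in dimension four. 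Note that the standing hypothesis $\log\left(\frac{b}{a}\right)\geq \frac{15\sqrt{4+3\pi(\pi+1)}}{2}$ is exactly the one under which Theorem~\ref{bound_biharmonic} applies.

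For the power part I would simply invoke \eqref{bilap_second_est1} and \eqref{bilap_second_est2}, which yield a universal $C_{\beta}<\infty$ with
\begin{align*}
\int_{\Omega}\left(\frac{u^{2}}{|x|^{4}}+\frac{|\D u|^{2}}{|x|^{2}}\right)\left(\left(\frac{|x|}{b}\right)^{\beta}+\left(\frac{a}{|x|}\right)^{\beta}\right)dx\leq 2\,C_{\beta}\int_{\Omega}(\Delta u)^{2}dx .
\end{align*}
For the constant weight I would first use Theorem~\ref{bound_biharmonic} in the form $\int_{\Omega}\frac{u^{2}}{|x|^{4}}dx\leq \frac{\log^{2}\left(\frac{b}{a}\right)}{4\pi^{2}}\int_{\Omega}(\Delta u)^{2}dx$, so that $\frac{1}{\log^{2}\left(\frac{b}{a}\right)}\int_{\Omega}\frac{u^{2}}{|x|^{4}}dx\leq \frac{1}{4\pi^{2}}\int_{\Omega}(\Delta u)^{2}dx$. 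For the remaining term $\frac{1}{\log^{2}\left(\frac{b}{a}\right)}\int_{\Omega}\frac{|\D u|^{2}}{|x|^{2}}dx$ I would use the $d=4$ Hardy--Rellich inequality $\int_{\Omega}\frac{|\D u|^{2}}{|x|^{2}}dx\leq \frac{1}{3}\int_{\Omega}(\Delta u)^{2}dx$, which holds for every $u\in W^{2,2}_{0}(\Omega)$ (either as the $d=4$ case of Theorem~\ref{poincare_gen00}, or by extending $u$ by zero to an element of $W^{2,2}(\R^{4})$ and applying the global inequality with constant $3$); since $\log^{2}\left(\frac{b}{a}\right)>1$ the prefactor $\frac{1}{\log^{2}(b/a)}$ only improves this, giving $\frac{1}{\log^{2}\left(\frac{b}{a}\right)}\int_{\Omega}\frac{|\D u|^{2}}{|x|^{2}}dx\leq \frac{1}{3}\int_{\Omega}(\Delta u)^{2}dx$.

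Adding the three bounds, the right-hand side of the asserted inequality is at most $\left(2\,C_{\beta}+\frac{1}{4\pi^{2}}+\frac{1}{3}\right)\int_{\Omega}(\Delta u)^{2}dx$, which is the claim after taking the reciprocal of that constant and relabelling it $C_{\beta}$. I do not anticipate a genuine obstacle: all the analytic content is already packaged in the preceding weighted estimate, in the eigenvalue bound of Theorem~\ref{bound_biharmonic}, and in the Hardy--Rellich inequality. The only point that deserves a line of justification is the use of the global Hardy--Rellich inequality for $W^{2,2}_{0}(\Omega)$ functions, which follows since such functions extend by zero to $W^{2,2}(\R^{4})$ (indeed with support in a compact subset of $\R^{4}\setminus\ens{0}$), so the inequality holds for them with its universal constant and with no restriction on the conformal class.
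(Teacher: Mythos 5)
Your proposal is correct and is essentially the argument the paper intends: the paper states Theorem~\ref{weight_final_bilap} with only the phrase ``we deduce,'' and the deduction is exactly your three-way splitting of the weight, with the power parts absorbed by \eqref{bilap_second_est1}--\eqref{bilap_second_est2}, the constant-weight $L^2$ part by Theorem~\ref{bound_biharmonic}, and the constant-weight gradient part by the (conformal-class-independent) $d=4$ Hardy--Rellich inequality applied to the zero-extension of $u$. The only point worth flagging is that you correctly avoid relying on the paper's own $d=4$ gradient estimate (Theorem~\ref{poincare_gen0}), which appears only later in the text, by invoking the classical global inequality instead.
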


    \section{First Minimisation Problem in All Dimension}

    Recall that the Laplacian on $\R^m$ admits the following expansion in polar coordinates
    \begin{align}
        \Delta=\p{r}^2+\frac{m-1}{r}\p{r}+\frac{1}{r^2}\Delta_{S^{m-1}},
    \end{align}
    where $\Delta_{S^{m-1}}$ is the Laplacian on $S^{m-1}$. Thanks to classical results, all function $u\in L^{2}(S^{m-1})$ admits an expansion 
    \begin{align*}
        u(\theta)=\sum_{n=0}^{\infty}a_nY_n(\theta)
    \end{align*}
    for some sequence $\ens{a_n}_{n\in\N}\subset \R$, where $Y_n$ is the restriction to $S^{m-1}$ of a homogeneous harmonic polynomial of $\R^m$ (and the $\ens{Y_n}_{n\in\N}$ are orthogonal in $L^2(S^{m-1})$. Furthermore, $Y_n$ has eigenvalue $n(n+m-2)$. As we have already treated the case $m=2$, we can assume that $m\geq 3$. Now, if $u\in C^{\infty}(\R^m)$, using polar coordinates $(r,\theta)\in (0,\infty)\times S^{m-1}$, we obtain an expansion
    \begin{align*}
        u(r,\theta)=\sum_{n=0}^{\infty}a_n(r)Y_n(\theta),
    \end{align*}
    and
    \begin{align*}
        \Delta u=\sum_{n=0}^{\infty}\left(a_n''(r)+\frac{m-1}{r}a_n'(r)-\frac{n(n+m-2)}{r^2}a_n(r)\right)Y_n(\theta).
    \end{align*}
    Then, we compute
    \begin{align*}
        &\p{r}\left(a_n''(r)+\frac{m-1}{r}a_n'(r)-\frac{n(n+m-2)}{r^2}a_n(r)\right)=a_n'''(r)+\frac{m-1}{r}a_n''(r)-\frac{m-1+n(n+m-2)}{r^2}a_n'(r)\\
        &+\frac{2n(n+m-2)}{r^3}a_n(r),\\
        &\p{r}^2\left(a_n''(r)+\frac{m-1}{r}a_n'(r)-\frac{n(n+m-2)}{r^2}a_n(r)\right)=a_n''''(r)+\frac{m-1}{r}a_n'''(r)\\
        &-\frac{2(m-1)+n(n+m-2)}{r^2}a_n''(r)
        +\frac{2(m-1)+4n(n+m-2)}{r^3}a_n'(r)-\frac{6n(n+m-2)}{r^4}a_n(r).
    \end{align*}
    Therefore, we get
    \begin{align}\label{bilaplacian_all_dimension}
        &\Delta^2u=\sum_{n=0}^{\infty}\bigg(a_n''''(r)+\frac{m-1}{r}a_n'''(r)-\frac{2(m-1)+n(n+m-2)}{r^2}a_n''(r)
        +\frac{2(m-1)+4n(n+m-2)}{r^3}a_n'(r)\nonumber\\
        &-\frac{6n(n+m-2)}{r^4}a_n(r)\nonumber\\
        &+\frac{m-1}{r}\left(a_n'''(r)+\frac{m-1}{r}a_n''(r)-\frac{m-1+n(n+m-2)}{r^2}a_n'(r)
        +\frac{2n(n+m-2)}{r^3}a_n(r)\right)\nonumber\\
        &-\frac{n(n+m-2)}{r^2}\left(a_n''(r)+\frac{m-1}{r}a_n'(r)-\frac{n(n+m-2)}{r^2}a_n(r)\right)\bigg)Y_n(\theta)\nonumber\\
        &=\sum_{n=0}^{\infty}\left(a_n''''(r)+\frac{2(m-1)}{r}a_n'''(r)+\frac{(m-1)(m-3)-2n(n+m-2)}{r^2}a_n''(r)\right.\nonumber\\
        &\left.-\frac{(m-1)(m-3)+2(m-3)n(n+m-2)}{r^3}a_n'(r)+\frac{n^2(n+m-2)^2+2(m-4)n(n+m-2)}{r^4}\right)Y_n(\theta).
    \end{align}
    In other words, we have
    \begin{align}\label{projection_bilaplacian}
        &\Pi_{n(n+m-2)}(\Delta^2)=\p{r}^4+\frac{2(m-1)}{r}\p{r}^3+\frac{(m-1)(m-3)-2n(n+m-2)}{r^2}\p{r}^2\nonumber\\
        &-\frac{(m-1)(m-3)+2(m-3)n(n+m-2)}{r^3}\p{r}+\frac{n^2(n+m-2)^2+2(m-4)n(n+m-2)}{r^4}.
    \end{align}
    Now, consider the following eigenvalue problem on some interval $[a,b]$, where $0<a<b<\infty$
    \begin{align}\label{bilap_eigenvalue_gen}
        \left\{\begin{alignedat}{1}
        &f''''+\frac{2(m-1)}{r}f'''+\frac{(m-1)(m-3)-2n(n+m-2)}{r^2}f''\\
        &-\frac{(m-1)(m-3)+2(m-3)n(n+m-2)}{r^3}f'\\
        &+\frac{n^2(n+m-2)^2+2(m-4)n(n+m-2)}{r^4}f=\frac{\lambda}{r^4}f\\
        &f(a)=f(b)=f'(a)=f'(b)=0,
        \end{alignedat}\right.
    \end{align}
    where $\lambda\geq 0$.
    Making the change of variable $f(r)=Y(\log(r))$, we get
    \begin{align}\label{change_var_log_gen}
    \left\{\begin{alignedat}{1}
        &\p{r}f(r)=\frac{1}{r}Y'(\log(r))\\
        &\p{r}^2f(r)=\frac{1}{r^2}Y''(\log(r))-\frac{1}{r^2}Y'(\log(r))\\
        &\p{r}^3f(r)=\frac{1}{r^3}Y'''(\log(r))-\frac{3}{r^3}Y''(\log(r))+\frac{2}{r^3}Y'(\log(r))\\
        &\p{r}^4f(r)=\frac{1}{r^4}Y''''(\log(r))-\frac{6}{r^4}Y'''(\log(r))+\frac{11}{r^4}Y''(\log(r))-\frac{6}{r^4}Y'(\log(r)).
        \end{alignedat}\right.
    \end{align}
    Therefore, we have
    \begin{align}\label{change_var_log_gen2}
        &r^4\left(f''''+\frac{2(m-1)}{r}f'''+\frac{(m-1)(m-3)-2n(n+m-2)}{r^2}f''\right.\nonumber\\
        &\left.-\frac{(m-1)(m-3)+2(m-3)(n+m-2)}{r^3}f'\right.\nonumber\\
        &\left.+\frac{n^2(n+m-2)^2+2(m-4)n(n+m-2)}{r^4}f\right)\nonumber\\
        &=Y''''-6\,Y'''+11\,Y''-6\,Y'
        +2(m-1)\left(Y'''-3\,Y''+2\,Y'\right)\nonumber\\
        &+\left((m-1)(m-3)-2n(n+m-2)\right)\left(Y''-Y'\right)
        -\left((m-1)(m-3)+2(m-3)n(n+m-2)\right)Y'\nonumber\\
        &+\left(n^2(n+m-2)^2+2(m-4)n(n+m-2)\right)Y\nonumber\\
        &=Y''''+2(m-4)Y'''+((m-1)(m-9)+11-2n(n+m-2))Y''\nonumber\\
        &-2((m-1)(m-5)+3+(m-4)n(n+m-2))Y'+(n^2(n+m-2)^2+2(m-4)n(m+n-2))Y.
    \end{align}
    Therefore, \eqref{bilap_eigenvalue_gen} is equivalent to 
    \begin{align}\label{bilap_eingenvalue_gen_adjoint}
    \left\{\begin{alignedat}{1}
        &Y''''+2(m-4)Y'''+((m-1)(m-9)+11-2n(n+m-2))Y''\\
        &-2((m-1)(m-5)+3+(m-4)n(n+m-2))Y'\\
        &+(n^2(n+m-2)^2+2(m-4)n(m+n-2))Y=\lambda\,Y\\
        &Y(\log(a))=Y(\log(b))=Y'(\log(a))=Y'(\log(b))=0.
        \end{alignedat}\right.
    \end{align}
    The characteristic polynomial of this equation is given by 
    \begin{align*}
        P(X)&=X^4+2(m-4)X^3+((m-1)(m-9)+11-2n(n+m-2))X^2\\
        &-2((m-1)(m-5)+3+(m-4)n(n+m-2))X+n^2(n+m-2)^2+2(m-4)n(n+m-2)-\lambda.
    \end{align*}
    Make the change of variable $X=Y-(m-4)/2$. Then we get
    \begin{align*}
        &X^2=Y^2-(m-4)Y+\frac{(m-4)^2}{4}\\
        &X^3=Y^3-\frac{3}{2}(m-4)Y^2+\frac{3}{4}(m-4)^2Y-\frac{1}{8}(m-4)^3\\
        &X^4=Y^4-2(m-4)Y^3+\frac{3}{2}(m-4)^2Y^2-\frac{1}{2}(m-4)^3Y+\frac{(m-4)^4}{16}.
    \end{align*}
    Also, let us rewrite the coefficients
    \begin{align*}
        &(m-1)(m-9)+11=m^2-10m+20=(m-5)^2-5\\
        &(m-1)(m-5)+3=m^2-6m+8=(m-2)(m-4).
    \end{align*}
    We therefore rewrite
    \begin{align*}
        P(X)&=X^4+2(m-4)X^3+(m^2-10m+20-2n(n+m-2))X^2-2(m-4)(m-2+n(n+m-2))X\\
        &+n^2(n+m-2)^2+2(m-4)n(n+m-2)-\lambda.
    \end{align*}
    Therefore, the coefficient in $Y$ is
    \begin{align*}
        &-\frac{1}{2}(m-4)^3+\frac{3}{2}(m-4)^3-(m-4)(m^2-10\,m+20-\colorcancel{2n(n+m-2)}{red})\\
        &-2(m-4)(m-2+\colorcancel{n(n+m-2)}{red})\\
        &=(m-4)\left((m-4)^2-m^2+10\,m-20\right)-2(m-2)(m-4)=(m-4)(2m-4)-2(m-4)(m-2)=0.
    \end{align*}
    Therefore, we get
    \begin{align}\label{change_var_bilap_poly}
        &Q(Y)=P(Y-(m-4)/2)=Y^4+\frac{3}{2}(m-4)^2Y^2+\frac{(m-4)^4}{16}\nonumber\\
        &+2(m-4)\left(-\frac{3}{2}(m-4)Y^2-\frac{1}{8}(m-4)^3\right)\nonumber\\
        &+(m^2-10m+20-2n(n+m-2))\left(Y^2+\frac{(m-4)^2}{4}\right)\nonumber\\
        &-2(m-4)(m-2+n(n+m-2))\left(-\frac{m-4}{2}\right)\nonumber\\
        &+n^2(n+m-2)^2+2(m-4)n(n+m-2)-\lambda\nonumber\\
        &=Y^4+(m^2-10m+20-\frac{3}{2}(m-4)^2-2n(n+m-2))Y^2-\frac{3}{16}(m-4)^4\nonumber\\
        &+\frac{(m-4)^2}{4}\left(m^2-10m+20-2n(n+m-2)\right)+(m-4)^2(m-2+n(n+m-2))+n^2(n+m-2)^2\nonumber\\
        &+2(m-4)n(n+m-2)-\lambda\nonumber\\
        &=Y^4-\left(\frac{1}{2}m^2-2m+4+2n(n+m-2)\right)Y^2+\left(\frac{m(m-4)}{4}+n(n+m-2)\right)^2-\lambda\nonumber\\
        &=Y^4-\left(\frac{1}{2}((m-2)^2+4)+2n(n+m-2)\right)Y^2+\left(\frac{m(m-4)}{4}+n(n+m-2)\right)^2-\lambda.
    \end{align}
    Indeed, we have
    \begin{align*}
        &-\frac{3}{16}(m-4)^4+\frac{(m-4)^2}{4}(m^2-10m+20-2n(n+m-2))+(m-4)^4 (m-2+n(n+m-2))\\
        &=\frac{(m-4)^2}{16}\left(-3m^2+\colorcancel{24m}{red}-\colorcancel{48}{blue}+4m^2-\colorcancel{40m}{red}+\colorcancel{80}{blue}-8n(n+m-2)+\colorcancel{16m}{red}-\colorcancel{32}{blue}+16n(n+m-2)\right)\\
        &=\frac{(m-4)^2}{16}\left(m^2+8n(n+m-2)\right),
    \end{align*}
    which shows that the constant term is
    \begin{align*}
        &\frac{m^2(m-4)^2}{16}+\frac{1}{2}(m-4)^2n(n+m-2)+n^2(n+m-2)+2(m-4)n(n+m-2)\\
        &=\frac{m^2(m-4)^2}{16}+\frac{1}{2}(m-4)n(n+m-2)((m-4)+4)+n^2(n+m-2)^2\\
        &=\frac{m^2(m-4)^2}{16}+\frac{1}{2}m(m-4)n(n+m-2)+n^2(n+m-2)^2\\
        &=\left(\frac{m(m-4)}{4}+n(n+m-2)\right)^2.
    \end{align*}
    Once more, we find a biquadratic polynomial! The discriminant of $Q(\sqrt{Y})$ is given by 
    \begin{align}\label{discriminant_bilap_gen}
        D&=\left(\frac{1}{2}((m-2)^2+4)+2n(n+m-2)\right)^2-4\left(\frac{1}{4}m(m-4)+n(n+m-2)\right)^2+4\lambda\nonumber\\
        &=\left(\frac{1}{2}(m^2-4m+8)+2n(n+m-2)\right)^2-\left(\frac{1}{2}(m^2-4m)+2n(n+m-2)\right)^2+4\lambda\nonumber\\
        &=4((m^2-4m+4)+4n(n+m-2))+4\lambda\nonumber\\
        &=4((m-2)^2+4n^2+4n(m-2))+4\lambda=4((2n+m-2)^2+\lambda).
    \end{align}
    For $m=4$, we recover the formula $4(4(n+1)^2+\lambda)$. Therefore, the roots of $Q(\sqrt{Y})$ are given by 
    \begin{align*}
        \frac{1}{4}((m-2)^2+4)+n(n+m-2)\pm \sqrt{(2n+m-2)^2+\lambda},
    \end{align*}
    and finally, the roots of $Q$ are given by
    \begin{align}\label{root_Q_gen}
        \left\{\begin{alignedat}{1}
            s_1&=\sqrt{\frac{1}{4}((m-2)^2+4)+n(n+m-2)+\sqrt{(2n+m-2)^2+\lambda}}\\
            s_2&=-\sqrt{\frac{1}{4}((m-2)^2+4)+n(n+m-2)+\sqrt{(2n+m-2)^2+\lambda}}\\
            s_3&=\sqrt{\frac{1}{4}((m-2)^2+4)+n(n+m-2)-\sqrt{(2n+m-2)^2+\lambda}}\\
            s_4&=-\sqrt{\frac{1}{4}((m-2)^2+4)+n(n+m-2)-\sqrt{(2n+m-2)^2+\lambda}}\\
        \end{alignedat}\right.
    \end{align}
    Finally, the roots of $P$ are given by 
    \begin{align}\label{root_P_gen}
        \left\{\begin{alignedat}{1}
            r_1&=-\frac{m-4}{2}+\sqrt{\frac{1}{4}((m-2)^2+4)+n(n+m-2)+\sqrt{(2n+m-2)^2+\lambda}}\\
            r_2&=-\frac{m-4}{2}-\sqrt{\frac{1}{4}((m-2)^2+4)+n(n+m-2)+\sqrt{(2n+m-2)^2+\lambda}}\\
            r_3&=-\frac{m-4}{2}+\sqrt{\frac{1}{4}((m-2)^2+4)+n(n+m-2)-\sqrt{(2n+m-2)^2+\lambda}}\\
            r_4&=-\frac{m-4}{2}-\sqrt{\frac{1}{4}((m-2)^2+4)+n(n+m-2)-\sqrt{(2n+m-2)^2+\lambda}}\\
        \end{alignedat}\right.
    \end{align}
    Taking $m=2$, we recover \eqref{racines_m_n} (where $m=1$ in this identity), whilst for $m=4$, we recover \eqref{root4}.
    We can also rewrite the roots as 
    \begin{align}\label{root_P_gen2}
        \left\{\begin{alignedat}{1}
            r_1&=-\frac{m-4}{2}+\sqrt{\frac{1}{4}(2n+m-2)^2+1+\sqrt{(2n+m-2)^2+\lambda}}\\
            r_2&=-\frac{m-4}{2}-\sqrt{\frac{1}{4}(2n+m-2)^2+1+\sqrt{(2n+m-2)^2+\lambda}}\\
            r_3&=-\frac{m-4}{2}+\sqrt{\frac{1}{4}(2n+m-2)^2+1-\sqrt{(2n+m-2)^2+\lambda}}\\
            r_4&=-\frac{m-4}{2}-\sqrt{\frac{1}{4}(2n+m-2)^2+1-\sqrt{(2n+m-2)^2+\lambda}}\\
        \end{alignedat}\right.
    \end{align}
    Notice that there are complex roots if and only if
    \begin{align*}
        \lambda>\frac{1}{16}\left((2n+m-2)^2-4\right)^2=\frac{1}{16}(2n+m)^2(2n+m-4)^2,
    \end{align*}
    For all $m\geq 3$, this quantity is minimal for $n=0$, and we will prove the following theorem.
    \begin{theorem}\label{ode_gen_dimension}
        Let $m>2$, $n\in\N$, $0<a<b<\infty$, and assume that
        \begin{align*}
            \log\left(\frac{b}{a}\right)>15\sqrt{2+\frac{3\pi}{2}(\pi+1)}.
        \end{align*}
        Let $Y$ be a non-trivial solution of the system
        \begin{align}\label{bilap_eingenvalue_gen_adjoint2}
    \left\{\begin{alignedat}{1}
        &Y''''+2(m-4)Y'''+((m-1)(m-9)+11-2n(n+m-2))Y''\\
        &-2((m-1)(m-5)+3+(m-4)n(n+m-2))Y'\\
        &+(n^2(n+m-2)^2+2(m-4)n(m+n-2))Y=\lambda_n\,Y\\
        &Y(\log(a))=Y(\log(b))=Y'(\log(a))=Y'(\log(b)),
        \end{alignedat}\right.
    \end{align}
    where $\lambda_n\geq 0$. Then, we have
    \begin{align*}
        \lambda_n>\frac{1}{16}(2n+m)^2(2n+m-4)^2+\frac{c^2}{\log^2\left(\frac{b}{a}\right)}.
    \end{align*}
    In particular, we have
    \begin{align*}
        \inf_{n\in\N}\lambda_n>\left(\frac{m^2}{4}+\frac{\pi^2}{\log^2\left(\frac{b}{a}\right)}\right)\left(\frac{(m-4)^2}{4}+\frac{\pi^2}{\log^2\left(\frac{b}{a}\right)}\right).
    \end{align*}
    Finally, provided that 
    \begin{align*}
        \log\left(\frac{b}{a}\right)>\pi\sqrt{\frac{3m^2-16m+28+\sqrt{(3m^2-16m+28)^2+60(2m-1)((m-1)^2-2)}}{(2m-1)((m-1)^2-2)}},
    \end{align*}
    we have
    \begin{align*}
        \inf_{n\in\N}\lambda_n=\lambda_0.
    \end{align*}
    \end{theorem}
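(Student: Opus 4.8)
The plan is to run the same three-case ODE analysis already carried out for Theorem~\ref{theoreme_ode_m_n} (dimension $2$) and Theorem~\ref{theoreme_ode_dimension4} (dimension $4$), of which this statement is the common generalisation. After the substitution $f(r)=Y(\log r)$ turning \eqref{bilap_eigenvalue_gen} into the constant-coefficient equation \eqref{bilap_eingenvalue_gen_adjoint2}, the characteristic polynomial — via the shift $X=Y-(m-4)/2$ already performed in \eqref{change_var_bilap_poly} — is biquadratic with roots listed in \eqref{root_P_gen2}. Setting $\mu=2n+m-2>0$, I would write the four roots as $-\tfrac{m-4}2\pm\lambda_1$ and $-\tfrac{m-4}2\pm\lambda_2$ (resp. $-\tfrac{m-4}2\pm i\lambda_2$), so that $\lambda_1^2-\lambda_2^2=\tfrac12(\mu^2+4)$ and $\lambda_1^2\lambda_2^2=\tfrac1{16}(\mu^2-4)^2-\lambda$; the sign of $\lambda-\tfrac1{16}(2n+m)^2(2n+m-4)^2=\lambda-\tfrac1{16}(\mu^2-4)^2$ decides whether $\lambda_2$ is real, zero or imaginary, splitting the argument into three cases. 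The point that makes the reduction work is that in the boundary-condition determinant the common summand $-\tfrac{m-4}2$ of all four roots factors out as $(ab)^{4-m}\neq0$: the general determinant formula from Case~1 of the proof of Theorem~\ref{theoreme_ode_m_n} is a sum of terms (exponent difference)$\cdot$(exponent difference)$\cdot X^{(\text{sum of two exponents})}Y^{(\text{sum of two exponents})}$, hence is multiplied by $(XY)^{2c_0}$ under a common shift $c_0$ of all exponents, while all its remaining coefficients depend only on $\lambda_1,\lambda_2$.

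In Case~1 ($\lambda<\tfrac1{16}(2n+m)^2(2n+m-4)^2$, four distinct real roots) this gives $\det A=(ab)^{4-m}g(b/a)$ with $g$ the function studied in Case~1 of Theorem~\ref{theoreme_ode_m_n}; since $g<0$ on $(1,\infty)$ whenever $\lambda_1>\lambda_2>0$ and $(ab)^{4-m}>0$, $\det A\neq0$ and $Y\equiv0$, so no such $\lambda$ is admissible. In Case~2 (equality: a double root at $-\tfrac{m-4}2$ and $\lambda_1=\sqrt{(\mu^2+4)/2}>\sqrt2$) the determinant is exactly the one treated in Case~2 of Theorem~\ref{theoreme_ode_m_n} and is again nonzero. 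Hence the minimal admissible $\lambda_n$ occurs in Case~3.

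For Case~3 (roots $-\tfrac{m-4}2\pm\lambda_1$ and $-\tfrac{m-4}2\pm i\lambda_2$) I would observe that $\det A=0$ is, up to the factor $(ab)^{4-m}$, precisely the universal equation \eqref{fun_eq_gen}--\eqref{fund_m} with the coefficient ``$m^2+n^2$'' replaced by $\tfrac12(\lambda_1^2-\lambda_2^2)=\tfrac14((2n+m-2)^2+4)$, and that the hypothesis $\log(b/a)>15\sqrt{2+\tfrac{3\pi}2(\pi+1)}$ is the uniform-in-$n$ (and in $m>2$) form of condition \eqref{lower_bound2}, the worst case being $n=0$, $m\to2^+$. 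The $\psi$-analysis of Case~3 of Theorem~\ref{theoreme_ode_m_n} then applies verbatim and places the first positive zero of $\psi$ in $(\pi,2\pi)$, i.e. $\pi<\lambda_2\log(b/a)<2\pi$. Squaring $\lambda_2>\pi/\log(b/a)$, using $\sqrt{(2n+m-2)^2+\lambda}=\lambda_2^2+\tfrac14(2n+m-2)^2+1$ together with the elementary factorisation $\bigl(\tfrac14\mu^2+1+X\bigr)^2-\mu^2=\bigl((\tfrac\mu2-1)^2+X\bigr)\bigl((\tfrac\mu2+1)^2+X\bigr)$ (with $\mu=2n+m-2$, $X=\pi^2/\log^2(b/a)$) and the identities $\tfrac\mu2-1=n+\tfrac{m-4}2$, $\tfrac\mu2+1=n+\tfrac m2$, gives
\begin{align*}
\lambda_n>\Bigl(\bigl(n+\tfrac{m-4}2\bigr)^2+\tfrac{\pi^2}{\log^2(b/a)}\Bigr)\Bigl(\bigl(n+\tfrac m2\bigr)^2+\tfrac{\pi^2}{\log^2(b/a)}\Bigr)=:F_m(n),
\end{align*}
which contains the stated preliminary estimate; the upper zero $\lambda_2<2\pi/\log(b/a)$ gives the matching upper bound $\lambda_n<G_m(n)$, the same product with $\tfrac{4\pi^2}{\log^2(b/a)}$ in place of $\tfrac{\pi^2}{\log^2(b/a)}$.

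It remains to optimise over $n$. Writing $F_m(n)=\bigl((s-1)^2+\alpha^2\bigr)\bigl((s+1)^2+\alpha^2\bigr)$ with $s=n+\tfrac{m-2}2$, $\alpha=\pi/\log(b/a)$, one sees $F_m$ is a convex function of $s^2$ with vertex at $s^2=1-\alpha^2$, hence unimodal in $n$; since $m\geq3$ forces $F_m(0)<F_m(1)$, we get $\min_{n\in\N}F_m(n)=F_m(0)=\bigl(\tfrac{m^2}4+\tfrac{\pi^2}{\log^2(b/a)}\bigr)\bigl(\tfrac{(m-4)^2}4+\tfrac{\pi^2}{\log^2(b/a)}\bigr)$, which is the ``in particular'' bound. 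For the last claim it suffices to prove $G_m(0)\leq F_m(1)=\min_{n\geq1}F_m(n)$, since then $\lambda_0<G_m(0)\leq F_m(1)\leq F_m(n)<\lambda_n$ for every $n\geq1$; expanding, $G_m(0)\leq F_m(1)$ is a quadratic inequality in $\alpha^2$ whose relevant root yields the explicit lower bound on $\log(b/a)$. The main obstacle is bookkeeping — correctly matching the Case~3 determinant to the already-analysed universal equation with the right coefficient, checking the stated conformal-class hypothesis is uniformly strong enough, and handling the unimodality of $F_m$ (where $m\geq3$, equivalently $m^2-2m-2>0$, is used) — all the genuinely delicate estimates, namely the sign of $g$ in Case~1 and the localisation of the first zero of $\psi$ in Case~3, being imported wholesale from Theorem~\ref{theoreme_ode_m_n}.
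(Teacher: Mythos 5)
Your proposal is correct and follows essentially the same route as the paper: the same three cases according to the sign of $\lambda-\tfrac1{16}\bigl((2n+m-2)^2-4\bigr)^2$, the same factoring of the common root shift $-\tfrac{m-4}2$ out of the boundary determinant so that Cases 1--2 and the universal equation \eqref{fund_m} (with coefficient $(n+\tfrac{m-2}2)^2+1$, i.e.\ ``$m=1$, $n\mapsto n+\tfrac{m-2}2$'') are imported from Theorem \ref{theoreme_ode_m_n}, the same factorisation giving the product lower bound, the minimisation at $n=0$ for $m\geq 3$, and the comparison of the upper bound at $n=0$ with the lower bound at $n=1$ for the final claim. The only blemishes are immaterial: in the real-root case the relation is $\lambda_1^2+\lambda_2^2=\tfrac12\bigl((2n+m-2)^2+4\bigr)$ (your difference formula is the Case-3 one), and ``$m\geq3$'' is not literally equivalent to $m^2-2m-2>0$, though both hold for the integer dimensions of interest.
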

    \begin{proof}
        Let $Y$ be a non-trivial solution of the system. As previously, we distinguish three cases.

        \textbf{Case 1: $\lambda<\dfrac{1}{16}((2n+m-2)^2-4)^2$}. 
        Then, the characteristic polynomial 
        \begin{align*}
            &P(X)=X^4+2(m-4)X^3+((m-1)(m-9)+11-2n(n+m-2))X^2
        \\
        &-2((m-1)(m-5)+3+(m-4)n(n+m-2))X+n^2(n+m-2)^2+2(m-4)n(n+m-2)-\lambda\\
        &=\left(X+\frac{m-4}{2}\right)^4-\left(\frac{1}{2}((m-2)^2+4)+2n(n+m-2)\right)\left(X+\frac{m-4}{2}\right)^2\\
        &+\left(\frac{m(m-4)}{4}+n(n+m-2)\right)^2-\lambda
        \end{align*} of the ordinary differential equation has four distinct real roots, and 
        \begin{align}
            Y(t)=\mu_1\,e^{r_1\,t}+\mu_2\,e^{r_2\,t}+\mu_3\,e^{r_3\,t}+\mu_4\,e^{r_4\,t},
        \end{align}
        where $r_1,r_2,r_3,r_4$ are given in \eqref{root4}. Also write
        \begin{align}
            \left\{\begin{alignedat}{1}
                r_1&=-\frac{m-4}{2}+\lambda_1\\
                r_2&=-\frac{m-4}{2}-\lambda_1\\
                r_3&=-\frac{m-4}{2}+\lambda_2\\
                r_4&=-\frac{m-4}{2}-\lambda_2
            \end{alignedat}\right.,
        \end{align}
        where $\lambda_1=\frac{1}{4}(2n+m-2)^2+1+\sqrt{(2n+m-2)^2+\lambda}$ and $\lambda_2=\frac{1}{4}(2n+m-2)^2+1-\sqrt{(2n+m-2)^2+\lambda}$. The boundary conditions are equivalent to
         \begin{align*}
        A\,\mu=\begin{pmatrix}
            a^{r_1} & a^{r_2} & a^{r_3} & a^{r_4}\\
            b^{r_1} & b^{r_2} & b^{r_3} & b^{r_4}\\
            r_1\,a^{r_1} & r_2\,a^{r_2} & r_3\,a^{r_3} & r_4\,a^{r_4}\\
            r_1\,b^{r_2} & r_2\,b^{r_2} & r_3\,b^{r_3} & r_4\,b^{r_4}
        \end{pmatrix}\begin{pmatrix}
            \mu_1\\
            \mu_2\\
            \mu_3\\
            \mu_4
        \end{pmatrix}=0. 
    \end{align*}
    We recover up to a $a^{-m+2}b^{-m+2}$ factor the determinant appearing in the proof of Theorem \ref{theoreme_ode_m_n}, and since $\lambda_1\lambda_2\neq 0$, $(\lambda_1+\lambda_2)(\lambda_1-\lambda_2)\neq 0$, the rest of the proof applies identically and we deduce that $Y=0$.

    \textbf{Case 2: $\lambda=\frac{1}{16}((2n+m-2)^2-4)^2$.}
    Then, the roots are given by 
    \begin{align*}
        \left\{\begin{alignedat}{1}
            r_1&=-\frac{m-4}{2}+\sqrt{\frac{1}{2}(2n+m-2)^2+2}\\
            r_2&=-\frac{m-4}{2}-\sqrt{\frac{1}{2}(2n+m-2)^2+2}\\
            r_3&=r_4=-\frac{m-4}{2}.
        \end{alignedat}\right.
    \end{align*}
    Write $\mu=\sqrt{\frac{1}{2}(2n+m-2)^2+2}$ to simplify notations. Therefore, $Y$ is given by 
    \begin{align*}
        Y(t)=\mu_1e^{\mu\,t}+\mu_1e^{-\mu\,t}+\mu_3e^{-\frac{m-4}{2}t}+\mu_4\,t\,e^{-\frac{m-4}{2}t}.
    \end{align*}
    The boundary conditions are equivalent to
    \begin{align*}
        \begin{pmatrix}
            a^{-\frac{m-4}{2}+\mu} & a^{-\frac{m-4}{2}-\mu} & a^{-\frac{m-4}{2}} & \log(a)a^{-\frac{m-4}{2}}\\
            b^{-\frac{m-4}{2}+\mu} & b^{-\frac{m-4}{2}-\mu} & b^{-\frac{m-4}{2}} & \log(b)b^{-\frac{m-4}{2}}\\
            \left(-\frac{m-4}{2}+\mu\right)a^{-\frac{m-4}{2}+\mu} & \left(-\frac{m-4}{2}-\mu\right)a^{-\frac{m-4}{2}-\mu}&-\frac{m-4}{2}a^{-\frac{m-4}{2}}&\left(1-\frac{m-4}{2}\log(a)\right)a^{-\frac{m-4}{2}}\\
            \left(-\frac{m-4}{2}+\mu\right)b^{-\frac{m-4}{2}+\mu} & \left(-\frac{m-4}{2}-\mu\right)b^{-\frac{m-4}{2}-\mu}&-\frac{m-4}{2}b^{-\frac{m-4}{2}}&\left(1-\frac{m-4}{2}\log(b)\right)b^{-\frac{m-4}{2}}
        \end{pmatrix}
        \begin{pmatrix}
            \mu_1\\
            \mu_2\\
            \mu_3\\
            \mu_4
        \end{pmatrix}=0.
    \end{align*}
    If $A$ is the matrix above, we get
    \begin{align*}
        \det(A)&={\left(a^{\mu} \mu \log\left(a\right) + b^{\mu} \mu \log\left(a\right) - a^{\mu} \mu \log\left(b\right) - b^{\mu} \mu \log\left(b\right) - 2 \, a^{\mu} + 2 \, b^{\mu}\right)} a^{-m - \mu + 4} {\left(a^{\mu} - b^{\mu}\right)} b^{-m - \mu + 4} \mu\\
        &=\left(2\,b^{\mu}\left(\left(\frac{b}{a}\right)^{\mu}-1\right)-\mu\,b^{\mu}\left(1+\left(\frac{b}{a}\right)^{\mu}\right)\log\left(\frac{b}{a}\right)\right) a^{-m - \mu + 4} {\left(a^{\mu} - b^{\mu}\right)} b^{-m - \mu + 4} \mu\\
        &= a^{-m - \mu + 4} {\left(a^{\mu} - b^{\mu}\right)} b^{-m + 4} \mu\left(2\left(\left(\frac{b}{a}\right)^{\mu}-1\right)-\mu\left(1+\left(\frac{b}{a}\right)^{\mu}\right)^{\mu}\log\left(\frac{b}{a}\right)\right).
    \end{align*}
    Since $a\neq b$, we have $\det(A)=0$ if and only if
    \begin{align*}
        2\left((1+x)^{\mu}-1\right)-\mu\left(1+(1+x)^{\mu}\right)\log(1+x)=0.
    \end{align*}
    where $x=\dfrac{b}{a}-1$. Thanks to the proof of Theorem \ref{theoreme_ode_m_n}, for all $\alpha\geq \sqrt{2}$ and $x>0$, we have
    \begin{align*}
        2\left((1+x)^{\alpha}-1\right)-\alpha\left(1+(1+x)^{\alpha}\right)\log(1+x)<0,
    \end{align*}
    Since $\mu=\sqrt{\frac{1}{2}((2n+m-2)^2+2}\geq \sqrt{2}$, we deduce that $\det(A)>0$, which implies that $Y=0$. Therefore, only one possibility remains.

    \textbf{Case 3: $\lambda>\frac{1}{16}((2n+m-2)^2-4)^2$.} Then, the roots of $P$ are given by 
    \begin{align*}
        \left\{\begin{alignedat}{1}
            r_1&=-\frac{m-4}{2}+\sqrt{\sqrt{(2n+m-2)^2+\lambda}+\frac{1}{4}(2n+m-2)^2+1}\\
            r_2&=-\frac{m-4}{2}-\sqrt{\sqrt{(2n+m-2)^2+\lambda}+\frac{1}{4}(2n+m-2)^2+1}\\
            r_3&=-\frac{m-4}{2}+i\sqrt{\sqrt{(2n+m-2)^2+\lambda}-\frac{1}{4}(2n+m-2)^2-1}\\
            r_4&=-\frac{m-4}{2}-i\sqrt{\sqrt{(2n+m-2)^2+\lambda}-\frac{1}{4}(2n+m-2)^2-1}.
        \end{alignedat}\right.
    \end{align*}
    If 
    \begin{align}
        \lambda_1=\sqrt{\sqrt{(2n+m-2)^2+\lambda}+\frac{1}{4}(2n+m-2)^2+1}
    \end{align}
    and 
    \begin{align}
        \lambda_2=\sqrt{\sqrt{(2n+m-2)^2+\lambda}-\frac{1}{4}(2n+m-2)^2-1}.
    \end{align}
    Up to a $a^{-m+2}b^{-m+2}$ factor, we recover the quantity from the proof of Theorem \ref{theoreme_ode_m_n}. Therefore, following the exact same steps,  we deduce that this identity is equivalent to 
    \begin{align}
        &\left(\left(1+\left(\frac{b}{a}\right)^{2\lambda_1}\right)\cos\left(\lambda_2\log\left(\frac{b}{a}\right)\right)-2\left(\frac{b}{a}\right)^{\lambda_1}\right)\lambda_1\lambda_2\\
        &=\left(1+\left(n+\frac{m-2}{2}\right)^2\right)\left(\left(\frac{b}{a}\right)^{2\lambda_1}-1\right)\sin\left(\lambda_2\log\left(\frac{b}{a}\right)\right).\nonumber
    \end{align}
    This equation correspond to \eqref{fund_m} for $m=1$ and $n=n+\frac{m-2}{2}$. Notice that we need not assume $m$ or $n$ integer in the proof of Theorem \ref{theoreme_ode_m_n}. In particular, the proof applies and shows that if 
    \begin{align*}
        \log\left(\frac{b}{a}\right)\geq \max\ens{\frac{15\sqrt{4+3\pi(\pi+1)}}{\sqrt{2+2\left(n+\frac{m-2}{2}\right)^2}},\frac{2\log\left(1+\sqrt{2+2\left(n+\frac{m-2}{2}\right)^2}\right)}{\sqrt{2+2\left(n+\frac{m-2}{2}\right)^2}}},
    \end{align*}
    which is in particular satisfied provided that 
    \begin{align*}
        \log\left(\frac{b}{a}\right)\geq 15\sqrt{2+\frac{3\pi}{2}(\pi+1)},
    \end{align*}
    then the first non-trivial zero $\theta_0$ of $\psi$ is such that
    \begin{align*}
        \pi<\theta_0<2\pi,
    \end{align*}
    which is equivalent to 
    \begin{align*}
        \sqrt{\sqrt{(2n+m-2)^2+\lambda_n}-\frac{1}{4}(2n+m-2)^2-1}=\lambda_2>\frac{\pi}{\log\left(\frac{b}{a}\right)},
    \end{align*}
    or
    \begin{align*}
        \sqrt{(2n+m-2)^2+\lambda_n}>\left(n+\frac{m-2}{2}\right)^2+1+\frac{\pi^2}{\log^2\left(\frac{b}{a}\right)},
    \end{align*}
    and finally
    \begin{align*}
        \lambda_n&>\left(\left(n+\frac{m-2}{2}\right)^2+1+\frac{\pi^2}{\log^2\left(\frac{b}{a}\right)}\right)^2-4\left(n+\frac{m-2}{2}\right)^2\\
        &=\left(\left(n+\frac{m}{2}\right)^2+\frac{\pi^2}{\log^2\left(\frac{b}{a}\right)}\right)\left(\left(n+\frac{m-4}{2}\right)^2+\frac{\pi^2}{\log^2\left(\frac{b}{a}\right)}\right).
    \end{align*}
    Finally, the other inequality shows that 
    \begin{align*}
        \lambda_n<\left(\left(n+\frac{m}{2}\right)^2+\frac{4\pi^2}{\log^2\left(\frac{b}{a}\right)}\right)\left(\left(n+\frac{m-4}{2}\right)^2+\frac{4\pi^2}{\log^2\left(\frac{b}{a}\right)}\right).
    \end{align*}
    For all $m\geq 3$, we see that 
    \begin{align}
        \inf_{n\in\N}\lambda_n>\left(\frac{m^2}{4}+\frac{\pi^2}{\log^2\left(\frac{b}{a}\right)}\right)\left(\frac{(m-4)^2}{4}+\frac{\pi^2}{\log^2\left(\frac{b}{a}\right)}\right),
    \end{align}
    and—provided that $m\neq 2$—we have $\displaystyle\inf_{n\in\N}\lambda_n=\lambda_0$ provided that 
    \begin{align*}
        \left(\frac{m^2}{4}+\frac{4\pi^2}{\log^2\left(\frac{b}{a}\right)}\right)\left(\frac{(m-4)^2}{4}+\frac{4\pi^2}{\log^2\left(\frac{b}{a}\right)}\right)<\left(\left(1+\frac{m}{2}\right)^2+\frac{\pi^2}{\log^2\left(\frac{b}{a}\right)}\right)\left(\frac{(m-2)^2}{4}+\frac{\pi^2}{\log^2\left(\frac{b}{a}\right)}\right).
    \end{align*}
    Letting $X=\dfrac{\pi}{\log\left(\frac{b}{a}\right)}$, we deduce that this equation is equivalent to 
    \begin{align*}
        \left(\frac{m^2}{4}+4X^2\right)\left(\frac{(m-4)^2}{4}+4X^2\right)<\left(\frac{(m+2)^2}{4}+X^2\right)\left(\frac{(m-2)^2}{4}+X^2\right),
    \end{align*}
    or
    \begin{align*}
        15X^4+\left(m^2+(m-4)^4-\frac{(m+2)^2+(m-2)^2}{4}\right)X^2+\frac{m^2(m-4)^2-(m+2)^2(m-2)^2}{16}<0,
    \end{align*}
    that can also be rewritten as
    \begin{align*}
        15X^4+\left(m^2+(m-4)^4-\frac{(m+2)^2+(m-2)^2}{4}\right)X^2-\frac{(2m-1)((m-1)^2-2)}{4}<0.
    \end{align*}
    Therefore, this inequality is satisfied if and only if
    \begin{align*}
        X^2&<\frac{1}{30}\left(-\left(m^2+(m-4)^2-\frac{(m+2)^2+(m-2)^2}{4}\right)\right.\\
        &\left.+\sqrt{\left(m^2+(m-4)^2-\frac{(m+2)^2+(m-2)^2}{4}\right)^2+15(2m-1)((m-1)^2-2)}\right)\\
        &=\frac{1}{2}\frac{(2m-1)((m-1)^2-2)}{\frac{3}{2}m^2-8m+14+\sqrt{\left(\frac{3}{2}m^2-8m+14\right)^2+15(2m-1)((m-1)^2-2)}},
    \end{align*}
    and finally, we get the condition
    \begin{align*}
        \frac{\pi}{\log\left(\frac{b}{a}\right)}<\sqrt{\frac{(2m-1)((m-1)^2-2)}{3m^2-16m+28}+\sqrt{(3m^2-16m+28)^2+60(2m-1)((m-1)^2-2)}},
    \end{align*}
    and
    \begin{align*}
        \log\left(\frac{b}{a}\right)>\pi\sqrt{\frac{3m^2-16m+28+\sqrt{(3m^2-16m+28)^2+60(2m-1)((m-1)^2-2)}}{(2m-1)((m-1)^2-2)}},
    \end{align*}
    which finally concludes the proof of the theorem.
    \end{proof}

    Let $0<a<b<\infty$, let $\Omega=B_b\setminus\bar{B}_a(0)\subset \R^m$, and consider the following minimisation problem:
    \begin{align}\label{min_gen_dim}
        \lambda=\min\ens{\int_{\Omega}(\Delta u)^2dx:\int_{\Omega}\frac{u^2}{|x|^4}dx=1}.
    \end{align}
    Since \eqref{poincare_bilaplace} is true for all $m\geq 2$, using the Sobolev embedding $W^{2,2}(\Omega)\hookrightarrow W^{1,\frac{2m}{m-2}}(\Omega)$, if $m=3$, we get $W^{2,2}(\Omega)\hookrightarrow W^{1,6}(\Omega)\hookrightarrow C^0(\Omega)$ (where the last embedding), for $m=4$, we have a compact embedding $W^{2,2}(\Omega)\hookrightarrow L^p(\Omega)$ for all $p<\infty$, and for $m\geq 5$, we have a compact embedding $W^{2,2}(\Omega)\hookrightarrow L^{\frac{2m}{m-4}}(\Omega)$, and since $\dfrac{2m}{m-4}>2$, we deduce that the condition 
    \begin{align*}
        \int_{\Omega}\frac{u_k^2}{|x|^4}dx=1
    \end{align*}
    passes to the limit as $k\rightarrow \infty$, which implies that there exists a minimiser of \eqref{min_gen_dim} for all $m\geq 2$. It satisfies the following equation
    \begin{align*}
        \left\{\begin{alignedat}{2}
        \Delta^2u&=\frac{\lambda}{|x|^4}u\qquad&&\text{in}\;\,\Omega\\
        u&=0\qquad&&\text{on}\;\, \partial\Omega\\
        \partial_{\nu}u&=0\qquad &&\text{on}\;\,\partial\Omega.
        \end{alignedat}\right.
    \end{align*}

    Now, the previous Theorem \ref{ode_gen_dimension} implies the following result.
    \begin{theorem}\label{eigenvalue_gen}
        Let $m\geq 3$ and $u\in W^{2,2}_0(\Omega)$ be a minimiser of \eqref{min_gen_dim}, and assume that
        \begin{align*}
            \log\left(\frac{b}{a}\right)\geq 15\sqrt{2+\frac{3\pi}{2}(\pi+1)}.
        \end{align*}
        Then, we have
        \begin{align*}
            \lambda>\left(\frac{m^2}{4}+\frac{\pi^2}{\log^2\left(\frac{b}{a}\right)}\right)\left(\frac{(m-4)^2}{4}+\frac{\pi^2}{\log^2\left(\frac{b}{a}\right)}\right).
        \end{align*}
        Furthermore, provided that 
        \begin{align*}
        \log\left(\frac{b}{a}\right)>\pi\sqrt{\frac{3m^2-16m+28+\sqrt{(3m^2-16m+28)^2+60(2m-1)((m-1)^2-2)}}{(2m-1)((m-1)^2-2)}},
        \end{align*}
        the minimiser is a radial function $u(r)$, where $Y(r)=u(e^r)$ is a non-trivial solution of the ordinary differential equation
        \begin{align*}
            Y''''+2(m-4)Y'''+((m-5)^2-5)Y''-2(m-2)(m-4)Y'=\lambda\,Y
        \end{align*}
        such that $Y=Y'=0$ on $\partial[\log(a),\log(b)]$.
    \end{theorem}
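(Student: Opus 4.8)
The plan is to obtain this statement as a direct consequence of the ODE analysis in Theorem \ref{ode_gen_dimension}, combined with the spherical harmonic decomposition, exactly in the spirit of the proof of Theorem \ref{neck_estimate_dim4}. First I would recall that the existence of a minimiser $u\in W^{2,2}_0(\Omega)$ of \eqref{min_gen_dim} has already been established above, using \eqref{poincare_bilaplace}, the Sobolev embeddings valid in all dimensions $m\geq 3$, the weak lower semicontinuity of $u\mapsto\int_{\Omega}(\Delta u)^2dx$, and the compactness of $W^{2,2}(\Omega)\hookrightarrow L^2(\Omega,|x|^{-4}dx)$; such a minimiser solves $\Delta^2u=\lambda|x|^{-4}u$ with $u=\partial_{\nu}u=0$ on $\partial\Omega$, and satisfies $\int_{\Omega}(\Delta u)^2dx=\lambda>0$.

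Next I would expand $u$ in spherical harmonics, $u(r,\theta)=\sum_{n=0}^{\infty}a_n(r)Y_n(\theta)$, and use the orthogonality of the $Y_n$ to split $\int_{\Omega}(\Delta u)^2dx=\sum_{n}\int_{\Omega}(\Delta(a_nY_n))^2dx$ and $\int_{\Omega}u^2|x|^{-4}dx=\sum_{n}\int_{\Omega}a_n^2Y_n^2|x|^{-4}dx$. Projecting the eigenvalue equation onto the $n$-th eigenspace via \eqref{projection_bilaplacian} and changing variables $r=e^t$, each non-zero $a_n$ produces a non-trivial solution of the system \eqref{bilap_eingenvalue_gen_adjoint2} with the same eigenvalue $\lambda$, so $\lambda\geq\lambda_n$, where $\lambda_n$ is the minimal eigenvalue furnished by Theorem \ref{ode_gen_dimension}. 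Since the hypothesis $\log(b/a)\geq 15\sqrt{2+\frac{3\pi}{2}(\pi+1)}$ is exactly the one under which Theorem \ref{ode_gen_dimension} applies for every $n\in\N$, and that theorem yields $\inf_{n\in\N}\lambda_n>\left(\frac{m^2}{4}+\frac{\pi^2}{\log^2\left(\frac{b}{a}\right)}\right)\left(\frac{(m-4)^2}{4}+\frac{\pi^2}{\log^2\left(\frac{b}{a}\right)}\right)$, the announced lower bound on $\lambda$ follows at once.

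For the radiality statement I would invoke the refined part of Theorem \ref{ode_gen_dimension}: under the stronger bound $\log(b/a)>\pi\sqrt{\frac{3m^2-16m+28+\sqrt{(3m^2-16m+28)^2+60(2m-1)((m-1)^2-2)}}{(2m-1)((m-1)^2-2)}}$ one has $\inf_{n\in\N}\lambda_n=\lambda_0$ with $\lambda_n>\lambda_0$ strictly for all $n\geq 1$. Taking the radial eigenfunction associated to $\lambda_0$ as a competitor in \eqref{min_gen_dim} gives $\lambda\leq\lambda_0$, hence $\lambda=\lambda_0$. On the other hand, from the decomposition above $\lambda=\left(\sum_{n}\int_{\Omega}(\Delta(a_nY_n))^2dx\right)\big/\left(\sum_{n}\int_{\Omega}a_n^2Y_n^2|x|^{-4}dx\right)$, and each numerator term is bounded below by $\lambda_n$ times the corresponding denominator term; since $\lambda_n>\lambda_0$ for $n\geq 1$, the identity $\lambda=\lambda_0$ forces $a_n\equiv 0$ for all $n\geq 1$, so $u$ is radial. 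Setting $Y(t)=a_0(e^t)$ and specialising \eqref{bilap_eingenvalue_gen_adjoint2} to $n=0$, together with the simplifications $(m-1)(m-9)+11=(m-5)^2-5$ and $(m-1)(m-5)+3=(m-2)(m-4)$ already recorded in the text, produces the stated ODE with the boundary conditions $Y=Y'=0$ at the endpoints of $[\log a,\log b]$.

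The step requiring the most care — rather than a mere citation — is the identification of the Rayleigh quotient of the $n$-th angular mode with the one-dimensional eigenvalue $\lambda_n$: one must check that the radial profile $a_n$ inherited from $u\in W^{2,2}_0(\Omega)$ has enough regularity and boundary vanishing (namely $a_n(a)=a_n(b)=a_n'(a)=a_n'(b)=0$, in the weighted sense appropriate to the change of variable $r=e^t$) to be an admissible competitor for the minimisation defining $\lambda_n$, and that the quadratic quantities genuinely split over $n$ after integrating out the angular variable. This is routine given the orthogonality of the $Y_n$ and formula \eqref{projection_bilaplacian}, but it is precisely where one uses $m\geq 3$ (so that spherical harmonics with the stated spectrum of $\Delta_{S^{m-1}}$ are available) rather than only quoting the ODE theorem.
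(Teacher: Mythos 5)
Your proposal is correct and follows essentially the same route as the paper: existence of the minimiser as already established, spherical harmonic decomposition with the projection \eqref{projection_bilaplacian} and the change of variable $r=e^t$, the lower bound $\lambda\geq\lambda_n$ from Theorem \ref{ode_gen_dimension} for every non-vanishing mode, and the competitor/strict-separation argument (as in Theorems \ref{main_neck_lm} and \ref{neck_estimate_dim4}) to force $a_n\equiv 0$ for $n\geq 1$ and identify the radial ODE for $n=0$. The only cosmetic difference is your mode-wise Rayleigh-quotient splitting in the radiality step, which is not needed (and would itself require the mode-restricted variational characterisation of $\lambda_n$): the projected-equation argument you already use gives $\lambda\geq\lambda_n>\lambda_0\geq\lambda$ directly whenever $a_n\not\equiv 0$ for some $n\geq 1$.
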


    Finally, we deduce the following inequality for dimension $m\geq 3$.

    \begin{theorem}\label{poincare_gen}
        Let $m\geq 3$, let $0<a<b<\infty$, and let $\Omega=B_b\setminus\bar{B}_a(0)\subset \R^m$. Then, for all  $u\in W^{2,2}_0(\Omega)$, we have
        \begin{align}\label{poincare_dim_gen}
            \int_{\Omega}(\Delta u)^2dx\geq \left(\frac{m^2}{4}+\frac{\pi^2}{\log^2\left(\frac{b}{a}\right)}\right)\left(\frac{(m-4)^2}{4}+\frac{\pi^2}{\log^2\left(\frac{b}{a}\right)}\right)\int_{\Omega}\frac{u^2}{|x|^4}dx.
        \end{align}
    \end{theorem}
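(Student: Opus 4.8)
The plan is to reduce \eqref{poincare_dim_gen} to a one‑dimensional clamped‑beam inequality by decomposing $u$ into spherical harmonics and straightening the annulus, and then to prove that one‑dimensional inequality by chaining the elementary Dirichlet–Poincaré inequality with itself. This yields the bound for \emph{all} $0<a<b<\infty$, and is elementary compared with the sharp eigenvalue analysis of Theorems~\ref{ode_gen_dimension}–\ref{eigenvalue_gen} (which is needed for the matching upper bound \eqref{precise} and the radiality of the minimiser, but not for \eqref{poincare_dim_gen} alone). By density of $C^\infty_c(\Omega)$ in $W^{2,2}_0(\Omega)$ it suffices to treat $u\in C^\infty_c(\Omega)$. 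Expanding $u(r,\theta)=\sum_{n\geq 0}a_n(r)Y_n(\theta)$ and using orthogonality of the $Y_n$ in $L^2(S^{m-1})$, both $\int_\Omega(\Delta u)^2dx$ and $\int_\Omega u^2|x|^{-4}dx$ split into sums over $n$, so it is enough to prove, for each $n$,
\begin{align*}
\int_a^b\Big(a_n''+\tfrac{m-1}{r}a_n'-\tfrac{n(n+m-2)}{r^2}a_n\Big)^2r^{m-1}\,dr\ \geq\ C_{m,n}\int_a^b a_n^2\,r^{m-5}\,dr,
\end{align*}
with $C_{m,n}=\big((n+\tfrac m2)^2+X\big)\big((n+\tfrac{m-4}2)^2+X\big)$ and $X=\pi^2/\log^2(b/a)$. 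Since each factor of $C_{m,n}$ is at least its value at $n=0$ for every $m\geq 3$ (for $m=3$ this uses $|n-\tfrac12|\geq\tfrac12$), one has $C_{m,n}\geq C_{m,0}=(\tfrac{m^2}4+X)(\tfrac{(m-4)^2}4+X)$, which is the constant in \eqref{poincare_dim_gen}.

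Next I straighten the problem. Setting $r=e^t$, $a_n(r)=Y(t)$ on $[\log a,\log b]$, the mode integrals become $\int_{\log a}^{\log b}\!\big(Y''+(m-2)Y'-n(n+m-2)Y\big)^2e^{(m-4)t}dt$ and $\int_{\log a}^{\log b}Y^2e^{(m-4)t}dt$. Removing the weight by the substitution $Z(t)=e^{(m-4)t/2}Y(t)$ — which, at the level of the characteristic polynomial, is precisely the depression $X\mapsto X-(m-4)/2$ carried out in \eqref{change_var_bilap_poly}, with $\alpha:=n+\tfrac{m-2}2$ so that the depressed biquadratic has $p=2(\alpha^2+1)$, $q=(\alpha^2-1)^2$ — a direct computation gives $e^{(m-4)t/2}\big(Y''+(m-2)Y'-n(n+m-2)Y\big)=Z''+2Z'-(\alpha^2-1)Z$, and $a_n\in C^\infty_c(a,b)$ forces $Z=Z'=0$ at both endpoints, i.e. $Z\in H^2_0$. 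Hence the per‑mode estimate is equivalent to
\begin{align*}
\int_0^R\big(Z''+2Z'-(\alpha^2-1)Z\big)^2dt\ =\ \int_0^R\!\Big((Z'')^2+2(\alpha^2+1)(Z')^2+(\alpha^2-1)^2Z^2\Big)dt\ \geq\ C_{m,n}\int_0^R Z^2\,dt,
\end{align*}
where $R=\log(b/a)$ and the first equality is an integration by parts whose boundary terms vanish because $Z=Z'=0$ at $0$ and $R$.

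The one‑dimensional inequality is now immediate. Since $Z\in H^1_0([0,R])$, the Dirichlet–Poincaré inequality gives $\int(Z')^2\geq X\int Z^2$; since moreover $Z'\in H^1_0([0,R])$, applying it again yields $\int(Z'')^2\geq X\int(Z')^2\geq X^2\int Z^2$. Therefore the left‑hand side is $\geq\big(X^2+2(\alpha^2+1)X+(\alpha^2-1)^2\big)\int Z^2$, and the algebraic identity
\begin{align*}
X^2+2(\alpha^2+1)X+(\alpha^2-1)^2=(X+\alpha^2+1)^2-4\alpha^2=\big(X+(\alpha-1)^2\big)\big(X+(\alpha+1)^2\big)
\end{align*}
exhibits the right‑hand coefficient as exactly $C_{m,n}$ (recall $\alpha\pm1=n+\tfrac{m}{2},\,n+\tfrac{m-4}{2}$). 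Summing over $n$ and using $C_{m,n}\geq C_{m,0}$ gives \eqref{poincare_dim_gen}.

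There is no deep obstacle here; the only point that needs care is the \emph{sharpness} bookkeeping, namely that the exponential conjugation sends the fourth‑order radial operator to the factored form $(-\partial_t^2+(\alpha+1)^2)(-\partial_t^2+(\alpha-1)^2)$, so that the two successive Poincaré steps saturate the claimed constant $\big(X+(\alpha-1)^2\big)\big(X+(\alpha+1)^2\big)$ rather than merely a comparable quantity — this is what makes the elementary argument recover the constant of \eqref{poincare_dim_gen}. (One subtlety worth recording: the argument breaks for $m=2$, since there $C_{2,1}=(4+X)X<(1+X)^2=C_{2,0}$ for small $X$, consistent with the non‑radial minimiser of Theorem~\ref{intro_main10}; it is the inequality $C_{m,n}\geq C_{m,0}$, valid only for $m\geq 3$, that keeps the minimiser radial.) Alternatively, for $\log(b/a)$ large one may simply invoke Theorem~\ref{eigenvalue_gen} and run the above only in the remaining range of small conformal class.
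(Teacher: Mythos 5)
Your proof is correct, and it takes a genuinely different route from the paper. The paper obtains \eqref{poincare_dim_gen} as a consequence of the eigenvalue analysis: existence of a minimiser for \eqref{min_gen_dim}, projection onto spherical harmonics, and the lower bound of Theorem \ref{ode_gen_dimension}, which rests on a case analysis of the characteristic polynomial and an estimate of the first zero of a transcendental function $\psi$ — and therefore carries a largeness assumption on $\log(b/a)$. You bypass all of that: after the same spherical-harmonic decomposition and logarithmic change of variable, the conjugation $Z=e^{(m-4)t/2}Y$ (the operator-level counterpart of the depression $X\mapsto X-(m-4)/2$ in \eqref{change_var_bilap_poly}) turns the per-mode quadratic form into $\int (Z'')^2+2(\alpha^2+1)(Z')^2+(\alpha^2-1)^2Z^2\,dt$ with $\alpha=n+\frac{m-2}{2}$, the boundary terms vanishing since $Z=Z'=0$ at the endpoints, and two iterations of the one-dimensional Dirichlet–Poincaré inequality give exactly the factored constant $\bigl(X+(\alpha-1)^2\bigr)\bigl(X+(\alpha+1)^2\bigr)=C_{m,n}$; the monotonicity $C_{m,n}\geq C_{m,0}$ for $m\geq 3$ (using $\bigl|n-\tfrac12\bigr|\geq\tfrac12$ when $m=3$) then yields the claim. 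Your computations check out: the conjugation identity, the integration by parts (valid also when $\alpha^2-1<0$, e.g.\ $m=3$, $n=0$, since the resulting coefficient $2(\alpha^2+1)$ stays positive), and the algebra identifying $\alpha\pm1=n+\tfrac m2,\;n+\tfrac{m-4}{2}$. What your argument buys is simplicity and, importantly, validity for \emph{every} $0<a<b<\infty$, which matches the statement as written and removes the conformal-class threshold that the paper's own chain of deductions (via Theorems \ref{ode_gen_dimension}–\ref{eigenvalue_gen}) implicitly requires; your remark that the monotonicity $C_{m,n}\geq C_{m,0}$ fails for $m=2$ is also consistent with the non-radial minimiser of Theorem \ref{intro_main10}. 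What it does not give — and what the paper's heavier machinery does — is the matching upper bound \eqref{precise}, the strict inequality for the first eigenvalue, and the identification of the minimiser as radial for large conformal class.
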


    \section{Second Eigenvalue Problem in All Dimension}

    Assume that $m\geq 3$, and let $0<a<b<\infty$, and $\Omega=B_b\setminus\bar{B}_a(0)\subset \R^m$. We also need to obtain an estimate of the form
    \begin{align*}
        \int_{\Omega}(\Delta u)^2dx\geq \frac{\lambda_0^2}{\log^2\left(\frac{b}{a}\right)}\int_{\Omega}\frac{|\D u|^2}{|x|^2}dx\quad \forall u\in W^{2,2}(\Omega),
    \end{align*}
    where $\lambda_0>0$ is independent of $\Omega=B_b\setminus\bar{B}_a(0)$. Notice however that since the weight is non-critical, we should get a constant uniformly bounded from below as the conformal class diverges to $\infty$. Indeed, the following result holds.
     \begin{theorem}[\cite{morse_biharmonic}]\label{poincare_gen0}
        Let $d\geq 3$, let $0<a<b<\infty$, and let $\Omega=B_b\setminus\bar{B}_a(0)\subset \R^d$.Then, for all  $u\in W^{2,2}_0(\Omega)$, we have
        \begin{align}\label{poincare_dim_gen0}
            \int_{\Omega}(\Delta u)^2dx\geq \left(\frac{d^2}{4}+\frac{\pi^2}{\log^2\left(\frac{b}{a}\right)}\right)\left(\frac{(d-4)^2}{4}+\frac{\pi^2}{\log^2\left(\frac{b}{a}\right)}\right)\int_{\Omega}\frac{u^2}{|x|^4}dx.
        \end{align}
        Furthermore, for all $d\geq 3$ and for all $u\in W^{2,2}_0(\Omega)\subset W^{2,2}(\R^d)$, we have
        \begin{align}\label{poincare_dim_gen1}
            \int_{\Omega}(\Delta u)^2dx\geq \mu_1(\Omega)\int_{\Omega}\frac{|\D u|^2}{|x|^2}dx,
        \end{align}
        where
        \begin{align*}
            \mu_1(\Omega)>\left\{\begin{alignedat}{2}
                &\frac{25+\dfrac{104\pi^2}{\log^2\left(\frac{b}{a}\right)}+\dfrac{16\pi^4}{\log^2\left(\frac{b}{a}\right)}}{36+\dfrac{16\pi^2}{\log^2\left(\frac{b}{a}\right)}}\qquad&& \text{for}\;\,d=3\\
                &\frac{9+\dfrac{10\pi^2}{\log^2\left(\frac{b}{a}\right)}+\dfrac{\pi^4}{\log^2\left(\frac{b}{a}\right)}}{3+\dfrac{\pi^2}{\log^2\left(\frac{b}{a}\right)}}\qquad&&\text{for}\;\, d=4\\
                &\frac{d^2(d-4)^2+((d-2)^2+4)\dfrac{8\pi^2}{\log^2\left(\frac{b}{a}\right)}+\dfrac{16\pi^4}{\log^4\left(\frac{b}{a}\right)}}{4(d-4)^2+\dfrac{16\pi^2}{\log^2\left(\frac{b}{a}\right)}}\qquad &&\text{for}\;\, d\geq 5.
            \end{alignedat}\right.
        \end{align*}
    \end{theorem}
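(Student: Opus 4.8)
The inequality \eqref{poincare_dim_gen0} is exactly Theorem \ref{poincare_gen}, so there is nothing new to do there, and the substance of the statement is the Hardy--Rellich inequality \eqref{poincare_dim_gen1}. The plan is to realise the best constant as a first eigenvalue: set
\[
    \mu_1(\Omega)=\inf\ens{\int_{\Omega}(\Delta u)^2dx:\;u\in W^{2,2}_0(\Omega),\;\int_{\Omega}\frac{|\D u|^2}{|x|^2}dx=1}.
\]
On the fixed annulus the weight $|x|^{-2}$ is bounded above and below, so the direct method applies verbatim as in Section 2: coercivity comes from \eqref{poincare_bilaplace} and \eqref{d2=Delta}, the constraint passes to the limit by the compact embedding $W^{2,2}(\Omega)\hookrightarrow W^{1,2}(\Omega)$, and one obtains a minimiser $u$. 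Integrating by parts and using $\dive(|x|^{-2}\D u)=|x|^{-2}\big(\Delta u-2\tfrac{x}{|x|^2}\cdot\D u\big)$, the Euler--Lagrange equation reads $\Delta^2u=-\mu_1\,|x|^{-2}\big(\Delta u-2\tfrac{x}{|x|^2}\cdot\D u\big)$, which for $d=4$ is the $d=2$, $m=1$ equation \eqref{second_eigenvalue_fundamental} in higher dimension. A scaling check shows that both sides of \eqref{poincare_dim_gen1} scale like $(\mathrm{length})^{d-4}$, so $\mu_1(\Omega)$ depends only on the conformal parameter $b/a$.

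\textbf{Diagonalisation, linearisation, and the universal ODE.}
Decomposing $u$ on spherical harmonics $Y_n$ (eigenvalue $n(n+d-2)$ of $-\Delta_{S^{d-1}}$) and combining \eqref{projection_bilaplacian} with
\[
    \Pi_{n(n+d-2)}\left(\frac{1}{|x|^2}\left(\Delta-2\frac{x}{|x|^2}\cdot\D\right)\right)=\frac{1}{r^2}\left(\p{r}^2+\frac{d-3}{r}\p{r}-\frac{n(n+d-2)}{r^2}\right),
\]
the $n$-th Fourier component of $u$ solves a fourth-order linear ODE which, after $f(r)=Y(\log r)$, becomes constant-coefficient. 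As in Section 6, the shift $X\mapsto X-\tfrac{d-4}{2}$ turns the characteristic polynomial into a biquadratic one up to the $\mu$-linear correction, so its roots are $-\tfrac{d-4}{2}\pm\lambda_1$ and $-\tfrac{d-4}{2}\pm i\lambda_2$, with the two degenerate ranges (four real roots; a double root) handled separately. Feeding $Y=Y'=0$ at $\log a,\log b$ into the $4\times4$ determinant and dividing by $a^{-(d-4)}b^{-(d-4)}$, one recovers precisely the three determinants of Theorem \ref{theoreme_ode_m_n}: the real-root and double-root cases force $Y\equiv0$ (by the monotonicity computations of that proof), while the mixed case is equivalent to the universal equation $\psi(\theta)=0$. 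Hence, once $\log(b/a)$ exceeds the threshold of Theorem \ref{theoreme_ode_m_n}, the first positive zero satisfies $\theta_0\in(\pi,2\pi)$, i.e.\ $\lambda_2\log(b/a)>\pi$; unwinding the quadratic in $\mu$ (as in Step 4 of the proof of Theorem \ref{second_theoreme_ode_m_n}) then gives a lower bound for the $n$-th eigenvalue $\mu_{d,n}$ as an explicit rational function of $1/\log^2(b/a)$, together with the matching upper bound that shows the different modes do interact.

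\textbf{Optimisation over frequencies; the main obstacle.}
It remains to minimise the per-mode lower bound over $n\in\N$, which is a calculus exercise on explicit rational functions of $1/\log^2(b/a)$. For $d\geq5$ the minimum is at $n=0$, giving the third formula; for $d=3$ it is at $n=1$, giving the first. The case $d=4$ is the genuinely delicate one: the generic formula is valid only for $n\geq1$ (where the minimum sits at $n=1$, giving the stated $\big(9+\tfrac{10\pi^2}{\log^2(b/a)}+\tfrac{\pi^4}{\log^2(b/a)}\big)/\big(3+\tfrac{\pi^2}{\log^2(b/a)}\big)$), whereas at $n=0$ the characteristic polynomial acquires a double root — an exceptional algebraic structure — and one must compute the $4\times4$ determinant with logarithmic terms by hand, exactly as in the sub-cases of Section 3, obtaining the \emph{exact} value $\mu_0=4+\tfrac{4\pi^2}{\log^2(b/a)}$ and then checking that it is strictly larger than the $n\geq1$ infimum for $\log(b/a)$ large. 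The main obstacle is therefore this $d=4$, $n=0$ degeneracy, together with the need to perform the frequency comparison uniformly in $n$ and $d$ so as to identify the minimising mode and the associated conformal-class threshold; the rest is a direct transcription of the ODE analysis already carried out in Sections 2, 3 and 6.
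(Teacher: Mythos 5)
Your proposal is correct and follows essentially the same route as the paper's own proof: realise $\mu_1(\Omega)$ as the first eigenvalue of the weighted problem, project on spherical harmonics, pass to the logarithmic variable so the shifted characteristic polynomial becomes biquadratic up to the $\mu$-correction, reuse the three determinant cases of Theorem \ref{theoreme_ode_m_n} to bound each modal eigenvalue, treat the exceptional case $d=4$, $n=0$ separately to get the exact value $4+\tfrac{4\pi^2}{\log^2(b/a)}$, and then minimise over frequencies ($n=0$ for $d\geq5$, $n=1$ for $d=3$, and the $n=1$ versus $n=0$ comparison for $d=4$). This is precisely the structure of the paper's Section 7 argument, including the same conformal-class thresholds entering through the ODE analysis.
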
    
    \begin{proof}
    Consider the following minimisation problem
    \begin{align*}
        \mu=\inf\ens{\int_{\Omega}(\Delta u)^2dx:u\in W^{2,2}(\Omega)\quad \text{and}\quad \int_{\Omega}\frac{|\D u|^2}{|x|^2}dx=1}.
    \end{align*}
    A similar proof shows that a minimiser $u\in W^{2,2}_0(\Omega)$ exists, and verifies
    \begin{align*}
        \Delta^2u=-\frac{\mu}{|x|^2}\left(\Delta-2\frac{x}{|x|^2}\cdot \D\right)u.
    \end{align*}
    Recall that the Laplacian on $\R^m$ is given in polar coordinates as  
    \begin{align*}
        \Delta=\p{r}^2+\frac{m-1}{r}\p{r}+\frac{1}{r^2}\Delta_{S^{m-1}}.
    \end{align*}
    Make an expansion of $u\in C^{\infty}(\R^m)$ in polar coordinates $(r,\theta)\in (0,\infty)\times S^{n-1}$ as
    \begin{align*}
        u(r,\theta)=\sum_{n=0}^{\infty}a_n(r)Y_n(\theta),
    \end{align*}
    where ${Y_n:S^{m-1}\rightarrow \R}_{n\in\N}$ is an orthonormal basis of $L^2(S^{m-1})$ made of restrictions to $S^{m-1}$ of homogeneous harmonic polynomials. Furthermore, we have for all $n\in\N$
    \begin{align*}
        -\Delta_{S^{m-1}}Y_n=n(n+m-2)Y_n.
    \end{align*}
    In particular, we have
    \begin{align*}
        \Delta u=\sum_{n=0}^{\infty}\left(a_n''(r)+\frac{1}{r}a_n'(r)-\frac{n(n+m-2)}{r^2}\right)Y_n(\theta).
    \end{align*}
    and by \eqref{bilaplacian_all_dimension}, we have
    \begin{align*}
        \Delta^2u&=\sum_{n=0}^{\infty}\left(a_n''''(r)+\frac{2(m-1)}{r}a_n'''(r)+\frac{(m-1)(m-3)-2n(n+m-2)}{r^2}a_n''(r)\right.\\
        &\left.-\frac{(m-1)(m-3)+2(m-3)n(n+m-2)}{r^3}a_n'(r)+\frac{n^2(n+m-2)^2+2(m-4)n(n+m-2)}{r^4}\right)Y_n(\theta).
    \end{align*}
    Finally, we will also need the formula \eqref{projection_bilaplacian}
    \begin{align*}
        &\Pi_{n(n+m-2)}(\Delta^2)=\p{r}^4+\frac{2(m-1)}{r}\p{r}^3+\frac{(m-1)(m-3)-2n(n+m-2)}{r^2}\p{r}^2\\
        &-\frac{(m-1)(m-3)+2(m-3)n(n+m-2)}{r^3}\p{r}+\frac{n^2(n+m-2)^2+2(m-4)n(n+m-2)}{r^4}.
    \end{align*}
    We also have
    \begin{align*}
        \Pi_{n(n+m-2)}\left(\Delta-2\dfrac{x}{|x|^2}\cdot \D\right)=\p{r}^2+\frac{m-3}{r}\p{r}-\frac{n(n+m-2)}{r^2}.
    \end{align*}
    Therefore, we are led to consider the following eigenvalue problem on the compact interval $[a,b]$
    \begin{align}\label{second_problem_gen_dim}
    \left\{\begin{alignedat}{1}
        &f''''+\frac{2(m-1)}{r}f'''+\frac{(m-1)(m-3)-2n(n+m-2)+\mu}{r^2}f''\\
        &-\frac{(m-1)(m-3)+2(m-3)n(n+m-2)-(m-3)\mu}{r^3}f'\\
        &+\frac{n^2(n+m-2)^2+(2(m-4)-\mu)n(n+m-2)}{r^4}f=0\\
        &f(a)=f(b)=f'(a)=f'(b)=0,
        \end{alignedat}\right.
    \end{align}
    where $\mu\geq 0$. Making the change of variable $f(r)=Y(\log(r))$, we get by \eqref{change_var_log_gen}
    \begin{align*}
        r^2\left(f''+\frac{m-3}{r}f'-\frac{n(n+m-2)}{r^2}f\right)=Y''(\log(r))+(m-4)Y'(\log(r))-n(n+m-2)Y(\log(r)),
    \end{align*}
    while \eqref{change_var_log_gen2} shows that the eigenvalue problem \eqref{second_problem_gen_dim} is equivalent to 
    \begin{align}\label{bilap_eigenvalue_gen02}
        \left\{\begin{alignedat}{1}
            &Y''''+2(m-4)Y'''+(m^2-10m+20-2n(n+m-2)+\mu)Y''\\
            &-(m-4)\left(2(m-2)+2n(n+m-2)-\mu\right)Y'\\
            &+(n^2(n+m-2)^2+(2(m-4)-\mu)n(n+m-2))Y=0\\
            &Y(\log(a))=Y(\log(b))=Y'(\log(a))=Y'(\log(b)).
        \end{alignedat}\right.
    \end{align}
    The character polynomial of this equation is given by 
    \begin{align*}
        P(X)&=X^4+2(m-4)X^3+(m^2-10m+20-2n(n+m-2)+\mu)X^2\\
        &-(m-4)\left(2(m-2)+2n(n+m-2)-\mu\right)X+n^2(n+m-2)+(2(m-4)-\mu)n(n+m-2),
    \end{align*}
    or more simply
    \begin{align*}
        P(X)&=X^4+2(m-4)X^3+(m^2-10m+20-2\lambda_n+\mu)X^2-(m-4)(2(m-2)+2\lambda_n-\mu)X\\
        &+\lambda_n(\lambda_n+2(m-4)-\mu).
    \end{align*}
    In particular, we see that for the critical dimension $m=4$, we get a biquadratic equation. Consider the polynomial
    \begin{align*}
        R(X)=X^2+(m-4)X-n(n+m-2).
    \end{align*}
    If $X=Y-\dfrac{(m-4)}{2}$, we get
    \begin{align*}
        R(Y)&=\left(Y^2-(m-4)Y+\frac{(m-4)^2}{4}\right)+(m-4)\left(Y-\frac{m-4}{2}\right)-n(n+m-2)\\
        &=Y^2-\frac{(m-4)^2}{4}-n(n+m-2)
    \end{align*}
    Therefore, if $Q(Y)=P(X-(m-4)/2)$, we deduce by \eqref{change_var_bilap_poly} that 
    \begin{align*}
        Q(Y)&=Y^4-\left(\frac{1}{2}((m-2)^2+4)+2n(n+m-2)\right)Y^2+\left(\frac{m(m-4)}{4}+n(n+m-2)\right)^2\\
        &+\mu\left(Y^2-\frac{(m-4)^2}{4}-n(n+m-2)\right)\\
        &=Y^4-\left(\frac{1}{2}((m-2)^2+4)+2n(n+m-2)-\mu\right)Y^2+\left(\frac{m(m-4)}{4}+n(n+m-2)\right)^2\\
        &-\mu\left(\frac{(m-4)^2}{4}+n(n+m-2)\right).
    \end{align*}
    Thanks to \eqref{discriminant_bilap_gen}, we have
    \begin{align*}
        \left(\frac{1}{2}((m-2)^2+4)+2n(n+m-2)\right)^2-4\left(\frac{m(m-4)}{4}+n(n+m-2)\right)^2=4(2n+m-2)^2
    \end{align*}
    Therefore, we deduce that the discriminant of $Q$ is given by 
    \begin{align*}
        D_m&=\left(\frac{1}{2}(m^2-4m+8)+2n(n+m-2)-\mu\right)^2\\
        &-4\left(\frac{m(m-4)}{4}+n(n+m-2)\right)^2+\left((m-4)^2+4n(n+m-2)\right)\mu\\
        &=4(2n+m-2)^2-((m^2-4m+8)+4n(n+m-2))\mu+\mu^2+((m^2-8m+16)+4n(n+m-2))\mu\\
        &=4(2n+m-2)^2-4(m-2)\mu+\mu^2.
    \end{align*}
    The discriminant of this quadratic polynomial in $\mu$ is given by 
    \begin{align*}
        16(m-2)^2-16(2n+m-2)^2\leq 0
    \end{align*}
    for all $m\geq 3$ and $n\geq 0$. Therefore, this polynomial is always positive, which can also be seen directly since
    \begin{align*}
           &4(2n+m-2)^2-4(m-2)\mu+\mu^2=(\mu-2(m-2))^2+4(2n+m-2)^2-4(m-2)^2\\
           &=(\mu-2(m-2))^2+4(2n)(2n+2m-4)
           =(\mu-2(m-2))^2+16n(n+m-2).
    \end{align*}
    We deduce that the roots of $Q(\sqrt{Y})$ are given by 
    \begin{align*}
    \left\{\begin{alignedat}{1}
        s_1&=\frac{1}{2}\left(\frac{1}{2}((m-2)^2+4)+2n(n+m-2)-\mu+\sqrt{(\mu-2(m-2))^2+16n(n+m-2)}\right)\\
        s_2&=\frac{1}{2}\left(\frac{1}{2}((m-2)^2+4)+2n(n+m-2)-\mu-\sqrt{(\mu-2(m-2))^2+16n(n+m-2)}\right),
        \end{alignedat}\right.
    \end{align*}
    and finally, the roots of $P$ are given by 
    \begin{align}\label{second_root_gen}
        \left\{\begin{alignedat}{1}
            r_1&=-\frac{m-4}{2}+\sqrt{\frac{1}{4}(m-2)^2+1+n(n+m-2)-\frac{\mu}{2}+\frac{1}{2}\sqrt{(\mu-2(m-2))^2+16n(n+m-2)}}\\
            r_1&=-\frac{m-4}{2}-\sqrt{\frac{1}{4}(m-2)^2+1+n(n+m-2)-\frac{\mu}{2}+\frac{1}{2}\sqrt{(\mu-2(m-2))^2+16n(n+m-2)}}\\
            r_1&=-\frac{m-4}{2}+\sqrt{\frac{1}{4}(m-2)^2+1+n(n+m-2)-\frac{\mu}{2}-\frac{1}{2}\sqrt{(\mu-2(m-2))^2+16n(n+m-2)}}\\
            r_1&=-\frac{m-4}{2}-\sqrt{\frac{1}{4}(m-2)^2+1+n(n+m-2)-\frac{\mu}{2}-\frac{1}{2}\sqrt{(\mu-2(m-2))^2+16n(n+m-2)}} 
        \end{alignedat}\right.
    \end{align}
    Then, notice that 
    \begin{align*}
        \sqrt{(\mu-2(m-2))^2+16n(n+m-2)}>\frac{1}{2}(m-2)^2+2+2n(n+m-2)-\mu
    \end{align*}
    if and only if
    \begin{align*}
        &(\mu-2(m-2))^2+16n(n+m-2)\geq \frac{1}{4}(m-2)^4+2(m-2)^2+4+4n^2(n+m-2)^2\\
        &+2((m-2)^2+4)n(n+m-2)-((m-2)^2+4+4n(n+m-2))\mu+\mu^2,
    \end{align*}
    or
    \begin{align*}
        &((m-2)^2-4(m-2)+4+4n(n+m-2))\mu\\
        &\geq \frac{1}{4}(m-2)^2-2(m-2)^2+4+4n^2(n+m-2)^2+2((m-2)^2-4)n(n+m-2)
    \end{align*}
    which is equivalent to 
    \begin{align*}
        \left((m-4)^2+4n(n+m-2)\right)\mu &\geq \left(\frac{1}{2}(m-2)^2-2\right)^2+4n^2(n+m-2)^2+2((m-2)^2-4)n(n+m-2)\\
        &=\left(\frac{1}{2}(m-2)^2-2+2n(n+m-2)\right)^2,
    \end{align*}
    and finally (unless $m=4$ and $n=0$)
    \begin{align*}
        \mu \geq \frac{1}{4}\frac{\left((m-2)^2-4+4n(n+m-2)\right)^2}{(m-4)^2+4n(n+m-2)}=\frac{1}{4}\frac{\left(m(m-4)+4n(n+m-2)\right)^2}{(m-4)^2+4n(n+m-2)}.
    \end{align*}
    Therefore, assume that $(m,n)\neq (4,0)$ in the following. Then we need to distinguish three cases as previously.

    \textbf{Case 1: $\mu<\dfrac{1}{4}\dfrac{\left(m(m-4)+4n(n+m-2)\right)^2}{(m-4)^2+4n(n+m-2)}$.} Then, the characteristic polynomial admits four distinct real roots, and the solution $Y$ to \eqref{bilap_eigenvalue_gen02} is given by
    \begin{align*}
        Y(t)=\mu_1e^{r_1t}+\mu_2e^{r_2t}+\mu_3e^{r_3t}+\mu_4e^{r_4t}
    \end{align*}
    for some $\mu_i\in \R$. Writing 
    \begin{align*}
        \left\{\begin{alignedat}{1}
            r_1=-\frac{m-4}{2}+\lambda_1\\
            r_2=-\frac{m-4}{2}-\lambda_1\\
            r_3=-\frac{m-4}{2}+\lambda_2\\
            r_4=-\frac{m-4}{2}-\lambda_2
        \end{alignedat}\right.,
    \end{align*}
    if $A$ is the boundary conditions matrix, we have by \eqref{eq_det} 
    \small
    \begin{align*}
        \det(A)=(ab)^{-(m-4)}\left((\lambda_1+\lambda_2)^2\left(\left(\frac{a}{b}\right)^{\lambda_1-\lambda_2}+\left(\frac{b}{a}\right)^{\lambda_1-\lambda_2}\right)-\left(\lambda_1-\lambda_2\right)^2\left(\left(\frac{a}{b}\right)^{\lambda_1+\lambda_2}+\left(\frac{b}{a}\right)^{\lambda_1+\lambda_2}\right)-8\lambda_1\lambda_2\right)
    \end{align*}
    \normalsize
    which implies that $\det(A)\neq 0$, a contradiction.

    \textbf{Case 2: $\mu=\dfrac{1}{4}\dfrac{\left(m(m-4)+4n(n+m-2)\right)^2}{(m-4)^2+4n(n+m-2)}$.} In this case, we have
    \begin{align*}
        \frac{1}{2}\sqrt{(\mu-2(m-2))^2+16n(n+m-2)}=\frac{1}{4}(m-2)^2+1+n(n+m-2)-\mu,
    \end{align*}
    which implies that
    \begin{align*}
        \left\{\begin{alignedat}{1}
            r_1&=-\frac{m-4}{2}+\sqrt{\frac{1}{2}(m-2)^2+2+2n(n+m-2)-\mu}=-\frac{m-4}{2}+\lambda_1\\
            r_2&=-\frac{m-4}{2}-\sqrt{\frac{1}{2}(m-2)^2+2+2n(n+m-2)-\mu}=-\frac{m-4}{2}-\lambda_1\\
            r_3&=r_4=-\frac{m-4}{2}.
        \end{alignedat}\right.
    \end{align*}
    Recall that by \eqref{determinant_equality_case1}, we have
    \begin{align}\label{determinant_equality_case12}
            &\det\begin{pmatrix}
                X^{\alpha} & X^{\beta} & X^{\gamma} & \log(X)X^{\gamma}\\
                Y^{\alpha} & Y^{\beta} & Y^{\gamma} & \log(Y)Y^{\gamma}\\
                \alpha X^{\alpha} & \beta X^{\beta} & \gamma X^{\gamma} & (1+\gamma\log(X))X^{\gamma}\\
                \alpha Y^{\alpha} & \beta Y^{\beta} & \gamma Y^{\gamma} & (1+\gamma\log(Y))X^{\gamma}
            \end{pmatrix}\nonumber\\
            &=(\alpha-\beta)\left(X^{\alpha+\beta}Y^{2\gamma}+X^{2\gamma}Y^{\alpha+\beta}\right)+(\beta-\alpha)\left(X^{\alpha+\gamma}Y^{\beta+\gamma}+X^{\beta+\gamma}Y^{\alpha+\gamma}\right)\nonumber\\
            &+\left(\alpha(\beta-\gamma)+\gamma^2\right)X^{\alpha+\gamma}Y^{\beta+\gamma}\log\left(\frac{Y}{X}\right)-\left(\beta(\alpha-\gamma)+\gamma^2\right) X^{\beta+\gamma}Y^{\alpha+\gamma}\log\left(\frac{Y}{X}\right)\nonumber\\
            &+\gamma\left(\alpha X^{\beta+\gamma}Y^{\alpha+\gamma}-\beta X^{\alpha+\gamma}Y^{\beta+\gamma}\right)\log\left(\frac{Y}{X}\right).
        \end{align}
        Taking now
        \begin{align}\label{determinant_equality_case22}
        \left\{\begin{alignedat}{1}
            &\alpha=-\frac{m-4}{2}+\sqrt{\frac{1}{2}(m-2)^2+2+2n(n+m-2)}=-\frac{m-4}{2}+\lambda_1\\
            &\beta=-\frac{m-4}{2}-\sqrt{\frac{1}{2}(m-2)^2+2+2n(n+m-2)}=-\frac{m-4}{2}-\lambda_1\\
            &\gamma=r_3=-\frac{m-4}{2}
            \end{alignedat}\right.
        \end{align}
        if $A$ is the boundary conditions matrix, we get (if $X=a$ and $Y=b$)
    \begin{align}
        &\det(A)=2\lambda_1\left(X^{4-m}Y^{4-m}+X^{4-m}Y^{4-m}\right)-2\lambda_1\left(X^{4-m+\lambda_1}Y^{4-m-\lambda_1}+X^{4-m-\lambda_1}Y^{4-m+\lambda_1}\right)\nonumber\\
        &+\left(\left(-\colorcancel{\frac{m-4}{2}}{blue}+\lambda_1\right)(-\lambda_1)+\colorcancel{\frac{(m-4)^2}{4}}{red}\right)X^{4-m+\lambda_1}Y^{4-m-\lambda_1}\log\left(\frac{Y}{X}\right)\nonumber\\
        &-\left(\left(-\colorcancel{\frac{m-4}{2}}{blue}-\lambda_1\right)\lambda_1+\colorcancel{\frac{(m-4)^2}{4}}{red}\right)\log\left(\frac{Y}{X}\right)X^{4-m-\lambda_1}Y^{4-m+\lambda_1}\log\left(\frac{Y}{X}\right)\nonumber\\
        &-\frac{m-4}{2}\left(\left(-\colorcancel{\frac{m-4}{2}}{red}+\colorcancel{\lambda_1}{blue}\right)X^{4-m-\lambda_1}Y^{4-m+\lambda_1}-\left(-\colorcancel{\frac{m-4}{2}}{red}-\colorcancel{\lambda_1}{blue}\right)X^{4-m+\lambda_1}Y^{4-m-\lambda_1}\right)\log\left(\frac{Y}{X}\right)\nonumber\\
        &=4\lambda_1X^{4-m}Y^{4-m}-2\lambda_1\left(X^{4-m+\lambda_1}Y^{4-m-\lambda_1}+X^{4-m-\lambda_1}Y^{4-m+\lambda_1}\right)\nonumber\\
        &+\lambda_1^2\left(X^{4-m+\lambda_1}Y^{4-m-\lambda_1}-X^{4-m-\lambda_1}Y^{4-m+\lambda_1}\right)\log\left(\frac{Y}{X}\right)\nonumber\\
        &=\lambda_1(ab)^{4-m}\left(4-2\left(\frac{b}{a}\right)^{\lambda_1}-2\bigg(\frac{a}{b}\bigg)^{\lambda_1}+\lambda_1\left(\left(\frac{b}{a}\right)^{\lambda_1}-\bigg(\frac{a}{b}\bigg)^{\lambda_1}\right)\log\left(\frac{b}{a}\right)\right)
        <0,
    \end{align}
    where we used the inequality proven after \eqref{determinant_equality_case4}.
    Therefore, this case is also excluded, and we can finally move to the analysis of the last case.

    \textbf{Case 3: $\mu>\dfrac{1}{4}\dfrac{\left(m(m-4)+4n(n+m-2)\right)^2}{(m-4)^2+4n(n+m-2)}$.}

    Then, the roots of the characteristic polynomial $P$ are given by 
    \begin{align*}
        \left\{\begin{alignedat}{1}
            r_1&=-\frac{m-4}{2}+\sqrt{\frac{1}{2}\sqrt{(\mu-2(m-2))^2+16n(n+m-2)}+\frac{1}{4}(m-2)^2+1+n(n+m-2)-\frac{\mu}{2}}\\
            r_2&=-\frac{m-4}{2}+\sqrt{\frac{1}{2}\sqrt{(\mu-2(m-2))^2+16n(n+m-2)}+\frac{1}{4}(m-2)^2+1+n(n+m-2)-\frac{\mu}{2}}\\
            r_3&=-\frac{m-4}{2}+i\sqrt{\frac{1}{2}\sqrt{(\mu-2(m-2))^2+16n(n+m-2)}-\frac{1}{4}(m-2)^2-1-n(n+m-2)+\frac{\mu}{2}}\\
            r_4&=-\frac{m-4}{2}+i\sqrt{\frac{1}{2}\sqrt{(\mu-2(m-2))^2+16n(n+m-2)}-\frac{1}{4}(m-2)^2-1-n(n+m-2)+\frac{\mu}{2}}.
        \end{alignedat}\right.
    \end{align*}
    Recalling that
    \begin{align*}
    \begin{vmatrix}
            X^{\alpha} & X^{\beta} & X^{\gamma} & X^{\delta}\\
            Y^{\alpha} & Y^{\beta} & Y^{\gamma} & Y^{\delta}\\
            \alpha\,X^{\alpha} & \beta\,X^{\beta} & \gamma\,X^{\gamma} & \delta\,X^{\delta}\\
            \alpha\,Y^{\alpha} & \beta\,Y^{\beta} & \gamma\,Y^{\gamma} & \delta\,Y^{\delta}
        \end{vmatrix}&=
        (\beta-\alpha)(\gamma-\delta)X^{\alpha+\beta}Y^{\gamma+\delta}+(\gamma-\alpha)(\delta-\beta)X^{\alpha+\gamma}Y^{\beta+\delta}\\
        &+(\delta-\alpha)(\beta-\gamma)X^{\alpha+\delta}Y^{\beta+\gamma}
        +(\beta-\gamma)(\delta-\alpha)X^{\beta+\gamma}Y^{\alpha+\delta}\\
        &+(\beta-\delta)(\alpha-\gamma)X^{\beta+\delta}Y^{\alpha+\gamma}+(\delta-\gamma)(\alpha-\beta)X^{\gamma+\delta}Y^{\alpha+\beta},
    \end{align*}
    we deduce by the computations preceding \eqref{eq_det_complex} that the boundary conditions matrix satisfies the equation
    \begin{align*}
        \det(A)=2i\,(ab)^{4-m}\left(\Im\left(\zeta^2\left(\left(\frac{a}{b}\right)^{\bar{\zeta}}+\left(\frac{b}{a}\right)^{\bar{\zeta}}\right)\right)-4\,\Re(\zeta)\Im(\zeta)\right)
    \end{align*}
    where $\zeta=\lambda_1+i\,\lambda_2$, and
    \begin{align*}
    \left\{\begin{alignedat}{1}
        \lambda_1&=\sqrt{\frac{1}{2}\sqrt{(\mu-2(m-2))^2+16n(n+m-2)}+\frac{1}{4}(m-2)^2+1+n(n+m-2)-\frac{\mu}{2}}\\
        \lambda_2&=\sqrt{\frac{1}{2}\sqrt{(\mu-2(m-2))^2+16n(n+m-2)}-\frac{1}{4}(m-2)^2-1-n(n+m-2)+\frac{\mu}{2}}
        \end{alignedat}\right.
    \end{align*}
    Therefore, following the exact same steps before \eqref{fun_eq_gen}, we deduce that 
    \small
    \begin{align}\label{second_fun_eq_gen2}
        2\left(\left(\left(\frac{b}{a}\right)^{\lambda_1}+\bigg(\frac{a}{b}\bigg)^{\lambda_1}\right)\cos\left(\lambda_2\log\left(\frac{b}{a}\right)\right)-2\right)\lambda_1\lambda_2-\left(\lambda_1^2-\lambda_2^2\right)\left(\left(\frac{b}{a}\right)^{\lambda_1}+\bigg(\frac{a}{b}\bigg)^{\lambda_1}\right)\sin\left(\lambda_2\log\left(\frac{b}{a}\right)\right)=0.
    \end{align}
    \normalsize
    We have
    \begin{align*}
        \lambda_1^2-\lambda_2^2&=\frac{1}{2}(m-2)^2+2+2n(n+m-2)-\mu\\
        (\lambda_1\lambda_2)^2&=\frac{1}{4}\left((\mu-2(m-2))^2+16n(n+m-2)\right)-\left(\frac{1}{2}(m-2)^2+1+n(n+m-2)-\frac{\mu}{2}\right)^2\\
        &=\frac{1}{4}\left((m-4)^2+4n(n+m-2)\right)\mu-\frac{1}{16}\left(m(m-4)+4n(n+m-2)\right)^2.
    \end{align*}
    Therefore, the equation becomes if $\lambda_1=\sqrt{\frac{1}{2}(m-2)^2+2+2n(n+m-2)+\lambda_2^2}$, noticing that $\lambda_2\geq 0$ as $\mu\geq \dfrac{1}{4}\dfrac{\left(m(m-4)+4n(n+m-2)\right)^2}{(m-4)^2+4n(n+m-2)}$, provided that 
    \begin{align*}
        2p^2+2q^2=\frac{1}{2}(m-2)^2+2+2n(n+m-2)-\mu,
    \end{align*}
    assuming without loss of generality that 
    \begin{align*}
        \mu\leq \frac{1}{2}(m-2)^2+2n(n+m-2),
    \end{align*}
    we can take $p=1$ and
    \begin{align*}
        q=\sqrt{\frac{1}{4}(m-2)^2+n(n+m-2)-\frac{\mu}{2}}
    \end{align*}
    since $n$ ($p$ in the current notations) need not be integer in this part of the proof of Theorem \ref{ode_gen_dimension}, we deduce that provided that 
    \begin{align}\label{bound_conformal_class_mu}
        \log\left(\frac{b}{a}\right)\geq \frac{5\pi}{\sqrt{\frac{1}{2}(m-2)^2+2+2n(n+m-2)-\mu}},
    \end{align}
    the first positive zero $\lambda_2^{\ast}>0$ of the equation  \eqref{second_fun_eq_gen2} satisfies the inequality
    \begin{align}\label{est_first_zero}
        \pi<\frac{\lambda_2^{\ast}}{\log\left(\frac{b}{a}\right)}<2\pi.
    \end{align}
    In particular, we get
    \begin{align*}
        \sqrt{(\mu-2(m-2))^2+16n(n+m-2)}-\frac{1}{2}(m-2)^2-2-2n(n+m-2)+\mu> \frac{2\pi^2}{\log^2\left(\frac{b}{a}\right)},
    \end{align*}
    or
    \begin{align*}
        (\mu-2(m-2))^2+16n(n+m-2)&>\frac{4\pi^4}{\log^4\left(\frac{b}{a}\right)}+\left(\frac{1}{2}(m-2)^2+2+2n(n+m-2)-\mu\right)\frac{4\pi^2}{\log^2\left(\frac{b}{a}\right)}\\
        &+\left(\frac{1}{2}(m-2)^2+2+2n(n+m-2)-\mu\right)^2,
    \end{align*}
    that we rewrite
    \begin{align*}
        &\left((m-4)^2+4n(n+m-2)+\frac{4\pi^2}{\log^2\left(\frac{b}{a}\right)}\right)\mu> \frac{1}{4}\left(m(m-4)+4n(n+m-2)\right)^2\\
        &+\frac{1}{2}\left((m-2)^2+4+4n(n+m-2)\right)\frac{4\pi^2}{\log^2\left(\frac{b}{a}\right)}+\frac{4\pi^4}{\log^4\left(\frac{b}{a}\right)},
    \end{align*}
    and finally
    \begin{align}\label{lower_bound_mu}
        \mu>\frac{\left(m(m-4)+4n(n+m-2)\right)^2+\left((m-2)^2+4+4n(n+m-2)\right)\dfrac{8\pi^2}{\log^2\left(\frac{b}{a}\right)}+\dfrac{16\pi^4}{\log^4\left(\frac{b}{a}\right)}}{4(m-4)^2+16n(n+m-2)+\dfrac{16\pi^2}{\log^2\left(\frac{b}{a}\right)}}.
    \end{align}
    Now, as previously, we need to express the bound \eqref{bound_conformal_class_mu} into a condition that only depends on $m$ and $n$. On the other hand, the estimate \eqref{est_first_zero} shows that 
    \begin{align*}
        \sqrt{(\mu-2(m-2))^2+16n(n+m-2)}-\frac{1}{2}(m-2)^2-2-2n(n+m-2)+\mu<\frac{8\pi^2}{\log^2\left(\frac{b}{a}\right)},
    \end{align*}
    which implies that 
    \begin{align*}
        (\mu-2(m-2))^2+16n(n+m-2)&<\frac{64\pi^4}{\log^4\left(\frac{b}{a}\right)}+\left(\frac{1}{2}(m-2)^2+2+2n(n+m-2)-\mu\right)\frac{16\pi^2}{\log^2\left(\frac{b}{a}\right)}\\
        &+\left(\frac{1}{2}(m-2)^2+2+2n(n+m-2)-\mu\right)^2,
    \end{align*}
    which is equivalent to
    \begin{align*}
        \left((m-4)^2+4n(n+m-2)+\frac{16\pi^2}{\log^2\left(\frac{b}{a}\right)}\right)&<\frac{1}{4}\left(m(m-4)+4n(n+m-2)\right)^2\\
        &+\frac{1}{2}\left((m-2)^2+4+4n(n+m-2)\right)\frac{16\pi^2}{\log^2\left(\frac{b}{a}\right)}+\frac{64\pi^4}{\log^4\left(\frac{b}{a}\right)},
    \end{align*}
    and finally
    \begin{align}
        \mu<\frac{(m(m-4)+4n(n+m-2))^2+\left((m-2)^2+4+4n(n+m-2)\right)\dfrac{64\pi^2}{\log^2\left(\frac{b}{a}\right)}+\dfrac{256\pi^4}{\log^4\left(\frac{b}{a}\right)}}{4(m-4)^2+16n(n+m-2)+\dfrac{64\pi^2}{\log^2\left(\frac{b}{a}\right)}}.
    \end{align}
    Let $X=\dfrac{4\pi}{\log^2\left(\frac{b}{a}\right)}$, and let us estimate
    \begin{align*}
        &c(X)=\frac{1}{2}(m-2)^2+2+2n(n+m-2)\\
        &-\frac{(m(m-4)+4n(n+m-2))^2+4\left((m-2)^2+4+4n(n+m-2)\right)X^2+X^4}{4(m-4)^2+16n(n+m-2)+4X^2}\\
        &=\frac{1}{4(m-4)^2+16n(n+m-2)+4X^2}\\
        &\times\left(2\left((m-2)^2+4+4n(n+m-2))\left((m-4)^2+4n(n+m-2)+X^2)\right)\right)\right.\\
        &\left.-(m(m-4)+4n(n+m-2))^2-4\left((m-2)^2+4+4n(n+m-2)\right)X^2-X^4\right)\\
        &=\frac{1}{4(m-4)^2+16n(n+m-2)+4X^2}\\
        &\times\bigg(2((m-2)^2+4)(m-4)^2+8\left((m-2)^2+(m-4)^2+4\right)n(n+m-2)\\
        &+32n^2(n+m-2)^2-2((m-2)^2+4+4n(n+m-2))X^2-m^2(m-4)^2-8m(m-4)n(n+m-2)\\
        &-16n^2(n+m-2)^2-X^4\bigg)\\
        &=\frac{1}{4(m-4)^2+16n(n+m-2)+4X^2}\bigg((m-4)^4+8((m-4)^2+4)n(n+m-2)\\
        &+16n^2(n+m-2)^2-2((m-2)^2+4+4n(n+m-2))X^2-X^4\bigg),
    \end{align*}
    where we used
    \begin{align*}
        8((m-2)^2+(m-4)^2+4)-8m(m-4)&=8((2m^2-12m+20)-m^2+4m)=8(m^2-8m+24)\\
        &=8((m-4)^2+8)\\
        2((m-2)^2+4)-m^2&=m^2-8m+16=(m-4)^2.
    \end{align*}
   At $X=0$, we have $C(X)\geq 0$, and $C(X)=0$ if and only if $m=4$ and $n=0$, but this case is excluded from our analysis. Therefore, we get the following condition on $X$
   \begin{align*}
       X<\sqrt{(m-2)^2+4+4\lambda_n+\sqrt{((m-2)^2+4+4\lambda_n)^2+(m-4)^4+8((m-4)^2+4)\lambda_n+16\lambda_n^2}},
   \end{align*}
   where we wrote for simplicity $\lambda_n=\lambda_n$. Finally, we get the condition 
   or
   \begin{align}\label{bound_conformal_class_mu2}
       \log\left(\frac{b}{a}\right)> \frac{\pi}{\sqrt{(m-2)^2+4+4\lambda_n+\sqrt{((m-2)^2+4+4\lambda_n)^2+(m-4)^4+8((m-4)^2+4)\lambda_n+16\lambda_n^2}}}.
   \end{align}
   Assume that this bound holds on the conformal class. Then, \eqref{bound_conformal_class_mu} becomes
   \begin{align*}
       \frac{1}{X}\geq \frac{5}{\sqrt{c(X)}},
   \end{align*}
   that we rewrite as 
   \begin{align*}
       25X^2\leq c(X)&=\frac{1}{4(m-4)^2+16\lambda_n+4X^2}\bigg((m-4)^4+8\big((m-4)^2+4\big)\lambda_n\\
        &+16\lambda_n^2-2\big((m-2)^2+4+4\lambda_n\big)X^2-X^4\bigg),
   \end{align*}
   or
   \begin{align*}
       25(4(m-4)^2X^2+16\lambda_nX^2+4X^4)&\leq (m-4)^4+8\big((m-4)^2+4\big)\lambda_n
        +16\lambda_n^2-2\big((m-2)^2+4+4\lambda_n\big)X^2\\
        &-X^4
   \end{align*}
   that we finally rewrite as
   \begin{align*}
       2\big(50(m-4)^2+(m-2)^2+4+204\lambda_n\big)X^2+101X^4\leq (m-4)^4+8\big((m-4)^2+4\big)\lambda_n+16\lambda_n^2,
   \end{align*}
   and finally gives us the estimate
   \begin{align*}
       &X^2\leq -\big(50(m-4)^2+(m-2)^2+4+204\lambda_n\big)\\
       &+\sqrt{\big(50(m-4)^2+(m-2)^2+4+204\lambda_n\big)^2+(m-4)^4+8\big((m-4)^2+4\big)\lambda_n+16\lambda_n^2}.
   \end{align*} 
   Therefore, the condition on the conformal class becomes
   \scriptsize
   \begin{align}\label{bound_conformal_class_mu3}
       &\log\left(\frac{b}{a}\right)\geq\\
       &\frac{\pi\sqrt{\big(50(m-4)^2+(m-2)^2+4+204\lambda_n\big)+\sqrt{\big(50(m-4)^2+(m-2)^242+204\lambda_n\big)^2+(m-4)^4+8\big((m-4)^2+4\big)\lambda_n+16\lambda_n^2}}}{\sqrt{(m-4)^4+8\big((m-4)^2+4\big)\lambda_n+16\lambda_n^2}}\nonumber.
   \end{align}
   \normalsize
   The important point is that this number is bounded in $n\in\Z$.

   Finally, we treat the remaining case $m=4$, $n=0$.

   \textbf{Final Case: $m=4$ and $n=0$}. Then, the roots of the characteristic polynomial are given by 
   \begin{align*}
       \left\{\begin{alignedat}{4}
           r_1&=&&\sqrt{2-\frac{\mu}{2}+\frac{1}{2}\sqrt{(\mu-4)^4}}&=&&\frac{1}{\sqrt{2}}\sqrt{4-\mu+|4-\mu|}\\
           r_2&=-&&\sqrt{2-\frac{\mu}{2}+\frac{1}{2}\sqrt{(\mu-4)^4}}&=-&&\frac{1}{\sqrt{2}}\sqrt{4-\mu+|4-\mu|}\\
           r_3&=&&\sqrt{2-\frac{\mu}{2}-\frac{1}{2}\sqrt{(\mu-4)^4}}&=&&\frac{1}{\sqrt{2}}\sqrt{4-\mu-|4-\mu|}\\
           r_4&=-&&\sqrt{2-\frac{\mu}{2}-\frac{1}{2}\sqrt{(\mu-4)^4}}&=-&&\frac{1}{\sqrt{2}}\sqrt{4-\mu-|4-\mu|}.
       \end{alignedat}\right.
   \end{align*}
   Therefore, we distinguish three cases.
   
   \textbf{Sub-case 1: $\mu< 4$.} Then 
   \begin{align*}
       \left\{\begin{alignedat}{4}
           r_1&=&&\sqrt{4-\mu}\\
           r_2&=-&&\sqrt{4-\mu}\\
           r_3&=&&0\\
           r_4&=&&0.
       \end{alignedat}\right.
   \end{align*}
   Therefore, $Y$ is given by
   \begin{align*}
       Y(t)=\mu_1e^{\sqrt{4-\mu}\,t}+\mu_2e^{-\sqrt{4-\mu}\,t}+\mu_3+\mu_4t.
   \end{align*}
   Therefore, we have thanks to the boundary conditions if $\lambda_1=\sqrt{4-\mu}$
   \begin{align*}
       A\begin{pmatrix}
           \mu_1\\
           \mu_2\\
           \mu_3\\
           \mu_4
       \end{pmatrix}=\begin{pmatrix}
           a^{\lambda_1} & a^{-\lambda_1} & 1 & \log(a)\\
           b^{\lambda_1} & b^{-\lambda_1} & 1 & \log(b)\\
           \lambda_1a^{\lambda_1} & -\lambda_1a^{-\lambda_1} & 0 & 1\\
           \lambda_1b^{\lambda_1} & -\lambda_1b^{-\lambda_1} & 0 & 1
       \end{pmatrix}\begin{pmatrix}
           \mu_1\\
           \mu_2\\
           \mu_3\\
           \mu_4
       \end{pmatrix}=0.
   \end{align*}
   As $Y\neq 0$, we have $\det(A)=0$. Now, expanding on the third column, we compute
   \begin{align*}
       \det(A)&=-\begin{vmatrix}
           b^{\lambda_1} & b^{-\lambda_1} & \log(b)\\
           \lambda_1a^{\lambda_1} & -\lambda_1a^{-\lambda_1}  & 1\\
           \lambda_1b^{\lambda_1} & -\lambda_1b^{-\lambda_1}  & 1
       \end{vmatrix}+\begin{vmatrix}
           a^{\lambda_1} & a^{-\lambda_1} & \log(a)\\
           \lambda_1a^{\lambda_1} & -\lambda_1a^{-\lambda_1}  & 1\\
           \lambda_1b^{\lambda_1} & -\lambda_1b^{-\lambda_1}  & 1
       \end{vmatrix}\\
       &=-\left(-\lambda_1\left(\frac{b}{a}\right)^{\lambda_1}+\lambda_1-\lambda_1^2\bigg(\frac{a}{b}\bigg)^{\lambda_1}\log(b)+\lambda_1^2\left(\frac{b}{a}\right)^{\lambda_1}\log(b)-\lambda_1\bigg(\frac{a}{b}\bigg)^{\lambda_1}+\lambda_1\right)\\
       &-\lambda_1+\lambda_1\left(\frac{b}{a}\right)^{\lambda_1}-\lambda_1^2\bigg(\frac{a}{b}\bigg)^{\lambda_1}\log(a)+\lambda_1^2\left(\frac{b}{a}\right)^{\lambda_1}\log(a)-\lambda_1+\lambda_1\bigg(\frac{a}{b}\bigg)^{\lambda_1}\\
       &=-4\lambda_1+2\lambda_1\left(\left(\frac{b}{a}\right)^{\lambda_1}+\bigg(\frac{a}{b}\bigg)^{\lambda_1}\right)-\lambda_1^2\left(\left(\frac{b}{a}\right)^{\lambda_1}-\bigg(\frac{a}{b}\bigg)^{\lambda_1}\right)\log\left(\frac{b}{a}\right)\\
       &=-\lambda_1\left(4-2\left(\left(\frac{b}{a}\right)^{\lambda_1}+\bigg(\frac{a}{b}\bigg)^{\lambda_1}\right)+\lambda_1\left(\left(\frac{b}{a}\right)^{\lambda_1}-\bigg(\frac{a}{b}\bigg)^{\lambda_1}\right)\log\left(\frac{b}{a}\right)\right).
   \end{align*}
   However, as $\lambda_1>0$, this function of $0<a<b<\infty$ never vanished thanks to the analysis that follows \eqref{determinant_equality_case4}. Therefore, we get a contradiction since $Y\neq 0$ by hypothesis. Let us now move to the second alternative.

   \textbf{Sub-case 2: $\mu=4$.} Then, all roots are equal to $0$, which shows that
   \begin{align*}
       Y(t)=\mu_1+\mu_2\,t+\mu_3\,t^2+\mu_4\,t^3.
   \end{align*}
   Writing to simplify $X=\log(a)$ and $Y=\log(b)$, the boundary conditions show that
   \begin{align*}
       A\begin{pmatrix}
           \mu_1\\
           \mu_2\\
           \mu_3\\
           \mu_4
       \end{pmatrix}=\begin{pmatrix}
           1 & X & X^2 & X^3\\
           1 & Y & Y^2 & Y^3\\
           0 & 1 & 2X & 3X^2\\
           0 & 1 & 2Y & 3Y^2
       \end{pmatrix}\begin{pmatrix}
           \mu_1\\
           \mu_2\\
           \mu_3\\
           \mu_4
       \end{pmatrix}=0.
   \end{align*}
   Expanding on the first column, we get
   \small
   \begin{align*}
       &\det(A)=\begin{pmatrix}
           Y & Y^2 & Y^3\\
           1 & 2X & 3X^2\\
           1 & 2Y & 3Y^2
       \end{pmatrix}-\begin{pmatrix}
           X & X^2 & X^3\\
           1 & 2X & 3X^2\\
           1 & 2Y & 3Y^2
       \end{pmatrix}\\
       &=6XY^3+3X^2Y^2+2Y^4-2XY^3-3Y^4-6X^2Y^2-\left(6X^2Y^2+3X^4+2X^3Y-2X^4-3X^2Y^2-6X^3Y\right)\\
       &=-(X^4+Y^4)+4XY(X^2+Y^2)-6X^2Y^2=-\left(X-Y\right)^4=-\log^4\left(\frac{b}{a}\right)\neq 0.
   \end{align*}
   \normalsize
   Therefore, we have $Y=0$.

   \textbf{Sub-case 3: $\mu>4$.} Then, the roots of $P$ are given by 
   \begin{align*}
       \left\{\begin{alignedat}{2}
           r_1&=r_2&&=0\\
           r_3&=&&i\sqrt{\mu-4}\\
           r_4&=-&&i\sqrt{\mu-4}
       \end{alignedat}\right.
   \end{align*}
   and $Y$ is given by
   \begin{align*}
       Y(t)=\mu_1+\mu_2t+\mu_3e^{i\lambda_1t}+\mu_4e^{-i\lambda_1t}.
   \end{align*}
   where $\lambda_1=\sqrt{\mu-4}$. The boundary conditions imply that
   \begin{align*}
       A\begin{pmatrix}
           \mu_1\\
           \mu_2\\
           \mu_3\\
           \mu_4
       \end{pmatrix}=\begin{pmatrix}
           1 & \log(a) & a^{i\,\lambda_1} & a^{-i\,\lambda_1}\\
           1 & \log(b) & b^{i\,\lambda_1} & b^{-i\,\lambda_1}\\
           0 & 1 & i\,\lambda_1a^{i\,\lambda_1} & -i\,\lambda_1a^{-i\,\lambda_1}\\
           0 & 1 & i\,\lambda_1b^{i\,\lambda_1} & -i\,\lambda_1b^{-i\,\lambda_1}
       \end{pmatrix}\begin{pmatrix}
           \mu_1\\
           \mu_2\\
           \mu_3\\
           \mu_4
       \end{pmatrix}=0.
   \end{align*}
   We have
   \begin{align*}
       \det(A)&=\begin{vmatrix}
           \log(b) & b^{i\,\lambda_1} & b^{-i\,\lambda_1}\\
           1 & i\,\lambda_1a^{i\lambda_1} & -i\,\lambda_1a^{-i\,\lambda_1}\\
           1 & i\,\lambda_1b^{i\lambda_1} & -i\,\lambda_1b^{-i\,\lambda_1}
       \end{vmatrix}-\begin{vmatrix}
           \log(a) & a^{i\lambda_1} & a^{-i\,\lambda_1}\\
           1 & i\,\lambda_1a^{i\,\lambda_1} & -i\,\lambda_1a^{-i\,\lambda_1}\\
           1 & i\,\lambda_1b^{i\,\lambda_1} & -i\,\lambda_1b^{-i\,\lambda_1}
       \end{vmatrix}\\
       &=\lambda_1^2\bigg(\frac{a}{b}\bigg)^{\lambda_1}\log(b)-i\,\lambda_1\left(\frac{b}{a}\right)^{\lambda_1}+i\,\lambda_1-i\,\lambda_1\bigg(\frac{a}{b}\bigg)^{i\,\lambda_1}+i\,\lambda_1-\lambda_1^2\left(\frac{b}{a}\right)^{i\,\lambda_1}\log(b)\\
       &-\left(\lambda_1^2\bigg(\frac{a}{b}\bigg)^{i\,\lambda_1}\log(a)-i\,\lambda_1+i\,\lambda_1\left(\frac{b}{a}\right)^{i\,\lambda_1}-i\,\lambda_1+i\,\lambda_1\bigg(\frac{a}{b}\bigg)^{i\,\lambda_1}-\lambda_1^2\left(\frac{b}{a}\right)^{i\,\lambda_1}\right)\\
       &=4\,i\,\lambda_1-2\,i\,\lambda_1\left(\left(\frac{b}{a}\right)^{i\,\lambda_1}+\bigg(\frac{a}{b}\bigg)^{i\,\lambda_1}\right)-\lambda_1^2\left(\left(\frac{b}{a}\right)^{i\,\lambda_1}-\bigg(\frac{a}{b}\bigg)^{i\,\lambda_1}\right)\log\left(\frac{b}{a}\right)\\
       &=4\,i\,\lambda_1-4\,i\,\lambda_1\cos\left(\lambda_1\log\left(\frac{b}{a}\right)\right)-2\,i\,\lambda_1^2\sin\left(\lambda_1\log\left(\frac{b}{a}\right)\right)\log\left(\frac{b}{a}\right)\\
       &=2\,i\,\lambda_1\left(2\left(1-\cos\left(\lambda_1\log\left(\frac{b}{a}\right)\right)\right)-\lambda_1\sin\left(\lambda_1\log\left(\frac{b}{a}\right)\right)\log\left(\frac{b}{a}\right)\right).
   \end{align*}
   Therefore, writing $\theta=\lambda_1\log\left(\frac{b}{a}\right)$, we are brought to the study of the first positive zero of the function
   \begin{align*}
       f(\theta)=2(1-\cos(\theta))-\theta\sin(\theta),
   \end{align*}
   which is elementary. We have $f(0)=0$, and
   \begin{align*}
       f'(\theta)&=\sin(\theta)-\theta\cos(\theta)\\
       f''(\theta)&=\theta\sin(\theta).
   \end{align*}
   Therefore, $f''(\theta)>0$ for all $0<\theta<\pi$ and $f''(\theta)<0$ for all $\pi<\theta<2\pi$. The function $f'$ is strictly increasing on $[0,\pi]$ and strictly decreasing on $[\pi,2\pi]$. We have $f'(\pi)=\pi>0$, which implies in particular that $f$ is strictly decreasing on $[0,\pi]$. Since $f'(2\pi)=-2\pi<0$, there exists $\pi<\theta_0<2\pi$ such that $f'(\theta)>0$ for all $0<\theta<\theta_0$ and $f'(\theta)<0$ for all $\theta_0<\theta<2\pi$. Finally, since $f(2\pi)=0$, we deduce that $f(\theta)>0$ for all $0<\theta<2\pi$, and the first non-trivial zero $\theta_1$ of the equation $\det(A)=0$ is $2\pi$. Therefore, we get
   \begin{align*}
       \lambda_1\log\left(\frac{b}{a}\right)=2\pi,
   \end{align*}
   or
   \begin{align*}
       \sqrt{\mu-4}=\frac{2\pi}{\log\left(\frac{b}{a}\right)},
   \end{align*}
   which finally gives us
   \begin{align*}
       \mu=4+\dfrac{4\pi^2}{\log^2\left(\frac{b}{a}\right)}.
   \end{align*}
   We can now move to the global bound on the eigenvalues and the computation of the smallest eigenvalue for large enough conformal class. Since the case $d=4$ is special, we start by this case.

   \textbf{Step 2: Lower bound on all eigenvalues in dimension $\mathbf{4}$.}

   We have 
   \begin{align*}
       \mu_0=4+\frac{4\pi^2}{\log^2\left(\frac{b}{a}\right)},
   \end{align*}
   and for all $n\geq 1$, 
   \begin{align*}
       \mu_n>\frac{16\lambda_n^2+\left(2+\lambda_n\right)\dfrac{32\pi^2}{\log^2\left(\frac{b}{a}\right)}+\dfrac{16\pi^4}{\log^4\left(\frac{b}{a}\right)}}{16\lambda_n+\dfrac{16\pi^2}{\log^2\left(\frac{b}{a}\right)}}=\frac{\left(4\lambda_n+4X^2\right)^2+64X^2}{16\lambda_n+16X^2}=\lambda_n+X^2+\dfrac{4X^2}{\lambda_n+X^2},
   \end{align*}
   where $\lambda_n=n(n+2)$ for all $n\in\N$, and where we denoted $X=\dfrac{\pi}{\log\left(\frac{b}{a}\right)}$ for simplicity. Let $\alpha>0$ and
   \begin{align*}
       f(t)=t+\frac{4\alpha^2}{t+\alpha^2}.
   \end{align*}
   We have
   \begin{align*}
       f'(t)=1-\frac{4\alpha^2}{(t+\alpha^2)^2}>0
   \end{align*}
   if and only if
   \begin{align*}
       t> 2\alpha-\alpha^2=\alpha(2-\alpha).
   \end{align*}
   Since $\alpha(2-\alpha)\leq 1<\lambda_1=3$, we deduce that 
   \begin{align*}
       \inf_{n\geq 1}\left(\lambda_n+X^2+\dfrac{4X^2}{\lambda_n+X^2}\right)=\lambda_1+X^2+\frac{4X^2}{\lambda_1+X^2}=3+X^2+\frac{4X^2}{3+X^2}.
   \end{align*}
   Then, we have for all $X\in \R$
   \begin{align*}
       4+4X^2\geq 3+X^2+\frac{4X^2}{3+X^2}
   \end{align*}
   since this inequality is equivalent to
   \begin{align*}
       3+10X^2+3X^4\geq 4X^2.
   \end{align*}
   Therefore, we deduce that
   \begin{align*}
       \inf_{n\in\N}\mu_{n}>3+\frac{\pi^2}{\log^2\left(\frac{b}{a}\right)}+\frac{4\pi^2}{\pi^2+3\log^2\left(\frac{b}{a}\right)}.
   \end{align*}
   \textbf{Step 3: Lower bound on all eigenvalues in dimension $\mathbf{3}$.}

   Thanks to \eqref{lower_bound_mu}, we have
   \begin{align*}
       \mu>\inf_{n\in\N}\mu_n&=\inf_{n\in\N}\ens{\frac{(4n(n+1)-3)^2+(5+4n(n+1))\dfrac{8\pi^2}{\log^2\left(\frac{b}{a}\right)}+\dfrac{16\pi^4}{\log^4\left(\frac{b}{a}\right)}}{4+16n(n+1)+\dfrac{16\pi^2}{\log^2\left(\frac{b}{a}\right)}}}\\
       &=\mu_1=\frac{25+\dfrac{104\pi^2}{\log^2\left(\frac{b}{a}\right)}+\dfrac{16\pi^4}{\log^4\left(\frac{b}{a}\right)}}{36+\dfrac{16\pi^2}{\log^2\left(\frac{b}{a}\right)}}
   \end{align*}
   if the conformal class is large enough. Indeed, let $\alpha=\dfrac{4\pi^2}{\log^2\left(\frac{b}{a}\right)}$ and $t=4n(n+1)$, and introduce on $\R_+$ the function
   \begin{align*}
       f(t)=\frac{(t-3)^2+2\alpha(5+t)+\alpha^2}{1+\alpha+t}=9+\alpha+t-\frac{16t}{1+\alpha+t}.
   \end{align*}
   Therefore, we get
   \begin{align*}
       f'(t)=1-\frac{16(1+\alpha)}{(1+\alpha+t)^2}.
   \end{align*}
   Therefore, $f$ is strictly decreasing on $[0,4\sqrt{1+\alpha}-1-\alpha]$ and strictly increasing  on $[4\sqrt{1+\alpha}-1-\alpha,\infty[$ provided that $4\sqrt{1+\alpha}\geq 1+\alpha$, or
   \begin{align*}
       \alpha^2-14\alpha-15\leq 0,
   \end{align*}
   which is verified if and only if $\alpha\leq 15$. Since for all $n\geq 1$, we have $4n(n+1)\geq 8>4\sqrt{1+\alpha}-1-\alpha$ for all $\alpha>0$, we deduce that
   \begin{align*}
       \inf_{n\in\N}f(4n(n+1))=\min\ens{f(0),f(8)}=\min\ens{9+\alpha,\frac{25+26\alpha+\alpha^2}{9+\alpha}}.
   \end{align*}
   Since $(9+\alpha)^2-(25+26\alpha+\alpha^2)=56-8\alpha=8(7-\alpha)$, we deduce that the condition $\alpha\leq 7$, or
   \begin{align*}
       \log\left(\frac{b}{a}\right)\geq \frac{2\pi}{\sqrt{7}}
   \end{align*}
   implies that
   \begin{align*}
       \inf_{n\in\N}f(4n(n+1))=\min\ens{f(0),f(8)}=\frac{25+26\alpha+\alpha^2}{9+\alpha}.
   \end{align*}
   
   \textbf{Step 4: Lower bound on all eigenvalues in dimension $m\geq 5$.}

   Introduce on $\R_+$ the function
   \begin{align*}
       f(t)=\frac{(m(m-4)+t)^2+2((m-2)^2+4+t)\alpha+\alpha^2}{(m-4)^2+t+\alpha}=m^2+\alpha+t-\frac{16t}{(m-4)^2+t+\alpha},
   \end{align*}
   where $t$ stands for $4n(n+m-2)$ ($n\in\N$).
   We have
   \begin{align*}
       f'(t)=1-\frac{16((m-4)^2+\alpha)}{((m-4)^2+t+\alpha)^2}.
   \end{align*}
   For $m\geq 8$, the function $f$ is strictly increasing on $\R_+$, which shows that
   \begin{align*}
       \inf_{n\in\N}f(4n(n+m-2))=f(0)=\frac{m^2(m-4)^2+2((m-2)^2+4)\alpha+\alpha^2}{(m-4)^2+\alpha}.
   \end{align*}
   If $5\leq m\leq 7$, since $4n(n+m-2)>4\sqrt{(m-4)^2+\alpha}-(m-4)^2-\alpha$ for all $n\geq 1$, we deduce that 
   \begin{align*}
       &\inf_{n\in\N}f(4n(n+m-2))=\min\ens{f(0),f(4(m-1))}\\
       &=\min\ens{\frac{m^2(m-4)^2+2((m-2)^2+4)\alpha+\alpha^2}{(m-4)^2+\alpha},\frac{(m+2)^2(m-2)^2+2(m+2)^2\alpha+\alpha^2}{(m-4)^2+4(m-1)+\alpha}}.
   \end{align*}
   We have
   \small
   \begin{align*}
       &\frac{(m+2)^2(m-2)^2}{(m-4)^2+4(m-1)}-\frac{m^2(m-4)^2}{(m-4)^2}=\frac{1}{(m-4)^2+4(m-1)}\left((m+2)^2(m-2)^2-((m-4)^2+4(m-1))\right)\\
       &=\frac{4}{(m-4)^2+4(m-1)}\left(m^3-5m^2+4\right)=\frac{4(m-1)}{(m-4)^2+4(m-1)}\left((m-2)^2-8\right)>0
   \end{align*}
   \normalsize
   for all $m\geq 5$. Therefore, 
   \begin{align}\label{min_end_second_eigen}
       \inf_{n\in\N}f(4n(n+m-2))=f(0)=\frac{m^2(m-4)^2+2((m-2)^2+4)\alpha+\alpha^2}{(m-4)^2+\alpha}
   \end{align}
   if the conformal class is large enough. We have
   \begin{align*}
       &\frac{(m+2)^2(m-2)^2+2(m+2)^2\alpha+\alpha^2}{(m-4)^2+4(m-1)+\alpha}-\frac{m^2(m-4)^2+2((m-2)^2+4)\alpha+\alpha^2}{(m-4)^2+\alpha}\\
       &=\frac{1}{((m-4)^2+\alpha)((m-4)^2+4(m-1)+\alpha)}\bigg\{4(m-1)(m-4)^2((m-2)^2-8)\\
       &+8(m^3-11m^2+24m-6)\alpha-4(3m-1)\alpha^2\bigg\}>0
   \end{align*}
   if and only if
   \small
   \begin{align*}
       \alpha\leq \frac{1}{3m-1}\left(m^3-11m^2+24m-6+\sqrt{(m^3-11m^2+24m-6)^2+(m-1)(3m-1)(m-4)^2\left((m-2)^2-8\right)}\right).
   \end{align*}
   Therefore, if
   \small
   \begin{align*}
       \log\left(\frac{b}{a}\right)\geq \frac{4\pi(3m-1)}{m^3-11m^2+24m-6+\sqrt{(m^3-11m^2+24m-6)^2+(m-1)(3m-1)(m-4)^2\left((m-2)^2-8\right)}},
   \end{align*}
   \normalsize
   then \eqref{min_end_second_eigen} holds.
   \end{proof}

    \nocite{}
	 \bibliographystyle{plain}
	 \bibliography{biblio_full}

 \end{document}